\newtheorem{thm}{Theorem}[subsection]
\newtheorem{lem}[thm]{Lemma}
\newtheorem{cor}[thm]{Corollary}
\newtheorem{prop}[thm]{Proposition}
\theoremstyle{definition}
\newtheorem{rem}[thm]{Remark}
\newtheorem{defn}[thm]{Definition}
\newtheorem{ex}[thm]{Example}
\def\F{{\mathbb F}}
\def\G{{\mathbb G}}
\def\O{{\mathcal O}}
\def\Q{{\mathbb Q}}
\def\Z{{\mathbb Z}}
\def\L{{\mathbb L}}
\def\Coker{\mathop{\mathrm{Coker}}\nolimits}
\def\Fil{\mathop{\mathrm{Fil}}\nolimits}
\def\Gal{\mathop{\mathrm{Gal}}\nolimits}
\def\Lie{\mathop{\mathrm{Lie}}\nolimits}
\def\Hom{\mathop{\mathrm{Hom}}\nolimits}
\def\Ker{\mathop{\mathrm{Ker}}\nolimits}
\def\id{\mathop{\mathrm{id}}\nolimits}
\def\GL{\mathop{\mathrm{GL}}\nolimits}
\def\Sym{\mathop{\mathrm{Sym}}\nolimits}
\def\Rep{\mathrm{Rep}}
\def\Spa{\mathop{\rm Spa}}
\def\Spec{\mathop{\rm Spec}}
\def\Spf{\mathop{\rm Spf}}
\def\diag{\mathop{\mathrm{diag}}}
\def\Fil{\mathop{\mathrm{Fil}}\nolimits}
\def\dR{\mathop{\mathrm{dR}}}
\newcommand{\plim}[1][]{\mathop{\varprojlim}\limits_{#1}}
\newcommand{\et}{\mathrm{\acute{e}t}}
\newcommand{\ad}{\mathrm{ad}}
\def\Perf{\mathop{\mathrm{Perf}}\nolimits}
\def\arc{\mathop{\mathrm{arc}}\nolimits}
\def\Vect{\mathop{\mathrm{Vect}}\nolimits}
\def\op{\mathop{\mathrm{op}}\nolimits}
\def\Perfd{\mathop{\mathrm{Perfd}}\nolimits}
\def\ori{\mathop{\mathrm{ori}}\nolimits}
\def\perf{\mathop{\mathrm{perf}}\nolimits}
\def\Kos{\mathop{\mathrm{Kos}}\nolimits}
\def\Rees{\mathop{\mathrm{Rees}}\nolimits}
\def\qsyn{\mathop{\mathrm{QSyn}}\nolimits}
\def\qrsperfd{\mathop{\mathrm{QRSPerfd}}\nolimits}
\DeclareSymbolFontAlphabet{\mathbb}{AMSb} 
\DeclareSymbolFontAlphabet{\mathbbl}{bbold} 
\newcommand{\Prism}{{\mathlarger{\mathbbl{\Delta}}}}
\numberwithin{equation}{section}
\begin{document}

\title[Prismatic $G$-displays and descent theory]{Prismatic $G$-displays and descent theory}

\author{Kazuhiro Ito}
\address{Mathematical Institute, Tohoku University, 6-3, Aoba, Aramaki, Aoba-Ku, Sendai 980-8578, Japan}
\email{kazuhiro.ito.c3@tohoku.ac.jp}


\subjclass[2020]{Primary 14F30; Secondary 14G45, 14L05}
\keywords{prisms, displays, $p$-divisible groups, prismatic $F$-gauges}


\begin{abstract}
For a smooth affine group scheme $G$ over the ring of $p$-adic integers $\Z_p$ and a cocharacter $\mu$ of $G$,
we study $G$-$\mu$-displays over the prismatic site of Bhatt--Scholze.
In particular, we obtain several descent results for them.
If $G=\GL_n$, then our $G$-$\mu$-displays can be thought of as Breuil--Kisin modules with some additional conditions.
The relation between our $G$-$\mu$-displays and prismatic $F$-gauges introduced by Drinfeld and Bhatt--Lurie is also discussed.

In fact, our main results are formulated and proved for smooth affine group schemes over the ring of integers $\O_E$ of any finite extension $E$ of $\Q_p$ by using $\O_E$-prisms, which are $\O_E$-analogues of prisms.
\end{abstract}

\maketitle

\setcounter{tocdepth}{1}
\tableofcontents

\section{Introduction} \label{Section:Introduction}

Bhatt--Scholze introduced the theory of prisms in \cite{BS}.
The category of (bounded) prisms with the flat topology is called the absolute prismatic site.
It has been observed that
\textit{prismatic $F$-crystals} on the absolute prismatic site introduced in \cite{BS2}
play significant roles in various aspects of arithmetic geometry.
In this paper,
for a smooth affine group scheme $G$ over the ring of $p$-adic integers $\Z_p$,
we provide a systematic study of prismatic $F$-crystals with certain $G$-actions, which we call \textit{prismatic $G$-$\mu$-displays}.
The results obtained here will be used to study the deformation theory of prismatic $G$-$\mu$-displays in \cite{Ito-K23-b}.
We also discuss the relation between prismatic $G$-$\mu$-displays and \textit{prismatic $F$-gauges} introduced by Drinfeld and Bhatt--Lurie (cf.\ \cite{Drinfeld22}, \cite{BL}, \cite{BL2}, \cite{BhattGauge}).

\subsection{Prismatic Dieudonn\'e crystals}\label{Subsection:Prismatic Dieudonn\'e crystals Intro}

In \cite{Anschutz-LeBras},
Ansch\"utz--Le Bras
introduced \textit{prismatic Dieudonn\'e crystals},
which are prismatic $F$-crystals with additional conditions, and showed that prismatic Dieudonn\'e crystals can be used to classify $p$-divisible groups in mixed characteristic.
The notion of prismatic $G$-$\mu$-displays can be seen as a generalization of that of prismatic Dieudonn\'e crystals.
Before discussing prismatic $G$-$\mu$-displays,
let us state our main result for prismatic Dieudonn\'e crystals.

Let $k$ be a perfect field of characteristic $p >0$
and let
$W(k)$
be the ring of $p$-typical Witt vectors of $k$.
Let $R$ be a complete regular local ring over $W(k)$ with residue field $k$.
There exists a pair
\[
(A, I)=(W(k)[[t_1, \dotsc, t_n]], (\mathcal{E}))
\]
with an isomorphism
$R \simeq A/I$
over $W(k)$ where $\mathcal{E} \in W(k)[[t_1, \dotsc, t_n]]$ is a formal power series whose constant term is $p$.
Here
$A$ admits a Frobenius endomorphism $\phi \colon A \to A$ such that it acts on $W(k)$ as the usual Frobenius and sends $t_i$ to $t^p_i$ for each $i$.
The pair $(A, I)$ is a typical example of a prism.
Let
$(R)_{\Prism}$
be
the absolute prismatic site of $R$
(where $R$ is equipped with the $p$-adic topology).
We regard $(A, I)$ as an object of $(R)_{\Prism}$.
We will prove (and generalize) the following result.

\begin{thm}[Proposition \ref{Proposition:evaluation prismatic dieudonne crystal equivalence}]\label{Theorem:evaluation prismatic dieudonne crystal equivalence intro}
The category of prismatic Dieudonn\'e crystals on $(R)_{\Prism}$
is equivalent to the category of minuscule Breuil--Kisin modules over $(A, I)$.
\end{thm}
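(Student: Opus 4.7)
The plan is to exhibit evaluation at the prism $(A, I)$ as the desired equivalence. First, one verifies that the evaluation is well-defined: a prismatic Dieudonn\'e crystal $\mathcal{M}$ on $(R)_\Prism$ has a value $\mathcal{M}(A, I)$ which is a finite projective $A$-module equipped with a $\phi$-linear endomorphism, and the height-one Dieudonn\'e hypothesis translates precisely into the minuscule condition on this Breuil--Kisin module over $(A, I)$. This produces the candidate functor, and exactness and linearity of evaluation ensure that morphisms of crystals go to morphisms of Breuil--Kisin modules.

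For the equivalence, the key is a descent argument showing that $(A, I)$ provides a cover of the absolute prismatic site $(R)_\Prism$ in a suitable sense. Concretely, I would show that every prism $(B, J) \in (R)_\Prism$ admits, after a $(p, J)$-completely faithfully flat cover, a map from $(A, I)$. This uses that $A = W(k)[[t_1, \dotsc, t_n]]$ is formally smooth over $W(k)$: a map $R \to B'/J'$ lifts to a map $A \to B'$, and after a further cover one can arrange that the image of $\mathcal{E}$ lies in $J'$ (distinguishedness of the two generators makes such an adjustment possible up to a unit). Granting this, $(p, J)$-completely faithfully flat descent identifies prismatic Dieudonn\'e crystals with finite projective $A$-modules carrying a minuscule $\phi$-linear structure, together with descent data along the \v{C}ech nerve of $(A, I)$ in $(R)_\Prism$.

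The main obstacle is to show that this descent data is automatic once the minuscule Frobenius structure is imposed. The approach is to exploit the rigidity of Frobenius on the self-products of $(A, I)$ in $(R)_\Prism$: for the two projections $p_1, p_2$ to a self-product prism, the pullbacks $p_1^* M$ and $p_2^* M$ of a minuscule Breuil--Kisin module admit a canonical $\phi$-equivariant comparison isomorphism, forced by the contracting nature of $\phi$ on the kernel of the diagonal (a prismatic analogue of the classical rigidity of lifts for Dieudonn\'e modules). The cocycle condition on triple self-products follows by the same uniqueness. This step is technically the most demanding part of the argument and is essentially subsumed by the paper's general descent theory for prismatic $G$-$\mu$-displays, specialized to $G = \GL_n$ with a minuscule cocharacter. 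Combining the evaluation functor, the descent reduction, and Frobenius rigidity then yields the claimed equivalence.
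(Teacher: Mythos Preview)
Your outline matches the paper's architecture (evaluation, a covering lemma for $(A,I)$, reduction to descent data on the \v{C}ech nerve, and Frobenius rigidity to trivialize that data), but two steps need more than you indicate.

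For the covering step, a formal-smoothness lift $A \to B'$ of $R \to B'/J'$ is only a ring homomorphism, not a $\delta$-ring homomorphism, so it does not produce a morphism of prisms. The paper instead tensors with $A$ over $W(k)$ (which is a $\delta$-ring map for free), takes a prismatic envelope to force the $t_i$ to land in the correct ideal, and still has to pass through the perfection of $A$ to make the resulting map a morphism in $(R)_\Prism$; see Proposition~\ref{Proposition:weakely initial object}. More seriously, your ``contracting nature of $\phi$ on the kernel of the diagonal'' is exactly the Ansch\"utz--Le~Bras mechanism: one has $\phi(K)\subset dK$ for $K=\ker(A^{(2)}\to A)$, and rigidity would follow if $x\mapsto\phi(x)/d$ were topologically nilpotent on $K$. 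As the paper points out in Remark~\ref{Remark:compare with Anschutz-Le Bras}, this nilpotence may fail once $n\geq 2$ or $p=0$ in $R$. The substitute is a refined pair of inclusions (Lemma~\ref{Lemma:kernel of two fold coproduct refined}) involving an auxiliary finitely generated ideal $M\subset K$ and the maximal ideal $(t_1,\dots,t_n)$, which combined with the noetherian completeness of $A$ drives the fixed-point argument in Proposition~\ref{Proposition:deformation of isomorphism}. So your deferral of this step to the paper's general theory is essential rather than cosmetic: this is where the higher-dimensional case requires a genuinely new idea. One further detail you do not mention: Proposition~\ref{Proposition:deformation of isomorphism} gives rigidity only for \emph{isomorphisms} of $G$-$\mu$-displays, whereas Dieudonn\'e crystals form a category; the paper bridges this in the proof of Proposition~\ref{Proposition:evaluation prismatic dieudonne crystal equivalence} by applying the isomorphism statement to $M_1\oplus M_2$ and extracting the Hom-statement via a triangular-matrix trick.
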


In \cite[Theorem 5.12]{Anschutz-LeBras},
Ansch\"utz--Le Bras proved Theorem \ref{Theorem:evaluation prismatic dieudonne crystal equivalence intro} when the dimension of $R$ is $ \leq 1$ (or equivalently $n \leq 1)$, and stated that their result should be generalized to $R$ of arbitrary dimension.

\begin{rem}\label{Remark:BK module intro}
A \textit{minuscule Breuil--Kisin module} over $(A, I)$
is a free $A$-module $M$ of finite rank equipped with an $A$-linear homomorphism
\[
F_M \colon \phi^*M:=A \otimes_{\phi, A} M \to M
\]
whose cokernel is killed by $I$.
For a prismatic Dieudonn\'e crystal
$\mathcal{M}$
on $(R)_\Prism$,
the value
$\mathcal{M}(A, I)$
at $(A, I) \in (R)_\Prism$
is by definition a minuscule Breuil--Kisin module over $(A, I)$, and the construction $\mathcal{M} \mapsto \mathcal{M}(A, I)$ induces an equivalence between the two categories in Theorem \ref{Theorem:evaluation prismatic dieudonne crystal equivalence intro}.
\end{rem}

Ansch\"utz--Le Bras in \cite[Theorem 4.74]{Anschutz-LeBras} showed that the category of prismatic Dieudonn\'e crystals on $(R)_{\Prism}$
is equivalent to the category of $p$-divisible groups over $R$.
In fact,
such an equivalence is obtained not only for $R$ but also for any quasisyntomic ring (in the sense of \cite[Definition 4.10]{BMS2}), where we need to replace prismatic Dieudonn\'e crystals
by \textit{admissible} prismatic Dieudonn\'e crystals (\cite[Definition 4.5]{Anschutz-LeBras}).

Although (admissible) prismatic Dieudonn\'e crystals are theoretically important,
it is difficult to describe them explicitly in general.
Theorem \ref{Theorem:evaluation prismatic dieudonne crystal equivalence intro}
provides a practical way to study prismatic Dieudonn\'e crystals on $(R)_\Prism$.
For example, this shows that giving a prismatic Dieudonn\'e crystal on $(R)_\Prism$ is equivalent to giving a minuscule Breuil--Kisin module over $(A, I)$.
The latter can be carried out in a much simpler way than the former.

\subsection{Prismatic $G$-$\mu$-displays}\label{Subsection:Prismatic G-mu-displays Introduction}

Let $G$ be a smooth affine group scheme over $\Z_p$ and
$
\mu \colon \G_m \to G_{W(k)}
$
a cocharacter defined over $W(k)$, where $G_{W(k)}:=G \times_{\Spec \Z_p} \Spec W(k)$.
We will generalize Theorem \ref{Theorem:evaluation prismatic dieudonne crystal equivalence intro} to prismatic $G$-$\mu$-displays, or equivalently $G$-Breuil--Kisin modules of type $\mu$, as explained below.

Let $(A, I)$ be a bounded prism in the sense of \cite{BS}, such that $A$ is a $W(k)$-algebra.
A $G$-Breuil--Kisin module over $(A, I)$ is a $G$-torsor $\mathcal{P}$ over $\Spec A$
with 
an isomorphism
\[
F_\mathcal{P} \colon (\phi^*\mathcal{P})[1/I] \overset{\sim}{\to} \mathcal{P}[1/I]
\]
of $G$-torsors over $\Spec A[1/I]$, where $\phi^*\mathcal{P}$ is
the base change of $\mathcal{P}$ along the Frobenius $\phi \colon A \to A$.
We say that
$\mathcal{P}$
is \textit{of type} $\mu$
if, $(p, I)$-completely \'etale locally on $A$,
there exists some trivialization
$\mathcal{P} \simeq G_A$ under which the isomorphism $F_\mathcal{P}$ is given by $g \mapsto Xg$ for an element $X$ in the double coset
\[
G(A)\mu(d)G(A) \subset G(A[1/I]),
\]
where $d \in I$ is a generator.
The notion of $G$-Breuil--Kisin modules of type $\mu$ is important in the study of integral models of (local) Shimura varieties; see Section \ref{Subsection:motivations Introduction}.

In this paper, we will study $G$-Breuil--Kisin modules of type $\mu$ via the theory of higher frames and $G$-$\mu$-displays developed by Lau \cite{Lau21}.
More precisely, we introduce and study the groupoid
\[
G\mathchar`-\mathrm{Disp}_\mu(A, I)
\]
of
\textit{$G$-$\mu$-displays over $(A, I)$}.
It turns out that
$G\mathchar`-\mathrm{Disp}_\mu(A, I)$
is equivalent to the groupoid of
$G$-Breuil--Kisin modules of type $\mu$ over $(A, I)$ (Proposition \ref{Proposition:G-displays and G-BK modules}).
For a $p$-adically complete ring $R$,
the groupoid of \textit{prismatic $G$-$\mu$-displays over $R$} is defined to be
\[
G\mathchar`-\mathrm{Disp}_\mu((R)_{\Prism}):= {2-\varprojlim}_{(A, I) \in (R)_{\Prism}} G\mathchar`-\mathrm{Disp}_\mu(A, I).
\]

The main result of this paper is as follows.
Let $R$ be a complete regular local ring over $W(k)$ with residue field $k$.
As in Section \ref{Subsection:Prismatic Dieudonn\'e crystals Intro},
there exists a prism
$
(W(k)[[t_1, \dotsc, t_n]], (\mathcal{E}))
$
with an isomorphism
$R \simeq W(k)[[t_1, \dotsc, t_n]]/\mathcal{E}$
over $W(k)$.

\begin{thm}[Theorem \ref{Theorem:main result on G displays over complete regular local rings}]\label{Theorem:main result on G displays over complete regular local rings Introduction}
Assume that the cocharacter $\mu$ is 1-bounded.
Then the following natural functor is an equivalence:
\[
 G\mathchar`-\mathrm{Disp}_\mu((R)_{\Prism}) \overset{\sim}{\to} G\mathchar`-\mathrm{Disp}_\mu(W(k)[[t_1, \dotsc, t_n]], (\mathcal{E})).
\]
\end{thm}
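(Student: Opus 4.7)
The plan is to interpret the natural functor as evaluation at $(A,I):=(W(k)[[t_1,\dots,t_n]],(\mathcal{E}))$ and to prove it is an equivalence by combining a ``weak-finality'' statement for $(A,I)$ in $(R)_\Prism$ with flat descent for $G$-$\mu$-displays, which, by the general theory of higher frames recalled earlier in the paper, is available for the $(p,I)$-completely faithfully flat topology on bounded prisms precisely when $\mu$ is $1$-bounded.

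The first step is to establish the following covering property of $(A,I)$: for every prism $(B,J)\in (R)_\Prism$, there exists a $(p,J)$-completely faithfully flat cover $(B,J)\to (B',J')$ in $(R)_\Prism$ together with a morphism of prisms $(A,I)\to (B',J')$ over $R$. The key input is that $A=W(k)[[t_1,\dots,t_n]]$ is $(p,t_1,\dots,t_n)$-adically formally smooth over $W(k)$, so that the composite $A\to R\to B'/J'$ lifts to a $W(k)$-algebra map $A\to B'$ after replacing $B$ by an appropriate prismatic envelope absorbing the obstructions to lifting the Frobenius and to sending $\mathcal{E}$ into $J'$. The rigidity of morphisms of bounded prisms then upgrades this $W(k)$-algebra lift to a morphism of prisms once its reduction modulo $(I,J')$ agrees with the given map $A/I=R\to B'/J'$.

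Given this covering property, the proof reduces to a standard descent argument. Writing $(A,I)^{(n+1)}$ for the $(n+1)$-fold self-coproduct of $(A,I)$ in the category of bounded prisms over $R$ (computed as the prismatic envelope of the $(p,I)$-completed tensor product), one identifies $G\mathchar`-\mathrm{Disp}_\mu((R)_\Prism)$ with the totalization of the cosimplicial groupoid $[n]\mapsto G\mathchar`-\mathrm{Disp}_\mu((A,I)^{(n+1)})$ coming from the Čech nerve of the cover produced in Step~1. Applying flat descent for $G$-$\mu$-displays with $1$-bounded $\mu$ to this cosimplicial diagram, the totalization collapses to the first term $G\mathchar`-\mathrm{Disp}_\mu(A,I)$; essential surjectivity and full faithfulness of the evaluation functor both follow.

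The main obstacle is the covering property in Step~1. For $n\leq 1$ the analogous statement is essentially the content of \cite[Theorem~5.12]{Anschutz-LeBras}; extending it to arbitrary $n$ requires either an inductive construction over the variables $t_1,\dots,t_n$ or an appeal to prismatic envelopes attached to the regular sequence $(t_1,\dots,t_n)$, and one must verify that the constructed lift $A\to B'$ can be taken compatible with the Frobenii and the distinguished ideals. The descent step is more uniform but depends crucially on the $1$-boundedness of $\mu$, which is precisely what upgrades $G$-$\mu$-displays from a presheaf to a stack for the $(p,I)$-completely faithfully flat topology on bounded prisms.
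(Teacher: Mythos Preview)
Your outline correctly identifies two ingredients---the covering property of $(A,I)$ in $(R)_\Prism$ and a descent argument---but there is a genuine gap in Step~2, and the role of the $1$-bounded hypothesis is misplaced.

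First, flat descent for $G$-$\mu$-displays (Proposition~\ref{Proposition:flat descent of G display}) holds for \emph{any} cocharacter $\mu$, with no $1$-boundedness needed; it is not the place where that hypothesis enters. Second, and more seriously, the \v{C}ech nerve $[m]\mapsto G\text{-}\mathrm{Disp}_\mu((A,I)^{(m+1)})$ is not the nerve of a faithfully flat map of prisms in $(A,I)_\Prism$: the self-coproducts $(A,I)^{(m+1)}$ are coproducts in $(R)_\Prism$, not in a slice category under some prism covered by $(A,I)$. So ``flat descent'' gives you nothing here---there is no base prism over which $(A,I)$ is a cover. What you actually need to prove is that for every $G$-$\mu$-display $\mathcal{Q}$ over $(A,I)$ there is a \emph{unique} isomorphism $\epsilon\colon p_1^*\mathcal{Q}\overset{\sim}{\to} p_2^*\mathcal{Q}$ satisfying the cocycle condition on $(A,I)^{(3)}$. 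This is not formal; it is the heart of the argument (Proposition~\ref{Proposition:deformation of isomorphism}).

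This is precisely where $1$-boundedness enters, and it requires genuinely new input beyond the covering property: one must analyze the kernel $K$ of $m\colon A^{(2)}\to A$ and show $\phi(K)\subset dK$ together with a refined filtration statement (Lemmas~\ref{Lemma:kernel of two fold coproduct I} and~\ref{Lemma:kernel of two fold coproduct refined}), then use the $1$-bounded decomposition $G_\mu(A',I')\simeq I(\mathrm{Lie}(U^-_\mu)\otimes A')\times P_\mu(A')$ (Proposition~\ref{Proposition:BB isomorphism}) to run a contraction-mapping argument showing a certain twisted Frobenius operator on $G(dK)$ is bijective (Proposition~\ref{Proposition:bijectivity of VdX for G(dK)}). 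You have inverted the difficulty: Step~1 (Proposition~\ref{Proposition:weakely initial object}) works for all $n$ by a direct prismatic-envelope construction, while Step~2 is where the real work---and the $1$-bounded hypothesis---lies.
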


We refer to Definition \ref{Definition:1-bounded} for the definition of 1-bounded cocharacters.
If $G$ is reductive, then $\mu$ is 1-bounded if and only if $\mu$ is minuscule.
A minuscule Breuil--Kisin module of rank $N$ over $(W(k)[[t_1, \dotsc, t_n]], (\mathcal{E}))$ can be regarded as
a $\GL_N$-Breuil--Kisin module of type $\mu$ over $(W(k)[[t_1, \dotsc, t_n]], (\mathcal{E}))$
for a minuscule cocharacter $\mu$.
Theorem \ref{Theorem:evaluation prismatic dieudonne crystal equivalence intro} is a special case of Theorem \ref{Theorem:main result on G displays over complete regular local rings Introduction}; see Section \ref{Section:p-divisible groups and prismatic Dieudonn\'e crystals} for details.

We make a few comments on the proof of Theorem \ref{Theorem:main result on G displays over complete regular local rings Introduction}.
To simplify the notation, we set $(A, I):=(W(k)[[t_1, \dotsc, t_n]], (\mathcal{E}))$.
As in the proof of \cite[Theorem 5.12]{Anschutz-LeBras},
the key part of the proof is to show that
every $G$-$\mu$-display
$\mathcal{Q}$
over $(A, I)$
admits a unique descent datum.
More precisely, let $(A^{(2)}, I^{(2)})$ be the coproduct of two copies of $(A, I)$ in $(R)_{\Prism}$ and let
$p_1, p_2 \colon (A, I) \to (A^{(2)}, I^{(2)})$ be the associated morphisms.
Then we will prove that
there exists a unique isomorphism
\[
\epsilon \colon p^*_1\mathcal{Q} \overset{\sim}{\to} p^*_2\mathcal{Q}
\]
of $G$-$\mu$-displays over
$(A^{(2)}, I^{(2)})$
satisfying the usual cocycle condition over
the coproduct $(A^{(3)}, I^{(3)})$ of three copies of $(A, I)$.
In the case where $G=\GL_N$,
the proof of this claim goes along the same line as that of \cite{Anschutz-LeBras}, but it requires some additional arguments when $n \geq 2$.
For general $G$, we will use some techniques from the proof of \cite[Proposition 7.1.5]{Lau21}.

We also give some basic definitions and results on prismatic $G$-$\mu$-displays.
In particular,
we establish several descent results for prismatic $G$-$\mu$-displays, such as flat descent (Proposition \ref{Proposition:flat descent of G display}) and $p$-complete $\arc$-descent
(Corollary \ref{Corollary:arc descent for G-displays}).
We also introduce the \textit{Hodge filtration} and the \textit{underlying $G$-$\phi$-module} of a prismatic $G$-$\mu$-display.
These notions will be needed in
the Grothendieck--Messing deformation theory for prismatic $G$-$\mu$-displays studied in \cite{Ito-K23-b}.

\begin{rem}\label{Remark:OE prisms intro}
In fact, Theorem \ref{Theorem:main result on G displays over complete regular local rings Introduction} will be formulated and proved for a smooth affine group scheme $G$ over the ring of integers $\O_E$ of any finite extension $E$ of $\Q_p$.
For this, we will use $\O_E$-analogues of prisms, called \textit{$\O_E$-prisms}.
This notion was already introduced in the work of Marks \cite{Marks} (in which these objects are called $E$-typical prisms).
Section \ref{Section:Preliminaries on prisms} is devoted to discuss
analogous results to those of \cite[Section 2 and Section 3]{BS} for $\O_E$-prisms.
We will define $G$-$\mu$-displays for bounded $\O_E$-prisms in the same way, and prove the above results for them.
As explained in Remark \ref{Remark:O_E prism motivation} below, it will be convenient to establish our results in this generality,
but the reader (who is familiar with the theory of prisms) may assume that $\O_E=\Z_p$ and skip Section \ref{Section:Preliminaries on prisms} on a first reading.
The arguments for general $\O_E$ are the same as for the case where $\O_E=\Z_p$.
\end{rem}

\begin{rem}\label{Remark:G-mu-displays are techinically better}
    $G$-Breuil--Kisin modules of type $\mu$ may be more familiar to readers than prismatic $G$-$\mu$-displays.
    However, in order to prove Theorem \ref{Theorem:evaluation prismatic dieudonne crystal equivalence intro}, Theorem \ref{Theorem:main result on G displays over complete regular local rings Introduction}, and other descent results (e.g.\ Corollary \ref{Corollary:G-BK module etale banal} and Corollary \ref{Corollary:arc descent for G-displays}),
    it is essential to work with prismatic $G$-$\mu$-displays.
\end{rem}

\begin{rem}\label{Remark:relation to crystalline representations}
    We briefly discuss how our results are related to the theory of $G$-objects in crystalline $\Z_p$-local systems.
    Here we follow the terminology of \cite{IKYtannakian}.
    Let
    $R=\O_K$
    be the ring of integers of a finite totally ramified extension $K$ of $W(k)[1/p]$.
    In \cite{BS2}, Bhatt--Scholze proved that the category of prismatic $F$-crystals on $(\O_K)_\Prism$ is equivalent to the category
    $\mathrm{Loc}^{\mathrm{crys}}_{\Z_p}(K)$
    of
    free $\Z_p$-modules $T$ of finite rank with a continuous $\Gal(\overline{K}/K)$-action such that $T[1/p]$ is crystalline.
    (Here $\Gal(\overline{K}/K)$ is the absolute Galois group of $K$.)
    Using this result, together with \cite{IKYtannakian}, one can prove that there is an equivalence of groupoids
    \begin{equation}\label{equation:equivalence with crystalline local systems}
        G\mathchar`-\mathrm{Disp}_\mu((\O_K)_{\Prism}) \overset{\sim}{\to} G\mathchar`-{\bf{\mathrm{Loc}}}^{\mathrm{crys}}_{\Z_p, \bf{\mu}}(K)
    \end{equation}
    if $G$ is reductive,
    where $G\mathchar`-{\bf{\mathrm{Loc}}}^{\mathrm{crys}}_{\Z_p, \bf{\mu}}(K)$
    is the groupoid of $G$-objects in $\mathrm{Loc}^{\mathrm{crys}}_{\Z_p}(K)$ having cocharacter $\mu$ in the sense of \cite{IKYtannakian}.
    More specifically, this follows from \cite[Theorem 2, Proposition 3.13]{IKYtannakian} and \cite[Proposition 5.1.16]{Ito-K23-b}.
    Let $\mathcal{E} \in W(k)[[t]]$ be the Eisenstein polynomial of a uniformizer $\varpi \in \O_K$.
    Then
    $(\ref{equation:equivalence with crystalline local systems})$
    and
    Theorem \ref{Theorem:main result on G displays over complete regular local rings Introduction} give an equivalence
    \[
    G\mathchar`-\mathrm{Disp}_\mu(W(k)[[t]], (\mathcal{E})) \overset{\sim}{\to} G\mathchar`-{\bf{\mathrm{Loc}}}^{\mathrm{crys}}_{\Z_p, \bf{\mu}}(K).
    \]
    A similar result was previously obtained by Levin \cite[Corollary 4.3.8]{Levin} by a completely different method.
    (The result of Bhatt--Scholze in \cite{BS2} was generalized to higher dimensional smooth $p$-adic formal schemes over $\O_K$; see \cite{DLMS} and \cite{GuoReinecke}.
    However, since a higher dimensional complete regular local ring $R$ as in Theorem \ref{Theorem:main result on G displays over complete regular local rings Introduction}
    is in general not topologically of finite type over $W(k)$ with respect to the \textit{$p$-adic} topology, we can not directly apply those results to obtain an analogue of $(\ref{equation:equivalence with crystalline local systems})$ for $R$.
    We do not pursue this issue here.)
\end{rem}

    We will also discuss (in the case where $\O_E=\Z_p$) the relation between prismatic $G$-$\mu$-displays and prismatic $F$-gauges in vector bundles introduced by Drinfeld and Bhatt--Lurie (cf.\ \cite{Drinfeld22}, \cite{BL}, \cite{BL2}, \cite{BhattGauge}).
    In particular, for a quasisyntomic ring $S$ over $W(k)$, we introduce\footnote{After this work was completed, Gardner--Madapusi--Mathew \cite{GMM} announced that they defined
(certain objects which are essentially equivalent to)
prismatic $G$-$F$-gauges of type $\mu$ for more general $p$-adically complete rings, using the stacky approach of Drinfeld and Bhatt--Lurie.
See also \cite{IKY} for the relation between our
prismatic $G$-$F$-gauges of type $\mu$ and those defined in \cite{GMM}.} the groupoid
    \[
    G\mathchar`-F\mathchar`-\mathrm{Gauge}_\mu(S)
    \]
    of \textit{prismatic $G$-$F$-gauges of type $\mu$ over $S$} and construct a fully faithful functor
    \[
    G\mathchar`-F\mathchar`-\mathrm{Gauge}_\mu(S) \to G\mathchar`-\mathrm{Disp}_\mu((S)_{\Prism}).
    \]
    See Section \ref{Section:Comparison with prismatic $F$-gauges} for details.
    This functor can be thought of as a generalization of
    the fully faithful functor from the category of
    admissible prismatic Dieudonn\'e crystals on $(S)_{\Prism}$ to the category of prismatic Dieudonn\'e crystals on $(S)_{\Prism}$ (cf.\ Example \ref{Example:F-gauge weight [0, 1]}).
    If $S$ is a complete regular local ring over $W(k)$ with residue field $k$, then the above functor is an equivalence (Corollary \ref{Corollary:functor from G-gauge to G-display perfectoid and regular}).
    Thus, we can rephrase Theorem \ref{Theorem:main result on G displays over complete regular local rings Introduction} as follows:

\begin{cor}[Theorem \ref{Theorem:main result on G displays over complete regular local rings}, Corollary \ref{Corollary:functor from G-gauge to G-display perfectoid and regular}]\label{Corollary:G-gauge and G-displays regular case intro}
    Let the notation be as in Theorem \ref{Theorem:main result on G displays over complete regular local rings Introduction}.
    Assume that $\mu$ is 1-bounded.
    Then we have a natural equivalence
    \[
    G\mathchar`-F\mathchar`-\mathrm{Gauge}_\mu(R) \overset{\sim}{\to} G\mathchar`-\mathrm{Disp}_\mu(W(k)[[t_1, \dotsc, t_n]], (\mathcal{E})).
    \]
\end{cor}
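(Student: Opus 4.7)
The plan is to obtain the claimed equivalence by simply composing the two equivalences that have already been set up earlier in the paper. First, by Corollary \ref{Corollary:functor from G-gauge to G-display perfectoid and regular}, applied to the complete regular local ring $R$ over $W(k)$ with residue field $k$, the natural (a priori only fully faithful) functor
\[
G\mathchar`-F\mathchar`-\mathrm{Gauge}_\mu(R) \to G\mathchar`-\mathrm{Disp}_\mu((R)_{\Prism})
\]
is an equivalence. Second, since $\mu$ is assumed to be 1-bounded, Theorem \ref{Theorem:main result on G displays over complete regular local rings Introduction} provides a natural equivalence
\[
G\mathchar`-\mathrm{Disp}_\mu((R)_{\Prism}) \overset{\sim}{\to} G\mathchar`-\mathrm{Disp}_\mu(W(k)[[t_1, \dotsc, t_n]], (\mathcal{E})),
\]
given by evaluation at the Breuil--Kisin prism $(W(k)[[t_1, \dotsc, t_n]], (\mathcal{E})) \in (R)_{\Prism}$.

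The composition of these two equivalences is exactly the functor appearing in the statement, so the corollary follows at once. The only thing to check is that the two cited results apply under the stated hypotheses, and both hypotheses -- that $R$ is a complete regular local ring over $W(k)$ with residue field $k$, and that $\mu$ is 1-bounded -- are built into the assumptions of the corollary.

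The real mathematical work is hidden in the two inputs, not in the present deduction. The more delicate of the two is the descent statement behind Theorem \ref{Theorem:main result on G displays over complete regular local rings Introduction}: one must show that any $G$-$\mu$-display over $(A, I) = (W(k)[[t_1, \dotsc, t_n]], (\mathcal{E}))$ carries a unique descent datum with respect to the two projections to the coproduct $(A^{(2)}, I^{(2)})$ in $(R)_{\Prism}$, satisfying the cocycle condition on the triple coproduct $(A^{(3)}, I^{(3)})$. This is the main obstacle, and it is what forces the proof to go beyond the $n \leq 1$ case treated by Anschütz--Le Bras and to incorporate Lau's higher-frame techniques; here, however, we may invoke the theorem as a black box.
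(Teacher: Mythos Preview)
Your proposal is correct and follows exactly the paper's approach: the corollary is stated with the parenthetical citation to Theorem \ref{Theorem:main result on G displays over complete regular local rings} and Corollary \ref{Corollary:functor from G-gauge to G-display perfectoid and regular}, and the intended proof is precisely the composition of those two equivalences that you describe. Your additional remarks about where the real work lies are accurate commentary but not required for the deduction.
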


\subsection{Motivation}\label{Subsection:motivations Introduction}

The primary motivation behind this work is to understand some classification results on $p$-divisible groups and the local structure of integral local Shimura varieties defined in \cite{Scholze-Weinstein}, by using the theory of prisms.
In the following, we explain this in more detail with a brief review of previous studies.

We first explain the motivation for $G=\GL_N$.
Let
$\O_K$
be the ring of integers of a finite totally ramified extension $K$ of $W(k)[1/p]$.
Let $\mathcal{E} \in W(k)[[t]]$ be the Eisenstein polynomial of a uniformizer $\varpi \in \O_K$.

\begin{rem}
In \cite[Theorem 5.12]{Anschutz-LeBras},
Ansch\"utz--Le Bras
obtained the following equivalence of categories
\begin{equation}\label{equation:Breuil--Kisin classification}
    \left.
\left\{
\begin{tabular}{c}
     $p$-divisible groups \\ over $\O_K$
\end{tabular}
\right\}
\right.
    \overset{\sim}{\to}
    \left.
\left\{
\begin{tabular}{c}
     minuscule Breuil--Kisin modules \\ over $(W(k)[[t]], (\mathcal{E}))$
\end{tabular}
\right\}
\right.
\end{equation}
by combining the classification theorem \cite[Theorem 4.74]{Anschutz-LeBras} with Theorem \ref{Theorem:evaluation prismatic dieudonne crystal equivalence intro} for $R=\O_K$.
This result was conjectured by Breuil (\cite{Breuil}), proved by Kisin (\cite{Kisin06}, \cite{Kisin09}) when $p \geq 3$, and proved in \cite{Kim12}, \cite{LiuTong}, and \cite{Lau14} for all $p>0$.
\end{rem}

We consider the pair $(W(k)[[t]]/t^n, (\mathcal{E}))$, which is naturally a bounded prism for every $n \geq 1$.
In \cite{Lau14},
Lau obtained the following equivalence of categories
\begin{equation}\label{equation:Breuil--Kisin classification torsion case}
    \left.
\left\{
\begin{tabular}{c}
     $p$-divisible groups \\ over $\O_K/\varpi^n$
\end{tabular}
\right\}
\right.
    \overset{\sim}{\to}
    \left.
\left\{
\begin{tabular}{c}
     minuscule Breuil--Kisin modules \\ over $(W(k)[[t]]/t^n, (\mathcal{E}))$
\end{tabular}
\right\}
\right.
\end{equation}
by a deformation-theoretic argument, and then proved $(\ref{equation:Breuil--Kisin classification})$
by taking the limit over $n$; see \cite[Corollary 5.4, Theorem 6.6]{Lau14}.
His proof uses the theory of \textit{displays}, which is initiated by Zink and developed by many authors including Zink and Lau (cf.\ \cite{Zink01}, \cite{Zink02}, \cite{Lau08}, \cite{Lau14}).
This classification result over $\O_K/\varpi^n$ is important in its own right.
For example, this is a key ingredient in the construction of integral canonical models of Shimura varieties of abelian type with hyperspecial level structure in characteristic $p=2$; see \cite{Kim-MadapusiPera} for details.

Contrary to Lau's approach,
it is not clear whether the results in \cite{Anschutz-LeBras} imply $(\ref{equation:Breuil--Kisin classification torsion case})$ since the Grothendieck--Messing deformation theory does not apply directly to prismatic Dieudonn\'e crystals.
This point is discussed in \cite{Ito-K23-b},
where we develop the deformation theory for
prismatic Dieudonn\'e crystals, or more generally for prismatic $G$-$\mu$-displays when $\mu$ is 1-bounded.
In \textit{loc.cit.},
we construct universal deformations of prismatic $G$-$\mu$-displays over $k$ as certain prismatic $G$-$\mu$-displays over complete regular local rings of higher dimension, where Theorem \ref{Theorem:main result on G displays over complete regular local rings Introduction} plays a crucial role.
As a result, we can give an alternative proof of the equivalences $(\ref{equation:Breuil--Kisin classification})$ and $(\ref{equation:Breuil--Kisin classification torsion case})$.

\begin{rem}\label{Remark:p-divisible group Scholze-Weinstein}
In the proof of \cite[Theorem 4.74]{Anschutz-LeBras} (and hence in the proof of the equivalence $(\ref{equation:Breuil--Kisin classification})$ given in \cite{Anschutz-LeBras}),
they use a result of Scholze--Weinstein \cite[Theorem B]{ScholzeWeinsteinModuli},
which says that for an algebraically closed complete extension $C$ of $\Q_p$,
the category of $p$-divisible groups over $\O_C$
is equivalent to the category of free $\Z_p$-modules $T$ of finite rank together with a $C$-subspace of $T \otimes_{\Z_p} C$.
In \cite{Ito-K23-b}, we also give an alternative proof of this result.
\end{rem}

We now explain our motivation for general $G$.
The notion of $G$-$\mu$-displays (``displays with $G$-$\mu$-structures'') were first introduced by B\"ultel \cite{Bultel} and B\"ultel--Pappas \cite{Bultel-Pappas} to study Rapoport--Zink spaces and integral models of Shimura varieties
(where $G$ is reductive and $\mu$ is minuscule).
The theory of $G$-$\mu$-displays has been developed in various settings; see for example \cite{LangerZink}, \cite{Pappas23}, \cite{Lau21}, \cite{Daniels}, and \cite{Bartling}.
In fact,
the notion of $G$-$\mu$-displays over \textit{perfect} prisms is already used in the paper of Bartling \cite{Bartling}.
Bartling used $G$-$\mu$-displays over perfect prisms
to prove
the local representability 
and
the formal smoothness
of integral local Shimura varieties with hyperspecial level structure, under a certain nilpotence assumption (introduced in \cite[Definition 3.4.2]{Bultel-Pappas}).
In \cite{Ito-K23-b}, we prove the same assertion without any nilpotence assumptions, by using the universal deformations of prismatic $G$-$\mu$-displays over $k$.

\begin{rem}\label{Remark:O_E prism motivation}
In \cite{Ito-K23-b},
we establish the above results not only when $G$ is defined over $\Z_p$ but also when $G$ is defined over $\O_E$ where $E$ is any finite extension of $\Q_p$.
For this,
it will be convenient to work with $\O_E$-prisms.
\end{rem}

The theory of $G$-$\mu$-displays also has applications to K3 surfaces and related varieties; see \cite{LangerZink}, \cite{Lau21}, and \cite{inoue}.
In a future work, we plan to employ prismatic $G$-$\mu$-displays to investigate the deformation theory for these varieties.

\subsection{Outline of this paper}\label{Subsection:Outline of this paper}

This paper is organized as follows.
In Section \ref{Section:Preliminaries on prisms}, we collect some basic definitions and facts about $\O_E$-prisms.
In Section \ref{Section:Displayed Breuil--Kisin module},
we discuss the notion of displayed Breuil--Kisin modules (of type $\mu$), which will serve as examples of prismatic $G$-$\mu$-displays.
In Section \ref{Section:display group},
we introduce and study the \textit{display group} $G_\mu(A, I)$, which is used in the definition of prismatic $G$-$\mu$-displays.
The structural results about $G_\mu(A, I)$ obtained there play crucial roles in the study of prismatic $G$-$\mu$-displays.

Section \ref{Section:prismatic G display} and Section \ref{Section:G-displays over complete regular local rings} are the main parts of this paper.
In Section \ref{Section:prismatic G display}, we introduce prismatic $G$-$\mu$-displays, give some basic definitions (e.g.\ Hodge filtrations and underlying $G$-$\phi$-modules), and establish several descent results.
In Section \ref{Section:G-displays over complete regular local rings}, we prove our main result (Theorem \ref{Theorem:main result on G displays over complete regular local rings Introduction}).

In Section \ref{Section:p-divisible groups and prismatic Dieudonn\'e crystals}, we make a few remarks on prismatic Dieudonn\'e crystals, and prove Theorem \ref{Theorem:evaluation prismatic dieudonne crystal equivalence intro}.
Finally, in Section \ref{Section:Comparison with prismatic $F$-gauges}, we provide a comparison between prismatic $G$-$\mu$-displays and prismatic $F$-gauges.
In particular, we introduce the notion of prismatic $G$-$F$-gauges of type $\mu$ for quasisyntomic rings over $W(k)$.

\subsection*{Notation} \label{Subsection:Notation}

In this paper, all rings are commutative and unital.
For a module $M$ over a ring $R$ and a ring homomorphism
$f \colon R \to R'$,
the tensor product $M\otimes_{R}R'$ is denoted by $M_{R'}$ or $f^*M$.
For a scheme $X$ over $R$,
the base change $X \times_{\Spec R} \Spec R'$ is denoted by
$X_{R'}$ or $f^*X$.
We use similar notation for the base change of group schemes,
$p$-divisible groups, etc.
Moreover, all actions of groups will be right actions, unless otherwise stated.

\section{Preliminaries on $\O_E$-prisms} \label{Section:Preliminaries on prisms}

Throughout this paper, we fix a prime number $p$.
Let $E$ be a finite extension of $\Q_p$ with ring of integers $\O_E$ and residue field $\F_q$.
Here $\F_q$ is a finite field with $q$ elements.
We fix a uniformizer $\pi \in \O_E$.

In this section, we study an ``$\O_E$-analogue'' of the notion of prisms.
Such objects are called $\O_E$-prisms in this paper.
This notion was also introduced
in the work of Marks \cite{Marks} (in which $\O_E$-prisms are called \textit{$E$-typical prisms}).
We discuss some properties of $\O_E$-prisms which we need in the sequel.
We hope that this section will also help the reader unfamiliar with \cite{BS} to understand some basic facts about prisms.

\subsection{Prisms} \label{Subsection:prisms}

We first recall the definition of bounded prisms.

Let $A$ be a $\Z_{(p)}$-algebra.
A \textit{$\delta$-structure} on $A$ is a map $\delta \colon A \to A$ of sets with the following properties:
\begin{enumerate}
    \item $\delta(1)=0$.
    \item $\delta(xy)=x^p\delta(y)+y^p\delta(x)+p\delta(x)\delta(y)$.
    \item $\delta(x+y)=\delta(x)+\delta(y)+(x^p+y^p-(x+y)^p)/p$.
\end{enumerate}
A \textit{$\delta$-ring} is a pair $(A, \delta)$ of a $\Z_{(p)}$-algebra $A$ and a $\delta$-structure $\delta \colon A \to A$.
The above equalities imply that
\[
\phi \colon A \to A, \quad x \mapsto x^p+p\delta(x)
\]
is a ring homomorphism which is a lift of
the Frobenius $A/p \to A/p$, $x \mapsto x^p$.

In the following, for a ring $A$ and an ideal $I \subset A$, we say that an $A$-module $M$ is $I$-adically complete (or $x$-adically complete if $I$ is generated by an element $x \in I$) if the natural homomorphism
\[
M \to \widehat{M}:=\plim[n] M/I^nM
\]
is bijective.

\begin{defn}[\cite{BS}]\label{Definition:orientable and bounded prism}
A \textit{bounded prism} is a pair
$(A, I)$ of a $\delta$-ring $A$ and a Cartier divisor $I \subset A$ with the following properties:
\begin{enumerate}
    \item $A$ is $(p, I)$-adically complete.
    \item $A/I$ has bounded $p$-torsion, i.e.\ $(A/I)[p^\infty]=(A/I)[p^n]$ for some integer $n > 0$.
    \item We have $p \in (I, \phi(I))$.
\end{enumerate}
We say that a bounded prism $(A, I)$ is \textit{orientable} if $I$ is principal.
\end{defn}

\begin{rem}\label{Remark:remarks on prisms}
Under the condition that $A/I$ has bounded $p^\infty$-torsion,
the requirement that $A$ is $(p, I)$-adically complete
is equivalent to saying that $A$ is \textit{derived} $(p, I)$-adically complete; see \cite[Lemma 3.7]{BS}.
We refer to \cite[Section 1.2]{BS} and \cite[Tag 091N]{SP} for the notion of derived complete modules (or complexes).
For a ring $A$ and a finitely generated ideal $I \subset A$, if an $A$-module $M$ is $I$-adically complete, then $M$ is derived $I$-adically complete; see \cite[Tag 091T]{SP} or \cite[Lemma 2.3]{Positselski}.
\end{rem}

\subsection{$\delta_E$-rings}\label{Subsection:delta pi ring}

In this subsection, we recall the notion of $\delta_E$-rings, which is an ``$\O_E$-analogue'' of the notion of a $\delta$-ring.
We define
\[
\delta_{\O_E, \pi} \colon \O_E \to \O_E, \quad x \mapsto (x-x^q)/\pi.
\]

\begin{defn}[{\cite[Definition 2.2]{Marks}}]\label{Definition:OE delta}
\
\begin{enumerate}
    \item Let $A$ be an $\O_E$-algebra.
    A \textit{$\delta_E$-structure} on $A$ is a map $\delta_E \colon A \to A$ of sets with the following properties:
\begin{enumerate}
    \item $\delta_E(xy)=x^q\delta_E(y)+y^q\delta_E(x)+\pi\delta_E(x)\delta_E(y)$.
    \item $\delta_E(x+y)=\delta_E(x)+\delta_E(y)+(x^q+y^q-(x+y)^q)/\pi$.
    \item $\delta_E \colon A \to A$ is compatible with $\delta_{\O_E, \pi}$, i.e.\ we have $\delta_E(x)=\delta_{\O_E, \pi}(x)$ for any $x \in \O_E$.
\end{enumerate}
    A \textit{$\delta_E$-ring} is a pair $(A, \delta_E)$ of an $\O_E$-algebra $A$ and a $\delta_E$-structure $\delta_E \colon A \to A$.
\item A homomorphism $f \colon (A, \delta_E) \to (A', \delta'_E)$
of $\delta_E$-rings
is a homomorphism $f \colon A \to A'$ of $\O_E$-algebras such that $f \circ \delta_E=\delta'_E \circ f$.
\end{enumerate}
\end{defn}
The term
$(x^q+y^q-(x+y)^q)/\pi$ in (b) makes sense since we can write it as
\[
(x^q+y^q-(x+y)^q)/\pi= -\sum_{0 < i < q} (\tbinom{q}{i}/\pi)x^iy^{q-i}.
\]
We usually denote a $\delta_E$-ring $(A, \delta_E)$ simply by $A$.

\begin{rem}
The notion of $\delta_E$-rings also appeared in \cite[Remark 1.19]{Borger} and \cite{Li} for example.
In the end of \cite{Li}, Li suggests to use $\delta_E$-structures for the study of prismatic sites of higher level over ramified bases.
\end{rem}

\begin{rem}\label{Remark:change of pi delta structure}
The notion of $\delta_E$-rings is essentially independent of the choice of $\pi$.
More precisely, let $\pi' \in \O_E$ be another uniformizer.
We write $\pi=u\pi'$ for a unique unit $u \in \O^\times_E$.
If an $\O_E$-algebra $A$ is equipped with a
$\delta_E$-structure
$\delta_E \colon A \to A$
with respect to $\pi$, then it also admits
a
$\delta_E$-structure
with respect to $\pi'$, defined by $x \mapsto u\delta_E(x)$.
\end{rem}

For a $\delta_E$-ring $A$, we define
\[
\phi_A \colon A \to A, \quad x \mapsto x^q+\pi\delta_E(x).
\]
We see that $\phi_A$ is a homomorphism of $\O_E$-algebras and is a lift of
the $q$-th power Frobenius $A/\pi \to A/\pi$, $x \mapsto x^q$.
The homomorphism $\phi_A$ is called the \textit{Frobenius} of the $\delta_E$-ring $A$.
When there is no ambiguity, we omit the subscript and simply write $\phi=\phi_A$.

\begin{rem}\label{Remark:torsion free case Frobenius lift}
If $A$ is a $\pi$-torsion free $\O_E$-algebra,
then the construction $\delta_E \mapsto \phi$ gives a bijection between the set of $\delta_E$-structures on $A$ and the set of homomorphisms $\phi \colon A \to A$ over $\O_E$ that are lifts of $A/\pi \to A/\pi$, $x \mapsto x^q$.
\end{rem}

\begin{ex}[Free $\delta_E$-rings]\label{Example:free delta pi rings}
We define an endomorphism $\phi$
of
the polynomial ring
$
\O_E[X_0, X_1, X_2, \dotsc]
$
by $X_i \mapsto X^q_i + \pi X_{i+1}$ $(i \geq 0)$.
By Remark \ref{Remark:torsion free case Frobenius lift},
we get the corresponding $\delta_E$-structure on $\O_E[X_0, X_1, X_2, \dotsc]$, which sends $X_i$ to $X_{i+1}$.
We write
\[
\O_E\{ X \}
\]
for the resulting $\delta_E$-ring.
As in the proof of \cite[Lemma 2.11]{BS}, one can check that $\O_E\{ X \}$ has the following property:
For a $\delta_E$-ring $A$ and an element $x \in A$,
there exists a unique homomorphism 
$f \colon \O_E\{ X \} \to A$ of $\delta_E$-rings with $f(X_0)=x$.
In other words, the $\delta_E$-ring $\O_E\{ X \}$ is a free object with basis $X:=X_0$ in the category of $\delta_E$-rings.

The same argument as in the proof of \cite[Lemma 2.11]{BS} also shows that the Frobenius $\phi \colon \O_E\{ X \} \to \O_E\{ X \}$ is faithfully flat; this fact will be used in Section \ref{Subsection:Prismatic envelopes for regular sequences}.
\end{ex}


\begin{lem}\label{Lemma:frobenius is compatible with delta}
For a $\delta_E$-ring $A$,
the Frobenius $\phi \colon A \to A$ is a homomorphism of $\delta_E$-rings.
\end{lem}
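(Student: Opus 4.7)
The plan is to prove the only nontrivial condition, namely the commutation $\phi \circ \delta_E = \delta_E \circ \phi$ (as maps $A \to A$), since $\phi$ is already an $\O_E$-algebra homomorphism. The strategy is a standard universal-case argument: reduce to the free $\delta_E$-ring on one generator, where $\pi$-torsion-freeness makes the verification essentially automatic.

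First, I would reduce to checking the single equality $\phi(\delta_E(X)) = \delta_E(\phi(X))$ in the free $\delta_E$-ring $\O_E\{X\}$. Given any $x \in A$, the universal property recorded in Example \ref{Example:free delta pi rings} provides a (necessarily unique) $\delta_E$-homomorphism $f \colon \O_E\{X\} \to A$ with $f(X) = x$. Since $f$ commutes with $\delta_E$, and since $\phi$ is defined by the $\delta_E$-structure via $\phi(y) = y^q + \pi\delta_E(y)$, the map $f$ also commutes with $\phi$. Hence the desired identity $\phi(\delta_E(x)) = \delta_E(\phi(x))$ in $A$ is the image under $f$ of the same identity applied to $X$ in $\O_E\{X\}$, so it suffices to establish the latter.

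Second, I would verify the identity in $\O_E\{X\}$ using that this ring is $\pi$-torsion free (it is a polynomial ring over $\O_E$ in the variables $X_0, X_1, \dotsc$). By Remark \ref{Remark:torsion free case Frobenius lift}, on a $\pi$-torsion free $\O_E$-algebra a $\delta_E$-structure is the same as a lift of the $q$-th power Frobenius, and an $\O_E$-algebra map between two such rings is a $\delta_E$-homomorphism precisely when it commutes with the associated Frobenius lifts. Applying this to $\phi \colon \O_E\{X\} \to \O_E\{X\}$, the condition is $\phi \circ \phi = \phi \circ \phi$, which is trivial. Equivalently, one can compute directly: using $\pi\delta_E(y) = \phi(y) - y^q$, both $\pi\cdot\phi(\delta_E(y))$ and $\pi\cdot\delta_E(\phi(y))$ simplify to $\phi^2(y) - \phi(y)^q$, and cancelling $\pi$ (permissible by $\pi$-torsion-freeness) yields the claim.

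There is no real obstacle here: the argument is a formal device that trades a general identity for its universal instance, where it becomes visible from the defining relation between $\phi$ and $\delta_E$. The only points requiring any attention are (i) confirming that the universal property of $\O_E\{X\}$ genuinely transports both $\delta_E$ and $\phi$, which is immediate from the definitions, and (ii) invoking $\pi$-torsion-freeness of $\O_E\{X\}$ at the decisive step, as in \cite[Lemma 2.6]{BS} for the classical $\delta$-ring setting.
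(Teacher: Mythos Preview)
Your proof is correct and follows essentially the same approach as the paper: reduce to the free $\delta_E$-ring $\O_E\{X\}$ via its universal property, then use $\pi$-torsion-freeness there to conclude that $\phi$-equivariance (which is trivial) implies $\delta_E$-equivariance. Your write-up is in fact slightly more detailed than the paper's, which simply notes that the identity is clear in $\O_E\{X\}$ since it is $\pi$-torsion free and $\phi$ is $\phi$-equivariant.
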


\begin{proof}
Let $x \in A$ be an element.
We have to show that $\phi(\delta_E(x))=\delta_E(\phi(x))$.
Since there exists a (unique) homomorphism
$f \colon \O_E\{ X \} \to A$
of $\delta_E$-rings with $f(X)=x$, it suffices to prove the assertion for $A=\O_E\{ X \}$, which is clear since $A$ is $\pi$-torsion free and $\phi \colon A \to A$ is $\phi$-equivariant.
\end{proof}

Following \cite[Remark 2.4]{BS},
we shall give a characterization of
$\delta_E$-rings in terms of 
\textit{ramified} Witt vectors.
For an $\O_E$-algebra $A$,
let
\[
W_{\O_E, \pi, 2}(A)
\]
denote the \textit{ring of $\pi$-typical Witt vectors of length $2$}: the underlying set of $W_{\O_E, \pi, 2}(A)$ is $A \times A$, and
for $(x_0, x_1), (y_0, y_1) \in W_{\O_E, \pi, 2}(A)$, we have
\begin{align*}
    (x_0, x_1) + (y_0, y_1)&= (x_0+y_0, x_1+y_1+(x^q_0+y^q_0-(x_0+y_0)^q)/\pi),\\
    (x_0, x_1) \cdot (y_0, y_1)&=(x_0y_0, x^q_0y_1+y^q_0x_1+\pi x_1y_1).
\end{align*}
If $\O_E=\Z_p$ and $\pi=p$, then $W_{\O_E, \pi, 2}(A)$ is the ring $W_2(A)$ of $p$-typical Witt vectors of length $2$.
For a detailed treatment of the rings of $\pi$-typical Witt vectors (of any length), we refer to \cite[Section 1.1]{Schneider} or \cite{Borger}.

\begin{rem}[{cf.\ \cite[Remark 2.4]{BS}}]\label{Remark:ramified Witt vectors of length 2}
The map
\[
\O_E \to W_{\O_E, \pi, 2}(A), \quad x \mapsto (x, \delta_{\O_E, \pi}(x))
\]
is a ring homomorphism for any $\O_E$-algebra $A$.
We regard $W_{\O_E, \pi, 2}(A)$ as an $\O_E$-algebra by this homomorphism.
Let
\[
\epsilon \colon W_{\O_E, \pi, 2}(A) \to A, \quad (x_0, x_1) \mapsto x_0
\]
denote the projection map, which is a homomorphism of $\O_E$-algebras.
For a $\delta_E$-structure $\delta_E \colon A \to A$,
the map
$s \colon A \mapsto W_{\O_E, \pi, 2}(A)$
defined by $x \mapsto (x, \delta_E(x))$
is a homomorphism of $\O_E$-algebras such that $\epsilon \circ s=\id_A$.
By this procedure, we obtain a bijection between
the set of $\delta_E$-structures on $A$
and
the set of homomorphisms $s \colon A \to W_{\O_E, \pi, 2}(A)$ of $\O_E$-algebras satisfying $\epsilon \circ s=\id_A$.
\end{rem}

\begin{rem}[{cf.\ \cite[Remark 2.7]{BS}}]\label{Remark:colimit and limit of delta rings}
It follows from Remark \ref{Remark:ramified Witt vectors of length 2} that the category of $\delta_E$-rings admits all limits and colimits, and they are preserved by the forgetful functor from the category of $\delta_E$-rings to the category of $\O_E$-algebras.
\end{rem}

The following two lemmas will be used frequently in the sequel.

\begin{lem}\label{Lemma:quotient delta pi ring}
Let $A=(A, \delta_E)$ be a $\delta_E$-ring and $I \subset A$ an ideal.
Then $I$ is stable under $\delta_E$ if and only if $A/I$ admits a $\delta_E$-structure that is compatible with the one on $A$.
If such a $\delta_E$-structure on $A/I$ exists, then it is unique.
\end{lem}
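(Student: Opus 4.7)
The plan is to adapt the familiar argument for $\delta$-rings (cf.\ \cite[Lemma 2.9]{BS}) to the $\delta_E$-setting, the only subtlety being that the correction term in axiom (b) now involves the $q$-th power rather than the $p$-th power.

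First I would dispatch the ``if'' direction together with uniqueness. Suppose $A/I$ carries a compatible $\delta_E$-structure $\bar\delta_E$. Taking $x=y=0$ in axiom (b) gives $\bar\delta_E(0)=2\bar\delta_E(0)$, hence $\bar\delta_E(0)=0$. Then for any $x \in I$, compatibility forces $\overline{\delta_E(x)} = \bar\delta_E(\bar x) = \bar\delta_E(0) = 0$, so $\delta_E(x) \in I$; the same formula $\bar\delta_E(\bar x) = \overline{\delta_E(x)}$ also shows that $\bar\delta_E$ is uniquely determined by $\delta_E$.

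For the ``only if'' direction, assume $\delta_E(I) \subseteq I$ and try to define $\bar\delta_E(\bar x) := \overline{\delta_E(x)}$. The main step, and the only nontrivial one, is well-definedness: for $x,y \in A$ with $x-y \in I$, writing $x = y + (x-y)$ and applying the additivity axiom (b) to $\delta_E(x)$ yields
\[
\delta_E(x) - \delta_E(y) \;=\; \delta_E(x-y) \;-\; \sum_{0 < i < q} \bigl(\tbinom{q}{i}/\pi\bigr)\, y^{i} (x-y)^{q-i}.
\]
The first term lies in $I$ by the hypothesis $\delta_E(I) \subseteq I$, and every summand on the right lies in $I$ because $(x-y)^{q-i} \in I$ for $0 < i < q$, while each coefficient $\tbinom{q}{i}/\pi$ is a well-defined element of $\O_E$ as noted in the discussion after Definition \ref{Definition:OE delta}. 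Hence $\delta_E(x) \equiv \delta_E(y) \pmod{I}$.

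Once $\bar\delta_E$ is well-defined, the three axioms of Definition \ref{Definition:OE delta} for $\bar\delta_E$ follow by reducing modulo $I$ the corresponding identities for $\delta_E$ (the compatibility with $\delta_{\O_E,\pi}$ on $\O_E$ being inherited automatically), and its compatibility with $\delta_E$ is built into the definition. A conceptually cleaner alternative, if preferred, is to invoke Remark \ref{Remark:ramified Witt vectors of length 2}: the hypothesis $\delta_E(I) \subseteq I$ says exactly that the section $s \colon A \to W_{\O_E,\pi,2}(A)$, $x \mapsto (x,\delta_E(x))$, carries $I$ into the ideal $I \times I$, so $s$ descends to a section $A/I \to W_{\O_E,\pi,2}(A/I)$ of the projection, which corresponds to the desired $\delta_E$-structure on $A/I$.
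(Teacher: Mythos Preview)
Your proof is correct and is exactly the standard argument the paper has in mind: the paper's own proof simply says the lemma follows immediately from the definition of $\delta_E$-structures and refers to the proof of \cite[Lemma~2.9]{BS}, which is precisely what you have written out in detail (including the Witt-vector reformulation via Remark~\ref{Remark:ramified Witt vectors of length 2}).
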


\begin{proof}
This follows immediately from the definition of $\delta_E$-structures (see the proof of \cite[Lemma 2.9]{BS}).
\end{proof}

\begin{lem}\label{Lemma:completion of delta ring}
    Let $A$ be a $\delta_E$-ring and
    let $I \subset A$ be a finitely generated ideal containing $\pi$.
    Then, for any integer $n \geq 1$, there exists an integer $m \geq 1$ such that for any $x \in A$, we have
    $\delta_E(x+I^m) \subset \delta_E(x)+I^n$.
    In particular, the $I$-adic completion of $A$ admits a unique $\delta_E$-structure that is compatible with the one on $A$.
\end{lem}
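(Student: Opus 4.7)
The plan is to reduce the first statement to the uniform bound $\delta_E(I^m) \subseteq I^{m-1}$ for all $m \geq 1$, and then derive the completion statement by continuity. First I would prove by induction on $m$ that $\delta_E(f_1 f_2 \cdots f_m) \in I^{m-1}$ for arbitrary $f_1, \dotsc, f_m \in I$. The case $m=1$ is trivial since $I^0 = A$. For the inductive step, applying the multiplicative formula
\[
\delta_E(fg) = f^q \delta_E(g) + g^q \delta_E(f) + \pi \delta_E(f) \delta_E(g)
\]
with $f = f_1 \cdots f_{m-1}$ and $g = f_m$ produces three summands, all of which lie in $I^{m-1}$: the first is in $I^{q(m-1)}$, the second is in $I^q \cdot I^{m-2}$ by the inductive hypothesis, and the third is in $\pi \cdot I^{m-2} \subseteq I^{m-1}$ since $\pi \in I$.

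Next, for an arbitrary $y \in I^m$ I would write $y = \sum_\alpha c_\alpha m_\alpha$ with $c_\alpha \in A$ and each $m_\alpha$ an $m$-fold product of generators of $I$. Applying the multiplicative formula once more to $c_\alpha \cdot m_\alpha$ together with the previous step gives $\delta_E(c_\alpha m_\alpha) \in I^{m-1}$. Iterating the additivity formula
\[
\delta_E(a+b) = \delta_E(a) + \delta_E(b) - \sum_{0<i<q} (\tbinom{q}{i}/\pi)\, a^i b^{q-i}
\]
then yields $\delta_E(y) = \sum_\alpha \delta_E(c_\alpha m_\alpha) + E$, where every correction term comprising $E$ has the form $-(\tbinom{q}{i}/\pi)\, a^i b^{q-i}$ with $a, b \in I^m$ and $1 \leq i \leq q-1$; each such term lies in $I^{mq} \subseteq I^{m-1}$. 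Hence $\delta_E(I^m) \subseteq I^{m-1}$. The first assertion of the lemma is then immediate from additivity applied to $x$ and $y \in I^m$:
\[
\delta_E(x+y) - \delta_E(x) = \delta_E(y) - \sum_{0<i<q} (\tbinom{q}{i}/\pi)\, x^i y^{q-i},
\]
where the first term lies in $I^{m-1}$ and each summand of the second lies in $I^m$ because $q-i \geq 1$ forces $y^{q-i} \in I^m$. Taking $m = n+1$ completes the proof of the first assertion.

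For the ``in particular'' claim, the first assertion precisely says that $\delta_E \colon A \to A$ is uniformly $I$-adically continuous, so it extends uniquely to a continuous map $\widehat{\delta_E} \colon \widehat{A} \to \widehat{A}$. Since addition, multiplication, and the divided-power expression $(x^q + y^q - (x+y)^q)/\pi$ are all continuous for the $I$-adic topology on $A$, the defining axioms of a $\delta_E$-structure pass from $A$ to $\widehat{A}$ by passage to limits, and uniqueness is forced by the density of the image of $A$ in $\widehat{A}$. The main bookkeeping hurdle is ensuring that the error terms produced by iterating additivity, which live in $I^{mq}$, do not degrade the bound $I^{m-1}$ obtained from the multiplicative formula; this is always comfortable because $mq \geq m - 1$ for every $m \geq 1$.
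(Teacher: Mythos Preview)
Your proof is correct and follows essentially the same approach as the paper, which simply refers to \cite[Lemma 2.17]{BS}; that argument likewise establishes $\delta(I^m)\subset I^{m-1}$ by induction using the multiplicative and additive identities and then deduces continuity. One tiny omission: you should also remark that axiom~(c) (compatibility with $\delta_{\O_E,\pi}$) passes to $\widehat A$, which is immediate since the image of $\O_E$ in $\widehat A$ factors through $A$.
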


\begin{proof}
    The proof is the same as that of \cite[Lemma 2.17]{BS}.
\end{proof}

We shall discuss some properties of perfect $\delta_E$-rings, which are defined as follows.

\begin{defn}\label{Definition:perfect delta pi ring}
We say that a $\delta_E$-ring $A$ is \textit{perfect} if the Frobenius $\phi \colon A \to A$ is bijective.
\end{defn}

\begin{lem}[{\cite[Lemma 2.11]{Marks}}]\label{Lemma:perfect implies torsion free}
A perfect $\delta_E$-ring $A$ is $\pi$-torsion free.
\end{lem}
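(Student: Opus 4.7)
The plan is to take an arbitrary $x \in A$ with $\pi x = 0$ and argue directly that $x = 0$, using only the bijectivity of $\phi$ and the $\delta_E$-axioms. No completeness hypothesis on $A$ will be needed.

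First I would extract a useful identity satisfied by every $\pi$-torsion element. Applying $\delta_E$ to the relation $\pi x = 0$, using $\delta_E(0)=0$ together with the Leibniz-type formula
\[
\delta_E(\pi x) \;=\; \pi^{q}\delta_E(x) + x^{q}\delta_E(\pi) + \pi\,\delta_E(\pi)\delta_E(x),
\]
and the value $\delta_E(\pi) = (\pi - \pi^{q})/\pi = 1 - \pi^{q-1}$, a short expansion produces the cancellation $\pi^{q}\delta_E(x) - \pi^{q}\delta_E(x)$ and leaves
\[
\pi\,\delta_E(x) \;=\; (\pi^{q-1} - 1)\,x^{q}.
\]
Substituting this into $\phi(x) = x^{q} + \pi\,\delta_E(x)$ gives the key identity
\[
\phi(x) \;=\; \pi^{q-1}\,x^{q} \qquad \text{whenever } \pi x = 0.
\]

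Next I would invoke perfectness to bootstrap. By bijectivity of $\phi$, there is a unique $y \in A$ with $\phi(y) = x$. Since $\phi(\pi)=\pi$, the relation $\phi(\pi y) = \pi x = 0$ and injectivity of $\phi$ force $\pi y = 0$, so $y$ is itself $\pi$-torsion. The key identity applied to $y$ yields
\[
x \;=\; \phi(y) \;=\; \pi^{q-1}\,y^{q}.
\]
But $\pi y = 0$ gives $\pi y^{q} = y^{q-1}(\pi y) = 0$, and because $q \geq 2$ we may write $\pi^{q-1}y^{q} = \pi^{q-2}\cdot \pi y^{q} = 0$. Thus $x = 0$.

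The real obstacle is discovering the identity $\phi(x) = \pi^{q-1}x^{q}$ for $\pi$-torsion $x$; the $\delta_E$-formulas are delicate and the cancellations depend on tracking $\delta_E(\pi)=1-\pi^{q-1}$ correctly. Once this identity is in hand, perfectness is used exactly once, to replace $x$ by a $q$-th root $y$ that is automatically still $\pi$-torsion; this single application of $\phi^{-1}$ is what closes the argument. Without it, iteration of the identity only yields the weaker conclusion $x \in \bigcap_{n} \pi^{q^{n}-1}A$, which in the absence of a $\pi$-adic separatedness hypothesis would not suffice.
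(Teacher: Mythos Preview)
Your proof is correct and is exactly the argument of \cite[Lemma 2.28]{BS} (adapted verbatim to the $\delta_E$-setting), which is precisely what the paper invokes via the references to \cite{Marks} and \cite{BS}. There is nothing to add: the identity $\phi(x)=\pi^{q-1}x^{q}$ for $\pi$-torsion $x$ and the single application of $\phi^{-1}$ are the standard steps.
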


\begin{proof}
This is proved in \cite[Lemma 2.11]{Marks}, and follows from the same argument as in the proof of \cite[Lemma 2.28]{BS}.
\end{proof}

\begin{ex}\label{Example:perfect delta pi ring}
Let $R$ be an $\F_q$-algebra.
Assume that $R$ is perfect (i.e.\ $R \to R$, $x \mapsto x^p$ is bijective).
Let
$W(R)$
be the ring of $p$-typical Witt vectors and we define
\[
W_{\O_E}(R):=W(R) \otimes_{W(\F_q)} \O_E.
\]
Let 
$\phi \colon W_{\O_E}(R) \to W_{\O_E}(R)$ denote
the base change of the $q$-th power Frobenius of $W(R)$.
This is a lift of the $q$-th power Frobenius of $W_{\O_E}(R)/\pi=R$.
Since $W_{\O_E}(R)$ is $\pi$-torsion free,
we obtain the corresponding $\delta_E$-structure on $W_{\O_E}(R)$.
It is clear that $W_{\O_E}(R)$ is a perfect $\delta_E$-ring.
\end{ex}

\begin{lem}\label{Lemma:adjoint ramified witt and tilt}
The functor $R \mapsto W_{\O_E}(R)$ from the category of perfect $\F_q$-algebras to the category of $\pi$-adically complete $\O_E$-algebras admits a right adjoint given by
$
A \mapsto \varprojlim_{x \mapsto x^p} A/\pi A.
$
\end{lem}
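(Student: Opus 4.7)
The plan is to exhibit, for any perfect $\F_q$-algebra $R$ and any $\pi$-adically complete $\O_E$-algebra $A$, a natural bijection
\[
\Hom_{\O_E}(W_{\O_E}(R), A) \simeq \Hom_{\F_q}(R, A^\flat),
\]
where $A^\flat := \plim[x \mapsto x^p] A/\pi$ denotes the tilt. This follows the classical template used to construct Fontaine's $A_\inf$ when $\O_E = \Z_p$, adapted to the ramified setting by exploiting $p \in \pi \O_E$. First I would set up $A^\flat$ as a perfect $\F_q$-algebra (using $\F_q \hookrightarrow \O_E$) and construct a multiplicative sharp map $\sharp \colon A^\flat \to A$ by $(x_n)_{n \geq 0} \mapsto \lim_{n \to \infty} \widetilde{x_n}^{p^n}$ for any lifts $\widetilde{x_n} \in A$ of $x_n$. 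Convergence and independence of lifts rest on the elementary congruence: if $a \equiv b \pmod{\pi A}$ in a $\pi$-adically complete $\O_E$-algebra, then $a^{p^n} \equiv b^{p^n} \pmod{\pi^{n+1} A}$ for all $n \geq 0$; this follows from $p \in \pi \O_E$ by induction. The map $\sharp$ is multiplicative and lifts the projection onto $A/\pi$, but is not additive in general.

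Next I would invoke the Teichm\"uller expansion: since $R$ is perfect, every element of $W_{\O_E}(R)$ admits a unique $\pi$-adically convergent expansion $\sum_{i \geq 0} [r_i]\pi^i$ with $r_i \in R$, where $[r] = (r, 0, 0, \dots) \in W_{\O_E}(R)$ is the Teichm\"uller lift. Given $g \colon R \to A^\flat$, this allows me to define the candidate
\[
\tilde{g} \colon W_{\O_E}(R) \to A, \quad \sum_{i \geq 0} [r_i]\pi^i \mapsto \sum_{i \geq 0} g(r_i)^\sharp \pi^i,
\]
which is well-defined by $\pi$-adic completeness of $A$. Conversely, any $\tilde{g} \colon W_{\O_E}(R) \to A$ reduces modulo $\pi$ to give $R \simeq W_{\O_E}(R)/\pi \to A/\pi$, which perfectness of $R$ upgrades canonically to $R \simeq \plim[\phi] R \to \plim[x \mapsto x^p] A/\pi = A^\flat$. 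Once $\tilde{g}$ is known to be a ring homomorphism, the triangle identities are immediate from uniqueness of Teichm\"uller expansions and from the fact that $\sharp$ is a section of $A \twoheadrightarrow A/\pi$.

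The main obstacle is showing that $\tilde{g}$ is additive; multiplicativity is clear from multiplicativity of $\sharp$ and of the Teichm\"uller map. To handle this I would reduce to a universal case: by functoriality in $R$ and $A$, it suffices to verify additivity when $R$ is a free perfect $\F_q$-algebra $\F_q[T_s^{1/p^\infty}]_{s \in S}$ and $A = W_{\O_E}(R)$ with $g$ the tautological Teichm\"uller map, where $\tilde{g} = \id$ holds by the very definition of the ramified Witt addition polynomials, given that $W_{\O_E}(R)$ is $\pi$-torsion-free (cf.\ Example \ref{Example:perfect delta pi ring}). A standard limit and quotient argument---expressing any perfect $R$ as a filtered colimit of such free algebras, and using that any $\pi$-adically complete $A$ receives a surjection from a $\pi$-torsion-free $\pi$-adically complete $\O_E$-algebra, and that both $W_{\O_E}(-)$ and $(-)^\flat$ behave well under the relevant (co)limits---then extends the conclusion to arbitrary $R$ and $A$.
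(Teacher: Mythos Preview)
Your approach is genuinely different from the paper's. The paper gives a two-line proof: the adjunction for $\O_E=\Z_p$ is classical (Serre, \emph{Local Fields}, II.5), and the general case follows formally from $W_{\O_E}(R)=W(R)\otimes_{W(\F_q)}\O_E$ together with the observation that $\varprojlim_{x\mapsto x^p}A/\pi \cong \varprojlim_{x\mapsto x^p}A/p$ (since $(\pi)$ and $(p)$ have the same radical). Your route, by contrast, rebuilds the whole Fontaine-style construction directly over $\O_E$; this is more self-contained and illuminates why the adjunction holds, at the cost of length.

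There is, however, a genuine gap in your ``standard limit and quotient argument'' for additivity. Reducing in $R$ to a free perfect $\F_q$-algebra is fine: a surjection $R'\twoheadrightarrow R$ of perfect $\F_q$-algebras induces a surjection $W_{\O_E}(R')\twoheadrightarrow W_{\O_E}(R)$, and $\tilde g$ is a ring map iff its pullback is. But reducing in $A$ fails as written. If $B\twoheadrightarrow A$ is a surjection of $\pi$-adically complete $\O_E$-algebras with $B$ $\pi$-torsion-free, the induced map $B^\flat\to A^\flat$ need \emph{not} be surjective (the inverse-limit transition maps $x\mapsto x^p$ on $B/\pi$ are typically not surjective), so you cannot lift $g\colon R\to A^\flat$ to $B^\flat$. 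In particular there is no evident way to reach the case $A=W_{\O_E}(R)$ from a general $A$. (Your aside that ``multiplicativity is clear'' has the same defect: computing $\tilde g(xy)$ requires first putting $xy$ into Teichm\"uller form, which already uses additivity.)

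Two clean repairs are available. First, you can bypass the reduction entirely: observe that $\tilde g$ factors as $W_{\O_E}(R)\xrightarrow{W_{\O_E}(g)}W_{\O_E}(A^\flat)\xrightarrow{\theta_A}A$, so it suffices to show $\theta_A$ is a ring homomorphism; this follows from the vanishing of $L_{(W_{\O_E}(A^\flat)/\pi^n)/(\O_E/\pi^n)}$ (deduced from $L_{A^\flat/\F_q}=0$ by flat base change and derived Nakayama), which gives a unique $\O_E$-algebra lift of $A^\flat\to A/\pi$ through each $A/\pi^n$. This is exactly the mechanism the paper later uses in Lemma~\ref{Lemma:maps from perfectoid prisms to prism}. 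Second, and shortest, you can simply adopt the paper's reduction to $\O_E=\Z_p$.
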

\begin{proof}
This is well-known in the case where $\O_E=\Z_p$ (see 
\cite[Proposition 3.12]{SzamuelyZabradi} for example).
The general case follows from this special case.
\end{proof}

\begin{cor}[{\cite[Proposition 2.13]{Marks}}]\label{Corollary:equivalence of delta}
The following categories are equivalent:
\begin{itemize}
    \item The category $\mathcal{C}_1$ of $\pi$-adically complete perfect $\delta_E$-rings $(A, \delta_E)$.
    \item The category $\mathcal{C}_2$ of $\pi$-adically complete and $\pi$-torsion free $\O_E$-algebras $A$ such that 
    $A/\pi A$ is perfect.
    \item The category $\mathcal{C}_3$ of perfect $\F_q$-algebras $R$.
\end{itemize}
More precisely, the functors
$
\mathcal{C}_1 \to \mathcal{C}_2 \to \mathcal{C}_3 \to \mathcal{C}_1,
$
defined by $(A, \delta_E) \mapsto A$, $A \mapsto A/\pi$, $R \mapsto W_{\O_E}(R)$, respectively, are equivalences.
\end{cor}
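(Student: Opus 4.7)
The plan is to decompose the three-way equivalence into the two separate equivalences $\mathcal{C}_1 \simeq \mathcal{C}_2$ (via the forgetful functor $(A,\delta_E)\mapsto A$) and $\mathcal{C}_2 \simeq \mathcal{C}_3$ (via reduction modulo $\pi$), with the common quasi-inverse supplied by $R\mapsto W_{\O_E}(R)$. First I verify that each of the three displayed functors lands in its asserted target. For $\mathcal{C}_1 \to \mathcal{C}_2$, $\pi$-torsion freeness is exactly Lemma \ref{Lemma:perfect implies torsion free}, while bijectivity of the Frobenius $\phi$ on $A$ forces $x\mapsto x^q$ and hence $x\mapsto x^p$ to be bijective on $A/\pi A$, so $A/\pi A\in\mathcal{C}_3$. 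The functors $\mathcal{C}_2\to\mathcal{C}_3$ and $\mathcal{C}_3\to\mathcal{C}_1$ are immediate from the definitions and Example \ref{Example:perfect delta pi ring}.

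For $\mathcal{C}_2 \simeq \mathcal{C}_3$, the composition $R\mapsto W_{\O_E}(R)\mapsto W_{\O_E}(R)/\pi = R$ is canonically the identity. For the reverse round trip, I observe that when $A\in\mathcal{C}_2$ the tilt appearing in Lemma \ref{Lemma:adjoint ramified witt and tilt} collapses to $A^{\flat} = \varprojlim_{x\mapsto x^p} A/\pi A = A/\pi A$, since $A/\pi A$ is already perfect. The counit of that adjunction then produces a canonical map $c\colon W_{\O_E}(A/\pi A)\to A$ between $\pi$-adically complete, $\pi$-torsion free $\O_E$-algebras whose reduction modulo $\pi$ is the identity on $A/\pi A$. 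A standard devissage — using $\pi$-torsion freeness on both sides to deduce inductively via the snake lemma applied to $0 \to (-) \xrightarrow{\pi^n} (-) \to (-)/\pi^n \to 0$ that $c$ is an isomorphism modulo $\pi^n$ for every $n$, then passing to the $\pi$-adic limit — upgrades this to an isomorphism $c$.

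For $\mathcal{C}_1 \simeq \mathcal{C}_2$, essential surjectivity of the forgetful functor is immediate from the previous step: each $A\in\mathcal{C}_2$ is identified via $c$ with $W_{\O_E}(A/\pi A)$, which carries the $\delta_E$-structure of Example \ref{Example:perfect delta pi ring}. For fully faithfulness, I identify $A = W_{\O_E}(A/\pi A)$ and $B = W_{\O_E}(B/\pi B)$ and invoke Remark \ref{Remark:torsion free case Frobenius lift}: since $B$ is $\pi$-torsion free, an $\O_E$-algebra map $f\colon A \to B$ is a $\delta_E$-ring map iff $\phi_B\circ f = f\circ\phi_A$. Both composites are $\O_E$-algebra maps $A\to B$ whose reductions modulo $\pi$ agree (both compute $\bar f\circ(x\mapsto x^q)$), and the adjunction of Lemma \ref{Lemma:adjoint ramified witt and tilt} shows that lifts of a given $\F_q$-algebra map $A/\pi A\to B/\pi B$ to a ring map $A\to B$ are unique. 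Hence $\phi_B\circ f = f\circ\phi_A$ holds automatically, so every $\O_E$-algebra map $A\to B$ is already a $\delta_E$-ring map, giving $\Hom_{\mathcal{C}_1}(A,B)=\Hom_{\mathcal{C}_2}(A,B)$.

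The main obstacle is the counit isomorphism $W_{\O_E}(A/\pi A)\xrightarrow{\sim}A$ in the middle step: both the $\pi$-torsion freeness and the $\pi$-adic completeness of $A$ are indispensable for the snake-lemma-plus-limit argument, which is exactly why $\mathcal{C}_2$ is defined with precisely those hypotheses. The remainder of the argument is formal bookkeeping around the adjunction of Lemma \ref{Lemma:adjoint ramified witt and tilt} and the functoriality of $W_{\O_E}$.
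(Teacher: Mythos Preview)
Your proposal is correct and follows essentially the same approach as the paper, which simply invokes Lemma~\ref{Lemma:adjoint ramified witt and tilt} and refers to \cite[Corollary 2.31]{BS} for the details. You have spelled out precisely that argument: the adjunction identifies the counit $W_{\O_E}(A/\pi)\to A$ as an isomorphism via the standard $\pi$-adic devissage, and uniqueness of lifts through the adjunction forces Frobenius-equivariance of any $\O_E$-algebra map, giving full faithfulness of $\mathcal{C}_1\to\mathcal{C}_2$.
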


\begin{proof}
Using Lemma \ref{Lemma:adjoint ramified witt and tilt},
one can prove the assertion in exactly the same way as \cite[Corollary 2.31]{BS}.
\end{proof}

\begin{cor}\label{Corollary:homomorphism from perfect delta pi-ring}
Let $A$ be a perfect $\delta_E$-ring
and $B$ a $\pi$-adically complete $\delta_E$-ring.
Then any homomorphism $A \to B$ of $\O_E$-algebras is a homomorphism of $\delta_E$-rings.
\end{cor}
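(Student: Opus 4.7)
The plan is to first establish Frobenius compatibility $f \circ \phi_A = \phi_B \circ f$, and then upgrade this to $\delta_E$-compatibility by combining the perfectness of $A$ with a special identity that every $\pi$-torsion element of a $\delta_E$-ring must satisfy.

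First, since $B$ is $\pi$-adically complete and $f$ is automatically $\pi$-adically continuous, $f$ factors uniquely through the $\pi$-adic completion $\widehat{A}$ of $A$. I would check that $\widehat{A}$ is again a perfect $\delta_E$-ring: the identity $\phi_A(\pi)=\pi$ (an immediate consequence of $\delta_{E,A}(\pi)=1-\pi^{q-1}$) shows $\phi_A$ preserves each $\pi^nA$, and the five lemma applied to the short exact sequences $0 \to \pi^nA/\pi^{n+1}A \to A/\pi^{n+1} \to A/\pi^n \to 0$ (using $\pi$-torsion-freeness from Lemma \ref{Lemma:perfect implies torsion free}) shows $\phi_A$ is bijective on every $A/\pi^n$, hence on $\widehat{A}$. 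I may therefore assume $A$ is $\pi$-adically complete and, via Corollary \ref{Corollary:equivalence of delta}, identify $A=W_{\O_E}(R)$ with $R=A/\pi$ a perfect $\F_q$-algebra and $\phi_A=W_{\O_E}(x\mapsto x^q)$. Under the adjunction of Lemma \ref{Lemma:adjoint ramified witt and tilt}, $f$ corresponds to an $\F_q$-algebra map $\bar f\colon R \to B^{\flat}:=\varprojlim_{x\mapsto x^p} B/\pi$. Naturality of the adjunction in both variables, combined with the fact that $\phi_B$ reduces modulo $\pi$ to $x\mapsto x^q$, shows that both $f\circ\phi_A$ and $\phi_B\circ f$ correspond to the same $\F_q$-algebra map $\bar f(-)^q=\bar f\circ(-)^q$, so the bijection forces $f\circ\phi_A=\phi_B\circ f$.

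Next I would establish the key torsion lemma: \emph{if $x \in B$ satisfies $\pi x=0$, then $\phi_B(x)=0$.} This is a direct computation inside any $\delta_E$-ring: applying $\delta_{E,B}$ to $\pi\cdot x=0$, expanding via the product rule, and substituting $\delta_{E,B}(\pi)=1-\pi^{q-1}$ (from compatibility with $\delta_{\O_E,\pi}$) gives, after cancellation, the identity $\phi_B(x)=\pi^{q-1}x^q$. But $(\pi x)^{q-1}=\pi^{q-1}x^{q-1}=0$, and multiplying by $x$ yields $\pi^{q-1}x^q=0$, whence $\phi_B(x)=0$.

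Finally, I would set $c(a):=f(\delta_{E,A}(a))-\delta_{E,B}(f(a))$ and prove $c\equiv 0$. Applying $f$ to $\pi\delta_{E,A}(a)=\phi_A(a)-a^q$ and comparing with $\pi\delta_{E,B}(f(a))=\phi_B(f(a))-f(a)^q$, the Frobenius compatibility yields $\pi\cdot c(a)=0$ for all $a\in A$. By perfectness, any $a\in A$ can be written $a=\phi_A(a')$; Lemma \ref{Lemma:frobenius is compatible with delta} ($\phi$ commutes with $\delta_E$ in both $A$ and $B$) together with the Frobenius compatibility then gives, by a short calculation, $c(a)=\phi_B(c(a'))$. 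Since $\pi c(a')=0$, the torsion lemma forces $\phi_B(c(a'))=0$, hence $c(a)=0$. The chief obstacle is precisely this torsion step: because $B$ is not assumed $\pi$-torsion free, the equation $\pi c(a)=0$ alone is insufficient, and it is the interplay between the $\delta_E$-structural fact that $\phi_B$ annihilates $\pi$-torsion and the perfectness of $A$ providing $a=\phi_A(a')$ that allows the argument to close.
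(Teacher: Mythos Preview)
Your proof is correct, and the first half (reduction to $A$ $\pi$-adically complete, then establishing $\phi$-equivariance via the adjunction of Lemma~\ref{Lemma:adjoint ramified witt and tilt}) matches the paper exactly. The second half, however, takes a different route.

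The paper does not work directly with the defect $c(a)$. Instead, after establishing $\phi$-equivariance, it forms the perfect $\delta_E$-ring $B^{\perf}:=\varprojlim_{\phi} B$ and observes that (i) by perfectness of $A$ the $\phi$-equivariant map $f$ lifts to $A \to B^{\perf}$, (ii) $B^{\perf}$ is $\pi$-torsion free by Lemma~\ref{Lemma:perfect implies torsion free}, so any $\phi$-equivariant $\O_E$-algebra map into it is automatically a $\delta_E$-map, and (iii) the projection $B^{\perf}\to B$ is a $\delta_E$-map, so the composite is too. Your torsion lemma ``$\pi x=0\Rightarrow\phi_B(x)=0$'' is in fact the computational core of Lemma~\ref{Lemma:perfect implies torsion free} (applied to $B^{\perf}$, any $\pi$-torsion element has all coordinates $\pi$-torsion, hence each $b_i=\phi_B(b_{i+1})=0$), so the two arguments are closely related: yours unrolls the same mechanism pointwise via $c(a)=\phi_B(c(a'))$, while the paper packages it once and for all into the auxiliary object $B^{\perf}$. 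Your approach is more self-contained (you prove the torsion fact by hand rather than citing Lemma~\ref{Lemma:perfect implies torsion free}); the paper's is slightly more conceptual and reusable.
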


\begin{proof}
We may assume that $A$ is $\pi$-adically complete.
It then follows from Lemma \ref{Lemma:adjoint ramified witt and tilt} and Corollary \ref{Corollary:equivalence of delta} that $A \to B$ is $\phi$-equivariant.
We recall that $\phi \colon B \to B$ is a homomorphism of $\delta_E$-rings by Lemma \ref{Lemma:frobenius is compatible with delta}.
Let $B^{\perf}$ be a limit of the diagram
\[
B \overset{\phi}{\leftarrow} B \overset{\phi}{\leftarrow} B  \leftarrow \cdots
\]
in the category of $\delta_E$-rings, which is a perfect $\delta_E$-ring.
Since $A$ is perfect, $A \to B$ factors through a $\phi$-equivariant homomorphism $A \to B^{\perf}$ of $\O_E$-algebras.
It follows from the $\pi$-torsion freeness of $B^{\perf}$ (see Lemma \ref{Lemma:perfect implies torsion free}) that $A \to B^{\perf}$ is a homomorphism of $\delta_E$-rings.
Since $A \to B$ is the composition $A \to B^{\perf} \to B$, the assertion follows.
\end{proof}

\subsection{$\O_E$-prisms}\label{Subsection:OE prism}

We now introduce $\O_E$-prisms.

\begin{defn}[{\cite[Definition 3.1]{Marks}}]\label{Definition:OE prism}
\
\begin{enumerate}
    \item An \textit{$\O_E$-prism} is a pair $(A, I)$ of a $\delta_E$-ring $A$ and a Cartier divisor $I \subset A$
    such that $A$ is derived $(\pi, I)$-adically complete and $\pi \in I + \phi(I)A$.
    \item We say that an $\O_E$-prism $(A, I)$ is \textit{bounded} if $A/I$ has bounded $p^\infty$-torsion.
    \item We say that an $\O_E$-prism $(A, I)$ is \textit{orientable} if $I$ is principal.
    \item An \textit{oriented and bounded} $\O_E$-prism $(A, d)$ is an orientable and bounded $\O_E$-prism $(A, I)$ with a choice of a generator $d \in I$.
    \item A map $f \colon (A, I) \to (A', I')$ of $\O_E$-prisms is a homomorphism $f \colon A \to A'$ of $\delta_E$-rings such that $f(I) \subset I'$.
\end{enumerate}
\end{defn}

If $\O_E=\Z_p$, then bounded $\O_E$-prisms are nothing but bounded prisms.

\begin{rem}\label{Remark:derived and classical complete}
Let $(A, I)$ be a bounded $\O_E$-prism.
By \cite[Proposition 2.5 (1)]{Tian} (see also Lemma \ref{Lemma:flat maps over prisms} below), we see that $A$ is $(\pi, I)$-adically complete.
Moreover, since $A/I$ is derived $\pi$-adically complete and has bounded $p^\infty$-torsion, it follows that $A/I$ is $\pi$-adically complete (see \cite[Lemma 4.7]{BMS2} for example).
\end{rem}

Let $A$ be a $\delta_E$-ring.
Following \cite[Definition 2.19]{BS}, we say that an element $d \in A$ is \textit{distinguished} if $\delta_E(d) \in A^\times$, i.e.\ $\delta_E(d)$ is a unit.
Since $\delta_{\O_E, \pi}(\pi)=1-\pi^{q-1} \in \O^\times_E$, we see that $\pi \in A$ is distinguished.

\begin{lem}\label{Lemma:distinguished}
Let $A$ be a $\delta_E$-ring and $d \in A$ an element.
Assume that 
$\pi$ is contained in the Jacobson radical $\mathrm{rad}(A)$ of $A$.
\begin{enumerate}
    \item Assume that $d=fh$ for some elements $f, h \in A$ with $f \in \mathrm{rad}(A)$. If $d$ is distinguished, then $f$ is distinguished and $h \in A^\times$.
    \item Assume that $d \in \mathrm{rad}(A)$.
Then $d$ is distinguished if and only if $\pi \in (d, \phi(d))$.
\end{enumerate}
\end{lem}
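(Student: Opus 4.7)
The plan is to handle the two parts separately, in both cases exploiting the observation that an element of $A$ is a unit if and only if its image in $A/\mathrm{rad}(A)$ is a unit.

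For (1) I will expand $d = fh$ via the product formula
\[
\delta_E(fh) = f^q\delta_E(h) + h^q\delta_E(f) + \pi\delta_E(f)\delta_E(h).
\]
Because $f$ and $\pi$ both lie in $\mathrm{rad}(A)$, the first and third terms are in $\mathrm{rad}(A)$, and therefore $\delta_E(d) \equiv h^q\delta_E(f) \pmod{\mathrm{rad}(A)}$. Since $\delta_E(d) \in A^\times$, its image modulo the Jacobson radical is a unit, forcing both $h^q$ and $\delta_E(f)$ to be units modulo $\mathrm{rad}(A)$, hence units in $A$. Unitness of $h^q$ implies unitness of $h$, so $h \in A^\times$ and $f$ is distinguished.

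For (2), the forward direction is immediate from $\phi(d) = d^q + \pi\delta_E(d)$: if $\delta_E(d)$ is a unit then $\pi = \delta_E(d)^{-1}(\phi(d) - d^q) \in (d,\phi(d))$. For the converse I will write $\pi = ad + b\phi(d)$ and substitute $\phi(d) = d^q + \pi\delta_E(d)$ to obtain the key relation $\pi u = dv$ in $A$, where $u := 1 - b\delta_E(d)$ and $v := a + bd^{q-1}$. To conclude $\delta_E(d) \in A^\times$ it suffices, as in (1), to rule out $\delta_E(d) \in \mathfrak{m}$ for every maximal ideal $\mathfrak{m} \subset A$. Assuming for contradiction $\delta_E(d)\in\mathfrak{m}$, so also $\pi,d\in\mathrm{rad}(A)\subset\mathfrak{m}$, I apply $\delta_E$ to both sides of $\pi u = dv$ using the product formula. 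On the right, each of the terms $d^q\delta_E(v)$, $v^q\delta_E(d)$, $\pi\delta_E(v)\delta_E(d)$ lies in $\mathfrak{m}$, so $\delta_E(dv) \equiv 0$. On the left, every term containing a factor of $\pi$ drops out modulo $\mathfrak{m}$, leaving $\delta_E(\pi u)\equiv \bar u^q\,\overline{\delta_E(\pi)}$. Since $\bar u \equiv 1$ and $\delta_E(\pi)=1-\pi^{q-1}\equiv 1\pmod{\mathfrak{m}}$, this gives $1\equiv 0$ in $A/\mathfrak{m}$, a contradiction.

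The main obstacle is the converse in (2): modulo any maximal ideal $\mathfrak{m}$ the starting relation $\pi=ad+b\phi(d)$ degenerates to $0=0$ and carries no information, so a purely linear argument cannot succeed. The trick is to apply $\delta_E$ to the derived relation $\pi u = dv$ \emph{before} reducing mod $\mathfrak{m}$, because this is precisely what injects the ambient unit $\delta_E(\pi)=1-\pi^{q-1}$ into the computation and produces a genuine obstruction. This is the $\O_E$-analogue of the argument in \cite[Lemma 2.24]{BS}, with $p$ replaced by $\pi$ throughout.
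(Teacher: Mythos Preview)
Your proof is correct and follows exactly the approach the paper indicates: the paper's own proof consists only of a reference to \cite[Lemma 2.24, Lemma 2.25]{BS} (and \cite[Lemma 2.9]{Marks}), and you have faithfully carried out the $\O_E$-analogue of those arguments, replacing $p$ by $\pi$ and using $\delta_E(\pi)=1-\pi^{q-1}\in A^\times$.
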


\begin{proof}
This can be proved exactly in the same way as \cite[Lemma 2.24, Lemma 2.25]{BS}.
See also \cite[Lemma 2.9]{Marks}.
\end{proof}

The following rigidity property plays a fundamental role in the theory of $\O_E$-prisms.

\begin{lem}[{cf.\ \cite[Lemma 3.5]{BS}}]\label{Lemma:rigidity}
Let $(A, I) \to (A', I')$ be a map of $\O_E$-prisms.
Then the natural homomorphism
$
I \otimes_A A' \to  IA'
$
is an isomorphism
and $IA'=I'$.
\end{lem}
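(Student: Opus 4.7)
The plan is to follow the argument of \cite[Lemma 3.5]{BS}, replacing the key input (distinguishedness of the generator of $I$ in a prism) with its $\O_E$-analogue, namely Lemma \ref{Lemma:distinguished}. Since the claims $IA' = I'$ and $I \otimes_A A' \xrightarrow{\sim} IA'$ are both local on $\Spec A$ and $\Spec A'$, I would first reduce to the \emph{oriented} case: $I = (d)$ and $I' = (d')$, with $d$ and $d'$ non-zero-divisors. This reduction uses that $I$ is an invertible $A$-module (hence principal Zariski locally) together with the fact that Zariski localizations of $(\pi, I)$-adically complete $\delta_E$-rings, after $(\pi, I)$-adic completion, remain bounded $\O_E$-prisms; since $(\pi, I) \subset \mathrm{rad}(A)$ by derived $(\pi, I)$-completeness, one can localize at elements which are units modulo $(\pi, I)$, and argue similarly on the $A'$ side.

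In the oriented case, the map sends $d$ into $I' = (d')$, so $d = d' a$ in $A'$ for some $a \in A'$. Once I show $a \in (A')^\times$, I obtain $(d) A' = (d') A' = I'$, i.e.\ $IA' = I'$; and the resulting surjection $I \otimes_A A' \twoheadrightarrow IA'$ is then a surjection between invertible $A'$-modules, hence an isomorphism.

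To establish that $a$ is a unit, I apply Lemma \ref{Lemma:distinguished}(1) to the factorization $d = d' \cdot a$ in $A'$. Note $d' \in I' \subset \mathrm{rad}(A')$, and $\pi \in \mathrm{rad}(A')$, by derived $(\pi, I')$-adic completeness. The element $d \in A$ is distinguished by Lemma \ref{Lemma:distinguished}(2), since $d \in \mathrm{rad}(A)$ and $\pi \in (d, \phi(d))$ (which is part of the definition of an $\O_E$-prism). Because $A \to A'$ is a homomorphism of $\delta_E$-rings, it carries $\delta_E(d) \in A^\times$ to a unit of $A'$, i.e.\ the image $d = d'a \in A'$ remains distinguished. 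Lemma \ref{Lemma:distinguished}(1) then yields that $a$ is a unit (and incidentally that $d'$ is distinguished, which we already knew).

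The main technical obstacle is the initial Zariski-localization reduction to the oriented case: one must verify that the completions of Zariski localizations of $A$ and $A'$ inherit the structure of bounded $\O_E$-prisms and that invertibility of $I$ is preserved. Modulo this (essentially formal) check, the heart of the argument is the single application of Lemma \ref{Lemma:distinguished}(1) to the equation $d = d' a$, which forces $a \in (A')^\times$ and thereby $IA' = I'$.
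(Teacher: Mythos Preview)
Your approach is correct and is essentially the same as the paper's: reduce to the orientable case, observe that a generator $d \in I$ is distinguished by Lemma \ref{Lemma:distinguished}(2), and apply Lemma \ref{Lemma:distinguished}(1) to the factorization $d = d' a$ in $A'$ (with $d' \in I' \subset \mathrm{rad}(A')$) to conclude $a \in (A')^\times$, whence $IA' = I'$ and $I \otimes_A A' \overset{\sim}{\to} IA'$. One small point on the reduction step: your phrase ``localize at elements which are units modulo $(\pi, I)$'' is vacuous, since such elements are already units in $A$ by $(\pi, I) \subset \mathrm{rad}(A)$; the paper handles the reduction by citing \cite[Lemma 3.4]{Marks} (a faithfully flat $\delta_E$-cover trivializing $I$, followed by completion, as in Remark \ref{Remark:flat locally orientable}), and your Zariski-plus-completion route works for the same reason once you localize at genuine non-units.
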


\begin{proof}
By using \cite[Lemma 3.4]{Marks}, this follows from the same argument as in the proof of \cite[Lemma 3.5]{BS}.
We recall the argument in the case where both $(A, I)$ and $(A', I')$ are orientable.
It follows from Lemma \ref{Lemma:distinguished} (2) that
any generator $d \in I$ is distinguished.
Let $d' \in I'$ be a generator.
Then Lemma \ref{Lemma:distinguished} (1) implies that $d$ is mapped to
$ud'$ for some $u \in A'^\times$.
In particular, the image of $d$ in $A'$ is a nonzerodivisor, and we obtain
$
I \otimes_A A'\overset{\sim}{\to}  IA'
$
and
$IA'=I'$.
\end{proof}

The following lemma will be used several times in this paper.

\begin{lem}[{cf.\ the proof of \cite[Lemma 4.8]{BS}}]\label{Lemma:maps from perfectoid prisms to prism}
Let $A$ be a perfect $\delta_E$-ring
and
$(B, I)$
a bounded $\O_E$-prism.
Then
any homomorphism $A \to B/I$ of $\O_E$-algebras
lifts uniquely to a homomorphism $A \to B$ of $\delta_E$-rings.
\end{lem}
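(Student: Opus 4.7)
The plan is to adapt the argument of \cite[Lemma 4.8]{BS} to the $\O_E$-setting, exploiting the tilting description of perfect $\delta_E$-rings provided by Corollary \ref{Corollary:equivalence of delta}. First, I would replace $A$ by its $\pi$-adic completion, which remains a perfect $\delta_E$-ring. By Corollary \ref{Corollary:equivalence of delta} one may then identify $A \cong W_{\O_E}(R)$ with $R := A/\pi A$ a perfect $\F_q$-algebra. Since $(B, I)$ is bounded, $B$ is $(\pi, I)$-adically complete by Remark \ref{Remark:derived and classical complete}, and in particular both $B$ and $B/I$ are $\pi$-adically complete.

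The key step is to show that the natural reduction map
\[
B^\flat := \varprojlim_{x \mapsto x^p} B/\pi B \;\longrightarrow\; (B/I)^\flat := \varprojlim_{x \mapsto x^p} B/(\pi, I)
\]
is a bijection. This is Fontaine's tilting trick. Its essential input is that $B/\pi B$ is an $\F_q$-algebra, since $\O_E/\pi = \F_q$, and hence has characteristic $p$, so that its absolute Frobenius is a ring endomorphism. For surjectivity, given a compatible system $(\bar y_n) \in (B/I)^\flat$, I would pick arbitrary lifts $\tilde y_n \in B/\pi B$ of $\bar y_n$ and set $y := \lim_n \tilde y_n^{p^n}$; the identity $(a-b)^p = a^p - b^p$ in characteristic $p$ forces consecutive terms to differ by elements in successively higher powers of $I$, so the sequence is Cauchy in the $(\pi, I)$-adic topology inherited from $B$, has a limit independent of lifts, and provides the desired preimage. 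The same contraction estimate yields injectivity.

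Combining this bijection with the adjunction of Lemma \ref{Lemma:adjoint ramified witt and tilt} applied to $B$ and to $B/I$ produces a bijection
\[
\Hom_{\O_E\text{-alg}}(A, B) \;\overset{\sim}{\longrightarrow}\; \Hom_{\O_E\text{-alg}}(A, B/I)
\]
given by reduction modulo $I$, which simultaneously supplies the existence and uniqueness of the $\O_E$-algebra lift $\tilde f \colon A \to B$ of $f$. Corollary \ref{Corollary:homomorphism from perfect delta pi-ring} then upgrades $\tilde f$ to a homomorphism of $\delta_E$-rings for free. The main obstacle is the tilting bijection: verifying convergence of the Fontaine limit requires careful handling of the $(\pi, I)$-adic topology, because $B/\pi B$ need not be classically $I$-adically complete in general, and one must instead work with coherent systems of reductions modulo $(\pi, I)^n$ in $B$ itself and invoke the $(\pi, I)$-adic completeness of $B$.
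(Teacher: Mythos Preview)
Your approach is correct but takes a different route from the paper. You establish the tilting bijection $B^\flat \overset{\sim}{\to} (B/I)^\flat$ directly via the Fontaine contraction argument, then combine it with the adjunction of Lemma~\ref{Lemma:adjoint ramified witt and tilt}. The paper instead reduces (after the same preliminary identifications $A \simeq W_{\O_E}(R)$) to lifting each $W_n(R) \to B/(\pi^n, I)$ uniquely to $W_n(R) \to B/(\pi^n, I^n)$, and invokes the acyclicity of the cotangent complex $L_{W_n(R)/W_n(\F_q)}$ to deform along the nilpotent thickenings. Your argument is more elementary in that it avoids any cotangent complex input, and it is closer in spirit to the original argument in \cite{BS}; the trade-off is exactly the completeness subtlety you flag. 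The paper's deformation-theoretic approach sidesteps that subtlety entirely, since each lifting step takes place along a genuinely nilpotent extension where no convergence issues arise. One small point worth making explicit in your write-up: for injectivity of the tilting map you ultimately need $\bigcap_m (I^m + \pi B) = \pi B$, i.e.\ that $B/\pi$ is $I$-adically separated; your proposed fix of working with lifts in $B$ and reductions modulo $(\pi,I)^n$ does handle this, but it deserves a sentence rather than being folded into ``the same contraction estimate.''
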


\begin{proof}
By Corollary \ref{Corollary:homomorphism from perfect delta pi-ring}, it is enough to check that the homomorphism $A \to B/I$ lifts uniquely to a homomorphism $A \to B$ of $\O_E$-algebras.
We may assume that $A$ is $\pi$-adically complete, and then
$A \simeq W_{\O_E}(R)$ for some perfect $\F_q$-algebra $R$
by Corollary \ref{Corollary:equivalence of delta}.
Since $B$ is $(\pi, I)$-adically complete and $B/I$ is $\pi$-adically complete,
it suffices to prove that for every integer $n \geq 1$,
any homomorphism
$W_n(R) \to B/(\pi^n, I)$ of $W_n(\F_q)$-algebras lifts uniquely to a homomorphism $W_n(R) \to B/(\pi^n, I^n)$ of $W_n(\F_q)$-algebras (here $W_n(R)=W(R)/p^n$).
This follows from the fact that
the cotangent complex $L_{W_n(R)/W_n(\F_q)}$
is acyclic
(\cite[Lemma 3.27 (1)]{SzamuelyZabradi}).
\end{proof}

We give some examples of $\O_E$-prisms.

\begin{ex}[{cf.\ \cite[Example 1.3 (1)]{BS}}]\label{Example:crystalline type frame}
Let $A$ be a $\pi$-adically complete and $\pi$-torsion free $\O_E$-algebra.
Let
$\phi \colon A \to A$
be a homomorphism
over $\O_E$ which is a lift of the $q$-th power Frobenius of $A/\pi$.
This homomorphism induces a $\delta_E$-structure on $A$, and the pair $(A, (\pi))$ is a bounded $\O_E$-prism.
\end{ex}

\begin{defn}[{$\O_E$-prism over $\O$}]\label{Definition:prism over O}
Let $k$ be a perfect field containing $\F_q$.
We will write
\[
\O := W(k) \otimes_{W(\F_q)} \O_E
\]
instead of $W_{\O_E}(k)$.
An $\O_E$-prism \textit{over} $\O$ is an $\O_E$-prism $(A, I)$ with a homomorphism $\O \to A$ of $\delta_E$-rings.
\end{defn}

Let $\O=W(k) \otimes_{W(\F_q)} \O_E$ be as in Definition \ref{Definition:prism over O}.
Let
\[
\mathfrak{S}_\O:=\O[[t_1, \dotsc, t_n]]
\]
(where $n \geq 0$)
and let
$\phi \colon \mathfrak{S}_\O \to \mathfrak{S}_\O$ be the homomorphism such that $\phi(t_i)=t^q_i$ ($1 \leq i \leq n$) and its restriction to $\O$ agrees with the Frobenius of $\O$.
Since $\mathfrak{S}_\O$ is $\pi$-torsion free, $\phi$ gives rise to a $\delta_E$-structure on $\mathfrak{S}_\O$.

\begin{prop}[{cf.\ \cite[Example 1.3 (3)]{BS}}]\label{Proposition:Breuil-Kisin type frame}
Let $\mathcal{E} \in \mathfrak{S}_\O$ be a formal power series whose constant term is a uniformizer of $\O$.
Then the pair
$
(\mathfrak{S}_\O, (\mathcal{E}))
$
is a bounded $\O_E$-prism over $\O$.
\end{prop}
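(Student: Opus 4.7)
The plan is to verify the four conditions of Definition \ref{Definition:OE prism} together with the bounded $p^\infty$-torsion condition; the compatibility $\O \to \mathfrak{S}_\O$ is a map of $\delta_E$-rings by the construction. First, $\mathfrak{S}_\O$ is an integral domain, so the nonzero element $\mathcal{E}$ is a nonzerodivisor and $(\mathcal{E})$ is a Cartier divisor; moreover, $\mathfrak{S}_\O/(\mathcal{E})$ is Noetherian, so the ascending chain $\{(\mathfrak{S}_\O/(\mathcal{E}))[p^n]\}_{n \geq 1}$ of submodules stabilizes, giving the bounded $p^\infty$-torsion condition.

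The core of the proof is establishing that $\mathfrak{S}_\O$ is derived $(\pi, \mathcal{E})$-adically complete. I would actually prove the stronger classical $(\pi, \mathcal{E})$-adic completeness; derived completeness then follows from Remark \ref{Remark:remarks on prisms}. Set $\mathfrak{m} = (\pi, t_1, \dotsc, t_n)$. Since $\mathcal{E}$ has constant term a uniformizer of $\O$, we have $\mathcal{E} \in \mathfrak{m}$, hence $(\pi, \mathcal{E})^m \subseteq \mathfrak{m}^m$ for all $m \geq 1$. Any $(\pi, \mathcal{E})$-adic Cauchy sequence $(g_m)$ is therefore $\mathfrak{m}$-adic Cauchy, so converges to some $g$ in the classically $\mathfrak{m}$-adically complete power series ring $\mathfrak{S}_\O$. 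Krull's intersection theorem applied to the finitely generated $\mathfrak{S}_\O$-module $\mathfrak{S}_\O/(\pi, \mathcal{E})^m$ shows that $(\pi, \mathcal{E})^m$ is closed in the $\mathfrak{m}$-adic topology; since $g - g_m$ is the $\mathfrak{m}$-adic limit of the partial sums $\sum_{j=m}^{k-1}(g_{j+1} - g_j) \in (\pi, \mathcal{E})^m$, we obtain $g - g_m \in (\pi, \mathcal{E})^m$, proving surjectivity of $\mathfrak{S}_\O \to \varprojlim_m \mathfrak{S}_\O/(\pi, \mathcal{E})^m$. Injectivity is immediate from $\bigcap_m (\pi, \mathcal{E})^m \subseteq \bigcap_m \mathfrak{m}^m = 0$.

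For the remaining condition $\pi \in (\mathcal{E}) + \phi(\mathcal{E})\mathfrak{S}_\O$, note that $\pi, \mathcal{E} \in \mathfrak{m} = \mathrm{rad}(\mathfrak{S}_\O)$, so Lemma \ref{Lemma:distinguished}(2) reduces us to showing that $\mathcal{E}$ is distinguished, i.e., $\delta_E(\mathcal{E}) \in \mathfrak{S}_\O^\times$. Writing $\mathcal{E} = u\pi + g$ with $u \in \O^\times$ and $g \in (t_1, \dotsc, t_n)\mathfrak{S}_\O$, and using that $\phi$ fixes $\pi$, is $\O_E$-linear and lifts $x \mapsto x^q$ on the residue field $k$, and sends each $t_i$ to $t_i^q$, a direct computation yields that the constant term of $\delta_E(\mathcal{E}) = (\phi(\mathcal{E}) - \mathcal{E}^q)/\pi$ equals $\phi(u) - u^q\pi^{q-1} \in \O$, which reduces modulo $\pi$ to $\bar{u}^q \in k^\times$. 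Hence the constant term is a unit of $\O$, so $\delta_E(\mathcal{E})$ is a unit of the local ring $\mathfrak{S}_\O$. The only delicate step is the completeness argument; the key point is that $(\pi, \mathcal{E})$-adic convergence implies $\mathfrak{m}$-adic convergence, and Krull's closedness of $(\pi, \mathcal{E})^m$ in the $\mathfrak{m}$-adic topology bridges the two.
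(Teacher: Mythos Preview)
Your proof is correct and follows essentially the same approach as the paper. The paper's proof is much terser: it dismisses the Cartier divisor, bounded torsion, and completeness conditions as ``clearly satisfied'' and only spells out the distinguishedness of $\mathcal{E}$, which it checks exactly as you do by reducing modulo $(t_1,\dotsc,t_n)$ to $\O$ and observing that a uniformizer of $\O$ is distinguished. Your explicit completeness argument via Krull's intersection theorem and your direct computation of the constant term $\phi(u)-u^q\pi^{q-1}$ are correct elaborations of what the paper leaves implicit.
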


\begin{proof}
We shall show that $\pi \in (\mathcal{E}, \phi(\mathcal{E}))$; the other required conditions are clearly satisfied.
It is enough to check that $\delta_E(\mathcal{E}) \in \mathfrak{S}^\times_\O$.
For this, it suffices to show that the image of $\delta_E(\mathcal{E})$ in $\mathfrak{S}_\O/(t_1, \dotsc, t_n)=\O$ is a unit, which is clear since the constant term of $\mathcal{E}$ is a uniformizer of $\O$.
\end{proof}

We call $(\mathfrak{S}_\O, (\mathcal{E}))$
an $\O_E$-prism \textit{of Breuil--Kisin type}
in this paper.
Here $n$ could be any non-negative integer.
Such a pair is also considered in \cite{ChengChuangxun}.

\subsection{Perfectoid rings and $\O_E$-prisms}\label{Subsection:Perfectoid rings and pi-prisms}

The notion of (integral) perfectoid rings in the sense of \cite[Definition 3.5]{BMS} plays a central role in the theory of prismatic $G$-$\mu$-displays.
We refer to \cite[Section 3]{BMS} and \cite[Section 2]{CS} for basic properties of perfectoid rings.
We recall the definition of perfectoid rings and some notation from \cite[Section 3]{BMS}.

A ring $R$ is a \textit{perfectoid ring} if
there exists an element $\varpi \in R$
such that $p \in (\varpi)^p$ and $R$ is $\varpi$-adically complete, the Frobenius $R/p \to R/p$, $x \mapsto x^p$ is surjective, and the kernel of $\theta \colon W(R^\flat) \to R$ is principal.
Here
\[
R^\flat:=\varprojlim_{x \mapsto x^p} R/p
\]
and
$
\theta \colon W(R^\flat) \to R
$
is the unique homomorphism whose reduction modulo $p$ is the projection
$R^\flat \to R/p$, $(x_0, x_1, \dotsc) \mapsto x_0$.
The homomorphism $\theta$ is the counit of the adjunction given in Lemma \ref{Lemma:adjoint ramified witt and tilt} (in the case where $\O_E=\Z_p$).
By \cite[Lemma 3.9]{BMS}, there is an element $\varpi^\flat \in R^\flat$ such that $\theta([\varpi^\flat])$ is a unit multiple of $\varpi$, where $[-]$ denotes the Teichm\"uller lift.


\begin{ex}\label{Example:algebraically closed valuation ring is perfectoid}
\ 
\begin{enumerate}
    \item An $\F_p$-algebra $R$ is a perfectoid ring if and only if it is perfect; see \cite[Example 3.15]{BMS}.
    \item Let $V$ be a $p$-adically complete valuation ring with algebraically closed fraction field.
Then $V$ is a perfectoid ring. This follows from \cite[Lemma 3.10]{BMS}.
\end{enumerate}
\end{ex}

Let $\O$ be as in Definition \ref{Definition:prism over O}.
If $R$ is a perfectoid ring over $\O$
(i.e.\ $R$ is a perfectoid ring with a ring homomorphism $\O \to R$),
then $R^\flat$ is naturally a $k$-algebra, and
$W_{\O_E}(R^\flat)=W(R^\flat) \otimes_{W(\F_q)} \O_E$ is an $\O$-algebra.
Let
\[
\theta_{\O_E} \colon W_{\O_E}(R^\flat) \to R
\]
be
the homomorphism induced from $\theta$.

\begin{lem}[{cf.\ \cite[Proposition II.1.4]{FS}}]\label{Lemma:kernel generator perfectoid}
The kernel $\Ker \theta_{\O_E}$ of $\theta_{\O_E}$ is a principal ideal.
Moreover, any generator of $\Ker \theta_{\O_E}$ is a nonzerodivisor in $W_{\O_E}(R^\flat)$.
\end{lem}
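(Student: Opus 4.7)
The strategy is to construct an element $\xi_E \in \Ker\theta_{\O_E}$ that is \emph{distinguished} in the $\delta_E$-sense (i.e., $\delta_E(\xi_E) \in W_{\O_E}(R^\flat)^\times$), and then to verify that it generates $\Ker\theta_{\O_E}$ and is a nonzerodivisor. I plan to use that $\theta\colon W(R^\flat) \to R$ is surjective with principal kernel generated by a nonzerodivisor $\xi$ (the defining property of the perfectoid ring $R$); that the Frobenius on $R/p$ is surjective; and that by Lemma \ref{Lemma:perfect implies torsion free} and Corollary \ref{Corollary:equivalence of delta} the perfect $\delta_E$-ring $W_{\O_E}(R^\flat)$ is $\pi$-adically complete and $\pi$-torsion free with $W_{\O_E}(R^\flat)/\pi \cong R^\flat$.

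First, $\theta_{\O_E}$ is surjective because $\theta$ factors as $W(R^\flat) \to W_{\O_E}(R^\flat) \xrightarrow{\theta_{\O_E}} R$. To construct $\xi_E$, the surjectivity of Frobenius on $R/p$ lets me iteratively extract $p$-power roots of $\pi \in R/p$, producing an element $\pi^\flat \in R^\flat$ with $\theta([\pi^\flat]) \equiv \pi \pmod{p}$. Writing $\theta([\pi^\flat]) = \pi + p s$ and lifting $s$ through $\theta_{\O_E}$ to some $t \in W_{\O_E}(R^\flat)$, I set
\[
\xi_E := \pi - [\pi^\flat] + p\, t,
\]
which clearly lies in $\Ker\theta_{\O_E}$. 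Using $\delta_E(\pi) = 1-\pi^{q-1}$ (a unit mod $\pi$), $\delta_E([\pi^\flat]) = 0$ (Teichm\"uller lifts are $\phi$-compatible), and $p \in \pi\,\O_E$, a short computation shows $\delta_E(\xi_E) \equiv 1 \pmod{\pi}$, which is a unit in the $\pi$-adically complete ring $W_{\O_E}(R^\flat)$; thus $\xi_E$ is distinguished.

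The principality $\Ker \theta_{\O_E} = (\xi_E)$ follows by showing the induced surjection $W_{\O_E}(R^\flat)/(\xi_E) \twoheadrightarrow R$ is an isomorphism. I reduce this check modulo $\pi$: the left side becomes $R^\flat/(\bar{\xi}_E) = R^\flat/(\pi^\flat)$ since $\xi_E \equiv -[\pi^\flat] \equiv -\pi^\flat \pmod{\pi}$; the right side is $R/\pi$; and the canonical map between them is an isomorphism by the standard perfectoid tilting identification $R^\flat/(\pi^\flat) \cong R/\pi$ (cf.\ \cite[Lemma 3.10]{BMS}). Since both sides are $\pi$-adically complete (and $W_{\O_E}(R^\flat)/(\xi_E)$ is $\pi$-torsion-free by distinguishedness of $\xi_E$), the mod-$\pi$ isomorphism lifts to an isomorphism over $W_{\O_E}(R^\flat)$. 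For the nonzerodivisor assertion, any generator is $u\xi_E$ for a unit $u$, and $\xi_E$ itself is a nonzerodivisor: its reduction modulo $\pi$ is a unit multiple of $\pi^\flat$, which is a nonzerodivisor in $R^\flat$, and by $\pi$-adic completeness together with $\pi$-torsion-freeness of $W_{\O_E}(R^\flat)$ a standard lifting argument upgrades this to a nonzerodivisor in $W_{\O_E}(R^\flat)$.

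The main obstacle I anticipate is the reduction step $R^\flat/(\pi^\flat) \cong R/\pi$, which relies on $\pi$ having enough $p$-power roots in $R$. When this fails (e.g.\ when the ramification index $e$ is small relative to $p$), I would replace $\pi$ by an element $\varpi \in R$ satisfying $p \in \varpi^p R$ as in the definition of a perfectoid ring, produce $\varpi^\flat \in R^\flat$ with $\theta([\varpi^\flat])$ a unit multiple of $\varpi$, and modify the construction of $\xi_E$ accordingly (of the form $\pi - u[\varpi^\flat]^m + p\, t$ with suitable $u, m, t$); the computation of $\delta_E$ and the mod-$\pi$ identification then proceed by the same pattern, using $R^\flat/(\varpi^\flat) \cong R/\varpi$ from \cite[Lemma 3.10]{BMS}.
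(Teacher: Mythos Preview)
Your approach differs from the paper's and is closer to the direct argument in \cite[Proposition II.1.4]{FS}. The paper instead reduces to the classical case of $\Ker\theta \subset W(R^\flat)$: after refining $\varpi$ so that $\varpi \mid \pi$, it writes $\pi = \theta([\varpi^\flat]x)$ for some $x \in W(R^\flat)$, sets $\xi_E := \pi - [\varpi^\flat]x$, and uses the Eisenstein polynomial $\mathcal{E}(T)$ of $\pi$ over $W(k)$ to compute
\[
W_{\O_E}(R^\flat)/(\xi_E) \;\cong\; W(R^\flat)[T]/(\mathcal{E}(T),\, T-[\varpi^\flat]x) \;\cong\; W(R^\flat)/\mathcal{E}([\varpi^\flat]x),
\]
and then checks that $\mathcal{E}([\varpi^\flat]x)$ generates $\Ker\theta$ (the known $\Z_p$-statement). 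For the nonzerodivisor claim the paper simply observes that a generator $\xi'$ of $\Ker\theta$ remains a nonzerodivisor in $W_{\O_E}(R^\flat)$ by flatness of $W(\F_q) \to \O_E$, and $\xi' \in (\xi_E)$ forces $\xi_E$ to be one too.

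Your proposal has two genuine gaps beyond the obstacle you already flag. First, the assertion that $W_{\O_E}(R^\flat)/(\xi_E)$ is $\pi$-torsion-free ``by distinguishedness of $\xi_E$'' is false in general: if $R$ is a perfect $\F_q$-algebra then $\xi_E$ is a unit multiple of $\pi$ and the quotient is $R$ itself, of characteristic $p$. This breaks your mod-$\pi$ lifting as written. Second, your nonzerodivisor argument assumes $\pi^\flat$ is a nonzerodivisor in $R^\flat$, which fails whenever $R$ has a characteristic-$p$ direct factor (e.g.\ $R = V \times k$ forces $\pi^\flat = (\pi^\flat_V,0)$). The paper's one-line flatness argument avoids this entirely. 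Finally, your proposed fix $\xi_E = \pi - u[\varpi^\flat]^m + pt$ for the tilting step is too restrictive: in a general perfectoid ring there is no reason for $\pi$ to be congruent to a unit times an integral power of $\varpi$ modulo $p$. The correct repair is exactly the paper's move, namely $\xi_E = \pi - [\varpi^\flat]x$ with $x$ an arbitrary Witt-vector lift.
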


\begin{proof}
Let $\varpi \in R$ be an element such that
$R$ is $\varpi$-adically complete and $p \in (\varpi)^p$.
By \cite[Lemma 3.10 (i)]{BMS}, after replacing $\varpi$ by $\theta([(\varpi^\flat)^{1/p^n}])$ for some integer $n >0$, we have $\pi \in (\varpi)$.
Then we can write $\pi=\theta([\varpi^\flat] x)$ for some element $x \in W(R^\flat)$ since $\theta$ is surjective.
We shall show that $\pi-[\varpi^\flat] x$ generates $\Ker \theta_{\O_E}$.

Let $\mathcal{E}(T) \in W(k)[T]$ be the (monic) Eisenstein polynomial of $\pi \in \O$ so that we have $W(k)[T]/\mathcal{E}(T) \overset{\sim}{\to} \O$, $T \mapsto \pi$.
We see that
\begin{align*}
    W_{\O_E}(R^\flat)/(\pi-[\varpi^\flat] x) &\simeq W(R^\flat)[T]/(\mathcal{E}(T), T-[\varpi^\flat] x) \\
    &\simeq W(R^\flat)/\mathcal{E}([\varpi^\flat] x).
\end{align*}
It thus suffices to show that
$\mathcal{E}([\varpi^\flat] x)$
is a generator of the kernel $\Ker \theta$ of $\theta$.
It is clear that $\mathcal{E}([\varpi^\flat] x) \in \Ker \theta$.
Since $\mathcal{E}([\varpi^\flat] x)$ is a unit multiple of an element of the form
$p + [\varpi^\flat] y$,
the proof of \cite[Lemma 3.10]{BMS} shows that 
$\mathcal{E}([\varpi^\flat] x) \in \Ker \theta$
is a generator.

It remains to prove that any generator $\xi \in \Ker \theta_{\O_E}$ is a nonzerodivisor.
We recall that $\Ker \theta$ is generated by a nonzerodivisor $\xi' \in W(R^\flat)$.
Since $W(\F_q) \to \O_E$ is flat, the element $\xi'$ is a nonzerodivisor in $W_{\O_E}(R^\flat)$.
This implies that $\xi$ is a nonzerodivisor since we have $\xi' \in (\xi)=\Ker \theta_{\O_E}$.
\end{proof}

\begin{prop}[{cf.\ \cite[Example 1.3 (2)]{BS}}]\label{Proposition:perfectoid type}
Let $R$ be a perfectoid ring over $\O$ and we write $I_R:=\Ker \theta_{\O_E}$.
Then the pair
\[
(W_{\O_E}(R^\flat), I_R)
\]
is an orientable and bounded $\O_E$-prism over $\O$.
\end{prop}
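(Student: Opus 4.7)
The plan is to verify the four requirements in Definition \ref{Definition:OE prism} in turn, plus boundedness and the $\O$-structure. The $\delta_E$-ring structure on $A := W_{\O_E}(R^\flat)$ comes from Example \ref{Example:perfect delta pi ring} applied to the perfect $\F_q$-algebra $R^\flat$; the $\O$-structure is then provided by the composition $\O = W_{\O_E}(k) \to W_{\O_E}(R^\flat)$ obtained by applying $W_{\O_E}$ to the tilt $k \to R^\flat$ of the given map $\O \to R$ (and this is a homomorphism of $\delta_E$-rings by Corollary \ref{Corollary:homomorphism from perfect delta pi-ring}). That $I_R$ is a Cartier divisor is the content of Lemma \ref{Lemma:kernel generator perfectoid}, and in particular $(A,I_R)$ is orientable.

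Next I would check the completeness and boundedness conditions. Since $R^\flat$ is a perfect $\F_q$-algebra which is $\varpi^\flat$-adically complete, standard Witt-vector computations show that $A$ is $(\pi,[\varpi^\flat])$-adically complete, hence derived $(\pi,[\varpi^\flat])$-complete. Choosing a generator of $I_R$ of the form $d=\pi-[\varpi^\flat]x$, as produced in the proof of Lemma \ref{Lemma:kernel generator perfectoid}, one has $(\pi,[\varpi^\flat])=(\pi,d,[\varpi^\flat])$, and derived $(\pi,I_R)$-completeness then follows from derived $(\pi,[\varpi^\flat])$-completeness (alternatively, one can observe that $A/\pi=R^\flat$ is $[\varpi^\flat]$-adically complete and hence also $d$-adically complete, and lift). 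Boundedness of $p^\infty$-torsion in $A/I_R=R$ is a standard property of perfectoid rings (cf.\ \cite[Section 3]{BMS}).

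The main work is the verification that $\pi\in I_R+\phi(I_R)A$. For this I would prove that the generator $d=\pi-[\varpi^\flat]x$ is distinguished and invoke Lemma \ref{Lemma:distinguished} (2); note that both $\pi$ and $[\varpi^\flat]$ lie in $\mathrm{rad}(A)$ (by the $(\pi,[\varpi^\flat])$-adic completeness), so $d\in\mathrm{rad}(A)$ and the hypotheses of the lemma are satisfied. To compute $\delta_E(d)$, the key observation is that $\delta_E([\varpi^\flat])=0$ because $\phi([\varpi^\flat])=[(\varpi^\flat)^q]=[\varpi^\flat]^q$ on Teichmüller lifts. Expanding $\delta_E(d)=\delta_E(\pi+(-[\varpi^\flat]x))$ using the additivity and multiplicativity formulas, every term involving $-[\varpi^\flat]x$ is divisible by $[\varpi^\flat]$, so
\[
\delta_E(d)\equiv \delta_E(\pi)=1-\pi^{q-1}\pmod{[\varpi^\flat]A}.
\]
Since $1-\pi^{q-1}\in 1+\pi A\subset A^\times$ and $[\varpi^\flat]\in\mathrm{rad}(A)$, the element $\delta_E(d)$ is a unit, proving $d$ is distinguished. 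Lemma \ref{Lemma:distinguished} (2) then yields $\pi\in(d,\phi(d))=I_R+\phi(I_R)A$, completing the proof. The delicate point is the $\delta_E$-computation in this last step; everything else reduces to results already established in the section.
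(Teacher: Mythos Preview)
Your proof is correct and follows essentially the same approach as the paper's: both verify that the generator $d=\pi-[\varpi^\flat]x$ is distinguished by computing $\delta_E(d)\equiv 1-\pi^{q-1}$ modulo $[\varpi^\flat]$, and reduce completeness to $(\pi,[\varpi^\flat])$-adic completeness of $W_{\O_E}(R^\flat)$. The only differences are cosmetic: you are more explicit about the $\O$-structure and about invoking Lemma~\ref{Lemma:distinguished}~(2), while the paper phrases the $\delta_E$-computation as passing to the quotient $W_{\O_E}(R^\flat)/[\varpi^\flat]$ (noting its $\pi$-torsion freeness).
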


\begin{proof}
By the proof of Lemma \ref{Lemma:kernel generator perfectoid},
we know that 
$I_R$
is generated by a nonzerodivisor of the form $\xi=\pi-[(\varpi')^\flat]b$ where $\varpi' \in R$ is such that $R$ is $\varpi'$-adically complete and $p \in (\varpi')^p$.
In order to show that $W_{\O_E}(R^\flat)$ is $(\pi, \xi)$-adically complete, it suffices to show that $W(R^\flat)$ is $(p, [(\varpi')^\flat])$-adically complete, which is easy to check (see also the proof of \cite[Proposition 2.1.11 (b)]{CS}).
Moreover $W_{\O_E}(R^\flat)/\xi=R$ has bounded $p^\infty$-torsion by \cite[Lemma 3.8]{BS}.

It remains to show that $\pi \in (\xi, \phi(\xi))$.
It suffices to prove that $\delta_E(\xi) \in W_{\O_E}(R^\flat)^\times$.
The image of $\delta_E(\xi)$ in $W_{\O_E}(R^\flat)/[(\varpi')^\flat]$
is equal to $1-\pi^{q-1}$ (we note that $W_{\O_E}(R^\flat)/[(\varpi')^\flat]$ is $\pi$-torsion free) and hence is a unit, which in turn implies that $\delta_E(\xi) \in W_{\O_E}(R^\flat)^\times$.
\end{proof}

A homomorphism
$R \to S$
of perfectoid rings over $\O$ induces a map
$
(W_{\O_E}(R^\flat), I_R) \to (W_{\O_E}(S^\flat), I_S)
$
of $\O_E$-prisms over $\O$.

\begin{rem}\label{Remark::perfect O_E prisms and perfectoid rings over O_E}
    We say that an $\O_E$-prism $(A, I)$ is \textit{perfect} if the $\delta_E$-ring $A$ is perfect.
    By \cite[Lemma 3.10]{Marks}, a perfect $\O_E$-prism $(A, I)$ is bounded and orientable.
    Moreover, in \cite[Theorem 3.18]{Marks}, it is proved that
    $A/I$ is a perfectoid ring.
    These facts, together with Lemma \ref{Lemma:maps from perfectoid prisms to prism} and Proposition \ref{Proposition:perfectoid type}, imply that the functor $(A, I) \mapsto A/I$ from the category of perfect $\O_E$-prisms to that of perfectoid rings over $\O_E$ is an equivalence, whose inverse is given by $R \mapsto (W_{\O_E}(R^\flat), I_R)$.
    This is an analogue of \cite[Theorem 3.10]{BS}.
\end{rem}

\subsection{Prismatic sites}\label{Subsection:prismatic sites}

For a ring $A$, let $D(A)$ denote the derived category of $A$-modules.
Let $I \subset A$ be a finitely generated ideal.
We say that a complex $M \in D(A)$
is \textit{$I$-completely flat}
(resp.\ \textit{$I$-completely faithfully flat})
if
$M \otimes^{\L}_A A/I$ 
is concentrated in degree $0$
and it is a flat
(resp.\ faithfully flat) $A/I$-module.
One can easily check that this definition is equivalent to the one introduced in \cite[Section 1.2]{BS}.

\begin{lem}\label{Lemma:flat maps over prisms}
Let $(A, I)$ be a bounded $\O_E$-prism.
\begin{enumerate}
    \item For a complex $M \in D(A)$, the derived $(\pi, I)$-adic completion of $M$ is isomorphic to
    \[
    R\varprojlim_n (M \otimes^{\L}_A A/(\pi, I)^n).
    \]
    In particular, if $M$ is $(\pi, I)$-completely flat, then the derived $(\pi, I)$-adic completion of $M$ is concentrated in degree $0$.
    \item Let M be an $A$-module.
    Assume that $M$ is $(\pi, I)$-completely flat and derived $(\pi, I)$-adically complete.
    Then $M$ is $(\pi, I)$-adically complete.
Moreover
the natural homomorphism $M \otimes_A I \to M$ is injective
and $M/I^nM$ has bounded $p^\infty$-torsion for any $n$.
\end{enumerate}
\end{lem}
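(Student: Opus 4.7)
The plan for (1) is to invoke the standard identification of derived $(\pi,I)$-adic completion with the limit formula for finitely generated ideals: for any $M \in D(A)$, the derived $(\pi,I)$-completion is naturally isomorphic to $R\varprojlim_n (M \otimes^{\L}_A A/(\pi,I)^n)$ (see e.g.\ the discussion in \cite[Section 3]{BS} or \cite{SP}). For the ``in particular'' clause, given that $M$ is $(\pi,I)$-completely flat, I would argue by induction on $n$ that each $M \otimes^{\L}_A A/(\pi,I)^n$ is concentrated in degree $0$, using the short exact sequences
\[
0 \to (\pi,I)^n/(\pi,I)^{n+1} \to A/(\pi,I)^{n+1} \to A/(\pi,I)^n \to 0,
\]
whose leftmost terms are $A/(\pi,I)$-modules on which $M \otimes^{\L}_A (-)$ introduces no higher Tor by the flatness hypothesis. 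The resulting inverse system has surjective transition maps, so Mittag--Leffler kills $R^1\varprojlim$ and the derived completion sits in degree $0$.

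For (2), classical $(\pi,I)$-adic completeness of $M$ is immediate from (1) combined with the derived completeness hypothesis. For the injectivity of $M \otimes_A I \to M$: since $I$ is a Cartier divisor it is invertible, hence $A$-flat, so the long Tor sequence associated with $0 \to I \to A \to A/I \to 0$ reduces injectivity to $\Tor^A_1(M, A/I) = 0$, i.e.\ to $M \otimes^{\L}_A A/I$ being concentrated in degree $0$. This complex is bounded above and derived $\pi$-complete (as derived tensors preserve derived completeness), and its reduction modulo $\pi$ is $M \otimes^{\L}_A A/(\pi,I)$, which lies in degree $0$ by hypothesis. A derived Nakayama argument via the long exact sequence of $C \xrightarrow{\pi} C \to C/\pi$ (where $C := M \otimes^{\L}_A A/I$) then shows that for each $i > 0$, multiplication by $\pi$ is an isomorphism on $H^{-i}(C)$; since these cohomology groups are themselves derived $\pi$-complete, they must vanish, giving the claim.

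Finally, for the bounded $p^\infty$-torsion of $M/I^nM$: the work above also shows $M/IM$ is $\pi$-completely flat over $A/I$ (by base-changing the flatness of $M \otimes^{\L}_A A/(\pi,I)$ over $A/(\pi,I)$ along $A/I \to A/(\pi,I)$). Combined with the bounded $p^\infty$-torsion of $A/I$ (built into the definition of a bounded $\O_E$-prism), this yields bounded $p^\infty$-torsion of $M/IM$, in parallel with \cite[Lemma 3.7]{BS}. For $n \geq 2$ I would induct using the exact sequences
\[
0 \to (I^n/I^{n+1}) \otimes_{A/I} M/IM \to M/I^{n+1}M \to M/I^n M \to 0,
\]
whose left-exactness follows from $\Tor^A_1(M, A/I^n) = 0$ (obtained inductively from the case $n = 1$), and whose leftmost term is a twist of $M/IM$ by an invertible $A/I$-module, hence has the same $p^\infty$-torsion behavior. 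The main subtlety is the derived Nakayama step in paragraph two, together with the propagation of the bounded torsion through $\pi$-complete flatness used in paragraph three; this is where the \emph{boundedness} hypothesis on $(A,I)$ enters essentially.
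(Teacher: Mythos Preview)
Your treatment of (2) is sound and fleshes out what the paper defers to \cite[Lemma 3.7(2)]{BS}: the derived Nakayama argument showing $M\otimes^{\L}_A A/I$ is concentrated in degree $0$, and the induction on $n$ via the filtration by $I^n/I^{n+1}$, both go through as you describe.

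The gap is in (1). The identification of the derived $(\pi,I)$-completion with $R\varprojlim_n\bigl(M\otimes^{\L}_A A/(\pi,I)^n\bigr)$ is \emph{not} a general fact about finitely generated ideals; it is precisely the content of the first assertion and does not come for free. Derived completion is defined via Koszul complexes, and identifying it with the quotient formula requires the generating sequence to be \emph{weakly proregular} --- equivalently, that the higher Koszul homology pro-systems are pro-zero. For a generator $d\in I$ this amounts to controlling the pro-system $\{(A/d^n)[\pi^m]\}_m$, and it is exactly the bounded $p^\infty$-torsion of $A/I$ that makes this work. The paper handles this by citing \cite[Proposition 2.5(1)]{Tian} and \cite[Lemma 3.7(1)]{BS}, and separately notes that weak proregularity can be verified as in \cite[Theorem 7.3]{Yekutieli21}. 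Your closing remark that boundedness enters ``essentially'' only in paragraphs two and three is therefore backwards: without it, the first line of (1) already fails. Once (1) is in hand, your induction for the ``in particular'' clause and your Mittag--Leffler step are fine.
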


\begin{proof}
(1) The assertion follows from \cite[Proposition 2.5 (1)]{Tian} or the proof of \cite[Lemma 3.7 (1)]{BS}.
This can also be deduced from the results discussed in \cite{Yekutieli21}; by \cite[Corollary 3.5, Theorem 3.11]{Yekutieli21}, it suffices to prove that the ideal $(\pi, I) \subset A$ is weakly proregular in the sense of \cite[Definition 3.2]{Yekutieli21}, which follows from the same argument as in the proof of \cite[Theorem 7.3]{Yekutieli21}.

(2) It follows from (1) that 
$M$ is $(\pi, I)$-adically complete.
The second statement can be proved in the same way as \cite[Lemma 3.7 (2)]{BS}.
(In \textit{loc.cit.}, we should assume that $M$ is derived $(p, I)$-adically complete.)
\end{proof}

We say that
a map
$f \colon (A, I) \to (A', I')$ of bounded $\O_E$-prisms
is a \textit{(faithfully) flat map}
if $A \to A'$ is 
$(\pi,I)$-completely (faithfully) flat.
If $f$ is a faithfully flat map, then we say that $(A', I')$ is a flat covering of $(A, I)$.

\begin{rem}\label{Remark:flat maps of prisms}
For a map 
$f \colon (A, I) \to (A', I')$ of bounded $\O_E$-prisms,
we have
$A' \otimes^{\L}_A A/I \simeq A'/I'$ by Lemma \ref{Lemma:rigidity}, which in turn implies that
\begin{equation}\label{equation:reduction of map of prisms}
    A' \otimes^{\L}_A A/(\pi, I) \simeq A'/I' \otimes^{\L}_{A/I} A/(\pi, I).
\end{equation}
In particular $f$ is a (faithfully) flat map
if and only if $A/I \to A'/I'$ is 
$\pi$-completely (faithfully) flat.
\end{rem}

\begin{defn}\label{Definition:category of bounded prisms}
Let $R$ be a $\pi$-adically complete $\O_E$-algebra.
Let
\[
(R)_{\Prism, \O_E}
\]
denote the category of bounded $\O_E$-prisms $(A, I)$ together with a homomorphism $R \to A/I$ of $\O_E$-algebras.
The morphisms $f \colon (A, I) \to (A', I')$ in $(R)_{\Prism, \O_E}$ are the maps of $\O_E$-prisms such that $A/I \to A'/I'$ is a homomorphism of $R$-algebras.
We endow
the opposite category
$
(R)^{\op}_{\Prism, \O_E}
$
with the topology generated by the faithfully flat maps.
This topology is called the \textit{flat topology}.
\end{defn}

We note that $(\O_E)_{\Prism, \O_E}$ is just the category of bounded $\O_E$-prisms.
If $\O_E=\Z_p$, then $(R)_{\Prism, \O_E}$ is the category $(R)_{\Prism}$ introduced in \cite[Remark 4.7]{BS}.
The category $(R)_{\Prism}$ (or its opposite) is called the \textit{absolute prismatic site of $R$}.

\begin{rem}\label{Remark:pushout in prismatic site}
A diagram
    \[
    (A_2, I_2) \overset{g}{\leftarrow} (A_1, I_1) \overset{f}{\rightarrow} (A_3, I_3)
    \]
    in $(R)_{\Prism, \O_E}$ such that $g$ is a flat map, admits a colimit (i.e.\ a pushout).
    Indeed, by Lemma \ref{Lemma:flat maps over prisms} (1), the $(\pi, I_3)$-adic completion
    $
    A:=(A_2 \otimes_{A_1} A_3)^\wedge
    $
    is isomorphic to the derived $(\pi, I_3)$-adic completion of $A_2 \otimes^{\L}_{A_1} A_3$.
    In particular $A$ is $(\pi, I_3)$-completely flat over $A_3$.
    (Here we use that $J$-complete flatness is preserved under base change and taking derived $J$-adic completions.)
    It follows from Remark \ref{Remark:colimit and limit of delta rings}
    and Lemma \ref{Lemma:completion of delta ring}
    that
    $A$ admits a unique $\delta_E$-structure that is compatible with the $\delta_E$-structures on $A_2$ and $A_3$.
    By Lemma \ref{Lemma:flat maps over prisms} (2),
    we see that $(A, I_3A)$ is a bounded $\O_E$-prism.
    By construction, $(A, I_3A)$ is a colimit of the above diagram.
    As a result, it follows that
    $
    (R)^{\op}_{\Prism, \O_E}
    $
    is indeed a site.
\end{rem}

\begin{rem}[{cf.\ \cite[Corollary 3.12]{BS}}]\label{Remark:structure sheaf}
A faithfully flat map
$(A, I) \to (A', I')$
induces
faithfully flat homomorphisms
$A/(\pi, I)^n \to A'/(\pi, I')^n$
and $A/(\pi^n, I) \to A'/(\pi^n, I')$
for any $n$.
It follows that the functors
\begin{align*}
    \O_{\Prism} &\colon (R)_{\Prism, \O_E} \to \mathrm{Set}, \quad (A, I) \mapsto A, \\
    \O_{\overline{\Prism}} &\colon
(R)_{\Prism, \O_E} \to \mathrm{Set}, \quad (A, I) \mapsto A/I
\end{align*}
form sheaves with respect to the flat topology.
Here $\mathrm{Set}$ is the category of sets.
\end{rem}

More generally, we have the following descent result.

\begin{prop}\label{Proposition:flat descent for finite projective modules}
The fibered category
over
$(\O_E)^{\op}_{\Prism, \O_E}$
which associates to each $(A, I) \in (\O_E)_{\Prism, \O_E}$
the category of finite projective $A$-modules
satisfies descent with respect to the flat topology.
The same holds for finite projective $A/I$-modules.
\end{prop}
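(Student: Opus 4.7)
The plan is to perform descent modulo $(\pi, I)^m$ via classical Grothendieck faithfully flat descent, and then pass to the inverse limit. Given a faithfully flat map $f \colon (A, I) \to (A', I')$ of bounded $\O_E$-prisms, iterated pushouts in $(\O_E)_{\Prism, \O_E}$ (which exist by Remark \ref{Remark:pushout in prismatic site}) yield a \v{C}ech nerve $(A'^{(n)}, I'^{(n)})_{n \geq 1}$. A descent datum for finite projective modules is a finite projective $A'$-module $M'$ together with an isomorphism $\varphi$ over $A'^{(2)}$ satisfying the cocycle condition over $A'^{(3)}$.

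First, I would reduce modulo $(\pi, I)^m$ for each integer $m \geq 1$. By Remark \ref{Remark:structure sheaf}, the induced map $A/(\pi, I)^m \to A'/(\pi, I')^m$ is classically faithfully flat, and similarly on all stages of the \v{C}ech nerve. Applying classical Grothendieck faithfully flat descent for finite projective modules, the reduced descent datum yields a finite projective $A/(\pi, I)^m$-module $M_m$ with $M_m \otimes_{A/(\pi, I)^m} A'/(\pi, I')^m \simeq M'/(\pi, I')^m M'$ compatibly with $\varphi$. By uniqueness of descent, the family $(M_m)_m$ assembles into an inverse system with surjective transition maps.

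Next, I would set $M := \varprojlim_m M_m$, which is a $(\pi, I)$-adically complete $A$-module. Using Lemma \ref{Lemma:flat maps over prisms} (1), one identifies $M$ with its own derived $(\pi, I)$-adic completion and deduces that $M$ is $(\pi, I)$-completely flat over $A$, since $M \otimes^{\L}_A A/(\pi, I)^m$ is represented by the flat module $M_m$. In particular, $M/(\pi, I)M \simeq M_1$ is finite projective over $A/(\pi, I)$. The isomorphism $M \otimes_A A' \simeq M'$ is then checked modulo $(\pi, I')^m$ for every $m$ and follows by $(\pi, I')$-adic completeness of both sides, invoking Lemma \ref{Lemma:flat maps over prisms} (2) to ensure honest completeness on the left.

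The hard part will be concluding that $M$ itself is finite projective over $A$, not merely finitely generated and $(\pi, I)$-completely flat. The plan for this step is to lift a projective presentation of $M_1$: realize $M_1$ as a direct summand of a finite free $A/(\pi, I)$-module via an idempotent $e_1$, then iteratively lift $e_1$ to compatible idempotents $e_m$ on a finite free $A/(\pi, I)^m$-module using idempotent lifting along the nilpotent extensions $A/(\pi, I)^{m} \to A/(\pi, I)^{m-1}$, and finally take the limit to produce an idempotent $e$ on a finite free $A$-module whose image one identifies with $M$ by checking the $M_m$ at each level. The argument for finite projective $A/I$-modules is entirely analogous: one reduces modulo $\pi^m$ instead of $(\pi, I)^m$, using Remark \ref{Remark:flat maps of prisms} to see that $A/I \to A'/I'$ is $\pi$-completely faithfully flat, and invoking the bounded $p^\infty$-torsion of $A/I$ to control the limit.
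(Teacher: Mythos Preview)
Your approach is correct and essentially the same as the paper's: descend modulo $(\pi,I)^m$ (respectively $\pi^m$) by classical faithfully flat descent, then pass to the limit. The paper compresses your ``hard part'' into a single citation, namely the equivalence between finite projective $B$-modules and compatible systems of finite projective $B/J^n$-modules for a $J$-adically complete ring $B$ (e.g.\ \cite[Lemma 4.11]{Bhatt16}), whereas you unpack this via idempotent lifting; note that your intermediate claim $M \otimes^{\L}_A A/(\pi,I)^m \simeq M_m$ is not cleanly justified at that stage, but it becomes unnecessary once the idempotent-lifting argument is in place.
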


\begin{proof}
For a ring $B$ and an ideal $J \subset B$ such that
$B$ is $J$-adically complete,
the natural functor
\[
\left.
\left\{
\begin{tabular}{c}
     finite projective $B$-modules
\end{tabular}
\right\}
\right.
\to {2-\varprojlim}_{n}
\left\{
\begin{tabular}{c}
     finite projective $B/J^n$-modules
\end{tabular}
\right\}
\]
is an equivalence; see for example \cite[Lemma 4.11]{Bhatt16}.
The assertions of the proposition follow from this fact and faithfully flat descent for finite projective modules over $A/(\pi, I)^n$ and $A/(\pi^n, I)$, respectively.
See also \cite[Lemma A.1, Proposition A.3]{Anschutz-LeBras}.
\end{proof}

\begin{defn}\label{Definition:category of prisms with flat topology}
For a bounded $\O_E$-prism $(A, I)$,
let
\[
(A, I)_\Prism
\]
be the category of bounded $\O_E$-prisms $(B, J)$ with a map $(A, I) \to (B, J)$.
We endow $(A, I)^{\op}_\Prism$ with the flat topology induced from $(\O_E)^{\op}_{\Prism, \O_E}$.
\end{defn}

\begin{ex}\label{Example:prismatic sites}
\ 
\begin{enumerate}
    \item
    Let $\O$ be as in Definition \ref{Definition:prism over O}.
    The category $(\O)_{\Prism, \O_E}$ is the same as the category of bounded $\O_E$-prisms over $\O$ by Lemma \ref{Lemma:maps from perfectoid prisms to prism}.
    \item Let $R$ be a perfectoid ring over $\O_E$.
    It follows from Lemma \ref{Lemma:maps from perfectoid prisms to prism} that
    $(R)_{\Prism, \O_E}$ is the same as the category
    $(W_{\O_E}(R^\flat), I_R)_\Prism$.
\end{enumerate}
\end{ex}

Let $A \to B$ be a ring homomorphism
and $I \subset A$ a finitely generated ideal.
We say that
$A \to B$
is \textit{$I$-completely \'etale}
if
$B$ is derived $I$-adically complete,
$B \otimes^{\L}_A A/I$ 
is concentrated in degree $0$,
and $B/IB$ is \'etale over $A/I$.
We write
$A_{I\mathchar`-\et}$
for the category of $I$-completely \'etale $A$-algebras.
If $I=0$, then $A_{I\mathchar`-\et}$ is just the category $A_{\et}$ of \'etale $A$-algebras.

\begin{lem}\label{Lemma:etale morphism and reduction}
Let $(A, I)$ be a bounded $\O_E$-prism.
\begin{enumerate}
    \item A ring homomorphism $A/I \to C$ is $\pi$-completely \'etale if and only if $C$ is $\pi$-adically complete and $C/\pi^n$ is \'etale over $A/(\pi^n, I)$ for every integer $n \geq 1$.
    If this is the case, then $C$ has bounded $p^{\infty}$-torsion.
    \item A ring homomorphism $A \to B$ is $(\pi, I)$-completely \'etale if and only if $B$ is $(\pi, I)$-adically complete and $B/(\pi, I)^n$ is \'etale over $A/(\pi, I)^n$ for every $n \geq 1$.
    If this is the case, then
    $B \otimes^\L_A A/I \overset{\sim}{\to} B/IB$ and $A/I \to B/IB$ is $\pi$-completely \'etale.
    \item
   The functors
   \[
   A_{(\pi, I)\mathchar`-\et} \to (A/I)_{\pi\mathchar`-\et} \to (A/(\pi, I))_{\et},
   \]
   where the first one is defined by $B \mapsto B/IB$ and the second one is defined by $C \mapsto C/\pi$,
   are equivalences.
\end{enumerate}
\end{lem}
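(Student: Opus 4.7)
\emph{Plan.} The strategy is to reduce everything to the topological invariance of the \'etale site along nilpotent thickenings: for a surjection of rings $R \twoheadrightarrow \overline{R}$ with locally nilpotent kernel, the base-change functor from \'etale $R$-algebras to \'etale $\overline{R}$-algebras is an equivalence. I would apply this to the two towers $A/(\pi, I)^n \twoheadrightarrow A/(\pi, I)$ and $A/(\pi^n, I) \twoheadrightarrow A/(\pi, I)$ and assemble the results by passage to the inverse limit.

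The heart of the lemma is part (3). For the second functor, given an \'etale $A/(\pi, I)$-algebra $\overline{C}_0$, let $\widetilde{C}_n$ be the unique \'etale lift to $A/(\pi^n, I)$, and set $C := \varprojlim_n \widetilde{C}_n$. Each $\widetilde{C}_n$ is flat over $A/(\pi^n, I)$, so one checks that $C$ is $\pi$-adically complete with $C/\pi^n \simeq \widetilde{C}_n$; in particular $C/\pi \simeq \overline{C}_0$ and $C$ is $\pi$-completely flat over $A/I$, hence $\pi$-completely \'etale. Full faithfulness is immediate from the $\pi$-adic completeness of the target. The first functor is handled identically via the thickenings $A/(\pi, I)^n \twoheadrightarrow A/(\pi, I)$, producing $B = \varprojlim_n \overline{C}_n$ that is $(\pi, I)$-completely \'etale over $A$. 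To identify the two constructions and conclude $B/IB \simeq C$, I would use the ideal equality $(\pi, I)^n + I = (\pi^n, I)$, so that $\overline{C}_n/I\overline{C}_n$ is an \'etale lift of $\overline{C}_0$ over $A/(\pi^n, I)$ and coincides with $\widetilde{C}_n$ by the uniqueness of lifts; combined with the $\pi$-adic completeness of $B/IB$ inherited from $B$, this gives $B/IB \simeq \varprojlim_n \widetilde{C}_n = C$.

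Parts (1) and (2) are then reformulations. In each case the forward direction uses that a derived complete, completely flat module is classically complete: for (2) this is exactly Lemma~\ref{Lemma:flat maps over prisms}~(2), and for (1) the same proof applies to the ring $A/I$, which is $\pi$-adically complete with bounded $p^\infty$-torsion by Remark~\ref{Remark:derived and classical complete} and the definition of a bounded $\O_E$-prism. Once classical completeness is in hand, the \'etaleness of the finite quotients follows from part (3) applied to $C/\pi$ (resp.\ $B/(\pi, I)$). The converse directions are immediate: classical completeness implies derived completeness, and flatness of each quotient gives complete flatness. The bounded $p^\infty$-torsion of $C$ in (1) follows from $\pi$-complete flatness over $A/I$. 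The extra claim in (2) that $B \otimes^{\L}_A A/I \simeq B/IB$ follows from part (3) (which realizes $B/IB$ as $\pi$-completely \'etale over $A/I$, hence forces $I$-complete flatness of $B$), which I would verify by passing to an orientable flat cover as in Remark~\ref{Remark:flat locally orientable}, where $I$ is generated by a single nonzerodivisor $d$, so that the higher Tor reduces to the injectivity of multiplication by $d$ on $B$.

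The main obstacle will be the identification $B/IB \simeq \varprojlim_n \widetilde{C}_n$ in part (3): interchanging the quotient by $I$ with the inverse limit over $n$ requires the ideal equality $(\pi, I)^n + I = (\pi^n, I)$ together with careful use of $\pi$-adic completeness to pass from $B/(IB + \pi^n B) = \widetilde{C}_n$ to the desired identity. A secondary point is the degree-zero concentration of $B \otimes^{\L}_A A/I$ in part (2), for which the orientable reduction is essential.
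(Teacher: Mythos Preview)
Your overall architecture—proving (3) first via topological invariance of the étale site along the nilpotent thickenings $A/(\pi^n,I)\twoheadrightarrow A/(\pi,I)$, then reading off (1) and (2)—is natural, and much of it goes through. The paper takes the reverse route: it proves (1) and (2) directly and lets (3) fall out. The substantive difference is where the algebraization input enters, and this is exactly where your sketch has a gap.

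The problematic step is the claim that the inverse limit $C=\varprojlim_n \widetilde{C}_n$ is $\pi$-completely flat over $A/I$, and the parallel assertion ``flatness of each quotient gives complete flatness'' in the converse directions of (1) and (2). Knowing that each $C/\pi^n C$ is flat over $A/(\pi^n,I)$ does \emph{not} formally imply that $C\otimes^{\L}_{A/I}(A/I)/\pi$ is concentrated in degree~$0$: since $A/I$ may have $\pi$-torsion and is typically non-Noetherian, $(A/I)/\pi$ need not be perfect over $A/I$, and one cannot commute the derived tensor product with the inverse limit defining $C$. Even the identification $C/\pi^n C\simeq\widetilde{C}_n$ is not free—it asks that $\ker(C\to\widetilde{C}_n)=\pi^nC$, which is again delicate when $A/I$ has $\pi$-torsion.

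The paper resolves exactly this point by invoking the Elkik--Arabia algebraization theorem (\cite[Section 1.1]{Arabia}, \cite[Tag 04D1]{SP}): one lifts the étale $A/(\pi,I)$-algebra $\overline{C}_0$ all the way to an honest étale, hence flat, $A/I$-algebra $C_0$. Then the derived and classical $\pi$-adic completions of $C_0$ agree and coincide with your $C$, and flatness of $C_0$ over $A/I$ gives
\[
C\otimes^{\L}_{A/I}(A/I)/\pi\;\simeq\;C_0\otimes^{\L}_{A/I}(A/I)/\pi\;\simeq\;C_0/\pi,
\]
concentrated in degree~$0$. The same argument with $A$ and $(\pi,I)$ handles (2); the extra claim $B\otimes^{\L}_A A/I\simeq B/IB$ is then already contained in Lemma~\ref{Lemma:flat maps over prisms}(2), so your orientable-cover reduction, while correct, is not needed. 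Once you insert this algebraization step, your argument and the paper's become essentially the same.
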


\begin{proof}
This result is well-known to specialists, but we include a proof for the convenience of the reader.

(1) Assume that $A/I \to C$ is $\pi$-completely \'etale.
Then, since $A/I$ has bounded $p^{\infty}$-torsion,
\cite[Lemma 4.7]{BMS2} implies that $C$ is $\pi$-adically complete and has bounded $p^{\infty}$-torsion.
Since $C/\pi^n$ is flat over $A/(\pi^n, I)$ 
and $C/\pi$ is \'etale over $A/(\pi, I)$,
it follows that $C/\pi^n$ is \'etale over $A/(\pi^n, I)$ for any $n$.

We next prove the ``if'' direction, so we assume that $C$ is $\pi$-adically complete and $C/\pi^n$ is \'etale over $A/(\pi^n, I)$ for any $n$.
We want to show that $C \otimes^{\L}_{A/I} A/(\pi, I)$ is concentrated in degree $0$.
There exists an \'etale $A/I$-algebra $C_0$ such that $C_0/\pi \simeq C/\pi$ over $A/(\pi, I)$; see for example \cite[Section 1.1]{Arabia} or \cite[Tag 04D1]{SP} (this is known as a special case of Elkik's result \cite{Elkik}).
One easily sees that
the derived $\pi$-adic completion of $C_0$, the $\pi$-adic completion of $C_0$,
and $C$ are isomorphic to each other.
Then we obtain
\[
C \otimes^{\L}_{A/I} A/(\pi, I) \simeq C_0 \otimes^{\L}_{A/I} A/(\pi, I) \simeq C_0/\pi.
\]
This proves the assertion.

(2) Assume that $A \to B$ is $(\pi, I)$-completely \'etale.
We easily see that $B/(\pi, I)^n$ is \'etale over $A/(\pi, I)^n$.
It follows from Lemma \ref{Lemma:flat maps over prisms} that $B$ is $(\pi, I)$-adically complete
and
we have
$B \otimes^\L_A A/I \overset{\sim}{\to} B/IB$.
It is then clear that $A/I \to B/IB$ is $\pi$-completely \'etale.

The ``if'' direction can be proved by the same argument as in (1).
Suppose that $B$ is $(\pi, I)$-adically complete and $B/(\pi, I)^n$ is \'etale over $A/(\pi, I)^n$ for any $n$.
As above, 
there exists an \'etale $A$-algebra $B_0$ such that
the $(\pi, I)$-adic completion of $B_0$ is isomorphic to $B$.
It follows from Lemma \ref{Lemma:flat maps over prisms} (1)
that
$B$ is isomorphic to the derived 
$(\pi, I)$-adic completion of $B_0$, which in turn implies that $B$ is $(\pi, I)$-completely \'etale over $A$.

(3) This follows from the proofs of (1) and (2).
\end{proof}

\begin{lem}[{cf.\ \cite[Lemma 2.18]{BS}}]\label{Lemma:etale morphism prism}
Let $(A, I)$ be a bounded $\O_E$-prism
and $A \to B$ a $(\pi, I)$-completely \'etale homomorphism.
Then $B$ admits a unique $\delta_E$-structure compatible with that on $A$.
Moreover, the pair $(B, IB)$ is a bounded $\O_E$-prism.
\end{lem}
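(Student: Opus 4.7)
The approach is to adapt the proof of \cite[Lemma 2.18]{BS} to the $\O_E$-setting, using the Witt-vector characterization of $\delta_E$-structures from Remark \ref{Remark:ramified Witt vectors of length 2} as the principal tool.

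The first step is to reduce the problem to the uncompleted étale case. By Lemma \ref{Lemma:etale morphism and reduction} (2) together with the Elkik-type lifting used in its proof, $B$ is the $(\pi,I)$-adic completion of some honestly étale $A$-algebra $B_0$. Since $(\pi,I)$ contains $\pi$, any $\delta_E$-structure on $B_0$ compatible with the one on $A$ would pass uniquely to the completion $B$ via Lemma \ref{Lemma:completion of delta ring}. Therefore, it suffices to produce a unique $\delta_E$-structure on $B_0$ compatible with $A$. By Remark \ref{Remark:ramified Witt vectors of length 2}, this is equivalent to producing a unique ring-theoretic section $s_{B_0}\colon B_0 \to W_{\O_E,\pi,2}(B_0)$ of the projection $\epsilon_{B_0}$ whose composition with $A \to B_0$ equals $\iota \circ s_A$, where $\iota\colon W_{\O_E,\pi,2}(A) \to W_{\O_E,\pi,2}(B_0)$ is the functoriality map and $s_A$ encodes the $\delta_E$-structure on $A$.

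The heart of the argument is the construction of $s_{B_0}$ via étale rigidity. The key observation is that the kernel $J$ of $\epsilon_{B_0}$, consisting of elements of the form $(0, b)$, satisfies $J \cdot J = \pi J$, so modulo $\pi$ the projection $\epsilon_{B_0}$ becomes a square-zero thickening. Since $A \to B_0$ is étale, the standard deformation-theoretic rigidity of étale maps along square-zero thickenings produces the required section modulo $\pi$ uniquely; one then bootstraps $\pi$-adically along the tower $\{W_{\O_E,\pi,2}(B_0)/\pi^n\}$ to produce the genuine section $s_{B_0}$, and uniqueness at every stage gives uniqueness overall.

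It remains to verify the $\O_E$-prism axioms for $(B, IB)$, which are largely formal. Derived $(\pi, IB)$-adic completeness coincides with $(\pi, I)$-adic completeness of $B$. Passing to an orientable faithfully flat cover of $(A, I)$ via Remark \ref{Remark:flat locally orientable}, any generator of $I$ remains a nonzerodivisor in the corresponding cover of $B$ by $(\pi,I)$-complete flatness of $A \to B$, so $IB$ is a Cartier divisor. The congruence $\pi \in IB + \phi(IB)B$ is inherited from $(A, I)$ via the $\delta_E$-ring map $A \to B$, and bounded $p^\infty$-torsion of $B/IB$ follows from Lemma \ref{Lemma:etale morphism and reduction} (1) applied to the $\pi$-completely étale map $A/I \to B/IB$ produced by part (2) of the same lemma. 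I expect the main obstacle to be the étale-lifting construction of $s_{B_0}$: although the underlying idea (square-zero deformation theory plus $\pi$-adic bootstrap) is standard, one must carefully match up the ring structure of $W_{\O_E,\pi,2}$ with the étale rigidity of $A \to B_0$ and verify compatibility with the given $s_A$, where the Witt-vector combinatorics specific to the $\O_E$-setting enter nontrivially.
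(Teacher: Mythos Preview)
Your overall strategy is the same as the paper's, but there is a genuine gap in the step where you construct $s_{B_0}\colon B_0 \to W_{\O_E,\pi,2}(B_0)$. Your $\pi$-adic bootstrap produces compatible maps $B_0 \to W_{\O_E,\pi,2}(B_0)/\pi^n$ and hence a map
\[
B_0 \longrightarrow \varprojlim_n W_{\O_E,\pi,2}(B_0)/\pi^n,
\]
but this inverse limit is the $\pi$-adic completion of $W_{\O_E,\pi,2}(B_0)$, not $W_{\O_E,\pi,2}(B_0)$ itself. Since $B_0$ is an honestly \'etale $A$-algebra with no completeness imposed, $W_{\O_E,\pi,2}(B_0)$ is typically not $\pi$-adically complete, so you have not actually produced a section of $\epsilon_{B_0}$ and hence no $\delta_E$-structure on $B_0$. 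Your reduction ``it suffices to produce a unique $\delta_E$-structure on $B_0$'' is therefore not achieved by the argument you give.

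The paper sidesteps this by working with $W_{\O_E,\pi,2}(B)$ rather than $W_{\O_E,\pi,2}(B_0)$. Using the short exact sequence $0 \to \phi_*B \to W_{\O_E,\pi,2}(B) \to B \to 0$ and the fact that both outer terms are $(\pi,I)$-adically complete, one checks that $W_{\O_E,\pi,2}(B)$ is $(\pi,I)$-adically complete; since $(\Ker\epsilon)^2 = \pi\,\Ker\epsilon$, it is then complete (equivalently, henselian) along $\Ker\epsilon$. \'Etaleness of $A \to B_0$ now gives a unique $A$-algebra map $s_0\colon B_0 \to W_{\O_E,\pi,2}(B)$ lifting $B_0 \to B$, and $(\pi,I)$-completeness extends $s_0$ uniquely to the desired section $s\colon B \to W_{\O_E,\pi,2}(B)$. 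In other words, the fix is to map into the \emph{completed} Witt vectors from the start; the appeal to Lemma~\ref{Lemma:completion of delta ring} is then unnecessary. Your verification that $(B,IB)$ is a bounded $\O_E$-prism is fine.
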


\begin{proof}
It suffices to prove the first statement by Lemma \ref{Lemma:flat maps over prisms}.
For this, we proceed as in the proof of \cite[Lemma 2.18]{BS}.

We regard
$W_{\O_E, \pi, 2}(B)$
as an $A$-algebra via the composition $A \to W_{\O_E, \pi, 2}(A) \to W_{\O_E, \pi, 2}(B)$,
where $A \to W_{\O_E, \pi, 2}(A)$ is the homomorphism corresponding to the $\delta_E$-structure on $A$ (Remark \ref{Remark:ramified Witt vectors of length 2}).
Then $W_{\O_E, \pi, 2}(B)$ is $(\pi, I)$-adically complete.
Indeed, we have an exact sequence of $A$-modules
\[
0 \to \phi_*B \overset{V}{\to} W_{\O_E, \pi, 2}(B) \overset{\epsilon}{\to} B \to 0,
\]
where we write $\phi_*B$ for $B$ regarded as an $A$-algebra via the composition $A \overset{\phi}{\to} A \to B$, and
$V \colon \phi_*B \to W_{\O_E, \pi, 2}(B)$ is defined by $x \mapsto (0, x)$.
Since $B \otimes^{\L}_A A/(\pi, I)^n$ is concentrated in degree $0$
and both $\phi_*B$ and $B$ are $(\pi, I)$-adically complete, we can conclude that $W_{\O_E, \pi, 2}(B)$ is $(\pi, I)$-adically complete.

As in the proof of Lemma \ref{Lemma:etale morphism and reduction},
there exists an \'etale $A$-algebra $B_0$ such that
the $(\pi, I)$-adic completion of $B_0$ is isomorphic to $B$.
Since $W_{\O_E, \pi, 2}(B)$ is $(\Ker \epsilon)$-adically complete and $A \to B_0$ is \'etale,
there exists a unique homomorphism $s_0 \colon B_0 \to W_{\O_E, \pi, 2}(B)$ of $A$-algebras such that $\epsilon \circ s_0$ coincides with $B_0 \to B$.
Then, since $W_{\O_E, \pi, 2}(B)$ is $(\pi, I)$-adically complete, we see that
$s_0 \colon B_0 \to W_{\O_E, \pi, 2}(B)$
extends to a unique homomorphism $s \colon B \to W_{\O_E, \pi, 2}(B)$ of $A$-algebras such that $\epsilon \circ s= \id_B$, which corresponds to a unique $\delta_E$-structure on $B$ compatible with that on $A$ by virtue of Remark \ref{Remark:ramified Witt vectors of length 2}.
\end{proof}

\begin{ex}\label{Example:perfectoid ring etale morphism}
Let $R$ be a perfectoid ring over $\O_E$
and let $R \to S$ be a $\pi$-completely \'etale (or equivalently, $p$-completely \'etale) homomorphism.
By \cite[Corollary 2.10]{Anschutz-LeBras} or \cite[Lemma 8.11]{Lau18}, we see that $S$ is a perfectoid ring.
Moreover, the isomorphism $(\ref{equation:reduction of map of prisms})$ implies that
$W_{\O_E}(R^\flat) \to W_{\O_E}(S^\flat)$ is $(\pi, I_R)$-completely \'etale.
\end{ex}

Let $(A, I)$ be a bounded $\O_E$-prism.
We say that
a homomorphism $B \to B'$
of $(\pi, I)$-completely \'etale $A$-algebras
is a \textit{$(\pi, I)$-completely \'etale covering} if
\[
\Spec B'/(\pi, I) \to \Spec B/(\pi, I)
\]
is surjective, or equivalently, the homomorphism $B \to B'$ is $(\pi, I)$-completely faithfully flat.
We note that $B \to B'$ is automatically $(\pi, I)$-completely \'etale.

\begin{defn}\label{Definition:etale prismatic site}
We write
\[
(A, I)_\et
\]
for the category of $(\pi, I)$-completely \'etale $A$-algebras instead of $A_{(\pi, I)\mathchar`-\et}$.
We endow
the opposite category
$
(A, I)^{\op}_\et
$
with the topology generated by the $(\pi, I)$-completely \'etale coverings, which is called the \textit{$(\pi, I)$-completely \'etale topology}.
\end{defn}

The category
$(A, I)^{\op}_\et$
admits fiber products.
Indeed, a colimit of the diagram
$
C \leftarrow B \rightarrow D
$
in $(A, I)_\et$
is given by the $(\pi, I)$-adic completion of $C \otimes_B D$; see Remark \ref{Remark:pushout in prismatic site}.
It follows that $(A, I)^{\op}_\et$ is a site.

\begin{rem}\label{Remark:etale category is a subcategory}
Recall that, for a $(\pi, I)$-completely \'etale $A$-algebra $B \in (A, I)_\et$,
the pair $(B, IB)$ is naturally a bounded $\O_E$-prism
by Lemma \ref{Lemma:etale morphism prism}.
We can regard $(A, I)_\et$ as a full subcategory of the category $(A, I)_\Prism$.
The $(\pi, I)$-completely \'etale topology on $(A, I)^{\op}_\et$ coincides with the one induced from the flat topology.
\end{rem}

\begin{rem}\label{Remark:flat locally orientable}
    Any bounded $\O_E$-prism $(A, I)$ admits a $(\pi, I)$-completely \'etale covering $A \to B$
    such that $(B, IB)$ is orientable.
    Indeed, there exists an \'etale and faithfully flat homomorphism $A \to A'$ such that $IA'$ is principal.
    The $(\pi, I)$-adic completion $B$ of $A'$ is a $(\pi, I)$-completely \'etale covering of $A$ (by Lemma \ref{Lemma:flat maps over prisms}).
    Since $IA'$ is principal, the bounded $\O_E$-prism $(B, IB)$ is orientable.
\end{rem}

\subsection{Prismatic envelopes for regular sequences}\label{Subsection:Prismatic envelopes for regular sequences}

The existence and the flatness of prismatic envelopes for regular sequences are proved in \cite[Proposition 3.13]{BS}.
In this subsection, 
we give an analogous result for $\O_E$-prisms.
We will freely use the formalism of \textit{animated rings} here.
For the definition and properties of animated rings, see for example \cite[Section 5]{CS} and \cite[Appendix A]{BL}.
(See also \cite[Chapter 25]{SAG}, where animated rings are called simplicial rings.)

We recall some terminology from \cite{CS} and \cite{BS}.
To an animated ring $A$, we can attach a graded ring of homotopy groups
$\oplus_{n \geq 0}\pi_n(A)$.
For an animated ring $A$, the \textit{derived quotient} of $A$ with respect to a sequence $x_1, \dotsc, x_n \in \pi_0(A)$ is defined by
\[
A/^{\L}(x_1, \dotsc, x_n):= A \otimes^{\L}_{\Z[X_1, \dotsc, X_n]}  \Z[X_1, \dotsc, X_n]/(X_1, \dotsc, X_n).
\]
Here $\Z[X_1, \dotsc, X_n] \to A$ is a morphism such that the induced ring homomorphism
$\Z[X_1, \dotsc, X_n] \to \pi_0(A)$ is given by $X_i \mapsto x_i$.
In \cite{BS}, the derived quotient is denoted by $\Kos(A; x_1, \dotsc, x_n)$.
We say that a morphism $A \to B$ of animated rings is \textit{flat} (resp.\ \textit{faithfully flat}) if
$\pi_0(B)$ is flat (resp.\ faithfully flat) over $\pi_0(A)$ and we have
$\pi_n(A) \otimes_{\pi_0(A)} \pi_0(B) \overset{\sim}{\to} \pi_n(B)$ for any $n \geq 0$.

Before stating the result, let us quickly recall the definition of an $\O_E$-PD structure, and its relation to $\delta_E$-structures.

\begin{defn}[{\cite[Section 10]{HopkinsGross}, \cite[Definition 14]{Faltings02}}]\label{Definition:pi PD strudture}
    Let $A$ be an $\O_E$-algebra and $I \subset A$ an ideal.
    An \textit{$\O_E$-PD structure} on $I$ is a map $\gamma_\pi \colon I \to I$ of sets with the following properties:
    \begin{enumerate}
    \item $\pi\gamma_\pi(x)=x^q$.
    \item $\gamma_\pi(ax)=a^q\gamma_\pi(x)$ where $a \in A$.
    \item $\gamma_\pi(x+y)=\gamma_\pi(x)+\gamma_\pi(y)+(x^q+y^q-(x+y)^q)/\pi$.
    \end{enumerate}
\end{defn}

\begin{ex}[{\cite[Section 7]{Faltings02}}]\label{Example:pi PD polynomial ring}
    Let $n \geq 0$ be an integer
    and
    let
    $\O_E[ (X_{i, j}) ]$
    be the polynomial ring with variables $X_{i, j}$ indexed by integers $i, j$ with $1 \leq i \leq n$ and $j \geq 0$.
    We write
    \[
    \O_E[X_1, \dotsc, X_n]^{\mathrm{PD}}
    \]
    for the quotient of $\O_E[ (X_{i, j}) ]$ by the ideal generated by the elements $X^q_{i, j}-\pi X_{i, j+1}$ for all $i, j$.
    The image of $X_{i, 0}$ in $\O_E[X_1, \dotsc, X_n]^{\mathrm{PD}}$ is denoted by $X_i$.
    We see that $\O_E[X_1, \dotsc, X_n]^{\mathrm{PD}}$ is canonically isomorphic to the $\O_E$-subalgebra of $E[X_1, \dotsc, X_n]$ generated by $X^{q^j}_i/\pi^{1 + q + \cdots + q^{j-1}}$ ($1 \leq i \leq n$ and $j \geq 0$).
    The ideal $I^{\mathrm{PD}} \subset \O_E[X_1, \dotsc, X_n]^{\mathrm{PD}}$ generated by the elements $X_{i, j}$ admits an $\O_E$-PD structure $\gamma_\pi$ such that $\gamma_\pi(X_{i, j})=X_{i, j+1}$.
    In fact, the pair $(\O_E[X_1, \dotsc, X_n]^{\mathrm{PD}}, I^{\mathrm{PD}})$ is the $\O_E$-PD envelope (in the usual sense) of the polynomial ring $\O_E[X_1, \dotsc, X_n]$ with respect to the ideal $(X_1, \dotsc, X_n)$.
\end{ex}

\begin{lem}[{cf.\ \cite[Lemma 2.38]{BS}}]\label{Lemma:pd envelope regular sequence}
    Let $B$ be a $\pi$-torsion free $\O_E$-algebra.
    Let $x_1, \dotsc, x_n \in B$ be a sequence
    such that
    $(B/\pi)/^\L(\overline{x}_1, \dotsc, \overline{x}_n)$
    is concentrated in degree $0$, where $\overline{x}_1, \dotsc, \overline{x}_n \in B/\pi$ are the images of $x_1, \dotsc, x_n \in B$.
    We set
    \[
    C:= B \otimes^\L_{\O_E[X_1, \dotsc, X_n]} \O_E[X_1, \dotsc, X_n]^{\mathrm{PD}}
    \]
    where
    $\O_E[X_1, \dotsc, X_n] \to B$ is defined by $X_i \mapsto x_i$.
    Then $C$ is concentrated in degree $0$ and $\pi_0(C)$ is $\pi$-torsion free.
    Moreover the pair $(\pi_0(C), I^{\mathrm{PD}}\pi_0(C))$ is the $\O_E$-PD envelope of $B$ with respect to the ideal $(x_1, \dotsc, x_n)$.
\end{lem}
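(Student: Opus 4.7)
The plan is to exploit the natural filtered colimit presentation
\[ \O_E[X_1, \dotsc, X_n]^{\mathrm{PD}} = \colim_m A^{(m)}, \qquad A^{(m)} := \O_E[X_{i,j} : j \leq m]/(X_{i,j}^q - \pi X_{i,j+1} : j < m), \]
and to prove the claim by induction along it, reducing at each stage to a mod-$\pi$ computation where the Koszul-regularity hypothesis applies. Setting $C^{(m)} := B \otimes^\L_{\O_E[X_1, \dotsc, X_n]} A^{(m)}$, I would show by induction on $m$ that $C^{(m)}$ is concentrated in degree $0$ and that $D^{(m)} := \pi_0(C^{(m)})$ is $\pi$-torsion free; the base case $C^{(0)} = B$ is immediate, and passing to the filtered colimit at the end yields the corresponding statement for $C$.

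For the inductive step, I use the presentation $A^{(m)} = A^{(m-1)}[Y_1, \dotsc, Y_n]/(X_{i,m-1}^q - \pi Y_i)$. Each relation is linear in $Y_i$ with leading coefficient $-\pi$, a nonzerodivisor in $A^{(m-1)}$, so an elementary argument shows these elements form a regular sequence in $A^{(m-1)}[Y_1, \dotsc, Y_n]$; the Koszul complex then furnishes a free resolution of $A^{(m)}$, and base-changing along $A^{(m-1)} \to D^{(m-1)}$ gives
\[ C^{(m)} \simeq \Kos\bigl(D^{(m-1)}[Y_1, \dotsc, Y_n];\ \overline{X}_{i,m-1}^q - \pi Y_i\bigr). \]
The main obstacle is to establish regularity of $(\overline{X}_{i,m-1}^q - \pi Y_i)$ in $D^{(m-1)}[Y_1, \dotsc, Y_n]$ itself. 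By the same leading-coefficient analysis, this reduces (via an inner induction on $i$) to verifying that $\pi$ remains a nonzerodivisor on the running quotient (maintained by a direct calculation using $\pi$-torsion freeness of $D^{(m-1)}$) and that $\overline{X}_{i,m-1}$ is a nonzerodivisor on the running quotient modulo $\pi$, which amounts to showing that $(\overline{X}_{1,m-1}, \dotsc, \overline{X}_{n,m-1})$ is a regular sequence in $D^{(m-1)}/\pi$.

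The key structural input is the explicit description (valid for $m \geq 2$)
\[ D^{(m-1)}/\pi \;\simeq\; \bigl((B/\pi)/(\overline{x}_1^q, \dotsc, \overline{x}_n^q)\bigr) \otimes_{\F_q} F_{m-1}, \]
where $F_{m-1} := \bigotimes_i \F_q[X_{i,1}, \dotsc, X_{i,m-1}]/(X_{i,j}^q : 1 \leq j \leq m-2)$ is a free $\F_q$-module in which the $\overline{X}_{i,m-1}$ occur as unconstrained polynomial variables, immediately giving the required regularity; the case $m=1$ uses the Koszul-regularity hypothesis on $(\overline{x}_1, \dotsc, \overline{x}_n)$ directly. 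The hypothesis also enters via the standard fact (flatness of the Frobenius $\F_q[T] \to \F_q[T]$, $T \mapsto T^q$) that Koszul regularity is preserved under raising to $q$-th powers, so that $(\overline{x}_1^q, \dotsc, \overline{x}_n^q)$ is Koszul-regular on $B/\pi$ and the underived quotient in the displayed formula genuinely computes the derived one. Finally, the canonical $\O_E$-PD structure on $I^{\mathrm{PD}} \subset \O_E[X_1, \dotsc, X_n]^{\mathrm{PD}}$ from Example~\ref{Example:pi PD polynomial ring} transports via base change to an $\O_E$-PD structure on $I^{\mathrm{PD}}\pi_0(C) \subset \pi_0(C)$, well-defined by the $\pi$-torsion freeness above, and the universal property of $\pi_0(C)$ as the $\O_E$-PD envelope of $B$ with respect to $(x_1, \dotsc, x_n)$ follows by combining that of Example~\ref{Example:pi PD polynomial ring} with the universal property of the ordinary tensor product.
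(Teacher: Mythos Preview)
Your filtered-colimit induction is a valid unfolding of the Bhatt--Scholze argument the paper cites, and the description of $D^{(m-1)}/\pi$ is correct. There is, however, a gap at $m=1$. Your inner induction on $i$ requires $\overline{X}_{i,m-1}^q$ to be a nonzerodivisor on the running quotient modulo $\pi$, which you reduce to $(\overline{X}_{1,m-1},\dotsc,\overline{X}_{n,m-1})$ being a regular sequence in $D^{(m-1)}/\pi$. For $m\ge 2$ this holds because the $X_{i,m-1}$ are genuine polynomial variables in $F_{m-1}$, but for $m=1$ you would need $(\overline{x}_1,\dotsc,\overline{x}_n)$ to be a \emph{regular} sequence in $B/\pi$, whereas the hypothesis only gives Koszul-regularity, which is strictly weaker in general. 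Your ``direct calculation'' that $R[Y]/(a-\pi Y)$ remains $\pi$-torsion free genuinely needs $\bar a$ to be a nonzerodivisor in $R/\pi$ (take $a=0$ to see it fails otherwise), so the inner induction cannot be launched at $m=1$ from Koszul-regularity alone.

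The fix is to replace the inner induction by a global argument on $K=\Kos(D^{(m-1)}[Y_i];\ \overline{X}_{i,m-1}^q-\pi Y_i)$: one checks that $K\otimes^\L_{\O_E}\F_q=\Kos((D^{(m-1)}/\pi)[Y_i];\ \overline{X}_{i,m-1}^q)$ is concentrated in degree $0$ by Koszul-regularity of $(\overline{X}_{i,m-1}^q)$ in $D^{(m-1)}/\pi$ (for $m=1$ this is exactly where your $q$-th-power argument applies), and that $K[1/\pi]$ is concentrated in degree $0$ since after inverting $\pi$ the change of variables $Y_i\mapsto Y_i-\overline{X}_{i,m-1}^q/\pi$ makes the sequence regular. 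The long exact sequence for $K\xrightarrow{\pi}K\to K/^\L\pi$ then shows $\pi$ acts bijectively on $H_i(K)$ for $i\ge 1$, hence $H_i(K)=H_i(K)[1/\pi]=0$, and that $H_0(K)$ is $\pi$-torsion free. In fact this device applies directly to $C$ itself, using $\O_E[X_1,\dotsc,X_n]^{\mathrm{PD}}/\pi\simeq(\F_q[X_i]/(X_i^q))\otimes_{\F_q}(\text{a free }\F_q\text{-module})$ and $\O_E[X_1,\dotsc,X_n]^{\mathrm{PD}}[1/\pi]=E[X_1,\dotsc,X_n]$, which makes the filtered colimit unnecessary; that is the Bhatt--Scholze proof the paper invokes.
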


In the following, we will write
\[
D_{(x_1, \dotsc, x_n)}(B):=\pi_0(C)=B \otimes_{\O_E[X_1, \dotsc, X_n]} \O_E[X_1, \dotsc, X_n]^{\mathrm{PD}}.
\]

\begin{proof}
The proof is identical to that of \cite[Lemma 2.38]{BS}.
\end{proof}

We also need the following construction.
Let $B$ be a $\delta_E$-ring.
Let $d \in B$ be an element, and $x_1, \dotsc, x_n \in B$ a sequence.
We set
$B\{ X \}:=B \otimes_{\O_E} \O_E\{ X \}$
and
let
\[
B \{ X_1, \dotsc, X_n \}
\]
be the $n$-th tensor power of $B\{ X \}$ over $B$.
We consider the following diagram of $\delta_E$-rings
\[
B \overset{f}{\leftarrow} B \{ X_1, \dotsc, X_n \} \overset{g}{\rightarrow} B \{ Y_1, \dotsc, Y_n \},
\]
where $f$ is defined by $X_i \mapsto x_i$ and $g$ is defined by $X_i \mapsto dY_i$.
Let
\[
B\{ x_1/d, \dotsc, x_n/d \}
\]
denote
the pushout of this diagram, which is a $\delta_E$-ring over $B$ with the following property:
For a homomorphism $B \to C$ of $\delta_E$-rings such that the image of $d$ is a nonzerodivisor in $C$ and 
$x_i \in dC$ for all $i$, there exists a unique homomorphism
$B\{ x_1/d, \dotsc, x_n/d \} \to C$
of $\delta_E$-rings over $B$.
We let $x_i/d \in B\{ x_1/d, \dotsc, x_n/d \}$ denote the image of $Y_i \in B \{ Y_1, \dotsc, Y_n \}$.

Using this construction, we can relate $\delta_E$-structures to $\O_E$-PD structures.

\begin{lem}[{cf.\ \cite[Lemma 2.36]{BS}}]\label{Lemma:pd envelope and delta structure basic case}
We have a natural isomorphism
\[
(\O_E \{ X_1, \dotsc, X_n \}) \{ \phi(X_1)/\pi, \dotsc, \phi(X_n)/\pi \} \simeq D_{(X_1, \dotsc, X_n)}(\O_E \{ X_1, \dotsc, X_n \})
\]
of $\O_E \{ X_1, \dotsc, X_n \}$-algebras.
\end{lem}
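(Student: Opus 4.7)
Write $B := \O_E\{X_1, \ldots, X_n\}$, $C := B\{\phi(X_1)/\pi, \ldots, \phi(X_n)/\pi\}$, and $D := D_{(X_1, \ldots, X_n)}(B)$. The plan mirrors the proof of \cite[Lemma 2.36]{BS}, identifying both $C$ and $D$ with the same $\delta_E$-subring $\widetilde{C}$ of $B[1/\pi]$.

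By the preceding lemma, $D$ is $\pi$-torsion free, and $B$ is a polynomial ring over $\O_E$ (see Example \ref{Example:free delta pi rings}), hence also $\pi$-torsion free; so we have inclusions $B \subset D \subset B[1/\pi]$. First I would upgrade $D$ to a $\delta_E$-ring over $B$ by showing that the unique extension of $\phi$ to $B[1/\pi]$ restricts to a Frobenius lift $\phi\colon D \to D$. Since $D$ is generated as a $B$-algebra by the divided powers $\gamma_\pi^k(X_i) := X_i^{q^k}/\pi^{e_k}$ (with $e_k := 1 + q + \cdots + q^{k-1}$), this reduces to checking that each $\phi(\gamma_\pi^k(X_i)) = (X_i^q + \pi\delta_E(X_i))^{q^k}/\pi^{e_k}$ lies in $D$, which follows by expanding the power: the ``main term'' $X_i^{q^{k+1}}/\pi^{e_k} = \pi^{q^k}\gamma_\pi^{k+1}(X_i)$ lies in $D$, and the remaining binomial terms carry extra factors of $\pi$. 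Then $\phi(X_i) = \pi(X_i^q/\pi + \delta_E(X_i)) \in \pi D$, and the universal property of $C$ yields a canonical map $\alpha\colon C \to D$ of $\delta_E$-$B$-algebras.

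Let $\widetilde{C} \subset B[1/\pi]$ be the $\delta_E$-subring generated by $B$ and $\phi(X_1)/\pi, \ldots, \phi(X_n)/\pi$. By construction $\widetilde{C} \subset D$. For the reverse inclusion $D \subset \widetilde{C}$, I would induct on $k$ to show $\gamma_\pi^k(X_i) \in \widetilde{C}$. The base case $k = 1$ is $X_i^q/\pi = \phi(X_i)/\pi - \delta_E(X_i) \in \widetilde{C}$. For the inductive step with $Z := \gamma_\pi^k(X_i) \in \widetilde{C}$, one wants $\gamma_\pi^{k+1}(X_i) = Z^q/\pi \in \widetilde{C}$; since $Z^q = \phi(Z) - \pi\delta_E(Z)$, this reduces to $\phi(Z) \in \pi\widetilde{C}$. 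Computing $\phi(Z) = \phi(X_i)^{q^k}/\pi^{e_k}$ and using $(\phi(X_i)/\pi)^{q^k} = \phi(X_i)^{q^k}/\pi^{q^k} \in \widetilde{C}$ yields $\phi(Z) \in \pi^{q^k - e_k}\widetilde{C}$; the elementary inequality $q^k \geq e_k + 1$, valid for all $q \geq 2$ (with equality when $q = 2$, since $e_k = (q^k - 1)/(q-1)$), then gives $\phi(Z) \in \pi\widetilde{C}$, completing the induction. Thus $D = \widetilde{C}$, and $\alpha$ factors as a surjection $C \twoheadrightarrow \widetilde{C} = D$.

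To conclude that $\alpha$ is an isomorphism, I would construct an inverse $\beta\colon D \to C$ using the explicit presentation $D = B[Z_{i,j}]_{j \geq 1}/(X_i^q - \pi Z_{i,1},\, Z_{i,j}^q - \pi Z_{i,j+1})$: it suffices to exhibit compatible elements $z_{i,j} \in C$, which one obtains by the same inductive procedure used above, now carried out inside $C$ starting from $z_{i,1} := \phi(X_i)/\pi - \delta_E(X_i) \in C$ and repeatedly invoking that $\phi(z_{i,j}) \in \pi C$. The identity $\alpha \circ \beta = \mathrm{id}_D$ on generators together with the surjectivity of $\beta$ (its image contains $X_i^q/\pi$, hence $\phi(X_i)/\pi$, which generates $C$ as a $\delta_E$-ring over $B$) then forces $\alpha$ and $\beta$ to be mutually inverse. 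The main technical obstacle is ensuring that the iterated quotients $z_{i,j}^q/\pi$ are well-defined inside $C$, i.e., that $C$ is $\pi$-torsion free; this is the $\O_E$-analogue of a subtle point in \cite[Lemma 2.36]{BS} and can be handled via a direct analysis of the pushout defining $C$, using the flatness of Frobenius on the free $\delta_E$-ring noted in Example \ref{Example:free delta pi rings}.
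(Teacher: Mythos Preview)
Your proposal is correct and follows essentially the same route as the paper: identify both $C$ and $D$ with the same $\delta_E$-subring of $B[1/\pi]$, using the key inequality $q^k \geq e_k + 1$ for the induction. The only difference is organizational: the paper invokes $\pi$-torsion freeness of $C$ (via faithful flatness of $\phi$ on $B$, exactly as you suggest) at the \emph{start}, which immediately gives $C \hookrightarrow B[1/\pi]$ with image $\widetilde{C}$, so the equality $\widetilde{C} = D$ already finishes the proof and your third paragraph constructing an explicit inverse $\beta$ becomes unnecessary.
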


\begin{proof}
    This can be proved in the same way as \cite[Lemma 2.36]{BS}.
    We include a sketch of the proof.

    We set $B:=\O_E \{ X_1, \dotsc, X_n \}$.
    Since the Frobenius
    $\phi \colon B \to B$
    is faithfully flat by Example \ref{Example:free delta pi rings},
    it follows that
    $C:=B \{ \phi(X_1)/\pi, \dotsc, \phi(X_n)/\pi \}$
    is $\pi$-torsion free.
    Since $\phi(X_i)/\pi=X^q_i/\pi + \delta_E(X_i)$,
    then $C$ can be regarded as
    the smallest $\delta_E$-subring of $B[1/\pi]$ which contains $B$ and $X^q_i/\pi$ ($1 \leq i \leq n$).
    On the other hand, since $D:=D_{(X_1, \dotsc, X_n)}(B)$ is $\pi$-torsion free by Lemma \ref{Lemma:pd envelope regular sequence}, we see that $D$ is the $\O_E$-subalgebra of $B[1/\pi]$ generated by
    $B$ and $X^{q^j}_i/\pi^{1 + q + \cdots + q^{j-1}}$ ($1 \leq i \leq n$ and $j \geq 1$).

    We shall prove that $C=D$ in $B[1/\pi]$.
    Let us first show that $D \subset C$.
    For this, it suffices to prove that for every $1 \leq i \leq n$, we have
    $
    X^{q^j}_i/\pi^{1 + q + \cdots + q^{j-1}} \in C
    $
    for all $j \geq 1$.
    We proceed by induction on $j$.
    The assertion is clear for $j=1$.
    Assume that the assertion is true for some $j \geq 1$.
    Then we have
    \begin{equation*}
\begin{split}
\delta_E(X^{q^j}_i/\pi^{1 + q + \cdots + q^{j-1}}) &= \phi(X^{q^j}_i/\pi^{1 + q + \cdots + q^{j-1}})/\pi-(X^{q^j}_i/\pi^{1 + q + \cdots + q^{j-1}})^q/\pi \\
&=\phi(X_i)^{q^j}/\pi^{2 + q + \cdots + q^{j-1}} - X^{q^{j+1}}_i/\pi^{1 + q + \cdots + q^{j}} \in C.
\end{split}
\end{equation*}
To prove that $X^{q^{j+1}}_i/\pi^{1 + q + \cdots + q^{j}} \in C$, it is enough to show that $\phi(X_i)^{q^j}/\pi^{2 + q + \cdots + q^{j-1}} \in C$.
Since
$
\phi(X_i)/\pi=X^q_i/\pi+ \delta_E(X_i)
$
is contained in $C$,
the assertion now follows from the inequality
$q^j \geq 2 + q + \cdots + q^{j-1}$.

It remains to prove that $C \subset D$.
Since
$
\phi(X_i)/\pi=X^q_i/\pi+ \delta_E(X_i)
$
is contained in $D$,
the inequality
$q^j \geq 2 + q + \cdots + q^{j-1}$ for $j \geq 1$
implies that
\[
\phi(X^{q^j}_i/\pi^{1 + q + \cdots + q^{j-1}})=\phi(X_i)^{q^j}/\pi^{1 + q + \cdots + q^{j-1}} \in \pi D
\]
for $j \geq 1$.
Since
\[
(X^{q^j}_i/\pi^{1 + q + \cdots + q^{j-1}})^q=\pi(X^{q^{j+1}}_i/\pi^{1 + q + \cdots + q^{j}}) \in \pi D,
\]
we see that $\phi$ preserves $D$ and that the reduction modulo $\pi$ of $\phi \colon D \to D$ is the $q$-th power Frobenius.
This implies that $C \subset D$.
\end{proof}

\begin{cor}[{cf.\ \cite[Corollary 2.39]{BS}}]\label{Corollary:pd envelope and delta structure}
    Let $B$ be a $\pi$-torsion free $\delta_E$-ring.
    Let $x_1, \dotsc, x_n \in B$ be a sequence
    such that
    $(B/\pi)/^\L(\overline{x}_1, \dotsc, \overline{x}_n)$
    is concentrated in degree $0$.
    We set
    $D:=B \otimes^\L_{\O_E \{ X_1, \dotsc, X_n \}} \O_E \{ Y_1, \dotsc, Y_n \}$
    where $\O_E \{ X_1, \dotsc, X_n \} \to B$ is defined by $X_i \mapsto \phi(x_i)$ and
    $\O_E \{ X_1, \dotsc, X_n \} \to \O_E \{ Y_1, \dotsc, Y_n \}$ is defined by $X_i \mapsto \pi Y_i$.
    Then $D$ is concentrated in degree $0$.
    Moreover
    \[
    \pi_0(D)=B \{ \phi(x_1)/\pi, \dotsc, \phi(x_n)/\pi \}
    \]
    is $\pi$-torsion free, and is isomorphic to $D_{(x_1, \dotsc, x_n)}(B)$ as a $B$-algebra.
\end{cor}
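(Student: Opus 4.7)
The plan is to reduce the claim to the ``universal'' case already handled in Lemma \ref{Lemma:pd envelope and delta structure basic case} via a base change argument, whose key input is the faithful flatness of the Frobenius on a free $\delta_E$-ring. I write $A_0 := \O_E \{ X_1, \dotsc, X_n \}$ and, renaming the variables, $A_1 := \O_E \{ X'_1, \dotsc, X'_n \}$. The structure map $A_1 \to B$ from the statement, sending $X'_i \mapsto \phi(x_i)$, factors as
\[
A_1 \xrightarrow{\ X'_i \mapsto \phi(X_i)\ } A_0 \xrightarrow{\ X_i \mapsto x_i\ } B,
\]
and under the natural identification $A_1 \simeq A_0$ (mapping $X'_i$ to $X_i$), the first arrow is precisely the Frobenius $\phi_{A_0}$.

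The faithful flatness of the Frobenius on $\O_E \{ X \}$ stated in Example \ref{Example:free delta pi rings} extends to arbitrary $n$: since $A_0 = \O_E \{ X_1 \} \otimes_{\O_E} \cdots \otimes_{\O_E} \O_E \{ X_n \}$ and $\phi_{A_0}$ agrees with the tensor product of the Frobenii on each factor, $\phi_{A_0}$ is faithfully flat. Therefore, by associativity of derived tensor products,
\[
D \simeq B \otimes^{\L}_{A_0} \bigl( A_0 \otimes^{\L}_{A_1} \O_E \{ Y_1, \dotsc, Y_n \} \bigr),
\]
and the inner derived tensor coincides with its classical counterpart. By Lemma \ref{Lemma:pd envelope and delta structure basic case}, this classical tensor equals $A_0 \{ \phi(X_1)/\pi, \dotsc, \phi(X_n)/\pi \} \simeq D_{(X_1, \dotsc, X_n)}(A_0)$; and since $\bar X_1, \dotsc, \bar X_n$ form a regular sequence in the polynomial ring $A_0/\pi$, Lemma \ref{Lemma:pd envelope regular sequence} further identifies this object with $A_0 \otimes^{\L}_{\O_E[X_1, \dotsc, X_n]} \O_E[X_1, \dotsc, X_n]^{\mathrm{PD}}$ in the derived category.

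Plugging this back and using associativity once more, I obtain
\[
D \simeq B \otimes^{\L}_{\O_E[X_1, \dotsc, X_n]} \O_E[X_1, \dotsc, X_n]^{\mathrm{PD}},
\]
where $\O_E[X_1, \dotsc, X_n] \to B$ sends $X_i \mapsto x_i$. The hypothesis on $x_1, \dotsc, x_n$ is exactly what Lemma \ref{Lemma:pd envelope regular sequence} needs, so this complex is concentrated in degree $0$, its $\pi_0$ is $\pi$-torsion free, and $\pi_0(D) \simeq D_{(x_1, \dotsc, x_n)}(B)$. Since $D$ is concentrated in degree $0$, $\pi_0(D)$ equals the classical tensor $B \otimes_{A_1} \O_E \{ Y_1, \dotsc, Y_n \}$, which by Remark \ref{Remark:colimit and limit of delta rings} is the $\delta_E$-ring pushout $B \{ \phi(x_1)/\pi, \dotsc, \phi(x_n)/\pi \}$ by definition.

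The sole nontrivial point is the passage from the derived to the underived inner tensor product; this is handled cleanly by the flatness of $\phi_{A_0}$, after which associativity and the lemmas already established make the remainder of the argument purely formal.
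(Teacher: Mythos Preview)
Your proof is correct and follows essentially the same approach as the paper's proof: both factor the structure map through the Frobenius on $\O_E\{X_1,\dotsc,X_n\}$, use its faithful flatness to replace the derived inner tensor by the classical one, and then apply Lemma~\ref{Lemma:pd envelope and delta structure basic case} and Lemma~\ref{Lemma:pd envelope regular sequence} in the same order. Your write-up is somewhat more explicit about the factorization and the identification of $\pi_0(D)$ with the $\delta_E$-ring pushout, but the argument is the same.
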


\begin{proof}
    Since
    $\phi \colon \O_E \{ X_1, \dotsc, X_n \} \to \O_E \{ X_1, \dotsc, X_n \}$
    is faithfully flat by Example \ref{Example:free delta pi rings},
    we have an identification
    \[
    D = B \otimes^\L_{\O_E \{ X_1, \dotsc, X_n \}} (\O_E \{ X_1, \dotsc, X_n \})\{ \phi(X_1)/\pi, \dotsc, \phi(X_n)/\pi \}
    \]
    where $\O_E \{ X_1, \dotsc, X_n \} \to B$ is defined by $X_i \mapsto x_i$.
    Then Lemma \ref{Lemma:pd envelope and delta structure basic case} implies that
    \[
    D \simeq B \otimes^\L_{\O_E \{ X_1, \dotsc, X_n \}} D_{(X_1, \dotsc, X_n)}(\O_E \{ X_1, \dotsc, X_n \}).
    \]
    By Lemma \ref{Lemma:pd envelope regular sequence}, we have
    \[
    D_{(X_1, \dotsc, X_n)}(\O_E \{ X_1, \dotsc, X_n \}) \simeq \O_E \{ X_1, \dotsc, X_n \} \otimes^\L_{\O_E [X_1, \dotsc, X_n ]}\O_E[X_1, \dotsc, X_n]^{\mathrm{PD}}
    \]
    and thus
    $
    D \simeq B \otimes^\L_{\O_E [X_1, \dotsc, X_n ]}\O_E[X_1, \dotsc, X_n]^{\mathrm{PD}}.
    $
    The assertion then follows by applying Lemma \ref{Lemma:pd envelope regular sequence} again.
\end{proof}

Now we can state the desired result:

\begin{prop}[{cf.\ \cite[Proposition 3.13]{BS}}]\label{Proposition:prismatic envelope}
    Assume that $(A, I)$ is an orientable and bounded $\O_E$-prism.
    Let $d \in I$ be a generator.
    Let $B$ be a $\delta_E$-ring over $A$.
    Let $x_1, \dotsc, x_n \in B$ be a sequence such that
    the induced morphism
    \begin{equation}\label{equation:regular sequence derived quotient}
        A/^\L(\pi, d) \to B/^\L(\pi, d, x_1, \dotsc, x_n)
    \end{equation}
    of animated rings is flat.
    (In other words, the sequence $x_1, \dotsc, x_n \in B$ is $(\pi, I)$-completely regular relative to $A$ in the sense of \cite[Definition 2.42]{BS}.)
    We set $J:=(d, x_1, \dotsc, x_n) \subset B$.
    Then the following assertions hold:
    \begin{enumerate}
        \item The $(\pi, I)$-adic completion $B\{ J/I \}^{\wedge}$ of $B\{ x_1/d, \dotsc, x_n/d \}$ is $(\pi, I)$-completely flat over $A$.
        In particular,
        the pair
        \[
        (B\{ J/I \}^{\wedge}, IB\{ J/I \}^{\wedge})
        \]
        is an orientable and bounded $\O_E$-prism.
        Moreover
        $B\{ J/I \}^{\wedge}$ is $(\pi, I)$-completely faithfully flat over $A$
        if the morphism
        $(\ref{equation:regular sequence derived quotient})$
        is faithfully flat.
        \item For a bounded $\O_E$-prism $(D, ID)$ over $(A, I)$
        and a homomorphism $B \to D$ of $\delta_E$-rings over $A$ such that $JD \subset ID$,
        there exists a unique map of $\O_E$-prisms
        \[
        (B\{ J/I \}^{\wedge}, IB\{ J/I \}^{\wedge}) \to (D, ID)
        \]
        over $B$.
        Moreover,
        the formation of $B\{ J/I \}^{\wedge}$ commutes with base change along any map $(A, I) \to (A', I')$ of bounded $\O_E$-prisms, and also commutes with base change along any $(\pi, I)$-completely flat homomorphism $B \to B'$ of $\delta_E$-rings.
    \end{enumerate}
\end{prop}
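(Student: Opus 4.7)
First I would handle part (2), which is essentially formal. Given a bounded $\O_E$-prism $(D, ID)$ over $(A, I)$ with a map $B \to D$ of $\delta_E$-rings satisfying $JD \subset ID$, rigidity (Lemma \ref{Lemma:rigidity}) gives $ID = dD$ with $d$ a nonzerodivisor in $D$, so the universal property of $B\{x_1/d, \ldots, x_n/d\}$ produces a unique $B$-algebra map $B\{J/I\} \to D$ of $\delta_E$-rings; this extends uniquely to $B\{J/I\}^\wedge$ by the $(\pi, I)$-adic completeness of $D$ (Remark \ref{Remark:derived and classical complete}). The base change compatibility follows from the analogous property for the uncompleted construction together with Lemma \ref{Lemma:flat maps over prisms} (1), which ensures that $(\pi, I)$-adic completion commutes with $(\pi, I)$-completely flat base change.

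For part (1), the strategy is to reduce to a universal situation and then relate it to $\O_E$-PD envelopes. The construction $B \mapsto B\{x_1/d, \ldots, x_n/d\}$ is compatible with pushouts of $\delta_E$-rings along the canonical map $A\{X_1, \ldots, X_n\} \to B$ sending $X_i \mapsto x_i$, so we have
\[
B\{x_1/d, \ldots, x_n/d\} \simeq B \otimes_{A\{X_1, \ldots, X_n\}} A\{X_1, \ldots, X_n\}\{X_1/d, \ldots, X_n/d\}.
\]
I would upgrade this to a statement about derived tensor products using the regularity hypothesis $(\ref{equation:regular sequence derived quotient})$; in the universal case $B = A\{X_1, \ldots, X_n\}$ with $x_i = X_i$, that hypothesis is automatic, since a direct computation using the faithful flatness of $\O_E\{X_1, \ldots, X_n\}/(X_1, \ldots, X_n)$ over $\O_E$ identifies $A\{X_1, \ldots, X_n\}/^\L(\pi, d, X_1, \ldots, X_n)$ with a faithfully flat base change of $A/^\L(\pi, d)$. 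This reduces (1) to proving the flatness statement in the universal case.

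In the universal case, set $C := A\{X_1, \ldots, X_n\}\{X_1/d, \ldots, X_n/d\}$. The key input is Corollary \ref{Corollary:pd envelope and delta structure}: the Frobenius-twisted envelope $A\{X_1, \ldots, X_n\}\{\phi(X_1)/\pi, \ldots, \phi(X_n)/\pi\}$ is canonically identified with the $\O_E$-PD envelope $D_{(X_1, \ldots, X_n)}(A\{X_1, \ldots, X_n\})$, whose $(\pi, I)$-complete flatness over $A$ follows from Lemma \ref{Lemma:pd envelope regular sequence}. Since $d$ is distinguished (Lemma \ref{Lemma:distinguished}), the identity $\phi(d) = d^q + \pi \delta_E(d)$ with $\delta_E(d) \in A^\times$ shows that $(\phi(d))$ and $(\pi)$ agree modulo $(d)$ up to the unit $\delta_E(d)$; combined with $\phi(X_i/d) = \phi(X_i)/\phi(d)$, this lets me transport flatness from the PD side to $C^\wedge$. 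The conclusion that $(C^\wedge, IC^\wedge)$ is then a bounded $\O_E$-prism follows from Lemma \ref{Lemma:flat maps over prisms} (2) together with Lemma \ref{Lemma:completion of delta ring}.

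The main technical obstacle will be making the Frobenius-twist comparison between the ``$/d$'' and ``$/\pi$'' envelopes rigorous: this is precisely where the distinguished nature of $d$ (and hence the fact that $(A,I)$ is a prism, not merely a pair) plays an essential role, and completions must be tracked carefully throughout. Once the flat case is in hand, the faithfully flat statement under the stronger hypothesis on $(\ref{equation:regular sequence derived quotient})$ is obtained by checking surjectivity on $\Spec$ of the reduction modulo $(\pi, I)$, which reduces by the same base-change argument to the corresponding faithful flatness of the universal PD envelope.
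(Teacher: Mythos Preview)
Your proposal is correct and follows essentially the same approach as the paper: form the animated pushout, reduce the $(\pi,I)$-complete flatness to the universal case, and there invoke the identification with $\O_E$-PD envelopes via Corollary~\ref{Corollary:pd envelope and delta structure} together with the distinguishedness of $d$. The paper's own proof is in fact briefer than yours, simply pointing to \cite[Proposition~3.13]{BS} and noting that one replaces usual PD structures by $\O_E$-PD structures; your outline and the paper's both leave the Frobenius-twist comparison (the passage from the ``$/d$'' envelope to the ``$/\pi$'' envelope, which in \cite{BS} goes through animated $\delta$-rings as in Remark~\ref{Remark:animated pi delta rings}) as the acknowledged technical core.
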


See \cite[Proposition 3.13 (3)]{BS} for the precise meaning of the last statement.

\begin{proof}
Let $C:=B \otimes^\L_{B \{ X_1, \dotsc, X_n \}} B \{ Y_1, \dotsc, Y_n \}$ be the pushout of the diagram
$
B \leftarrow B \{ X_1, \dotsc, X_n \} \rightarrow B \{ Y_1, \dotsc, Y_n \}
$
in the $\infty$-category of animated rings, where the first map is defined by $X_i \mapsto x_i$ and the second one is defined by $X_i \mapsto dY_i$.
It suffices to prove that
if the morphism
$(\ref{equation:regular sequence derived quotient})$
is flat (resp.\ faithfully flat), then $C$ is $(\pi, I)$-completely flat (resp.\ $(\pi, I)$-completely faithfully flat) over $A$.
Indeed, if this is true, then the derived $(\pi, I)$-adic completion of $C$ is isomorphic to $B\{ J/I \}^{\wedge}$,
and in particular $B\{ J/I \}^{\wedge}$ is $(\pi, I)$-completely flat (resp.\ $(\pi, I)$-completely faithfully flat) over $A$.
It is then easy to see that $B\{ J/I \}^{\wedge}$ has the desired properties.

In order to prove that $C$ is $(\pi, I)$-completely flat (resp.\ $(\pi, I)$-completely faithfully flat) over $A$, one can argue as in the proof of \cite[Proposition 3.13]{BS}.
(The faithful flatness is not discussed in \textit{loc.cit.}, but the same argument works.)
The only difference is that we have to use $\O_E$-PD structures, instead of usual PD structures.
Here we need the results established above (e.g.\ Corollary \ref{Corollary:pd envelope and delta structure}).
The details are left to the reader.
\end{proof}

The bounded $\O_E$-prism
$(B\{ J/I \}^{\wedge}, IB\{ J/I \}^{\wedge})$
is called the \textit{prismatic envelope} of $B$ over $(A, I)$ with respect to the ideal $J$.

\begin{rem}\label{Remark:animated pi delta rings}
As in \cite[Proposition 3.13]{BS}, we need to use animated $\delta_E$-rings in the proof of Proposition \ref{Proposition:prismatic envelope}.
(For example, in the proof of \cite[Proposition 3.13]{BS}, the notion of animated $\delta_E$-rings is used to obtain the description of the bottom right vertex $C''$ of the diagram appearing there.)
One can define the notion of animated $\delta_E$-rings in the same way as in \cite[Section 5]{Mao} (i.e.\ by animating $\delta_E$-rings).
Alternatively, we can follow the approach employed in \cite[Appendix A]{BL2}.
\end{rem}

\section{Displayed Breuil--Kisin modules}\label{Section:Displayed Breuil--Kisin module}

In this section, we study Breuil--Kisin modules for bounded $\O_E$-prisms.
We introduce the notions of a displayed Breuil--Kisin module and of a minuscule Breuil--Kisin module.
These objects serve as examples of prismatic $G$-$\mu$-displays introduced in Section \ref{Section:prismatic G display}.

\subsection{Displayed Breuil--Kisin modules}\label{Subsection:Displayed Breuil--Kisin modules}

We use the following notation.
Let $A$ be a ring.
For $A$-modules $M$ and $N$,
the set of $A$-linear homomorphisms $M \to N$ is denoted by $\Hom_A(M, N)$.
Let $I \subset A$ be a Cartier divisor.
For an integer $n \geq 1$, we define
$I^{-n}:= \Hom_A(I^{n}, A)$.
We have a natural injection $I^{-n} \hookrightarrow I^{-n-1}$ for any integer $n$.
We then define
\[
A[1/I]:=\varinjlim_n I^{-n},
\]
which is an $A$-algebra.
For an $A$-module $M$, we set
$M[1/I]:=M \otimes_A A[1/I]$.
If $I$ is generated by a nonzerodivisor $d$, then we have $A[1/I]=A[1/d]$ and $I^{-n}=d^{-n}A$.

\begin{lem}\label{Lemma:Brauil-Kisin module, filtration and height}
Let $M, N$ be finite projective $A$-modules and
let $F \colon N[1/I] \overset{\sim}{\to} M[1/I]$ be an $A[1/I]$-linear isomorphism.
For an integer $i$, we set
\[
\Fil^i(N):= \{ \, x \in N \, \vert \, F(x) \in M \otimes_A I^{i} \, \},
\]
where we view $M \otimes_A I^{i}$ as a subset of $M[1/I]$.
Let $m$ be an integer.
Then the following are equivalent:
    \begin{enumerate}
        \item $\Fil^{m+1}(N) \subset IN$.
        \item $M \otimes_A I^{m} \subset F(N)$.
    \end{enumerate}
    If these equivalent conditions are satisfied, then $F$ restricts to an isomorphism
    $\Fil^m(N) \overset{\sim}{\to} M \otimes_A I^{m}$, and in particular $\Fil^m(N)$ is a finite projective $A$-module.
\end{lem}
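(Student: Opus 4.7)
The statement is local on $\Spec A$ in an evident sense, and both $M, N$ finite projective together with $I$ a Cartier divisor mean that Zariski-locally on $A$ we may assume $I = (d)$ for a nonzerodivisor $d \in A$ and that $M, N$ are free $A$-modules. In this local setting the submodule inclusions $N \hookrightarrow N[1/d]$ and $M \hookrightarrow M[1/d]$ are injective, so the isomorphism $F$ restricts to an injection $F \colon N \hookrightarrow M[1/d]$, and we have the identifications $M \otimes_A I^k = d^k M$ inside $M[1/d]$. The plan is to establish the equivalence in this local setting and then transfer back globally; nothing in the argument obstructs this descent, so the substantive work is purely local.

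The direction (2)$\Rightarrow$(1) is the easier half: if $x \in \Fil^{m+1}(N)$ then $F(x) \in d^{m+1} M = d \cdot d^m M \subset d \cdot F(N) = F(dN)$ by (2) and $A$-linearity of $F$, and injectivity of $F|_N$ then forces $x \in dN = IN$. For (1)$\Rightarrow$(2) the key lemma, which I would prove by induction on $k \geq 0$, is that hypothesis (1) actually upgrades to $\Fil^{m+k}(N) \subset d^k N$ for \emph{every} $k \geq 0$. The base cases $k=0, 1$ are trivial or tautological; the inductive step takes $x \in \Fil^{m+k}(N) \subset \Fil^{m+k-1}(N) \subset d^{k-1} N$, writes $x = d^{k-1} x''$ with $x'' \in N$ (uniquely, since $d$ is a nonzerodivisor on $N$), observes that $d^{k-1} F(x'') = F(x) \in d^{m+k} M$ forces $F(x'') \in d^{m+1} M$, and then applies the base hypothesis $\Fil^{m+1}(N) \subset dN$ to conclude $x'' \in dN$, hence $x \in d^k N$. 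With this in hand, given any $y \in d^m M$, writing $F^{-1}(y) = x'/d^k \in N[1/d]$ gives $x' \in \Fil^{m+k}(N) \subset d^k N$, so $x'/d^k$ actually lies in $N$ and is a preimage of $y$, proving (2).

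For the final clause, the restriction $F \colon \Fil^m(N) \to M \otimes_A I^m$ lands in the target by definition, is injective because $F|_N$ is, and is surjective by (2) combined with the definition of $\Fil^m(N)$. Finite projectivity of $\Fil^m(N)$ is then inherited from $M \otimes_A I^m$: since $M$ is finite projective and $I^m$ is an invertible $A$-module (Cartier divisor), so is their tensor product. The main (mild) obstacle I anticipate is purely bookkeeping: making sure the inductive chain $\Fil^{m+k}(N) \subset d^k N$ is set up so that the base case is exactly hypothesis (1) and the local-global transition respects the formation of $\Fil^i$, but both are straightforward once one localizes enough to trivialize $I$ and the modules.
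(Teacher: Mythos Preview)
Your proof is correct and takes essentially the same approach as the paper. Both reduce to the case $I=(d)$ principal, handle $(2)\Rightarrow(1)$ identically, and for $(1)\Rightarrow(2)$ use the same core idea of iteratively peeling off factors of $d$ via hypothesis (1); you package this as the auxiliary statement $\Fil^{m+k}(N)\subset d^kN$ for all $k\geq 0$, while the paper phrases it as a direct descent on the exponent (from $d^n x\in F(N)$ to $d^{n-1}x\in F(N)$ whenever $n\geq m+1$), but the inductive step is the same.
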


\begin{proof}
    The final statement clearly follows from (2).
    We shall prove that (1) and (2) are equivalent.
    For this, we can reduce to the case where $I=(d)$ is principal.
    
    Assume that (1) holds.
    Let $x \in M$.
    We want to show that $d^m x \in F(N)$.
    For a large enough integer $n$, we have $d^n x \in F(N)$.
    Let $y \in N$ be an element such that $F(y)=d^n x$.
    If $n \geq m+1$, then we have $y \in \Fil^{m+1}(N) \subset IN$, which in turn implies that $d^{n-1} x \in F(N)$.
    From this observation, we can conclude that $d^m x \in F(N)$.

    Assume that (2) holds.
    Let $y \in \Fil^{m+1}(N)$.
    There exists an element $x \in M$ such that $F(y)=d^{m+1}x$.
    The condition (2) implies that $d^mx=F(z)$ for some $z \in N$.
    It then follows that $y = dz \in IN$.
\end{proof}

Let $(A, I)$ be a bounded $\O_E$-prism.

\begin{defn}\label{Definition:Breuil-Kisin module}
    A \textit{Breuil--Kisin module} over $(A, I)$ is a pair $(M, F_M)$ consisting of a finite projective $A$-module $M$ and an $A[1/I]$-linear isomorphism
    \[
    F_M \colon (\phi^*M)[1/I] \overset{\sim}{\to} M[1/I],
    \]
    where $\phi^*M:=A \otimes_{\phi, A}{M}$.
    When there is no possibility of confusion, we simply write $M$ instead of $(M, F_M)$.
    For an integer $i$, we set
    \[
    \Fil^i(\phi^*M):= \{ \, x \in \phi^*M \, \vert \, F_M(x) \in M \otimes_A I^{i} \, \}.
    \]
    Let $P^i \subset (\phi^*M)/I(\phi^*M)$ be the image of $\Fil^i(\phi^*M)$.
    We often write
    \[
    M_{\dR}:=(\phi^*M)/I(\phi^*M).
    \]
\end{defn}

\begin{rem}\label{Remark:effective Breuil-Kisin module}
If $F_M(\phi^*M) \subset M$,
then we say that $M$ is \textit{effective}.
In this case, the induced homomorphism $\phi^*M \to M$ is again denoted by $F_M$.
The cokernel of $F_M \colon \phi^*M \to M$ is killed by some power of $I$.

Conversely,
for a finite projective $A$-module $M$ and a homomorphism $F_M \colon \phi^*M \to M$
of $A$-modules
whose cokernel is killed by some power of $I$,
the induced homomorphism
$(\phi^*M)[1/I] \to M[1/I]$ is an isomorphism.
Indeed, it is clear that $(\phi^*M)[1/I] \to M[1/I]$ is surjective, which in turn implies that it is an isomorphism since (the vector bundles on $\Spec A$ associated with) $\phi^* M$ and $M$ have the same rank.
In particular, it follows that $F_M \colon \phi^*M \to M$ is injective.
\end{rem}

\begin{rem}\label{Remark:filtration of Breuil-Kisin module}
For any integer $i$, we have $I\Fil^{i-1}(\phi^*M)=\Fil^i(\phi^*M) \cap I(\phi^*M)$.
In other words,
the natural homomorphism
$\Fil^{i}(\phi^*M)/I\Fil^{i-1}(\phi^*M) \to P^{i}$
is bijective.
We have $P^i=M_{\dR}$ for small enough $i$ and $P^i=0$ for large enough $i$.
\end{rem}

\begin{defn}\label{Definition:displayed and minuscule Breuil-Kisin module}
    Let $M$ be a Breuil--Kisin module over $(A, I)$.
    We say that $M$ is \textit{displayed} if the $A/I$-submodule $P^i \subset M_{\dR}$ is a direct summand for every $i$.
    In this case, the filtration $\{ P^i \}_{i \in \Z}$ is called the \textit{Hodge filtration}.
    We say that $M$ is \textit{minuscule} if it is displayed, and if we have $P^i=M_{\dR}$ for any $i \leq 0$ and $P^i=0$ for any $i \geq 2$.
\end{defn}

The following proposition, which is basically a consequence of \cite[Remark 4.25]{Anschutz-LeBras}, shows that the definition of a minuscule Breuil--Kisin module given in Definition \ref{Definition:displayed and minuscule Breuil-Kisin module} agrees with the usual one employed in the literature (for example in \cite[Section 2.2]{Kisin06} and \cite[Definition 4.24]{Anschutz-LeBras}).

\begin{prop}\label{Proposition:minuscule equivalent condition}
     Let $M$ be a Breuil--Kisin module over $(A, I)$.
     The following statements are equivalent:
     \begin{enumerate}
         \item $M$ is minuscule.
         \item $M$ is effective, and the cokernel $\Coker F_M$ of $F_M \colon \phi^*M \to M$ is killed by $I$.
     \end{enumerate}
\end{prop}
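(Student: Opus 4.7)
My plan is to prove each direction separately, reducing throughout to the orientable case $I = (d)$ via Remark~\ref{Remark:flat locally orientable} together with Proposition~\ref{Proposition:flat descent for finite projective modules}; both the effectiveness condition $F_M(\phi^*M)\subset M$ and the direct-summand condition on $P^1$ descend along $(\pi, I)$-completely faithfully flat maps. The main technical tool throughout will be Lemma~\ref{Lemma:Brauil-Kisin module, filtration and height}.

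For (1)$\Rightarrow$(2): The vanishing $P^2 = 0$ combined with Lemma~\ref{Lemma:Brauil-Kisin module, filtration and height} applied with $m = 1$ gives immediately $IM\subset F_M(\phi^*M)$, so $\Coker F_M$ is killed by $I$. Effectiveness is extracted from $P^0 = M_{\dR}$ by iteration: the equality says $\Fil^0(\phi^*M) + I\phi^*M = \phi^*M$, and iteration yields $\Fil^0(\phi^*M) + d^n\phi^*M = \phi^*M$ for every $n \geq 1$. Since $\phi^*M$ is $(\pi, I)$-adically complete as a finite projective module over the $(\pi,I)$-complete ring $A$, every $x \in \phi^*M$ admits a convergent expansion $x = \sum_{i \geq 0} d^i y_i$ with $y_i \in \Fil^0(\phi^*M)$; applying $F_M$ term by term and using that $M$ is $(\pi, I)$-complete, $F_M(x) = \sum_{i \geq 0} d^i F_M(y_i)$ converges in $M$, so $x \in \Fil^0(\phi^*M)$ and $F_M$ is effective.

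For (2)$\Rightarrow$(1): Effectiveness yields $\Fil^0(\phi^*M) = \phi^*M$, so $P^0 = M_{\dR}$, and the cokernel condition combined with Lemma~\ref{Lemma:Brauil-Kisin module, filtration and height} gives $P^i = 0$ for $i \geq 2$; these are trivially direct summands. The nontrivial content is that $P^1 \subset M_{\dR}$ is a direct summand. The injectivity of $F_M$ (which holds because $F_M$ is an isomorphism after inverting $d$ and $\phi^*M$ is $A$-flat) together with the cokernel condition allows one to define the Verschiebung $V_M \colon M \to \phi^*M$ by $F_M(V_M(m)) = dm$; it satisfies $F_M V_M = V_M F_M = d$. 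Applying Lemma~\ref{Lemma:Brauil-Kisin module, filtration and height} with $m = 1$ yields an isomorphism $F_M \colon \Fil^1(\phi^*M) \overset{\sim}{\to} IM$, which composed with the isomorphism $M \overset{\sim}{\to} IM$, $m \mapsto dm$, identifies $V_M$ with an isomorphism $M \overset{\sim}{\to} \Fil^1(\phi^*M)$. Tensoring $0 \to \phi^*M \overset{F_M}{\to} M \to \Coker F_M \to 0$ with $A/I$ over $A$ and computing $\Tor_1^A(\Coker F_M, A/I) = \Coker F_M$ produces a canonical isomorphism $P^1 \cong \Coker F_M$ of $A/I$-modules, with $M_{\dR}/P^1$ realized as $F_M(\phi^*M)/IM \subset M/IM$.

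The main obstacle is showing that $\Coker F_M$ is a finite projective $A/I$-module; granted this, both $P^1 \cong \Coker F_M$ and $M_{\dR}/P^1 \cong F_M(\phi^*M)/IM$ are finite projective over $A/I$ (the latter fitting in the short exact sequence $0 \to M_{\dR}/P^1 \to M/IM \to \Coker F_M \to 0$ with projective outer terms), whence $P^1 \subset M_{\dR}$ splits as a direct summand. This projectivity is essentially the content of \cite[Remark~4.25]{Anschutz-LeBras}. The key input beyond Lemma~\ref{Lemma:Brauil-Kisin module, filtration and height} is the matrix-factorization structure $F_M V_M = V_M F_M = d$ coming from the cokernel condition, combined with the distinguishedness of $d$ (Lemma~\ref{Lemma:distinguished}); these together force the cokernel to have locally constant rank on $\Spec(A/I)$, yielding the required projectivity.
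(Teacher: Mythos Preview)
Your proposal is correct and follows the same overall strategy as the paper. Two points of comparison are worth noting.

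For (1)$\Rightarrow$(2), the paper obtains effectiveness in one line: since $(A,I)$ is a bounded $\O_E$-prism one has $I \subset \mathrm{rad}(A)$, so from $P^0 = M_{\dR}$ (i.e.\ $\Fil^0(\phi^*M) + I\phi^*M = \phi^*M$) Nakayama's lemma applied to the finitely generated $A$-module $\phi^*M/\Fil^0(\phi^*M)$ gives $\Fil^0(\phi^*M) = \phi^*M$ immediately. Your convergent-series argument is correct but is essentially a hand-unrolling of this.

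For (2)$\Rightarrow$(1), both proofs reduce to the projectivity of $\Coker F_M$ over $A/I$ and both defer to \cite[Remark~4.25]{Anschutz-LeBras}. The paper, however, explicitly supplements this reference with Lemma~\ref{Lemma:morphism to crystalline prisms}: at each closed point of $\Spec A/I$ one manufactures a map of $\O_E$-prisms $(A,I) \to (\O,(\pi))$ to a complete DVR with bijective Frobenius, where the question is settled by elementary divisors. Your summary---``the matrix factorization $F_M V_M = V_M F_M = d$ plus distinguishedness of $d$ force locally constant rank''---is too optimistic as a self-contained claim: over a general ring a matrix factorization of a nonzerodivisor need not have projective cokernel, and distinguishedness enters the argument precisely by enabling the passage to crystalline prisms. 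This is not a gap in your logic, since you cite the correct source, but your description of the mechanism understates the role of Lemma~\ref{Lemma:morphism to crystalline prisms}.
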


\begin{proof}
    Assume that (1) holds.
    It follows from $P^0=M_{\dR}$ and Nakayama's lemma that $\Fil^0(\phi^*M)=\phi^*M$.
    Moreover, we have $IM \subset F_M(\phi^*M)$ by Lemma \ref{Lemma:Brauil-Kisin module, filtration and height}.
    This proves that (1) implies (2).

    Assume that (2) holds.
    It follows from Lemma \ref{Lemma:Brauil-Kisin module, filtration and height} that $\Fil^{2}(\phi^*M) \subset I(\phi^*M)$, and hence $P^{i}=0$ for any $i \geq 2$.
    Since $M$ is effective, we have $P^{i}=M_{\dR}$ for any $i \leq 0$.
    It remains to prove that $P^1$ is a direct summand of $M_{\dR}$.
    For this, it suffices to show that $(\phi^*M)/\Fil^{1}(\phi^*M)$ is projective as an $A/I$-module.
    Since
    we have
    the following exact sequence of $A/I$-modules
    \[
    0 \to  (\phi^*M)/\Fil^1 (\phi^*M) \to M/IM \to \Coker F_M \to 0,
    \]
    is suffices to prove that 
    $\Coker F_M$
    is a projective $A/I$-module.
    With Lemma \ref{Lemma:morphism to crystalline prisms} below, this follows from the same argument as in \cite[Remark 4.25]{Anschutz-LeBras}.
\end{proof}

\begin{lem}\label{Lemma:morphism to crystalline prisms}
    Let $(A, I)$ be an $\O_E$-prism.
    For a perfect field $k$ containing $\F_q$ and a homomorphism $g \colon A/I \to k$ of $\O_E$-algebras, there exists a map
    $(A, I) \to (\O, (\pi))$
    of $\O_E$-prisms which induces $g$, where $\O:=W(k)\otimes_{W(\F_q)} \O_E$.
\end{lem}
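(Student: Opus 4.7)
The plan is to construct the map by first reducing modulo $\pi$ and then applying Lemma \ref{Lemma:maps from perfectoid prisms to prism} to the $\pi$-adic perfection of $A$ as a $\delta_E$-ring. Since the structure map $\O_E \to k$ factors through $\O_E/\pi = \F_q$, the $\O_E$-algebra homomorphism $g\colon A/I \to k$ factors through $\bar{g}\colon A/(\pi, I) \to k$. Because $k$ is perfect, $\bar{g}$ further extends uniquely to $h\colon R \to k$, where $R := (A/(\pi, I))^{\mathrm{perf}}$ denotes the perfection.

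Next, I would form the $\pi$-adic completion $A^{\mathrm{perf}}$ of $\colim_{\phi} A$, the colimit being taken in the category of $\delta_E$-rings along the Frobenius $\phi$. By Remark \ref{Remark:colimit and limit of delta rings} and Lemma \ref{Lemma:completion of delta ring}, this carries a canonical structure of $\pi$-adically complete $\delta_E$-ring. The Frobenius on $\colim_{\phi} A$ is bijective and $\O_E$-linear, so it remains bijective modulo $\pi^n$ for every $n$ and therefore on the $\pi$-adic completion; hence $A^{\mathrm{perf}}$ is a perfect $\delta_E$-ring. Moreover $A^{\mathrm{perf}}/\pi \cong \colim_\phi(A/\pi) = (A/\pi)^{\mathrm{perf}}$, which surjects naturally onto $R$.

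Consider the composition of $\O_E$-algebra homomorphisms
\[
A^{\mathrm{perf}}/\pi \cong (A/\pi)^{\mathrm{perf}} \twoheadrightarrow R \xrightarrow{h} k = \O/(\pi).
\]
Applying Lemma \ref{Lemma:maps from perfectoid prisms to prism} to the perfect $\delta_E$-ring $A^{\mathrm{perf}}$ and the bounded $\O_E$-prism $(\O, (\pi))$, this composition lifts to a unique $\delta_E$-ring homomorphism $\widetilde{f}\colon A^{\mathrm{perf}} \to \O$. Precomposing with the canonical map $A \to A^{\mathrm{perf}}$ then defines a homomorphism $f\colon A \to \O$ of $\delta_E$-rings.

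To finish, I would verify by a diagram chase that the reduction of $f$ modulo $(\pi) = \ker(\O \to k)$ is precisely the composition $A \to A/I \xrightarrow{g} k$; in particular $f(I) \subseteq (\pi)$, so $f$ is a map of $\O_E$-prisms $(A, I) \to (\O, (\pi))$ inducing $g$, as required. The main technical point I expect is verifying the stated assertions about $A^{\mathrm{perf}}$, namely $\pi$-adic completeness, perfectness, and the identification of the reduction modulo $\pi$; once these are in place, the existence of the desired map reduces immediately to the already-established Lemma \ref{Lemma:maps from perfectoid prisms to prism}.
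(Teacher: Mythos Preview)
Your approach is correct and essentially identical to the paper's: both pass to the perfection $\varinjlim_\phi A$, factor the map to $k$ through its reduction mod $\pi$, and invoke Lemma~\ref{Lemma:maps from perfectoid prisms to prism} with the bounded $\O_E$-prism $(\O,(\pi))$. The only difference is that you $\pi$-adically complete the perfection before applying the lemma, which is harmless but unnecessary since Lemma~\ref{Lemma:maps from perfectoid prisms to prism} does not require its source to be $\pi$-adically complete (the completion is absorbed into that lemma's own proof).
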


\begin{proof}
    Let
    $A_{\perf}:=\varinjlim_\phi A$ be a colimit of the diagram
$
A \overset{\phi}{\rightarrow} A \overset{\phi}{\rightarrow} A  \rightarrow \cdots,
$
which is a perfect $\delta_E$-ring.
Since $k$ is perfect,
the homomorphism
$A/\pi \to k$ induced by the composition $A \to A/I \to k$ factors through a homomorphism $A_{\perf}/\pi \to k$.
This homomorphism lifts uniquely to a homomorphism $A_{\perf} \to \O$ of $\delta_E$-rings by Lemma \ref{Lemma:maps from perfectoid prisms to prism}.
The composition $A \to A_{\perf} \to \O$ gives a map
$(A, I) \to (\O, (\pi))$
which induces $g$, as desired.
\end{proof}

We shortly discuss the relation between
the notion of minuscule Breuil--Kisin modules and that of windows introduced by Zink and Lau.
We recall the notion of windows, adapted to our context.
Let $(A, d)$ be an oriented and bounded $\O_E$-prism.

\begin{defn}\label{Definition:window}
A \textit{window} over $(A, d)$ is a quadruple
\[
\underline{N}=(N, \Fil^1(N), \Phi, \Phi_1)
\]
where $N$ is a finite projective $A$-module,
$\Fil^1(N) \subset N$ is an $A$-submodule,
$\Phi \colon N \to N$ and
$\Phi_1 \colon \Fil^1(N) \to N$
are $\phi$-linear homomorphisms, such that the following conditions hold:
\begin{enumerate}
    \item We have $dN \subset \Fil^1(N)$, and $\Phi(x)=\Phi_1(dx)$ for every $x \in N$.
    \item The image $P^1 \subset N/dN$ of $\Fil^1(N)$ is a direct summand of $N/dN$.
    \item The linearization $1 \otimes \Phi_1 \colon \phi^*\Fil^1(N) \to N$ of $\Phi_1$ is an isomorphism.
\end{enumerate}
\end{defn}

\begin{prop}[cf.\ {\cite[Lemma 2.1.16]{Cais-Lau}, \cite[Proposition 4.26]{Anschutz-LeBras}}]\label{Proposition:windows and BK modules}
For a window
$\underline{N}$ over $(A, d)$,
the pair $(\Fil^1(N), F)$, where $F \colon \phi^*\Fil^1(N) \to \Fil^1(N)$ is defined by $F=d(1 \otimes \Phi_1)$, is a minuscule Breuil--Kisin module over $(A, (d))$.
This construction gives an equivalence between the category of windows over $(A, d)$ and the category of minuscule Breuil--Kisin modules over $(A, (d))$.
\end{prop}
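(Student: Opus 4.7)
The plan is to exhibit explicit quasi-inverse functors between the two categories, with the forward functor as given in the statement.

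\textbf{Forward direction.} For a window $\underline{N}=(N,\Fil^1(N),\Phi,\Phi_1)$, the main point is to verify that $\Fil^1(N)$ is finite projective over $A$. I would start from the short exact sequence
$$0\to N\xrightarrow{\,\cdot d\,}\Fil^1(N)\to P^1\to 0$$
of $A$-modules, and combine the hypothesis that $P^1\subset N/dN$ is a direct summand with the completeness of $A$ to lift, Zariski-locally on $\Spec A$, the idempotent in $\End_{A/d}(N/dN)$ cutting out $P^1$ to an idempotent in $\End_A(N)$. This yields a decomposition $N=L\oplus L'$ such that $L'\to P^1$ and $L\to Q$ are isomorphisms modulo $d$ for a complement $Q$; a direct check using that $d$ is a nonzerodivisor on $L$ then gives $L'\subset\Fil^1(N)$ and $L\cap\Fil^1(N)=dL$, so that $\Fil^1(N)=dL\oplus L'$ is finite projective. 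Next, the map $F:=d(1\otimes\Phi_1)\colon\phi^*\Fil^1(N)\to\Fil^1(N)$ is well-defined since its image in $N$ lies in $dN\subset\Fil^1(N)$. It is injective because $1\otimes\Phi_1$ is an isomorphism and $d$ is a nonzerodivisor on $N$, and its image equals $dN$, so its cokernel is $\Fil^1(N)/dN=P^1$, which is killed by $d$. Hence $F$ becomes an isomorphism after inverting $d$, and Proposition \ref{Proposition:minuscule equivalent condition} confirms that $(\Fil^1(N),F)$ is minuscule.

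\textbf{Inverse functor.} Given a minuscule Breuil--Kisin module $(M,F_M)$ over $(A,(d))$, I would set
$$N:=\{\,x\in M[1/d]\,:\,dx\in F_M(\phi^*M)\,\}\subset M[1/d].$$
Minusculeness gives $dM\subset F_M(\phi^*M)$, so $M\subset N$. The $A$-linear map $\phi^*M\to N$, $x\mapsto F_M(x)/d$, is surjective by construction and injective because $F_M$ is injective (Remark \ref{Remark:effective Breuil-Kisin module}); in particular, $N\cong\phi^*M$ is finite projective. I would put $\Fil^1(N):=M$, define $\Phi_1\colon M\to N$ as the composition of $x\mapsto 1\otimes x$ with this isomorphism, and set $\Phi(x):=\Phi_1(dx)$, which is well-defined because $dN=F_M(\phi^*M)\subset M$. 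The window axioms are then straightforward: the relation $\Phi(x)=\Phi_1(dx)$ holds by definition, $1\otimes\Phi_1$ is the chosen isomorphism, and under $N\cong\phi^*M$, the submodule $\Fil^1(N)=M$ corresponds to $\Fil^1(\phi^*M)\subset\phi^*M$, whose image in $N/dN\cong\phi^*M/d\phi^*M$ is the direct summand $P^1$ of $(M,F_M)$.

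\textbf{Quasi-inverseness.} Starting from a window $\underline{N}$, passing to $(\Fil^1(N),F)$ and back recovers $N'=d^{-1}F(\phi^*\Fil^1(N))=d^{-1}\cdot dN=N$, with the remaining data identified by construction. Conversely, applying the inverse functor to $(M,F_M)$ and then the forward functor returns the pair $(M,d(1\otimes\Phi_1))=(M,F_M)$ by the definition of $\Phi_1$. Functoriality in both directions is immediate from the explicit formulas.

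The main technical obstacle is the finite-projectivity of $\Fil^1(N)$ in the forward direction, which hinges on lifting the Hodge idempotent from $\End_{A/d}(N/dN)$ to $\End_A(N)$; once this local decomposition is in hand, all remaining verifications reduce to routine module-theoretic manipulations.
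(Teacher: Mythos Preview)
Your proof is correct and follows the standard approach: the paper's own proof is a one-line reference to \cite[Lemma 2.1.16]{Cais-Lau} together with Proposition~\ref{Proposition:minuscule equivalent condition}, and you have essentially reproduced that argument in detail. One minor remark: the idempotent lifting can be done globally rather than Zariski-locally, since $A$ is $I$-adically complete (this follows from $(\pi,I)$-adic completeness together with the fact that each $A/I^n$ is $\pi$-adically complete, cf.\ Lemma~\ref{Lemma:flat maps over prisms}), so $\End_A(N)$ is $d$-adically complete and idempotents lift directly; but your local argument is also valid since finite projectivity is a local property.
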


\begin{proof}
By virtue of Proposition \ref{Proposition:minuscule equivalent condition}, the same argument as in the proof of \cite[Lemma 2.1.16]{Cais-Lau} works.
\end{proof}

We study the structure of displayed Breuil--Kisin modules.
For this, we introduce the following definition.
Let $(A, I)$ be a bounded $\O_E$-prism.

\begin{defn}\label{Definition:normal decomposition}
    Let $M$ be a Breuil--Kisin module over $(A, I)$.
    A decomposition
$
\phi^*M = \bigoplus_{j \in \Z} L_j
$
is called a \textit{normal decomposition} if the isomorphism $F_M \colon (\phi^*M)[1/I] \overset{\sim}{\to} M[1/I]$ restricts to an isomorphism
\[
\bigoplus_{j \in \Z} (L_j \otimes_A I^{-j}) \overset{\sim}{\to} M
\]
of $A$-modules.
\end{defn}

\begin{rem}\label{Remark:normal decomposition and displayed BK}
    If $
\phi^*M = \bigoplus_{j \in \Z} L_j
$
is a normal decomposition, then we have
\[
\Fil^i(\phi^*M)= (\bigoplus_{j \geq i} L_j) \oplus (\bigoplus_{j < i} I^{i-j}L_j)
\]
for every $i \in \Z$.
In particular, a Breuil--Kisin module over $(A, I)$ which admits a normal decomposition is displayed.
In the next lemma, we shall prove that the converse is also true.
\end{rem}

\begin{lem}\label{Lemma:normal decomposition for displayed BK module}
Let $M$ be a displayed Breuil--Kisin module over $(A, I)$.
Then there exists a normal decomposition
$
\phi^*M = \bigoplus_{j \in \Z} L_j.
$
\end{lem}

\begin{proof}
We choose a decomposition
$
(\phi^*M)/I(\phi^*M)=\bigoplus_{j\in \Z} K_j
$
such that $P^i=\bigoplus_{j \geq i} K_{j}$ for every $i$.
Since
$K_{j}$
is a finite projective $A/I$-module
and
$A$ is $I$-adically complete,
there exists a finite projective $A$-module $L_j$
such that $L_j/IL_j \simeq K_j$ for every $j$; see \cite[Tag 0D4A]{SP} or \cite[Theorem 5.1]{Greco} for example.
Moreover we have $L_j=0$ for all but finitely many $j$.
Since $\Fil^{i}(\phi^*M) \to P^i$ is surjective,
there exists a homomorphism $L_i \to \Fil^{i}(\phi^*M)$
which fits into the following commutative diagram:
\[
\xymatrix{
L_i \ar^-{}[r]  \ar[d]_-{} & K_i  \ar[d]_-{} \\
\Fil^{i}(\phi^*M) \ar[r]^-{} & P^i.
}
\]
The induced homomorphism
$\bigoplus_{j\in \Z} L_j \to \phi^*M$
is an isomorphism
since it is a lift of the isomorphism $\bigoplus_{j\in \Z} K_j \overset{\sim}{\to}
(\phi^*M)/I(\phi^*M)$.
We shall prove that, under this isomorphism,
$\Fil^i(\phi^*M)$ coincides with $(\bigoplus_{j \geq i} L_j) \oplus (\bigoplus_{j < i} I^{i-j}L_j)$
for any $i \in \Z$.
This implies that $\bigoplus_{j\in \Z} L_j$ is a normal decomposition.

We proceed by induction on $i$.
The assertion clearly holds for small enough $i$.
Let us assume that the assertion holds for an integer $i$.
Since
\[
(\bigoplus_{j \geq i} IL_j) \oplus (\bigoplus_{j < i} I^{i+1-j}L_j)=I\Fil^i(\phi^*M) \subset \Fil^{i+1}(\phi^*M)
\]
and $\bigoplus_{j \geq i+1} L_j \subset \Fil^{i+1}(\phi^*M)$ by construction, we obtain
\[
(\bigoplus_{j \geq i+1} L_j) \oplus (\bigoplus_{j < i+1} I^{i+1-j}L_j) \subset \Fil^{i+1}(\phi^*M).
\]
The left hand side contains $I\Fil^i(\phi^*M)$ and the quotient by $I\Fil^i(\phi^*M)$ is equal to $P^{i+1}$.
The same holds for the right hand side by Remark \ref{Remark:filtration of Breuil-Kisin module}.
Therefore, this inclusion is actually an equality.
\end{proof}

Let $f \colon (A, I) \to (A', I')$ be a map of bounded $\O_E$-prisms.
For a Breuil--Kisin module $(M, F_M)$ over $(A, I)$,
let
$
F_{M_{A'}} \colon (\phi^*(M_{A'}))[1/I'] \overset{\sim}{\to} M_{A'}[1/I']
$
be the base change of $F_M$, where $M_{A'}:= M \otimes_A A'$.
We also write $f^*M$ for $(M_{A'}, F_{M_{A'}})$.

\begin{prop}\label{Proposition:displayed condition base change}
Let $(M, F_M)$ be a Breuil--Kisin module over $(A, I)$.
\begin{enumerate}
    \item Assume that $(M, F_M)$ is displayed.
    Then $(M_{A'}, F_{M_{A'}})$
    is a displayed Breuil--Kisin module over $(A', I')$, and we have
    $\Fil^i(\phi^*M) \otimes_A A' \overset{\sim}{\to} \Fil^i(\phi^*(M_{A'}))$ for any integer $i$.
    \item Assume that $(M_{A'}, F_{M_{A'}})$ is displayed and $f \colon (A, I) \to (A', I')$ is a faithfully flat map of $\O_E$-prisms. Then $(M, F_M)$ is displayed.
\end{enumerate}
\end{prop}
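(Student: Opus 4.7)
First, by Lemma~\ref{Lemma:normal decomposition for displayed BK module}, choose a normal decomposition $\phi^*M = \bigoplus_j L_j$ with $\Fil^i(\phi^*M) = \bigoplus_{j \geq i} L_j \oplus \bigoplus_{j < i} I^{i-j} L_j$. Compatibility of Frobenius with $f$ gives $\phi^*(M_{A'}) \cong \bigoplus_j (L_j)_{A'}$, and the rigidity Lemma~\ref{Lemma:rigidity}, iterated, yields $I^{i-j} \otimes_A A' \overset{\sim}{\to} (I')^{i-j}$. Hence the base change of the formula becomes
\[
\Fil^i(\phi^*M) \otimes_A A' \;\overset{\sim}{\to}\; \tilde{\Fil}^i := \bigoplus_{j \geq i}(L_j)_{A'} \oplus \bigoplus_{j < i}(I')^{i-j}(L_j)_{A'} \;\subset\; \phi^*(M_{A'}),
\]
the injectivity into $\phi^*(M_{A'})$ following from the vanishing of $\Tor_1^A(L_j/I^{i-j}L_j, A')$, a consequence of invertibility of $I^{i-j}$ and flatness of $L_j$. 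Since $F_M(L_j) \subset I^j M$ base changes to $F_{M_{A'}}((L_j)_{A'}) \subset (I')^j M_{A'}$, we obtain $\tilde{\Fil}^i \subset \Fil^i(\phi^*(M_{A'}))$.

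The main obstacle is the reverse inclusion. I would argue by induction on $i$, proving simultaneously $\tilde{\Fil}^i = \Fil^i(\phi^*(M_{A'}))$ and $P^i(M_{A'}) = \bigoplus_{j \geq i}\bar{L}_{j,A'/I'}$ (so $M_{A'}$ is displayed). After passing to a faithfully flat cover making both prisms orientable (Remark~\ref{Remark:flat locally orientable}) with $I=(d)$ and $I'=(d')$, the inductive step at $i+1$ combines Remark~\ref{Remark:filtration of Breuil-Kisin module} for $M_{A'}$ (yielding $\Fil^{i+1}/I'\Fil^i \cong P^{i+1}(M_{A'})$) with the inductive hypothesis ($I'\Fil^i = I'\tilde{\Fil}^i$) and the direct computation $\tilde{\Fil}^{i+1}/I'\tilde{\Fil}^i \cong \bigoplus_{j \geq i+1}\bar{L}_{j,A'/I'}$. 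The hard step is promoting $\bigoplus_{j \geq i+1}\bar{L}_{j,A'/I'} \hookrightarrow P^{i+1}(M_{A'})$ to an equality; for this I exploit the divided Frobenius $\Phi := \bigoplus_j d^{-j} F_M|_{L_j} : \phi^*M \to M$ and claim it is an $A$-linear isomorphism. Granting this, $\Phi_{A'}$ is also an isomorphism, and a filtered argument on the $(d')$-adic filtrations of $\phi^*(M_{A'})/\tilde{\Fil}^{i+1}$ and $M_{A'}/(d')^{i+1}M_{A'}$ (whose associated graded maps are the reductions $\bar\Phi_{j,A'}$) forces the $j$-th components of any element of $\Fil^{i+1}(\phi^*(M_{A'}))$ into $(I')^{i+1-j}(L_j)_{A'}$, closing the induction. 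The claim that $\Phi$ is an isomorphism is proved by Nakayama on the $(\pi,d)$-adically complete $A$: $\Phi$ is injective (as $\Phi[1/d]$ is an isomorphism) with finitely generated cokernel $C$ satisfying $C[1/d]=0$, so $C=0$ iff $C \otimes_A k(\mathfrak{p})^{\mathrm{perf}} = 0$ for every prime $\mathfrak{p}$ of $A/(\pi,d)$; Lemma~\ref{Lemma:morphism to crystalline prisms} produces a map $(A,I) \to (\O,(\pi))$ for each $\mathfrak{p}$, reducing to the crystalline case, where the determinantal identity $\det F_M = \pi^{\sum_j j\,\rank(L_j)}\det\Phi$ combined with $F_M[1/\pi]$ being an isomorphism forces $\det\Phi \in \O^\times$ by elementary-divisor considerations.

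For part (2), assume $f$ is faithfully flat and $M_{A'}$ is displayed. Applying part (1) to the displayed $M_{A'}$ and the flat projections $p_1, p_2 : (A',I') \to (A' \otimes_A A')^\wedge$ (from Remark~\ref{Remark:pushout in prismatic site}) provides canonical descent data on $\Fil^i(\phi^*(M_{A'}))$, which by faithfully flat descent of submodules produces a unique $A$-submodule $F \subset \phi^*M$ with $F \otimes_A A' = \Fil^i(\phi^*(M_{A'}))$. Faithful flatness of $A \to A'$ applied to the inclusion $I^iM \hookrightarrow M[1/I]$ identifies $F$ with $\Fil^i(\phi^*M)$, giving $\Fil^i(\phi^*M) \otimes_A A' \overset{\sim}{\to} \Fil^i(\phi^*(M_{A'}))$. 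Passing to the image in the de Rham quotient yields $P^i(M) \otimes_{A/I} A'/I' = P^i(M_{A'})$. Since the latter is a direct summand of $(M_{A'})_{\dR}$ by hypothesis, faithfully flat descent for direct summands (Proposition~\ref{Proposition:flat descent for finite projective modules}, applied to the faithfully flat $A/I \to A'/I'$ of Remark~\ref{Remark:flat locally orientable}) yields that $P^i(M) \subset M_{\dR}$ is a direct summand, so $M$ is displayed.
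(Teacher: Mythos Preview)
Your Part (1) is correct but makes the key step far harder than necessary. The divided Frobenius $\Phi = \bigoplus_j d^{-j}F_M|_{L_j}$ is an isomorphism \emph{immediately} from Lemma~\ref{Lemma:Brauil-Kisin module, filtration and height}: taking $m = m_1$ (the largest weight), that lemma gives $F_M \colon \Fil^{m_1}(\phi^*M) \overset{\sim}{\to} d^{m_1}M$, and since $\Fil^{m_1}(\phi^*M) = \bigoplus_j d^{m_1-j}L_j$, dividing by $d^{m_1}$ yields $\Phi$. Your detour through Nakayama, fibers at primes of $A/(\pi,d)$, Lemma~\ref{Lemma:morphism to crystalline prisms}, and an elementary-divisor calculation is unnecessary. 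The paper's proof expresses the same idea without naming $\Phi$: it uses the ``general remark'' that $\Fil^m(\phi^*M) \overset{\sim}{\to} M\otimes_A I^m$ base-changes correctly, and then computes $\Fil^i(\phi^*(M_{A'})) = \phi^*(M_{A'}) \cap (\Fil^m(\phi^*(M_{A'})) \otimes_{A'} I'^{i-m})$ directly in the normal decomposition. This is cleaner than your induction, but the content is equivalent.

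Your Part (2) has a genuine gap. The map $f \colon A \to A'$ is only $(\pi,I)$-\emph{completely} faithfully flat, not classically faithfully flat, so you cannot invoke ``faithfully flat descent of submodules'' or argue that an inclusion like $I^iM \hookrightarrow M[1/I]$ commutes with $-\otimes_A A'$ in the way you suggest. In this topology, only finite projective modules are known to descend (Proposition~\ref{Proposition:flat descent for finite projective modules}); arbitrary submodules and kernels need not. Consequently your identification of the descended $F$ with $\Fil^i(\phi^*M)$ is unjustified, and without it the claimed equality $P^i(M)\otimes_{A/I} A'/I' = P^i(M_{A'})$ has no basis. The paper circumvents this: it descends the finite projective $A'$-modules $\Fil^i(\phi^*(M_{A'}))$ together with the maps between them and into $\phi^*M$ via Proposition~\ref{Proposition:flat descent for finite projective modules}, obtaining abstract $\Fil^i \subset \phi^*M$; it then proves $\Fil^i = \Fil^i(\phi^*M)$ by showing that $\Fil^i/I\Fil^{i-1} \to M_{\dR}$ is injective with projective cokernel, which is checked at closed points of $\Spec A/I$---these all lie in $\Spec A/(\pi,I)$, over which $\Spec A'/(\pi,I')$ is surjective, so one can pass to $A'/I'$ where the displayed hypothesis applies. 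This pointwise argument is precisely what replaces the unavailable classical faithful flatness.
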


\begin{proof}
    (1) This follows from Remark \ref{Remark:normal decomposition and displayed BK}, Lemma \ref{Lemma:normal decomposition for displayed BK module}, and the fact that normal decompositions are preserved under base change.

    (2) We note that $\Fil^i(\phi^*(M_{A'}))$ is a finite projective $A'$-module for any $i$ by Lemma \ref{Lemma:normal decomposition for displayed BK module} and Remark \ref{Remark:normal decomposition and displayed BK}.
    Since $\Fil^i(\phi^*(M_{A'}))$ is stable under the natural descent datum of $\phi^*(M_{A'})$ (with respect to the flat covering $(A, I) \to (A', I')$) by (1),
    it follows from Proposition \ref{Proposition:flat descent for finite projective modules} that there is a descending filtration
$\{ \Fil^i \}_{i \in \Z}$ of $\phi^*M$ by finite projective $A$-submodules such that
$\Fil^i \otimes_A A' \to \phi^*(M_{A'})$ induces an isomorphism
$\Fil^i \otimes_A A' \overset{\sim}{\to} \Fil^i(\phi^*(M_{A'}))$ for any $i$.

Let $m$ be an integer such that $M \otimes_A I^{m} \subset F_M(\phi^*M)$.
Then $\Fil^m=\Fil^m(\phi^*M)$ (see Lemma \ref{Lemma:Brauil-Kisin module, filtration and height}).
Moreover, we have $I\Fil^{i-1} \subset \Fil^i$ for any $i$, and $I\Fil^{i-1} = \Fil^i$ for $i \geq m+1$.
In particular, we obtain $\Fil^i=\Fil^i(\phi^*M)$ for $i \geq m$.

Let $i$ be any integer.
We claim that the natural homomorphism of $A/I$-modules
    \[
    \iota \colon \Fil^i/I\Fil^{i-1} \to (\phi^*M)/I(\phi^*M)
    \]
is injective and its cokernel is a finite projective $A/I$-module.
Indeed, it suffices to show that for every closed point $x \in \Spec A/I$, the base change of $\iota$ to the residue field $k(x)$ is injective.
Since $x$ is contained in $\Spec A/(\pi, I)$ and $\Spec A'/(\pi, I') \to \Spec A/(\pi, I)$ is surjective,
it is enough to prove that the base change of $\iota$ along $A/I \to A'/I'$ is injective and its cokernel is a finite projective $A'/I'$-module. 
This follows from the assumption that 
$(M_{A'}, F_{M_{A'}})$ is displayed.

It follows from the claim that
$I\Fil^{i-1}= I(\phi^*M) \cap \Fil^{i}$, or equivalently, $\Fil^{i-1}= \phi^*M \cap (\Fil^{i} \otimes_A I^{-1})$.
Since $\Fil^i=\Fil^i(\phi^*M)$ for $i \geq m$,
we can conclude that $\Fil^i=\Fil^i(\phi^*M)$ for any $i$.
This, together with the claim, shows that $(M, F_M)$ is displayed.
\end{proof}

\begin{rem}\label{Remark:flat descent for BK modules}
The functor
\[
(\O_E)_{\Prism, \O_E} \to \mathrm{Set}, \quad (B, J) \mapsto B[1/J]
\]
forms a sheaf (with respect to the flat topology) by
Lemma \ref{Lemma:rigidity} and Remark \ref{Remark:structure sheaf}.
Thus for finite projective $A$-modules $M, M'$,
the functor $(A, I)_{\Prism} \to \mathrm{Set}$ which associates to each $(B, J) \in (A, I)_{\Prism}$ the set of isomorphisms
$M_B[1/J] \overset{\sim}{\to} M'_B[1/J]$
forms a sheaf.
This fact, together with
Proposition \ref{Proposition:flat descent for finite projective modules}, implies that the fibered category over $(\O_E)^{\op}_{\Prism, \O_E}$
which associates to a bounded $\O_E$-prism $(A, I)$ the category of Breuil--Kisin modules over $(A, I)$ satisfies descent with respect to the flat topology.
\end{rem}

\begin{cor}\label{Corollary:flat descent for dispalyed BK modules}
The fibered category over $(\O_E)^{\op}_{\Prism, \O_E}$
which associates to a bounded $\O_E$-prism $(A, I)$ the category of displayed Breuil--Kisin modules over $(A, I)$
satisfies descent with respect to the flat topology.
\end{cor}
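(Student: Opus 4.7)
The plan is to assemble flat descent for displayed Breuil--Kisin modules from the flat descent statement for finite projective modules (Proposition \ref{Proposition:flat descent for finite projective modules}) together with the base-change behaviour of the displayed condition (Proposition \ref{Proposition:displayed condition base change}). Fix a faithfully flat map $f\colon(A,I)\to(A',I')$ of bounded $\O_E$-prisms, let $A'',A'''$ denote the iterated $(\pi,I)$-completed pushouts (which are bounded $\O_E$-prisms by Remark \ref{Remark:pushout in prismatic site}), and suppose given a displayed Breuil--Kisin module $(M',F_{M'})$ over $(A',I')$ equipped with a descent datum along $f$.

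First, by Proposition \ref{Proposition:flat descent for finite projective modules}, the finite projective $A'$-module $M'$ descends to a finite projective $A$-module $M$; since the Frobenius commutes with all structure maps, the descent datum transports to one on $\phi^*M'$. I then choose an integer $m$ large enough that $M'\otimes_{A'}I'^m\subset F_{M'}(\phi^*M')$. By Proposition \ref{Proposition:displayed condition base change}(1) the formation of $\Fil^m(\phi^*M')$ commutes with arbitrary base change of $\O_E$-prisms, so the descent datum on $\phi^*M'$ restricts to one on the finite projective $A'$-submodule $\Fil^m(\phi^*M')$, and both the inclusion $\Fil^m(\phi^*M')\hookrightarrow\phi^*M'$ and the $A'$-linear isomorphism
\[
F_{M'}\colon \Fil^m(\phi^*M')\overset{\sim}{\to}M'\otimes_{A'}I'^m
\]
respect the descent data. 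Applying Proposition \ref{Proposition:flat descent for finite projective modules} once more (together with the standard fact that morphisms between descended finite projective modules are themselves descent data) yields a finite projective $A$-module $L$, an $A$-linear map $\iota\colon L\to\phi^*M$, and an $A$-linear isomorphism $F\colon L\overset{\sim}{\to}M\otimes_A I^m$, whose base change along $f$ recovers the previous data.

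To conclude, I would verify that $\iota$ is injective with $I$-power-torsion cokernel; inverting $I$ then produces the desired isomorphism $F_M\colon(\phi^*M)[1/I]\overset{\sim}{\to}M[1/I]$, and Proposition \ref{Proposition:displayed condition base change}(2) guarantees that the resulting $(M,F_M)$ is displayed. Injectivity of $\iota$ is detected modulo $(\pi,I)^n$, where it follows from faithful flatness of $A/(\pi,I)^n\to A'/(\pi,I')^n$; since the finite projective $A$-module $L$ is $(\pi,I)$-adically separated, this lifts to injectivity of $\iota$ itself. For the cokernel, the normal decomposition of Lemma \ref{Lemma:normal decomposition for displayed BK module} applied to $(M',F_{M'})$ shows that $(\phi^*M')/\Fil^m(\phi^*M')$ is killed by $I'^N$ for some $N$; combining Lemma \ref{Lemma:rigidity} with faithfully flat descent of the vanishing of $(I^N(\phi^*M/L))\otimes_A A'$, we obtain $I^N(\phi^*M/L)=0$. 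The main obstacle is precisely this bookkeeping: the datum $F_{M'}$ lives only over $A'[1/I']$ and does not descend directly, so one must pass through the integral refinement provided by the displayed condition and then check that flat descent of the resulting inclusion and isomorphism produces objects with the expected injectivity and torsion behaviour.
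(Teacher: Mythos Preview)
Your overall strategy—descend the underlying module via Proposition~\ref{Proposition:flat descent for finite projective modules}, descend an integral avatar of $F_{M'}$, and then invoke Proposition~\ref{Proposition:displayed condition base change}(2)—is exactly the content the paper's one-line proof is pointing to. However, the specific verification of injectivity of $\iota$ contains a genuine gap. You assert that injectivity can be checked modulo $(\pi,I)^n$ and then deduce it from faithful flatness of $A/(\pi,I)^n \to A'/(\pi,I')^n$; this would require the reduction $\iota'_n$ of $\iota' \colon \Fil^m(\phi^*M') \hookrightarrow \phi^*M'$ to remain injective modulo $(\pi,I')^n$, but that fails in general (already the inclusion $I' \hookrightarrow A'$ becomes the zero map modulo $I'$). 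A parallel issue affects the cokernel step: from $I'^N \cdot (\phi^*M'/\Fil^m)=0$ you conclude $(I^N(\phi^*M/L)) \otimes_A A'=0$, but tensoring the inclusion $I^N Q \hookrightarrow Q$ with $A'$ need not be injective, so only the \emph{image} of $(I^N Q)\otimes_A A'$ in $Q\otimes_A A'$ is forced to vanish.

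Both gaps are repaired by descending one more map. The inclusion $j' \colon \phi^*M' \otimes_{A'} I'^N \hookrightarrow \Fil^m(\phi^*M')$ is a morphism of finite projective $A'$-modules compatible with the descent data (again by Proposition~\ref{Proposition:displayed condition base change}(1)), so it descends to $j \colon \phi^*M \otimes_A I^N \to L$. Now use that descent of morphisms between finite projective $A$-modules is \emph{faithful}: $\Hom_A(P_1,P_2)$ is finite projective, hence $(\pi,I)$-adically separated, and its reduction modulo $(\pi,I)^n$ injects into the base change to $A'/(\pi,I')^n$. Applying this to the identities $\iota' \circ j' = \mathrm{incl}$ and $j' \circ (\iota' \otimes I'^N) = \mathrm{incl}$ over $A'$ yields $\iota \circ j = \mathrm{incl}$ and $j \circ (\iota \otimes I^N)=\mathrm{incl}$ over $A$. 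The second equality forces $\iota$ to be injective, and the first shows that $\mathrm{coker}(\iota)$ is killed by $I^N$; your reconstruction of $F_M$ and the appeal to Proposition~\ref{Proposition:displayed condition base change}(2) then go through.
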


\begin{proof}
    This follows from Proposition \ref{Proposition:displayed condition base change} and Remark \ref{Remark:flat descent for BK modules}.
\end{proof}

We finish this subsection by giving an example of a Breuil--Kisin module which is not displayed.

\begin{ex}\label{Example:non-displayed BK module}
Let $(A, I)$ be an orientable and bounded $\O_E$-prism.
We assume that $A/I$ is $\pi$-torsion free and $A/I \neq 0$.
We set $M:=A^2$ and let
$F_M \colon \phi^*M \to M$
be the homomorphism defined by
the matrix
$\begin{pmatrix}
\pi & d \\
d & d^2 \\
\end{pmatrix}
$
for a generator $d \in I$.
The pair $(M, F_M)$ is a Breuil--Kisin module over $(A, I)$.
We claim that $P^1/P^2$ is not $\pi$-torsion free, and thus $(M, F_M)$ is not displayed.
Indeed, since $(d, 1) \in \Fil^1(\phi^*M)$,
we have $(0, 1) \in P^1 \subset M_{\dR}$.
One can check that
the image of $(0, 1)$ in $P^1/P^2$ is not zero and is killed by $\pi$.
\end{ex}

\subsection{Breuil--Kisin modules of type $\mu$}\label{Subsection:BK module of type mu}

Here we introduce the notion of Breuil--Kisin modules \textit{of type $\mu$}.
Let
\[
\mu \colon \G_m \to \GL_{n, \O}
\]
be a cocharacter defined over $\O$, where $\O=W(k)\otimes_{W(\F_q)} \O_E$ is as in Definition \ref{Definition:prism over O}.
There is a unique tuple $(m_1, \dotsc, m_n)$ of integers $m_1 \geq \cdots \geq m_n$ such that
$\mu$ is conjugate to the cocharacter defined by
$
t \mapsto \diag{(t^{m_1}, \dotsc, t^{m_n})}.
$
By abuse of notation,
the tuple $(m_1, \dotsc, m_n)$ is also denoted by $\mu$.
Let $r_i \in \Z_{\geq 0}$ be the number of occurrences of $i$ in $(m_1, \dotsc, m_n)$.
We set $L:=\O^n_E$ and $L_{\O} := L \otimes_{\O_E} \O$.
The cocharacter $\mu$ induces an action of $\G_m$ on $L_\O$.
We have the weight decomposition
\[
L_{\O} = \bigoplus_{j \in\Z}  L_{\mu, j}
\]
where an element $t \in \G_m(\O)=\O^\times$ acts on $L_{\mu, j}$ by $x \mapsto t^{j}x$ for every $j \in \Z$.
(See for example \cite[Lemma A.8.8]{CGP} for the existence of the weight decomposition over a ring.)
The rank of $L_{\mu, j}$ is equal to $r_j$.

Let $(A, I)$ be a bounded $\O_E$-prism over $\O$.

\begin{defn}\label{Definition:type of displayed BK module}
Let
$M$
be a displayed Breuil--Kisin module over $(A, I)$.
We say that $M$ is \textit{of type} $\mu$
if, for the Hodge filtration $\{ P^i \}_{i \in \Z}$, the successive quotient $P^i/P^{i+1}$ is of rank $r_i$ (i.e.\ the corresponding vector bundle on $\Spec A/I$ has constant rank $r_i$) for every $i$.
We say that $M$ is \textit{banal} if
all successive quotients $P^i/P^{i+1}$ are free $A/I$-modules.
\end{defn}

We write
$\mathrm{BK}_\mu(A, I)$
(resp.\ 
$\mathrm{BK}_\mu(A, I)_{\mathrm{banal}}$)
for the category of Breuil--Kisin modules over $(A, I)$ of type $\mu$
(resp.\ banal Breuil--Kisin modules over $(A, I)$ of type $\mu$).


\begin{rem}\label{Remark:standard filtration}
We set
\[
\Fil^i_\mu:= (\bigoplus_{j \geq i} (L_{\mu, j})_A) \oplus (\bigoplus_{j < i} I^{i-j}(L_{\mu, j})_A) \subset A^n,
\]
where $(L_{\mu, j})_A:=L_{\mu, j} \otimes_{\O} A$.
Let $M \in \mathrm{BK}_\mu(A, I)_{\mathrm{banal}}$.
Let $\phi^*M = \bigoplus_{j \in \Z} L_j$ be a normal decomposition.
Then, each $L_j$ is a free $A$-module of rank $r_j$.
Thus there is an isomorphism
$A^n \simeq \phi^*M$ such that
the filtration
$\{ \Fil^i_\mu \}_{i \in \Z}$
coincides with
$\{ \Fil^i(\phi^*M) \}_{i \in \Z}$.
\end{rem}

\begin{rem}\label{Remark:etale locally banal of type mu}
    Let
    $M$
    be a displayed Breuil--Kisin module over $(A, I)$.
    Then there exists a $(\pi, I)$-completely \'etale covering
    $A \to A_1 \times \cdots \times A_m$
    such that for each $1 \leq j \leq m$,
    the base change
    of $M$ to the bounded $\O_E$-prism $(A_j, IA_j)$ (see Lemma \ref{Lemma:etale morphism prism}) is of type $\mu$ for some $\mu$ and banal.
    Indeed, by Lemma \ref{Lemma:etale morphism and reduction}, it suffices to prove that
    there exists an \'etale and faithfully flat homomorphism
    $A/(\pi, I) \to B_1 \times \cdots \times B_m$
    such that for all $1 \leq j \leq m$ and $i$,
    the base change $P^{i}/P^{i+1} \otimes_{A/I} B_j$ is free over $B_j$, which is clear.
\end{rem}

\section{Display group} \label{Section:display group}

Let $G$ be a smooth affine group scheme over $\O_E$.
Let $k$ be a perfect field containing $\F_q$
and
we set
$\O := W(k)\otimes_{W(\F_q)} \O_E$.
Let
\[
\mu \colon \G_m \to G_{\O}:=G \times_{\Spec \O_E} \Spec \O
\]
be a cocharacter defined over $\O$.
In this section, we introduce the display group $G_\mu(A, I)$
for an orientable and bounded $\O_E$-prism $(A, I)$ over $\O$.
The display group will be used in the definition of prismatic $G$-$\mu$-displays.

\subsection{Definition of the display group}\label{Subsection:Definition of the Display group}

Let $A$ be an $\O$-algebra with
an ideal $I \subset A$ which is generated by a nonzerodivisor $d \in A$.

\begin{defn}\label{Definition:display group}
We define
\[
G_\mu(A, I):=\{ \, g \in G(A) \, \vert \, \mu(d)g\mu(d)^{-1} \, \, \text{lies in} \, \, G(A) \subset G(A[1/I]) \, \}.
\]
The group $G_\mu(A, I)$ is called the \textit{display group}.
We note that $G_\mu(A, I)$ does not depend on the choice of $d$.
\end{defn}

\begin{rem}\label{Remark:definition of display group is simpler}
    The definition of the display group given here is a translation of the one given in \cite{Lau21} to our setting; see Remark \ref{Remark:analogue of Lau's definitions display version} for details.
    If $G$ is reductive and $\mu$ is minuscule, such a group was also considered in \cite{Bultel-Pappas}.
\end{rem}

For the cocharacter
$
\mu \colon \G_m \to G_{\O},
$
we endow $G_\O$ with the action of $\G_m$ defined by
\begin{equation}\label{equation:action cocharacter adjoint}
    G_\O(R) \times \G_m(R) \to G_\O(R), \quad (g, t) \mapsto \mu(t)^{-1}g\mu(t)
\end{equation}
for every $\O$-algebra $R$.
We note that this action is the inverse of the one used in Definition \ref{Definition:display group}.
We write $G=\Spec A'_G$ and $A_G:=A'_G \otimes_{\O_E} \O$, so that $G_\O=\Spec A_G$.
Let
\[
A_G= \bigoplus_{i\in\Z} A_{G, i}
\]
be the weight decomposition with respect to the action of $\G_m$.
An element $t \in \G_m(R)=R^\times$ acts on $A_{G, i} \otimes_\O R$ by $x \mapsto t^{i}x$.

\begin{rem}\label{Remark:formula action of cocharacter}
Let $R$ be an $\O$-algebra.
For any $t \in \G_m(R)$ and any $g \in G_\O(R)$ with corresponding homomorphism $g^* \colon A_G \to R$,
the homomorphism
\[
(\mu(t)^{-1}g\mu(t))^* \colon A_G \to R
\]
corresponding to $\mu(t)^{-1}g\mu(t) \in G_\O(R)$ 
sends an element $x \in A_{G, i}$ to $t^ig^*(x) \in R$.
\end{rem}

\begin{lem}\label{Lemma:Gmu iff condition}
Let $g \in G(A)$ be an element.
Then $g \in G_\mu(A, I)$ if and only if $g^*(x) \in I^iA$ for every $i > 0$ and every $x \in A_{G, i}$.
\end{lem}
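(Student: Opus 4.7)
The plan is to reduce the membership condition on $G_\mu(A,I)$ to a pointwise condition on the map $g^\ast : A_G \to A$ by directly computing the effect of the conjugation $g \mapsto \mu(d)g\mu(d)^{-1}$ on weight components.

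First, I would rewrite $\mu(d)g\mu(d)^{-1}$ as $\mu(d^{-1})^{-1}g\mu(d^{-1})$ inside $G(A[1/I])$, which is legitimate because $d$ is a nonzerodivisor and is invertible in $A[1/I]$. This puts the element into the form to which Remark \ref{Remark:formula action of cocharacter} applies (the action $(\ref{equation:action cocharacter adjoint})$), with the group element $g \in G(A) \subset G(A[1/I])$ and the scalar $t = d^{-1} \in \G_m(A[1/I])$.

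Next, I would set $h := \mu(d)g\mu(d)^{-1} \in G(A[1/I])$ and apply Remark \ref{Remark:formula action of cocharacter} to obtain, for every $i \in \Z$ and every $x \in A_{G,i}$,
\[
h^\ast(x) = (d^{-1})^i \, g^\ast(x) = d^{-i} g^\ast(x) \in A[1/I].
\]
By definition, $g \in G_\mu(A,I)$ means that $h \in G(A)$, which is equivalent to $h^\ast(A_G) \subset A$. Since $A_G = \bigoplus_{i \in \Z} A_{G,i}$, this in turn decomposes into the componentwise conditions $d^{-i}g^\ast(x) \in A$ for all $i$ and all $x \in A_{G,i}$.

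Finally, I would split this conjunction by sign: for $i \leq 0$, we have $d^{-i} \in A$ and $g^\ast(x) \in A$, so the condition is automatic; for $i > 0$, multiplying by the nonzerodivisor $d^i$ shows that $d^{-i}g^\ast(x) \in A$ is equivalent to $g^\ast(x) \in d^i A = I^i A$. This yields exactly the stated criterion. There is no real obstacle here; the only point requiring care is checking the sign in the exponent, i.e.\ that $\mu(d)g\mu(d)^{-1}$ corresponds to the action with $t = d^{-1}$ (and hence to multiplication by $d^{-i}$, not $d^i$) relative to the convention fixed in $(\ref{equation:action cocharacter adjoint})$.
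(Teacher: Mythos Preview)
Your proposal is correct and is precisely the argument the paper has in mind; the paper's proof is the single sentence ``This follows from Remark \ref{Remark:formula action of cocharacter},'' and you have accurately unpacked what that entails, including the sign check that $\mu(d)g\mu(d)^{-1}$ corresponds to $t=d^{-1}$ in the action $(\ref{equation:action cocharacter adjoint})$.
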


\begin{proof}
This follows from Remark \ref{Remark:formula action of cocharacter}.
\end{proof}

\begin{ex}\label{Example:display group GLn case}
Assume that $G=\GL_n$.
Let
$\mu \colon \G_m \to \GL_{n, \O}$
be a cocharacter and let $(m_1, \dotsc, m_n)$ be the corresponding tuple of integers $m_1 \geq \cdots \geq m_n$ as in Section \ref{Subsection:BK module of type mu}.
Let $\{ \Fil^i_\mu \}_{i \in \Z}$ be the filtration of $M:=A^n$ defined as in Remark \ref{Remark:standard filtration}.
Then we have
\[
(\GL_n)_\mu(A, I)=\{ \, g \in \GL_n(A) \, \vert \, g(\Fil^i_\mu)=\Fil^i_\mu \, \, \text{for every $i \in \Z$} \, \}.
\]
Let $d \in I$ be a generator.
For any $g \in (\GL_n)_\mu(A, I)$,
the following diagram commutes:
\[
\xymatrix{
\Fil^{m_1}_\mu \ar^-{g}[rr]  \ar[d]_-{\simeq} & & \Fil^{m_1}_\mu  \ar[d]^-{\simeq} \\
M \ar[rr]^-{\mu(d)g\mu(d)^{-1}} & & M,
}
\]
where
$\Fil^{m_1}_\mu \overset{\sim}{\to} M$ is defined by $d^{-m_1}\mu(d)$.
\end{ex}

\subsection{Properties of the display group}\label{Subsection:Properties of the display group}

For the cocharacter $\mu  \colon \G_m \to G_{\O}$,
we consider the closed subgroup schemes $P_\mu, U^{-}_{\mu} \subset G_\O$ over $\O$ defined by, for every $\O$-algebra $R$,
\begin{align*}
    P_\mu(R)&=\{ \, g \in G(R) \, \vert \, \lim_{t \to 0} \mu(t)g\mu(t)^{-1} \, \text{exists} \, \},\\
    U^{-}_{\mu}(R)&=\{ \, g \in G(R) \, \vert \, \lim_{t \to 0} \mu(t)^{-1}g\mu(t)=1 \, \}.
\end{align*}
(We refer to \cite[Lemma 2.1.4]{CGP} for the definition of $\lim_{t \to 0} \mu(t)g\mu(t)^{-1}$.)
We see that $P_\mu$ and $U^{-}_{\mu}$ are stable under the action of $\G_m$ on $G_\O$ given by $(\ref{equation:action cocharacter adjoint})$.
The group schemes $P_\mu$ and $U^{-}_{\mu}$ are smooth over $\O$.
Moreover, the multiplication map
\[
U^{-}_{\mu} \times_{\Spec \O} P_\mu \to G_\O
\]
is an open immersion.
See \cite[Section 2.1]{CGP}, especially \cite[Proposition 2.1.8]{CGP}, for details.

\begin{rem}\label{Remark:change of notation from Lau}
We have employed slightly different notation than in \cite{Lau21}.
For example, in \textit{loc.cit.}, the subgroup $P_\mu$ (resp.\ $U^{-}_{\mu}$) is denoted by $P^{-}$ (resp.\ $U^{+}$).
\end{rem}

\begin{lem}\label{Lemma:Pmu structure}
\
\begin{enumerate}
    \item Let $R$ be an $\O$-algebra
    and $g \in G_\O(R)$ an element.
    Then $g \in P_\mu(R)$ if and only if $g^*(x)=0$ for every $i > 0$ and every $x \in A_{G, i}$.
    \item We have
    $
    P_\mu(A) \subset G_\mu(A, I),
    $
    and the image of $G_\mu(A, I)$ in $G(A/I)$
    under the projection
    $G(A) \to G(A/I)$ is contained in $P_\mu(A/I)$.
    Moreover
    $
    \mu(d)P_\mu(A)\mu(d)^{-1}$
    is contained in $P_\mu(A)$.
\end{enumerate}
\end{lem}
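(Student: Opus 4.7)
The plan breaks into establishing (1) first and then reducing the three assertions of (2) to bookkeeping with (1) and Lemma \ref{Lemma:Gmu iff condition}.

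For (1), I would translate the limit condition into an extension condition on a morphism of schemes. The morphism $\G_m \to G_\O$, $t \mapsto \mu(t)g\mu(t)^{-1}$ corresponds to a homomorphism of $\O$-algebras $\alpha \colon A_G \to R[t, t^{-1}]$. Since $\mu(t)g\mu(t)^{-1} = \mu(t^{-1})^{-1}g\mu(t^{-1})$, applying Remark \ref{Remark:formula action of cocharacter} with $t$ replaced by $t^{-1}$ shows that $\alpha$ sends $x \in A_{G, i}$ to $t^{-i}g^*(x)$. The limit $\lim_{t \to 0} \mu(t)g\mu(t)^{-1}$ exists exactly when this morphism extends across $0 \in \A^1$, i.e.\ when $\alpha$ factors through $R[t] \subset R[t, t^{-1}]$. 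For $i \leq 0$ the image $t^{-i}g^*(x) = t^{|i|}g^*(x)$ is already a polynomial, so extendability is equivalent to $g^*(x) = 0$ for all $i > 0$ and all $x \in A_{G, i}$, which is the desired characterization.

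Given (1), the three assertions of (2) follow by elementary manipulations. First, for $g \in P_\mu(A)$ we have $g^*(x) = 0 \in I^iA$ for $i > 0$ by (1), so Lemma \ref{Lemma:Gmu iff condition} gives $g \in G_\mu(A, I)$. Second, if $g \in G_\mu(A, I)$ then $g^*(x) \in I^iA \subset IA$ for $i > 0$ by the same lemma; reducing modulo $I$ yields $\overline{g}^*(x) = 0$ in $A/I$, and (1) applied to $A/I$ places $\overline{g}$ in $P_\mu(A/I)$. Finally, for $g \in P_\mu(A)$ the first inclusion just proved ensures that $g' := \mu(d)g\mu(d)^{-1}$ actually lies in $G(A) \subset G(A[1/I])$. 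Computing in $A[1/I]$ via Remark \ref{Remark:formula action of cocharacter} gives $(g')^*(x) = d^{-i}g^*(x)$ for $x \in A_{G, i}$, which vanishes for $i > 0$ since $g^*(x) = 0$. Because $A \hookrightarrow A[1/I]$ is injective ($d$ being a nonzerodivisor), $(g')^*(x) = 0$ holds in $A$, and one further application of (1) shows $g' \in P_\mu(A)$.

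The only real subtlety is a sign bookkeeping issue: the weight decomposition $A_G = \bigoplus_i A_{G, i}$ is defined via the action $(g, t) \mapsto \mu(t)^{-1}g\mu(t)$, whereas $P_\mu$ is defined using the opposite conjugation $g \mapsto \mu(t)g\mu(t)^{-1}$. Hence Remark \ref{Remark:formula action of cocharacter} must be invoked after the substitution $t \leftrightarrow t^{-1}$, and the same substitution reappears implicitly when passing from $\mu(t)$-conjugation to $\mu(d)$-conjugation in the last step. I do not anticipate any further obstacles, as no structural results about $G$, $P_\mu$, or the prism $(A, I)$ beyond Lemma \ref{Lemma:Gmu iff condition} and Remark \ref{Remark:formula action of cocharacter} are needed.
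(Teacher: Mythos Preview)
Your proposal is correct and follows essentially the same route as the paper: the paper's proof simply says that (1) follows immediately from Remark~\ref{Remark:formula action of cocharacter} and that (2) follows from (1) together with Lemma~\ref{Lemma:Gmu iff condition}, which is precisely what you have spelled out in detail. Your careful handling of the sign issue (the $t \leftrightarrow t^{-1}$ substitution) and the passage through $A[1/I]$ for the third part of (2) fills in exactly the bookkeeping the paper leaves to the reader.
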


\begin{proof}
Remark \ref{Remark:formula action of cocharacter} immediately implies (1).
The second assertion (2) follows from (1) and Lemma \ref{Lemma:Gmu iff condition}.
\end{proof}

\begin{defn}[{\cite[Definition 6.3.1]{Lau21}}]\label{Definition:1-bounded}
The action of $\G_m$ on $G_\O$ given in $(\ref{equation:action cocharacter adjoint})$ induces an action of $\G_m$ on
the Lie algebra $\Lie(G_\O)$.
Let
\[
\Lie(G_\O)=\bigoplus_{i \in \Z} \mathfrak{g}_i
\]
be the weight decomposition with respect to the action of $\G_m$.
We say that the cocharacter
$\mu \colon \G_m \to G_{\O}$
is \textit{1-bounded} if $\mathfrak{g}_i=0$ for $i \geq 2$.
\end{defn}

In general,
the Lie algebra $\Lie(U^-_{\mu})$ of $U^-_{\mu}$
coincides with
$\bigoplus_{i \geq 1} \mathfrak{g}_i$.
(We also note that
$\Lie(P_{\mu})= \bigoplus_{i \leq  0} \mathfrak{g}_i$.)
Thus $\mu$ is 1-bounded if and only if
$\Lie(U^-_{\mu})=\mathfrak{g}_1$.

\begin{rem}\label{Remark:minuscule cocharacter}
    If $G$ is a reductive group scheme over $\O_E$, then $\mu$ is 1-bounded if and only if $\mu$ is minuscule, that is, the equality $\Lie(G_\O)=\mathfrak{g}_{-1} \oplus \mathfrak{g}_{0} \oplus \mathfrak{g}_{1}$ holds.
\end{rem}

\begin{ex}\label{Example:GLn minuscule case}
    Assume that $G=\GL_n$.
    The cocharacter
    $\G_m \to \GL_{n, \O}$
    defined by
\[
t \mapsto \diag{(\underbrace{t^m, \dotsc, t^m}_{s}, \underbrace{t^{m-1}, \dotsc, t^{m-1}}_{n-s})}
\]
for some integers $m$ and $s$ $(0 \leq s \leq n)$ is 1-bounded.
In fact, any 1-bounded cocharacter of $\GL_{n, \O}$ is conjugate to a cocharacter of this form.
\end{ex}

For a free $\O$-module $M$ of finite rank,
we let $V(M)$ denote the group scheme over $\O$ defined by $R \mapsto M \otimes_\O R$ for every $\O$-algebra $R$.

\begin{lem}\label{Lemma:Umu structure}
There exists a $\G_m$-equivariant isomorphism
    \[
    \log \colon U^-_{\mu} \overset{\sim}{\to} V(\Lie(U^-_{\mu}))
    \]
    of schemes over $\O$ which induces the identity on the Lie algebras.
    If $\mu$ is 1-bounded, then the isomorphism $\log$ is unique, and it is an isomorphism of group schemes over $\O$.
\end{lem}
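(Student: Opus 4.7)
The plan is to construct $\log$ via a $\G_m$-equivariant splitting of the augmentation ideal, and then to verify it is an isomorphism by reducing to the residue field. Let $\mathfrak{m} \subset \O[U^-_\mu]$ denote the augmentation ideal cutting out the identity section, so $\mathfrak{m}/\mathfrak{m}^2 = \Lie(U^-_\mu)^\vee$. By the very definition of $U^-_\mu$, the $\G_m$-action from $(\ref{equation:action cocharacter adjoint})$ contracts $U^-_\mu$ to the identity, so $\Lie(U^-_\mu) = \bigoplus_{i \geq 1} \mathfrak{g}_i$ sits in strictly positive weights, $\mathfrak{m}/\mathfrak{m}^2$ in weights $\leq -1$, and more generally $\mathfrak{m}^n$ in weights $\leq -n$. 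Consequently, $\O[U^-_\mu] = \bigoplus_{i \leq 0} A_i$ has only non-positive weights with $A_0 = \O$, and each weight component $A_i$ is a finitely generated projective $\O$-module.

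First I would pick a $\G_m$-equivariant $\O$-linear splitting $s \colon \mathfrak{m}/\mathfrak{m}^2 \to \mathfrak{m}$ of the canonical projection, which exists weight by weight because each $(\mathfrak{m}/\mathfrak{m}^2)_i$ is a finite projective $\O$-module. The universal property of the symmetric algebra extends $s$ uniquely to a $\G_m$-equivariant $\O$-algebra map
\[
\log^* \colon \mathrm{Sym}^*_\O(\mathfrak{m}/\mathfrak{m}^2) \;=\; \O[V(\Lie(U^-_\mu))] \;\longrightarrow\; \O[U^-_\mu],
\]
yielding a $\G_m$-equivariant morphism $\log \colon U^-_\mu \to V(\Lie(U^-_\mu))$ inducing the identity on Lie algebras. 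To show $\log$ is an isomorphism, both sides are smooth affine $\O$-schemes of the same relative dimension, so by Nakayama and the projectivity of the weight components it suffices to check that $\log^*$ becomes an isomorphism after base change to the residue field $k$. Over $k$, the contracting $\G_m$-action endows $(U^-_\mu)_k$ with a $\G_m$-stable normal filtration by closed subgroups whose successive quotients are the vector groups $V(\mathfrak{g}_i)_k$; inductively on decreasing weight, $\G_m$-equivariance splits each extension and produces an isomorphism $(U^-_\mu)_k \cong V(\Lie(U^-_\mu))_k$ of $\G_m$-schemes, and then one checks that any $\G_m$-equivariant morphism to $V(\Lie(U^-_\mu))_k$ inducing the identity on the cotangent space is an isomorphism.

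For the 1-bounded case, $\mathfrak{m}/\mathfrak{m}^2$ is concentrated in weight $-1$ while $\mathfrak{m}^2$ sits in weights $\leq -2$, so $(\mathfrak{m})_{-1} \to (\mathfrak{m}/\mathfrak{m}^2)_{-1}$ is already an isomorphism: the splitting $s$ is unique, and hence so is $\log$. For the group-scheme statement, $U^-_\mu$ is commutative since $[\mathfrak{g}_1,\mathfrak{g}_1] \subset \mathfrak{g}_2 = 0$, and one compares the two $\G_m$-equivariant morphisms $\log \circ m_{U^-_\mu}$ and $(+) \circ (\log \times \log)$ from $U^-_\mu \times U^-_\mu$ to $V(\mathfrak{g}_1)$. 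Both agree on the axes $U^-_\mu \times \{e\}$ and $\{e\} \times U^-_\mu$; since a $\G_m$-equivariant map to $V(\mathfrak{g}_1)$ is determined by its weight-$(-1)$ component, and the weight-$(-1)$ part of the ideal $\mathfrak{m}_1 \otimes \mathfrak{m}_2$ of the union of the axes vanishes, the two morphisms coincide.

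The main obstacle I expect is the isomorphism claim itself: carrying out the weight-by-weight identification over $\O$ rather than over a field. A conceptually cleaner alternative, which may be preferable in the actual write-up, is to construct the $\G_m$-equivariant normal filtration of $U^-_\mu$ directly over $\O$ with graded pieces $V(\mathfrak{g}_i)$ and inductively split each extension using $\G_m$-equivariance; this bypasses the reduction to the residue field and yields both the existence of $\log$ and its being an isomorphism simultaneously.
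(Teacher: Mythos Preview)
Your approach is essentially the same as the paper's (which defers to \cite[Lemma 6.1.1, Lemma 6.3.2]{Lau21}): construct $\log^*$ from a $\G_m$-equivariant section of $\mathfrak m\to\mathfrak m/\mathfrak m^2$, and in the 1-bounded case deduce uniqueness and the Hopf-algebra compatibility from the fact that $\mathfrak m^2$ lives in strictly higher weights than $\mathfrak m/\mathfrak m^2$. Your argument for the group-homomorphism property (the weight-$1$ part of $\mathfrak m_1\cdot\mathfrak m_2$ vanishes, so the two maps to $V(\mathfrak g_1)$ agree) is exactly the dual of the paper's computation that $\Delta\vert_{B_1}$ lands in $B_1\otimes B_0\oplus B_0\otimes B_1$ and is forced by the counit axioms.

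Two small points. First, your weight signs are flipped relative to the paper's conventions: with the action $(\ref{equation:action cocharacter adjoint})$ and the weight decomposition $A_G=\bigoplus A_{G,i}$ as in Remark~\ref{Remark:formula action of cocharacter}, the coordinate ring $\O[U^-_\mu]$ sits in \emph{non-negative} weights and $\mathfrak m=I$ in weights $\geq 1$, not $\leq -1$. This is harmless for the argument but should be corrected. Second, the remark that ``$U^-_\mu$ is commutative since $[\mathfrak g_1,\mathfrak g_1]\subset\mathfrak g_2=0$'' is not a valid inference over a base of residue characteristic $p$ (abelian Lie algebra does not force commutativity of a unipotent group), and in any case your argument does not use it---just drop the claim.
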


\begin{proof}
The same arguments as in the proofs of \cite[Lemma 6.1.1, Lemma 6.3.2]{Lau21} work here.
\end{proof}

\begin{rem}\label{Remark:Umu identify}
\ 
\begin{enumerate}
    \item An isomorphism
    $
    \log \colon U^-_{\mu} \overset{\sim}{\to} V(\Lie(U^-_{\mu}))
    $
    as in Lemma \ref{Lemma:Umu structure} induces a bijection
\[
U^-_{\mu}(A) \cap G_\mu(A, I) \overset{\sim}{\to} \bigoplus_{i \geq 1} I^i(\mathfrak{g}_i \otimes_\O A).
\]
    \item If $\mu$ is 1-bounded,
then
we identify $U^-_{\mu}$ with $V(\Lie(U^-_{\mu}))$
by the unique isomorphism $\log$.
In particular, we view
$\Lie(U^-_{\mu}) \otimes_\O A$
as a subgroup of $G(A)$.
We then obtain
\[
I(\Lie(U^-_{\mu}) \otimes_\O A)=(\Lie(U^-_{\mu}) \otimes_\O A) \cap G_\mu(A, I).
\]
Moreover, the following diagram commutes:
\[
\xymatrix{
I(\Lie(U^-_{\mu}) \otimes_\O A)  \ar@{^{(}->}[r]^-{} \ar_-{dv \mapsto v}[d] & G_\mu(A, I) \ar[d]^-{g \mapsto \mu(d)g\mu(d)^{-1}}  \\
\Lie(U^-_{\mu}) \otimes_\O A
 \ar@{^{(}->}[r]^-{}  & G(A).
}
\]
\end{enumerate}
\end{rem}

\begin{prop}[{cf.\ \cite[Lemma 6.2.2]{Lau21}}]\label{Proposition:decomposition of display group}
Assume that $A$ is $I$-adically complete.
Then the multiplication map
\begin{equation}\label{equation:multiplication map Gmu}
    (U^-_{\mu}(A) \cap G_\mu(A, I)) \times P_\mu(A) \to G_\mu(A, I)
\end{equation}
is bijective.
\end{prop}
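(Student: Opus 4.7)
The plan is to combine the fact that $U^-_\mu \times_{\Spec \O} P_\mu \to G_\O$ is an open immersion (the ``big cell'') with the $I$-adic completeness of $A$ to decompose any element of $G_\mu(A,I)$. Before anything else, I would record the basic remark that $G_\mu(A,I) \subset G(A)$ is a subgroup: if $g_1,g_2 \in G_\mu(A,I)$, then
\[
\mu(d)g_1g_2\mu(d)^{-1} = \bigl(\mu(d)g_1\mu(d)^{-1}\bigr)\bigl(\mu(d)g_2\mu(d)^{-1}\bigr) \in G(A),
\]
and similarly for $g_1^{-1}$. Injectivity of the multiplication map $(\ref{equation:multiplication map Gmu})$ is then immediate from the fact that an open immersion is a monomorphism: if $u_1p_1 = u_2p_2$ in $G(A)$, then the pair $(u_1,p_1)$ equals $(u_2,p_2)$ as $A$-points of $U^-_\mu \times P_\mu$.

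For surjectivity, let $g \in G_\mu(A,I)$. By Lemma \ref{Lemma:Pmu structure}(2), the reduction $\bar g \in G(A/I)$ lies in $P_\mu(A/I)$. Since $P_\mu$ is smooth and affine over $\O$ and $A$ is $I$-adically complete, standard deformation theory (lift inductively along each square-zero extension $A/I^{n+1} \to A/I^n$ using smoothness, and use $P_\mu(A) = \varprojlim_n P_\mu(A/I^n)$, which holds because $P_\mu$ is affine and $A = \varprojlim_n A/I^n$) produces an element $p_0 \in P_\mu(A)$ with $p_0 \equiv \bar g \pmod I$. Since $P_\mu(A) \subset G_\mu(A,I)$ and the latter is a group, $g' := gp_0^{-1}$ lies in $G_\mu(A,I)$ and satisfies $g' \equiv 1 \pmod I$.

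The heart of the proof is then a big-cell argument. Let $\Omega := U^-_\mu \cdot P_\mu \subset G_\O$, which is open and contains the identity, and let $U \subset \Spec A$ be the preimage of $\Omega$ under the morphism $\Spec A \to G_\O$ classified by $g'$. Writing the closed complement as $V(J)$, the fact that $g'$ reduces to $1 \in \Omega$ modulo $I$ means $V(I) \subset U$, so $V(I+J) = \emptyset$, i.e.\ $I + J = A$. But $I$-adic completeness of $A$ implies $I \subset \mathrm{rad}(A)$, since every $1+x$ with $x \in I$ is invertible via the $I$-adically convergent series $\sum_{n \geq 0}(-x)^n$; consequently $J = A$ and $U = \Spec A$. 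Thus $g'$ factors through $\Omega$, giving a (unique) decomposition $g' = u p_1$ with $u \in U^-_\mu(A)$ and $p_1 \in P_\mu(A)$. Since $g', p_1 \in G_\mu(A,I)$ and this is a subgroup, $u = g' p_1^{-1} \in U^-_\mu(A) \cap G_\mu(A,I)$, and $g = u \cdot (p_1 p_0)$ is the required decomposition. The only nonformal step is the big-cell reduction, which hinges precisely on $I \subset \mathrm{rad}(A)$; everything else follows from the group-theoretic formalism and the smoothness of $P_\mu$.
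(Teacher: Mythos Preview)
Your proof is correct and follows essentially the same approach as the paper's: lift the reduction $\bar g \in P_\mu(A/I)$ to $P_\mu(A)$ using smoothness and $I$-adic completeness, then use $I \subset \mathrm{rad}(A)$ to force the adjusted element into the open big cell $U^-_\mu \cdot P_\mu$. The paper's argument is slightly more terse (it does not spell out the $\mathrm{rad}(A)$ reasoning or the subgroup property of $G_\mu(A,I)$), but the strategy and key steps are identical.
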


\begin{proof}
Since $P_\mu(A) \subset G_\mu(A, I)$ by Lemma \ref{Lemma:Pmu structure}, the map $(\ref{equation:multiplication map Gmu})$ is well-defined.
Since
the map
$
U^-_{\mu} \times_{\Spec \O} P_\mu \to G_\O
$
is an open immersion, the map $(\ref{equation:multiplication map Gmu})$ is injective.

We shall show that the map $(\ref{equation:multiplication map Gmu})$ is surjective.
Let $g \in G_\mu(A, I)$ be an element.
By Lemma \ref{Lemma:Pmu structure}, the image of $g$ in $G(A/I)$ is contained in $P_\mu(A/I)$.
Since $P_\mu$ is smooth and $A$ is $I$-adically complete,
there exists an element $t \in P_\mu(A)$ whose image in $P_\mu(A/I)$ coincides with the image of $g$.
The restriction of 
the morphism $gt^{-1} \colon \Spec A \to G_\O$
to $\Spec A/I$
factors through the open subscheme
$U^-_{\mu} \times_{\Spec \O} P_\mu$.
Since $I \subset \mathrm{rad}(A)$,
it follows that
$gt^{-1} \colon \Spec A \to G_\O$
itself
factors through
$U^-_{\mu} \times_{\Spec \O} P_\mu$.
In other words, there are elements $u \in U^-_{\mu}(A)$ and $t' \in P_\mu(A)$ such that
$g=ut't$.
We note that $u \in G_\mu(A, I)$.
In conclusion, we have shown that $g$ is contained in the image of the map (\ref{equation:multiplication map Gmu}).
\end{proof}

\begin{prop}\label{Proposition:BB isomorphism}
Assume that $\mu \colon \G_m \to G_{\O}$ is 1-bounded.
Assume further that $A$ is $I$-adically complete.
Then the multiplication map
\begin{equation}\label{equation:multiplication map Gmu 1-bounded case}
    I(\Lie(U^-_{\mu}) \otimes_\O A) \times P_\mu(A) \to G_\mu(A, I)
\end{equation}
is bijective.
Moreover $G_\mu(A, I)$ coincides with the inverse image of $P_\mu(A/I)$ in $G(A)$ under the projection $G(A) \to G(A/I)$, and we have the following bijection:
\begin{equation}\label{equation:BB isomorphism}
    G(A)/G_\mu(A, I) \overset{\sim}{\to} G(A/I)/P_\mu(A/I).
\end{equation}
\end{prop}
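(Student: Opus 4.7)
The plan is to reduce this proposition to Proposition \ref{Proposition:decomposition of display group} and Remark \ref{Remark:Umu identify}, each of which acquires additional force under the 1-bounded hypothesis.

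First, for the multiplication map $(\ref{equation:multiplication map Gmu 1-bounded case})$, I would invoke Proposition \ref{Proposition:decomposition of display group} to get the bijection
\[
(U^-_{\mu}(A) \cap G_\mu(A, I)) \times P_\mu(A) \overset{\sim}{\to} G_\mu(A, I),
\]
and then identify the first factor with $I(\Lie(U^-_\mu) \otimes_\O A)$. By Remark \ref{Remark:Umu identify} (1), the $\log$ isomorphism sends $U^-_\mu(A) \cap G_\mu(A, I)$ bijectively onto $\bigoplus_{i \geq 1} I^i(\mathfrak{g}_i \otimes_\O A)$, which collapses to $I(\mathfrak{g}_1 \otimes_\O A) = I(\Lie(U^-_\mu) \otimes_\O A)$ because $\mathfrak{g}_i = 0$ for $i \geq 2$. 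Moreover, in the 1-bounded case $\log$ is a group isomorphism (Lemma \ref{Lemma:Umu structure}), so this identification is compatible with the multiplication, as recorded in Remark \ref{Remark:Umu identify} (2).

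Next, for the inverse image description of $G_\mu(A, I)$, the inclusion $G_\mu(A, I) \subset \{g \in G(A) : \overline{g} \in P_\mu(A/I)\}$ is Lemma \ref{Lemma:Pmu structure} (2). For the reverse inclusion, I would imitate the surjectivity argument in the proof of Proposition \ref{Proposition:decomposition of display group}: given $g \in G(A)$ with $\overline{g} \in P_\mu(A/I)$, use smoothness of $P_\mu$ together with $I$-adic completeness of $A$ to lift $\overline{g}$ to some $t \in P_\mu(A)$. Then $gt^{-1}$ reduces to the identity modulo $I$, and since $I \subset \mathrm{rad}(A)$, the morphism $gt^{-1} \colon \Spec A \to G_\O$ factors through the open subscheme $U^-_\mu \times_{\Spec \O} P_\mu$, yielding a factorization $gt^{-1} = ut'$ with $u \in U^-_\mu(A)$ and $t' \in P_\mu(A)$ both reducing to the identity modulo $I$. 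Under 1-boundedness, $U^-_\mu$ is identified via $\log$ with $V(\mathfrak{g}_1)$, and the condition that $u$ is trivial modulo $I$ becomes $u \in I(\mathfrak{g}_1 \otimes_\O A) \subset G_\mu(A, I)$. Hence $g = u t' t \in G_\mu(A, I)$.

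Finally, the bijection $(\ref{equation:BB isomorphism})$ follows formally: the reduction map $G(A) \to G(A/I)$ is surjective because $G$ is smooth over $\O$ and $A$ is $I$-adically complete, so the induced map $G(A)/G_\mu(A, I) \to G(A/I)/P_\mu(A/I)$ is surjective, and the inverse image characterization just proved shows that two elements of $G(A)$ have the same image in $G(A/I)/P_\mu(A/I)$ if and only if they lie in the same left coset of $G_\mu(A, I)$. I do not foresee a serious obstacle here; the essential content is in correctly carrying the 1-bounded hypothesis through the decomposition of Proposition \ref{Proposition:decomposition of display group}, with the key input being the linearization of $U^-_\mu$ supplied by Lemma \ref{Lemma:Umu structure}.
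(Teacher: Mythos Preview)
Your proof is correct and follows essentially the same approach as the paper: both deduce the product decomposition from Proposition \ref{Proposition:decomposition of display group} and Remark \ref{Remark:Umu identify}, establish the inverse-image description by rerunning the surjectivity argument of Proposition \ref{Proposition:decomposition of display group} (the paper phrases this as $G'_\mu \subset I(\Lie(U^-_\mu)\otimes_\O A)\times P_\mu(A)$, which is exactly your factorization $g=ut't$), and then read off the bijection $(\ref{equation:BB isomorphism})$ from smoothness of $G$ and the inverse-image characterization.
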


\begin{proof}
It follows from Remark \ref{Remark:Umu identify} and Proposition \ref{Proposition:decomposition of display group} that
the map (\ref{equation:multiplication map Gmu 1-bounded case}) is bijective.
Let $G'_\mu \subset G(A)$ be the inverse image of $P_\mu(A/I)$.
We have
$G_\mu(A, I) \subset G'_\mu$.
By the same argument as in the proof of Proposition \ref{Proposition:decomposition of display group},
one can show that
$
G'_\mu \subset I(\Lie(U^-_{\mu}) \otimes_\O A) \times P_\mu(A).
$
Thus, we obtain $G_\mu(A, I) = G'_\mu$.

It remains to prove that the map (\ref{equation:BB isomorphism}) is bijective.
Since $G$ is smooth and $A$ is $I$-adically complete,
the projection $G(A) \to G(A/I)$ is surjective, which in turn implies the surjectivity of (\ref{equation:BB isomorphism}).
The injectivity follows from the equality $G_\mu(A, I)=G'_\mu$.
\end{proof}

For an integer $m \geq 0$,
let
$
G^{\geq m}(A)
$
be the kernel of $G(A) \to G(A/I^m)$.
We set
\[
G^{\geq m}_\mu(A, I):=G_\mu(A, I) \cap G^{\geq m}(A).
\]
We record a structural result about the quotient $G^{\geq m}_\mu(A, I)/G^{\geq m+1}_\mu(A, I)$.

\begin{lem}\label{Lemma:congruent subgroup of Gmu}
Assume that $A$ is $I$-adically complete.
Then we have the following isomorphisms of groups:
\[
G^{\geq m}(A)/G^{\geq m+1}(A)
\simeq
\begin{cases}
G(A/I) \quad &(m=0) \\ 
\Lie(G_\O) \otimes_\O I^m/I^{m+1} \quad &(m \geq 1),
\end{cases}
\]
\[
G^{\geq m}_\mu(A, I)/G^{\geq m+1}_\mu(A, I)
\simeq
\begin{cases}
P_\mu(A/I) \quad &(m=0) \\ 
(\bigoplus_{i \leq m} \mathfrak{g}_i)
\otimes_\O I^m/I^{m+1} \quad &(m \geq 1).
\end{cases}
\]
\end{lem}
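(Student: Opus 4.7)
The plan is to first establish the non-display isomorphism via the standard smoothness/deformation-theoretic picture, and then use it together with the defining condition of the display group to pin down the image of $G^{\geq m}_\mu/G^{\geq m+1}_\mu$.

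For the non-display version I proceed as follows. When $m = 0$, the smoothness of $G$ together with the $I$-adic completeness of $A$ implies that $G(A) \to G(A/I)$ is surjective, and since $G^{\geq 1}(A)$ is by definition its kernel, this gives $G(A)/G^{\geq 1}(A) \simeq G(A/I)$. When $m \geq 1$, the ideal $I^m/I^{m+1} \subset A/I^{m+1}$ is square-zero (as $I^{2m} \subset I^{m+1}$), so the standard identification for smooth affine group schemes yields
\[
\ker\bigl(G_\O(A/I^{m+1}) \to G_\O(A/I^m)\bigr) \simeq \Lie(G_\O) \otimes_\O I^m/I^{m+1}.
\]
Combined with the surjectivity of $G(A) \to G(A/I^{m+1})$ and the obvious injection $G^{\geq m}(A)/G^{\geq m+1}(A) \hookrightarrow \ker(G(A/I^{m+1}) \to G(A/I^m))$, this gives the first set of isomorphisms.

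For the display version with $m = 0$, Lemma \ref{Lemma:Pmu structure}(2) shows that the image of $G_\mu(A, I)$ in $G(A/I)$ sits inside $P_\mu(A/I)$, and smoothness of $P_\mu$ with $I$-adic completeness of $A$ lifts any element of $P_\mu(A/I)$ to $P_\mu(A) \subset G_\mu(A, I)$. For $m \geq 1$, composing with the non-display identification yields an injection
\[
G^{\geq m}_\mu(A, I)/G^{\geq m+1}_\mu(A, I) \hookrightarrow \bigoplus_{i} \mathfrak{g}_i \otimes_\O I^m/I^{m+1},
\]
whose image I would identify using Lemma \ref{Lemma:Gmu iff condition}. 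The key observation is that $\mathfrak{g}_i$ consists of the derivations supported on $A_{G,i}$, so the $\mathfrak{g}_i$-component of the image of $g \in G^{\geq m}(A)$ is controlled by the values of $g^*(x)$ for $x \in A_{G, i}$ modulo $I^{m+1}$. The display condition $g^*(x) \in I^i A$ is automatic for $0 < i \leq m$ (since $g \in G^{\geq m}(A)$ already forces $g^*(x) \in I^m A \subseteq I^i A$), while for $i \geq m + 1$ it forces $g^*(x) \in I^{m+1}A$, killing the $\mathfrak{g}_i$-component in $I^m/I^{m+1}$. This identifies the image with $\bigoplus_{i \leq m} \mathfrak{g}_i \otimes_\O I^m/I^{m+1}$.

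The main obstacle is the surjectivity onto $\bigoplus_{i \leq m} \mathfrak{g}_i \otimes_\O I^m/I^{m+1}$, i.e., realizing every class by an actual element of $G^{\geq m}_\mu(A, I)$. To handle this I would use Proposition \ref{Proposition:decomposition of display group}: sections of $\bigoplus_{i \leq 0} \mathfrak{g}_i \otimes I^m/I^{m+1}$ lift from $P_\mu(A) \cap G^{\geq m}(A)$ by smoothness of $P_\mu$, while for $1 \leq i \leq m$ the inclusion $I^m \subset I^i$ together with Remark \ref{Remark:Umu identify}(1) and the $\log$-isomorphism of Lemma \ref{Lemma:Umu structure} shows that every element of $\mathfrak{g}_i \otimes I^m$ arises from some element of $U^-_\mu(A) \cap G_\mu(A, I)$ that already reduces to the identity modulo $I^m$, hence lies in $G^{\geq m}_\mu(A, I)$. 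The subtle point is that $\log$ is in general only a $\G_m$-equivariant isomorphism of schemes (not of group schemes), but bijectivity on $A$-points and the induced bijection on the associated graded piece at the identity are enough for our purposes.
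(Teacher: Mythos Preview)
Your proof is correct and essentially follows the paper: both use smoothness of $G$ (resp.\ $P_\mu$) plus $I$-adic completeness for the first set of isomorphisms and the $m=0$ display case, and both rely on the product decomposition of Proposition \ref{Proposition:decomposition of display group} together with Remark \ref{Remark:Umu identify} and the $\log$-isomorphism to produce the required lifts for $m\ge 1$. The only minor organizational difference is that you verify the containment of the image in $\bigoplus_{i\le m}\mathfrak g_i\otimes I^m/I^{m+1}$ directly from Lemma \ref{Lemma:Gmu iff condition} (using $g^*(A_{G,i})\subset I^i\subset I^{m+1}$ for $i\ge m+1$), whereas the paper reads both the containment and the surjectivity off from the explicit product description of $G^{\geq m}_\mu(A,I)$.
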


\begin{proof}
Since $A$ is $I$-adically complete and $G$ is smooth, the map $G(A) \to G(A/I^m)$ is surjective.
It follows that
$
G^{\geq m}(A)/G^{\geq m+1}(A)
$
is isomorphic to the kernel
$\Ker(G(A/I^{m+1}) \to G(A/I^m))$
of $G(A/I^{m+1}) \to G(A/I^m)$.
This is equal to $G(A/I)$ when $m=0$.
If $m \geq 1$, then we have a canonical identification
\[
\Ker(G(A/I^{m+1}) \to G(A/I^m)) = \Lie(G_\O) \otimes_\O I^m/I^{m+1}
\]
since $I^m/I^{m+1} \subset A/I^{m+1}$ is a square zero ideal.
This proves the first assertion.

Since $P_\mu(A) \to P_\mu(A/I)$ is surjective (as $A$ is $I$-adically complete and $P_\mu$ is smooth), it follows from Lemma \ref{Lemma:Pmu structure} that
$G_\mu(A, I)/G^{\geq 1}_\mu(A, I) \simeq P_\mu(A/I)$.
To prove the second assertion,
it then suffices to show that
the image of the natural homomorphism
\[
G^{\geq m}_\mu(A, I) \to \Ker(G(A/I^{m+1}) \to G(A/I^m)) = \Lie(G_\O) \otimes_\O I^m/I^{m+1}
\]
is $(\bigoplus_{i \leq m} \mathfrak{g}_i)
\otimes_\O I^m/I^{m+1}$ for any $m \geq 1$.
By Proposition \ref{Proposition:decomposition of display group},
we may identify $G^{\geq m}_\mu(A, I)$ with
\[
(U^{-}_{\mu}(A) \cap G^{\geq m}_\mu(A, I)) \times P^{\geq m}_\mu(A)
\]
where
$P^{\geq m}_\mu(A):=P_\mu(A) \cap G^{\geq m}(A)$.
By the same argument as above,
we have
\[
P^{\geq m}_\mu(A)/P^{\geq m+1}_\mu(A) \simeq \Lie(P_{\mu}) \otimes_\O I^m/I^{m+1}= (\bigoplus_{i \leq  0} \mathfrak{g}_i) \otimes_\O I^m/I^{m+1}.
\]
It now suffices to prove that
the image of the natural homomorphism
\begin{equation}\label{equation:U_mu lie algebra image}
    U^{-}_{\mu}(A) \cap G^{\geq m}_\mu(A, I) \to \Ker(U^{-}_{\mu}(A/I^{m+1}) \to U^{-}_{\mu}(A/I^m)) = \Lie(U^{-}_{\mu}) \otimes_\O I^m/I^{m+1}
\end{equation}
is
$(\bigoplus_{1 \leq i \leq m} \mathfrak{g}_i)
\otimes_\O I^m/I^{m+1}$.
For this, we fix an isomorphism
$\log \colon U^-_{\mu} \overset{\sim}{\to} V(\Lie(U^-_{\mu}))$
as in Lemma \ref{Lemma:Umu structure}.
Since $\log$ induces the identity on the Lie algebras,
the isomorphism
\[
\Ker(U^{-}_{\mu}(A/I^{m+1}) \to U^{-}_{\mu}(A/I^m)) \overset{\sim}{\to} \Lie(U^{-}_{\mu}) \otimes_\O I^m/I^{m+1}
\]
induced by $\log$ is the same as the one in (\ref{equation:U_mu lie algebra image}).
Since $\log$ induces
\[
U^{-}_{\mu}(A) \cap G^{\geq m}_\mu(A, I) \overset{\sim}{\to} (\bigoplus_{1 \leq i \leq m} \mathfrak{g}_i) \otimes_\O I^m)  \oplus  (\bigoplus_{i \geq m+1} \mathfrak{g}_i) \otimes_\O I^i
\]
by
Remark \ref{Remark:Umu identify} (1), the result follows.
\end{proof}

\subsection{Display groups on prismatic sites}\label{Subsection:Display groups on prismatic sites}

In this subsection,
for a bounded $\O_E$-prism $(A, I)$ over $\O$,
we define the display group sheaf 
$G_{\mu, A, I}$
on the site
$
(A, I)^{\op}_\et
$
and discuss some basic results on $G_{\mu, A, I}$-torsors.

Let $(A, I)$ be a bounded $\O_E$-prism over $\O$.
We begin with a comparison result between torsors over $\Spec A$ (or $\Spec A/I$) with respect to the usual \'etale topology, and torsors on the sites $(A, I)^{\op}_\et$ and $(A, I)^{\op}_\Prism$ from Section \ref{Subsection:prismatic sites}.
To an affine scheme $X$ over $\O$ (or $A$),
we attach
a functor
\[
X_{\Prism, A} \colon (A, I)_\Prism \to \mathrm{Set},\quad (B, J) \mapsto X(B).
\]
This forms a sheaf (with respect to the flat topology) by Remark \ref{Remark:structure sheaf}
since $X(B)$ can be regarded as the set of homomorphisms
$R \to B$ of $\O$-algebras (or $A$-algebras) where $X=\Spec R$.
Similarly, to an affine scheme $X$ over $\O$ (or $A/I$),
we attach
a sheaf
\[
X_{\overline{\Prism}, A} \colon (A, I)_\Prism \to \mathrm{Set}, \quad (B, J) \mapsto X(B/J).
\]
The restrictions of these sheaves to $(A, I)_\et$ are denoted by the same notation (see also Remark \ref{Remark:etale category is a subcategory}).

\begin{prop}\label{Proposition:equivalences of pi completely etale torsors}
Let $H$ be a smooth affine group scheme over $\O$.
\begin{enumerate}
    \item
    For an $H_{A/I}$-torsor $\mathcal{P}$ over $\Spec A/I$ with respect to the \'etale topology, which is an affine scheme over $A/I$, the sheaf $\mathcal{P}_{\overline{\Prism}, A}$ on $(A, I)^{\op}_\Prism$ is an $H_{\overline{\Prism}, A}$-torsor with respect to the flat topology.
The functor
\[
\mathcal{P} \mapsto \mathcal{P}_{\overline{\Prism}, A}
\]
is an equivalence from the groupoid of $H_{A/I}$-torsors over $\Spec A/I$ to
the groupoid of $H_{\overline{\Prism}, A}$-torsors on $(A, I)^{\op}_\Prism$.
The same holds if we replace $(A, I)^{\op}_\Prism$ by $(A, I)^{\op}_\et$.
\item The construction
\[
\mathcal{P} \mapsto \mathcal{P}_{\Prism, A}
\]
gives an equivalence from the groupoid of $H_A$-torsors over $\Spec A$ to
the groupoid of $H_{\Prism, A}$-torsors on $(A, I)^{\op}_\Prism$.
The same holds if we replace $(A, I)^{\op}_\Prism$ by $(A, I)^{\op}_\et$.
\end{enumerate}
\end{prop}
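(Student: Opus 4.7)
The strategy is to prove (1), and then obtain (2) by the same argument with $A/I$ replaced by $A$ throughout. The key inputs are Lemma \ref{Lemma:etale morphism and reduction}(3), which identifies the $(\pi,I)$-completely \'etale site of $A$ with the small \'etale site of $A/(\pi,I)$; the computation of coproducts in the prismatic site via $(\pi,I)$-completed tensor products (Remark \ref{Remark:pushout in prismatic site}); and classical faithfully flat (equivalently \'etale) descent for torsors under the smooth affine group scheme $H$. The plan is to reduce both sides of (1) to \'etale $H$-torsors on $\Spec A/(\pi, I)$, and then lift back using $(\pi, I)$-adic completeness.

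First, I would check that the functor $\mathcal{P} \mapsto \mathcal{P}_{\overline{\Prism}, A}$ sends \'etale $H_{A/I}$-torsors to torsors for both topologies. The sheaf property follows from the representability of $\mathcal{P}$: a faithfully flat map of bounded $\O_E$-prisms $(A,I) \to (B, IB)$ induces a $\pi$-completely faithfully flat homomorphism $A/I \to B/IB$, and, by Remark \ref{Remark:pushout in prismatic site}, the iterated prismatic coproduct $(B^{(n)}, IB^{(n)})$ reduces modulo $I$ to the $\pi$-completion of the $n$-fold tensor power of $B/IB$ over $A/I$, which is enough to descend sections of the fpqc sheaf $\mathcal{P}$. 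For local triviality, trivialize $\mathcal{P}$ on some \'etale cover $A/I \to C$; its $\pi$-adic completion $\widehat{C}$ is $\pi$-completely faithfully flat \'etale over $A/I$, and by Lemma \ref{Lemma:etale morphism and reduction}(3) and Lemma \ref{Lemma:etale morphism prism}, lifts uniquely to a $(\pi,I)$-completely \'etale bounded prism $(B, IB)$ which is a covering of $(A, I)$ in both sites, and over which $\mathcal{P}_{\overline{\Prism}, A}$ has a section. Full faithfulness is then immediate from \'etale descent applied to morphisms of torsors evaluated on a common trivializing cover.

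For essential surjectivity, given an $H_{\overline{\Prism}, A}$-torsor $\mathcal{Q}$, I would first refine a flat trivializing cover $(A, I) \to (B, IB)$ to a $(\pi,I)$-completely \'etale one using smoothness of $H$, which effectively reduces the $(A,I)^{\op}_{\Prism}$ case to the $(A,I)^{\op}_{\et}$ case. The resulting descent datum lives in $H(B^{(2)}/IB^{(2)})$ subject to a cocycle on $B^{(3)}/IB^{(3)}$; via Lemma \ref{Lemma:etale morphism and reduction}(3) this is precisely a classical \'etale descent datum over $\Spec A/(\pi, I)$, producing an \'etale $H$-torsor $\mathcal{P}_0$ over $\Spec A/(\pi, I)$. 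This torsor lifts uniquely to an \'etale $H$-torsor $\mathcal{P}$ over $\Spec A/I$ by the smooth lifting argument (as $A/I$ is $\pi$-adically complete with bounded $p^\infty$-torsion, and $H$ is smooth), and by construction $\mathcal{P}_{\overline{\Prism}, A} \simeq \mathcal{Q}$. Part (2) is proved identically with $A/I$ and $B/IB$ replaced by $A$ and $B$ throughout, using that $A$ is $(\pi,I)$-adically complete. The main obstacle is verifying that prismatic descent data match classical \'etale descent data on $\Spec A/(\pi, I)$: this is where the precise shape of the prismatic coproducts from Remark \ref{Remark:pushout in prismatic site} is crucial, since they reduce modulo $(\pi, I)$ to tensor powers over $A/(\pi, I)$ with no completion left, so classical \'etale descent over $\Spec A/(\pi, I)$ applies directly. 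Smoothness of $H$ is essential both for the refinement of a flat cover to a $(\pi,I)$-completely \'etale cover and for the final lifting step from $\Spec A/(\pi, I)$ back to $\Spec A/I$ (or $\Spec A$).
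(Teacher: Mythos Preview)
Your approach differs from the paper's. The paper does not work with cocycles or reduce to $\Spec A/(\pi,I)$; instead it invokes the Tannakian description of $H$-torsors over a ring $R$ as exact $\otimes$-functors $\Rep_\O(H)\to\Vect(R)$ (citing \cite[Theorem 19.5.1]{Scholze-Weinstein} and \cite[Theorem 1.2]{Broshi}), and then applies Proposition~\ref{Proposition:flat descent for finite projective modules} together with the fact that exactness of a short sequence of vector bundles can be tested after $\pi$-completely faithfully flat base change. This handles the flat and the $(\pi,I)$-completely \'etale sites simultaneously and avoids all cocycle manipulation.

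Your argument has a real gap at the essential surjectivity step. You assert that the descent datum in $H(B^{(2)}/IB^{(2)})$ ``is precisely a classical \'etale descent datum over $\Spec A/(\pi,I)$'' via Lemma~\ref{Lemma:etale morphism and reduction}(3), but that lemma only identifies the underlying \emph{sites} $(A,I)_\et\simeq(A/(\pi,I))_\et$; it does not identify the group sheaf $H_{\overline{\Prism},A}$ (whose value on $(B,IB)$ is $H(B/IB)$) with the sheaf represented by $H$ on $(A/(\pi,I))_\et$ (whose value is $H(B/(\pi,I)B)$). Reducing modulo $\pi$ therefore loses information, and your claim that the lift $\mathcal{P}$ of $\mathcal{P}_0$ satisfies $\mathcal{P}_{\overline{\Prism},A}\simeq\mathcal{Q}$ is unjustified as written: one must show that two cocycles in $H(B^{(2)}/IB^{(2)})$ agreeing modulo $\pi$ are cohomologous, which requires a successive-approximation argument using smoothness of $H$ and $\pi$-completeness of $B^{(\bullet)}/IB^{(\bullet)}$. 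The same issue affects ``refine a flat trivializing cover to an \'etale one using smoothness of $H$'': smoothness gives \'etale-local triviality only for \emph{algebraic} $H$-torsors, and $\mathcal{Q}$ is not yet known to be algebraic. Both gaps are filled precisely by proving that $(B,IB)\mapsto\{H_{B/IB}\text{-torsors}\}$ is a stack on $(A,I)_\Prism^{\op}$, which is exactly what the paper's Tannakian step accomplishes in one stroke.
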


\begin{proof}
(1)
It follows from Lemma \ref{Lemma:etale morphism and reduction} that
$\mathcal{P}_{\overline{\Prism}, A}$ is trivialized by a $(\pi, I)$-completely \'etale covering of $A$.
Thus $\mathcal{P}_{\overline{\Prism}, A}$ is an $H_{\overline{\Prism}, A}$-torsor on both $(A, I)^{\op}_\Prism$ and $(A, I)^{\op}_\et$.
It then suffices to prove that
the fibered category over $(A, I)^{\op}_\Prism$
which associates to each $(B, J) \in (A, I)_\Prism$ the groupoid of $H_{B/J}$-torsors over $\Spec B/J$
is a stack with respect to the flat topology.

It is known that, for any $\O$-algebra $R$,
the groupoid of $H_R$-torsors over $\Spec R$
is equivalent to the groupoid of
exact tensor functors
$
\Rep_\O(H) \to \Vect(R),
$
where $\Rep_\O(H)$ is the category of algebraic representations of $H$ on free $\O$-modules of finite rank,
and $\Vect(R)$ is the category of finite projective $R$-modules; see \cite[Theorem 19.5.1]{Scholze-Weinstein} and \cite[Theorem 1.2]{Broshi}.
(Although this result is stated only for the case where $\O=\Z_p$ in \cite[Theorem 19.5.1]{Scholze-Weinstein}, the proof also works for general $\O$.)
Using this Tannakian perspective, the desired claim follows from Proposition \ref{Proposition:flat descent for finite projective modules} and the following fact:
For a $\pi$-completely faithfully flat homomorphism $C \to C'$ of $\pi$-adically complete $\O$-algebras, a complex
\[
0 \to M_1 \to M_2 \to M_3 \to 0
\]
of finite projective $C$-modules
is exact if the base change
\[
0 \to M_1 \otimes_{C} C'  \to M_2 \otimes_{C} C' \to M_3 \otimes_{C} C' \to 0
\]
is exact.
(This fact follows from the following criterion: A complex $0 \to M_1 \to M_2 \to M_3 \to 0$ of finite projective modules over a ring $C$ is exact if for every closed point $x \in \Spec C$, its base change to the residue field $k(x)$ is exact.)

(2) This can be proved in the same way as (1).
\end{proof}

\begin{defn}\label{Definition:category of orientable bounded prisms}
Let
$(A, I)_{\Prism, \ori}$
be the category of \textit{orientable} and bounded $\O_E$-prisms $(B, J)$ with a map $(A, I) \to (B, J)$.
We endow $(A, I)^{\op}_{\Prism, \ori}$ with the flat topology.
If $(A, I)$ is orientable, then we have $(A, I)_{\Prism, \ori}=(A, I)_{\Prism}$.
\end{defn}

\begin{rem}\label{Remark:sheaves on oriented prisms}
By Remark \ref{Remark:flat locally orientable},
the objects in $(A, I)^{\op}_{\Prism, \ori}$ form a basis for $(A, I)^{\op}_{\Prism}$.
We may identify sheaves on $(A, I)^{\op}_{\Prism, \ori}$ with sheaves on
$(A, I)^{\op}_{\Prism}$. 
\end{rem}

\begin{defn}\label{Definition:display group sheaf}
We define the following functor:
\[
G_{\Prism, A} \colon (A, I)_{\Prism} \to \mathrm{Set}, \quad (B, J) \mapsto G(B).
\]
As explained above, the functor $G_{\Prism, A}$ forms a group sheaf.
We also define the following functor:
\[
G_{\mu, A, I} \colon (A, I)_{\Prism, \ori} \to \mathrm{Set}, \quad (B, J) \mapsto G_\mu(B, J).
\]
Since the functor
$(A, I)_\Prism \to \mathrm{Set}$, $(B, J) \mapsto G(B[1/J])$ forms a group sheaf (Remark \ref{Remark:flat descent for BK modules}),
it follows that $G_{\mu, A, I}$ forms a group sheaf.
We regard $G_{\mu, A, I}$ as a group sheaf on $(A, I)^{\op}_{\Prism}$.
The restrictions of $G_{\Prism, A}$ and $G_{\mu, A, I}$ to $(A, I)_\et$ will be denoted by the same notation.
\end{defn}

We remark that Proposition \ref{Proposition:equivalences of pi completely etale torsors} can not be applied directly to $G_{\mu, A, I}$-torsors.
However,
it is still useful for analyzing $G_{\mu, A, I}$-torsors in several places below, since we have the following lemma.
For the notation used below, see Lemma \ref{Lemma:congruent subgroup of Gmu}.

\begin{lem}\label{Lemma:Gmu torsor successive quotient}
\ 
\begin{enumerate}
    \item For an integer $m \geq 0$, the functor
    \[
    G^{=m}_{\mu, A, I} \colon (A, I)_{\Prism, \ori} \to \mathrm{Set}, \quad
    (B, J) \mapsto G^{\geq m}_\mu(B, J)/G^{\geq m+1}_\mu(B, J)
    \]
    forms a group sheaf, and it is isomorphic to
    $(P_\mu)_{\overline{\Prism}, A}$
    (resp.\ $V(\bigoplus_{i \leq m} \mathfrak{g}_i)_{\overline{\Prism}, A}$)
    if $m=0$ 
    (resp.\ $m \geq 1$).
    Moreover, the functor
    \[
    G^{<m}_{\mu, A, I} \colon (A, I)_{\Prism, \ori} \to \mathrm{Set}, \quad
    (B, J) \mapsto G_\mu(B, J)/G^{\geq m}_\mu(B, J)
    \]
    forms a group sheaf.
    \item 
    For a $G_{\mu, A, I}$-torsor $\mathcal{Q}$ on $(A, I)^{\op}_\Prism$, we write
    $\mathcal{Q}^{<m}$
    for the pushout of $\mathcal{Q}$ along $G_{\mu, A, I} \to G^{<m}_{\mu, A, I}$ (see Remark \ref{Remark:push out} below).
    Then we have
    $
    \mathcal{Q} \overset{\sim}{\to} \varprojlim_m \mathcal{Q}^{<m}.
    $
    The same holds for $G_{\mu, A, I}$-torsors on $(A, I)^{\op}_\et$.
\end{enumerate}
\end{lem}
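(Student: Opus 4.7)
The key technical ingredient underlying both parts is the observation that for every $B \in (A, I)_{\et}$, the ring $B$ is in fact $IB$-adically complete --- not merely $(\pi, IB)$-adically complete as given by Remark \ref{Remark:derived and classical complete}. I would first establish this by an induction on $m$ showing that each quotient $B/(IB)^m$ is $\pi$-adically complete: the base case $m = 1$ is Remark \ref{Remark:derived and classical complete} applied to the bounded $\O_E$-prism $(B, IB)$, and the inductive step uses the short exact sequence
\[
0 \to (IB)^{m-1}/(IB)^m \to B/(IB)^m \to B/(IB)^{m-1} \to 0
\]
combined with the isomorphism $B/IB \overset{\sim}{\to} (IB)^{m-1}/(IB)^m$ given by multiplication by $d^{m-1}$ (which is injective since the generator $d$ of $I$ is a nonzerodivisor on $B$) and the stability of classical $\pi$-adic completeness under extensions with bounded $\pi$-torsion. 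With this in place, Lemma \ref{Lemma:congruent subgroup of Gmu} applies directly to each $(B, IB)$.

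For part (1), my approach is to apply Lemma \ref{Lemma:congruent subgroup of Gmu} pointwise at each $B \in (A, I)_{\et}$. The generator $d$ induces a canonical identification $B/IB \overset{\sim}{\to} (IB)^m/(IB)^{m+1}$ natural in $B$, so the pointwise identifications assemble into isomorphisms of presheaves $G^{=0}_{\mu, A, I} \simeq (P_\mu)_{\overline{\Prism}, A}$ and, for $m \geq 1$, $G^{=m}_{\mu, A, I} \simeq V(\bigoplus_{i \leq m} \mathfrak{g}_i)_{\overline{\Prism}, A}$; since the right-hand sides are sheaves by Remark \ref{Remark:structure sheaf}, so are the $G^{=m}_{\mu, A, I}$. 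To show each $G^{<m}_{\mu, A, I}$ is a sheaf I will argue by induction on $m$: the case $m = 1$ is $G^{<1}_{\mu, A, I} = G^{=0}_{\mu, A, I}$, and for the inductive step I will invoke the pointwise-exact sequence of presheaves of groups
\[
1 \to G^{=m}_{\mu, A, I} \to G^{<m+1}_{\mu, A, I} \to G^{<m}_{\mu, A, I} \to 1
\]
together with a direct diagram chase showing that any pointwise-exact extension of a sheaf of groups by a sheaf of normal subgroups is again a sheaf.

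For part (2), both $\mathcal{Q}$ and $\varprojlim_m \mathcal{Q}^{<m}$ are sheaves, so it suffices to verify the comparison map is an isomorphism on a $(\pi, I)$-completely \'etale cover trivializing $\mathcal{Q}$. The question reduces to showing that $G_\mu(B, IB) \to \varprojlim_m G_\mu(B, IB)/G^{\geq m}_\mu(B, IB)$ is a bijection for each $B \in (A, I)_{\et}$. Injectivity follows from $\bigcap_m (IB)^m = 0$, a consequence of $(\pi, IB)$-adic separation of $B$. For surjectivity I would lift a given compatible sequence to elements $g_m \in G_\mu(B, IB)$, observe that $\{g_m\}$ is Cauchy in $G(B)$ with respect to the $\{G^{\geq m}(B)\}$ filtration, and conclude that it converges to some $g \in G(B) = \varprojlim_m G(B/(IB)^m)$ using the $IB$-adic completeness of $B$ from the first paragraph. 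To verify $g \in G_\mu(B, IB)$ I would appeal to Lemma \ref{Lemma:Gmu iff condition}: for any weight coordinate $x \in A_{G, i}$ with $i > 0$, the congruence $g^*(x) \equiv g_m^*(x) \pmod{(IB)^m}$ together with $g_m^*(x) \in (IB)^i$ forces $g^*(x) \in (IB)^i$ upon taking $m \geq i$.

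The main obstacle will be establishing the $IB$-adic completeness of $B \in (A, I)_{\et}$, as this is not immediate from the prism axioms yet is essential for invoking the results of Section \ref{Section:display group} (which were stated for $I$-adically complete base rings). Once this is handled, both parts reduce to a clean combination of Lemma \ref{Lemma:congruent subgroup of Gmu}, Lemma \ref{Lemma:Gmu iff condition}, and standard sheaf-theoretic patching arguments.
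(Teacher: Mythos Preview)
Your proposal is correct. For part (1) your argument is essentially the paper's: invoke Lemma~\ref{Lemma:congruent subgroup of Gmu} pointwise and run the induction on the short exact sequence. For part (2), however, your route differs from the paper's. The paper appeals to the product decomposition $G_\mu(B,IB)\simeq (U^-_\mu(B)\cap G_\mu(B,IB))\times P_\mu(B)$ from Proposition~\ref{Proposition:decomposition of display group}, carries it along to each truncated quotient, and then checks that each factor is an inverse limit separately using the $\log$ identification of Lemma~\ref{Lemma:Umu structure} and the smoothness of $P_\mu$. Your argument instead bypasses this structural decomposition entirely: you run a Cauchy-sequence argument in $G(B)$ using the $IB$-adic completeness of $B$, and then use the algebraic characterisation of $G_\mu$ in Lemma~\ref{Lemma:Gmu iff condition} to pin the limit down inside $G_\mu(B,IB)$. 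Your approach is more elementary and self-contained; the paper's approach leans on results already developed in Section~\ref{Subsection:Properties of the display group} and is slightly quicker once those are in hand.

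One genuine improvement in your write-up is that you explicitly verify $IB$-adic completeness of each $B\in (A,I)_\et$, which both your argument and the paper's proof (via Proposition~\ref{Proposition:decomposition of display group} and Lemma~\ref{Lemma:congruent subgroup of Gmu}, each stated under an $I$-adic completeness hypothesis) require but which the paper leaves implicit. Your inductive argument for the $\pi$-adic completeness of $B/(IB)^m$ is fine; just be sure to record the last step, namely that $\varprojlim_m B/(IB)^m = \varprojlim_{m,k} B/((IB)^m+\pi^k B)$ by the $\pi$-adic completeness just shown, and that this double limit agrees with the $(\pi,IB)$-adic completion since the two filtrations are cofinal.
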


\begin{proof}
(1) The statement about $G^{=m}_{\mu, A, I}$ follows from Lemma \ref{Lemma:congruent subgroup of Gmu}.
Using the exact sequence
\[
1 \mapsto G^{=m}_{\mu, A, I}(B) \to G^{<m+1}_{\mu, A, I}(B) \to G^{<m}_{\mu, A, I}(B) \to 1,
\]
the statement about $G^{<m}_{\mu, A, I}$ then follows by induction on $m$.

(2) 
We may assume that $\mathcal{Q}$ is a trivial $G_{\mu, A, I}$-torsor and $(A, I)$ is orientable.
Then it is enough to prove that
$
G_{\mu, A, I} \overset{\sim}{\to} \varprojlim_m G^{<m}_{\mu, A, I}
$
on $(A, I)^{\op}_{\Prism}$.
By Proposition \ref{Proposition:decomposition of display group}, the multiplication map
$(U^-_{\mu}(A) \cap G_\mu(A, I)) \times P_\mu(A) \to G_\mu(A, I)$
is bijective.
Note that
$G_\mu(A, I)/G^{\geq m}_\mu(A, I)$
can be identified with the image of $G_\mu(A, I)$ in  $G(A/I^m)$.
Let $U^{<m}$ be the image of $U^-_{\mu}(A) \cap G_\mu(A, I)$ in $U^-_{\mu}(A/I^m)$.
Then the multiplication map induces a bijection
\[
U^{<m} \times P_\mu(A/I^m) \overset{\sim}{\to} G_\mu(A, I)/G^{\geq m}_\mu(A, I).
\]
We have
$P_\mu(A) \overset{\sim}{\to} \varprojlim P_\mu(A/I^m)$.
Moreover, using Lemma \ref{Lemma:Umu structure},
one can check that
$U^-_{\mu}(A) \cap G_\mu(A, I) \overset{\sim}{\to} \varprojlim U^{<m}$.
Thus, we obtain
$G_{\mu}(A, I) \overset{\sim}{\to} \varprojlim G_\mu(A, I)/G^{\geq m}_\mu(A, I)$.
The same assertion holds for any $(B, J) \in (A, I)_\Prism$, and hence
$G_{\mu, A, I} \overset{\sim}{\to} \varprojlim  G^{<m}_{\mu, A, I}$.
\end{proof}

\begin{rem}\label{Remark:push out}
Let $f \colon H \to H'$ be a homomorphism of groups and $Q$ a set with an $H$-action.
We can attach to $Q$ a set $Q^f$ with an $H'$-action
and an $H$-equivariant map $Q \to Q^f$ with the following universal property: For any set $Q'$ with an $H'$-action and any $H$-equivariant map $Q \to Q'$,
the map $Q \to Q'$ factors through a unique $H'$-equivariant map $Q^f \to Q'$.
Explicitly, we can define $Q^f$ as the contracted product
\[
Q^f= (Q \times H')/H,
\]
where the action of an $h \in H$ on $Q \times H'$ is defined by $(x, h') \mapsto (xh, f(h)^{-1}h')$.
We call $Q^f$ the pushout of $Q$ along $f \colon H \to H'$.

Similarly, for a homomorphism
$f \colon H \to H'$ of group sheaves on a site and
a sheaf $Q$ with an action of $H$, we can form the pushout $Q^f$ with the same properties as above.
If $Q$ is an $H$-torsor, then $Q^f$ is an $H'$-torsor.
\end{rem}


We will use the following notation.
Let us denote
the inclusion
$G_{\mu, A, I} \hookrightarrow G_{\Prism, A}$
by $\tau$.
The composition of $\tau$ with the projection map
$G_{\Prism, A} \to G_{\overline{\Prism}, A}$
is denoted by $\overline{\tau}$.
(Here $G_{\overline{\Prism}, A}:=(G_\O)_{\overline{\Prism}, A}$.)
By Lemma \ref{Lemma:Pmu structure}, the homomorphism $\overline{\tau}$ factors through
a homomorphism
$\overline{\tau}_P \colon G_{\mu, A, I} \to (P_{\mu})_{\overline{\Prism}, A}$.
In summary, we have the following commutative diagram of group sheaves on $(A, I)^{\op}_{\Prism}$ (or on $(A, I)^{\op}_{\et}$):
\begin{equation}\label{equation:diagram of group sheaves}
    \vcenter{\xymatrix{
G_{\mu, A, I} \ar^-{\tau}[r]  \ar[d]_-{\overline{\tau}_P} \ar^-{\overline{\tau}}[rd] &  G_{\Prism, A}   \ar[d]_-{} \\
(P_{\mu})_{\overline{\Prism}, A} \ar[r]^-{} & G_{\overline{\Prism}, A}.
}}
\end{equation}

\begin{cor}\label{Corollary:trivial Gmu torsor}
A $G_{\mu, A, I}$-torsor $\mathcal{Q}$ on $(A, I)^{\op}_\Prism$ is trivial if the pushout of $\mathcal{Q}$ along $\overline{\tau}_P \colon G_{\mu, A, I} \to (P_\mu)_{\overline{\Prism}, A}$ is trivial as a $(P_\mu)_{\overline{\Prism}, A}$-torsor on $(A, I)^{\op}_\Prism$.
The same holds if we replace $(A, I)^{\op}_\Prism$ by $(A, I)^{\op}_\et$.
\end{cor}

\begin{proof}
We prove the assertion for $(A, I)^{\op}_\Prism$; the argument for $(A, I)^{\op}_\et$ is similar.
By Lemma \ref{Lemma:Gmu torsor successive quotient} (2), it suffices to show that
$\mathcal{Q}^{<m}$
is trivial as a $G^{<m}_{\mu, A, I}$-torsor
for any $m$.
We proceed by induction on $m$.
The assertion is true for $m=1$ by our assumption.
We assume that $\mathcal{Q}^{<m}$ is trivial for an integer $m \geq 1$, so that there exists an element $x \in \mathcal{Q}^{<m}(A)$.
The fiber of the morphism $\mathcal{Q}^{<m+1} \to \mathcal{Q}^{<m}$ at $x$ is a $G^{=m}_{\mu, A, I}$-torsor.
Lemma \ref{Lemma:Gmu torsor successive quotient} (1) shows that $G^{=m}_{\mu, A, I} \simeq V(\bigoplus_{i \leq m} \mathfrak{g}_i)_{\overline{\Prism}, A}$.
By Proposition \ref{Proposition:equivalences of pi completely etale torsors},
the fiber arises from a $V(\bigoplus_{i \leq m} \mathfrak{g}_i)_{A/I}$-torsor over $\Spec A/I$, which is trivial since $\Spec A/I$ is affine.
This implies that
the $G^{<m+1}_{\mu, A, I}$-torsor
$\mathcal{Q}^{<m+1}$
is trivial, as desired.
\end{proof}

\begin{cor}\label{Corollary:flat and etale Gmu torsors}
    A $G_{\mu, A, I}$-torsor $\mathcal{Q}$ on $(A, I)^{\op}_{\Prism}$ is trivialized by a $(\pi, I)$-completely \'etale covering $A \to B$, i.e.\ the restriction of $\mathcal{Q}$ to $(B, IB)^{\op}_{\Prism}$ is trivial.
    Moreover, the restriction functor induces an equivalence from the groupoid of $G_{\mu, A, I}$-torsors on $(A, I)^{\op}_{\Prism}$ to the groupoid of $G_{\mu, A, I}$-torsors on $(A, I)^{\op}_{\et}$.
\end{cor}

\begin{proof}
    The first assertion follows from Corollary \ref{Corollary:trivial Gmu torsor} since any $(P_\mu)_{\overline{\Prism}, A}$-torsor $\mathcal{P}$ on $(A, I)^{\op}_{\Prism}$ arises from a $(P_\mu)_{A/I}$-torsor over $\Spec A/I$ with respect to the \'etale topology, which in turn implies that $\mathcal{P}$ is trivialized by a $(\pi, I)$-completely \'etale covering $A \to B$ (see also Lemma \ref{Lemma:etale morphism and reduction}).
    The second assertion is a formal consequence of the first one.
\end{proof}

\begin{rem}\label{Remark:alternative description of Gmu torsors for 1-bounded}
    To a $G_{\mu, A, I}$-torsor $\mathcal{Q}$ on $(A, I)^{\op}_{\Prism}$,
    we can associate the $G_{\Prism, A}$-torsor $\mathcal{Q}^{\tau}$
    and the $(P_{\mu})_{\overline{\Prism}, A}$-torsor
    $\mathcal{Q}^{\overline{\tau}_P}$
    on $(A, I)^{\op}_\Prism$, and there is a canonical isomorphism between the $G_{\overline{\Prism}, A}$-torsors associated with $\mathcal{Q}^{\tau}$ and $\mathcal{Q}^{\overline{\tau}_P}$.
    We assume that $\mu$ is 1-bounded.
    Then, by Proposition \ref{Proposition:BB isomorphism},
    this construction induces an equivalence from the groupoid of $G_{\mu, A, I}$-torsors on $(A, I)^{\op}_{\Prism}$
    to the groupoid of triples consisting of a $G_{\Prism, A}$-torsor, a $(P_{\mu})_{\overline{\Prism}, A}$-torsor, and an isomorphism between the $G_{\overline{\Prism}, A}$-torsors associated with them.
    The same holds if we replace $(A, I)^{\op}_\Prism$ by $(A, I)^{\op}_\et$.
    Corollary \ref{Corollary:trivial Gmu torsor} and Corollary \ref{Corollary:flat and etale Gmu torsors} also follow from this fact and Proposition \ref{Proposition:equivalences of pi completely etale torsors} when $\mu$ is 1-bounded.
\end{rem}

\section{Prismatic $G$-$\mu$-displays} \label{Section:prismatic G display}

In this section,
we come to the heart of this paper, namely prismatic $G$-$\mu$-displays.
We first discuss the notion of $G$-Breuil--Kisin modules of type $\mu$ in Section \ref{Subsection:G-BK module}.
Then we introduce and study prismatic $G$-$\mu$-displays in Sections \ref{Subsection:prismatic G-display}-\ref{Subsection:Examples}.
Our prismatic $G$-$\mu$-displays are essentially equivalent to $G$-Breuil--Kisin modules of type $\mu$, and the latter may be more familiar to readers.
Nevertheless, in many cases, such as the proof of the main result (Theorem \ref{Theorem:main result on G displays over complete regular local rings}) of this paper, it will be crucial to work with prismatic $G$-$\mu$-displays.

We retain the notation of Section \ref{Section:display group}.
Recall that $G$ is a smooth affine group scheme over $\O_E$
and $\mu \colon \G_m \to G_{\O}$ is a cocharacter defined over $\O=W(k)\otimes_{W(\F_q)} \O_E$.

\subsection{$G$-Breuil--Kisin modules of type $\mu$}\label{Subsection:G-BK module}

Let $(A, I)$ be a bounded $\O_E$-prism over $\O$.

\begin{defn}\label{Definition:G-BK module}
A \textit{$G$-Breuil--Kisin module} over $(A, I)$
is a pair
$(\mathcal{P}, F_\mathcal{P})$ consisting of
a $G_A$-torsor $\mathcal{P}$ over $\Spec A$ (with respect to the \'etale topology) and
an isomorphism
\[
F_\mathcal{P} \colon (\phi^*\mathcal{P})[1/I] \overset{\sim}{\to} \mathcal{P}[1/I]
\]
of $G_{A[1/I]}$-torsors over $\Spec A[1/I]$.
\end{defn}

Here, for a $G_A$-torsor $\mathcal{P}$ over $\Spec A$,
we let
$\phi^*\mathcal{P}$ denote
the base change of $\mathcal{P}$ along the Frobenius $\phi \colon A \to A$.
Since $\phi$ is $\O_E$-linear and $G$ is defined over $\O_E$,
we have $\phi^*G_A=G_A$, and hence $\phi^*\mathcal{P}$ is a $G_A$-torsor over $\Spec A$.
Moreover, we write
$\mathcal{P}[1/I]:=\mathcal{P} \times_{\Spec A} \Spec A[1/I]$.
When there is no ambiguity, we simply write $\mathcal{P}=(\mathcal{P}, F_\mathcal{P})$.

\begin{ex}\label{Example:GLn BK module}
Assume that $G=\GL_n$.
Let
$(M, F_M)$ be a Breuil--Kisin module of rank $n$ over $(A, I)$.
Let
\[
\mathcal{P}(M):=\underline{\mathrm{Isom}}(A^n, M)
\]
be the $\GL_{n, A}$-torsor over $\Spec A$
defined by sending an $A$-algebra $B$
to
the set of isomorphisms $B^n \simeq M_B$.
Together with the isomorphism
$(\phi^*\mathcal{P}(M))[1/I] \overset{\sim}{\to} \mathcal{P}(M)[1/I]$ induced by $F_M$, we regard $\mathcal{P}(M)$
as a $\GL_n$-Breuil--Kisin module over $(A, I)$.
This construction
$M \mapsto \mathcal{P}(M)$
induces an equivalence
between the groupoid of Breuil--Kisin modules of rank $n$ over $(A, I)$ and the groupoid of $\GL_n$-Breuil--Kisin modules over $(A, I)$.
\end{ex}

\begin{rem}\label{Remark:flat descent for G-BK modules}
    Let $\mathcal{P}$ and $\mathcal{P}'$ be $G_A$-torsors over $\Spec A$.
    Using that the functor
    $
    (\O_E)_{\Prism, \O_E} \to \mathrm{Set}, (B, J) \mapsto B[1/J]
    $
    forms a sheaf (see Remark \ref{Remark:flat descent for BK modules}) and that $\mathcal{P}, \mathcal{P}'$ are affine and flat over $\Spec A$, one can show that the functor $(A, I)_{\Prism} \to \mathrm{Set}$ which associates to each $(B, J) \in (A, I)_{\Prism}$ the set of isomorphisms
    $\mathcal{P}_B[1/J] \overset{\sim}{\to} \mathcal{P}'_B[1/J]$
    of $G_{B[1/J]}$-torsors forms a sheaf.
    This, together with Proposition \ref{Proposition:equivalences of pi completely etale torsors}, implies that
    the fibered category over $(A, I)^{\op}_{\Prism}$ which associates to each $(B, J) \in (A, I)_\Prism$ the groupoid of $G$-Breuil--Kisin modules over $(B, J)$ is a stack with respect to the flat topology.
\end{rem}

We introduce $G$-Breuil--Kisin modules of type $\mu$.
Recall that for a $(\pi, I)$-completely \'etale $A$-algebra $B \in (A, I)_\et$,
the pair $(B, IB)$ is naturally a bounded $\O_E$-prism; see Lemma \ref{Lemma:etale morphism prism}.

\begin{defn}[{$G$-Breuil--Kisin module of type $\mu$}]\label{Definition:G-BK module of type mu}
We say that a $G$-Breuil--Kisin module
$(\mathcal{P}, F_\mathcal{P})$
over $(A, I)$ is \textit{of type $\mu$}
if there exists a $(\pi, I)$-completely \'etale covering
$A \to B$ such that $(B, IB)$ is orientable, the base change $\mathcal{P}_B$ is a trivial $G_B$-torsor, and via some (and hence any) trivialization
    $\mathcal{P}_B \simeq G_B$,
    the isomorphism $F_\mathcal{P}$ is given by $g \mapsto Yg$ for an element $Y$ in the double coset
    \[
    G(B)\mu(d)G(B) \subset G(B[1/IB])
    \]
    where $d \in IB$ is a generator.
    If these conditions are satisfied for $B=A$, then we say that $(\mathcal{P}, F_\mathcal{P})$ is \textit{banal}.
\end{defn}

We write
\[
G\mathchar`-\mathrm{BK}_\mu(A, I) \quad \text{and} \quad G\mathchar`-\mathrm{BK}_\mu(A, I)_{\mathrm{banal}}
\]
for the groupoid of $G$-Breuil--Kisin modules of type $\mu$ over $(A, I)$ and the groupoid of banal $G$-Breuil--Kisin modules of type $\mu$ over $(A, I)$ (when $(A, I)$ is orientable), respectively.

\begin{rem}\label{Remark:etale descent for G-BK of type mu}
    By Remark \ref{Remark:flat descent for G-BK modules}, the fibered category over $(A, I)^{\op}_{\et}$ which associates to each $B \in (A, I)_\et$ the groupoid of $G$-Breuil--Kisin modules of type $\mu$ over $(B, IB)$ is a stack with respect to the $(\pi, I)$-completely \'etale topology.
    We will prove that the same result holds for the flat topology in Corollary \ref{Corollary:G-BK module etale banal} below, using $G$-$\mu$-displays introduced in the next subsection.
\end{rem}

\begin{ex}\label{Example:G-BK of type mu implies BK of type mu}
    Let $M$ be a Breuil--Kisin module of rank $n$ over $(A, I)$ and
    let $\mathcal{P}(M)$ be the associated $\GL_n$-Breuil--Kisin module over $(A, I)$ (see Example \ref{Example:GLn BK module}).
    If $\mathcal{P}(M)$ is of type $\mu$, then $M$ is of type $\mu$ in the sense of Definition \ref{Definition:type of displayed BK module} by Proposition \ref{Proposition:displayed condition base change}.
    We will prove that the converse is also true in Example \ref{Example:GLn displays}.
\end{ex}

\subsection{$G$-$\mu$-displays}\label{Subsection:prismatic G-display}

We now introduce prismatic $G$-$\mu$-displays.
To an orientable and bounded $\O_E$-prism $(A, I)$ over $\O$, we attach the display group $G_\mu(A, I)$ as in Definition \ref{Definition:display group}.
Since $G$ is defined over $\O_E$, the Frobenius $\phi$ of $A$ induces a homomorphism $\phi \colon G(A) \to G(A)$.
For each generator $d \in I$, we define the following homomorphism:
\begin{equation}\label{equation:sigma map of sets}
    \sigma_{\mu, d} \colon G_\mu(A, I) \to G(A), \quad g \mapsto \phi(\mu(d)g\mu(d)^{-1}).
\end{equation}
We endow $G(A)$ with the following action of $G_\mu(A, I)$:
\begin{equation}\label{equation:action of display group}
    G(A) \times G_\mu(A, I) \to G(A), \quad (X, g) \mapsto X \cdot g:=g^{-1}X\sigma_{\mu, d}(g).
\end{equation}
We write $G(A)=G(A)_d$ when we regard $G(A)$ as a set with this action of $G_\mu(A, I)$.
For another generator $d' \in I$, we have $d=ud'$ for a unique element $u \in A^\times$.
The map $G(A)_d \to G(A)_{d'}$ defined by $X \mapsto X\phi(\mu(u))$ is $G_\mu(A, I)$-equivariant.
Then we define the set
\[
G(A)_I := \varprojlim_{d} G(A)_d
\]
equipped with a natural action of $G_\mu(A, I)$,
where $d$ runs over the set of generators $d \in I$.
The projection map $G(A)_I \to G(A)_d$ is an isomorphism.
For an element $X \in G(A)_I$, let
\[
X_d \in G(A)_d
\]
denote the image of $X$.
Although $G(A)_I$ depends on the cocharacter $\mu$, we omit it from the notation.
We hope that this will not cause any confusion.

Let $(A, I)$ be a bounded $\O_E$-prism over $\O$.
We recall the category $(A, I)_{\Prism, \ori}$ from Definition \ref{Definition:category of orientable bounded prisms}.
We define the following functor
\[
G_{\Prism, A, I} \colon (A, I)_{\Prism, \ori} \to \mathrm{Set}, \quad (B, J) \mapsto G(B)_{J}.
\]
This forms a sheaf.
We regard $G_{\Prism, A, I}$ as a sheaf on $(A, I)^{\op}_{\Prism}$ (see Remark \ref{Remark:sheaves on oriented prisms}).
The sheaf $G_{\Prism, A, I}$ is equipped with a natural action of the group sheaf $G_{\mu, A, I}$ on $(A, I)^{\op}_{\Prism}$ defined in Definition \ref{Definition:display group sheaf}.

The restriction of $G_{\Prism, A, I}$ to $(A, I)_\et$ is denoted by the same notation.
We define prismatic $G$-$\mu$-displays, using the $(\pi, I)$-completely \'etale topology, as follows.

\begin{defn}[{$G$-$\mu$-display}]\label{Definition:G mu display over oriented prisms}
Let $(A, I)$ be a bounded $\O_E$-prism over $\O$.
\begin{enumerate}
    \item A \textit{$G$-$\mu$-display} over
    $(A, I)$ is a pair
    \[
    (\mathcal{Q}, \alpha_\mathcal{Q})
    \]
    where $\mathcal{Q}$ is a $G_{\mu, A, I}$-torsor on $(A, I)^{\op}_\et$ and $\alpha_\mathcal{Q} \colon \mathcal{Q} \to G_{\Prism, A, I}$ is a $G_{\mu, A, I}$-equivariant map of sheaves.
    The $G_{\mu, A, I}$-torsor $\mathcal{Q}$ is called the \textit{underlying $G_{\mu, A, I}$-torsor} of $(\mathcal{Q}, \alpha_\mathcal{Q})$.
    We say that $(\mathcal{Q}, \alpha_\mathcal{Q})$ is \textit{banal} if $\mathcal{Q}$ is trivial as a $G_{\mu, A, I}$-torsor.
    When there is no possibility of confusion, we write $\mathcal{Q}$ instead of $(\mathcal{Q}, \alpha_\mathcal{Q})$.
    
    \item An isomorphism
    $g \colon (\mathcal{Q}, \alpha_\mathcal{Q}) \to (\mathcal{R}, \alpha_\mathcal{R})$ of $G$-$\mu$-displays over 
    $(A, I)$
    is an isomorphism $g \colon \mathcal{Q} \overset{\sim}{\to} \mathcal{R}$ of $G_{\mu, A, I}$-torsors such that $\alpha_{\mathcal{R}} \circ g=\alpha_\mathcal{Q}$.
\end{enumerate}
\end{defn}

We write
\[
G\mathchar`-\mathrm{Disp}_\mu(A, I) \quad \text{and} \quad G\mathchar`-\mathrm{Disp}_\mu(A, I)_{\mathrm{banal}}
\]
for the groupoid of $G$-$\mu$-displays over $(A, I)$ and the groupoid of banal $G$-$\mu$-displays over $(A, I)$, respectively.

\begin{rem}\label{Remark:compare with Lau, and Bartling perfectoid case}
The notion of $G$-$\mu$-displays was originally introduced by B\"ultel \cite{Bultel}, B\"ultel--Pappas \cite{Bultel-Pappas}, and Lau \cite{Lau21} in different settings.
The definition given here is an adaptation of Lau's approach to the context of ($\O_E$-)prisms; see also Remark \ref{Remark:analogue of Lau's definitions display version} below.
If
$\O_E=\Z_p$ and $\mu$ is $1$-bounded,
the notion of $G$-$\mu$-displays for 
an oriented perfect prism has already appeared in the work of Bartling \cite{Bartling}.
He also claimed that the same construction should work for more general oriented prisms in \cite[Remark 14]{Bartling}.
\end{rem}

\begin{rem}\label{Remark:analogue of Lau's definitions display version}
Assume that $(A, I)$ is orientable.
We consider the graded ring
\[
\Rees(I^\bullet):= (\bigoplus_{i \geq 0} I^i t^{-i}) \oplus (\bigoplus_{i < 0} A t^{-i}) \subset A[t, t^{-1}],
\]
where the degree of $t$ is $-1$.
Let $\tau \colon \Rees(I^\bullet) \to A$
be the homomorphism of $A$-algebras defined by $t \mapsto 1$.
For a generator $d \in I$, let
$\sigma_d \colon \Rees(I^\bullet) \to A$
be the homomorphism
defined by
$a_it^{-i} \mapsto \phi(a_id^{-i})$
for any $i \in \Z$.
The triple
\[
(\Rees(I^\bullet), \sigma_d, \tau)
\]
can be viewed as an analogue of a higher frame introduced in \cite[Definition 2.0.1]{Lau21}.
We note that by Lemma \ref{Lemma:Gmu iff condition},
the homomorphism $\tau$ induces an isomorphism
between
the display group $G_\mu(A, I)$
and the subgroup
\[
G(\Rees(I^\bullet))^{0} \subset G(\Rees(I^\bullet))
\]
consisting of
homomorphisms
$g^* \colon A_G \to \Rees(I^\bullet)$
of graded $\O$-algebras.
Under this isomorphism,
the homomorphism
$\sigma_{\mu, d}$ agrees with the one
$G(\Rees(I^\bullet))^{0} \to G(A)$ induced by $\sigma_d$.
Therefore, the action $(\ref{equation:action of display group})$ is consistent with the one considered in \cite[(5-2)]{Lau21}.
\end{rem}

\begin{rem}\label{Remark:comparison base field}
    Let $\widetilde{k}$ be a perfect field containing $k$.
    We set $\widetilde{\O}:=W(\widetilde{k}) \otimes_{W(\F_q)} \O_E$.
    Let $\widetilde{\mu} \colon \G_m \to G_{\widetilde{\O}}$ be the base change of $\mu$.
    Then, for a bounded $\O_E$-prism $(A, I)$ over $\widetilde{\O}$,
    a $G$-$\widetilde{\mu}$-display over $(A, I)$ is the same as a $G$-$\mu$-display over $(A, I)$.
\end{rem}

We have the following alternative description of banal $G$-$\mu$-displays, which we will use frequently in the sequel.

\begin{rem}\label{Remark:quotient groupoid banal G-displays}
Assume that $(A, I)$ is orientable.
Let
\[
[G(A)_I/G_\mu(A, I)]
\]
denote the groupoid whose objects are the elements $X \in G(A)_I$ and whose morphisms are defined by
$
\Hom(X, X')=\{\, g \in G_\mu(A, I) \, \vert \, X'\cdot g=X  \, \}.
$
Here $(-)\cdot g$ denotes the action of $g \in G_\mu(A, I)$ on $G(A)_I$.
To each
$X \in G(A)_I$,
we attach a banal $G$-$\mu$-display
\[
\mathcal{Q}_X:=(G_{\mu, A, I}, \alpha_X)
\]
over $(A, I)$ where $\alpha_X \colon G_{\mu, A, I} \to G_{\Prism, A, I}$ is given by $1 \mapsto X$.
We obtain an equivalence
\[
[G(A)_I/G_\mu(A, I)] \overset{\sim}{\to} G\mathchar`-\mathrm{Disp}_\mu(A, I)_{\mathrm{banal}}, \quad X \mapsto \mathcal{Q}_X
\]
of groupoids.
\end{rem}

We discuss the notion of base change for $G$-$\mu$-displays.
Let $f \colon (A, I) \to (A', I')$ be a map of orientable and bounded $\O_E$-prisms over $\O$.
We have natural homomorphisms
$f \colon G(A) \to G(A')$
and $f \colon G_\mu(A, I) \to G_\mu(A', I')$.
Let $d \in I$ and $d' \in I'$ be generators and let $u \in A'^\times$ be the unique element satisfying $f(d)=ud'$.
Then the following composition of $G_\mu(A, I)$-equivariant maps
\[
G(A)_I \simeq G(A)_d \to G(A')_{d'} \simeq G(A')_{I'},
\]
where the second map is defined by $X \mapsto f(X)\phi(\mu(u))$, is independent of the choices of $d$ and $d'$, and is also denoted by $f$.

We now consider
a map
$f \colon (A, I) \to (A', I')$
of (not necessarily orientable) bounded $\O_E$-prisms over $\O$.
The functor
$(A, I)^{\op}_\et \to (A', I')^{\op}_\et$
sending $B \in (A, I)^{\op}_\et$ to the $(\pi, I)$-adic completion $B'$ of $B \otimes_A A'$
induces
a morphism of the associated topoi
\[
f \colon ((A', I')^{\op}_\et)^\sim \to ((A, I)^{\op}_\et)^\sim
\]
(since it sends $(\pi, I)$-completely \'etale coverings to $(\pi, I')$-completely \'etale coverings, sends final objects to final objects, and commutes with fiber products).
We have a natural homomorphism
$
f \colon f^{-1}G_{\mu, A, I} \to G_{\mu, A', I'}
$
of group sheaves.
Moreover, the maps 
$G(A)_I \to G(A')_{I'}$ 
defined in the orientable case glue together to a morphism
$
f \colon f^{-1}G_{\Prism, A, I} \to G_{\Prism, A', I'}
$
of sheaves.

\begin{defn}\label{Definition:base change for G-displays}
Let
$(\mathcal{Q}, \alpha_{\mathcal{Q}})$ be a $G$-$\mu$-display over $(A, I)$.
Let
$f^*\mathcal{Q}$
be the pushout of
the $f^{-1}G_{\mu, A, I}$-torsor
$f^{-1}\mathcal{Q}$ along
$f \colon f^{-1}G_{\mu, A, I} \to G_{\mu, A', I'}$.
By the universal property of $f^*\mathcal{Q}$, 
the composition 
\[
f^{-1}\mathcal{Q} \overset{f^{-1}(\alpha_{\mathcal{Q}})}{\longrightarrow} f^{-1}G_{\Prism, A, I} \to G_{\Prism, A', I'}
\]
factors through a unique $G_{\mu, A', I'}$-equivariant map
$f^*(\alpha_{\mathcal{Q}}) \colon f^*\mathcal{Q} \to G_{\Prism, A', I'}$.
The base change of $(\mathcal{Q}, \alpha_{\mathcal{Q}})$ along $f \colon (A, I) \to (A', I')$ is defined to be
$
(f^*\mathcal{Q}, f^*(\alpha_{\mathcal{Q}})).
$
\end{defn}

\begin{ex}\label{Example:base change of banal displays}
    Assume that $(A, I)$ is orientable.
    For the banal $G$-$\mu$-display $\mathcal{Q}_X$ associated with an element $X \in G(A)_I$ (see Remark \ref{Remark:quotient groupoid banal G-displays}), we have
    $f^*(\mathcal{Q}_X)=\mathcal{Q}_{f(X)}$.
\end{ex}

By definition, it is clear that $G$-$\mu$-displays form a stack with respect to the $(\pi, I)$-completely \'etale topology.
In fact, we can prove the following flat descent result, which is an analogue of \cite[Lemma 5.4.2]{Lau21}.

\begin{prop}[Flat descent]\label{Proposition:flat descent of G display}
The fibered category over
$
(\O)^{\op}_{\Prism, \O_E}
$
which associates to each $(A, I) \in (\O)_{\Prism, \O_E}$
the groupoid
$
G\mathchar`-\mathrm{Disp}_\mu(A,I)
$
is a stack with respect to the flat topology.
\end{prop}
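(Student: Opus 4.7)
The plan is to first reduce to the case of orientable bounded $\O_E$-prisms and then prove descent for the underlying torsor and for the equivariant structure map separately. Since $G\mathchar`-\mathrm{Disp}_\mu(A,I)$ for non-orientable $(A,I)$ is defined as the $2$-limit over $(A, I)_{\Prism, \ori}$, and every bounded $\O_E$-prism admits a flat covering by an orientable one by Remark \ref{Remark:flat locally orientable}, it will suffice to verify descent along a faithfully flat map $(A,I) \to (A',I')$ of orientable bounded $\O_E$-prisms over $\O$; the iterated coproducts $(A'^{(n)}, I'^{(n)})$ remain orientable by Lemma \ref{Lemma:rigidity}.

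The first substantial step will be to establish descent for the underlying $G_{\mu,A,I}$-torsor. By Lemma \ref{Lemma:Gmu torsor successive quotient} (2), such a torsor $\mathcal{Q}$ is an inverse limit $\mathcal{Q} \simeq \varprojlim_m \mathcal{Q}^{<m}$ of $G^{<m}_{\mu,A,I}$-torsors. Using the short exact sequences
\[
1 \to G^{=m}_{\mu,A,I} \to G^{<m+1}_{\mu,A,I} \to G^{<m}_{\mu,A,I} \to 1
\]
implicit in Lemma \ref{Lemma:Gmu torsor successive quotient} (1), descent for $G^{<m}_{\mu,A,I}$-torsors will follow by induction on $m$ from descent for torsors under the successive quotients $G^{=m}_{\mu,A,I}$. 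By Lemma \ref{Lemma:Gmu torsor successive quotient} (1) each of these quotients is isomorphic to $H_{\overline{\Prism},A}$ for a smooth affine group scheme $H$ over $\O$, namely $P_\mu$ when $m=0$ and a vector group when $m \geq 1$. Proposition \ref{Proposition:equivalences of pi completely etale torsors} will then identify $H_{\overline{\Prism},A}$-torsors on $(A,I)^{\op}_\et$ with classical \'etale $H_{A/I}$-torsors on $\Spec A/I$, and since the faithful flatness of $(A,I) \to (A',I')$ translates by Remark \ref{Remark:flat maps of prisms} into the $\pi$-completely faithful flatness of $A/I \to A'/I'$, classical faithfully flat descent of torsors applies. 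Taking the inverse limit over $m$ will then yield descent for $\mathcal{Q}$ itself.

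To descend the equivariant structure map $\alpha$, I would observe that $G_{\Prism,A,I}$ is non-canonically isomorphic to $G_{\Prism,A}$ as a sheaf of sets, which is a sheaf by Remark \ref{Remark:structure sheaf}. Given a descended torsor $\mathcal{Q}$ over $(A,I)$ together with a compatible system of equivariant maps on the \v{C}ech nerve $(A'^{(\bullet)}, I'^{(\bullet)})$, the sheaf property of $G_{\Prism,A,I}$ will produce a unique $G_{\mu,A,I}$-equivariant map $\alpha \colon \mathcal{Q} \to G_{\Prism,A,I}$ restricting to the given system. Combined with the descent of $\mathcal{Q}$, this will complete the descent of $G$-$\mu$-displays, and the same reasoning will show that morphisms of $G$-$\mu$-displays form a sheaf.

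The hardest part will be the bookkeeping for the torsor descent: $G_{\mu,A,I}$-torsors live on the $(\pi,I)$-completely \'etale site, whereas descent is required with respect to the flat topology on the prismatic site. The combination of Lemma \ref{Lemma:Gmu torsor successive quotient}, which supplies a dévissage of $G_{\mu,A,I}$ by sheaves of the form $H_{\overline{\Prism},A}$, with Proposition \ref{Proposition:equivalences of pi completely etale torsors}, which reinterprets torsors for such sheaves as classical \'etale torsors over $\Spec A/I$, is the key technical input that will reduce the problem to classical faithfully flat descent of \'etale torsors over an ordinary scheme.
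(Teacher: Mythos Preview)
Your approach is correct and rests on the same technical ingredients as the paper (the d\'evissage of $G_{\mu,A,I}$ via Lemma \ref{Lemma:Gmu torsor successive quotient} and the identification with scheme-theoretic torsors in Proposition \ref{Proposition:equivalences of pi completely etale torsors}), but the organization differs. You descend the torsor $\mathcal{Q}$ and the map $\alpha_{\mathcal{Q}}$ directly, by induction on the filtration $\{G^{<m}_{\mu,A,I}\}$. The paper instead extends $G_{\mu,A,I}$ and $G_{\Prism,A,I}$ from $(A,I)^{\op}_\et$ to the flat site $(A,I)^{\op}_\Prism$, introduces a groupoid $G\text{-}\mathrm{Disp}^{\fl}_\mu(A,I)$ of pairs on that larger site (for which flat descent is tautological), and shows $G\text{-}\mathrm{Disp}^{\fl}_\mu(A,I) \simeq G\text{-}\mathrm{Disp}_\mu(A,I)$ by proving every flat $G_{\mu,A,I}$-torsor is already trivialized by a $(\pi,I)$-completely \'etale cover---this last step is exactly the d\'evissage you invoke. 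The paper's reformulation sidesteps one point you glide over: for a short exact sequence $1 \to N \to G' \to H \to 1$ of sheaves, descent for $N$- and $H$-torsors does not formally imply descent for $G'$-torsors without knowing that the $N$-gerbe of lifts of a descended $H$-torsor is neutral. In your situation this holds because each $G^{=m}_{\mu,A,I}$ for $m \geq 1$ is a vector group over $\Spec A/I$ (so all its torsors over an affine base are trivial), but you should say so explicitly. With that caveat the inductive step goes through and the two arguments are equivalent in content.
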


\begin{proof}
It suffices to prove that
$
G\mathchar`-\mathrm{Disp}_\mu(A, I)
$
is equivalent to the groupoid of pairs 
$(\mathcal{Q}, \alpha_{\mathcal{Q}})$
where $\mathcal{Q}$ is a $G_{\mu, A, I}$-torsor on
$(A, I)^{\op}_{\Prism}$ (with respect to the flat topology)
and $\alpha_{\mathcal{Q}} \colon \mathcal{Q} \to G_{\Prism, A, I}$ is a $G_{\mu, A, I}$-equivariant map of sheaves on $(A, I)^{\op}_{\Prism}$.
This follows from Corollary \ref{Corollary:flat and etale Gmu torsors}.
\end{proof}

\subsection{$G$-$\mu$-displays and $G$-Breuil--Kisin modules of type $\mu$}\label{Subsection:G-mu-displays and G-Breuil--Kisin modules of type mu}

Here we shall show that $G$-$\mu$-displays are essentially equivalent to $G$-Breuil--Kisin modules of type $\mu$.
Let $(A, I)$ be a bounded $\O_E$-prism over $\O$.

\begin{defn}\label{Definition:G-torsor Q_BK}
    To a $G_{\mu, A, I}$-torsor $\mathcal{Q}$ on $(A, I)^{\op}_\et$,
    we attach a $G_A$-torsor $\mathcal{Q}_{\mathrm{BK}}$ over $\Spec A$ as follows.
    We first assume that $(A, I)$ is orientable.
    Let $d \in I$ be a generator.
Let
    $
    \mathcal{Q}_{\mathrm{BK}, d}
    $
    be the pushout of
$\mathcal{Q}$
along the homomorphism
\[
G_{\mu, A, I} \hookrightarrow G_{\Prism, A}, \quad g \mapsto \mu(d)g\mu(d)^{-1}.
\]
Let $d' \in I$ be another generator
and let $u \in A^\times$ be the unique element such that $d=ud'$.
We define
$\ad(\mu(u)) \colon G_{\Prism, A} \overset{\sim}{\to} G_{\Prism, A}$
by $g \mapsto \mu(u)g\mu(u)^{-1}$.
The pushout $(\mathcal{Q}_{\mathrm{BK}, d'})^{\ad(\mu(u))}$ can be identified with $\mathcal{Q}_{\mathrm{BK}, d}$.
The following composition
\begin{equation}\label{equation:identification along inner auto}
    \mathcal{Q}_{\mathrm{BK}, d'} \to (\mathcal{Q}_{\mathrm{BK}, d'})^{\ad(\mu(u))}=\mathcal{Q}_{\mathrm{BK}, d} \overset{x \mapsto x \cdot \mu(u)}{\longrightarrow}\mathcal{Q}_{\mathrm{BK}, d}
\end{equation}
is an isomorphism of $G_{\Prism, A}$-torsors.
(See Remark \ref{Remark:push out} for the first map.)
Then we define
\[
\mathcal{Q}_{\mathrm{BK}}:=\varprojlim_{d} \mathcal{Q}_{\mathrm{BK}, d}
\]
where $d$ runs over the set of generators $d \in I$.

In general, the sheaves constructed in the banal case glue together to a $G_{\Prism, A}$-torsor $\mathcal{Q}_{\mathrm{BK}}$ on 
$(A, I)^{\op}_\et$.
By Proposition \ref{Proposition:equivalences of pi completely etale torsors},
we regard $\mathcal{Q}_{\mathrm{BK}}$ as a $G_A$-torsor over $\Spec A$.
\end{defn}

\begin{rem}\label{Remark:isomorphism between Q_BK and Q_A}
    Recall that $\tau \colon G_{\mu, A, I} \hookrightarrow G_{\Prism, A}$ is the natural inclusion.
    For a $G_{\mu, A, I}$-torsor $\mathcal{Q}$ on $(A, I)^{\op}_\et$, let
    \[
    \mathcal{Q}_A:=\mathcal{Q}^\tau
    \]
    be the pushout of
    $\mathcal{Q}$
    along $\tau$, regarded as a $G_A$-torsor over $\Spec A$
    (by Proposition \ref{Proposition:equivalences of pi completely etale torsors}).
    There exists a canonical isomorphism
    \[
    \mathcal{Q}_A[1/I] \overset{\sim}{\to} \mathcal{Q}_{\mathrm{BK}}[1/I]
    \]
    of $G_{A[1/I]}$-torsors over $\Spec A[1/I]$ obtained as follows.
    We first assume that $(A, I)$ is orientable.
    Let $d \in I$ be a generator.
    Similarly to (\ref{equation:identification along inner auto}), the composition
    \[
    \mathcal{Q}_A[1/I] \to (\mathcal{Q}_A[1/I])^{\ad(\mu(d))}=\mathcal{Q}_{\mathrm{BK}, d}[1/I] \overset{x \mapsto x \cdot \mu(d)}{\longrightarrow}\mathcal{Q}_{\mathrm{BK}, d}[1/I]
    \]
    is an isomorphism of $G_{A[1/I]}$-torsors, where
    $\ad(\mu(d)) \colon G_{A[1/I]} \overset{\sim}{\to} G_{A[1/I]}$ is defined by $g \mapsto \mu(d)g\mu(d)^{-1}$.
    We then obtain the desired isomorphism as
    \[
    \mathcal{Q}_A[1/I] \simeq \mathcal{Q}_{\mathrm{BK}, d}[1/I] \simeq \mathcal{Q}_{\mathrm{BK}}[1/I],
    \]
    which does not depend on the choice of $d \in I$.
    By Remark \ref{Remark:flat descent for G-BK modules},
    the isomorphisms in the banal case glue together to an isomorphism $\mathcal{Q}_A[1/I] \overset{\sim}{\to} \mathcal{Q}_{\mathrm{BK}}[1/I]$.
\end{rem}

\begin{ex}\label{Example:GLn case Q_BK}
    Assume that $G=\GL_n$.
    Let the notation be as in Example \ref{Example:display group GLn case}.
    Let
    $M$
    be a Breuil--Kisin module of type $\mu$ over $(A, I)$.
    Recall the filtration $\{ \Fil^i(\phi^*M) \}_{i \in \Z}$ of $\phi^*M$ from Definition \ref{Definition:Breuil-Kisin module}.
Let $\{ \Fil^i_\mu \}_{i \in \Z}$ be the filtration of $A^n$ defined in Remark \ref{Remark:standard filtration}.
The functor
\[
\mathcal{Q}(M):= \underline{\mathrm{Isom}}_{\Fil}(A^n, \phi^*M)
\colon (A, I)_\et \to \mathrm{Set}
\]
sending $B \in (A, I)_\et$ to the set of isomorphisms
$h \colon B^n \overset{\sim}{\to} (\phi^*M)_B$ preserving the filtrations is a $(\GL_n)_{\mu, A, I}$-torsor by
Remark \ref{Remark:standard filtration},
Example \ref{Example:display group GLn case},
and the fact that $M$ is $(\pi, I)$-completely \'etale locally on $A$ banal.
We note that
\[
\mathcal{Q}(M)_A = \underline{\mathrm{Isom}}(A^n, \phi^*M).
\]
We set $\widetilde{M}:=\Fil^{m_1}(\phi^*M) \otimes_A I^{-m_1}$.
Then we have a canonical identification
\[
\mathcal{Q}(M)_{\mathrm{BK}}= \underline{\mathrm{Isom}}(A^n, \widetilde{M}).
\]
If $A$ is orientable and $d \in I$ is a generator, then $\mathcal{Q}(M)_{\mathrm{BK}, d}= \underline{\mathrm{Isom}}(A^n, \widetilde{M})$ and the natural map
$\mathcal{Q}(M) \to \mathcal{Q}(M)_{\mathrm{BK}, d}$ sends $h \in \mathcal{Q}(M)(A)$ to the composition of isomorphisms
\[
A^n \overset{\mu(d)^{-1}}{\to} \Fil^{m_1}_\mu \otimes_A I^{-m_1} \overset{h}{\to} \widetilde{M}.
\]
If $d' \in I$ is another generator, then the isomorphism 
$\mathcal{Q}(M)_{\mathrm{BK}, d'} \overset{\sim}{\to} \mathcal{Q}(M)_{\mathrm{BK}, d}$
from (\ref{equation:identification along inner auto}) is the identity $\underline{\mathrm{Isom}}(A^n, \widetilde{M}) \to \underline{\mathrm{Isom}}(A^n, \widetilde{M})$.

The isomorphism
$\mathcal{Q}(M)_A[1/I] \overset{\sim}{\to} \mathcal{Q}(M)_{\mathrm{BK}}[1/I]$
defined in Remark \ref{Remark:isomorphism between Q_BK and Q_A}
agrees with the one induced from the equality
$(\phi^*M)[1/I]=\widetilde{M}[1/I]$.
\end{ex}

To construct
$G$-Breuil--Kisin modules of type $\mu$ from $G$-$\mu$-displays, we use the following proposition, which also gives an alternative description of $G$-$\mu$-displays.

\begin{prop}\label{Proposition:alternative description of G-mu-displays}
    Let $\mathcal{Q}$ be a $G_{\mu, A, I}$-torsor on $(A, I)^{\op}_\et$.
    Then there is a natural bijection $\alpha \mapsto \alpha'$ from the set of $G_{\mu, A, I}$-equivariant maps $\alpha \colon \mathcal{Q} \to G_{\Prism, A, I}$
    to the set of isomorphisms
    $\alpha' \colon \phi^*(\mathcal{Q}_{\mathrm{BK}}) \overset{\sim}{\to} \mathcal{Q}_A$
    of $G_A$-torsors over $\Spec A$.
\end{prop}

\begin{proof}
    We shall construct the bijection when
    $(A, I)$ is orientable and $\mathcal{Q}$ is a trivial $G_{\mu, A, I}$-torsor; the general case follows by gluing.
    Let
    $\alpha \colon \mathcal{Q} \to G_{\Prism, A, I}$
    be a $G_{\mu, A, I}$-equivariant map.
    We choose a trivialization
    $\mathcal{Q} \simeq G_{\mu, A, I}$.
    Then $\alpha$ can be regarded as a $G_{\mu, A, I}$-equivariant map
    $G_{\mu, A, I} \to G_{\Prism, A, I}$, which is determined by the image $X \in G(A)_I$ of $1 \in G_\mu(A, I)$.
    We may also identify $\mathcal{Q}_A$ with $G_A$.
    Let $d \in I$ be a generator.
    Then we may identify
$\mathcal{Q}_{\mathrm{BK}}$
with $G_A$ by
\[
\mathcal{Q}_{\mathrm{BK}} \simeq \mathcal{Q}_{\mathrm{BK}, d} \simeq (G_{\mu, A, I})_{\mathrm{BK}, d}= G_A.
\]
(See Definition \ref{Definition:G-torsor Q_BK} for $\mathcal{Q}_{\mathrm{BK}, d}$.)
Via these identifications, we define $\alpha' \colon \phi^*(\mathcal{Q}_{\mathrm{BK}}) \overset{\sim}{\to} \mathcal{Q}_A$ by
\[
\phi^*(\mathcal{Q}_{\mathrm{BK}})=\phi^*G_A=G_A \overset{\sim}{\to} G_A=\mathcal{Q}_A, \quad g \mapsto X_d \cdot g
\]
where $X_d \in G(A)=G(A)_d$ is the image of $X \in G(A)_I$.
One can check that the resulting isomorphism
$\alpha'$
does not depend on the choices of $\mathcal{Q} \simeq G_{\mu, A, I}$
and $d \in I$.
It is clear that the map $\alpha \mapsto \alpha'$ is a bijection.
\end{proof}

\begin{rem}\label{Remark:alternative description of G-mu-displays}
By Proposition \ref{Proposition:alternative description of G-mu-displays},
a $G$-$\mu$-display over $(A, I)$ can be thought of as a pair
$(\mathcal{Q}, \alpha')$
of a $G_{\mu, A, I}$-torsor $\mathcal{Q}$ on $(A, I)^{\op}_\et$ and an isomorphism
$\alpha' \colon \phi^*(\mathcal{Q}_{\mathrm{BK}}) \overset{\sim}{\to} \mathcal{Q}_A$
of $G_A$-torsors over $\Spec A$.
\end{rem}

\begin{defn}\label{Definition:functor from G-displays to G-BK}
    Let $(\mathcal{Q}, \alpha_\mathcal{Q})$ be a $G$-$\mu$-display over $(A, I)$
    and let $(\alpha_\mathcal{Q})' \colon \phi^*(\mathcal{Q}_{\mathrm{BK}}) \overset{\sim}{\to} \mathcal{Q}_A$
    be the corresponding isomorphism.
    We denote by $F$ the following composition
    \[
    (\phi^*(\mathcal{Q}_{\mathrm{BK}}))[1/I] \overset{(\alpha_\mathcal{Q})'}{\to} \mathcal{Q}_A[1/I] \overset{\sim}{\to} \mathcal{Q}_{\mathrm{BK}}[1/I]
    \]
    where the second isomorphism is constructed in Remark \ref{Remark:isomorphism between Q_BK and Q_A}.
    By construction, we see that
    $\mathcal{Q}_{\mathrm{BK}}$,
    together with the isomorphism $F$,
    is a $G$-Breuil--Kisin module of type $\mu$.
    (See also Example \ref{example:associated G-BK module banal case} below.)
    We have a functor
    \begin{equation}\label{equation:functor from G-displays to G-BK}
G\mathchar`-\mathrm{Disp}_\mu(A, I) \to G\mathchar`-\mathrm{BK}_\mu(A, I), \quad \mathcal{Q} \mapsto \mathcal{Q}_{\mathrm{BK}}.
\end{equation}
\end{defn}

\begin{ex}\label{example:associated G-BK module banal case}
    Assume that $(A, I)$ is orientable
    and let $d \in I$ be a generator.
    Let $\mathcal{Q}_X$ be the banal $G$-$\mu$-display associated with an element $X \in G(A)_I$ (Remark \ref{Remark:quotient groupoid banal G-displays}).
    The trivial $G_A$-torsor $G_A$ with the isomorphism
    \[
    (\phi^*G_A)[1/I]=G_A[1/I] \overset{\sim}{\to} G_A[1/I], \quad g \mapsto (\mu(d)X_d) g
    \]
    is a banal $G$-Breuil--Kisin module of type $\mu$ over $(A, I)$, which is denoted by $\mathcal{P}_{X_d}$.
    By construction, we have
$(\mathcal{Q}_X)_{\mathrm{BK}} \overset{\sim}{\to} \mathcal{P}_{X_d}$.
\end{ex}

\begin{prop}\label{Proposition:G-displays and G-BK modules}
    Let $(A, I)$ be a bounded $\O_E$-prism over $\O$.
    The functor $(\ref{equation:functor from G-displays to G-BK})$
    \[
    G\mathchar`-\mathrm{Disp}_\mu(A, I) \to G\mathchar`-\mathrm{BK}_\mu(A, I), \quad \mathcal{Q} \mapsto \mathcal{Q}_{\mathrm{BK}}
    \]
    is an equivalence.
\end{prop}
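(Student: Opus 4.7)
The strategy is to deduce the general case from the banal, orientable case already treated in Lemma \ref{Lemma:banal G displays and banal G-BK module of type mu}, by invoking flat descent on both sides. The functor $\mathcal{Q} \mapsto \mathcal{Q}_{\mathrm{BK}}$ is compatible with base change by construction (see the proof of Proposition \ref{Proposition:functor from G-displays to G-BK}). Both fibered categories over $(\O)^{\op}_{\Prism, \O_E}$ are stacks with respect to the flat topology: for $G\mathchar`-\mathrm{Disp}_\mu$ by Proposition \ref{Proposition:flat descent of G display} together with Definition \ref{Definition:G mu display over general bounded prisms}, and for $G\mathchar`-\mathrm{BK}_\mu$ by Remark \ref{Remark:flat descent for G-BK modules}. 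Since any bounded $\O_E$-prism admits a faithfully flat cover by an orientable one (Remark \ref{Remark:flat locally orientable}), and pushouts of orientable prisms along flat maps remain orientable (so all iterated fiber products in the descent data lie in $(A, I)_{\Prism, \ori}$), we may assume that $(A, I)$ itself is orientable.

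For fully faithfulness, fix $\mathcal{Q}, \mathcal{R} \in G\mathchar`-\mathrm{Disp}_\mu(A, I)$. Since the source is a stack for the flat topology, the presheaf $\underline{\Hom}(\mathcal{Q}, \mathcal{R})$ on $(A, I)^{\op}_{\Prism}$ given by $(B, J) \mapsto \Hom(\mathcal{Q}|_B, \mathcal{R}|_B)$ is a sheaf, and similarly for $\underline{\Hom}(\mathcal{Q}_{\mathrm{BK}}, \mathcal{R}_{\mathrm{BK}})$. To show the natural map of sheaves is an isomorphism, it suffices to exhibit a flat covering on which it is bijective. By the proof of Proposition \ref{Proposition:flat descent of G display} (in particular the argument via Corollary \ref{Corollary:trivial Gmu torsor}), every $G_{\mu, A, I}$-torsor is trivialized on some $(\pi, I)$-completely \'etale cover, so we may choose $(A, I) \to (B, J)$ trivializing both $\mathcal{Q}$ and $\mathcal{R}$. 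Over $(B, J)$, both displays become banal, and the map is bijective by Lemma \ref{Lemma:banal G displays and banal G-BK module of type mu}; likewise for all iterated fiber products (which are again orientable), so fully faithfulness follows.

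For essential surjectivity, let $\mathcal{P} \in G\mathchar`-\mathrm{BK}_\mu(A, I)$. By definition of ``of type $\mu$'', there is a flat cover $(A, I) \to (B, J)$ with $(B, J)$ orientable on which $\mathcal{P}|_B$ is banal. Form the two- and three-fold pushouts $(B^{(2)}, J^{(2)})$ and $(B^{(3)}, J^{(3)})$ in $(A, I)_{\Prism}$, which are again orientable. Applying Lemma \ref{Lemma:banal G displays and banal G-BK module of type mu} over $(B, J)$ gives a $\mathcal{Q}_B \in G\mathchar`-\mathrm{Disp}_\mu(B, J)$ with $(\mathcal{Q}_B)_{\mathrm{BK}} \simeq \mathcal{P}|_B$. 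The canonical isomorphism $p_1^*\mathcal{P}|_B \simeq p_2^*\mathcal{P}|_B$ over $(B^{(2)}, J^{(2)})$, satisfying the cocycle condition over $(B^{(3)}, J^{(3)})$, lifts via the fully faithfulness established above to a descent datum on $\mathcal{Q}_B$. Proposition \ref{Proposition:flat descent of G display} then produces the required $\mathcal{Q} \in G\mathchar`-\mathrm{Disp}_\mu(A, I)$ with $\mathcal{Q}_{\mathrm{BK}} \simeq \mathcal{P}$.

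The bulk of the technical work lies outside this proof proper, in constructing the functor and in setting up flat descent on both sides. The only place that requires care is verifying that the categories over the various iterated fiber products lie within the orientable setting where Lemma \ref{Lemma:banal G displays and banal G-BK module of type mu} is available; this is ensured by the stability of orientability under the pushouts from Remark \ref{Remark:pushout in prismatic site}. Once this is noted, the argument is a standard descent-theoretic passage from a banal local equivalence to a global equivalence of stacks.
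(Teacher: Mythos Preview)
Your proposal is correct and follows essentially the same approach as the paper, which simply states that the result follows from Lemma \ref{Lemma:banal G displays and banal G-BK module of type mu} by virtue of Proposition \ref{Proposition:flat descent of G display} and Remark \ref{Remark:flat descent for G-BK modules}. You have spelled out in detail the standard descent argument that is implicit in the paper's one-line proof.
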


\begin{proof}
By Remark \ref{Remark:flat locally orientable},
Remark \ref{Remark:etale descent for G-BK of type mu}, and $(\pi, I)$-completely \'etale descent for $G$-$\mu$-displays, it suffices to prove that the functor
\[
G\mathchar`-\mathrm{Disp}_\mu(A, I)_{\mathrm{banal}} \to G\mathchar`-\mathrm{BK}_\mu(A, I)_{\mathrm{banal}}, \quad \mathcal{Q} \mapsto \mathcal{Q}_{\mathrm{BK}}
\]
is an equivalence when $(A, I)$ is orientable.

We shall prove that the functor is fully faithful.
It suffices to prove that for all $X, X' \in G(A)_I$ and the associated banal $G$-$\mu$-displays
$\mathcal{Q}_X, \mathcal{Q}_{X'}$
over $(A, I)$,
we have
\begin{equation}\label{equation:hom functor}
    \Hom(\mathcal{Q}_X, \mathcal{Q}_{X'}) \overset{\sim}{\to} \Hom((\mathcal{Q}_X)_{\mathrm{BK}}, (\mathcal{Q}_{X'})_{\mathrm{BK}}).
\end{equation}
We fix a generator $d \in I$.
The left hand side can be identified with
\[
\{\, g \in G_\mu(A, I) \, \vert \, g^{-1}X'_d \phi(\mu(d)g\mu(d)^{-1})=X_d  \, \}.
\]
(See Remark \ref{Remark:quotient groupoid banal G-displays}.)
By Example \ref{example:associated G-BK module banal case}, we have
$(\mathcal{Q}_X)_{\mathrm{BK}} \overset{\sim}{\to} \mathcal{P}_{X_d}$ and
$(\mathcal{Q}_{X'})_{\mathrm{BK}} \overset{\sim}{\to} \mathcal{P}_{X'_d}$.
Thus the right hand side of $(\ref{equation:hom functor})$ can be identified with
\[
\{\, h \in G(A) \, \vert \, h^{-1}\mu(d)X'_d \phi(h)=\mu(d)X_d  \, \}.
\]
The map $(\ref{equation:hom functor})$ is given by $g \mapsto \mu(d)g\mu(d)^{-1}$ under these identifications.
In particular, the map is injective.
For surjectivity, let $h \in G(A)$ be an element such that 
$h^{-1}\mu(d)X'_d \phi(h)=\mu(d)X_d$.
The element $g:=\mu(d)^{-1}h \mu(d)=X'_d\phi(h)X^{-1}_d$ belongs to $G(A)$, and hence $g \in G_\mu(A, I)$.
It follows that $g \in \Hom(\mathcal{Q}_X, \mathcal{Q}_{X'})$, and $g$ is mapped to $h$.

It remains to prove that the functor is essentially surjective.
It is enough to show that a banal $G$-Breuil--Kisin module
$\mathcal{P}$ of type $\mu$
over $(A, I)$, such that $\mathcal{P}=G_{A}$ and $F_\mathcal{P}$
corresponds to an element $Y \in G(A)\mu(d)G(A),
$
is isomorphic to $\mathcal{P}_{X_d}$ for some $X \in G(A)_I$.
After changing the trivialization $\mathcal{P}=G_{A}$, we may assume that $Y \in \mu(d)G(A)$.
Then the result is clear.
\end{proof}

\begin{cor}\label{Corollary:G-BK module etale banal}
    The fibered category over $(\O)^{\op}_{\Prism, \O_E}$ which associates to each $(A, I) \in (\O)_{\Prism, \O_E}$ the groupoid
    $G\mathchar`-\mathrm{BK}_\mu(A, I)$
    of $G$-Breuil--Kisin modules of type $\mu$ over $(A, I)$ is a stack with respect to the flat topology.
\end{cor}

\begin{proof}
    This follows from
    Proposition \ref{Proposition:flat descent of G display}
    and
    Proposition \ref{Proposition:G-displays and G-BK modules}.
\end{proof}

\begin{ex}\label{Example:GLn displays}
    Assume that $G=\GL_n$.
    We retain the notation of Example \ref{Example:GLn case Q_BK}.
    Let
    $M$
    be a Breuil--Kisin module of type $\mu$ over $(A, I)$.
    Since $M$ is of type $\mu$, it follows from Lemma \ref{Lemma:Brauil-Kisin module, filtration and height} that $F_M$ restricts to an isomorphism
    $\widetilde{M} \overset{\sim}{\to} M$.
    The base change
    $\phi^*(F_M) \colon \phi^*\widetilde{M} \overset{\sim}{\to} \phi^*M$
    induces an isomorphism
    \[
    \alpha' \colon \phi^*(\mathcal{Q}(M)_{\mathrm{BK}}) \overset{\sim}{\to} \mathcal{Q}(M)_A
    \]
    of $\GL_{n, A}$-torsors over $\Spec A$.
    The $(\GL_n)_{\mu, A, I}$-torsor $\mathcal{Q}(M)$ with $\alpha'$ is a $\GL_n$-$\mu$-display over $(A, I)$.
    
    By construction, the $\GL_n$-Breuil--Kisin module
    $\mathcal{Q}(M)_{\mathrm{BK}}$
    agrees with the one $\mathcal{P}(\widetilde{M})$ associated with the Breuil--Kisin module $(\widetilde{M}, F_{\widetilde{M}})$ where
    the isomorphism $F_{\widetilde{M}}$ is
    \[
    (\phi^*\widetilde{M})[1/I] \overset{\phi^*(F_M)}{\longrightarrow} (\phi^*M)[1/I]=\widetilde{M}[1/I].
    \]
    (See $\mathcal{P}(\widetilde{M})$ for Example \ref{Example:GLn BK module}.)
    We note that $F_M \colon \widetilde{M} \overset{\sim}{\to} M$ is an isomorphism of Breuil--Kisin modules.
    Since $\mathcal{Q}(M)_{\mathrm{BK}}$ is of type $\mu$, it follows that $\mathcal{P}(M)$ is of type $\mu$.
\end{ex}

\begin{cor}\label{Corollary:GLn BK module of type mu}
Let $(A, I)$ be a bounded $\O_E$-prism over $\O$.
We have equivalences of groupoids
    \begin{align*}
        \mathrm{BK}_\mu(A, I)^{\simeq} &\overset{\sim}{\to} \GL_n\mathchar`-\mathrm{BK}_\mu(A, I), \quad M \mapsto \mathcal{P}(M), \\
        \mathrm{BK}_\mu(A, I)^{\simeq} &\overset{\sim}{\to} \GL_n\mathchar`-\mathrm{Disp}_\mu(A, I), \quad M \mapsto \mathcal{Q}(M).
    \end{align*}
Here $\mathrm{BK}_\mu(A, I)^{\simeq}$ is the groupoid of Breuil--Kisin modules of type $\mu$ over $(A, I)$.
\end{cor}

\begin{proof}
    The first equivalence follows from Example \ref{Example:GLn BK module}, Example \ref{Example:G-BK of type mu implies BK of type mu}, and Example \ref{Example:GLn displays}.
    We shall prove that the functor $M \mapsto \mathcal{Q}(M)$ is an equivalence.
    It follows from Example \ref{Example:GLn displays} that the composition of this functor with the functor (\ref{equation:functor from G-displays to G-BK}) is isomorphic to the functor 
$M \mapsto \mathcal{P}(M)$.
Since (\ref{equation:functor from G-displays to G-BK}) is an equivalence by Proposition \ref{Proposition:G-displays and G-BK modules}, the result follows.
\end{proof}

\subsection{Hodge filtrations}\label{Subsection:Hodge filtrations}

We define the Hodge filtrations for $G$-$\mu$-displays, following \cite[Section 7.4]{Lau21}.
Let
$(A, I)$
be a bounded $\O_E$-prism over $\O$.
We recall the commutative diagram (\ref{equation:diagram of group sheaves}) from Section \ref{Subsection:Display groups on prismatic sites}.

\begin{defn}[Hodge filtration]\label{Definition:Hodge filtration of G-displays}
Let
$\mathcal{Q}$ be a $G$-$\mu$-display over $(A, I)$.
We write
\[
\mathcal{Q}_{A/I}:=\mathcal{Q}^{\overline{\tau}} \quad (\text{resp.}\ P(\mathcal{Q})_{A/I}:=\mathcal{Q}^{\overline{\tau}_P})
\]
for the pushout of 
the underlying $G_{\mu, A, I}$-torsor $\mathcal{Q}$ on $(A, I)^{\op}_\et$ along $\overline{\tau}$ (resp.\ $\overline{\tau}_P$), which is a $G_{\overline{\Prism}, A}$-torsor
(resp.\ a $(P_{\mu})_{\overline{\Prism}, A}$-torsor) on $(A, I)^{\op}_\et$.
There is a natural $(P_{\mu})_{\overline{\Prism}, A}$-equivariant injection
\[
P(\mathcal{Q})_{A/I} \hookrightarrow \mathcal{Q}_{A/I}.
\]
We call $P(\mathcal{Q})_{A/I}$
(or the injection $P(\mathcal{Q})_{A/I} \hookrightarrow \mathcal{Q}_{A/I}$)
the \textit{Hodge filtration} of $\mathcal{Q}_{A/I}$.
If there is no risk of confusion,
we also say that
$P(\mathcal{Q})_{A/I}$
is the Hodge filtration of $\mathcal{Q}$.
\end{defn}

\begin{ex}\label{Example:Hodge filtration of GLn-display}
Assume that $G=\GL_n$ and let the notation be as in Example \ref{Example:GLn case Q_BK}.
Let
$M$
be a Breuil--Kisin module over $(A, I)$ of type $\mu$
and
let
$\mathcal{Q}=\mathcal{Q}(M)$
be
the associated $\GL_n$-$\mu$-display over $(A, I)$ given in Example \ref{Example:GLn displays}.
Recall that the filtration $\{ \Fil^i(\phi^*M) \}_{i \in \Z}$
defines
the Hodge filtration $\{ P^i \}_{i \in \Z}$ of
$M_{\dR}=(\phi^*M)/I(\phi^*M)$.
Similarly, the filtration $\{ \Fil^i_\mu \}_{i \in \Z}$ of $A^n$ induces a filtration
of $(A/I)^n$.
Let $\underline{\mathrm{Isom}}((A/I)^n, M_{\dR})$
(resp.\ $\underline{\mathrm{Isom}}_{\Fil}((A/I)^n, M_{\dR})$)
be the functor sending $B \in (A, I)_\et$ to the set of isomorphisms
$(B/IB)^n \overset{\sim}{\to} (M_{\dR})_{B/IB}$
(resp.\ the set of isomorphisms $(B/IB)^n \overset{\sim}{\to} (M_{\dR})_{B/IB}$ preserving the filtrations).
Since
$M$ is of type $\mu$,
we see that
$\underline{\mathrm{Isom}}_{\Fil}((A/I)^n, M_{\dR})$ is naturally a $(P_{\mu})_{\overline{\Prism}, A}$-torsor.
It follows that the natural morphism
\[
\mathcal{Q}= \underline{\mathrm{Isom}}_{\Fil}(A^n, \phi^*M) \to \underline{\mathrm{Isom}}_{\Fil}((A/I)^n, M_{\dR})
\]
induces an isomorphism
\[
P(\mathcal{Q})_{A/I} \overset{\sim}{\to} \underline{\mathrm{Isom}}_{\Fil}((A/I)^n, M_{\dR}).
\]
Similarly, we obtain
$
\mathcal{Q}_{A/I} \overset{\sim}{\to} \underline{\mathrm{Isom}}((A/I)^n, M_{\dR}).
$
\end{ex}

\begin{rem}\label{Remark:Hodge filtration schematic}
Let
$\mathcal{Q}$ be a $G$-$\mu$-display over $(A, I)$.
By Proposition \ref{Proposition:equivalences of pi completely etale torsors}, the $G_{\overline{\Prism}, A}$-torsor
$\mathcal{Q}_{A/I}$
(resp.\ the $(P_{\mu})_{\overline{\Prism}, A}$-torsor $P(\mathcal{Q})_{A/I}$)
corresponds to a
$G_{A/I}$-torsor
(resp.\ a $(P_\mu)_{A/I}$-torsor) over $\Spec A/I$, which will be denoted by the same symbol.
\end{rem}

\begin{ex}\label{Example:Hodge filtration banal case}
Assume that $(A, I)$ is orientable.
Let $X \in G(A)_I$ be an element.
Then the Hodge filtration associated with $\mathcal{Q}_X$
can be identified with the natural inclusion
$(P_{\mu})_{A/I} \hookrightarrow G_{A/I}$.
\end{ex}

\begin{prop}\label{Proposition:G display with trivial Hodge filtration is banal}
A $G$-$\mu$-display
$\mathcal{Q}$
over $(A, I)$ is banal if and only if the Hodge filtration
$P(\mathcal{Q})_{A/I}$
is a trivial $(P_{\mu})_{A/I}$-torsor over $\Spec A/I$.
\end{prop}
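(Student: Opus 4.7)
The plan is to reduce both directions to results already established earlier in the paper. The forward implication is a direct computation from the definitions, while the converse is an immediate application of Corollary \ref{Corollary:trivial Gmu torsor}.

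For the \emph{only if} direction, suppose $\mathcal{Q}$ is banal. By Remark \ref{Remark:quotient groupoid banal G-displays}, I may choose an isomorphism $\mathcal{Q} \simeq \mathcal{Q}_X$ for some $X \in G(A)_I$. Then Example \ref{Example:Hodge filtration banal case} identifies the Hodge filtration of $\mathcal{Q}_X$ with the canonical inclusion $(P_\mu)_{\overline{\Prism}, A} \hookrightarrow G_{\overline{\Prism}, A}$, so $P(\mathcal{Q})_{A/I}$ is the trivial $(P_\mu)_{\overline{\Prism}, A}$-torsor. Passing through the equivalence of Proposition \ref{Proposition:equivalences of pi completely etale torsors}, this corresponds to the trivial $(P_\mu)_{A/I}$-torsor over $\Spec A/I$.

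For the \emph{if} direction, the key point is that by construction the Hodge filtration $P(\mathcal{Q})_{A/I}$ is precisely the pushout of $\mathcal{Q}$ along the homomorphism $\overline{\tau}_P \colon G_{\mu, A, I} \to (P_\mu)_{\overline{\Prism}, A}$ from Definition \ref{Definition:Hodge filtration of G-displays}, which is the same homomorphism that appears in the hypothesis of Corollary \ref{Corollary:trivial Gmu torsor}. If $P(\mathcal{Q})_{A/I}$ is trivial as a $(P_\mu)_{A/I}$-torsor over $\Spec A/I$, then by Proposition \ref{Proposition:equivalences of pi completely etale torsors} it is also trivial as a $(P_\mu)_{\overline{\Prism}, A}$-torsor on $(A,I)^{\op}_\et$. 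Corollary \ref{Corollary:trivial Gmu torsor} then directly yields that $\mathcal{Q}$ is trivial as a $G_{\mu, A, I}$-torsor, i.e.\ that $\mathcal{Q}$ is banal.

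There is no substantive obstacle here, since all of the technical work has been done in Corollary \ref{Corollary:trivial Gmu torsor} (whose proof uses Lemma \ref{Lemma:Gmu torsor successive quotient} and the smoothness of the successive quotients $G^{=m}_{\mu, A, I}$). The only thing worth emphasizing in the writeup is the compatibility between the two notions of triviality (over $\Spec A/I$ versus on $(A, I)^{\op}_\et$), which is provided by Proposition \ref{Proposition:equivalences of pi completely etale torsors}.
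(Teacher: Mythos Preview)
Your proposal is correct and follows the same approach as the paper, which simply states that the proposition is a restatement of Corollary \ref{Corollary:trivial Gmu torsor} in the current context. Your version spells out the details more fully, particularly the passage between the two notions of triviality via Proposition \ref{Proposition:equivalences of pi completely etale torsors} and the (essentially trivial) forward direction, but the substance is identical.
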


\begin{proof}
This is a restatement of Corollary \ref{Corollary:trivial Gmu torsor} in the current context.
\end{proof}

\subsection{Underlying $G$-$\phi$-modules}\label{Subsection:phi G torsor of type mu}

Let $(A, I)$ be a bounded $\O_E$-prism over $\O$ and let $(M, F_M)$ be a Breuil--Kisin module over $(A, I)$.
Since $\{ \Fil^i(\phi^*M) \}_{i \in \Z}$ is the filtration of $\phi^*M$, it is sometimes reasonable to consider $\phi^*M$ (rather than $M$) as ``the underlying $A$-module'' of the Breuil--Kisin module $(M, F_M)$.
The same applies to $G$-Breuil--Kisin modules $\mathcal{P}$ over $(A, I)$.
In fact, the Frobenius of $\phi^*\mathcal{P}$ will also be important.
For example, this can be observed in the Grothendieck--Messing deformation theory studied in \cite{Ito-K23-b}.

It will be convenient to make the following definition.
We assume that $(A, I)$ is orientable for simplicity.
We set $A[1/\phi(I)]:=A[1/\phi(d)]$ for a generator $d \in I$, which does not depend on the choice of $d$.

\begin{defn}\label{Definition:phi-G-torsor}
A \textit{$G$-$\phi$-module} over $(A, I)$
is a pair
$(\mathcal{P}, \phi_\mathcal{P})$ consisting of
a $G_A$-torsor $\mathcal{P}$ over $\Spec A$ and
an isomorphism
\[
\phi_\mathcal{P} \colon (\phi^*\mathcal{P})[1/\phi(I)] \overset{\sim}{\to} \mathcal{P}[1/\phi(I)]
\]
of $G_{A[1/\phi(I)]}$-torsors over $\Spec A[1/\phi(I)]$.
(Here $\mathcal{P}[1/\phi(I)]:=\mathcal{P} \times_{\Spec A} \Spec A[1/\phi(I)]$.)
If there is no possibility of confusion, we write $\mathcal{P}=(\mathcal{P}, \phi_\mathcal{P})$.
\end{defn}

Here we explain how to attach a $G$-$\phi$-module over $(A, I)$ to a $G$-$\mu$-display $\mathcal{Q}$ over $(A, I)$.
Recall
$
\mathcal{Q}_A:=\mathcal{Q}^\tau
$
from Remark \ref{Remark:isomorphism between Q_BK and Q_A}, which we regard as a $G_A$-torsor over $\Spec A$.
We define
\[
\phi_{\mathcal{Q}_A} \colon (\phi^*(\mathcal{Q}_A))[1/\phi(I)] \overset{\sim}{\to} \mathcal{Q}_A[1/\phi(I)]
\]
as the composition
\[
(\phi^*(\mathcal{Q}_A))[1/\phi(I)] \overset{\sim}{\to} (\phi^*(\mathcal{Q}_{\mathrm{BK}}))[1/\phi(I)] \overset{\sim}{\to} \mathcal{Q}_A[1/\phi(I)]
\]
where the first isomorphism is the base change of $\mathcal{Q}_A[1/I] \overset{\sim}{\to} \mathcal{Q}_{\mathrm{BK}}[1/I]$
given in Remark \ref{Remark:isomorphism between Q_BK and Q_A} along $\phi \colon A[1/I] \to A[1/\phi(I)]$, and the second one is the base change of
$(\alpha_\mathcal{Q})' \colon \phi^*(\mathcal{Q}_{\mathrm{BK}}) \overset{\sim}{\to} \mathcal{Q}_A$
given in Proposition \ref{Proposition:alternative description of G-mu-displays} along the natural homomorphism $A \to A[1/\phi(I)]$.

\begin{defn}[{Underlying $G$-$\phi$-module}]\label{Definition:underlying phi-G-torsor}
Let
$\mathcal{Q}$
be a $G$-$\mu$-display over $(A, I)$.
The $G$-$\phi$-module
\[
\mathcal{Q}_{\phi}:=(\mathcal{Q}_A, \phi_{\mathcal{Q}_A})
\]
over $(A, I)$
is called the \textit{underlying $G$-$\phi$-module} of $\mathcal{Q}$.
\end{defn}

\begin{ex}\label{Example:underlying G-module banal case}
    Let $\mathcal{Q}_X$ be the banal $G$-$\mu$-display associated with an element $X \in G(A)_I$.
    The underlying $G$-$\phi$-module $(\mathcal{Q}_X)_\phi$ of $\mathcal{Q}_X$ is the trivial $G_A$-torsor $G_A$ with the isomorphism
    \[
    (\phi^*G_A)[1/\phi(I)]=G_A[1/\phi(I)] \overset{\sim}{\to} G_A[1/\phi(I)], \quad g \mapsto X_d\phi(\mu(d)) g
    \]
    for a generator $d \in I$.
    We note that the element
    $
    X_d\phi(\mu(d)) \in G(A[1/\phi(I)])
    $
    is independent of the choice of $d \in I$.
\end{ex}

\begin{rem}\label{Remark:underlying phi G torsor and underlying G-BK module}
    Let $\mathcal{Q}$ be a $G$-$\mu$-display over $(A, I)$.
    The base change $\phi^*(\mathcal{Q}_{\mathrm{BK}})$ of the associated $G$-Breuil--Kisin module $\mathcal{Q}_{\mathrm{BK}}$ is naturally a $G$-$\phi$-module over $(A, I)$.
    We note that $(\alpha_\mathcal{Q})'$ gives an isomorphism
$\phi^*(\mathcal{Q}_{\mathrm{BK}}) \overset{\sim}{\to} \mathcal{Q}_\phi$
of $G$-$\phi$-modules.
Therefore, one can also define the underlying $G$-$\phi$-module of $\mathcal{Q}$ as $\phi^*(\mathcal{Q}_{\mathrm{BK}})$.
However, the construction of $\mathcal{Q}_\phi$ is more natural and will be useful in \cite{Ito-K23-b}.
\end{rem}

\subsection{$G$-$\mu$-displays for perfectoid rings}\label{Subsection:G displays for perfectoid rings}

Let $R$ be a perfectoid ring over $\O$.
We discuss $p$-complete $\arc$-descent results for $G$-$\mu$-displays over the $\O_E$-prism
$
(W_{\O_E}(R^\flat), I_R).
$

\begin{rem}\label{Remark:G-BK of type mu over perdectoid}
Assume that $\O_E=\Z_p$.
In \cite{Bartling}, the notion of $G$-Breuil--Kisin modules over $(W(R^\flat), I_R)$ \textit{of type $\mu$} was introduced in a different way.
Namely, in \cite{Bartling},
a $G$-Breuil--Kisin module $\mathcal{P}$ over $(W(R^\flat), I_R)$ is said to be of type $\mu$ if for any homomorphism $R \to V$ with $V$ a
$p$-adically complete valuation ring of rank $\leq 1$
whose fraction field is algebraically closed,
the base change $\mathcal{P}_{W(V^\flat)}$
is of type $\mu$ in the sense of Definition \ref{Definition:G-BK module of type mu}.
In Proposition \ref{Proposition:comparison of notions of type mu} below, we will prove that this notion agrees with the one introduced in Definition \ref{Definition:G-BK module of type mu}.
\end{rem}

Let
$\Perfd_{R}$
be the category
of perfectoid rings over $R$.
We endow $\Perfd^{\op}_{R}$ with
the topology generated by the \textit{$\pi$-complete $\arc$-coverings} (or equivalently, the $p$-complete $\arc$-coverings) in the sense of \cite[Section 2.2.1]{CS}.
This topology is called the \textit{$\pi$-complete $\arc$-topology}.

\begin{rem}\label{Remark:arc topology}
We quickly review the notion of a $\pi$-complete $\arc$-covering.
\begin{enumerate}
    \item We say that a homomorphism
$R \to S$
of perfectoid rings over $\O$
is a $\pi$-complete $\arc$-covering if
for any homomorphism $R \to V$ with $V$ a $\pi$-adically complete valuation ring of rank $\leq 1$, there exist an extension $V \hookrightarrow W$ of $\pi$-adically complete valuation rings of rank $\leq 1$ and a homomorphism $S \to W$ such that the following diagram commutes:
\[
\xymatrix{
R \ar^-{}[r]  \ar[d]_-{} & S \ar[d]_-{}  \\
V \ar[r]^-{} & W.
}
\]
    \item The category $\Perfd^{\op}_R$ admits fiber products; a colimit of the diagram
$
S_2 \leftarrow S_1 \rightarrow S_3
$
in $\Perfd_R$
is given by the $\pi$-adic completion of $S_2 \otimes_{S_1} S_3$ (cf.\ \cite[Proposition 2.1.11]{CS}).
We see that $\Perfd^{\op}_R$ is indeed a site.
    \item
    Let $R \to S$ be a $\pi$-completely \'etale covering.
    Then $S$ is perfectoid as explained in 
    Example \ref{Example:perfectoid ring etale morphism},
    and $R \to S$ is a $\pi$-complete $\arc$-covering; see \cite[Section 2.2.1]{CS}.
    \item There exists a $\pi$-complete $\arc$-covering of the form $R \to \prod_{i \in I} V_i$ where $V_i$ are $\pi$-adically complete valuation rings of rank $\leq 1$ with algebraically closed fraction fields; see \cite[Lemma 2.2.3]{CS}.
\end{enumerate}
\end{rem}

We recall the following result from \cite{Ito-K21}.

\begin{prop}[{\cite[Corollary 4.2]{Ito-K21}}]\label{Proposition:arc descent for finite projective modules}
The fibered category over $\Perfd^{\op}_R$
which associates to a perfectoid ring $S$ over $R$
the category of finite projective $S$-modules satisfies
descent with respect to the $\pi$-complete $\arc$-topology.
In particular, the functor
$
\Perfd_R \to \mathrm{Set}$,
$S \mapsto S$
forms a sheaf.
\end{prop}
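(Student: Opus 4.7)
The plan is to reduce the descent statement for finite projective modules to arc-descent for the structure sheaf $S \mapsto S$, and then prove the latter by tilting to characteristic $p$ and invoking known arc-descent results for perfect $\F_q$-algebras.

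First, I would reduce to the structure-sheaf case. Given a $\pi$-complete $\arc$-covering $R \to S$ with associated \v{C}ech nerve $S^{(\bullet)}$, where $S^{(n)}$ denotes the $\pi$-adic completion of the $n$-fold tensor product of $S$ over $R$ (which is again perfectoid by Remark \ref{Remark:arc topology}~(2)), a finite projective $R$-module $M$ can be described by the pair consisting of the finite projective $S$-module $M \otimes_R S$ together with a descent isomorphism on $S^{(2)}$ satisfying a cocycle condition on $S^{(3)}$. The exactness of $R \to S \rightrightarrows S^{(2)}$ (the structure-sheaf statement) gives the fully faithfulness. For effectiveness of descent data, one embeds a finite projective module with descent data into a free module (which trivially descends) and uses the structure-sheaf descent to produce a compatible splitting, thereby realizing the descended module as a direct summand.

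For the structure sheaf, by Remark \ref{Remark:arc topology}~(4) it suffices to treat coverings of the form $R \to S=\prod_i V_i$ with $V_i$ $\pi$-adically complete valuation rings of rank $\leq 1$ whose fraction fields are algebraically closed. The tilting functor $(-)^\flat$ sends the \v{C}ech nerve of such a covering to the \v{C}ech nerve of the $\arc$-covering $R^\flat \to S^\flat=\prod_i V_i^\flat$ of perfect $\F_q$-algebras; here one uses that tilting commutes with $\pi$-adic completion and with completed tensor products. By arc-descent for perfect $\F_p$-algebras (Bhatt--Mathew), the sequence $R^\flat \to S^\flat \rightrightarrows (S^{(2)})^\flat$ is exact. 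Applying the Witt ring functor $W_{\O_E}(-)$ and then quotienting by the compatible family of principal ideals $I_\bullet:=\Ker\theta_{\O_E}$ on each term yields the exactness of $R \to S \rightrightarrows S^{(2)}$.

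The main obstacle is the compatibility of the kernels of $\theta_{\O_E}$ along the \v{C}ech nerve: one must verify that a chosen generator $\xi \in I_R$ maps to generators of $I_{S^{(n)}}$ for all $n \geq 1$, which follows from the rigidity property (Lemma \ref{Lemma:rigidity}) combined with the explicit form of the kernel generators given in Lemma \ref{Lemma:kernel generator perfectoid}. A second subtlety is that the base change $M \mapsto M \otimes_{W_{\O_E}(R^\flat)} R$ is not an equivalence on finite projective modules, so one cannot deduce arc-descent over $R$ from arc-descent over $R^\flat$ at the level of projective modules directly; instead, the descent of finite projective modules must be bootstrapped from the structure-sheaf descent as outlined above, combined with the observation that being a finite projective module is an arc-local property over a $\pi$-adically complete ring with bounded $p^\infty$-torsion.
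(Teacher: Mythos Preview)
The paper does not give a self-contained proof; it simply cites \cite[Corollary 4.2]{Ito-K21} and notes that the structure-sheaf assertion was already in \cite[Proposition 8.10]{BS}. As indicated in Remark \ref{Remark:arc descent for finite projective modules}, the argument in \cite{Ito-K21} proceeds by first establishing $\pi$-complete $\arc$-hyperdescent for the full $\infty$-category $\Perf(S)$ of perfect complexes, and then identifying finite projective modules with perfect complexes of tor-amplitude $[0,0]$.

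Your structure-sheaf argument via tilting and Bhatt--Mathew is correct and essentially reproduces \cite[Proposition 8.10]{BS}, and the full-faithfulness reduction is fine. The gap is in effectiveness. Given a finite projective $S$-module $M$ with descent datum $\phi \colon p_1^*M \overset{\sim}{\to} p_2^*M$, an arbitrarily chosen splitting $M \hookrightarrow S^n$ has no reason to intertwine $\phi$ with the trivial descent datum on $S^n$, and producing a compatible embedding is precisely the hard part you are trying to avoid. Because $\arc$-coverings are not flat, the classical faithfully-flat-descent machinery is unavailable, and the bare equalizer property of the structure presheaf does not by itself yield effective descent for finite projective modules. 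Your closing ``observation'' that finite-projectivity is $\arc$-local is likewise non-elementary in this non-flat setting. This is exactly why the cited reference passes through the derived category: hyperdescent for $\Perf(-)$ is what allows one both to construct the descended object and to recognize it as finite projective.
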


\begin{proof}
See \cite[Corollary 4.2]{Ito-K21}.
The second assertion was previously proved in \cite[Proposition 8.10]{BS}.
\end{proof}

\begin{rem}\label{Remark:arc descent for finite projective modules}
In fact, it is proved in \cite[Theorem 1.2]{Ito-K21} that
the functor on $\Perfd_R$ associating to each $S \in \Perfd_R$ the $\infty$-category $\Perf(S)$ of perfect complexes over $S$ satisfies $\pi$-complete $\arc$-hyperdescent.
    Using this,
    we can prove that for any integer $n \geq 1$,
    the functor
    $S \mapsto \Perf(W_{\O_E}(S^\flat)/I^n_S)$
    on $\Perfd_R$
    satisfies $\pi$-complete $\arc$-hyperdescent, by induction on $n$.
    This implies that the functor
    $
    S \mapsto \Perf(W_{\O_E}(S^\flat))
    $
    satisfies $\pi$-complete $\arc$-hyperdescent as well.
    See the discussion in \cite[Section 4.1]{Ito-K21}.
\end{rem}


\begin{cor}\label{Corollary:descent witt vectors}
The fibered category over $\Perfd^{\op}_R$
which associates to a perfectoid ring $S$ over $R$
the category of finite projective $W_{\O_E}(S^\flat)$-modules
satisfies
descent with respect to the $\pi$-complete $\arc$-topology.
The same holds for finite projective $W_{\O_E}(S^\flat)/I^n_S$-modules.
\end{cor}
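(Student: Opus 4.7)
The plan is to combine the perfect-complex hyperdescent recorded in Remark~\ref{Remark:arc descent for finite projective modules} with the arc-descent of finite projective modules over perfectoid rings (Proposition~\ref{Proposition:arc descent for finite projective modules}). Since a finite projective module over a ring $A$ is exactly a perfect complex of $A$-modules with Tor-amplitude $[0,0]$, the task reduces to showing that this Tor-amplitude condition can be tested in the $\pi$-complete $\arc$-topology when $A = W_{\O_E}(S^\flat)$ or $A = W_{\O_E}(S^\flat)/I^n_S$.

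First I would treat the case $A = W_{\O_E}(S^\flat)$. Given a $\pi$-complete $\arc$-covering $S \to S'$ and descent datum for a finite projective $W_{\O_E}(S'^\flat)$-module $M'$, Remark~\ref{Remark:arc descent for finite projective modules} already supplies a perfect complex $M$ over $W_{\O_E}(S^\flat)$ whose base change to $W_{\O_E}(S'^\flat)$ is $M'$. It remains to show $M$ has Tor-amplitude $[0,0]$. Since $\pi$ is a nonzerodivisor in $W_{\O_E}(S^\flat)$ and the ring is $\pi$-adically complete, derived Nakayama (applied to the finitely generated cohomology modules of the perfect complex $M$) reduces the Tor-amplitude check to showing that $M \otimes^{\L} S^\flat$ is a finite projective $S^\flat$-module. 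Its base change along the tilt $S^\flat \to S'^\flat$ is $M'/\pi M'$, which is finite projective, and the tilt remains a $\pi$-complete $\arc$-covering of perfect $\F_q$-algebras (hence of perfectoid rings). Proposition~\ref{Proposition:arc descent for finite projective modules} applied to $\Perfd_{S^\flat}$ therefore produces a finite projective $S^\flat$-module $N$ descending the datum induced on $M'/\pi M'$, and the uniqueness of perfect-complex descent forces $N \simeq M \otimes^{\L} S^\flat$, so the latter is finite projective.

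The statement about $W_{\O_E}(S^\flat)/I^n_S$ I would handle in the same spirit, but reducing modulo the ideal $I_S$ (which is nilpotent in $W_{\O_E}(S^\flat)/I^n_S$). A perfect complex over $W_{\O_E}(S^\flat)/I^n_S$ whose reduction to $W_{\O_E}(S^\flat)/I_S = S$ is a finite projective $S$-module is itself finite projective, because a bounded complex of finite projectives that becomes split quasi-isomorphic to a single term after quotienting by a nilpotent ideal can be split already over the ambient ring (the splittings of the differentials lift along the nilpotent quotient). The perfect-complex hyperdescent of Remark~\ref{Remark:arc descent for finite projective modules} for $W_{\O_E}(S^\flat)/I^n_S$ combined with Proposition~\ref{Proposition:arc descent for finite projective modules} applied to $\Perfd_S$ then concludes the argument exactly as above.

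The main obstacle is the compatibility between the two descent mechanisms: the finite projective module $N$ produced by Proposition~\ref{Proposition:arc descent for finite projective modules} has to be identified with the reduction of the perfect complex produced by Remark~\ref{Remark:arc descent for finite projective modules}. This is a formal consequence of the uniqueness clause of hyperdescent, since both objects furnish the same descent datum in $\Perf$ of the reduced ring, but it should be spelled out carefully to avoid circularity. Once this identification is in hand, the proof is genuinely short.
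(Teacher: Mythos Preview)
Your proposal is correct and follows the same overall strategy as the paper: deduce finite-projective descent from the perfect-complex hyperdescent recorded in Remark~\ref{Remark:arc descent for finite projective modules}, by checking that the Tor-amplitude $[0,0]$ condition is $\arc$-local. The paper's own proof is simply the one-line reference ``by the same argument as in the proof of \cite[Corollary~4.2]{Ito-K21}'', so what you have written is a reasonable unpacking of that citation rather than a different method.

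One point deserves a sentence of justification. In the $W_{\O_E}(S^\flat)$ case you reduce modulo $\pi$ and then invoke Proposition~\ref{Proposition:arc descent for finite projective modules} on $\Perfd_{S^\flat}$, which requires knowing that the tilt $S^\flat \to S'^\flat$ is again a $\pi$-complete $\arc$-covering. This is true (tilting gives an equivalence between perfectoid $S$-algebras and perfectoid $S^\flat$-algebras carrying rank~$\le 1$ valuation rings to rank~$\le 1$ valuation rings), but it is an extra input you have not argued. You can sidestep it entirely by reducing modulo $I_S$ instead of modulo $\pi$: since $W_{\O_E}(S^\flat)$ is derived $I_S$-complete (being derived $(\pi,I_S)$-complete), the same derived-Nakayama step applies, and you land directly in $S$ with the given cover $S\to S'$, so Proposition~\ref{Proposition:arc descent for finite projective modules} applies without any tilting. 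This also makes the two halves of your argument uniform and matches the order in which Remark~\ref{Remark:arc descent for finite projective modules} treats the two cases (first $W_{\O_E}(S^\flat)/I^n_S$ by induction on $n$, then $W_{\O_E}(S^\flat)$).
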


\begin{proof}
By the same argument as in the proof of \cite[Corollary 4.2]{Ito-K21},
we can deduce the assertion from Remark \ref{Remark:arc descent for finite projective modules}.
\end{proof}

In particular, the functor
$\Perfd_R \to \mathrm{Set}$, $S \mapsto W_{\O_E}(S^\flat)$
forms a sheaf.
This fact also follows from \cite[Lemma 4.2.6]{CS} or the proof of \cite[Proposition 8.10]{BS} (using that $W(\F_q) \to \O_E$ is flat).

\begin{rem}\label{Remark:alternative proof of arc descent for finite projective modules} 
    In the case where $\O_E=\Z_p$, the first assertion of Corollary \ref{Corollary:descent witt vectors} is proved in \cite[Corollary 4.2]{Ito-K21}.
    The general case can also be deduced from this special case, using that a module over $W_{\O_E}(S^\flat)$ is finite projective if and only if it is finite projective over $W(S^\flat)$.
\end{rem}

For an affine scheme $X$ over $\O$ (or $R$),
we define a functor
$
X_{\overline{\Prism}} \colon \Perfd_R \to \mathrm{Set},
$
$S \mapsto X(S).
$
By Proposition \ref{Proposition:arc descent for finite projective modules}, this forms a sheaf.
Similarly, 
for an affine scheme $X$ over $\O$ (or $W_{\O_E}(R^\flat)$),
we define a functor
$
X_{\Prism} \colon \Perfd_R \to \mathrm{Set},
$
$S \mapsto X(W_{\O_E}(S^\flat))$,
which forms a sheaf by Corollary \ref{Corollary:descent witt vectors}.
We have the following analogue of Proposition \ref{Proposition:equivalences of pi completely etale torsors}.

\begin{prop}\label{Proposition:equivalence of arc torsors}
Let $H$ be a smooth affine group scheme over $\O$.
\begin{enumerate}
    \item The functor $\mathcal{P} \mapsto \mathcal{P}_{\overline{\Prism}}$ from the groupoid of $H_R$-torsors over $\Spec R$ to the groupoid of $H_{\overline{\Prism}}$-torsors on $\Perfd^{\op}_R$ is an equivalence.
    \item The functor
    $\mathcal{P} \mapsto \mathcal{P}_{\Prism}$
    from the groupoid of $H_{W_{\O_E}(R^\flat)}$-torsors over $\Spec W_{\O_E}(R^\flat)$ to
the groupoid of $H_{\Prism}$-torsors on $\Perfd^{\op}_R$ is an equivalence.
\end{enumerate}
\end{prop}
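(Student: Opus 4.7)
The plan is to mirror the proof of Proposition~\ref{Proposition:equivalences of pi completely etale torsors}, substituting the $\pi$-complete $\arc$-topology for the $(\pi, I)$-completely \'etale and flat topologies used there, and invoking Proposition~\ref{Proposition:arc descent for finite projective modules} for part (1) and Corollary~\ref{Corollary:descent witt vectors} for part (2) as the fundamental descent inputs.

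For (1), I first verify that $\mathcal{P}_{\overline{\Prism}}$ is an $H_{\overline{\Prism}}$-torsor on $\Perfd^{\op}_R$ whenever $\mathcal{P}$ is an $H_R$-torsor over $\Spec R$. By Remark~\ref{Remark:arc topology}(4), there is a $\pi$-complete $\arc$-covering $R \to \prod_i V_i$ where each $V_i$ is a $\pi$-adically complete rank $\leq 1$ valuation ring with algebraically closed fraction field. Such a $V_i$ is strictly Henselian, so the smooth affine torsor $\mathcal{P}_{V_i}$ is trivial; this, together with the fact that $H_{\overline{\Prism}}$ is an $\arc$-sheaf (a consequence of Proposition~\ref{Proposition:arc descent for finite projective modules}), shows that $\mathcal{P}_{\overline{\Prism}}$ is locally trivial and hence is an $H_{\overline{\Prism}}$-torsor.

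To obtain the equivalence, I invoke the Tannakian description used in the proof of Proposition~\ref{Proposition:equivalences of pi completely etale torsors}: for any $\O$-algebra $R'$, the groupoid of $H_{R'}$-torsors is equivalent to the groupoid of exact tensor functors $\Rep_\O(H) \to \Vect(R')$ (see \cite[Theorem 19.5.1]{Scholze-Weinstein}, \cite[Theorem 1.2]{Broshi}). It thus suffices to show that the fibered category $S \mapsto \{H_S\text{-torsors}\}$ on $\Perfd^{\op}_R$ is a stack for the $\pi$-complete $\arc$-topology with global sections equivalent to $\{H_R\text{-torsors}\}$. Both statements reduce, via the Tannakian formalism, to the descent of finite projective modules (Proposition~\ref{Proposition:arc descent for finite projective modules}) together with the fact that exactness of a short exact sequence of finite projective modules can be tested after a $\pi$-complete $\arc$-covering — itself a consequence of the descent of finite projective modules applied to kernels and cokernels.

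Part (2) proceeds in complete analogy. For the local triviality of $\mathcal{P}_{\Prism}$ associated to an $H_{W_{\O_E}(R^\flat)}$-torsor $\mathcal{P}$, I combine Proposition~\ref{Proposition:equivalences of pi completely etale torsors}(2) applied to $(W_{\O_E}(R^\flat), I_R)$, which produces a $(\pi, I_R)$-completely \'etale cover $W_{\O_E}(R^\flat) \to B$ trivializing $\mathcal{P}$, with Lemma~\ref{Lemma:etale morphism and reduction}(2) and Example~\ref{Example:perfectoid ring etale morphism}, which identify $B$ with $W_{\O_E}(S^\flat)$ for a $\pi$-completely \'etale perfectoid $R$-algebra $S$; such an $R \to S$ is a $\pi$-complete $\arc$-covering by Remark~\ref{Remark:arc topology}(3). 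The Tannakian argument then yields (2), with Corollary~\ref{Corollary:descent witt vectors} replacing Proposition~\ref{Proposition:arc descent for finite projective modules}. I do not foresee a serious obstacle; the only subtlety is packaging the exactness-testing step into the Tannakian framework, but this is standard once the relevant descent of finite projective modules is in hand.
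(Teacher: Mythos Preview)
Your proposal is correct and follows exactly the approach indicated in the paper, which simply says to repeat the argument of Proposition~\ref{Proposition:equivalences of pi completely etale torsors} using Proposition~\ref{Proposition:arc descent for finite projective modules} and Corollary~\ref{Corollary:descent witt vectors} as the descent inputs. You have merely spelled out in more detail what the paper leaves implicit, including the local triviality arguments and the Tannakian reduction.
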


\begin{proof}
This can be proved by the same argument as in the proof of Proposition \ref{Proposition:equivalences of pi completely etale torsors}, using Proposition \ref{Proposition:arc descent for finite projective modules} and Corollary \ref{Corollary:descent witt vectors}.
\end{proof}

\begin{rem}\label{Remark:arc descent for G-BK}
    Arguing as in Remark \ref{Remark:flat descent for G-BK modules},
    we see that the fibered category over $\Perfd^{\op}_R$ which associates to each $S \in \Perfd_R$ the groupoid of $G$-Breuil--Kisin modules over $(W_{\O_E}(S^\flat), I_S)$ is a stack with respect to the $\pi$-complete $\arc$-topology.
\end{rem}

As in Section \ref{Subsection:prismatic G-display},
the following functors
\begin{align*}
    G_{\mu, I} &\colon \Perfd_R \to \mathrm{Set}, \quad S \mapsto G_\mu(W_{\O_E}(S^\flat), I_S), \\
    G_{\Prism, I} &\colon \Perfd_R \to \mathrm{Set}, \quad S \mapsto G(W_{\O_E}(S^\flat))_{I_S}
\end{align*}
form sheaves, and the group sheaf $G_{\mu, I}$ acts on $G_{\Prism, I}$.

\begin{lem}\label{Lemma:Gmu arc torsor is etale torsor}
Let $\mathcal{Q}$ be a $G_{\mu, I}$-torsor with respect to the $\pi$-complete $\arc$-topology.
Then $\mathcal{Q}$ is trivialized by a $\pi$-completely \'etale covering $R \to S$.
\end{lem}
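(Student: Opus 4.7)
The plan is to adapt the strategy of Corollary \ref{Corollary:trivial Gmu torsor} and the proof of Proposition \ref{Proposition:flat descent of G display} to the $\pi$-complete $\arc$-setting, producing an explicit $\pi$-completely \'etale trivialization by exploiting smoothness of $P_\mu$. First I would set up the $\arc$-topology analogues of the group sheaves in Lemma \ref{Lemma:Gmu torsor successive quotient} on $\Perfd^{\op}_R$: for each $m \geq 0$, define $G^{\geq m}_{\mu, I}$, $G^{=m}_{\mu, I}$ and $G^{<m}_{\mu, I}$ by setting $G^{\geq m}_{\mu, I}(S):=G^{\geq m}_\mu(W_{\O_E}(S^\flat), I_S)$, etc. Since $W_{\O_E}(S^\flat)$ is $I_S$-adically complete, Lemma \ref{Lemma:congruent subgroup of Gmu} applies verbatim and gives $G^{=0}_{\mu, I} \simeq (P_\mu)_{\overline{\Prism}}$ and $G^{=m}_{\mu, I} \simeq V(\bigoplus_{i \leq m} \mathfrak{g}_i)_{\overline{\Prism}}$ for $m \geq 1$, together with the identification $\mathcal{Q} \overset{\sim}{\to} \varprojlim_m \mathcal{Q}^{<m}$, where $\mathcal{Q}^{<m}$ denotes the pushout of $\mathcal{Q}$ along $G_{\mu, I} \to G^{<m}_{\mu, I}$.

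Next, consider the pushout $\mathcal{Q}^{<1}$, which is a $(P_\mu)_{\overline{\Prism}}$-torsor on $\Perfd^{\op}_R$. By Proposition \ref{Proposition:equivalence of arc torsors} (1) it arises from a $(P_\mu)_R$-torsor $\mathcal{P}$ over $\Spec R$. Since $P_\mu$ is smooth over $\O$, the torsor $\mathcal{P}$ is trivialized by an \'etale covering $\Spec R' \to \Spec R$. Taking $S$ to be the $\pi$-adic completion of $R'$, Example \ref{Example:perfectoid ring etale morphism} shows that $S$ is perfectoid and $R \to S$ is a $\pi$-completely \'etale (faithfully flat) covering; by construction $(\mathcal{Q}|_S)^{<1}$ becomes trivial.

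I then claim that $\mathcal{Q}|_S$ itself is trivial, which I would prove by induction on $m$ that each $(\mathcal{Q}|_S)^{<m}$ is trivial. Given a section of $(\mathcal{Q}|_S)^{<m}$ for some $m \geq 1$, the fiber of $(\mathcal{Q}|_S)^{<m+1} \to (\mathcal{Q}|_S)^{<m}$ at this section is a torsor under $G^{=m}_{\mu, I}|_S \simeq V(\bigoplus_{i \leq m} \mathfrak{g}_i)_{\overline{\Prism}}|_S$; by Proposition \ref{Proposition:equivalence of arc torsors} (1) this corresponds to a vector group torsor over the affine scheme $\Spec S$, which is trivial since $H^1_{\et}(\Spec S, \O_S^r)=0$. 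Hence $(\mathcal{Q}|_S)^{<m+1}$ admits a section and so is trivial. Passing to the limit using $\mathcal{Q}|_S \overset{\sim}{\to} \varprojlim_m (\mathcal{Q}|_S)^{<m}$ concludes the proof.

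The main obstacle is the initial descent step: all higher layers of the filtration are automatically killed because they correspond to additive group torsors on an affine base, so the entire difficulty is concentrated in trivializing the Hodge-filtration layer $\mathcal{Q}^{<1}$ by a \emph{$\pi$-completely \'etale} covering rather than just an $\arc$-covering. This requires Proposition \ref{Proposition:equivalence of arc torsors} to schematize the $(P_\mu)_{\overline{\Prism}}$-torsor on $\Perfd^{\op}_R$ as a genuine \'etale torsor over $\Spec R$, and then the smoothness of $P_\mu$ (together with Example \ref{Example:perfectoid ring etale morphism} to stay inside the perfectoid world after $\pi$-adic completion).
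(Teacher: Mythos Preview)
Your proposal is correct and follows essentially the same route as the paper. The paper compresses your inductive argument into the sentence ``one can prove the analogue of Lemma \ref{Lemma:Gmu torsor successive quotient} for $G_{\mu,I}$, and then the argument as in the proof of Corollary \ref{Corollary:trivial Gmu torsor} works,'' and then handles the $(P_\mu)_{\overline{\Prism}}$-layer by a one-line appeal to Proposition \ref{Proposition:equivalence of arc torsors}; your write-up simply unpacks these references explicitly (including the use of Example \ref{Example:perfectoid ring etale morphism} to stay in the perfectoid world after completion).
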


\begin{proof}
We claim that if the pushout of $\mathcal{Q}$ along the homomorphism
$G_{\mu, I} \to (P_\mu)_{\overline{\Prism}}$
is trivial as a $(P_\mu)_{\overline{\Prism}}$-torsor, then $\mathcal{Q}$ is itself trivial.
Indeed, one can prove the analogue of Lemma \ref{Lemma:Gmu torsor successive quotient} for $G_{\mu, I}$, and then the argument as in the proof of Corollary \ref{Corollary:trivial Gmu torsor} works.

By the claim, it suffices to prove that any $(P_\mu)_{\overline{\Prism}}$-torsor
with respect to the $\pi$-complete $\arc$-topology
can be trivialized by a $\pi$-completely \'etale covering $R \to S$.
This is a consequence of Proposition \ref{Proposition:equivalence of arc torsors}.
\end{proof}

\begin{cor}\label{Corollary:arc descent for G-displays}
The fibered category over $\Perfd^{\op}_R$
which associates to a perfectoid ring $S$ over $R$
the groupoid of $G$-$\mu$-displays over $(W_{\O_E}(S^\flat), I_S)$ is a stack with respect to the $\pi$-complete $\arc$-topology.
The same holds for $G$-Breuil--Kisin modules of type $\mu$ over $(W_{\O_E}(S^\flat), I_S)$.
\end{cor}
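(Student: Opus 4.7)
The plan is to adapt the strategy used in the proof of Proposition \ref{Proposition:flat descent of G display} to the present setting. For each $S \in \Perfd_R$, I introduce an auxiliary groupoid $G\mathchar`-\mathrm{Disp}^{\arc}_\mu(S)$ consisting of pairs $(\mathcal{Q}, \alpha_{\mathcal{Q}})$, where $\mathcal{Q}$ is a $G_{\mu, I}|_{\Perfd^{\op}_S}$-torsor on the $\pi$-complete $\arc$-site, and $\alpha_{\mathcal{Q}} \colon \mathcal{Q} \to G_{\Prism, I}|_{\Perfd^{\op}_S}$ is a $G_{\mu, I}$-equivariant morphism of sheaves. Because $\arc$-torsors descend along $\arc$-coverings by construction, and equivariant maps into an $\arc$-sheaf glue, the association $S \mapsto G\mathchar`-\mathrm{Disp}^{\arc}_\mu(S)$ is automatically a stack for the $\pi$-complete $\arc$-topology on $\Perfd^{\op}_R$. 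It therefore suffices to produce a natural equivalence
\[
G\mathchar`-\mathrm{Disp}^{\arc}_\mu(S) \simeq G\mathchar`-\mathrm{Disp}_\mu(W_{\O_E}(S^\flat), I_S),
\]
compatible with base change along maps in $\Perfd_R$.

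To construct this equivalence, I would combine two inputs. First, Lemma \ref{Lemma:Gmu arc torsor is etale torsor} asserts that every $G_{\mu, I}$-torsor on $\Perfd^{\op}_S$ for the $\arc$-topology is trivialized by a $\pi$-completely \'etale covering $S \to S'$, so such a torsor is completely determined by its restriction to the full subcategory of $\pi$-completely \'etale perfectoid $S$-algebras. Second, the functor $T \mapsto W_{\O_E}(T^\flat)$ identifies this subcategory with $(W_{\O_E}(S^\flat), I_S)_\et$, by Example \ref{Example:perfectoid ring etale morphism} together with Lemma \ref{Lemma:etale morphism and reduction}, and under this identification the sheaves $G_{\mu, I}$ and $G_{\Prism, I}$ restrict respectively to $G_{\mu, W_{\O_E}(S^\flat), I_S}$ and $G_{\Prism, W_{\O_E}(S^\flat), I_S}$, while the $\pi$-completely \'etale topology matches on both sides. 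Consequently, restriction defines a functor from $G\mathchar`-\mathrm{Disp}^{\arc}_\mu(S)$ to $G\mathchar`-\mathrm{Disp}_\mu(W_{\O_E}(S^\flat), I_S)$; conversely, a $G$-$\mu$-display over $(W_{\O_E}(S^\flat), I_S)$ in the sense of Definition \ref{Definition:G mu display over oriented prisms} extends uniquely to an $\arc$-sheaf datum on $\Perfd^{\op}_S$ because $G_{\Prism, I}$ is an $\arc$-sheaf by Corollary \ref{Corollary:descent witt vectors}.

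The main technical point is verifying that this restriction/extension pair is indeed an equivalence, and that the equivariant map $\alpha$ is transported unambiguously between the two frameworks. The torsor side is governed by Lemma \ref{Lemma:Gmu arc torsor is etale torsor}, applied both to objects and to isomorphisms, while the equivariant-map side reduces to the $\arc$-sheaf property of $G_{\Prism, I}$, which is a consequence of Corollary \ref{Corollary:descent witt vectors} applied to the coordinate ring of the affine scheme $G_\O$. Granted the equivalence, the corollary is immediate: any descent datum for $G$-$\mu$-displays along a $\pi$-complete $\arc$-covering transports to a descent datum in the arc-stack $G\mathchar`-\mathrm{Disp}^{\arc}_\mu$, which admits effective descent by construction, and then returns to a $G$-$\mu$-display over the base via the same equivalence.
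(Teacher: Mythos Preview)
Your proposal is correct and follows essentially the same approach as the paper: the paper's proof simply states that the result ``can be deduced from Lemma \ref{Lemma:Gmu arc torsor is etale torsor} by the same argument as in the proof of Proposition \ref{Proposition:flat descent of G display},'' and you have faithfully unpacked that argument by introducing the auxiliary groupoid $G\mathchar`-\mathrm{Disp}^{\arc}_\mu(S)$ and comparing it with $G\mathchar`-\mathrm{Disp}_\mu(W_{\O_E}(S^\flat), I_S)$ via the \'etale trivialization guaranteed by Lemma \ref{Lemma:Gmu arc torsor is etale torsor}.
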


\begin{proof}
The first assertion can be deduced from Lemma \ref{Lemma:Gmu arc torsor is etale torsor} by the same argument as in the proof of Proposition \ref{Proposition:flat descent of G display}.
The second assertion follows from the first one, together with Proposition \ref{Proposition:G-displays and G-BK modules}.
\end{proof}

Now we are ready to prove the following result:

\begin{prop}\label{Proposition:comparison of notions of type mu}
For
a $G$-Breuil--Kisin module $\mathcal{P}$ over $(W_{\O_E}(R^\flat), I_R)$, the following conditions are equivalent.
\begin{enumerate}
    \item $\mathcal{P}$ is of type $\mu$ (in the sense of Definition \ref{Definition:G-BK module of type mu}).
    \item There exists a $\pi$-complete $\arc$-covering $R \to S$ such that the base change of
    $\mathcal{P}$
    along $(W_{\O_E}(R^\flat), I_R) \to (W_{\O_E}(S^\flat), I_S)$
    is of type $\mu$.
    \item For any homomorphism $R \to V$ with $V$ a
$\pi$-adically complete valuation ring of rank $\leq 1$
whose fraction field is algebraically closed,
the base change
of $\mathcal{P}$ along
$(W_{\O_E}(R^\flat), I_R) \to (W_{\O_E}(V^\flat), I_V)$
is of type $\mu$.
\end{enumerate}
\end{prop}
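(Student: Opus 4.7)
The plan is to combine $\pi$-complete arc-descent for $G$-$\mu$-displays (Corollary \ref{Corollary:arc descent for G-displays}) with the equivalence $G\mathchar`-\mathrm{Disp}_\mu((W_{\O_E}(S^\flat), I_S)) \simeq \{G\text{-BKF modules of type } \mu \text{ for } S\}$ from Proposition \ref{Proposition:equivalence of G displays and phi G torsors perfectoid} (2), applied to each perfectoid ring $S$ over $\O$. The implications (1) $\Rightarrow$ (2) and (1) $\Rightarrow$ (3) are immediate: any $(\pi, I_R)$-completely \'etale covering of $W_{\O_E}(R^\flat)$ is of the form $W_{\O_E}(S^\flat)$ for a $\pi$-completely \'etale covering $R \to S$ (Example \ref{Example:perfectoid ring etale morphism}), which is arc by Remark \ref{Remark:arc topology} (3), and being of type $\mu$ is preserved under base change.

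The central implication is (2) $\Rightarrow$ (1). Given an arc-covering $R \to S$ with $\mathcal{P}_S$ of type $\mu$, Proposition \ref{Proposition:equivalence of G displays and phi G torsors perfectoid} (2) yields a $G$-$\mu$-display $\mathcal{Q}_S$ with $\mathcal{Q}_{S,\phi} \simeq \mathcal{P}_S$. For each $n \geq 1$, the base change $\mathcal{P}_{S^{(n)}}$ along the \v{C}ech nerve is of type $\mu$ and corresponds to $\mathcal{Q}_{S^{(n)}}$. The canonical descent datum $p_1^* \mathcal{P}_S \simeq p_2^* \mathcal{P}_S$ on $\mathcal{P}_S$ (coming from $\mathcal{P}$ being defined over $R$) transports, by functoriality of Proposition \ref{Proposition:equivalence of G displays and phi G torsors perfectoid} (2) in the base perfectoid ring, to a descent datum on $\mathcal{Q}_S$ satisfying the cocycle condition over $S^{(3)}$. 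By Corollary \ref{Corollary:arc descent for G-displays} it descends to a $G$-$\mu$-display $\mathcal{Q}$ over $R$, whose associated $G$-BKF module $\mathcal{Q}_\phi$ is of type $\mu$ and, by arc-descent for $G$-torsors on perfectoids (Proposition \ref{Proposition:equivalence of arc torsors}), isomorphic to $\mathcal{P}$.

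For (3) $\Rightarrow$ (2), I take the arc-covering $R \to S := \prod_{i} V_i$ supplied by Remark \ref{Remark:arc topology} (4), where each $V_i$ is a $\pi$-adically complete rank $\leq 1$ valuation ring with algebraically closed fraction field. Since tilting and the ramified Witt vector functor both commute with arbitrary products (the latter using that $\O_E$ is a finite free $W(\F_q)$-module), one has canonical identifications $W_{\O_E}(S^\flat) \simeq \prod_i W_{\O_E}(V_i^\flat)$ and $I_S \simeq \prod_i I_{V_i}$; assuming (3), each $\mathcal{P}_{V_i}$ is trivialized by a $(\pi, I_{V_i})$-completely \'etale covering with Frobenius in the prescribed double coset, and these componentwise trivializations assemble to exhibit $\mathcal{P}_S$ as of type $\mu$. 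The main expected obstacle is the technical step of verifying naturality of Proposition \ref{Proposition:equivalence of G displays and phi G torsors perfectoid} (2) in the base perfectoid ring needed for transporting descent data in (2) $\Rightarrow$ (1); a secondary subtlety is confirming that the componentwise assembly in (3) $\Rightarrow$ (2) yields a genuine $(\pi, I_S)$-completely \'etale covering of $W_{\O_E}(S^\flat)$, since \'etale morphisms are not preserved under infinite products in the naive sense.
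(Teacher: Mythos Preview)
Your approach to (1) $\Rightarrow$ (2), (3) and (2) $\Rightarrow$ (1) is essentially the paper's. The naturality concern you flag for (2) $\Rightarrow$ (1) is not a genuine obstacle: the equivalence in Proposition \ref{Proposition:equivalence of G displays and phi G torsors perfectoid} (2) is visibly functorial in the perfectoid ring, since it is induced by the pullback $\phi^*$ (which commutes with base change) composed with the equivalence of Proposition \ref{Proposition:G-displays and G-BK modules}. The paper phrases this step more compactly by first noting that the groupoid of $G$-BKF modules itself is an arc-stack (arguing as in Remark \ref{Remark:flat descent for G-BK modules} via Corollary \ref{Corollary:descent witt vectors}), so the descent datum lives on that side automatically and the rest follows formally.

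For (3) $\Rightarrow$ (2) there is a genuine gap in your argument, and the secondary subtlety you point to is exactly the problem: an infinite product of $(\pi, I)$-completely \'etale coverings need not be $(\pi, I)$-completely \'etale, so assembling componentwise trivializing covers of the $W_{\O_E}(V_i^\flat)$ does not yield the covering required by Definition \ref{Definition:phi-G-torsor of type mu}. The paper circumvents this entirely by observing that no \'etale cover is needed over any $V_i$: since $V_i$ has algebraically closed fraction field, each $W_{\O_E}(V_i^\flat)$ is strictly henselian, so the smooth affine torsor $\mathcal{P}_{W_{\O_E}(V_i^\flat)}$ is already trivial. As $\mathcal{P}_{W_{\O_E}(S^\flat)}$ is affine and $W_{\O_E}(S^\flat)=\prod_i W_{\O_E}(V_i^\flat)$, the componentwise sections give a global section, hence a trivialization of $\mathcal{P}_{W_{\O_E}(S^\flat)}$ over $S$ itself. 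Under this trivialization, condition (3) supplies elements $Z_i, Z'_i \in G(W_{\O_E}(V_i^\flat))$ with $\phi_\mathcal{P}$ given by $Z_i\phi(\mu(\xi))Z'_i$ on each factor, and setting $Z:=(Z_i)_i$, $Z':=(Z'_i)_i$ exhibits $\mathcal{P}_S$ as of type $\mu$ with the trivial cover.
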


\begin{proof}
It is clear that (1) implies (2) and (3).
By Remark \ref{Remark:arc descent for G-BK} and Corollary \ref{Corollary:arc descent for G-displays}, we see that (2) implies (1).

Assume that the condition (3) is satisfied.
We want to show that this implies (2), which will conclude the proof of the proposition.
By Remark \ref{Remark:arc topology} (4),
there exists a $\pi$-complete $\arc$-covering 
$R \to S=\prod_{i} V_i$ where $V_i$ are $\pi$-adically complete valuation rings of rank $\leq 1$ with algebraically closed fraction fields.
Since $W_{\O_E}(V^\flat_i)$ is strictly henselian, 
the base change
$\mathcal{P}_{W_{\O_E}(V^\flat_i)}$
is a trivial $G_{W_{\O_E}(V^\flat_i)}$-torsor.
Since $\mathcal{P}_{W_{\O_E}(S^\flat)}$ is affine
and $W_{\O_E}(S^\flat)=\prod_{i} W_{\O_E}(V^\flat_i)$, it follows that $\mathcal{P}_{W_{\O_E}(S^\flat)}$ has a $W_{\O_E}(S^\flat)$-valued point, and hence is a trivial $G_{W_{\O_E}(S^\flat)}$-torsor.
We fix a trivialization
$\mathcal{P}_{W_{\O_E}(S^\flat)} \simeq G_{W_{\O_E}(S^\flat)}$.
Let $\xi \in I_S$ be a generator.
The condition (3) implies that
for each $i$,
the base change of $F_\mathcal{P}$
along $W_{\O_E}(R^\flat) \to W_{\O_E}(V^\flat_i)$
corresponds to
an element of $G(W_{\O_E}(V^\flat_i)[1/\xi])$
which is of the form
$Z_i\mu(\xi)Z'_i$
for some $Z_i, Z'_i \in G(W_{\O_E}(V^\flat_i))$, via the induced trivialization
$\mathcal{P}_{W_{\O_E}(V^\flat_i)} \simeq G_{W_{\O_E}(V^\flat_i)}$.
We set
\[
Z:=(Z_i)_i \in G(W_{\O_E}(S^\flat))=\prod_i G(W_{\O_E}(V^\flat_i))
\]
and similarly let $Z':=(Z'_i)_i \in G(W_{\O_E}(S^\flat))$.
Then
the base change of
$F_\mathcal{P}$
along $W_{\O_E}(R^\flat) \to W_{\O_E}(S^\flat)$
corresponds to the element
$Z\mu(\xi)Z'$.
This means that
the condition (2) is satisfied.
\end{proof}

\subsection{Examples}\label{Subsection:Examples}

We discuss some examples of $G$-$\mu$-displays and $G$-Breuil--Kisin modules of type $\mu$ for certain pairs $(G, \mu)$.

We first discuss a pair $(G, \mu)$ of Hodge type.
Let $G$ be a connected reductive group scheme over $\O_E$ and
$\mu \colon \G_m \to G_{\O}$ a cocharacter.
We assume that there exists a closed immersion
$G \hookrightarrow \GL_n$ over $\O_E$ such that the composition
$
\G_m \to  G_{\O} \to \GL_{n, \O}
$
is conjugate to the cocharacter defined by
\[
t \mapsto \diag{(\underbrace{t, \dotsc, t}_s, \underbrace{1, \dotsc, 1}_{n-s})}
\]
for some $s$.
In particular $\mu$ is 1-bounded.
We set $L:=\O^n_E$.
By \cite[Proposition 1.3.2]{Kisin10}, there exists a finite set of tensors
$\{  s_\alpha \}_{\alpha \in \Lambda} \subset L^{\otimes}$ such that
$G \hookrightarrow \GL_n$ is the pointwise stabilizer of $\{  s_\alpha \}_{\alpha \in \Lambda}$, where $L^{\otimes}$ is the direct sum of all $\O_E$-modules obtained from $L$ by taking tensor products, duals, symmetric powers, and exterior powers.
Let
\[
L_\O = L_{\mu, 1} \oplus L_{\mu, 0}
\]
be the weight decomposition with respect to $\mu$.
(Here the composition
$\G_m \to  G_{\O} \to \GL_{n, \O}$
is also denoted by $\mu$.)

Let $(A, I)$ be a bounded $\O_E$-prism over $\O$.
Let $M$ be a Breuil--Kisin module of type $\mu$ over $(A, I)$.
We note that $M$ is minuscule in the sense of Definition \ref{Definition:displayed and minuscule Breuil-Kisin module}, and that the rank of the Hodge filtration
$P^1 \subset (\phi^*M)/I(\phi^*M)$ is $s$.
For a finite set of tensors
$\{  s_{\alpha, M} \}_{\alpha \in \Lambda} \subset M^{\otimes}$
which are $F_M$-invariant,
we say that the pair $(M, \{  s_{\alpha, M} \}_{\alpha \in \Lambda})$ is
\textit{$G$-$\mu$-adapted} if there exist a $(\pi, I)$-completely \'etale covering $A \to B$ and an isomorphism
$
\psi \colon L_B \overset{\sim}{\to} M_B 
$
such that $\psi$ carries $s_\alpha$ to $s_{\alpha, M}$ for each $\alpha \in \Lambda$ and the reduction modulo $I$ of $\phi^*\psi$ identifies
$(L_{\mu, 1})_{B/IB} \subset L_{B/IB}$
with the Hodge filtration $(P^1)_{B/IB}$.

\begin{prop}\label{Proposition:Hodge type case}
    With the notation above,
    the groupoid $G\mathchar`-\mathrm{Disp}_\mu(A, I)$
    is equivalent to
    the groupoid of $G$-$\mu$-adapted pairs
    $(M, \{  s_{\alpha, M} \}_{\alpha \in \Lambda})$ over $(A, I)$.
\end{prop}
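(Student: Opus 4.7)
The plan is to construct mutually inverse functors between the two groupoids. For the forward direction, given a $G$-$\mu$-display $\mathcal{Q}$, the pushout along the closed immersion $\iota \colon G \hookrightarrow \GL_n$ produces a $\GL_n$-$\mu$-display, which by Example \ref{Example:GLn displays} corresponds to a Breuil--Kisin module $M$ of type $\mu$ over $(A, I)$. Each $G$-invariant tensor $s_\alpha \in L^{\otimes}$ determines a morphism from the trivial $G$-representation to $L^{\otimes}$; applying the associated bundle construction to $\mathcal{Q}_{\mathrm{BK}}$ yields $F_M$-invariant tensors $s_{\alpha, M} \in M^{\otimes}$. Adaptedness follows by choosing a $(\pi, I)$-completely \'etale covering $A \to B$ over which $\mathcal{Q}$ trivializes: the induced trivialization of $M_B$ gives the required isomorphism $\psi \colon L_B \overset{\sim}{\to} M_B$ carrying $s_\alpha$ to $s_{\alpha, M}$, and the Hodge-filtration compatibility follows from the identification of $P(\mathcal{Q})_{A/I}$ with the filtration subfunctor described in Example \ref{Example:Hodge filtration of GLn-display}.

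For the inverse direction, to an adapted pair $(M, \{s_{\alpha, M}\})$ I attach the subfunctor
\[
\mathcal{Q}^{G}(M)(B) := \{\, \psi \in \mathcal{Q}(M)(B) \,\vert\, \psi^{\otimes}(s_\alpha) = \phi^{*} s_{\alpha, M} \ \text{for all}\ \alpha \in \Lambda \,\}
\]
of the $(\GL_n)_{\mu, A, I}$-torsor $\mathcal{Q}(M)$ from Example \ref{Example:GLn displays}, equipped with the restriction of the structure map $\alpha \colon \mathcal{Q}(M) \to (\GL_n)_{\Prism, A, I}$. The key identity
\[
G_{\mu, A, I} = G_{\Prism, A} \cap (\GL_n)_{\mu, A, I}
\]
inside $(\GL_n)_{\Prism, A}$ (which holds because $G \hookrightarrow \GL_n$ is a closed immersion and $A \to A[1/I]$ is injective) equips $\mathcal{Q}^{G}(M)$ with a natural $G_{\mu, A, I}$-action; since $F_M$ preserves each $s_{\alpha, M}$, the restricted structure map lands in $G_{\Prism, A, I}$.

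The hard part will be showing that $\mathcal{Q}^{G}(M)$ is genuinely a $G_{\mu, A, I}$-torsor rather than merely a subsheaf. \'Etale-local non-emptiness comes from adaptedness: the isomorphism $\psi_0 \colon L_B \to M_B$ provided by condition (1) induces $\phi^{*}\psi_0 \colon L_B \to \phi^{*}M_B$, using the canonical identification $\phi^{*}L_B \simeq L_B$ coming from $L$ being defined over $\O_E$. This $\phi^{*}\psi_0$ sends $s_\alpha$ to $\phi^{*} s_{\alpha, M}$, and a short Nakayama argument using minusculity of $\mu$ together with the containment $I(\phi^{*}M) \subset \Fil^{1}(\phi^{*}M)$ shows that condition (2) of adaptedness forces $\phi^{*}\psi_0$ to preserve the entire filtration $\{\Fil^{i}_\mu\}_{i \in \Z}$; hence $\phi^{*}\psi_0 \in \mathcal{Q}^{G}(M)(B)$. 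Freeness and transitivity follow from the corresponding properties of $\mathcal{Q}(M)$ together with the key identity above. Finally, the two functors are mutually quasi-inverse by construction: pushing $\mathcal{Q}^{G}(M)$ forward along $\iota$ recovers $\mathcal{Q}(M)$ with its tensors, while for $\mathcal{Q} \in G\mathchar`-\mathrm{Disp}_\mu(A, I)$ the tautological map $\mathcal{Q} \to \mathcal{Q}^{G}(M)$ (coming from the identification of the pushout of $\mathcal{Q}$ along $\iota$ with $\mathcal{Q}(M)$) is an isomorphism of $G_{\mu, A, I}$-torsors.
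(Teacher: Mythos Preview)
Your proof is correct, and the key computations (that $\phi^{*}\psi_{0}$ preserves the filtration because $\mu$ is minuscule, and that the structure element $\alpha(h)_{d}$ stabilizes the tensors $s_{\alpha}$ and hence lies in $G(A)$) are exactly the ones the paper carries out. The difference is in packaging: the paper first invokes Proposition~\ref{Proposition:G-displays and G-BK modules} to reduce to the $G$-Breuil--Kisin side, then defines the $G_{A}$-torsor $\mathcal{P}=\underline{\mathrm{Isom}}_{\{s_\alpha\}}(L_A, M)$ over $\Spec A$ and shows it is of type $\mu$ by exhibiting the element $\mu(d)\alpha(h)_{d}$ after trivialization. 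You instead stay on the $G_{\mu,A,I}$-torsor side, carving out $\mathcal{Q}^{G}(M)$ as the tensor-preserving subfunctor of $\mathcal{Q}(M)$ and using the identity $G_{\mu,A,I}=G_{\Prism,A}\cap(\GL_n)_{\mu,A,I}$. Your route is slightly more direct (it does not pass through Proposition~\ref{Proposition:G-displays and G-BK modules}) and makes the compatibility with Example~\ref{Example:GLn displays} transparent; the paper's route makes clearer that the construction is just the usual Tannakian reduction of structure group on the underlying $G$-torsor, deferring the filtration-compatibility check to the verification that the resulting $G$-Breuil--Kisin module is of type $\mu$.
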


\begin{proof}
    We shall construct a functor
    from the groupoid of $G$-$\mu$-adapted pairs over $(A, I)$ to $G\mathchar`-\mathrm{Disp}_\mu(A, I)$.
    Let
    $(M, \{  s_{\alpha, M} \}_{\alpha \in \Lambda})$
    be a $G$-$\mu$-adapted pair over $(A, I)$.
    Let
    \[
    \mathcal{Q}:= \underline{\mathrm{Isom}}_{\Fil, \{ s_\alpha \}}(L_A, \phi^*M)
    \colon (A, I)_\et \to \mathrm{Set}
    \]
    be the functor sending $B \in (A, I)_\et$ to the set of isomorphisms
    $h \colon L_B \overset{\sim}{\to} (\phi^*M)_B$
    preserving the filtrations and carrying $s_\alpha$ to $1 \otimes s_{\alpha, M}$ for each $\alpha \in \Lambda$.
    Here $L_A$ is equipped with the filtration
    $\{ \Fil^i_\mu \}_{i \in \Z}$ given in Remark \ref{Remark:standard filtration}.
    We claim that $\mathcal{Q}$ is a $G_{\mu, A, I}$-torsor.
    For this, we may assume that there exists an isomorphism
    $\psi \colon L_A \overset{\sim}{\to} M$
    such that $\psi$ carries $s_{\alpha}$ to $s_{\alpha, M}$ for each $\alpha \in \Lambda$ and the reduction modulo $I$ of $h:=\phi^*\psi$ identifies
    $(L_{\mu, 1})_{A/I}$ with $P^1$.
    Under the isomorphism $h \colon L_A \overset{\sim}{\to} \phi^*M$, we have 
    \[
    \{ \Fil^i_\mu \}_{i \in \Z}=\{ \Fil^i(\phi^*M) \}_{i \in \Z},
    \]
    which in turn implies that $h \in \mathcal{Q}(A)$.
    To see this, it suffices to prove that $\Fil^1_\mu=\Fil^1(\phi^*M)$ since $M$ is minuscule.
    We observe that $\Fil^1_\mu$ and $\Fil^1(\phi^*M)$ are the inverse images of $(L_{\mu, 1})_{A/I} \subset L_{A/I}$ and $P^1 \subset (\phi^*M)/I(\phi^*M)$, respectively.
    It then follows that $\Fil^1_\mu=\Fil^1(\phi^*M)$.

    We define $\widetilde{M}:=\Fil^1(\phi^*M) \otimes_A I^{-1}$.
    Since $M$ is of type $\mu$, we see that $F_M$ restricts to an isomorphism
    $\widetilde{M} \overset{\sim}{\to} M$, and we may regard 
    $\{  1 \otimes s_{\alpha, M} \}_{\alpha \in \Lambda}$
    as tensors of $\widetilde{M}$.
    Similarly to Example \ref{Example:GLn case Q_BK},
    we have
    \[
    \mathcal{Q}_{\mathrm{BK}}=\underline{\mathrm{Isom}}_{\{ s_\alpha \}}(L_A, \widetilde{M}),
    \]
    where
    $\underline{\mathrm{Isom}}_{\{ s_\alpha \}}(L_A, \widetilde{M})$
    is the $G_A$-torsor over $\Spec A$ defined by sending an $A$-algebra $B$ to the set of isomorphisms $L_B \overset{\sim}{\to} \widetilde{M}_B$
    carrying $s_\alpha$ to $1 \otimes s_{\alpha, M}$ for each $\alpha \in \Lambda$.
    Moreover, we have
    \[
    \mathcal{Q}_{A}=\underline{\mathrm{Isom}}_{\{ s_\alpha \}}(L_A, \phi^*{M}).
    \]
    The base change
    $\phi^*(F_M) \colon \phi^*\widetilde{M} \overset{\sim}{\to} \phi^*M$
    induces an isomorphism
    $
    \alpha' \colon \phi^*(\mathcal{Q}_{\mathrm{BK}}) \overset{\sim}{\to} \mathcal{Q}_A
    $
    of $G_A$-torsors.
    The $G_{\mu, A, I}$-torsor $\mathcal{Q}$ with $\alpha'$ is a $G$-$\mu$-display over $(A, I)$; see Remark \ref{Remark:alternative description of G-mu-displays}.
    In this way, we obtain a functor
    from the groupoid of $G$-$\mu$-adapted pairs over $(A, I)$ to $G\mathchar`-\mathrm{Disp}_\mu(A, I)$.

    One can prove that this functor is an equivalence in the same way as in the case of $G=\GL_n$; see Section \ref{Subsection:G-mu-displays and G-Breuil--Kisin modules of type mu}.
\end{proof}

\begin{rem}\label{Remark:Hodge type integral model of Shimura variety}
    In \cite[Proposition 1.3.4]{Kisin10}, \cite[Theorem 2.5]{Kim-MadapusiPera}, and \cite{IKY},
    it is observed that $G$-$\mu$-adapted pairs naturally arise from crystalline Galois representations associated with integral canonical models of Shimura varieties of Hodge type with hyperspecial level structure.
    The notion of $G$-$\mu$-adapted pairs plays a central role in the construction of integral canonical models in \cite{Kisin10} and \cite{Kim-MadapusiPera} (see especially \cite[Proposition 1.5.8]{Kisin10} and \cite[Section 3]{Kim-MadapusiPera}).
\end{rem}

We include the following two important examples.
The details will be presented elsewhere.

\begin{ex}[$G$-shtuka]\label{Example:G-shutuka}
    Let $G$ be a connected reductive group scheme over $\O_E$.
    We assume that $k$ is an algebraic closure of $\F_q$ and let
    $\mu \colon \G_m \to G_\O$ be a 1-bounded (or equivalently, minuscule) cocharacter.
    Let $C$ be an algebraically closed nonarchimedean field over $\O[1/\pi]$ with ring of integers $\O_C$.
    We consider the perfectoid space $S= \Spa(C, \O_C)$ and its tilt $S^\flat=\Spa(C^\flat, \O^\flat_C)$.
    We can show that
    the groupoid
    of $G$-$\mu$-displays over
    $(W_{\O_E}(\O^\flat_C), I_{\O_C})$
    is equivalent to the groupoid of \textit{$G$-shtukas} over $S^\flat$ with one leg at $S$ which are bounded by $\mu$ (or bounded by $\mu^{-1}$, depending on the sign convention) introduced by Scholze \cite{Scholze-Weinstein}.
    See \cite[Section 5.1]{Ito-K23-b} for details.
\end{ex}

\begin{ex}[Orthogonal Breuil--Kisin module]\label{example:orthogonal display}
    Let $n=2m$ be an even positive integer and we set $L:=\O^n_E$.
    We define the quadratic form
    \[
    Q \colon L \to \O_E
    \]
    by
    $
    (a_1, \dotsc, a_{2m}) \mapsto \sum^m_{i=1} a_i a_{2m-i+1}.
    $
    The quadratic form $Q$ is perfect in the sense that
    the bilinear form on $L$ defined by $(x, y) \mapsto Q(x+y)-Q(x)-Q(y)$ is perfect.
    Let $G:=\mathrm{O}(Q) \subset \GL_n$ be the orthogonal group of $Q$, which is a smooth affine group scheme over $\O_E$.
    Let $\mu \colon \G_m \to G \subset \GL_n$ be the cocharacter defined by
    \[
    t \mapsto \diag{(t, 1, \dotsc, 1, t^{-1})}.
    \]
    Let $(A, I)$ be a bounded $\O_E$-prism.
    An \textit{orthogonal Breuil--Kisin module of type $\mu$} over $(A, I)$ is a Breuil--Kisin module $M$ of type $\mu$ over $(A, I)$ together with a perfect quadratic form
    $Q_M \colon M \to A$ which is compatible with $F_M$ in the sense that for every $x \in M$, we have
    $
    \phi(Q_M(x))=Q_M(F_M(1 \otimes x))
    $
    in $A[1/I]$.
    Let
    \[
    \mathcal{P}:=\underline{\mathrm{Isom}}_Q(L_A, M)
    \]
    be the $G_A$-torsor over $\Spec A$ defined by sending an $A$-algebra $B$ to the set of isomorphisms $L_B \simeq M_B$ of quadratic spaces.
    One can show that $\mathcal{P}$, together with the isomorphism
$F_\mathcal{P} \colon (\phi^*\mathcal{P})[1/I] \overset{\sim}{\to} \mathcal{P}[1/I]$ induced by $F_M$, forms a $G$-Breuil--Kisin module of type $\mu$ over $(A, I)$.
This construction gives an equivalence
between the groupoid of orthogonal Breuil--Kisin modules of type $\mu$ over $(A, I)$ and the groupoid $G\mathchar`-\mathrm{BK}_\mu(A, I)$.
Thus, by Proposition \ref{Proposition:G-displays and G-BK modules}, the groupoid of orthogonal Breuil--Kisin modules of type $\mu$ over $(A, I)$ is equivalent to the groupoid $G\mathchar`-\mathrm{Disp}_\mu(A, I)$.
The details will be presented in a forthcoming paper. 
    \end{ex}

\begin{rem}\label{Remark:minuscule observation}
   Let the notation be as in Example \ref{example:orthogonal display}.
   Our main result (Theorem \ref{Theorem:main result on G displays over complete regular local rings}) can not be applied to Breuil--Kisin modules of type $\mu$ since $\mu$ is not 1-bounded as a cocharacter of $\GL_n$.
   However, since $\mu$ is 1-bounded as a cocharacter of $G=\mathrm{O}(Q)$, the result can be applied to \textit{orthogonal} Breuil--Kisin modules of type $\mu$.
   Such an observation was made by Lau \cite{Lau21}
   in the context of the deformation theory of K3 surfaces.
\end{rem}

\section{Prismatic $G$-$\mu$-displays over complete regular local rings}\label{Section:G-displays over complete regular local rings}

In this section, we prove the main result (Theorem \ref{Theorem:main result on G displays over complete regular local rings}) of this paper, which we state in Section \ref{Subsection:G displays on absolute prismatic sites}.
The proof will be given in Section \ref{Subsection:proof of main result}.
In Sections \ref{Subsection:Coproducts of Breuil--Kisin prisms}-\ref{Subsection:Deformations of isomorphisms},
we discuss a few technical results that will be used in the proof.

\subsection{$G$-$\mu$-displays on absolute prismatic sites}\label{Subsection:G displays on absolute prismatic sites}

In this paper, we use the following definition.

\begin{defn}\label{Definition:G displays over prismatic sites}
Let $R$ be a $\pi$-adically complete $\O$-algebra.
A \textit{prismatic $G$-$\mu$-display over} $R$ is defined to be an object of the following groupoid
\[
G\mathchar`-\mathrm{Disp}_\mu((R)_{\Prism, \O_E}):= {2-\varprojlim}_{(A, I) \in (R)_{\Prism, \O_E}} G\mathchar`-\mathrm{Disp}_\mu(A, I).
\]
\end{defn}

\begin{rem}\label{Remark:alternative definition of G displays over prismatic sites}
Giving a prismatic $G$-$\mu$-display $\mathfrak{Q}$ over $R$
is equivalent to giving a $G$-$\mu$-display $\mathfrak{Q}_{(A, I)}$ over $(A, I)$ for each $(A, I) \in (R)_{\Prism, \O_E}$
and an isomorphism
\[
\gamma_f \colon f^*(\mathfrak{Q}_{(A, I)}) \overset{\sim}{\to} \mathfrak{Q}_{(A', I')}
\]
for each morphism $f \colon (A, I) \to (A', I')$ in $(R)_{\Prism, \O_E}$,
such that
$\gamma_{f'} \circ ({f'}^*\gamma_f) = \gamma_{f' \circ f}$
(via the natural identification ${f'}^* \circ f^* \simeq (f' \circ f)^*$)
for two morphisms
$f \colon (A, I) \to (A', I')$ and $f' \colon (A', I') \to (A'', I'')$.
We call $\mathfrak{Q}_{(A, I)}$ the \textit{value} of $\mathfrak{Q}$ at $(A, I) \in (R)_{\Prism, \O_E}$.
\end{rem}

Assume that $R$ is a complete regular local ring over $\O$ with residue field $k$.
Let
$
(\O[[t_1, \dotsc, t_n]], (\mathcal{E}))
$
be
an $\O_E$-prism
of Breuil--Kisin type
with an isomorphism
$
R \simeq \O[[t_1, \dotsc, t_n]]/\mathcal{E}
$ over $\O$
(where $n \geq 0$ is the dimension of $R$).
Such an $\O_E$-prism exists; see for example \cite[Section 3.3]{ChengChuangxun}.
We set
$\mathfrak{S}_\O:=\O[[t_1, \dotsc, t_n]]$.
Our goal is to prove the following result.

\begin{thm}\label{Theorem:main result on G displays over complete regular local rings}
Assume that the cocharacter $\mu$ is 1-bounded.
Then the functor
    \[
 G\mathchar`-\mathrm{Disp}_\mu((R)_{\Prism, \O_E}) \to G\mathchar`-\mathrm{Disp}_\mu(\mathfrak{S}_\O, (\mathcal{E})), \quad \mathfrak{Q} \mapsto \mathfrak{Q}_{(\mathfrak{S}_\O, (\mathcal{E}))}
 \]
given by evaluation at $(\mathfrak{S}_\O, (\mathcal{E}))$ is an equivalence.
\end{thm}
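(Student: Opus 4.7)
The plan is to prove the theorem by establishing the existence and uniqueness of a descent datum on a $G$-$\mu$-display over the ``tautological'' prism $(A, I) := (\mathfrak{S}_\O, (\mathcal{E}))$. I would first observe that $(A, I)$ generates $(R)_{\Prism, \O_E}$ in the following sense: for any orientable $(B, J) \in (R)_{\Prism, \O_E}$, choosing arbitrary lifts to $B$ of the images of $t_1, \dotsc, t_n$ in $B/J$ defines a map of $\O_E$-prisms $(A, I) \to (B, J)$ (the image of $\mathcal{E}$ necessarily generates $J$ by Lemma \ref{Lemma:rigidity}). Combined with Remark \ref{Remark:flat locally orientable} and the flat descent result Proposition \ref{Proposition:flat descent of G display}, this reduces the theorem to showing that every $G$-$\mu$-display $\mathcal{Q}$ over $(A, I)$ admits a unique descent datum with respect to the self-coproducts $(A^{(k)}, I^{(k)})$ of $(A, I)$ in $(R)_{\Prism, \O_E}$ for $k = 2, 3$; the analogous statement for morphisms handles full faithfulness.

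Next I would describe $(A^{(2)}, I^{(2)})$ explicitly. By Proposition \ref{Proposition:prismatic envelope}, it is the $(\pi, \mathcal{E})$-adically completed prismatic envelope of $\mathfrak{S}_\O \mathbin{\widehat{\otimes}}_\O \mathfrak{S}_\O$ over $(A, I)$ (via $p_1$, say) with respect to the regular sequence $z_i := t_i - t_i'$, where $t_i' = 1 \otimes t_i$. In $A^{(2)}$ the ideal $J := (z_1, \dotsc, z_n)$ lies inside $(\mathcal{E}) = I^{(2)}$, and the two projections $p_1, p_2 \colon (A, I) \to (A^{(2)}, I^{(2)})$ agree modulo $J$. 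A similar description of $(A^{(3)}, I^{(3)})$ applies.

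The heart of the proof is to construct, for any such $\mathcal{Q}$, a unique isomorphism $\epsilon \colon p_1^*\mathcal{Q} \overset{\sim}{\to} p_2^*\mathcal{Q}$ over $(A^{(2)}, I^{(2)})$. By flat descent, I may pass to a $(\pi, I^{(2)})$-completely \'etale cover to assume that $\mathcal{Q} = \mathcal{Q}_X$ is banal with $X \in G(A)_I$; then $\epsilon$ corresponds to an element $g \in G_\mu(A^{(2)}, I^{(2)})$ satisfying $g^{-1} \cdot p_1(X) \cdot \sigma_{\mu, \mathcal{E}}(g) = p_2(X)$. The plan is to solve this equation by successive approximation along the $J$-adic filtration of $A^{(2)}$: the initial approximation $g \equiv 1 \pmod{J}$ is valid since $p_1 \equiv p_2 \pmod{J}$, and at each subsequent stage the obstruction to lifting is governed by the structural description of the display group given in Lemma \ref{Lemma:congruent subgroup of Gmu}. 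The 1-boundedness of $\mu$, encoded in Proposition \ref{Proposition:BB isomorphism}, is essential for ensuring that the obstructions both exist uniquely and lie in the expected graded pieces; this is the step at which I would invoke the techniques from the proof of \cite[Proposition 7.1.5]{Lau21}, needed to pass beyond the $\GL_N$ case.

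The main obstacle will be this inductive solvability together with convergence: each order must be uniquely solvable, and the resulting sequence of approximations must assemble into a genuine element of $G_\mu(A^{(2)}, I^{(2)})$ using the $(\pi, I^{(2)})$-adic completeness of $A^{(2)}$; the $\GL_N$ case already requires honest work when $n \geq 2$, and the general reductive case needs Lau's machinery. Once $\epsilon$ is constructed and shown to be unique, the cocycle identity over $(A^{(3)}, I^{(3)})$ is automatic: both $p_{13}^*\epsilon$ and $p_{23}^*\epsilon \circ p_{12}^*\epsilon$ satisfy the defining equation for a descent isomorphism between $p_1^*\mathcal{Q}$ and $p_3^*\mathcal{Q}$, so their equality is forced by the uniqueness. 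Full faithfulness of the evaluation functor follows by the same type of argument applied to morphisms rather than objects.
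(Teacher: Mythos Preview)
Your proposal has the right overall architecture (reduce to descent data on $(A,I)=(\mathfrak{S}_\O,(\mathcal{E}))$, then prove existence/uniqueness of $\epsilon$), but there are two genuine gaps.

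\textbf{The weak-initiality step is wrong as stated.} You claim that for any orientable $(B,J)\in (R)_{\Prism,\O_E}$, choosing lifts $b_i\in B$ of the images of $t_i$ in $B/J$ defines a map $(A,I)\to(B,J)$. This fails for two reasons. First, $B$ is only $(\pi,J)$-adically complete, and the images of $\overline{t}_i$ in $B/J$ need not lie in the Jacobson radical (for $n\geq 2$ one can have $\overline{t}_i$ mapping to a unit), so the extension of $\O[[t_1,\dotsc,t_n]]\to B$ by $t_i\mapsto b_i$ need not converge. Second, even when it does, an arbitrary lift $b_i$ has no reason to satisfy $\delta_E(b_i)=0$, so the resulting map is not a map of $\delta_E$-rings. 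The paper handles this by passing to a flat cover built from a prismatic envelope and the perfection $\mathfrak{S}_{\O,\infty}$ (Proposition \ref{Proposition:weakely initial object}); the same subtlety recurs when constructing the coproduct $(A^{(2)},I^{(2)})$ and showing $p_2$ really is a morphism in $(R)_{\Prism,\O_E}$ (Lemma \ref{Lemma:coproduct and prismatic envelope}).

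\textbf{The successive-approximation scheme does not converge for $n\geq 2$ without further input.} You propose to solve $g^{-1}p_1(X)\sigma_{\mu,\mathcal{E}}(g)=p_2(X)$ by approximation along the $J$-adic filtration. The relevant linearised operator is essentially $x\mapsto \phi(x)/d$ on the kernel $K=\ker(m\colon A^{(2)}\to A)$. For $n\leq 1$ (and $p\neq 0$ in $R$) this map is $(\pi,d)$-adically topologically nilpotent, and your scheme works; this is the case treated in \cite{Anschutz-LeBras}. For $n\geq 2$ or $p=0$ in $R$ it is \emph{not} topologically nilpotent (see Remark \ref{Remark:compare with Anschutz-Le Bras}), so naive iteration does not converge. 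The paper supplies the missing contraction estimate: it exhibits a finitely generated ideal $M\subset K$ with
\[
\phi(K)\subset dM + d(\pi,d)K,\qquad \phi(M)\subset d(t_1,\dotsc,t_n)M + d(\pi,d)K
\]
(Lemma \ref{Lemma:kernel of two fold coproduct refined}), and then uses completeness and noetherianity of $A$ to run the fixed-point argument on $G(dK)$ (Lemmas \ref{Lemma:VdX bijective primitive case}--\ref{Lemma:five lemma for VdX}, Proposition \ref{Proposition:bijectivity of VdX for G(dK)}). Without an analogue of this refined estimate, your inductive step for $n\geq 2$ does not go through. Your final paragraph (deducing the cocycle condition from uniqueness) is correct and matches the paper's argument.
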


The rest of this section is devoted to the proof of Theorem \ref{Theorem:main result on G displays over complete regular local rings}.

\subsection{Coproducts of Breuil--Kisin prisms}\label{Subsection:Coproducts of Breuil--Kisin prisms}

In this subsection, we establish some properties of the object
\[
(\mathfrak{S}_\O, (\mathcal{E})) \in (R)_{\Prism, \O_E}.
\]
We begin with the following result.

\begin{prop}\label{Proposition:weakely initial object}
For any $(A, I) \in (R)_{\Prism, \O_E}$, there exists a flat covering $(A, I) \to (A', I')$ in $(R)_{\Prism, \O_E}$ such that $(A', I')$ admits a morphism
$(\mathfrak{S}_\O, (\mathcal{E})) \to (A', I')$ in $(R)_{\Prism, \O_E}$.
\end{prop}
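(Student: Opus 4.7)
The plan is to produce the desired flat covering by a prismatic envelope construction, analogous to the method used in the proof of \cite[Theorem 5.12]{Anschutz-LeBras} for the one-dimensional case.

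First I would reduce to the case where $(A, I)$ is orientable by Remark \ref{Remark:flat locally orientable}; so we may assume $I = (d)$ for some generator $d$. Let $f \colon R \to A/I$ be the structure map and write $\overline{x_i} := f(t_i) \in A/I$. Since $R = \mathfrak{S}_\O/\mathcal{E}$ is a quotient of a power series ring and $A/I$ is $\pi$-adically complete, the evaluation of the power series $\mathcal{E}$ at the $\overline{x_i}$ must be well-defined (and vanish), so the $\overline{x_i}$ are topologically nilpotent in $A/I$; in particular their reductions modulo $\pi$ are nilpotent in $A/(\pi, I)$. Choose arbitrary lifts $\tilde{t}_i \in A$ of the $\overline{x_i}$.

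Next, I would form the $\delta_E$-ring $\tilde{B} := (\mathfrak{S}_\O \otimes_{\O} A)^{\wedge}_{(\pi, I)}$, the $(\pi, I)$-adic completion of the coproduct of $\mathfrak{S}_\O$ and $A$ in $\delta_E$-rings (cf.\ Remark \ref{Remark:colimit and limit of delta rings} and Lemma \ref{Lemma:completion of delta ring}). Since $\mathfrak{S}_\O$ is flat over $\O$, the ring $\mathfrak{S}_\O \otimes_\O A$ is flat over $A$, and therefore $\tilde{B}$ is $(\pi, I)$-completely faithfully flat over $A$. Let $T_i \in \tilde{B}$ denote the image of $t_i \in \mathfrak{S}_\O$, so that the natural map $\mathfrak{S}_\O \to \tilde{B}$ is a $\delta_E$-homomorphism. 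Now consider the ideal
\[
J := (d, T_1 - \tilde{t}_1, \ldots, T_n - \tilde{t}_n) \subset \tilde{B},
\]
and let $C := \tilde{B}\{J/I\}^{\wedge}$ be the prismatic envelope. To apply Proposition \ref{Proposition:prismatic envelope}, I need to verify that $A/^{\L}(\pi, d) \to \tilde{B}/^{\L}(\pi, d, T_1 - \tilde{t}_1, \ldots, T_n - \tilde{t}_n)$ is faithfully flat. Modulo $(\pi, d)$, the ring $\tilde{B}$ becomes the power series ring $(A/(\pi, d))[[T_1, \ldots, T_n]]$, in which the sequence $(T_i - \tilde{t}_i)$ is regular; the classical quotient by this sequence is obtained by substituting $T_i = \tilde{t}_i$, and since the images of the $\tilde{t}_i$ in $A/(\pi, d)$ agree with the nilpotent reductions of $\overline{x_i}$, this substitution is well-defined and yields canonically $A/(\pi, d)$. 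Hence the derived quotient also collapses to $A/(\pi, d)$, and the required faithful flatness holds. Consequently $(C, IC)$ is an orientable and bounded $\O_E$-prism, and $(A, I) \to (C, IC)$ is a flat covering.

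Finally, I equip $C$ with the structure map given by the composition $R \to A/I \to C/IC$, so that $(C, IC) \in (R)_{\Prism, \O_E}$ and the cover is in $(R)_{\Prism, \O_E}$. The composition $\mathfrak{S}_\O \to \tilde{B} \to C$ is a $\delta_E$-homomorphism over $\O$ sending $t_i \mapsto T_i$; in $C/IC$ the congruence $T_i \equiv \tilde{t}_i \pmod{IC}$, built into the envelope, yields $\mathcal{E}(T_1, \ldots, T_n) \equiv \mathcal{E}(\overline{x_1}, \ldots, \overline{x_n}) = 0 \pmod{IC}$, so $\mathcal{E}$ maps into $IC$ and we obtain a map $(\mathfrak{S}_\O, (\mathcal{E})) \to (C, IC)$ in $(R)_{\Prism, \O_E}$, as required. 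The main obstacle is the verification of the flatness of the prismatic envelope; once the topological nilpotency of the $\overline{x_i}$ modulo $\pi$ is noted, this reduces to the classical regularity of the substitution sequence in a power series ring.
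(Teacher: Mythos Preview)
Your argument has a genuine gap in the final step, and it is precisely the obstacle that the paper isolates and works around. You assert that the $\overline{x}_i$ are topologically nilpotent in $A/I$ because ``the evaluation of the power series $\mathcal{E}$ at the $\overline{x}_i$ must be well-defined.'' This is circular: the map $R\to A/I$ is only a ring homomorphism, and there is no a priori map $\mathfrak{S}_\O\to A/I$ extending it. The $\overline{x}_i$ are images of elements of the maximal ideal $\mathfrak{m}\subset R$, but $A/I$ carries only the $\pi$-adic topology, not the $\mathfrak{m}$-adic one, so the $\overline{x}_i$ need not be $\pi$-adically topologically nilpotent. (For a concrete instance, take $\mathcal{E}=p$, $R=\F_p[[t]]$, $(A,I)=(\Z_p[[s]],(p))$ with $\phi(s)=s^p$ and structure map $t\mapsto s$: then $\overline{x}_1=s$ is not nilpotent in $A/(p,I)=\F_p[[s]]$.) Consequently your congruence ``$\mathcal{E}(T_1,\dots,T_n)\equiv\mathcal{E}(\overline{x}_1,\dots,\overline{x}_n)\pmod{IC}$'' is unjustified: $\mathcal{E}$ is an infinite power series, and $T_i\equiv\tilde t_i\pmod{IC}$ does not propagate to such series without the missing topological control. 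You therefore cannot conclude that the composite $\mathfrak{S}_\O\to C\to C/IC$ kills $\mathcal{E}$, nor that the induced map agrees with the given $R$-structure.

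The paper handles this by replacing $B=A\otimes_\O\mathfrak{S}_\O$ with $B'=B\otimes_{\mathfrak{S}_\O}\mathfrak{S}_{\O,\infty}$, the base change to the $(\pi,\mathcal{E})$-completed perfection $\mathfrak{S}_{\O,\infty}=(\varinjlim_\phi\mathfrak{S}_\O)^\wedge$, before forming the prismatic envelope $(A',I')$. The structure map $R\to A'/I'$ then extends to a map $R_\infty=\mathfrak{S}_{\O,\infty}/\mathcal{E}\to A'/I'$ by sending $t_i^{1/q^m}\mapsto 1\otimes t_i^{1/q^m}$; because $\mathfrak{S}_{\O,\infty}$ is a \emph{perfect} $\delta_E$-ring, Lemma~\ref{Lemma:maps from perfectoid prisms to prism} supplies a unique $\delta_E$-lift $\mathfrak{S}_{\O,\infty}\to A'$ inducing this map, and one then restricts to $\mathfrak{S}_\O$. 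The passage to the perfection is exactly what circumvents the completeness problem. (There is also a smaller slip: $\tilde{B}/(\pi,d)$ is $k[[T_1,\dots,T_n]]\otimes_k A/(\pi,d)$, not the power series ring $(A/(\pi,d))[[T_1,\dots,T_n]]$; your substitution argument for the derived quotient relies on the incorrect nilpotence claim as well.)
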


\begin{proof}
We may assume that $(A, I)$ is orientable by Remark \ref{Remark:flat locally orientable}.
Let $d \in I$ be a generator.
Let $v_1, \dotsc, v_n \in A$ be elements such that each $v_i$ is a lift of the image of
$t_i \in \mathfrak{S}_\O$ under the composition $\mathfrak{S}_\O \to R \to A/I$.
Let $B:=A \otimes_\O \mathfrak{S}_\O$.
We set
\[
x_i:=1\otimes t_i - v_i \otimes 1 \in B.
\]
Then the morphism
$
A/^\L(\pi, d) \to B/^\L(\pi, d, x_1, \dotsc, x_n)
$
of animated rings is faithfully flat.
Indeed, using that the natural homomorphism
$\O[t_1, \dotsc, t_n] \to \mathfrak{S}_\O$ is flat, we see that $A/^\L(\pi, d) \to B/^\L(\pi, d, x_1, \dotsc, x_n)$ is flat.
Since the composition $\mathfrak{S}_\O \to R \to A/(\pi, d)$
induces a homomorphism $B/(\pi, d, x_1, \dotsc, x_n) \to A/(\pi, d)$ over $A/(\pi, d)$,
it follows that 
$
A/^\L(\pi, d) \to B/^\L(\pi, d, x_1, \dotsc, x_n)
$
is faithfully flat.

Let
$\mathfrak{S}_{\O, \infty}$ be the $(\pi, \mathcal{E})$-adic completion of a colimit
$\varinjlim_\phi \mathfrak{S}_\O$
of the diagram
\[
\mathfrak{S}_\O \overset{\phi}{\rightarrow} \mathfrak{S}_\O \overset{\phi}{\rightarrow} \mathfrak{S}_\O  \rightarrow \cdots.
\]
Since $\phi \colon \mathfrak{S}_\O \to \mathfrak{S}_\O$ is faithfully flat, we see that
$\mathfrak{S}_\O \to \mathfrak{S}_{\O, \infty}$ is $(\pi, \mathcal{E})$-completely faithfully flat.
In fact, it is faithfully flat by \cite[Theorem 1.5]{Yekutieli18}.
We set
$B':=B \otimes_{\mathfrak{S}_\O} \mathfrak{S}_{\O, \infty}$.
Then 
$
A/^\L(\pi, d) \to B'/^\L(\pi, d, x_1, \dotsc, x_n)
$
is faithfully flat as well.
Thus, by Proposition \ref{Proposition:prismatic envelope},
we can consider the prismatic envelope
\[
(A', I'):=(B'\{ J/I \}^{\wedge}, IB'\{ J/I \}^{\wedge})
\]
of $B'$ over $(A, I)$ with respect to the ideal
$J:=(d, x_1, \dotsc, x_n) \subset B'$.
The map $(A, I) \to (A', I')$
is a flat covering.

We shall construct a morphism $(\mathfrak{S}_\O, (\mathcal{E})) \to (A', I')$ in $(R)_{\Prism, \O_E}$.
We remark that since $A'/I'$ is not necessarily $\mathfrak{m}$-adically complete for the maximal ideal $\mathfrak{m} \subset R$, it is not clear that the natural homomorphism $\mathfrak{S}_\O \to A'$ induces a morphism $(\mathfrak{S}_\O, (\mathcal{E})) \to (A', I')$ in $(R)_{\Prism, \O_E}$.
Instead, we construct a morphism $(\mathfrak{S}_\O, (\mathcal{E})) \to (A', I')$ as follows.
Since
$\mathfrak{S}_{\O, \infty}$ can be identified with the $(\pi, \mathcal{E})$-adic completion of
\[
\cup_{m \geq 0} \mathfrak{S}_\O[t^{1/q^m}_1, \dotsc, t^{1/q^m}_n],
\]
the quotient
$R_{\infty}:=\mathfrak{S}_{\O, \infty}/\mathcal{E}$ is the $\pi$-adic completion of
$\cup_{m \geq 0} R[\overline{t}^{1/q^m}_1, \dotsc, \overline{t}^{1/q^m}_n]$, where
$\overline{t}_i \in R$ is the image of $t_i$.
Here
\[
\mathfrak{S}_\O[t^{1/q^m}_1, \dotsc, t^{1/q^m}_n]=\mathfrak{S}_\O[X_1, \dotsc, X_n]/(X^{q^m}_1-t_1, \dotsc, X^{q^m}_n-t_n)
\]
and similarly for $R[\overline{t}^{1/q^m}_1, \dotsc, \overline{t}^{1/q^m}_n]$.
The composition $R \to A/I \to A'/I'$ factors through the homomorphism
\[
g \colon R_\infty \to A'/I'
\]
defined by sending $\overline{t}^{1/q^m}_i$ to the image of $1 \otimes t^{1/q^m}_i \in A'$, which is well-defined since $1 \otimes t_i = v_i \otimes 1$ in $A'/I'$.
By Lemma \ref{Lemma:maps from perfectoid prisms to prism}, there exists a unique map
$(\mathfrak{S}_{\O, \infty}, (\mathcal{E})) \to (A', I')$
of bounded $\O_E$-prisms which induces $g$.
By construction, the composition
\[
(\mathfrak{S}_\O, (\mathcal{E})) \to (\mathfrak{S}_{\O, \infty}, (\mathcal{E})) \to (A', I')
\]
is a morphism in $(R)_{\Prism, \O_E}$.
\end{proof}

\begin{rem}\label{Remark:weakely initial object}
    Assume that $\O_E=\Z_p$.
    In this case, Proposition \ref{Proposition:weakely initial object} is proved in 
    \cite[Example 7.13]{BS} and \cite[Lemma 5.14]{Anschutz-LeBras}, using \cite[Proposition 7.11]{BS}.
    Moreover, if $\mathfrak{S}_\O=W(k)[[t]]$ and $\mathcal{E} \in W(k)[[t]]$ is an Eisenstein polynomial, then an alternative argument using prismatic envelopes is given in \cite[Example 2.6]{BS2}.
    Our argument is similar to the one given in \cite[Example 2.6]{BS2},
    but we have to modify it slightly in order to treat the case where $A/I$ is not $\mathfrak{m}$-adically complete.
\end{rem}

In a similar way, we obtain the following result:

\begin{lem}\label{Lemma:two morphisms from Breuil-Kisin coincide}
    Let $(A, I) \in (R)_{\Prism, \O_E}$ and let
    $f_1, f_2 \colon (\mathfrak{S}_\O, (\mathcal{E})) \to (A, I)$
    be two morphisms in $(R)_{\Prism, \O_E}$.
    If $f_1(t_i)=f_2(t_i)$ in $A$ for any $i$, then we have $f=g$.
\end{lem}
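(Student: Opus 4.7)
The strategy is a three-step reduction: verify $f_1 = f_2$ successively on $\O \subset \mathfrak{S}_\O$, on $\O[t_1, \dots, t_n]$, and finally on all of $\mathfrak{S}_\O = \O[[t_1, \dots, t_n]]$.

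For the first step, both restrictions $f_k|_\O \colon \O \to A$ are $\delta_E$-ring homomorphisms whose composites with the projection $A \to A/I$ agree with the canonical map $\O \to R \to A/I$; this is built into the morphism condition in $(R)_{\Prism, \O_E}$. Since $\O = W_{\O_E}(k)$ is a perfect $\delta_E$-ring, the uniqueness clause of Lemma \ref{Lemma:maps from perfectoid prisms to prism} forces $f_1|_\O = f_2|_\O$. Combined with the hypothesis $f_1(t_i) = f_2(t_i)$ and the ring-homomorphism property, this yields $f_1 = f_2$ on the polynomial subring $\O[t_1, \dots, t_n] \subset \mathfrak{S}_\O$.

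For the extension to $\mathfrak{S}_\O$, set $x_i := f_1(t_i) = f_2(t_i) \in A$. Any $g \in \mathfrak{S}_\O$ decomposes as $g = P_N + h_N$ with $P_N \in \O[t_1, \dots, t_n]$ a polynomial truncation and $h_N \in (t_1, \dots, t_n)^N \mathfrak{S}_\O$. Since $f_k$ is a ring homomorphism, $f_k(h_N) \in (x_1, \dots, x_n)^N A$, so the difference $f_1(g) - f_2(g)$ lies in $\bigcap_{N \geq 1}(x_1, \dots, x_n)^N A$.

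The main obstacle is showing this intersection vanishes, i.e., that $A$ is separated in the $(x_1, \dots, x_n)$-adic topology. The key input is the prism condition $f_k(\mathcal{E}) \in I$: writing $\mathcal{E} = \pi u + g_0$ with $u \in \O^\times$ and $g_0 \in (t_1, \dots, t_n)\mathfrak{S}_\O$, we obtain $\pi \in I + (x_1, \dots, x_n) A$. Combining this with the $(\pi, I)$-adic completeness (hence separatedness) of $A$, together with an iterative argument comparing powers of $(x_1, \dots, x_n)$ to powers of $(\pi, I)$, one derives the required separatedness and thereby concludes $f_1 = f_2$.
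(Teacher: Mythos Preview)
Your first two steps are fine: the restriction to $\O$ is pinned down by Lemma \ref{Lemma:maps from perfectoid prisms to prism}, and hence $f_1 = f_2$ on the polynomial subring $\O[t_1,\dots,t_n]$. The gap is in the third step. The relation $\pi \in I + (x_1,\dots,x_n)A$ gives an inclusion $(\pi,I) \subset I + (x_1,\dots,x_n)$, which is the wrong direction for what you need; it does \emph{not} imply that $A$ is $(x_1,\dots,x_n)$-adically separated. In fact this separatedness can fail outright. Take $\O_E = \Z_p$, $k=\F_p$, $n=1$, $\mathcal{E}=p$, so $R = \F_p[[t_1]]$. Let $A$ be the $p$-adic completion of $\Z_p[[t_1]][t_1^{-1}]$ with Frobenius $t_1 \mapsto t_1^p$, and $I=(p)$; then $A/I = \F_p((t_1))$ and the inclusion $R \hookrightarrow A/I$ makes $(A,I)$ an object of $(R)_\Prism$. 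The obvious inclusion $f_1 \colon \Z_p[[t_1]] \hookrightarrow A$ is a morphism in $(R)_\Prism$ with $x_1 = f_1(t_1) = t_1$ a \emph{unit} in $A$, so $\bigcap_N (x_1)^N A = A$. Your ``iterative argument'' therefore cannot exist as stated; the obstacle is precisely that $A/I$ need not be $\mathfrak{m}_R$-adically complete (cf.\ the analogous remark in the proof of Proposition \ref{Proposition:weakely initial object}).

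The paper circumvents this by a different mechanism: after a flat cover one factors $f_1$ through the perfection $\mathfrak{S}_{\O,\infty}$, extends $f_2$ compatibly to $\mathfrak{S}_{\O,\infty}$ (using the explicit description as a completion of $\bigcup_m \mathfrak{S}_\O[t_i^{1/q^m}]$), and then invokes the uniqueness clause of Lemma \ref{Lemma:maps from perfectoid prisms to prism} for maps out of a perfect $\delta_E$-ring. The point is that for perfect sources the lift from $A/I$ to $A$ is unique, so one only needs to check that $\widetilde f_1$ and $\widetilde f_2$ agree modulo $I$, which reduces to an easy statement about $R_\infty$. No separatedness hypothesis on $A$ is needed.
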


\begin{proof}
    As in the proof of Proposition \ref{Proposition:weakely initial object}, let $\mathfrak{S}_{\O, \infty}$ be the $(\pi, \mathcal{E})$-adic completion of 
    $\varinjlim_\phi \mathfrak{S}_\O$, which is faithfully flat over $\mathfrak{S}_\O$.
    After replacing $(A, I)$ by a flat covering, we may assume that
    $f_1 \colon (\mathfrak{S}_\O, (\mathcal{E})) \to (A, I)$
    factors through a map $\widetilde{f}_1 \colon (\mathfrak{S}_{\O, \infty}, (\mathcal{E})) \to (A, I)$ of $\O_E$-prisms.
    Since
    $\mathfrak{S}_{\O, \infty}$ is the $(\pi, \mathcal{E})$-adic completion of
    $
    \cup_{m \geq 0} \mathfrak{S}_\O[t^{1/q^m}_1, \dotsc, t^{1/q^m}_n],
    $
    there exists a map
    $\widetilde{f}_2 \colon \mathfrak{S}_{\O, \infty} \to A$
    extending $f_2$ such that
    $\widetilde{f}_2(t^{1/q^m}_i)=\widetilde{f}_1(t^{1/q^m}_i)$
    for all $m$ and $i$.
    The map $\widetilde{f}_2$ preserves the $\delta_E$-structures by Corollary \ref{Corollary:homomorphism from perfect delta pi-ring}.
    It suffices to prove that $\widetilde{f}_1=\widetilde{f}_2$.
    We note that both $f_1$ and $f_2$ induce the same homomorphism
    $R \to A/I$.
    Since $R_{\infty}=\mathfrak{S}_{\O, \infty}/\mathcal{E}$ is the $\pi$-adic completion of
    $\cup_{m \geq 0} R[\overline{t}^{1/q^m}_1, \dotsc, \overline{t}^{1/q^m}_n]$,
    it follows that
    the homomorphism $R_{\infty} \to A/I$ induced by $\widetilde{f}_1$ agrees with the one induced by $\widetilde{f}_2$.
    Then, by Lemma \ref{Lemma:maps from perfectoid prisms to prism}, we conclude that $\widetilde{f}_1=\widetilde{f}_2$.
\end{proof}

We next study a coproduct of two copies of $(\mathfrak{S}_\O, (\mathcal{E}))$
in the category $(R)_{\Prism, \O_E}$.
To simplify the notation, we write
\[
(A, I):=(\mathfrak{S}_\O, (\mathcal{E}))
\]
in the rest of this section.
We set
\[
B:=A[[x_1, \dotsc, x_n]]
\]
and let $p'_1 \colon A \to B$ be the natural homomorphism.
There exists a unique $\delta_E$-structure on $B$ such that $p'_1$ is a homomorphism of $\delta_E$-rings and the associated Frobenius $\phi \colon B \to B$ sends $x_i$ to $(x_i+t_i)^q-t^q_i$ for every $i$.
We consider the prismatic envelope
\[
(A^{(2)}, I^{(2)})
\]
of $B$ over $(A, I)$ with respect to the ideal $(\mathcal{E}, x_1, \dotsc, x_n) \subset B$
as in Proposition \ref{Proposition:prismatic envelope}.
Let
$p_1 \colon (A, I) \to (A^{(2)}, I^{(2)})$ denote the natural map.
We view $(A^{(2)}, I^{(2)})$ as an object of $(R)_{\Prism, \O_E}$ via 
the homomorphism $\overline{p}_1 \colon R \to A^{(2)}/I^{(2)}$ induced by $p_1$.

The homomorphism $p'_2 \colon A \to B$ over $\O$ defined by $t_i \mapsto x_i+t_i$ is a homomorphism of $\delta_E$-rings.
Let $p_2 \colon A \to A^{(2)}$ be the composition of $p'_2$ with the natural homomorphism $B \to A^{(2)}$.

\begin{lem}\label{Lemma:coproduct and prismatic envelope}
Let the notation be as above.
\begin{enumerate}
    \item We have $p_2(I) \subset I^{(2)}$, and the induced map
    $p_2 \colon (A, I) \to (A^{(2)}, I^{(2)})$ is a morphism in $(R)_{\Prism, \O_E}$.
    \item The object $(A^{(2)}, I^{(2)}) \in (R)_{\Prism, \O_E}$ with the morphisms
    $p_1, p_2 \colon (A, I) \to (A^{(2)}, I^{(2)})$ is a coproduct of two copies of $(A, I)$ in the category $(R)_{\Prism, \O_E}$.
\end{enumerate}
\end{lem}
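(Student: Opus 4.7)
The plan is to verify (1) by a direct computation and (2) by invoking the universal property of prismatic envelopes established in Proposition \ref{Proposition:prismatic envelope}.

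For (1), Taylor-expanding in $B$ yields
\[
p'_2(\mathcal{E}) - \mathcal{E}(t_1,\dotsc,t_n) = \mathcal{E}(t_1+x_1,\dotsc,t_n+x_n) - \mathcal{E}(t_1,\dotsc,t_n) \in (x_1,\dotsc,x_n)B.
\]
In the prismatic envelope $A^{(2)} = B\{(\mathcal{E}, x_1,\dotsc,x_n)/\mathcal{E}\}^{\wedge}$, each $x_i$ lies in $\mathcal{E} A^{(2)} = I^{(2)}$ by construction, so $p_2(\mathcal{E}) \in I^{(2)}$. Reducing modulo $I^{(2)}$ we obtain $p_2(t_i) \equiv t_i = p_1(t_i)$, so $\overline{p}_2 = \overline{p}_1$ as maps $R \to A^{(2)}/I^{(2)}$ and hence $p_2$ is a morphism in $(R)_{\Prism, \O_E}$.

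For (2), suppose morphisms $f_1, f_2 \colon (A, I) \to (C, J)$ in $(R)_{\Prism, \O_E}$ are given. I would first construct a continuous $\O_E$-algebra homomorphism $f' \colon B \to C$ by setting $t_i \mapsto f_1(t_i)$ and $x_i \mapsto f_2(t_i) - f_1(t_i)$. This is well-defined on the power series ring $B = A[[x_1,\dotsc,x_n]]$ because both $f_j$ induce the same structure map $R \to C/J$, forcing $f_2(t_i) - f_1(t_i) \in J \subset \mathrm{rad}(C)$ (using that $C$ is $(\pi, J)$-adically complete by Remark \ref{Remark:derived and classical complete}). The next step is to check that $f'$ is a $\delta_E$-homomorphism: since $B$ is $\pi$-torsion free, by Remark \ref{Remark:torsion free case Frobenius lift} it suffices to show that $f'$ commutes with the Frobenius, and the identity $\phi(x_i) = (x_i+t_i)^q - t_i^q$ was arranged precisely so that this reduces to the short verification that both $f'(\phi(x_i))$ and $\phi(f'(x_i))$ equal $f_2(t_i)^q - f_1(t_i)^q$. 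Since $f'$ sends both $\mathcal{E}$ and each $x_i$ into $J$, Proposition \ref{Proposition:prismatic envelope} (2) provides a unique extension $f \colon (A^{(2)}, I^{(2)}) \to (C, J)$ over $(A, I)$. By construction $f \circ p_1 = f_1$, and $f(p_2(t_i)) = f'(x_i + t_i) = f_2(t_i)$ gives $f \circ p_2 = f_2$; the uniqueness of $f$ is built into the universal property of the prismatic envelope.

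I do not anticipate any serious obstacle; the lemma is essentially a bookkeeping exercise on top of the universal property of prismatic envelopes. The only subtle point is that the specific $\delta_E$-structure placed on $B$ (with $\phi(x_i) = (x_i+t_i)^q - t_i^q$, rather than $\phi(x_i) = x_i^q$) is precisely what is needed both to make $p'_2$ a $\delta_E$-homomorphism and to make the Frobenius compatibility of $f'$ work; once this structure is in place, every verification is formal.
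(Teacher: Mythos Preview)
Your approach to (1) matches the paper. Your overall strategy for (2) also matches, but there are two genuine gaps in the execution.

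\textbf{First gap: $\delta_E$-compatibility of $f'$.} Your appeal to Remark~\ref{Remark:torsion free case Frobenius lift} is a misreading. That remark says that on a $\pi$-torsion free ring the $\delta_E$-structure is determined by $\phi$; it does \emph{not} say that a $\phi$-equivariant homomorphism between $\delta_E$-rings is a $\delta_E$-homomorphism. From $\phi\circ f'=f'\circ\phi$ you only get $\pi\bigl(f'(\delta_E(b))-\delta_E(f'(b))\bigr)=0$, and the target $(C,J)$ is an arbitrary bounded $\O_E$-prism, which may well have $\pi$-torsion. The paper handles exactly this point by a substantially different argument: it passes to a flat cover on which $f_1,f_2$ extend to the perfection $A_\infty=(\varinjlim_\phi A)^\wedge$, builds compatible maps $f'(m)\colon B\to C$ for all $m\geq 0$, and then invokes Corollary~\ref{Corollary:homomorphism from perfect delta pi-ring} (any map out of a perfect $\delta_E$-ring is automatically a $\delta_E$-map) to force $\delta_E$-compatibility. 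This detour through the perfection is the actual content here.

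\textbf{Second gap: concluding $f\circ p_2=f_2$.} You write that $f(p_2(t_i))=f_2(t_i)$ ``gives $f\circ p_2=f_2$.'' But $A=\O[[t_1,\dotsc,t_n]]$ is a power-series ring, and a ring homomorphism out of it is not determined by its values on the $t_i$ unless you have continuity for the $(t_1,\dotsc,t_n)$-adic topology --- and there is no reason $f_2(t_i)$ should be topologically nilpotent in $C$. Since moreover $\delta_E(t_i)=0$, the $\delta_E$-compatibility gives no further constraints. The paper isolates precisely this issue as Lemma~\ref{Lemma:two morphisms from Breuil-Kisin coincide}, whose proof again uses the perfection $A_\infty$ and Lemma~\ref{Lemma:maps from perfectoid prisms to prism}; it then invokes that lemma to conclude $f\circ p_2=f_2$. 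Without this step (or an equivalent argument), the existence half of the universal property is incomplete.
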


\begin{proof}
    (1) It suffices to show that the composition of $p_2 \colon A \to A^{(2)}$ with $A^{(2)} \to A^{(2)}/I^{(2)}$ coincides with the composition of $A \to R$ with $\overline{p}_1 \colon R \to A^{(2)}/I^{(2)}$.
    For any $h \in A$, the element $p'_2(h)-p'_1(h) \in B$ is contained in the ideal $(x_1, \dotsc, x_n) \subset B$.
    Since the image of $x_i$ in $A^{(2)}$ is contained in $I^{(2)}$, we have
    $p_2(h)-p_1(h) \in I^{(2)}$, which implies the assertion.

    (2) We have to show that for any $(A', I') \in (R)_{\Prism, \O_E}$ and two morphisms $f_1, f_2 \colon (A, I) \to (A', I')$ in $(R)_{\Prism, \O_E}$, there exists a unique morphism
    \[
    f \colon (A^{(2)}, I^{(2)}) \to (A', I')
    \]
    in $(R)_{\Prism, \O_E}$ such that $f \circ p_1=f_1$ and $f \circ p_2=f_2$.

    We first prove the uniqueness of $f$.
    Let $f' \colon B \to A'$ be the composition of $f$ with $B \to A^{(2)}$.
    Then we have $f' \circ p'_j=f_j$ ($j=1, 2$), and $f'$ sends $x_i=p'_2(t_i)-p'_1(t_i)$ to
    \[
    f_2(t_i)-f_1(t_i) \in I' \subset A'
    \]
    for any $i$.
    Since $A'$ is $I'$-adically complete, such a homomorphism $f'$ of $\delta_E$-rings is uniquely determined (if it exists).
    The uniqueness of $f$ now follows from the universal property of the prismatic envelope $(A^{(2)}, I^{(2)})$.

    We next prove the existence of $f$.
    Since $f_2(t_i)-f_1(t_i) \in I' \subset A'$ and $A'$ is $I'$-adically complete, there exists a unique homomorphism
    $f' \colon B \to A'$ over $\O$ such that $f' \circ p'_1=f_1$ and $f'(x_i)=f_2(t_i)-f_1(t_i)$ for every $i$.
    
    We claim that $f'$ is a homomorphism of $\delta_E$-rings.
    Indeed,
    as in the proof of Proposition \ref{Proposition:weakely initial object}, let $A_\infty$ be the $(\pi, \mathcal{E})$-adic completion of 
    $\varinjlim_\phi A$.
    Then $A_\infty$ is faithfully flat over $A$.
    After replacing $(A', I')$ by a flat covering,
    we may assume that $f_j$ factors through a morphism
    $\widetilde{f}_j \colon (A_\infty, IA_\infty) \to (A', I')$ in $(R)_{\Prism, \O_E}$ for each $j=1, 2$.
    For an integer $m \geq 0$ and $i$, we set
    \[
    x_{i, m}:=\widetilde{f}_2(t^{1/q^m}_i)-\widetilde{f}_1(t^{1/q^m}_i) \in A'.
    \]
    Since we have $x^{q^m}_{i, m} \in (\pi, I')$, it follows that $A'$ is $(x_{1, m}, \dotsc, x_{n, m})$-adically complete for every $m$ (see \cite[Tag 090T]{SP} for example).
    Thus, for each $m \geq 0$, there exists a unique homomorphism
    $f'(m) \colon B \to A'$
    such that $f'(m) \circ p'_1$ is the composition
    \[
    A \to A_\infty \overset{\phi^{-m}}{\to} A_\infty \overset{\widetilde{f}_1}{\to} A'
    \]
    and $f'(m)(x_i)=x_{i, m}$ for any $i$.
    Since $f'(m)=f'(m+1) \circ \phi$, they give rise to a homomorphism
    $
    \widetilde{f}' \colon \varinjlim_\phi B \to A'.
    $
    By Corollary \ref{Corollary:homomorphism from perfect delta pi-ring}, $\widetilde{f}'$ is a homomorphism of $\delta_E$-rings.
    Since $f'$ is the composition $B \to \varinjlim_\phi B \to A'$, we conclude that $f'$ is a homomorphism of $\delta_E$-rings.
    
    By the universal property of the prismatic envelope $(A^{(2)}, I^{(2)})$,
    the homomorphism $f'$ extends to a unique morphism
    $f \colon (A^{(2)}, I^{(2)}) \to (A', I')$ in $(R)_{\Prism, \O_E}$.
    By construction, we have $f \circ p_1=f_1$.
    It follows from Lemma \ref{Lemma:two morphisms from Breuil-Kisin coincide} that $f \circ p_2=f_2$.

    The proof of Lemma \ref{Lemma:coproduct and prismatic envelope} is complete.
\end{proof}

\begin{rem}\label{Remark:coproduct of BK prisms}
    It might be more natural to expect that
    the prismatic envelope $(C, IC)$ of $A \otimes_\O A$
    over $(A, I)$
    with respect to the ideal
    $(\mathcal{E} \otimes 1, t_1 \otimes 1 - 1 \otimes t_1, \dotsc, t_n \otimes 1 - 1 \otimes t_n)$ is a coproduct of two copies of $(A, I)$ in the category $(R)_{\Prism, \O_E}$, where we regard $A \otimes_\O A$ as an $A$-algebra via the homomorphism $a \mapsto a \otimes 1$.
    However, this does not seem to be the case in general.
    For example, it is not clear that the homomorphism
    $A \to C$, $a \mapsto 1 \otimes a$ induces a morphism
    $(A, I) \to (C, IC)$
    in $(R)_{\Prism, \O_E}$ (cf.\ the proof of Proposition \ref{Proposition:weakely initial object}).
\end{rem}

Let
\[
m \colon (A^{(2)}, I^{(2)}) \to (A, I)
\]
be the unique morphism in $(R)_{\Prism, \O_E}$ such that $m \circ p_1 = m \circ p_2= \id_{(A, I)}$.
Let $K$ be the kernel of $m \colon A^{(2)} \to A$.
Let $d \in I^{(2)}$ be a generator.

\begin{lem}[{cf.\ \cite[Lemma 5.15]{Anschutz-LeBras}}]\label{Lemma:kernel of two fold coproduct I}
    We have $\phi(K) \subset  dK$.
\end{lem}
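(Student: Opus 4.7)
The plan is to describe $K$ explicitly, verify the claim on the natural generators $y_i := x_i/\mathcal{E}$, and propagate to all of $K$ via the $\delta_E$-closure of $K$.

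First I would identify $K$. By the construction of the prismatic envelope, $A^{(2)}$ is the $(\pi,\mathcal{E})$-adic completion of the $\delta_E$-$B$-algebra freely generated by $y_1,\ldots,y_n$ subject to $\mathcal{E} y_i = x_i$, and the map $m \colon A^{(2)} \to A$ is uniquely characterized (over $A$) by $y_i \mapsto 0$. Hence $K = \ker m$ is the smallest $(\pi,\mathcal{E})$-adically closed $\delta_E$-ideal containing $y_1,\ldots,y_n$; as an ordinary ideal, it is the closure of $(\delta_E^k(y_i))_{k \geq 0,\, 1 \leq i \leq n}$. In particular $\phi(K) \subset K$.

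The base case is the key computation. Using $\phi(x_i) = (x_i+t_i)^q - t_i^q$ from the $\delta_E$-structure on $B$ and substituting $x_i = \mathcal{E} y_i$,
\[
\phi(\mathcal{E})\,\phi(y_i) \;=\; \phi(x_i) \;=\; \mathcal{E}^q y_i^q + \sum_{j=1}^{q-1} \binom{q}{j}\mathcal{E}^j y_i^j t_i^{q-j} \;=\; \mathcal{E}\cdot r_i,
\]
with $r_i \in y_i A^{(2)} \subset K$. To promote $\phi(\mathcal{E})\phi(y_i) \in \mathcal{E} A^{(2)}$ to $\phi(y_i) \in \mathcal{E} A^{(2)}$, I would use the congruence $\phi(\mathcal{E}) \equiv \pi\,\delta_E(\mathcal{E}) \pmod{\mathcal{E}}$ with $\delta_E(\mathcal{E}) \in A^{\times}$ (proved in Proposition \ref{Proposition:Breuil-Kisin type frame}), together with the $\pi$-torsion freeness of $A^{(2)}/\mathcal{E}$. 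The latter follows from the $(\pi,\mathcal{E})$-complete flatness of $A^{(2)}$ over $A$ (Proposition \ref{Proposition:prismatic envelope}) combined with the $\pi$-torsion freeness of $R = A/\mathcal{E}$. Cancelling $\pi$ modulo $\mathcal{E}$ yields $\phi(y_i) \in \mathcal{E} A^{(2)}$. Writing $\phi(y_i) = \mathcal{E} z_i$, the identities $m(\phi(y_i)) = \phi(m(y_i)) = 0$ together with $\mathcal{E}$ being a nonzerodivisor in $A$ force $z_i \in K$, so $\phi(y_i) \in \mathcal{E} K$.

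For the extension, define $K' := \{k \in K : \phi(k) \in \mathcal{E} K\}$. Because $\phi$ is a ring homomorphism and $\mathcal{E} K$ is an ideal, $K'$ is an $A^{(2)}$-submodule of $K$, and it contains each $y_i$ by the previous step. By the description of $K$ above, the equality $K' = K$ reduces to showing that $K'$ is stable under $\delta_E$. Given $k \in K'$ with $\phi(k) = \mathcal{E} s$ and $s \in K$, the $\delta_E$-Leibniz rule combined with $\pi\,\delta_E(s) = \phi(s) - s^q$ collapses to the clean identity
\[
\phi(\delta_E(k)) \;=\; \delta_E(\mathcal{E} s) \;=\; \mathcal{E}^q\,\delta_E(s) + \delta_E(\mathcal{E})\,\phi(s),
\]
whose first summand lies in $\mathcal{E} K$ since $\delta_E(s) \in K$. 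Thus $\delta_E(k) \in K'$ amounts to $s \in K'$.

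The main obstacle is therefore to verify that the derived element $s = \phi(k)/\mathcal{E}$ lies in $K'$ whenever $k$ does; unlike $z_i$, this $s$ admits no elementary closed form. I would address this by an induction on the $\delta_E$-depth of $k$, exploiting that the operator $k \mapsto \phi(k)/\mathcal{E}$ is compatible with the natural $\delta_E$-filtration on $K$ and that $A^{(2)}$ is $(\pi,\mathcal{E})$-adically complete, allowing the successive $s$'s to stabilize in $K'$. This parallels the argument of \cite[Lemma 5.15]{Anschutz-LeBras} in the case $n=1$; the only new feature for general $n$ is the bookkeeping of several simultaneously independent variables $y_1,\ldots,y_n$, which do not interact in the formula for $\phi(y_i)$.
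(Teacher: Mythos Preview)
There are two genuine gaps.

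\textbf{Base case.} Your cancellation of $\pi$ modulo $\mathcal{E}$ relies on $A^{(2)}/\mathcal{E}$ being $\pi$-torsion free, which you deduce from the $\pi$-torsion freeness of $R = A/\mathcal{E}$. But $R$ need not be $\pi$-torsion free: when $\mathcal{E}$ lies in $\pi A$ (equivalently, when $R$ has characteristic $p$), $\mathcal{E}$ is a unit multiple of $\pi$, so $A^{(2)}/\mathcal{E} = A^{(2)}/\pi$ and the cancellation step is vacuous. This is exactly the case the paper isolates and treats by a separate argument.

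\textbf{Extension.} Your reduction $\delta_E(k) \in K' \iff s := \phi(k)/\mathcal{E} \in K'$ is correct, but the proposed ``induction on the $\delta_E$-depth of $k$'' does not close: to obtain $\delta_E^{j+1}(y_i) \in K'$ you need $\phi(\delta_E^j(y_i))/\mathcal{E} \in K'$, and this element is not expressible in terms of $\delta_E^{j'}(y_{i'})$ with $j' \le j$, so the induction hypothesis says nothing about it. The circularity you flag is real and is not resolved by any filtration or completeness argument alone. The paper's fix (in the case $\mathcal{E} \in \pi A$, where one may take $\mathcal{E} = \pi$) is to \emph{strengthen} what is proved by induction on $j$: one shows $\phi^l(\delta_E^j(y_i)) \in \pi^l A^{(2)}$ for \emph{all} $l \ge 1$, using the identity
\[
\pi\,\phi^l(\delta_E^{j+1}(y_i)) \;=\; \phi^{l+1}(\delta_E^j(y_i)) - \phi^l(\delta_E^j(y_i))^q
\]
together with the $\pi$-torsion freeness of $A^{(2)}$ itself (not of $A^{(2)}/\mathcal{E}$). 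With this stronger hypothesis the induction does propagate, since the right-hand side lies in $\pi^{l+1}A^{(2)}$. For $\mathcal{E} \notin \pi A$ the paper refers to \cite[Lemma~5.15]{Anschutz-LeBras}, which is likewise a direct induction on $j$ showing $\phi(\delta_E^j(y_i)) \in dA^{(2)}$, rather than a $\delta_E$-stability argument for a subideal $K' \subset K$.
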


\begin{proof}
    It suffices to show that $\phi(K) \subset  dA^{(2)}$.
    Indeed, let $x \in K$, and we assume that $\phi(x)=dy$ for some $y \in A^{(2)}$.
    Then, since $m(\phi(x))=0$ and $m(d) \in A$ is a nonzerodivisor,
    we have $y \in K$.

    We shall prove that $\phi(K) \subset  dA^{(2)}$.
    We may assume that $d=p_1(\mathcal{E})$.
    The image $p'_1(\mathcal{E}) \in B$ is also denoted by $d$.
    It follows from Proposition \ref{Proposition:prismatic envelope} that $A^{(2)}$ can be identified with the $(\pi, d)$-adic completion of
    \[
    C:=B\{ x_1/d, \dotsc, x_n/d \}.
    \]
    We write $y_i:=x_i/d$.
    The composition $C \to A^{(2)} \to A$ sends $\delta^j_E(y_i)$ to $0$ for any $j \geq 0$ and any $i$.
    Here $\delta^j_E$ is the $j$-th iterate of the map $\delta_E \colon C \to C$.
    Since the kernel of the homomorphism $B \to A$ defined by $x_i \mapsto 0$ ($1 \leq i \leq n$) coincides with $(x_1, \dotsc, x_n)$, it follows that
    the kernel $K_0$ of $C \to A$ is generated by
    \[
    \{ \delta^j_E(y_i) \}_{1 \leq i \leq n, j \geq 0}.
    \]
    We note that $K$ can be identified with the $(\pi, d)$-adic completion of $K_0$.
    We also note that
    $dA^{(2)} = \cap_{l \geq 0}(dA^{(2)} + (\pi, d)^lA^{(2)})$
    since $A^{(2)}/d$ is $\pi$-adically complete (see Remark \ref{Remark:derived and classical complete}).
    It then suffices to show that
    $\phi(\delta^j_E(y_i)) \in dA^{(2)}$ for any $j \geq 0$ and any $i$.
    This can be proved by the same argument as in the first paragraph of the proof of \cite[Lemma 5.15]{Anschutz-LeBras} when $\mathcal{E} \in A$ is not contained in $\pi A$.
    A similar argument holds when $\mathcal{E} \in \pi A$.
    We include the argument in this case for the convenience of the reader.

    We may assume that $\mathcal{E}=\pi$.
    In fact, we prove a more general statement: For any $j \geq 0$, we have
    $\phi^l(\delta^j_E(y_i)) \in \pi^{l} A^{(2)}$ for any $l \geq 1$ and any $i$.
    We proceed by induction on $j$.
    Let $u_i:=x_i+t_i=\pi y_i + t_i \in A^{(2)}$.
    Then we have
    \[
    \phi^l(x_i)=u^{q^l}_i-t^{q^l}_i= \sum_{0 \leq h \leq q^l -1} \dbinom{q^l}{h} (\pi y_i)^{q^l-h} t^{h}_i
    \in \pi^{l+1} A^{(2)}.
    \]
    Thus, we obtain
    $\phi^l(y_i) \in \pi^{l} A^{(2)}$, which proves the assertion in the case where $j=0$.
    Suppose that the assertion holds for some $j \geq 0$.
    Since
    \[
    \pi \phi^l(\delta^{j+1}_E(y_i)) = \phi^l(\pi \delta^{j+1}_E(y_i)) = \phi^l(\phi(\delta^{j}_E(y_i))-\delta^{j}_E(y_i)^{q})
    = \phi^{l+1}(\delta^{j}_E(y_i))-\phi^l(\delta^{j}_E(y_i))^q,
    \]
    the induction hypothesis implies that $\pi \phi^l(\delta^{j+1}_E(y_i)) \in \pi^{l+1} A^{(2)}$, whence $\phi^l(\delta^{j+1}_E(y_i)) \in \pi^l A^{(2)}$.
\end{proof}

The following lemma plays a crucial role in the proof of Theorem \ref{Theorem:main result on G displays over complete regular local rings} (especially in the proof of Proposition \ref{Proposition:deformation of isomorphism} below).
As in the proof of Lemma \ref{Lemma:kernel of two fold coproduct I},
we set $y_i:=x_i/d \in A^{(2)}$.

\begin{lem}\label{Lemma:kernel of two fold coproduct refined}
        Let
        $M \subset A^{(2)}$
        be the ideal generated by
        $\phi(y_1)/d, \dotsc, \phi(y_n)/d \in K$.
        Then we have inclusions
        \[
        \phi(K) \subset dM + d(\pi, d)K \quad \text{and} \quad \phi(M) \subset d(t_1, \dotsc, t_n)M + d(\pi, d)K.
        \]
\end{lem}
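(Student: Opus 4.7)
These inclusions refine the containment $\phi(K) \subset dK$ of Lemma~\ref{Lemma:kernel of two fold coproduct I}: the first localizes $\phi(K)$ to $M$ modulo a $(\pi,d)$-correction, and the second further extracts a factor from $(t_1,\dotsc,t_n)$. My plan proceeds by first reducing to algebraic generators, then verifying the first inclusion by a Leibniz calculation, and finally exposing the $t_i$-factor in the second inclusion.

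The first step is a continuity reduction. Both right-hand sides are finitely generated ideals, hence $(\pi,d)$-adically closed in $A^{(2)}$, and $\phi$ is $(\pi,d)$-continuous. From the proof of Lemma~\ref{Lemma:kernel of two fold coproduct I}, $K$ is the $(\pi,d)$-adic closure of the ideal $K_0 \subset C = B\{x_1/d,\dotsc,x_n/d\}$ generated by $\{\delta_E^j(y_i)\}_{j\geq 0,\,1\leq i\leq n}$, and $M$ is generated by the $u_i := \phi(y_i)/d$. Hence it suffices to verify $\phi(\delta_E^j(y_i)) \in dM + d(\pi,d)K$ and $\phi(u_i) \in d(t_1,\dotsc,t_n)M + d(\pi,d)K$. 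Using $\phi \circ \delta_E = \delta_E \circ \phi$ (Lemma~\ref{Lemma:frobenius is compatible with delta}), the first becomes $\delta_E^j(du_i) \in dM + d(\pi,d)K$, which I would prove by induction on $j$: the base $j=0$ is tautological, and the inductive step expands via the Leibniz rule $\delta_E(ab) = a^q\delta_E(b) + b^q\delta_E(a) + \pi\delta_E(a)\delta_E(b)$, using $d^q \in d(\pi,d)A^{(2)}$ (since $q \geq 2$), the unit $\delta_E(d) \in A^{(2)\times}$ ($d$ is distinguished), and the $\delta_E$-stability of $K$ inherited from the evident stability of $K_0$ under $\delta_E$.

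For the second inclusion, the starting point is the identity
\[
u_i\,\phi(d) \;=\; d^{q-1}y_i^q \;+\; \sum_{h=1}^{q-1}\binom{q}{h}d^{q-1-h}y_i^{q-h}t_i^h,
\]
obtained by combining $\phi(x_i) = (x_i+t_i)^q - t_i^q$ with $\phi(x_i) = \phi(d)\phi(y_i)$ and $x_i = dy_i$. Reducing modulo $d$, only the $h=q-1$ term on the right survives, giving $q\,t_i^{q-1}y_i$, while $\phi(d) \equiv \pi\delta_E(d) \pmod{d}$ with $\delta_E(d) \in A^{(2)\times}$. Since $p \in (\pi)$ forces $q/\pi \in \O_E$, and $\pi$ is a nonzerodivisor modulo $d$ (the quotient $A^{(2)}/d$ is $\pi$-completely flat over the regular local ring $R$), I can solve to obtain the congruence
\[
u_i \;\equiv\; (q/\pi)\,\delta_E(d)^{-1}\,t_i^{q-1}\,y_i \pmod{dA^{(2)}}.
\]
Writing $u_i = a_i + dw_i$ with $a_i \in (t_1,\dotsc,t_n)K$ and $w_i \in K$ (the latter from the equality $K:d = K$, valid since $m(d) = \mathcal{E}$ is a nonzerodivisor in $A$), I would then expand $\phi(u_i) = u_i^q + \pi\delta_E(u_i)$ term by term.

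The main obstacle lies in this final expansion: neither $u_i^q$ nor $\pi\delta_E(u_i)$ belongs to $dA^{(2)}$ individually, so the $d$-divisibility needed for the inclusion can only come from cancellation between the two pieces, cancellation that is guaranteed abstractly by the first inclusion applied to $u_i \in K$. The strategy is to use $du_i = y_i^q + \pi\delta_E(y_i)$, which gives $y_i^q \equiv -\pi\delta_E(y_i) \pmod{d}$, to rewrite the leading term of $u_i^q$ modulo $d$ as a $\pi$-multiple of an element of $K$; this term then matches $\pi\delta_E(u_i)$ so that their sum is divisible by $d$ with quotient in $(t_1,\dotsc,t_n)M + (\pi,d)K$. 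The ramified cases, where $q/\pi$ is not a unit of $\O_E$, demand the most care, requiring careful tracking of the powers of $\pi$ arising from the binomial coefficients $\binom{q}{h}$ and from iterated applications of $\delta_E$ to the distinguished element $d$.
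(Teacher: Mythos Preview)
Your direct approach can be made to work and is genuinely different from the paper's, but the proposal as written has real gaps.

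\textbf{Closedness.} The right-hand sides are \emph{not} finitely generated: $K$ is not a finitely generated ideal of the non-noetherian ring $A^{(2)}$, so neither is $d(\pi,d)K$. The reduction to generators is still valid, but for a different reason. Using the section $p_1$ one has a splitting $A^{(2)}=K\oplus p_1(A)$, from which $K\cap(\pi,d)A^{(2)}=(\pi,d)K$; combined with $A^{(2)}/(\pi,d)\simeq C/(\pi,d)$ this shows every $v\in K$ satisfies $v\equiv v_0\pmod{(\pi,d)K}$ for some $v_0\in K_0$. Since $\phi((\pi,d)K)\subset d(\pi,d)K$ directly, this reduces the first inclusion to the $\delta_E^j(y_i)$.

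\textbf{Order of the two inclusions.} Your Leibniz induction for the first inclusion does not close by itself. If $\delta_E^j(du_i)=dz$ with $z\in M+(\pi,d)K$, then
\[
\delta_E^{j+1}(du_i)=\delta_E(dz)=d^q\delta_E(z)+\delta_E(d)\bigl(z^q+\pi\delta_E(z)\bigr)=d^q\delta_E(z)+\delta_E(d)\,\phi(z),
\]
and the last term forces you to know $\phi(M)\subset dM+d(\pi,d)K$. So the second inclusion must be established first and fed into the induction, contrary to your ordering.

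\textbf{The ``main obstacle'' is self-inflicted.} After writing $u_i=a_i+dw_i$ with $a_i=c\,t_i^{q-1}y_i$ and $w_i\in K$, do \emph{not} expand $\phi(u_i)=u_i^q+\pi\delta_E(u_i)$. Apply $\phi$ as a ring homomorphism:
\[
\phi(u_i)=\phi(c)\,t_i^{q(q-1)}\phi(y_i)+\phi(d)\phi(w_i)=d\,\phi(c)\,t_i^{q(q-1)}u_i+\phi(d)\phi(w_i).
\]
The first term lies in $d(t_i)M$, and since $\phi(w_i)\in dK$ (Lemma~\ref{Lemma:kernel of two fold coproduct I}) and $\phi(d)\in(\pi,d^q)$, the second lies in $d(\pi,d)K$. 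No cancellation is required.

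\textbf{Case omission.} Dividing by $\pi$ in $A^{(2)}/d$ requires $A^{(2)}/d$ to be $\pi$-torsion free; this holds iff $R=A/\mathcal E$ is $\pi$-torsion free, i.e.\ iff $\mathcal E\notin\pi A$. The obstruction is not the ramification of $\O_E/\Z_p$. When $\mathcal E\in\pi A$ one may take $d=\pi$; then $\phi(d)=\pi$ and dividing your displayed identity by $\pi$ in the $\pi$-torsion-free ring $A^{(2)}$ itself gives $u_i$ explicitly, and applying $\phi$ term by term (each summand with $h\le q-2$ picks up $d^{q-h}\in d^2A^{(2)}$) yields the second inclusion as above.

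\textbf{Comparison with the paper.} The paper avoids all of this by a base-change trick: it forms an auxiliary ring $A^{(2)\prime\prime}$ (pushout along $\phi\colon A\to A$ and then along the $\pi$-completed $\O_E$-PD envelope $A\to A''$) in which the image of $d$ becomes a unit times $\pi$. There it proves the analogue of both inclusions in $K''/\pi K''$ by a single explicit calculation after quotienting by the ideal $J$ generated by $\{x^q/\pi:x\in K''\}$, and then descends to $A^{(2)}$ using faithful flatness of $\phi\colon A\to A$ together with a section of the PD-envelope map modulo $\pi$. Your direct route, once repaired as above, is more elementary and stays inside $A^{(2)}$; the paper's route is more uniform across the two cases $\mathcal E\in\pi A$ and $\mathcal E\notin\pi A$.
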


The proof of Lemma \ref{Lemma:kernel of two fold coproduct refined} will be given in Section \ref{Subsection:proof of Lemma kernel of two fold coproduct refined}.

\begin{rem}\label{Remark:three coproduct}
    There exists a coproduct $(A^{(3)}, I^{(3)})$ of three copies of $(A, I)$
    in the category $(R)_{\Prism, \O_E}$.
    Indeed, one can define $(A^{(3)}, I^{(3)})$ as a pushout of the diagram
    \[
    (A^{(2)}, I^{(2)}) \overset{p_2}{\leftarrow} (A, I) \overset{p_1}{\rightarrow} (A^{(2)}, I^{(2)}),
    \]
    which exists since $p_1$ is a flat map (see Remark \ref{Remark:pushout in prismatic site}).
    Let $q_1, q_2, q_3 \colon (A, I) \to (A^{(3)}, I^{(3)})$ denote the associated three morphisms.
    For $1 \leq i < j \leq 3$, let
    $p_{ij} \colon (A^{(2)}, I^{(2)}) \to (A^{(3)}, I^{(3)})$ be the unique morphism such that $p_{ij} \circ p_1=q_i$ and $p_{ij} \circ p_2=q_j$.
\end{rem}

\begin{cor}\label{Corollary:kernel of three fold coproduct}
    Let $m \colon (A^{(3)}, I^{(3)}) \to (A, I)$ be the unique morphism in $(R)_{\Prism, \O_E}$ such that $m \circ q_i = \id_{(A, I)}$ for $i=1, 2, 3$.
    Let $L$ be the kernel of $m \colon A^{(3)} \to A$.
    Let $d \in I^{(3)}$ be a generator.
    Then the following assertions hold:
    \begin{enumerate}
        \item We have $\phi(L) \subset dL$.
        \item
        Let
        $N \subset A^{(3)}$
        be the ideal generated by
        $
        \{ \phi(p_{12}(y_l))/d, \phi(p_{23}(y_l))/d \}_{1 \leq l \leq n} \subset L.
        $
        Then we have inclusions
        \[
        \phi(L) \subset dN + d(\pi, d)L \quad \text{and} \quad \phi(N) \subset d(q_1(t_1), \dotsc, q_1(t_n))N + d(\pi, d)L.
        \]
    \end{enumerate}
\end{cor}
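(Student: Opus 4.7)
\textbf{Proof proposal for Corollary \ref{Corollary:kernel of three fold coproduct}.}

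The plan is to deduce both assertions from the two-fold analogues (Lemma \ref{Lemma:kernel of two fold coproduct I} and Lemma \ref{Lemma:kernel of two fold coproduct refined}) by exploiting the pushout description of $(A^{(3)}, I^{(3)})$ along $p_{12}, p_{23} \colon (A^{(2)}, I^{(2)}) \to (A^{(3)}, I^{(3)})$, together with rigidity of $\O_E$-prisms (Lemma \ref{Lemma:rigidity}), which identifies both $p_{12}(d_{12})$ and $p_{23}(d_{12})$ with $d$ up to units in $A^{(3)}$, where $d_{12} \in I^{(2)}$ denotes a chosen generator. The key structural input is that $L$ is the $(\pi, d)$-adic closure in $A^{(3)}$ of the (non-completed) ideal $J := p_{12}(K) \cdot A^{(3)} + p_{23}(K) \cdot A^{(3)}$, which follows from the classical pushout identity $(A^{(2)} \otimes_A A^{(2)})/((K \otimes 1) + (1 \otimes K)) = A$ combined with the $(\pi, d)$-adic completeness of $A$ and $A^{(3)}$. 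Since $\phi$ is $(\pi, d)$-adically continuous (as $\phi((\pi, d)) \subset (\pi, d)$ in any bounded $\O_E$-prism), generator-wise inclusions extend from $J$ to $L$ provided the target ideal is closed in this topology.

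For part (1), I first verify $\phi(J) \subset dA^{(3)}$. For $k \in K$, Lemma \ref{Lemma:kernel of two fold coproduct I} gives $\phi(k) \in d_{12} K$, hence $\phi(p_{ij}(k)) = p_{ij}(\phi(k)) \in p_{ij}(d_{12}) \cdot p_{ij}(K) \subset d \cdot L$ for each $(i, j) \in \{(1, 2), (2, 3)\}$. Since $dA^{(3)}$ is closed in the $(\pi, d)$-adic topology (as $A^{(3)}/d$ is $\pi$-adically complete), continuity of $\phi$ yields $\phi(L) \subset dA^{(3)}$. The refinement $\phi(L) \subset dL$ then follows by the same argument as in the proof of Lemma \ref{Lemma:kernel of two fold coproduct I}: if $\phi(\ell) = d\ell'$ with $\ell \in L$, then $m(d) m(\ell') = m(\phi(\ell)) = \phi(m(\ell)) = 0$ in $A$, and since $m(d) \in A$ is a nonzerodivisor, $\ell' \in L$.

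For part (2), the same closure-and-continuity strategy applies, now invoking Lemma \ref{Lemma:kernel of two fold coproduct refined}. The first inclusion reduces to
\[
\phi(p_{ij}(K)) = p_{ij}(\phi(K)) \subset p_{ij}(d_{12} M + d_{12}(\pi, d_{12}) K) \subset dN + d(\pi, d) L,
\]
where $M \subset A^{(2)}$ is as in Lemma \ref{Lemma:kernel of two fold coproduct refined}, and $p_{ij}(M) \subset N$ by construction since each generator $p_{ij}(\phi(y_l)/d_{12})$ of $p_{ij}(M)$ is a unit multiple of the generator $\phi(p_{ij}(y_l))/d$ of $N$. For the second inclusion, the generators of $N$ are unit multiples of $p_{ij}(u_l)$ with $u_l := \phi(y_l)/d_{12} \in M$; Lemma \ref{Lemma:kernel of two fold coproduct refined} applied to $u_l$ yields
\[
\phi(p_{ij}(u_l)) = p_{ij}(\phi(u_l)) \in p_{ij}(d_{12}(t_1, \dotsc, t_n) M + d_{12}(\pi, d_{12}) K).
\]
For $(i, j) = (1, 2)$, using $p_{12}(t_l) = q_1(t_l)$, the right-hand side lies in $d(q_1(t_1), \dotsc, q_1(t_n)) N + d(\pi, d) L$. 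For $(i, j) = (2, 3)$ we instead obtain $p_{23}(t_l) = q_2(t_l)$; the discrepancy is absorbed using the relation $q_2(t_l) - q_1(t_l) = d \cdot u \cdot p_{12}(y_l) \in dL$ for some unit $u$, which gives $d \cdot q_2(t_l) \cdot N \subset d \cdot q_1(t_l) \cdot N + d^2 \cdot L \cdot N \subset d(q_1(t_1), \dotsc, q_1(t_n)) N + d(\pi, d) L$, since $d^2 \in d(\pi, d)$ and $L \cdot N \subset L$.

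The main obstacle is the topological bookkeeping: verifying that $L = \overline{J}$ and that each target ideal appearing in (1) and (2) is closed in the $(\pi, d)$-adic topology on $A^{(3)}$, so that the generator-wise computations genuinely extend. These are routine consequences of the $\O_E$-prism structure of $A^{(3)}$ and the $(\pi, d)$-adic completeness of the relevant quotients, but warrant careful attention; once in place, the rest is a mechanical transport of the two-fold lemmas along $p_{12}$ and $p_{23}$.
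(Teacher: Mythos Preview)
Your proposal is correct and follows essentially the same route as the paper: both reduce to the two-fold Lemmas \ref{Lemma:kernel of two fold coproduct I} and \ref{Lemma:kernel of two fold coproduct refined} via the pushout identification $A^{(3)} = (A^{(2)} \otimes_{p_2, A, p_1} A^{(2)})^{\wedge}$, density of $L_0 = K \otimes_A A^{(2)} + A^{(2)} \otimes_A K$ in $L$, and the observation that $q_j(t_l) - q_i(t_l) \in dL$ absorbs the $q_1$-vs-$q_2$ discrepancy. One simplification for the bookkeeping you flag: rather than verifying closure of the target ideals in (2), note that the section $q_1$ gives a splitting $A^{(3)} = q_1(A) \oplus L$ with $(\pi,d)^n A^{(3)} \cap L = (\pi,d)^n L$, so any $\ell \in L$ decomposes as $\ell_0 + \ell_1$ with $\ell_0$ in the image of $L_0$ and $\ell_1 \in (\pi,d)L$, and then $\phi(\ell_1) \in d(\pi,d)L$ directly by part (1) and $\phi(d) \in (\pi,d)$.
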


\begin{proof}
    We may assume that $d$ is the image of a generator of $I^{(2)}$, again denoted by $d$, under the homomorphism $p_{12}$.
    As in Remark \ref{Remark:three coproduct}, we identify
    $A^{(3)}$
    with
    the $(\pi, d)$-adic completion of
    $A^{(3)}_0:= A^{(2)} \otimes_{p_2, A, p_1} A^{(2)}$.
    Under this identification,
    the homomorphism $p_{12}$ (resp.\ $p_{23}$)
    is induced by 
    the homomorphism $A^{(2)} \to A^{(3)}_0$ defined by $a \mapsto a \otimes 1$ (resp.\ $a \mapsto 1 \otimes a$).
    The kernel $L_0$ of the natural homomorphism
    $A^{(3)}_0 \to A$
    coincides with
    $K \otimes_A A^{(2)} + A^{(2)} \otimes_A K$, and $L$ is the $(\pi, d)$-adic completion of $L_0$.

    In order to prove the assertion (1), it suffices to show that for any element $x \in L$ which lies in the image of $L_0 \to L$, we have
    $\phi(x) \in d A^{(3)}$.
    (Note that $A^{(3)}/d$ is $\pi$-adically complete by Remark \ref{Remark:derived and classical complete}.)
    This follows from Lemma \ref{Lemma:kernel of two fold coproduct I}.
    Similarly, the assertion (2) follows from Lemma \ref{Lemma:kernel of two fold coproduct refined}.
    We note here that, since
    $q_j(t_l)-q_i(t_l)=p_{ij}(x_l) \in d L$ for $1 \leq i < j \leq 3$,
    the ideal
    $d(q_1(t_1), \dotsc, q_1(t_n))N + d(\pi, d)L$
    is unchanged if we replace $q_1$ by $q_i$ ($1 \leq i \leq 3$).
\end{proof}

\begin{rem}\label{Remark:compare with Anschutz-Le Bras}
    Assume that $\O_E=\Z_p$.
    Under the assumption that $n=1$ and $R$ is $p$-torsion free,
    Ansch\"utz--Le Bras gave a proof of the analogue of Theorem \ref{Theorem:main result on G displays over complete regular local rings} for minuscule Breuil--Kisin modules in \cite[Section 5.2]{Anschutz-LeBras}.
    (We will come back to this result in Section \ref{Subsection:A remark on prismatic Dieudonne crystals}.)
    In the proof, they use that the map $K \to K$, $x \mapsto \phi(x)/d$
    is topologically nilpotent with respect to the $(p, d)$-adic topology (\cite[Lemma 5.15]{Anschutz-LeBras}).
    This topological nilpotence may not be true if $n \geq 2$ or $p=0$ in $R$.
    We will use
    Lemma \ref{Lemma:kernel of two fold coproduct refined}, Corollary \ref{Corollary:kernel of three fold coproduct}, and the fact that the local ring $A$ is complete and noetherian to overcome this issue; see Section \ref{Subsection:Deformations of isomorphisms}.
\end{rem}

\subsection{Proof of Lemma \ref{Lemma:kernel of two fold coproduct refined}}\label{Subsection:proof of Lemma kernel of two fold coproduct refined}

The proof of Lemma \ref{Lemma:kernel of two fold coproduct refined} will require some preliminary results.
We first introduce some notation.

If $\mathcal{E}$ is not contained in $\pi A$,
then the image of $\mathcal{E}$ in $A/\pi$ is a nonzerodivisor (since $A/\pi$ is an integral domain).
In this case, the $\delta_E$-ring
$A \{ \phi(\mathcal{E}) / \pi \}$
is $\pi$-torsion free, and is isomorphic to
the
$\O_E$-PD envelope
$D_{(\mathcal{E})}(A)$
of $A$ with respect to the ideal $(\mathcal{E})$; see Corollary \ref{Corollary:pd envelope and delta structure}.
Let $A''$ be the $\pi$-adic completion of
$A \{ \phi(\mathcal{E}) / \pi \}$, and let $g \colon A \to A''$ be the natural homomorphism.
We note that $A''$ is also $\pi$-torsion free.
We consider the following pushout squares of $\delta_E$-rings:
\[
\xymatrix{
A \ar^-{p'_1}[r]  \ar[d]_-{\phi} & B \ar^-{}[r] \ar[d]_-{} & A^{(2)} \ar^-{m}[r] \ar[d]_-{} & A \ar[d]_-{\phi} \\
A \ar[r]^-{} \ar[d]_-{g} & B' \ar^-{}[r] \ar[d]_-{} & A^{(2)'} \ar^-{}[r] \ar[d]_-{} & A \ar[d]_-{g} \\
A'' \ar[r]^-{} & B''_0 \ar^-{}[r] & A^{(2)''}_0 \ar^-{}[r] & A''.
}
\]
Let $A^{(2)''}$ be the $\pi$-adic completion
of $A^{(2)''}_0$ and $K''$ the kernel of
the induced homomorphism $A^{(2)''} \to A''$.
Since $A \to A^{(2)}$ is flat (by Proposition \ref{Proposition:prismatic envelope} and \cite[Theorem 1.5]{Yekutieli18}),
so is
$A'' \to A^{(2)''}_0$.
It follows that $A^{(2)''}$ is $\pi$-torsion free.
In the case where $\mathcal{E} \in \pi A$, we set $A^{(2)''}:= A^{(2)}$ and $K'':=K$.

\begin{lem}\label{Lemma:K'' case I}
Let the notation be as above. Then the following assertions hold:
\begin{enumerate}
    \item We have $\phi(K'') \subset \pi K''$.
    \item 
    We have $x_i \in \pi K''$ for any $1 \leq i \leq n$.
    (Here we denote the image of $x_i \in B$ in $A^{(2)''}$ again by $x_i$.)
    We set $w_i:=x_i/\pi \in K''$.
    Then $K''/\pi K''$ is generated by the images of
    $\{ \delta^j_E(w_i) \}_{1 \leq i \leq n, j \geq 0}$ as an $A^{(2)''}$-module.
\end{enumerate}
\end{lem}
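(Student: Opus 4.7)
Plan: The proof splits into two cases depending on whether $\mathcal{E} \in \pi A$ or not. In each case the assertions are extracted from the construction of $A^{(2)''}$ as a base change of $A^{(2)}$ (along the Frobenius twist $\phi \colon A \to A$ followed by $g \colon A \to A''$), together with Lemma~\ref{Lemma:kernel of two fold coproduct I}. The key observation is that the image in $A^{(2)''}$ of the generator $d = p_1(\mathcal{E})$ of $I^{(2)}$ becomes (via the Frobenius twist) the image of $\phi(\mathcal{E})$ in $A''$, which equals $\pi v$ with $v := \phi(\mathcal{E})/\pi \in A''$; this replaces the role of $d$ by $\pi$ inside $A^{(2)''}$.

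When $\mathcal{E} \in \pi A$, the hypothesis that the constant term of $\mathcal{E}$ is a uniformizer of $\O$ forces $\mathcal{E} = \pi u$ for some unit $u \in A^\times$; hence $(\mathcal{E}) = (\pi)$, $A'' = A$, and $A^{(2)''} = A^{(2)}$, $K'' = K$. Choosing $d = \pi u_0$ for a unit $u_0 \in (A^{(2)})^\times$, we have $w_i = u_0 y_i$. Assertion (1) is immediate from Lemma~\ref{Lemma:kernel of two fold coproduct I}. For (2), the proof of Lemma~\ref{Lemma:kernel of two fold coproduct I} identifies the uncompleted kernel $K_0$ as the ideal of $A^{(2)}_0$ generated by $\{\delta_E^j(y_i)\}$; a short induction using the identity $\delta_E(u_0 y_i) \equiv u_0^q \delta_E(y_i) + y_i^q \delta_E(u_0) \pmod{\pi}$ (together with $y_i = u_0^{-1} w_i$) then shows that modulo $\pi$ the ideals generated by $\{\delta_E^j(w_i)\}$ and by $\{\delta_E^j(y_i)\}$ coincide, which gives (2).

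When $\mathcal{E} \notin \pi A$, the ring $A'' = A\{\phi(\mathcal{E})/\pi\}^{\wedge_\pi}$ is $\pi$-torsion free and $\phi(\mathcal{E}) = \pi v$ with $v \in A''$. Writing $\bar{y}_i$ for the image of $y_i \in A^{(2)}$ in $A^{(2)''}$, the Frobenius twist gives $x_i = \phi(\mathcal{E}) \bar{y}_i = \pi v \bar{y}_i$ in $A^{(2)''}$; since $A^{(2)''}/K'' = A''$ is $\pi$-torsion free, $\pi A^{(2)''} \cap K'' = \pi K''$, and thus $w_i = v \bar{y}_i \in K''$ is well-defined. For (1), Lemma~\ref{Lemma:kernel of two fold coproduct I} gives $\phi(K) \subset dK$ in $A^{(2)}$, and Frobenius-compatibility of the base change combined with $\phi(d) \in \pi A''$ yields $\phi(K'') \subset \pi K''$.

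The main obstacle is the generation statement (2) in Case~2, which reduces to showing that $v$ is a unit in $A''/\pi$. Using $\phi(\mathcal{E}) = \mathcal{E}^q + \pi \delta_E(\mathcal{E})$ and the relation $\mathcal{E}^q = \pi \gamma_\pi(\mathcal{E})$ in the $\O_E$-PD envelope, we compute $v \equiv \delta_E(\mathcal{E}) + \gamma_\pi(\mathcal{E}) \pmod{\pi}$. Here $\delta_E(\mathcal{E}) \in A$ is a unit (as $\mathcal{E}$ is distinguished), while $\gamma_\pi(\mathcal{E})$ is nilpotent modulo $\pi$: indeed $\gamma_\pi(\mathcal{E})^q = \mathcal{E}^{q^2}/\pi^q = \pi \cdot (\mathcal{E}^{q^2}/\pi^{q+1}) \in \pi A''$ by the iterated $\O_E$-PD structure. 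Hence $v$ is a unit in $A''/\pi$. Granted this, assertion (2) follows from the inductive $\delta_E$-argument of Case~1 applied to $w_i = v \bar{y}_i$ modulo $\pi$, combined with the description of $K''/\pi K''$ inherited from $K/\pi K$ via base change.
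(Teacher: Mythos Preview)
Your proof is correct and follows the same overall two-case split as the paper, but in Case~2 ($\mathcal{E}\notin\pi A$) you establish the key fact that $v=g(\phi(\mathcal{E}))/\pi$ is a unit by a different route. The paper observes directly from Lemma~\ref{Lemma:distinguished}(1) that $g(\phi(\mathcal{E}))$ is distinguished (since $\phi$ and $g$ are $\delta_E$-ring maps and $\mathcal{E}$ is distinguished) and factors as $\pi\cdot v$ with $\pi\in\mathrm{rad}(A'')$, whence $v\in (A'')^\times$. You instead compute $v\equiv\delta_E(\mathcal{E})+\gamma_\pi(\mathcal{E})\pmod{\pi}$ and show $\gamma_\pi(\mathcal{E})$ is nilpotent modulo~$\pi$; note that since $A''$ is $\pi$-adically complete this already gives $v\in (A'')^\times$, a point you should make explicit. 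For the generation statement~(2), the paper then re-identifies $A^{(2)''}$ with the $\pi$-adic completion of $B''_0\{x_1/\pi,\dotsc,x_n/\pi\}$ and reads off the generators $\{\delta_E^j(x_i/\pi)\}$ of the kernel directly from this presentation. Your route---comparing $\delta_E^j(w_i)=\delta_E^j(v\bar{y}_i)$ with $\delta_E^j(\bar{y}_i)$ inductively modulo~$\pi K''$---also works, but the induction needs part~(1) (to get $b^q\in\pi K''$ for $b\in K''$) and a check that the $\delta_E(\pi c)$ and cross terms arising from $\delta_E(x+y)$ land in $\pi K''$; these details are not spelled out. The paper's argument is shorter and avoids tracking completions in the inductive step; yours is more explicit about the $\delta_E$-calculus but correspondingly more delicate to write out in full.
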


\begin{proof}
    If $\mathcal{E} \in \pi A$, then the assertions follow from Lemma \ref{Lemma:kernel of two fold coproduct I} and its proof.
    Thus, we may assume that $\mathcal{E}$ is not contained in $\pi A$.
    Let $h \colon A^{(2)} \to A^{(2)''}$ denote the natural homomorphism.
    Since $g(\phi(\mathcal{E}))/\pi \in A''$ is a unit by Lemma \ref{Lemma:distinguished} (1),
    it follows that $h(d) \in A^{(2)''}$ is a unit multiple of $\pi$.
    The kernel $K''$ of $A^{(2)''} \to A''$ can be identified with
    the $\pi$-adic completion of $h^*K$.
    Therefore, the assertion (1) follows from Lemma \ref{Lemma:kernel of two fold coproduct I}.

    Using that the image of $g(\phi(\mathcal{E}))$ in $B''_0$ is a unit multiple of $\pi$, we see that
    $A^{(2)''}$ agrees with the $\pi$-adic completion of
    $B''_0\{ x_1/\pi, \dotsc, x_n/\pi \}$.
    Since the kernel of $B''_0 \to A''$ is generated by $x_1, \dotsc, x_n$,
    it follows that the kernel of
    $B''_0\{ x_1/\pi, \dotsc, x_n/\pi \} \to A''$
    is generated by $\{ \delta^j_E(x_i/\pi) \}_{1 \leq i \leq n, j \geq 0}$, which implies the assertion (2).
\end{proof}

\begin{lem}\label{Lemma:K'' case II}
We define
\[
\phi_1 \colon K'' \to K'', \quad x \mapsto \phi(x)/\pi.
\]
The induced $\phi$-linear homomorphism
$K''/\pi K'' \to K''/ \pi K''$
is denoted by the same symbol $\phi_1$.
Let
$\overline{M}'' \subset K''/\pi K''$
be the
$A^{(2)''}$-submodule generated by the images of
$
\phi_1(w_1), \dotsc, \phi_1(w_n) \in K''.
$
Then we have inclusions
\[
\phi_1(K''/\pi K'') \subset \overline{M}'' \quad \text{and} \quad \phi_1(\overline{M}'') \subset (t_1, \dotsc, t_n)\overline{M}'',
\]
where we denote the image of $t_i \in A^{(2)}$ in $A^{(2)''}$ again by $t_i$.
\end{lem}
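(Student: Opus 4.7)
My plan is to prove the two inclusions separately, deriving the second one (closed under $\phi_1$ with the factor $(t_1,\dots,t_n)$) from a single explicit computation of $\phi_1(\phi_1(w_i))$, and then bootstrapping to the first inclusion by induction on the generating set furnished by Lemma \ref{Lemma:K'' case I}(2).

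The key explicit calculation begins from the defining relation $\phi(x_i)=(x_i+t_i)^q-t_i^q$ on $B$, together with $x_i=\pi w_i$ and $\phi(\pi)=\pi$. Expanding the binomial gives
\[
\phi(w_i) \;=\; \sum_{h=1}^{q} \binom{q}{h}\,\pi^{h-1}\,w_i^{\,h}\,t_i^{\,q-h},
\]
and every coefficient lies in $\pi\O_E$ (for $1\le h\le q-1$, $\binom{q}{h}$ is divisible by $p\in\pi\O_E$; for $h=q$ the factor $\pi^{q-1}$ suffices, as $q\ge 2$). Thus $v_i:=\phi_1(w_i)\in K''$, and writing $\phi(w_i)=\pi v_i$ and applying $\phi$ again (using $\phi(t_i)=t_i^q$ and $\O_E$-linearity) yields
\[
\phi_1(\phi_1(w_i)) \;=\; \phi^{2}(w_i)/\pi^{2} \;=\; \sum_{h=1}^{q}\binom{q}{h}\,\pi^{\,2h-3}\,v_i^{\,h}\,t_i^{\,q(q-h)}.
\]
Modulo $\pi$, every term with $h\ge 2$ vanishes (since $2h-3\ge 1$), leaving the single contribution $(q/\pi)\,v_i\,t_i^{\,q(q-1)}$. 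Because $q(q-1)\ge 2$, this lies in $(t_i)\overline{M}''$, so $\phi_1(\phi_1(w_i))\in (t_1,\dots,t_n)\overline{M}''$.

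The second inclusion now follows formally: $\overline{M}''$ is generated by $v_1,\dots,v_n$, and $\phi$-semilinearity of $\phi_1$ gives $\phi_1(a\,v_i)=\phi(a)\,\phi_1(v_i)$; since $\phi(t_j)=t_j^q\in(t_1,\dots,t_n)$, the ideal $(t_1,\dots,t_n)$ is $\phi$-stable in $A^{(2)''}$, and the desired inclusion $\phi_1(\overline{M}'')\subset(t_1,\dots,t_n)\overline{M}''$ drops out. For the first inclusion, by Lemma \ref{Lemma:K'' case I}(2) it suffices to show $\phi_1(\overline{\delta_E^{\,j}(w_i)})\in\overline{M}''$ for all $i$ and $j\ge 0$, which I would prove by induction on $j$. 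The base case $j=0$ is the definition of $\overline{M}''$.

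For the inductive step, I use that $y^q\in\pi K''$ for every $y\in K''$: indeed, $\pi\delta_E(y)=\phi(y)-y^q$ together with $\phi(K'')\subset\pi K''$ (Lemma \ref{Lemma:K'' case I}(1)) forces $y^q\in\pi K''$. Hence $\delta_E(y)=\phi_1(y)-y^q/\pi$ in $K''$, and applying $\phi_1$ together with the identity $\phi_1(y^q/\pi)=\phi(y)^q/\pi^{2}=\pi^{q-2}\phi_1(y)^q$ gives
\[
\phi_1(\delta_E(y)) \;=\; \phi_1(\phi_1(y)) \;-\; \pi^{q-2}\,\phi_1(y)^{q}.
\]
Take $y=\delta_E^{\,j-1}(w_i)$. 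By the inductive hypothesis, $\overline{\phi_1(y)}\in\overline{M}''$, and then the second inclusion yields $\overline{\phi_1(\phi_1(y))}\in(t_1,\dots,t_n)\overline{M}''\subset\overline{M}''$. For the residual term: when $q\ge 3$ the factor $\pi^{q-2}$ kills it modulo $\pi$; when $q=2$ the term equals $\phi_1(y)^{2}$, and writing any lift $\phi_1(y)\equiv\sum_k a_k v_k\pmod\pi$ we get $(\sum_k a_k v_k)^{2}=\sum_{k,l}a_k a_l v_k v_l\in\sum_{l}A^{(2)''}v_l=M_0''$, so this term also reduces into $\overline{M}''$. This closes the induction. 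The main delicate point is the $q=2$ case and the bookkeeping of ``well-defined modulo $\pi$'' in the presence of $\delta_E$ (which does not descend to $\bar K$); everything else is an orderly expansion of the defining identity for $\phi(w_i)$.
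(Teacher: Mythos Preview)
Your proof is correct and rests on the same explicit computation as the paper's, namely the expansion of $\phi(w_i)$ coming from $\phi(x_i)=(x_i+t_i)^q-t_i^q$; both arguments hinge on the resulting formula $\phi_1(\phi_1(w_i))\equiv (q/\pi)t_i^{q(q-1)}v_i\pmod{\pi K''}$.

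The organization differs in one respect worth noting. The paper first introduces the auxiliary ideal $J$ generated by $\{x^q/\pi : x\in K''\}$, checks $\phi_1(J)\subset\pi J$, and uses this to reduce $K''/(J+\pi K'')$ to the span of $w_1,\dots,w_n$; both inclusions then fall out in parallel from formula~(\ref{equation:formula phi_1}). You instead establish the second inclusion first by direct expansion of $\phi^2(w_i)$, and then feed it back into the inductive step for the first inclusion via the identity $\phi_1(\delta_E(y))=\phi_1(\phi_1(y))-\pi^{q-2}\phi_1(y)^q$. Your route trades the uniform treatment via $J$ for a short case analysis at $q=2$ (where you use that $v_k v_l\in A^{(2)''}v_l$, so products of the generators stay in $\overline{M}''$). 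Both are clean; the paper's ideal $J$ absorbs all the nonlinear terms at once, while your argument is slightly more hands-on but avoids introducing an extra object.
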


\begin{proof}
We have
$x^q=\phi(x)- \pi \delta_E(x) \in \pi K''$
for every $x \in K''$ by Lemma \ref{Lemma:K'' case I}.
Let $J \subset K''$ be the ideal generated by $\{ x^q/\pi \}_{x \in K''}$.
For any $x \in K''$, we have
\[
\phi_1(x^q/\pi)= \phi(x^q/\pi)/\pi = \phi(x)^q/\pi^2 = \pi^{q-1} (\phi_1(x)^q/\pi) \in \pi J,
\]
and thus we obtain $\phi_1(J) \subset \pi J$.

We shall prove that
$K''/(J+ \pi K'')$
is generated by the images of $w_1, \dotsc, w_n$ as an $A^{(2)''}$-module.
By Lemma \ref{Lemma:K'' case I}, it suffices to show that for any $j \geq 0$ and any $i$,
the image of $\delta^j_E(w_i)$
in $K''/(J+ \pi K'')$
is contained in
the 
$A^{(2)''}$-submodule of $K''/(J+ \pi K'')$ generated by the images of $w_1, \dotsc, w_n$.
We proceed by induction on $j$.
If $j=0$, then the assertion holds trivially.
Assume that the assertion is true for some $j \geq 0$.
Since
\[
\phi_1(w_i)=\phi(x_i)/\pi^2=((x_i+t_i)^q-t^q_i)/\pi^2=((\pi w_i+t_i)^q-t^q_i)/\pi^2,
\]
we can write $\phi_1(w_i)$ as
\begin{equation}\label{equation:formula phi_1}
    \phi_1(w_i)= \pi^{q-2} w^{q}_i + (q/\pi)t^{q-1}_i w_i + \pi b_i
\end{equation}
for some element $b_i \in K''$.
For any $x \in K''$, we have
$\delta_E(x)=\phi_1(x)$
in $K''/J$.
Thus
the image of
$\delta^{j+1}_E(w_i)$
in $K''/(J+ \pi K'')$
agrees with the one of
$\phi_1(\delta^{j}_E(w_i))$,
which is contained in the 
$A^{(2)''}$-submodule of $K''/(J+ \pi K'')$ generated by the images of $\phi_1(w_1), \dotsc, \phi_1(w_n)$ by the induction hypothesis.
Then $(\ref{equation:formula phi_1})$ implies the assertion for $j+1$.

We have shown that every $x \in K''$ can be written as
\[
x= (\sum_{1 \leq i \leq n} a_i w_i) + b + \pi c
\]
for some $a_i \in A^{(2)''}$ ($1 \leq i \leq n$), $b \in J$, and $c \in K''$.
Since $\phi_1(b) \in \pi J$,
the image
of $\phi_1(x)$ in $K''/\pi K''$
coincides with that of 
$\sum_{1 \leq i \leq n} \phi(a_i) \phi_1(w_i)$.
This proves that
$\phi_1(K''/\pi K'') \subset \overline{M}''$.
Moreover, since
$\phi_1(w^q_i)=\phi(w_i)\phi_1(w^{q-1}_i)$
is contained in $\pi K''$,
it follows from (\ref{equation:formula phi_1}) that
the image
of $\phi_1(\phi_1(w_i))$ in $K''/\pi K''$
is equal to
that of
$\phi_1((q/\pi)t^{q-1}_i w_i)=(q/\pi)t^{q(q-1)}_i \phi_1(w_i)$.
This proves that
$\phi_1(\overline{M}'') \subset (t_1, \dotsc, t_n)\overline{M}''$.
\end{proof}

We now prove Lemma \ref{Lemma:kernel of two fold coproduct refined}.

\begin{proof}[Proof of Lemma \ref{Lemma:kernel of two fold coproduct refined}]
    We first treat the case where $\mathcal{E} \in \pi A$.
    In this case $d$ is a unit multiple of $\pi$.
    Thus, the assertion follows from Lemma \ref{Lemma:K'' case II}.

    We now assume that $\mathcal{E}$ is not contained in $\pi A$.
    We define
    \[
    \phi_1 \colon K \to K, \quad x \mapsto \phi(x)/d.
    \]
    The induced $\phi$-linear homomorphism
    $K/(\pi, d)K \to K/(\pi, d)K$
    is also denoted by $\phi_1$.
    Let
    $
    \overline{M} \subset K/(\pi, d)K
    $
    be the $A^{(2)}$-submodule generated by the images of
    $
    \phi_1(y_1), \dotsc, \phi_1(y_n) \in K.
    $
    It suffices to prove that
    $\phi_1(K/(\pi, d)K) \subset \overline{M}$
    and
    $\phi_1(\overline{M}) \subset (t_1, \dotsc, t_n)\overline{M}$.

    Let $f \colon A^{(2)} \to A^{(2)'}$ denote the natural homomorphism.
    Let $K'$ be the kernel of the homomorphism $A^{(2)'} \to A$, which can be identified with $f^*K$.
    We define $\phi'_1 \colon K' \to K'$ by $x \mapsto \phi(x)/f(d)$, and
    let
    $
    \overline{M}' \subset K'/(\pi, f(d))K'
    $
    be the $A^{(2)'}$-submodule generated by the images of
    $
    \phi'_1(f(y_1)), \dotsc, \phi'_1(f(y_n)) \in K'.
    $
    Since $\phi \colon A \to A$ is faithfully flat, so is $f$.
    Therefore, in order to prove the assertion, it is enough to prove that
    \begin{equation}\label{equation:goal}
        \phi'_1(K'/(\pi, f(d))K') \subset \overline{M}' \quad \text{and} \quad \phi'_1(\overline{M}') \subset (f(t_1), \dotsc, f(t_n))\overline{M}'.
    \end{equation}

    The homomorphism $A^{(2)'} \to A^{(2)''}$
    induced by $g \colon A \to A''$ is again denoted by $g$.
    The element $g(f(d))$ is a unit multiple of $\pi$ in $A^{(2)''}$.
    Thus, for $\phi_1 \colon K'' \to K''$ defined in Lemma \ref{Lemma:K'' case II},
    the element $g(\phi'_1(x))$ is a unit multiple of
    $\phi_1(g(x))$ for any $x \in K'$.
    Also, the induced homomorphism
    $A^{(2)'}/(\pi, f(d)) \to A^{(2)''}/\pi$,
    again denoted by $g$, sends $\overline{M}'$ into $\overline{M}''$.
    It follows from Lemma \ref{Lemma:K'' case II} that, for any
    $x \in K'/(\pi, f(d))K'$
    (resp.\ $x \in \overline{M}'$),
    we have
    \begin{equation}\label{equation:key inclusions phi'_1}
        g(\phi'_1(x)) \in \overline{M}''
    \quad (\text{resp.\ } g(\phi'_1(x)) \in (g(f(t_1)), \dotsc, g(f(t_n)))\overline{M}'').
    \end{equation}

    Since $A''/\pi \simeq D_{(\mathcal{E})}(A)/\pi$, 
    we can find a homomorphism
    \[
    s \colon A''/\pi \to A/(\pi, \phi(\mathcal{E}))
    \]
    of $\O_E$-algebras such that
    the composition
    $A/(\pi, \phi(\mathcal{E})) \overset{g}{\to} A''/\pi \overset{s}{\to} A/(\pi, \phi(\mathcal{E}))$
    is the identity; see Example \ref{Example:pi PD polynomial ring} and Lemma \ref{Lemma:pd envelope regular sequence}.
    We consider the following pushout squares of $\O_E$-algebras:
    \[
    \xymatrix{
    A''/\pi \ar^-{}[r]  \ar[d]_-{s} & A^{(2)''}/\pi \ar^-{}[r] \ar[d]_-{\widetilde{s}} & A''/\pi  \ar[d]_-{s}  \\
    A/(\pi, \phi(\mathcal{E})) \ar[r]^-{}  & A^{(2)'}/(\pi, f(d)) \ar^-{}[r]  & A/(\pi, \phi(\mathcal{E})).
    }
    \]
    The homomorphism $g \colon A^{(2)'}/(\pi, f(d)) \to A^{(2)''}/\pi$
    is a section of $\widetilde{s}$.
    We observe that $\widetilde{s}(K''/\pi K'') \subset K'/(\pi, f(d))K'$ and $\widetilde{s}(\overline{M}'') \subset \overline{M}'$.
    It follows from (\ref{equation:key inclusions phi'_1}) that,
    for any
    $x \in K'/(\pi, f(d))K'$
    (resp.\ $x \in \overline{M}'$),
    its image $\phi'_1(x)=\widetilde{s}(g(\phi'_1(x)))$
    belongs to $\overline{M}'$ (resp.\ $(f(t_1), \dotsc, f(t_n))\overline{M}'$).
    This proves (\ref{equation:goal}),
    and the proof of Lemma \ref{Lemma:kernel of two fold coproduct refined} is now complete.
\end{proof}

\subsection{Deformations of isomorphisms}\label{Subsection:Deformations of isomorphisms}

As in Section \ref{Subsection:Coproducts of Breuil--Kisin prisms},
we write $(A, I)=(\mathfrak{S}_\O, (\mathcal{E}))$.
In this subsection, as a preparation for the proof of Theorem \ref{Theorem:main result on G displays over complete regular local rings}, we study deformations of isomorphisms of $G$-$\mu$-displays over $(A, I)$ along the morphisms
$m \colon (A^{(2)}, I^{(2)}) \to (A, I)$ and $m \colon (A^{(3)}, I^{(3)}) \to (A, I)$ defined in Section \ref{Subsection:Coproducts of Breuil--Kisin prisms}.
Throughout this subsection, we assume that $\mu$ is 1-bounded.

Our setup is as follows.
Let
$(A', I'):=(A^{(2)}, I^{(2)})$
(resp.\ $(A', I'):=(A^{(3)}, I^{(3)})$).
Let
$m \colon (A', I') \to (A, I)$
denote
$m \colon (A^{(2)}, I^{(2)}) \to (A, I)$
(resp.\ $m \colon (A^{(3)}, I^{(3)}) \to (A, I)$).
Let $f_1, f_2 \in \{ p_1, p_2 \}$
(resp.\ $f_1, f_2 \in \{ q_1, q_2, q_3 \}$).
We do not exclude the case where $f_1=f_2$.

The purpose of this subsection is to prove the following result:

\begin{prop}\label{Proposition:deformation of isomorphism}
Assume that $\mu$ is 1-bounded.
    Let $\mathcal{Q}_1$ and $\mathcal{Q}_2$ be $G$-$\mu$-displays over $(A, I)$.
    Then
    the map
    \begin{equation}\label{equation:reduction map for isomorphisms}
        m^{*} \colon \Hom_{G\mathchar`-\mathrm{Disp}_\mu(A', I')}(f^*_1(\mathcal{Q}_1), f^*_2(\mathcal{Q}_2)) \to \Hom_{G\mathchar`-\mathrm{Disp}_\mu(A, I)}(\mathcal{Q}_1, \mathcal{Q}_2)
    \end{equation}
    induced by the base change functor
    $m^* \colon G\mathchar`-\mathrm{Disp}_\mu(A', I') \to G\mathchar`-\mathrm{Disp}_\mu(A, I)$ is bijective.
\end{prop}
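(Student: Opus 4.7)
The plan is to reduce to the banal case, turn the statement into a lifting problem for a single element of $G_\mu$ satisfying a Frobenius-twisted equation, and solve that lifting problem by a contraction/iteration argument that exploits the refined Frobenius estimates on the kernel of $m \colon A' \to A$.

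First, by flat descent (Proposition~\ref{Proposition:flat descent of G display}), I may assume that both $\mathcal{Q}_1$ and $\mathcal{Q}_2$ are banal, so that $\mathcal{Q}_i \simeq \mathcal{Q}_{X_i}$ for elements $X_i \in G(A)_I$ (Remark~\ref{Remark:quotient groupoid banal G-displays}). Fix a generator $d \in I$; then morphisms $\mathcal{Q}_{X_1} \to \mathcal{Q}_{X_2}$ correspond to elements $g \in G_\mu(A,I)$ satisfying $g^{-1} X_{2,d}\, \sigma_{\mu,d}(g) = X_{1,d}$. After picking a generator $d' \in I'$ and using the canonical change-of-generator identifications $G(A')_{d'} \overset{\sim}{\to} G(A')_{f_i(d)}$ coming from the units $f_i(d)/d' \in (A')^{\times}$, morphisms $f_1^{*}\mathcal{Q}_{X_1} \to f_2^{*}\mathcal{Q}_{X_2}$ correspond to elements $g' \in G_\mu(A',I')$ satisfying the analogous equation with $Y_i := f_i(X_i)_{d'} \in G(A')$ in place of $X_{i,d}$, and $m^{*}$ is simply the restriction of $m \colon G(A') \to G(A)$ to these solution sets.

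Given such a $g$, I would first produce an initial lift $g_0 \in G_\mu(A',I')$. Since $\mu$ is $1$-bounded, Proposition~\ref{Proposition:BB isomorphism} identifies $G_\mu(A',I')$ with the preimage of $P_\mu(A'/I')$ in $G(A')$; smoothness of $G$ and of $P_\mu$, together with the $(\pi,I')$-adic completeness of $A'$ and a Hensel-style lift along $m$ carried out layer-by-layer modulo $(\pi,I')^n$, supply such a $g_0$. Writing $g' = g_0 h$ with $h$ in the kernel $\mathcal{K} := \ker(G(A') \to G(A))$, and using that $\sigma_{\mu,d'}$ is a group homomorphism on $G_\mu(A',I')$, the equation for $g'$ rewrites as the fixed-point equation
\[
h \;=\; e_0 \cdot Y_1\, \sigma_{\mu,d'}(h)\, Y_1^{-1} \;=:\; \Psi(h),
\]
where $e_0 := g_0^{-1} Y_2\, \sigma_{\mu,d'}(g_0)\, Y_1^{-1}$ measures the defect of $g_0$; it lies in $\mathcal{K}$ precisely because $g$ already solved the downstairs equation.

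The main obstacle, and the core of the proof, is to show that $\Psi$ has a unique fixed point in $\mathcal{K} \cap G_\mu(A',I')$. Lemma~\ref{Lemma:kernel of two fold coproduct I} (and Corollary~\ref{Corollary:kernel of three fold coproduct}(1) in the three-fold case) yields a divided Frobenius $\phi_1 \colon K \to K$, $x \mapsto \phi(x)/d$, on the kernel $K := \ker(m \colon A' \to A)$. As noted in Remark~\ref{Remark:compare with Anschutz-Le Bras}, $\phi_1$ is \emph{not} topologically nilpotent for the $(\pi,d)$-adic topology once $n \geq 2$ or $\pi$ is nilpotent in $R$, so the direct Ansch\"utz--Le Bras strategy breaks down. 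To repair it, I would invoke the refined inclusions of Lemma~\ref{Lemma:kernel of two fold coproduct refined} (and Corollary~\ref{Corollary:kernel of three fold coproduct}(2)): writing $M \subset A'$ for the ideal generated by $\phi_1(y_1),\dots,\phi_1(y_n)$, one has $\phi_1(K) \subset M + (\pi,d)K$ and $\phi_1(M) \subset (t_1,\dots,t_n)\,M + (\pi,d)\,K$, so $N$-fold iteration of $\phi_1$ forces $K$ into ideals containing arbitrarily large powers of $(t_1,\dots,t_n)$. Since $A=\mathfrak{S}_\O$ is a noetherian complete local ring with maximal ideal $(\pi,t_1,\dots,t_n)$, Krull's intersection theorem applied level-wise modulo powers of $(\pi,d)$ shows that the intersection of these ideals over $N$ vanishes in $A'$. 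Translating this back to the group via the description of the successive quotients $G^{\geq m}_\mu/G^{\geq m+1}_\mu$ from Lemma~\ref{Lemma:congruent subgroup of Gmu} produces a decreasing filtration on $\mathcal{K} \cap G_\mu(A',I')$ with respect to which $\Psi$ strictly contracts. The iteration $h_{n+1}=\Psi(h_n)$ starting from $h_0=1$ then converges to a unique fixed point, yielding both existence and uniqueness of the desired lift $g'$.
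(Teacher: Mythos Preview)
Your approach is essentially the paper's --- a contraction argument driven by the refined Frobenius estimates of Lemma~\ref{Lemma:kernel of two fold coproduct refined} and Corollary~\ref{Corollary:kernel of three fold coproduct} --- but there is a genuine gap in the set-up of the iteration.

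The problem is the domain on which $\Psi$ acts. You claim to iterate $\Psi(h)=e_0\,Y_1\,\sigma_{\mu,d'}(h)\,Y_1^{-1}$ inside $\mathcal{K}\cap G_\mu(A',I')$, but $\sigma_{\mu,d'}$ is only defined on $G_\mu(A',I')$, and you have not checked that $\Psi$ preserves this subgroup. Concretely, starting from $h_0=1$ gives $h_1=e_0$, and you only verify $e_0\in\mathcal{K}=G(K)$; there is no reason a priori that $e_0\in G_\mu(A',I')$, so the next step $\sigma_{\mu,d'}(e_0)$ may not even make sense. The paper fixes this by working on the smaller normal subgroup $G(dK)\subset G_\mu(A',I')$ (this inclusion uses Proposition~\ref{Proposition:BB isomorphism} and $dK\subset I'$). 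Lemma~\ref{Lemma:stability of G(J)} shows $\sigma_{\mu,d'}$ preserves $G(dK)$, and conjugation by $Y_1$ preserves $G(J)$ for any ideal $J$; so $\Psi$ stabilizes $G(dK)$ \emph{provided} $e_0\in G(dK)$. That last point is exactly what the paper checks (for its specific target element), using the two observations $e_0\in G(K)$ and $e_0\in G(I')$ together with $K\cap I'=dK$. You will not get $e_0\in G(I')$ from a generic Hensel-style lift of $g$; the clean choice is $g_0:=f_2(g)$ (the section $m\circ f_2=\id$ makes Hensel lifting unnecessary anyway), and then a short computation as in the paper gives $e_0\in G(dK)$.

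Two smaller points. First, your reduction to the banal case via flat descent is too quick: over $(A,I)=(\mathfrak{S}_\O,(\mathcal{E}))$ with $k$ merely perfect, the $\mathcal{Q}_i$ need not be banal; the paper passes to a finite Galois extension $\widetilde{k}/k$ using Lemma~\ref{Lemma:banal after finite field extension} and then invokes Galois descent, noting that $(A'_{\widetilde{\O}},I'A'_{\widetilde{\O}})$ is again a coproduct in $(R_{\widetilde{\O}})_{\Prism,\O_E}$. Second, the filtration you need is not the $I'$-adic one from Lemma~\ref{Lemma:congruent subgroup of Gmu} but the custom chain of ideals $(\pi,d)^l dK$ and, inside each graded piece, the $(t_1,\dots,t_n)$-adic refinement via the ideal $M$; the paper packages this as bijectivity of $\mathcal{V}_{d,X}$ on successive quotients (Lemmas~\ref{Lemma:square zero case}--\ref{Lemma:VdX bijective primitive case}) and passes to the limit in Proposition~\ref{Proposition:bijectivity of VdX for G(dK)}. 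Your fixed-point formulation is equivalent (the equation $h=\Psi(h)$ is precisely $\mathcal{V}_{d,Y_1}(h)=e_0^{-1}$), but the bookkeeping must be done on $G(dK)$ with that filtration, not on $\mathcal{K}\cap G_\mu$ with the $I'$-adic one.
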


We need some preliminary results for the proof of Proposition \ref{Proposition:deformation of isomorphism}.
We will use the following notation.
Let $H$ be a group scheme over $\O$.
For an ideal $J \subset A'$,
we write
\[
H(J):= \Ker (H(A') \to H(A'/J))
\]
for the kernel of the homomorphism $H(A') \to H(A'/J)$.
If $H=G_\O$, then we simply write $G(J):=G_\O(J)$.

Let $K$ denote the kernel of $m \colon A' \to A$.
(We note that if $A'=A^{(3)}$, then this kernel was denoted by $L$ in Corollary \ref{Corollary:kernel of three fold coproduct}.)
Let $d \in I'$ be a generator.

\begin{lem}\label{Lemma:stability of G(J)}
    Let $J \subset A'$ be an ideal such that
    $J \subset dK$ and, for any $x \in J$, we have $\phi(x/d) \in J$.
    Then the homomorphism
    $\sigma_{\mu, d} \colon G_\mu(A', I') \to G(A')$
    (see $(\ref{equation:sigma map of sets})$)
    sends $G(J) \subset G_\mu(A', I')$ into itself.
\end{lem}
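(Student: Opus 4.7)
The plan is to decompose an element $g \in G(J)$ into a $U^-_\mu$-part and a $P_\mu$-part using the 1-bounded structure, and verify separately that $\sigma_{\mu, d}$ preserves each part inside $G(J)$. First, $J \subset dK \subset I'$ implies every $g \in G(J)$ reduces to $1$ in $G(A'/I')$, which lies in $P_\mu(A'/I')$, so $g \in G_\mu(A', I')$ by Proposition \ref{Proposition:BB isomorphism} (applicable as $\mu$ is 1-bounded and $A'$ is $I'$-adically complete). In particular $\sigma_{\mu, d}$ is defined on $G(J)$. By Proposition \ref{Proposition:decomposition of display group}, write $g = u \cdot p$ with $u \in U^-_\mu(A') \cap G_\mu(A', I')$ and $p \in P_\mu(A')$. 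Reducing modulo $J$, using that $U^-_\mu \times P_\mu \to G_\O$ is an open immersion and hence a monomorphism of schemes, the equality $\bar u \bar p = 1 = 1 \cdot 1$ forces $\bar u = \bar p = 1$, so $u \in U^-_\mu(J)$ and $p \in P_\mu(J)$. Since $\sigma_{\mu, d}$ is a group homomorphism, it suffices to show $\sigma_{\mu, d}(u), \sigma_{\mu, d}(p) \in G(J)$ separately.

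For the $P_\mu$-piece, Lemma \ref{Lemma:Pmu structure} gives $\mu(d) p \mu(d)^{-1} \in P_\mu(A')$, and since $p \equiv 1 \pmod J$ this conjugate reduces mod $J$ to $\mu(\bar d) \cdot 1 \cdot \mu(\bar d)^{-1} = 1$, hence lies in $G(J)$. To transport through the Frobenius I would verify $\phi(J) \subset J$: for any $v \in J \subset dK$, the element $y := v/d \in A'$ is well-defined ($d$ being a nonzerodivisor), the hypothesis gives $\phi(y) \in J$, so $\phi(v) = \phi(d)\phi(y) \in J$. Consequently $\phi$ descends to an endomorphism of $A'/J$, preserves $G(J)$, and $\sigma_{\mu, d}(p) \in G(J)$.

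The main step is the $U^-_\mu$-piece, where the 1-bounded hypothesis enters essentially. By Lemma \ref{Lemma:Umu structure} the logarithm $U^-_\mu \overset{\sim}{\to} V(\mathfrak{g}_1)$ is a group scheme isomorphism, giving $U^-_\mu(A') = \mathfrak{g}_1 \otimes_\O A'$. Under this identification, Remark \ref{Remark:Umu identify}(2) presents $u$ as an element $v = dw$ with $w \in \mathfrak{g}_1 \otimes_\O A'$ uniquely determined, and $\mu(d) u \mu(d)^{-1}$ corresponds to $w$. Since $\mathfrak{g}_1$ is free over $\O$ (as a direct summand of the free $\O$-module $\Lie(G_\O)$), the condition $u \in U^-_\mu(J)$ reads $v \in \mathfrak{g}_1 \otimes_\O J$, equivalently $w \in \mathfrak{g}_1 \otimes_\O J'$, where $J' := \{x \in A' : dx \in J\}$. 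The hypothesis $\phi(J/d) \subset J$ is precisely $\phi(J') \subset J$, so applying $\phi$ coordinatewise (after choosing an $\O$-basis of $\mathfrak{g}_1$) sends $\mathfrak{g}_1 \otimes_\O J'$ into $\mathfrak{g}_1 \otimes_\O J$. Hence $\sigma_{\mu, d}(u) = \phi(\mu(d) u \mu(d)^{-1})$ corresponds to an element of $\mathfrak{g}_1 \otimes_\O J$, i.e., $\sigma_{\mu, d}(u) \in U^-_\mu(J) \subset G(J)$, completing the argument.
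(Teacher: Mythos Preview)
Your proof is correct and follows essentially the same approach as the paper: decompose $G(J)$ via the 1-bounded product structure into a $U^-_\mu$-piece and a $P_\mu$-piece, and handle each separately using, respectively, the hypothesis $\phi(\tfrac{1}{d}J)\subset J$ and its consequence $\phi(J)\subset J$. One small imprecision: since $\phi$ is not $\O$-linear, $\sigma_{\mu,d}(u)=\phi(\mu(d)u\mu(d)^{-1})$ lies in $U^-_{\phi^*\mu}(J)$ rather than $U^-_\mu(J)$, but this is still contained in $G(J)$, so the conclusion is unaffected (the paper simply writes $\sigma_{\mu,d}(\Lie(U^-_\mu)\otimes_\O J)\subset G(J)$ without naming the intermediate subgroup).
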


\begin{proof}
    We note that, by Proposition \ref{Proposition:BB isomorphism}, we have
    $G(J) \subset G(dK) \subset G_\mu(A', I')$, and
    the multiplication map
    $U^-_{\mu} \times_{\Spec \O} P_\mu \to G_\O$
    induces a bijection
    \[
    (\Lie(U^-_{\mu}) \otimes_\O J)
    \times P_\mu(J) \overset{\sim}{\to} G(J).
    \]
    Thus, it suffices to prove that
    $\sigma_{\mu, d}(P_\mu(J)) \subset G(J)$ and 
    $\sigma_{\mu, d}(\Lie(U^-_{\mu}) \otimes_\O J) \subset G(J)$.

    By Remark \ref{Remark:formula action of cocharacter} and Lemma \ref{Lemma:Pmu structure}, we have
    $\mu(d)P_\mu(J)\mu(d)^{-1} \subset P_\mu(J)$.
    (In fact, this holds for any ideal $J \subset A'$.)
    Since $\phi(J) \subset J$, we have $\phi(G(J)) \subset G(J)$.
    It follows that $\sigma_{\mu, d}(P_\mu(J)) \subset G(J)$.

    Since $\mu$ is 1-bounded, the homomorphism
    $G_\mu(A', I') \to G(A')$, $g \mapsto \mu(d)g\mu(d)^{-1}$ restricts to a homomorphism
    \[
    \Lie(U^-_{\mu}) \otimes_\O J \to \Lie(U^-_{\mu}) \otimes_\O \frac{1}{d} J, \quad v \mapsto v/d.
    \]
    (See Remark \ref{Remark:Umu identify}.)
    Since $\phi((1/d)J) \subset J$, we obtain
    $\sigma_{\mu, d}(\Lie(U^-_{\mu}) \otimes_\O J) \subset G(J)$.
\end{proof}

\begin{defn}\label{Definition:UdX FdX}
    Let $J \subset A'$ be an ideal as in Lemma \ref{Lemma:stability of G(J)}. For an element $X \in G(A')$,
we define a homomorphism
\[
\mathcal{U}_{d, X} \colon G(J) \to  G(J), \quad g \mapsto X\sigma_{\mu, d}(g)X^{-1}.
\]
We also define a map of sets
\[
\mathcal{V}_{d, X} \colon G(J) \to  G(J), \quad g \mapsto \mathcal{U}_{d, X}(g)g^{-1}.
\]
\end{defn}

Let $J_2 \subset J_1 \subset A'$ be two ideals which satisfy the assumption of Lemma \ref{Lemma:stability of G(J)}.
Then $\mathcal{U}_{d, X} \colon G(J_1) \to G(J_1)$ induces a homomorphism
\[
G(J_1)/G(J_2) \to G(J_1)/G(J_2),
\]
which we denote by the same symbol $\mathcal{U}_{d, X}$.
Let
$
\mathcal{V}_{d, X} \colon G(J_1)/G(J_2) \to G(J_1)/G(J_2)
$
be the map of sets defined by
$g \mapsto  \mathcal{U}_{d, X}(g)g^{-1}$.

By Lemma \ref{Lemma:kernel of two fold coproduct I} and Corollary \ref{Corollary:kernel of three fold coproduct}, we have
$\phi(K) \subset dK$.
Thus, the ideal $dK \subset A'$ satisfies the assumption of Lemma \ref{Lemma:stability of G(J)}.
We shall prove
(in Proposition \ref{Proposition:bijectivity of VdX for G(dK)} below)
that
$
\mathcal{V}_{d, X} \colon G(dK) \to  G(dK)
$
is bijective for any $X \in G(A')$, from which we will deduce Proposition \ref{Proposition:deformation of isomorphism}.
For this purpose, we need the following lemmas.

\begin{lem}\label{Lemma:square zero case}
 Let $J_2 \subset J_1 \subset A'$ be two ideals which satisfy the assumption of Lemma \ref{Lemma:stability of G(J)}.
 Assume that for any $x \in J_1$, we have $\phi(x/d) \in J_2$.
 Then we have
 \[
 \sigma_{\mu, d}(G(J_1)) \subset G(J_2).
 \]
 In particular, the map
 $
\mathcal{V}_{d, X} \colon G(J_1)/G(J_2) \to G(J_1)/G(J_2)
 $
 is equal to the map $g \mapsto g^{-1}$ for any $X \in G(A')$.
\end{lem}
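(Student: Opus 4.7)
The second assertion follows at once from the first: since $G(J_2)$ is normal in $G(A')$, we have $X\sigma_{\mu, d}(g)X^{-1} \in G(J_2)$ whenever $\sigma_{\mu, d}(g) \in G(J_2)$, whence $\mathcal{V}_{d, X}(g) = X\sigma_{\mu, d}(g)X^{-1} g^{-1} \equiv g^{-1} \pmod{G(J_2)}$. So the work is to establish the inclusion $\sigma_{\mu, d}(G(J_1)) \subset G(J_2)$.

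My strategy is to first prove the intermediate inclusion $\mu(d) G(J_1) \mu(d)^{-1} \subset G(J_1/d)$, where I write $J_1/d := \{y \in A' : dy \in J_1\}$, an ideal of $A'$ containing $J_1$ (since $J_1 \subset dK \subset (d)$). Using the $1$-boundedness of $\mu$, Proposition \ref{Proposition:BB isomorphism} factors each $g \in G(J_1) \subset G(I')$ uniquely as $g = ut$ with $u \in U^-_\mu(A') \cap G_\mu(A', I')$ and $t \in P_\mu(A')$, and the uniqueness of $1 = 1\cdot 1$ forces $u \in U^-_\mu(J_1)$ and $t \in P_\mu(J_1)$. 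For the unipotent factor, the linear isomorphism $\log \colon U^-_\mu \overset{\sim}{\to} V(\Lie(U^-_\mu))$ of Lemma \ref{Lemma:Umu structure} identifies $u$ with $w = dv$ for some $v \in \Lie(U^-_\mu) \otimes_\O (J_1/d)$, and the diagram in Remark \ref{Remark:Umu identify} (2) identifies $\mu(d) u \mu(d)^{-1}$ with $v$, placing it in $G(J_1/d)$. For the parabolic factor, I test on each $x \in A'_G$: extending to $A_G = A'_G \otimes_{\O_E} \O$ and decomposing $x \otimes 1 = \sum_i x_i$ into $\mu$-weight components, $t \in P_\mu$ annihilates $x_i$ for $i > 0$ by Lemma \ref{Lemma:Pmu structure}, and Remark \ref{Remark:formula action of cocharacter} yields
\[
(\mu(d) t \mu(d)^{-1})^*(x) - \epsilon(x) = \bigl(t^*(x_0) - \epsilon(x_0)\bigr) + \sum_{i<0} d^{|i|} t^*(x_i),
\]
each summand lying in $J_1 \subset J_1/d$, so $\mu(d) t \mu(d)^{-1} \in G(J_1) \subset G(J_1/d)$. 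Combining, $\mu(d) g \mu(d)^{-1} \in G(J_1/d)$.

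To finish, I apply $\phi$. For $x \in A'_G$, one has $\epsilon(x) \in \O_E$, and since $\phi$ restricts to the identity on $\O_E$,
\[
\sigma_{\mu, d}(g)^*(x) - \epsilon(x) = \phi\bigl((\mu(d) g \mu(d)^{-1})^*(x)\bigr) - \phi(\epsilon(x)) = \phi\bigl((\mu(d) g \mu(d)^{-1})^*(x) - \epsilon(x)\bigr) \in \phi(J_1/d) \subset J_2,
\]
the last inclusion by hypothesis. Hence $\sigma_{\mu, d}(g) \in G(J_2)$. The subtle point, and the main thing to watch, is precisely the weight-zero contribution, which would otherwise leave a residual term $\phi(\epsilon(x)) - \epsilon(x) \in \O$ that need not lie in $J_2$ when $k \neq \F_q$; this is sidestepped by testing membership of $G(J_2)$ against the $\O_E$-coordinate ring $A'_G$ rather than $A_G$, which forces $\epsilon(x) \in \O_E$.
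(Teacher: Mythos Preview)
Your proof is correct and follows essentially the same strategy as the paper's. The paper's own proof is the one-liner ``the same argument as in the proof of Lemma~\ref{Lemma:stability of G(J)} shows that $\sigma_{\mu,d}(G(J_1)) \subset G(J_2)$'': decompose $g \in G(J_1)$ into its $U^-_\mu$- and $P_\mu$-factors via Proposition~\ref{Proposition:BB isomorphism}, use that conjugation by $\mu(d)$ divides the unipotent part by $d$ and preserves $P_\mu(J_1)$, then apply $\phi$ using $\phi((1/d)J_1) \subset J_2$ and $\phi(J_1) \subset J_2$. Your organization differs only cosmetically --- you first assemble the uniform inclusion $\mu(d)G(J_1)\mu(d)^{-1} \subset G(J_1/d)$ and then apply $\phi$ once --- but the content is identical. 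Your closing remark about testing on $A'_G$ (so that $\epsilon(x) \in \O_E$ is fixed by $\phi$) is exactly the point that makes the implication $\phi(G(J')) \subset G(\phi(J')A')$ go through, and it is implicit in the paper's treatment as well.
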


\begin{proof}
    The same argument as in the proof of Lemma \ref{Lemma:stability of G(J)} shows that $\sigma_{\mu, d}(G(J_1)) \subset G(J_2)$.
    The second assertion immediately follows from the first one.
\end{proof}

\begin{lem}\label{Lemma:five lemma for VdX}
    Let $J_3 \subset J_2 \subset J_1 \subset A'$ be three ideals which satisfy the assumption of Lemma \ref{Lemma:stability of G(J)}.
    Let $X \in G(A')$.
    If the maps
 \[
\mathcal{V}_{d, X} \colon G(J_1)/G(J_2) \to G(J_1)/G(J_2) \quad \text{and} \quad \mathcal{V}_{d, X} \colon G(J_2)/G(J_3) \to G(J_2)/G(J_3)
 \]
 are bijective, then
$
\mathcal{V}_{d, X} \colon G(J_1)/G(J_3) \to G(J_1)/G(J_3)
$
is also bijective.
\end{lem}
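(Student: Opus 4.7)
The plan is to exploit the fact that, although $\mathcal{V}_{d,X}$ is only a map of sets, it is built out of a genuine group homomorphism together with a right multiplication by an inverse, and this structure yields a cocycle-like identity that makes a five-lemma-style argument go through in the non-abelian setting.

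First I would verify the algebraic ingredients. Since $g \mapsto \mu(d) g \mu(d)^{-1}$ is a group homomorphism and $\phi$ is a ring homomorphism, $\sigma_{\mu,d}$ is a group homomorphism on $G_\mu(A', I')$; hence so is $\mathcal{U}_{d,X}(g) = X \sigma_{\mu,d}(g) X^{-1}$. By Lemma \ref{Lemma:stability of G(J)}, $\mathcal{U}_{d,X}$ preserves each $G(J_i)$, and since $G(J_i)$ is normal in $G(A')$ it induces a group endomorphism of every quotient $G(J_i)/G(J_j)$. Starting from the multiplicativity $\mathcal{U}_{d,X}(gh) = \mathcal{U}_{d,X}(g)\mathcal{U}_{d,X}(h)$ and rearranging, one obtains the key cocycle-like identity
\[
\mathcal{V}_{d,X}(gh) = \mathcal{V}_{d,X}(g) \cdot \bigl(g\,\mathcal{V}_{d,X}(h)\,g^{-1}\bigr)
\]
for all $g, h \in G(J_1)$.

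With this in hand, I would run the argument against the short exact sequence of groups
\[
1 \to G(J_2)/G(J_3) \to G(J_1)/G(J_3) \to G(J_1)/G(J_2) \to 1,
\]
on each term of which $\mathcal{V}_{d,X}$ is defined compatibly. For injectivity: if $\mathcal{V}_{d,X}(g_1) \equiv \mathcal{V}_{d,X}(g_2) \pmod{G(J_3)}$, the assumed bijection on $G(J_1)/G(J_2)$ forces $g_2 = g_1 h$ with $h \in G(J_2)$; substituting into the cocycle identity and cancelling $\mathcal{V}_{d,X}(g_1)$ yields $g_1 \mathcal{V}_{d,X}(h) g_1^{-1} \in G(J_3)$, hence $\mathcal{V}_{d,X}(h) \in G(J_3)$ by normality, and then $h \in G(J_3)$ by the bijection on $G(J_2)/G(J_3)$. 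For surjectivity: given $y \in G(J_1)$, use the bijection on $G(J_1)/G(J_2)$ to choose $g \in G(J_1)$ with $\mathcal{V}_{d,X}(g) \equiv y \pmod{G(J_2)}$, write $y = \mathcal{V}_{d,X}(g) \cdot z$ for some $z \in G(J_2)$, and apply the cocycle identity to reduce the lifting problem $\mathcal{V}_{d,X}(gh) \equiv y \pmod{G(J_3)}$ to solving $\mathcal{V}_{d,X}(h) \equiv g^{-1} z g \pmod{G(J_3)}$; such $h \in G(J_2)$ exists by the assumed surjectivity on $G(J_2)/G(J_3)$, since $g^{-1} z g$ lies in $G(J_2)$ by normality.

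The main obstacle is noticing that $\mathcal{V}_{d,X}$ is not itself a homomorphism, so the ordinary five lemma does not apply verbatim; everything rests on the cocycle identity above and on the normality of the subgroups $G(J_i) \subset G(A')$. Once the identity is isolated, the rest is a formal bookkeeping exercise.
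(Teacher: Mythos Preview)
Your proof is correct and is essentially the same argument as the paper's. The only cosmetic difference is that you isolate and name the cocycle identity $\mathcal{V}_{d,X}(gh) = \mathcal{V}_{d,X}(g)\cdot g\,\mathcal{V}_{d,X}(h)\,g^{-1}$, whereas the paper leaves it implicit and computes directly with $\mathcal{U}_{d,X}(g)^{-1}hg$; unwinding either side shows the two surjectivity and injectivity steps match line for line.
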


\begin{proof}
    Let us prove the surjectivity.
    Let $h \in G(J_1)/G(J_3)$ be an element.
    The image $h' \in G(J_1)/G(J_2)$ of $h$ can be written as
    $h'=\mathcal{V}_{d, X}(g')$
    for some element $g' \in G(J_1)/G(J_2)$.
    We choose some $g \in G(J_1)/G(J_3)$ which is a lift of $g'$.
    Then we see that
    $\mathcal{U}_{d, X}(g)^{-1}hg$
    is contained in $G(J_2)/G(J_3)$, so that there exists an element
    $g'' \in G(J_2)/G(J_3)$
    such that
    \[
    \mathcal{V}_{d, X}(g'')=\mathcal{U}_{d, X}(g'')g''^{-1}=\mathcal{U}_{d, X}(g)^{-1}hg.
    \]
    It follows that $h=\mathcal{V}_{d, X}(gg'')$.
    This proves that
    $
    \mathcal{V}_{d, X} \colon G(J_1)/G(J_3) \to G(J_1)/G(J_3)
    $
    is surjective.
    The proof of the injectivity is similar.
\end{proof}

\begin{lem}\label{Lemma:VdX bijective primitive case}
    Let $l \geq 0$ be an integer.
    For any $X \in G(A')$,
    the map
    \[
    \mathcal{V}_{d, X} \colon G((\pi, d)^l dK)/G((\pi, d)^{l+1} dK) \to  G((\pi, d)^l dK)/G((\pi, d)^{l+1} dK)
    \]
    is bijective.
\end{lem}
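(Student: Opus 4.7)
The plan is to first linearize the problem using the hypothesis $l \geq 0$, then to leverage the refined control on $\phi$ from Lemma~\ref{Lemma:kernel of two fold coproduct refined} (or Corollary~\ref{Corollary:kernel of three fold coproduct}) to exhibit $T := \mathrm{Ad}(X) \circ \sigma_{\mu, d}$ as topologically nilpotent on the quotient, so that $\mathcal{V}_{d, X} = T - \mathrm{id}$ becomes bijective through a Neumann series. Throughout set $J_1 = (\pi, d)^l dK$ and $J_2 = (\pi, d)^{l+1}dK$.

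\emph{Linearization.} Since $l \geq 0$ we have $J_1^2 \subset (\pi, d)^{2l+1} d^2 K^2 \subset J_2$, so $G(J_1)/G(J_2)$ is abelian, and the logarithm identifies it with the additive group $\Lie(G_\O) \otimes_\O (J_1/J_2)$. Decomposing $\Lie(G_\O) = \bigoplus_{i \leq 1} \mathfrak{g}_i$ by $\mu$-weights (with $\mathfrak{g}_i = 0$ for $i \geq 2$ since $\mu$ is $1$-bounded), the map $\sigma_{\mu, d}$ is, on $\mathfrak{g}_i \otimes J_1/J_2$, given by $\xi_i \mapsto \phi(d^{-i}\xi_i)$; in particular the $\mathfrak{g}_i$-components for $i \leq 0$ vanish modulo $J_2$ because $\phi(J_1) \subset J_2$ (using $\phi(\pi) = \pi$, $\phi(d) \in (\pi, d)$, and $\phi(K) \subset dK$ from Lemma~\ref{Lemma:kernel of two fold coproduct I}). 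Writing $\bar\sigma$ for the induced endomorphism, one has $\mathcal{V}_{d, X}(\xi) = T(\xi) - \xi$, and the task reduces to proving $T$ topologically nilpotent.

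\emph{Filtration shift.} With $M$ as in Lemma~\ref{Lemma:kernel of two fold coproduct refined} (resp.\ Corollary~\ref{Corollary:kernel of three fold coproduct}) and writing $t_i$ for its image via $p_1$ (resp.\ $q_1$), define the descending filtration
\[
L_k := \bigl((\pi, d)^l d (t_1, \dotsc, t_n)^k M + J_2\bigr)/J_2, \qquad k \geq 0,
\]
of $A'$-submodules of $J_1/J_2$. Using the inclusions $\phi(K) \subset dM + d(\pi, d)K$ and $\phi(M) \subset d(t_1, \dotsc, t_n)M + d(\pi, d)K$, together with $\phi(t_i) = t_i^q$, a direct computation shows that $T$ sends $\mathfrak{g} \otimes J_1/J_2$ into $\mathfrak{g} \otimes L_0$ and, for each $k \geq 0$, sends $\mathfrak{g} \otimes L_k$ into $\mathfrak{g} \otimes L_{qk+1}$. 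Consequently $T^n$ maps $\mathfrak{g} \otimes J_1/J_2$ into $\mathfrak{g} \otimes L_{c_n}$ for a sequence $c_n \to \infty$ growing geometrically in $q$.

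\emph{Convergence via noetherian completeness.} The anticipated main obstacle is to show that the filtration $\{L_k\}$ is separating, i.e., $\bigcap_k L_k = 0$, and that $\mathfrak{g} \otimes J_1/J_2$ is complete in the associated linear topology; granted these, $-\sum_{n \geq 0} T^n$ converges pointwise and defines a two-sided inverse of $\mathcal{V}_{d, X}$. The separation is where the complete noetherian structure of $A = \O[[t_1, \dotsc, t_n]]$ enters: $M$ is generated by the finitely many explicit elements $\phi(y_i)/d$, and the concrete description of $A^{(2)}$ and $A^{(3)}$ as prismatic envelopes allows one to express elements of $J_1/J_2$ via $(t_1, \dotsc, t_n)$-adic expansions over the complete noetherian base, so that a Krull-type argument applied modulo $J_2$ forces $\bigcap_k L_k = 0$; completeness of $\mathfrak{g} \otimes J_1/J_2$ in the $\{L_k\}$-topology then follows from its inverse-limit description inherited from the $(\pi, d)$-adic completeness of $A'$.
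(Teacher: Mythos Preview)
Your linearization and filtration-shift steps are correct: once $G(J_1)/G(J_2)$ is identified with $\Lie(G_\O)\otimes_\O J_1/J_2$ via $J_1^2\subset J_2$, the map $T$ is an additive endomorphism, and the inclusions of Lemma~\ref{Lemma:kernel of two fold coproduct refined} (resp.\ Corollary~\ref{Corollary:kernel of three fold coproduct}) do yield $T(\mathfrak{g}\otimes J_1/J_2)\subset\mathfrak{g}\otimes L_0$ and $T(\mathfrak{g}\otimes L_k)\subset\mathfrak{g}\otimes L_{qk+1}$. The gap is the convergence step. The ring $A'$ is \emph{not} noetherian, so no Krull-type argument is available in $A'$; and the filtration $\{L_k\}$ is governed by the ideal $(t_1,\dotsc,t_n)$, which is unrelated to the $(\pi,d)$-adic topology, so $(\pi,d)$-adic completeness of $A'$ buys you nothing. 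The way to access the noetherian ring $A=A'/K$ would be to show that $L_0$ is killed by $K$, but that fails: it would require $K\cdot(\pi,d)^l dM\subset J_2$, i.e.\ $KM\subset(\pi,d)K$, and since $M\subset K$ this asks certain elements of $K^2$ (for instance the class of $y_1^2$) to lie in $(\pi,d)K$, which there is no reason to expect.

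The paper repairs this by inserting the ideal $K^{-}:=K^2+(\pi,d)K$. Writing $K_l:=(\pi,d)^lK$, $K^{-}_l:=(\pi,d)^lK^{-}$, $M_l:=(\pi,d)^lM$, one splits $G(dK_l)/G(dK_{l+1})$ into three pieces via Lemma~\ref{Lemma:five lemma for VdX}. On the pieces $G(dK^{-}_l)/G(dK_{l+1})$ and $G(dK_l)/G(dM_l+dK^{-}_l)$ the map $\sigma_{\mu,d}$ already vanishes by Lemma~\ref{Lemma:square zero case}, using $\phi(K^{-})\subset d(\pi,d)K$ and $\phi(K)\subset dM+d(\pi,d)K$ respectively. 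On the remaining piece $G(dM_l+dK^{-}_l)/G(dK^{-}_l)$ the underlying additive group $N:=(dM_l+dK^{-}_l)/dK^{-}_l$ \emph{is} killed by $K$ (since $K\cdot M_l\subset(\pi,d)^lK^2\subset K^{-}_l$), hence is a finitely generated module over the noetherian $(t_1,\dotsc,t_n)$-adically complete local ring $A$; so $N$ is $(t_1,\dotsc,t_n)$-adically separated and complete, and on this piece your Neumann-series argument goes through. The insertion of $K^{-}$ to force an $A$-module structure is exactly the missing idea.
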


\begin{proof}
    \textit{Step 1.}
    We set $K_l:=(\pi, d)^lK$.
    We consider the ideal
    $K^{-}:=K^2+(\pi, d)K$
    and let
    $
    K^{-}_l:=(\pi, d)^l K^{-}.
    $
    All of $dK_l$, $dK_{l+1}$, $dK^{-}_l$
    satisfy the assumption of Lemma \ref{Lemma:stability of G(J)}.
    Since
    \[
    \phi(K^2) \subset d^2K^2 \subset d(\pi, d)K,
    \]
    we have $\phi(K^{-}_l) \subset dK_{l+1}$.
    Therefore, it follows from Lemma \ref{Lemma:square zero case} that
    $\mathcal{V}_{d, X}$ is bijective for $G(dK^{-}_l)/G(dK_{l+1})$.
    By Lemma \ref{Lemma:five lemma for VdX},
    it now suffices to show that $\mathcal{V}_{d, X}$ is bijective for
    $G(dK_l)/G(dK^{-}_l)$.
    
    \textit{Step 2.} 
    By Lemma \ref{Lemma:kernel of two fold coproduct refined} and Corollary \ref{Corollary:kernel of three fold coproduct},
    there exists a finitely generated ideal $M \subset A'$ which is contained in $K$ such that
    $\phi(K) \subset dM + dK^{-}$
    and
    $\phi(M) \subset (t_1, \dotsc, t_n)dM + dK^{-}$,
    where we abuse notation and denote the image of $t_i \in A$ under the morphism $p_1 \colon A \to A'$
    (resp.\ $q_1 \colon A \to A'$)
    if $A'=A^{(2)}$
    (resp.\ if $A'=A^{(3)}$)
    by the same symbol.
    We set
    $M_l := (\pi, d)^l M \subset K_l$.
    Then we have inclusions
    \begin{equation}\label{equation:key inclusions}
        \phi(K_l) \subset dM_l + dK^{-}_l \quad \text{and} \quad \phi(M_l) \subset (t_1, \dotsc, t_n)dM_l + dK^{-}_l.
    \end{equation}
    In particular, the ideals
    $dM_l + dK^{-}_l \subset dK_l$ satisfy the assumption of Lemma \ref{Lemma:square zero case}, so that
    $\mathcal{V}_{d, X}$ is bijective for $G(dK_l)/G(dM_l + dK^{-}_l)$.
    By Lemma \ref{Lemma:five lemma for VdX},
    it is enough to prove that $\mathcal{V}_{d, X}$ is bijective for $G(dM_l + dK^{-}_l)/G(dK^{-}_l)$.

     \textit{Step 3.} 
     We shall prove that
     \begin{equation}\label{equation:inverse limit I}
         G(dM_l + dK^{-}_l)/G(dK^{-}_l) \overset{\sim}{\to} \varprojlim_{r \geq 0} G(dM_l + dK^{-}_l)/G((t_1, \dotsc, t_n)^r dM_l + dK^{-}_l).
     \end{equation}
     To simplify the notation, we set $C_1:=A'/(dM_l + dK^{-}_l)$ and $C_2:=A'/dK^{-}_l$.
     Let $N \subset C_2$ be the image of $dM_l + dK^{-}_l$.
     We first claim that
     \begin{equation}\label{equation:ring limit part 1}
         A' \overset{\sim}{\to} \varprojlim_{r \geq 0} A'/dK_r
     \end{equation}
     and 
     \begin{equation}\label{equation:ring limit part 2}
         C_2 \overset{\sim}{\to} \varprojlim_{r \geq 0} C_2/(t_1, \dotsc, t_n)^r N.
     \end{equation}
     Since $d$ is a nonzerodivisor and $K$ is $(\pi, d)$-adically complete, it follows that
    $dK \overset{\sim}{\to} \varprojlim_{r \geq 0} dK/dK_r$.
    This implies (\ref{equation:ring limit part 1}).
    Since $N$ is killed by $K$, we see that $N$ is a finitely generated module over $A'/K \overset{\sim}{\to} A$.
     Since $A$ is noetherian and is $(t_1, \dotsc, t_n)$-adically complete, it follows that $N$ is also $(t_1, \dotsc, t_n)$-adically complete, which means that $N \overset{\sim}{\to} \varprojlim_{r \geq 0} N/(t_1, \dotsc, t_n)^r N$.
     This implies (\ref{equation:ring limit part 2}).

     We next show that $G(A') \to G(C_2)$ is surjective.
     Indeed, by (\ref{equation:ring limit part 1}) and the fact that
     $G(A'/dK_{r+1}) \to G(A'/dK_r)$ is surjective (as $G$ is smooth), it follows that $G(A') \to G(A'/dK_r)$ is surjective for any $r$.
     Since we have $(dK^{-}_l)^2 \subset dK_{l+1} \subset dK^{-}_l$, we see that
     $G(A'/dK_{l+1}) \to G(C_2)$ is surjective (again by the smoothness of $G$).
     Therefore $G(A') \to G(C_2)$ is surjective, as desired.
     Similarly, it follows from 
     (\ref{equation:ring limit part 2})
     that
     $G(C_2) \to G(C_2/(t_1, \dotsc, t_n)^r N)$ is surjective.

     Using the results obtained in the previous paragraph,
     we see that
     \[
     G(dM_l + dK^{-}_l)/G(dK^{-}_l) \overset{\sim}{\to} \Ker(G(C_2) \to G(C_1))
     \]
     and
     \[
     G(dM_l + dK^{-}_l)/G((t_1, \dotsc, t_n)^r dM_l + dK^{-}_l) \overset{\sim}{\to} \Ker(G(C_2/(t_1, \dotsc, t_n)^r N) \to G(C_1)).
     \]
     Now (\ref{equation:inverse limit I}) follows from 
     (\ref{equation:ring limit part 2}).

    \textit{Step 4.}
    We claim that
    $\mathcal{V}_{d, X}$
    is bijective for
    \[
    G((t_1, \dotsc, t_n)^r dM_l + dK^{-}_l)/G((t_1, \dotsc, t_n)^{r+1} dM_l + dK^{-}_l)
    \]
    for any $r \geq 0$.
    Indeed,
    the second inclusion of (\ref{equation:key inclusions}) shows that
    the assumption of Lemma \ref{Lemma:square zero case} is satisfied in this case, and hence the assertion follows.
    
    Using Lemma \ref{Lemma:five lemma for VdX} repeatedly, we see that
    $\mathcal{V}_{d, X}$
    is bijective for
    \[
    G(dM_l + dK^{-}_l)/G((t_1, \dotsc, t_n)^r dM_l + dK^{-}_l)
    \]
    for any $r \geq 0$.
    It then follows from (\ref{equation:inverse limit I}) that
    $\mathcal{V}_{d, X}$
    is bijective for $G(dM_l + dK^{-}_l)/G(dK^{-}_l)$ as well.
    This completes the proof.
\end{proof}

Let us now prove the desired result.

\begin{prop}\label{Proposition:bijectivity of VdX for G(dK)}
    For any $X \in G(A')$, the map
    $
    \mathcal{V}_{d, X} \colon G(dK) \to  G(dK)
    $
    is bijective.
\end{prop}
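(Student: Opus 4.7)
The strategy is to combine the bijectivity on primitive quotients from Lemma \ref{Lemma:VdX bijective primitive case} with Lemma \ref{Lemma:five lemma for VdX} by induction, and then pass to an inverse limit using the $(\pi, d)$-adic completeness of $A'$.

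First, I would show by induction on $l \geq 0$ that
\[
\mathcal{V}_{d, X} \colon G(dK)/G((\pi, d)^{l+1} dK) \to G(dK)/G((\pi, d)^{l+1} dK)
\]
is bijective. Since $\phi(K) \subset dK$ by Lemma \ref{Lemma:kernel of two fold coproduct I} and Corollary \ref{Corollary:kernel of three fold coproduct}, each ideal $(\pi, d)^{l+1} dK$ satisfies the assumption of Lemma \ref{Lemma:stability of G(J)}. The base case $l = 0$ is precisely the $l = 0$ case of Lemma \ref{Lemma:VdX bijective primitive case}, while the inductive step applies Lemma \ref{Lemma:five lemma for VdX} to the triple $J_1 = dK$, $J_2 = (\pi, d)^l dK$, $J_3 = (\pi, d)^{l+1} dK$, combining the inductive hypothesis with Lemma \ref{Lemma:VdX bijective primitive case} applied to the value $l$.

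Next, I would establish the identification
\[
G(dK) \overset{\sim}{\to} \varprojlim_l G(dK)/G((\pi, d)^{l+1} dK).
\]
Injectivity follows from $\cap_l (\pi, d)^{l+1} dK \subset \cap_l (\pi, d)^{l+1} = 0$, which uses that $A'$ is $(\pi, d)$-adically separated. For surjectivity, given a compatible family $(g_l)_l$, I would iteratively choose lifts $\tilde g_l \in G(dK)$ with $\tilde g_{l+1} \tilde g_l^{-1} \in G((\pi, d)^{l+1} dK) \subset G((\pi, d)^{l+1})$. The sequence $\{\tilde g_l\}$ is then Cauchy in $G(A')$ with respect to the $(\pi, d)$-adic topology, so by the $(\pi, d)$-adic completeness of $A'$ together with the smoothness of $G$ (which yields $G(A') \cong \varprojlim_n G(A'/(\pi, d)^n)$), it converges to some $g \in G(A')$. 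A closedness argument then shows that $g$ itself lies in $G(dK)$ and that $g \equiv g_l \pmod{G((\pi, d)^{l+1} dK)}$ for every $l$. With the two steps in hand, the bijectivity of $\mathcal{V}_{d, X}$ on each finite layer passes to the inverse limit.

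The main obstacle will be the closedness/completeness argument in the second step: the ideal $dK$ is not finitely generated over $A'$ (and $A'$ is not noetherian), so one cannot directly invoke Artin--Rees to compare the subspace and $(\pi, d)$-adic topologies on $dK$. Instead, the argument has to exploit that $A = A'/K$ is a noetherian complete local ring and that the completeness of $A'$ still controls the successive quotients $(\pi, d)^{l+1} dK / (\pi, d)^{l+2} dK$, essentially in the spirit of Step 3 of Lemma \ref{Lemma:VdX bijective primitive case}, so that the Cauchy convergence really does land inside the correct congruence subgroups.
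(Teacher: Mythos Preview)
Your overall strategy is correct and matches the paper's proof: bijectivity on the finite layers $G(dK)/G((\pi,d)^l dK)$ by induction using Lemmas \ref{Lemma:VdX bijective primitive case} and \ref{Lemma:five lemma for VdX}, followed by passage to the inverse limit.

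The only place you make your life harder than necessary is the inverse limit step. The obstacle you anticipate is resolved more cleanly than via Cauchy sequences and closedness. The key observation (already recorded in the proofs of Lemma \ref{Lemma:kernel of two fold coproduct I} and Corollary \ref{Corollary:kernel of three fold coproduct}) is that $K$ is the $(\pi,d)$-adic completion of an explicit ideal $K_0$ (resp.\ $L_0$), hence is itself $(\pi,d)$-adically complete; since $d$ is a nonzerodivisor, $dK$ is then $(\pi,d)$-adically complete as well. This gives $dK \simeq \varprojlim_l dK/(\pi,d)^l dK$ and consequently $A' \simeq \varprojlim_l A'/(\pi,d)^l dK$. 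Combined with the identification
\[
G(dK)/G((\pi,d)^l dK) \simeq \Ker\bigl(G(A'/(\pi,d)^l dK) \to G(A'/dK)\bigr),
\]
obtained from smoothness of $G$ and henselianness of $A'$ exactly as in Step~3 of Lemma \ref{Lemma:VdX bijective primitive case}, one reads off $G(dK) \simeq \varprojlim_l G(dK)/G((\pi,d)^l dK)$ directly. No Artin--Rees, Cauchy-sequence, or separate closedness argument is required.
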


\begin{proof}
    By (\ref{equation:ring limit part 1}) in the proof of Lemma \ref{Lemma:VdX bijective primitive case},
    we have
    \[
    G(dK) \overset{\sim}{\to} \varprojlim_{l \geq 0} G(dK)/G((\pi, d)^l dK).
    \]
    In order to show that
    $
    \mathcal{V}_{d, X} \colon G(dK) \to  G(dK)
    $
    is bijective, it suffices to check that
    $
    \mathcal{V}_{d, X} \colon G(dK)/G((\pi, d)^l dK) \to  G(dK)/G((\pi, d)^l dK)
    $
    is bijective for any $l \geq 0$.
    This follows from 
    Lemma \ref{Lemma:VdX bijective primitive case}
    by using Lemma \ref{Lemma:five lemma for VdX} repeatedly.
\end{proof}

We also need the following lemma:

\begin{lem}\label{Lemma:banal after finite field extension}
    Let $\mathcal{Q}$ be a $G$-$\mu$-display over $(A, I)$.
    Then there exists a finite extension $\widetilde{k}$ of $k$ such that the base change of
    $\mathcal{Q}$ to $(A_{\widetilde{\O}}, IA_{\widetilde{\O}})$ is banal, where $\widetilde{\O}:= W(\widetilde{k}) \otimes_{W(\F_q)} \O_E$ and $A_{\widetilde{\O}}:=A \otimes_\O \widetilde{\O} = \widetilde{\O}[[t_1, \dotsc, t_n]]$.
\end{lem}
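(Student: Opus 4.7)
The plan is to use the criterion of Proposition \ref{Proposition:G display with trivial Hodge filtration is banal}: the base change of $\mathcal{Q}$ to $(A_{\widetilde{\O}}, IA_{\widetilde{\O}})$ is banal if and only if the associated Hodge filtration becomes trivial as a $(P_\mu)_{R_{\widetilde{\O}}}$-torsor over $\Spec R_{\widetilde{\O}}$, where $R_{\widetilde{\O}}:= A_{\widetilde{\O}}/IA_{\widetilde{\O}} \simeq \widetilde{\O}[[t_1,\dotsc,t_n]]/\mathcal{E}$. Since the Hodge filtration (Definition \ref{Definition:Hodge filtration of G-displays}) is defined as the pushout of the underlying $G_{\mu,A,I}$-torsor along $\overline{\tau}_P$, it commutes with base change; in particular the Hodge filtration of the base change of $\mathcal{Q}$ is canonically identified with the pullback of the $(P_\mu)_R$-torsor $P(\mathcal{Q})_R$ along $R \to R_{\widetilde{\O}}$ (here I use Remark \ref{Remark:comparison base field} to view $\mathcal{Q}_{\widetilde{\O}}$ as a $G$-$\mu$-display in our current sense).

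First, I would note that since $k$ is perfect, any finite extension $\widetilde{k}/k$ is automatically perfect, so $\widetilde{\O} := W(\widetilde{k}) \otimes_{W(\F_q)} \O_E$ is finite free over $\O$ and $R_{\widetilde{\O}}$ is a complete regular local ring with residue field $\widetilde{k}$. Then I would reduce, via a standard Hensel-type argument, the triviality of a $(P_\mu)_{R_{\widetilde{\O}}}$-torsor to the triviality of its fiber over the residue field $\widetilde{k}$: by Proposition \ref{Proposition:equivalences of pi completely etale torsors} (applied to $H=P_\mu$), any such torsor is represented by a smooth affine scheme over $R_{\widetilde{\O}}$, and since $R_{\widetilde{\O}}$ is henselian along its maximal ideal, smoothness guarantees that any $\widetilde{k}$-point lifts to an $R_{\widetilde{\O}}$-point.

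Finally, it remains to find a finite extension $\widetilde{k}/k$ trivializing the fiber $P(\mathcal{Q})_R \otimes_R k$. But this fiber is a $(P_\mu)_k$-torsor, hence is represented by a non-empty affine scheme of finite type over $k$ (since $P_\mu$ is smooth affine of finite type over $\O$). Thus it has a point over some finite extension of $k$; taking its (finite) residue field after replacing by the image of a closed point in this finite type affine variety gives the desired $\widetilde{k}$. Concretely, the torsor $\Spec B$ with $B$ a finite-type $k$-algebra has a maximal ideal, and $B/\mathfrak{m}$ is a finite extension of $k$ by the Nullstellensatz.

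There is no real obstacle here: everything is a direct combination of the smoothness of $P_\mu$, the henselian property of $R_{\widetilde{\O}}$, the compatibility of the Hodge filtration with base change, and the finite-type property of $(P_\mu)_k$. The only point requiring mild care is ensuring that the chosen finite extension $\widetilde{k}$ is perfect (automatic as $k$ is perfect) and that the base change formalism from Remark \ref{Remark:comparison base field} applies.
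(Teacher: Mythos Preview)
Your proof is correct and follows essentially the same approach as the paper: reduce to triviality of the Hodge filtration $(P_\mu)_R$-torsor via Proposition~\ref{Proposition:G display with trivial Hodge filtration is banal}, trivialize its special fiber over a finite extension $\widetilde{k}$ of $k$, and lift using smoothness of $P_\mu$ and the henselian (indeed complete local) nature of $R_{\widetilde{\O}}$. The paper's version is simply more terse, omitting the explicit justifications you spell out.
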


\begin{proof}
    The Hodge filtration
    $P(\mathcal{Q})_{A/I}=P(\mathcal{Q})_{R}$ of $\mathcal{Q}$
    is a $(P_\mu)_R$-torsor over $\Spec R$.
    There exists a finite extension
    $\widetilde{k}$ of $k$ such that
    $P(\mathcal{Q})_{R} \times_{\Spec R} \Spec \widetilde{k}$
    is trivial.
    Since $P_\mu$ is smooth and
    $R \otimes_{\O} \widetilde{\O}$ is a complete local ring,
    it follows that $P(\mathcal{Q})_{R}$ is trivial over $R \otimes_{\O} \widetilde{\O}$.
    By Proposition \ref{Proposition:G display with trivial Hodge filtration is banal}, the base change of
    $\mathcal{Q}$ to $(A_{\widetilde{\O}}, IA_{\widetilde{\O}})$ is banal.
\end{proof}

\begin{proof}[Proof of Proposition \ref{Proposition:deformation of isomorphism}]
    By Lemma \ref{Lemma:banal after finite field extension}, there exists a finite Galois extension $\widetilde{k}$ of $k$ such that the base changes of
    $\mathcal{Q}_1$  
    and $\mathcal{Q}_2$
    to $(A_{\widetilde{\O}}, IA_{\widetilde{\O}})$ are banal.
    Here $\widetilde{\O}:= W(\widetilde{k}) \otimes_{W(\F_q)} \O_E$ and $A_{\widetilde{\O}}:=A \otimes_\O \widetilde{\O}$; we use the same notation for $\O$-algebras.
    We can identify
    $(A'_{\widetilde{\O}}, I'A'_{\widetilde{\O}})$
    with a coproduct of two (resp.\ three) copies of
    $(A_{\widetilde{\O}}, IA_{\widetilde{\O}})$ in $(R_{\widetilde{\O}})_{\Prism, \O_E}$
    if $A'=A^{(2)}$
    (resp.\ if $A'=A^{(3)}$).
    By Galois descent for $G$-$\mu$-displays,
    it suffices to prove the same statement for banal
    $G$-$\mu$-displays over $(A_{\widetilde{\O}}, IA_{\widetilde{\O}})$.
    We may thus assume without loss of generality that $\mathcal{Q}_1$ and $\mathcal{Q}_2$ are banal $G$-$\mu$-displays over $(A, I)$.

    If
    $\mathcal{Q}_1$  
    and $\mathcal{Q}_2$ are not isomorphic to each other, then the assertion holds trivially.
    Thus, we may assume that 
    $\mathcal{Q}_1=\mathcal{Q}_2=\mathcal{Q}_{Y}$
    for some $Y \in G(A)_I$.
    Let $d:=f_2(\mathcal{E})$.
    We have $f_1(\mathcal{E})=u d$ for some $u \in A'^\times$.
    With the choice of $d \in I'$,
    the $G$-$\mu$-displays
    $f^*_1(\mathcal{Q}_Y)$ and
    $f^*_2(\mathcal{Q}_Y)$
    correspond to the elements
    $f_1(Y_{\mathcal{E}})\phi(\mu(u)), f_2(Y_{\mathcal{E}}) \in G(A')_d$, respectively.
    Thus we can identify
    $\Hom_{G\mathchar`-\mathrm{Disp}_\mu(A', I')}(f^*_1(\mathcal{Q}_Y), f^*_2(\mathcal{Q}_Y))$
    with the set
    \[
    \{\, g \in G_\mu(A', I') \, \vert \, g^{-1}f_2(Y_{\mathcal{E}}) \sigma_{\mu, d}(g)=f_1(Y_{\mathcal{E}})\phi(\mu(u)) \, \}.
    \]

    We set $X:=f_2(Y_{\mathcal{E}})$.
    We shall prove that the map
    (\ref{equation:reduction map for isomorphisms})
    is injective.
    Let $g, h \in G_\mu(A', I')$ be two elements in $\Hom_{G\mathchar`-\mathrm{Disp}_\mu(A', I')}(f^*_1(\mathcal{Q}_1), f^*_2(\mathcal{Q}_2))$
    such that $m(g)=m(h)$ in $G_\mu(A, I)$.
    We set $\beta:=gh^{-1}$.
    Since $m(\beta)=1$, we have
    $\mu(d)\beta\mu(d)^{-1} \in G(K)$.
    It then follows from $\phi(K) \subset dK$ that $\sigma_{\mu, d}(\beta) \in G(dK)$.
    The equalities
    \[
    g^{-1}X \sigma_{\mu, d}(g) = f_1(Y_{\mathcal{E}})\phi(\mu(u)) = h^{-1}X \sigma_{\mu, d}(h)
    \]
    imply that $\beta=X \sigma_{\mu, d}(\beta) X^{-1}$.
    It follows that $\beta \in G(dK)$, and we have $\mathcal{V}_{d, X}(\beta)=1$ for the map $\mathcal{V}_{d, X} \colon G(dK) \to G(dK)$.
    Since $\mathcal{V}_{d, X}$ is bijective by Proposition \ref{Proposition:bijectivity of VdX for G(dK)}, we obtain $\beta=1$.
    
    It remains to prove that the map
    (\ref{equation:reduction map for isomorphisms})
    is surjective.
    For this, it suffices to prove that
    $\Hom_{G\mathchar`-\mathrm{Disp}_\mu(A', I')}(f^*_1(\mathcal{Q}_Y), f^*_2(\mathcal{Q}_Y))$
    is not empty.
    (Once we have obtained an isomorphism
    $g \colon f^*_1(\mathcal{Q}_Y) \overset{\sim}{\to} f^*_2(\mathcal{Q}_Y)$, we can write any isomorphism
    $h \colon \mathcal{Q}_Y \overset{\sim}{\to} \mathcal{Q}_Y$
    as $m^{*}(f^{*}_2(h \circ m^{*}(g^{-1})) \circ g)$.)
    We claim that $\phi(\mu(u)) \in G(dK)$ and $\gamma:=f_2(Y_\mathcal{E})^{-1}f_1(Y_\mathcal{E}) \in G(dK)$.
    Indeed, since $m(u)=1$, we have $\mu(u) \in G(K)$, which in turn implies that $\phi(\mu(u)) \in G(dK)$.
    Since the morphisms $f_1$ and $f_2$ induce the same homomorphism
    $R \to A'/I'$, we see that $\gamma \in G(I')$.
    Using that $I' \cap K=dK$, we then obtain
    $\gamma \in G(dK)$.
    Since $\mathcal{V}_{d, X} \colon G(dK) \to G(dK)$ is bijective, there exists an element $g \in G(dK)$ such that
    $\mathcal{V}_{d, X}(g^{-1})=X\phi(\mu(u))^{-1}\gamma^{-1} X^{-1}$, or equivalently
    \[
    g^{-1}f_2(Y_{\mathcal{E}}) \sigma_{\mu, d}(g)=f_1(Y_{\mathcal{E}})\phi(\mu(u)).
    \]
    In other words, the element $g$ gives an isomorphism
    $f^*_1(\mathcal{Q}_Y) \overset{\sim}{\to} f^*_2(\mathcal{Q}_Y)$.
\end{proof}

\subsection{Proof of Theorem \ref{Theorem:main result on G displays over complete regular local rings}}\label{Subsection:proof of main result}

In this section, we prove Theorem \ref{Theorem:main result on G displays over complete regular local rings} using our previous results.

As in Section \ref{Subsection:Coproducts of Breuil--Kisin prisms},
we write $(A, I)=(\mathfrak{S}_\O, (\mathcal{E}))$.
Let
\[
G\mathchar`-\mathrm{Disp}^\mathrm{DD}_\mu(A, I)
\]
be the groupoid of pairs $(\mathcal{Q}, \epsilon)$ consisting of a $G$-$\mu$-display
$\mathcal{Q}$
over $(A, I)$
and an isomorphism
$\epsilon \colon p^*_1\mathcal{Q} \overset{\sim}{\to} p^*_2\mathcal{Q}$
of $G$-$\mu$-displays
over $(A^{(2)}, I^{(2)})$
satisfying the cocycle condition $p^*_{13}\epsilon=p^*_{23}\epsilon \circ p^*_{12}\epsilon$.
An isomorphism $(\mathcal{Q}, \epsilon) \overset{\sim}{\to} (\mathcal{Q}', \epsilon')$ is an isomorphism $f \colon \mathcal{Q} \overset{\sim}{\to} \mathcal{Q}'$ of $G$-$\mu$-displays
over $(A, I)$ such that $\epsilon' \circ (p^*_1 f)=(p^*_2 f) \circ \epsilon$.

For a prismatic $G$-$\mu$-display $\mathfrak{Q}$ over $R$,
we have the associated isomorphism
\[
\gamma_{p_i} \colon p^*_i(\mathfrak{Q}_{(A, I)}) \overset{\sim}{\to} \mathfrak{Q}_{(A^{(2)}, I^{(2)})}
\]
for $i=1, 2$.
Let $\epsilon:=\gamma^{-1}_{p_2} \circ \gamma_{p_1}$.
Then $\epsilon$ satisfies the cocycle condition, so that the pair $(\mathfrak{Q}_{(A, I)}, \epsilon)$
is an object of $\mathrm{Disp}^\mathrm{DD}_\mu(A, I)$.
This construction induces a functor
\[
G\mathchar`-\mathrm{Disp}_\mu((R)_{\Prism, \O_E}) \to G\mathchar`-\mathrm{Disp}^\mathrm{DD}_\mu(A, I), \quad \mathfrak{Q} \mapsto (\mathfrak{Q}_{(A, I)}, \epsilon).
\]

\begin{prop}\label{Proposition:crystal and descent datum}
    The functor
    $G\mathchar`-\mathrm{Disp}_\mu((R)_{\Prism, \O_E}) \to G\mathchar`-\mathrm{Disp}^\mathrm{DD}_\mu(A, I)$
    is an equivalence.
\end{prop}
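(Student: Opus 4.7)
The plan is to construct an explicit quasi-inverse $(\mathcal{Q}, \epsilon) \mapsto \mathfrak{Q}$ and verify the equivalence, using flat descent for $G$-$\mu$-displays (Proposition \ref{Proposition:flat descent of G display}), the weak initiality of $(A,I)$ up to flat covers (Proposition \ref{Proposition:weakely initial object}), and the identifications of $(A^{(2)}, I^{(2)})$ and $(A^{(3)}, I^{(3)})$ with the double and triple pushouts of $(A,I)$ in $(R)_{\Prism, \O_E}$ (Lemma \ref{Lemma:coproduct and prismatic envelope} and Remark \ref{Remark:three coproduct}).

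For full faithfulness, given prismatic $G$-$\mu$-displays $\mathfrak{Q}, \mathfrak{Q}'$ and a morphism $f\colon \mathfrak{Q}_{(A,I)} \to \mathfrak{Q}'_{(A,I)}$ compatible with $\epsilon, \epsilon'$, I would construct a morphism $\mathfrak{Q} \to \mathfrak{Q}'$ as follows. For any $(B, J) \in (R)_{\Prism, \O_E}$, Proposition \ref{Proposition:weakely initial object} supplies a flat covering $(B,J) \to (B',J')$ together with a morphism $g\colon (A,I) \to (B',J')$. Let $(B'', J'')$ denote the pushout $(B',J') \otimes^{\wedge}_{(B,J)} (B',J')$ with projections $\pi_1, \pi_2$. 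By Lemma \ref{Lemma:coproduct and prismatic envelope}, the pair $(\pi_1 \circ g, \pi_2 \circ g)$ factors through a unique morphism $h\colon (A^{(2)}, I^{(2)}) \to (B'', J'')$. The pullback $g^*f\colon \mathfrak{Q}_{(B',J')} \to \mathfrak{Q}'_{(B',J')}$ satisfies $\pi_2^*(g^*f) \circ h^*\epsilon = h^*\epsilon' \circ \pi_1^*(g^*f)$ by the compatibility hypothesis on $f$, and so descends to a unique morphism $\mathfrak{Q}_{(B,J)} \to \mathfrak{Q}'_{(B,J)}$ by Proposition \ref{Proposition:flat descent of G display}. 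Independence of $(B',J')$ and $g$, and functoriality in $(B, J)$, follow from the same descent formalism applied to common refinements.

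For essential surjectivity, given $(\mathcal{Q}, \epsilon)$, I will define $\mathfrak{Q}_{(B,J)}$ by the parallel recipe: on a flat cover $(B, J) \to (B', J')$ equipped with $g\colon (A, I) \to (B', J')$, set the value to $g^*\mathcal{Q}$ with descent datum $h^*\epsilon$ over $(B'', J'')$, where $h\colon (A^{(2)}, I^{(2)}) \to (B'', J'')$ is as above. The cocycle condition for this descent datum over the triple pushout $(B',J')^{\otimes^\wedge 3}_{(B,J)}$ follows from the cocycle condition on $\epsilon$ together with the analogous universal property of $(A^{(3)}, I^{(3)})$ from Remark \ref{Remark:three coproduct}. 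Flat descent (Proposition \ref{Proposition:flat descent of G display}) then yields $\mathfrak{Q}_{(B,J)}$, and the transition isomorphisms $\gamma_f$ are produced by applying the same formalism to morphisms in $(R)_{\Prism, \O_E}$ with compatibly chosen flat covers.

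The main obstacle is the coherence required to make this construction well-defined and functorial, since different choices of $(B', J')$ and $g$ must yield canonically isomorphic prismatic objects and all such comparison isomorphisms must be coherent across triple overlaps. Here the decisive input is Proposition \ref{Proposition:deformation of isomorphism}, which for each $f_1, f_2 \in \{p_1, p_2\}$ (resp.\ each $f_1, f_2 \in \{q_1, q_2, q_3\}$) guarantees that isomorphisms $f_1^*\mathcal{Q}_1 \to f_2^*\mathcal{Q}_2$ over $(A^{(2)}, I^{(2)})$ (resp.\ over $(A^{(3)}, I^{(3)})$) correspond bijectively to isomorphisms $\mathcal{Q}_1 \to \mathcal{Q}_2$ over $(A, I)$. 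Consequently every candidate comparison isomorphism is uniquely determined by its value at $(A, I)$, where it is either tautological or given directly by $\epsilon$, and every coherence diagram reduces to a diagram at $(A, I)$, which commutes either trivially or by the cocycle condition on $\epsilon$.
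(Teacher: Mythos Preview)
Your outline is exactly the standard unpacking of what the paper calls a ``formal consequence of Proposition~\ref{Proposition:flat descent of G display} and Proposition~\ref{Proposition:weakely initial object}''. The construction of the quasi-inverse and the verification of full faithfulness are fine as you describe them.

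The problem is the final paragraph. You invoke Proposition~\ref{Proposition:deformation of isomorphism} as the ``decisive input'' for the coherence checks, but this is both unnecessary and illegitimate here. First, Proposition~\ref{Proposition:deformation of isomorphism} is proved under the standing hypothesis that $\mu$ is $1$-bounded (see the opening of Section~\ref{Subsection:Deformations of isomorphisms}), whereas Proposition~\ref{Proposition:crystal and descent datum} carries no such hypothesis; your proof would therefore prove a weaker statement than the one asserted. Second, Proposition~\ref{Proposition:deformation of isomorphism} only controls isomorphisms over $(A^{(2)}, I^{(2)})$ and $(A^{(3)}, I^{(3)})$, while the coherence diagrams you need to check live over arbitrary fiber products such as $(B',J')\times^\wedge_{(B,J)}(B'_2,J'_2)$; the proposition does not apply there.

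The correct argument for coherence is purely formal and does not use Proposition~\ref{Proposition:deformation of isomorphism} at all. Given any object $X$ in $(R)_{\Prism,\O_E}$ and any finite family of morphisms $g_1,\dots,g_m\colon (A,I)\to X$, the universal property of the coproducts (Lemma~\ref{Lemma:coproduct and prismatic envelope} and Remark~\ref{Remark:three coproduct}) lets you pull back $\epsilon$ to obtain isomorphisms $\epsilon_{ij}\colon g_i^*\mathcal{Q}\to g_j^*\mathcal{Q}$, and the cocycle condition $p_{13}^*\epsilon = p_{23}^*\epsilon\circ p_{12}^*\epsilon$ over $(A^{(3)},I^{(3)})$ yields $\epsilon_{ik}=\epsilon_{jk}\circ\epsilon_{ij}$ for every triple $(i,j,k)$. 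All the compatibilities you need (independence of choices, functoriality in $(B,J)$) reduce to identities of this type, hence follow directly from the cocycle condition on $\epsilon$. In the paper, Proposition~\ref{Proposition:deformation of isomorphism} is reserved for a genuinely different step: showing, in the proof of Theorem~\ref{Theorem:main result on G displays over complete regular local rings}, that the forgetful functor $G\text{-}\mathrm{Disp}^{\mathrm{DD}}_\mu(A,I)\to G\text{-}\mathrm{Disp}_\mu(A,I)$ is an equivalence, which is where the $1$-bounded hypothesis actually enters.
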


\begin{proof}
    This is a formal consequence of Proposition \ref{Proposition:flat descent of G display} and Proposition \ref{Proposition:weakely initial object}.
\end{proof}

\begin{proof}[Proof of Theorem \ref{Theorem:main result on G displays over complete regular local rings}]
We assume that $\mu$ is 1-bounded.
By virtue of Proposition \ref{Proposition:crystal and descent datum},
it suffices to show that the forgetful functor
\[
G\mathchar`-\mathrm{Disp}^\mathrm{DD}_\mu(A, I) \to G\mathchar`-\mathrm{Disp}_\mu(A, I)
\]
is an equivalence.
Let $m \colon (A^{(2)}, I^{(2)}) \to (A, I)$
be the unique morphism in $(R)_{\Prism, \O_E}$ such that
$m \circ p_i = \id_{(A, I)}$ for $i=1, 2$, and
let
$m' \colon (A^{(3)}, I^{(3)}) \to (A, I)$ be the unique morphism in $(R)_{\Prism, \O_E}$ such that $m \circ q_i = \id_{(A, I)}$ for $i=1, 2, 3$.
Let $\mathcal{Q}$ be a $G$-$\mu$-display over $(A, I)$.
We claim that an isomorphism $\epsilon \colon p^*_1\mathcal{Q} \overset{\sim}{\to} p^*_2\mathcal{Q}$ satisfies the cocycle condition $p^*_{13}\epsilon=p^*_{23}\epsilon \circ p^*_{12}\epsilon$
if and only if $m^*\epsilon=\id_{\mathcal{Q}}$.
Indeed, since the map
\[
m'^* \colon \Hom_{G\mathchar`-\mathrm{Disp}_\mu(A^{(3)}, I^{(3)})}(q^*_1\mathcal{Q}, q^*_3\mathcal{Q}) \to \Hom_{G\mathchar`-\mathrm{Disp}_\mu(A, I)}(\mathcal{Q}, \mathcal{Q})
\]
is bijective by Proposition \ref{Proposition:deformation of isomorphism},
we see that $\epsilon$ satisfies the cocycle condition $p^*_{13}\epsilon=p^*_{23}\epsilon \circ p^*_{12}\epsilon$
if and only if $m^*\epsilon=m^*\epsilon \circ m^*\epsilon$, which is equivalent to saying that $m^*\epsilon=\id_{\mathcal{Q}}$.

By Proposition \ref{Proposition:deformation of isomorphism},
the map
\[
m^* \colon \Hom_{G\mathchar`-\mathrm{Disp}_\mu(A^{(2)}, I^{(2)})}(p^*_1\mathcal{Q}, p^*_2\mathcal{Q}) \to \Hom_{G\mathchar`-\mathrm{Disp}_\mu(A, I)}(\mathcal{Q}, \mathcal{Q})
\]
is bijective.
Therefore,
for any $G$-$\mu$-display $\mathcal{Q}$ over $(A, I)$,
there exists a unique isomorphism
$\epsilon \colon p^*_1\mathcal{Q} \overset{\sim}{\to} p^*_2\mathcal{Q}$
satisfying the cocycle condition $p^*_{13}\epsilon=p^*_{23}\epsilon \circ p^*_{12}\epsilon$, and $\epsilon$ is characterized by the condition that $m^*\epsilon=\id_{\mathcal{Q}}$.
It follows that the forgetful functor
$G\mathchar`-\mathrm{Disp}^\mathrm{DD}_\mu(A, I) \to G\mathchar`-\mathrm{Disp}_\mu(A, I)$
is an equivalence.
\end{proof}

\section{$p$-divisible groups and prismatic Dieudonn\'e crystals}\label{Section:p-divisible groups and prismatic Dieudonn\'e crystals}

In this section, we make a few remarks on prismatic Dieudonn\'e crystals, which are introduced in \cite{Anschutz-LeBras}.

\subsection{A remark on prismatic Dieudonn\'e crystals}\label{Subsection:A remark on prismatic Dieudonne crystals}

Let $R$ be a $\pi$-adically complete $\O_E$-algebra.
Recall the sheaf
$\O_\Prism$
on the site $(R)^{\op}_{\Prism, \O_E}$
from Remark \ref{Remark:structure sheaf}.

We say that an $\O_\Prism$-module $\mathcal{M}$ on $(R)^{\op}_{\Prism, \O_E}$ is a
\textit{prismatic crystal in vector bundles}
if $\mathcal{M}(A, I)$
is a finite projective $A$-module for any $(A, I) \in (R)_{\Prism, \O_E}$, and for any morphism $(A, I) \to (A', I')$ in $(R)_{\Prism, \O_E}$, the natural homomorphism
\[
\mathcal{M}(A, I) \otimes_A A' \to \mathcal{M}(A', I')
\]
is bijective.
A
\textit{prismatic Dieudonn\'e crystal}
on $(R)^{\op}_{\Prism, \O_E}$ (or on $(R)_{\Prism, \O_E}$)
is a prismatic crystal $\mathcal{M}$ in vector bundles on $(R)^{\op}_{\Prism, \O_E}$ equipped with
a $\phi$-linear homomorphism
\[
\varphi_\mathcal{M} \colon \mathcal{M} \to \mathcal{M}
\]
such that for any $(A, I) \in (R)_{\Prism, \O_E}$,
the finite projective $A$-module $\mathcal{M}(A, I)$
with
the linearization
$1 \otimes \varphi_\mathcal{M} \colon \phi^*(\mathcal{M}(A, I)) \to \mathcal{M}(A, I)$
is a minuscule Breuil--Kisin module over $(A, I)$ in the sense of Definition \ref{Definition:displayed and minuscule Breuil-Kisin module} (see also Proposition \ref{Proposition:minuscule equivalent condition}).
For a bounded $\O_E$-prism $(A, I)$, let
\[
\mathrm{BK}_{\mathrm{min}}(A, I)
\]
be the category of minuscule Breuil--Kisin modules over $(A, I)$.
Then the category of prismatic Dieudonn\'e crystals on $(R)_{\Prism, \O_E}$ is equivalent to the category
\[
{2-\varprojlim}_{(A, I) \in (R)_{\Prism, \O_E}} \mathrm{BK}_{\mathrm{min}}(A, I).
\]

As in Section \ref{Section:G-displays over complete regular local rings},
let $R$ be a complete regular local ring 
over $\O$ with residue field $k$.
Let
$
(\mathfrak{S}_\O, (\mathcal{E}))
$
be
an $\O_E$-prism
of Breuil--Kisin type, where $\mathfrak{S}_\O=\O[[t_1, \dotsc, t_n]]$,
with an isomorphism
$
R \simeq \mathfrak{S}_\O/\mathcal{E}
$ over $\O$.
By using the results of Section \ref{Section:G-displays over complete regular local rings}, we can prove the following proposition, which is obtained in the proof of \cite[Theorem 5.12]{Anschutz-LeBras} if $n \leq 1$ (and $\O_E=\Z_p$).

\begin{prop}\label{Proposition:evaluation prismatic dieudonne crystal equivalence}
    The functor $\mathcal{M} \mapsto \mathcal{M}(\mathfrak{S}_\O, (\mathcal{E}))$ 
    from the category of prismatic Dieudonn\'e crystals on $(R)_{\Prism, \O_E}$ to the category $\mathrm{BK}_{\mathrm{min}}(\mathfrak{S}_\O, (\mathcal{E}))$
    is an equivalence.
\end{prop}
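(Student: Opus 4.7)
The plan is to deduce Proposition \ref{Proposition:evaluation prismatic dieudonne crystal equivalence} from the main result Theorem \ref{Theorem:main result on G displays over complete regular local rings} applied to $\GL_N$ with minuscule cocharacters, combined with a graph-of-morphism trick for handling non-invertible morphisms.

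First, I would translate the problem into the language of descent data. By flat descent for displayed Breuil--Kisin modules (Corollary \ref{Corollary:flat descent for dispalyed BK modules}) together with the weak initiality property of $(\mathfrak{S}_\O, (\mathcal{E}))$ (Proposition \ref{Proposition:weakely initial object}), the same formal argument as in Proposition \ref{Proposition:crystal and descent datum} shows that the category of prismatic Dieudonn\'e crystals on $(R)_{\Prism, \O_E}$ is equivalent to the category $\mathrm{BK}^{\mathrm{DD}}_{\mathrm{min}}$ whose objects are pairs $(M, \epsilon)$ with $M \in \mathrm{BK}_{\mathrm{min}}(\mathfrak{S}_\O, (\mathcal{E}))$ and $\epsilon \colon p_1^*M \overset{\sim}{\to} p_2^*M$ a minuscule Breuil--Kisin isomorphism over $(A^{(2)}, I^{(2)})$ satisfying the usual cocycle condition, and whose morphisms are Breuil--Kisin maps $f \colon M_1 \to M_2$ intertwining $\epsilon_1$ and $\epsilon_2$. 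It thus suffices to show that the forgetful functor $\mathrm{BK}^{\mathrm{DD}}_{\mathrm{min}} \to \mathrm{BK}_{\mathrm{min}}(\mathfrak{S}_\O, (\mathcal{E}))$ is an equivalence of categories.

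For essential surjectivity and the uniqueness of $\epsilon$, I would observe that over the connected local base $\Spec R$ any $M \in \mathrm{BK}_{\mathrm{min}}(\mathfrak{S}_\O, (\mathcal{E}))$ has constant rank $N$ with Hodge filtration $P^1$ of constant rank $s \in \{0, \dotsc, N\}$. By Example \ref{Example:GLn BK module of type mu} and Proposition \ref{Proposition:G-displays and G-BK modules}, $M$ corresponds to a $\GL_N$-$\mu$-display over $(\mathfrak{S}_\O, (\mathcal{E}))$, where $\mu$ is the minuscule (hence $1$-bounded) cocharacter $t \mapsto \diag(t, \dotsc, t, 1, \dotsc, 1)$ with $s$ copies of $t$. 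Theorem \ref{Theorem:main result on G displays over complete regular local rings} applied to $G = \GL_N$ and this $\mu$, together with Proposition \ref{Proposition:crystal and descent datum}, yields a unique descent datum for the associated display, characterized by $m^*\epsilon = \id$; translating this back through Proposition \ref{Proposition:G-displays and G-BK modules} gives the existence and uniqueness of $\epsilon$ for $M$.

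The hard part is the morphism part, which I would treat via a graph trick. Given $f \colon M_1 \to M_2$ in $\mathrm{BK}_{\mathrm{min}}(\mathfrak{S}_\O, (\mathcal{E}))$, the block matrix $g_f := \begin{pmatrix} \id & 0 \\ f & \id \end{pmatrix}$ is an $A$-linear automorphism of $M := M_1 \oplus M_2$ whose commutation with $F_M$ is precisely equivalent to $f$ being a Breuil--Kisin morphism; hence $g_f$ is an automorphism of $M$ in $\mathrm{BK}_{\mathrm{min}}$ and, equivalently, of the associated $\GL_{N_1+N_2}$-$\nu$-display $\mathcal{Q}(M)$, where $\nu = \mu_{N_1+N_2, s_1+s_2}$. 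The direct sum $\epsilon_1 \oplus \epsilon_2$ is a descent datum for $M$, and its conjugate $p_2^*(g_f)^{-1} \circ (\epsilon_1 \oplus \epsilon_2) \circ p_1^*(g_f)$ is another descent datum for $M$; both have $m^*$-reduction equal to $\id$ (since $m \circ p_i = \id$), so by the uniqueness provided by Proposition \ref{Proposition:deformation of isomorphism} applied to $\mathcal{Q}(M)$, the two descent data coincide. Expanding this equality in block form yields $\epsilon_2 \circ p_1^*f = p_2^*f \circ \epsilon_1$, so every Breuil--Kisin morphism at $(\mathfrak{S}_\O, (\mathcal{E}))$ automatically descends. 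The main obstacle is verifying that $g_f$ defines a display automorphism and that $\epsilon_1 \oplus \epsilon_2$ is the unique descent datum of $M$; both follow from the functorial dictionary established by Example \ref{Example:GLn displays} and Proposition \ref{Proposition:G-displays and G-BK modules} combined with the characterization $m^*\epsilon = \id$.
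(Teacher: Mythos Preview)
Your proposal is correct and follows essentially the same approach as the paper: both arguments reduce the morphism statement to the isomorphism statement (handled by Proposition~\ref{Proposition:deformation of isomorphism}) via the direct sum $M = M_1 \oplus M_2$ and a block-triangular automorphism. The paper packages this as bijectivity of $m^*$ on $\Hom$-sets using the matrix $\begin{pmatrix} \eta_1 & 0 \\ g & \eta_2 \end{pmatrix}$, while you phrase it as compatibility of the unique descent data using $\begin{pmatrix} \id & 0 \\ f & \id \end{pmatrix}$, but the content is the same. One small imprecision: you call the conjugate $p_2^*(g_f)^{-1}\circ(\epsilon_1\oplus\epsilon_2)\circ p_1^*(g_f)$ ``another descent datum'' without checking the cocycle condition, but you do not actually need this---what you use is only that it is an isomorphism $p_1^*M \overset{\sim}{\to} p_2^*M$ with $m^*$-reduction $\id$, which is exactly what Proposition~\ref{Proposition:deformation of isomorphism} requires.
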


\begin{proof}
    This follows from Corollary \ref{Corollary:GLn BK module of type mu}, Theorem \ref{Theorem:main result on G displays over complete regular local rings}, and the following fact: A functor of additive categories is an equivalence if and only if it induces an equivalence of the associated groupoids.
    (This fact follows since homomorphisms $f \colon X \to Y$ in an additive category can be completely described in terms of automorphisms of $X \oplus Y$ by considering $\begin{pmatrix}
\id_X & 0 \\
f & \id_Y \\
\end{pmatrix}$.)
\end{proof}

\subsection{Quasisyntomic rings}\label{Subsection:Quasisyntomic rings}

In the following (and in Section \ref{Section:Comparison with prismatic $F$-gauges} below), we will need the notions of
\textit{quasisyntomic rings} in the sense of \cite[Definition 4.10]{BMS2}
and
\textit{quasiregular semiperfectoid rings} in the sense of \cite[Definition 4.20]{BMS2}.
Let
\[
\mathrm{QSyn}
\]
be the category of quasisyntomic rings
and
let
\[
\mathrm{QRSPerfd} \subset \mathrm{QSyn}
\]
be the full subcategory spanned by quasiregular semiperfectoid rings.
We endow both
$\mathrm{QSyn}^{\op}$
and
$\mathrm{QRSPerfd}^{\op}$
with the quasisyntomic topology, i.e.\ the topology generated by the quasisyntomic coverings; see \cite[Definition 4.10]{BMS2}.
We will assume that the reader is familiar with basic properties of
$\mathrm{QSyn}$
and
$\mathrm{QRSPerfd}$ discussed in \cite[Section 4]{BMS2}.
Here we just recall that
quasiregular semiperfectoid rings form a basis for $\mathrm{QSyn}$; see \cite[Lemma 4.28]{BMS2}.

\begin{ex}
    A $p$-adically complete regular local ring is a quasisyntomic ring (see \cite[Example 3.17]{Anschutz-LeBras}).
    A perfectoid ring is a quasiregular semiperfectoid ring (see \cite[Example 4.24]{BMS2}).
\end{ex}

\begin{rem}\label{Remark:definition of prismatic Dieudonne crystal in ALB}
Let $R \in \qsyn$ be a quasisyntomic ring.
In \cite[Definition 4.5]{Anschutz-LeBras}, Ansch\"utz--Le Bras defined prismatic Dieudonn\'e crystals over $R$ as sheaves on the quasisyntomic site of $R$.
By virtue of \cite[Proposition 4.4]{Anschutz-LeBras},
the category of prismatic Dieudonn\'e crystals on $(R)_{\Prism}$ in our sense is equivalent to the category of prismatic Dieudonn\'e crystals over $R$ in the sense of \cite[Definition 4.5]{Anschutz-LeBras}.
\end{rem}

\subsection{$p$-divisible groups and minuscule Breuil-Kisin modules}\label{Subsection:p-divisible groups and minuscule Breuil-Kisin modules}

In this subsection, we consider the case where $\O_E=\Z_p$.
Let $R$ be a $p$-adically complete ring,
and
let $\mathcal{G}$ be a $p$-divisible group over $\Spec R$.
We define the following functors
\begin{align*}
    \underline{\mathcal{G}} &\colon (R)_\Prism \to \mathrm{Set}, \quad (A, I) \mapsto \mathcal{G}(A/I), \\
    \underline{\mathcal{G}[p^n]} &\colon (R)_\Prism \to \mathrm{Set}, \quad (A, I) \mapsto \mathcal{G}[p^n](A/I).
\end{align*}
These functors form sheaves on the site $(R)^{\op}_\Prism$.
In \cite[Proposition 4.69]{Anschutz-LeBras}, it is proved that
the $\O_\Prism$-module
\[
\mathcal{E}xt^1_{(R)_\Prism}(\underline{\mathcal{G}}, \O_\Prism)
\]
on $(R)^{\op}_\Prism$
is a prismatic crystal in vector bundles.
(Here we simply write $\mathcal{E}xt^1_{(R)_\Prism}(\underline{\mathcal{G}}, \O_\Prism)$ rather than $\mathcal{E}xt^1_{(R)^{\op}_\Prism}(\underline{\mathcal{G}}, \O_\Prism)$.)

\begin{rem}\label{Remark:evaluation local Ext groups}
\
\begin{enumerate}
    \item For an integer $n \geq 1$, the map $[p^{n}] \colon \underline{\mathcal{G}} \to \underline{\mathcal{G}}$ induced by multiplication by $p^n$ is surjective.
    This follows from \cite[Corollary 3.25]{Anschutz-LeBras}.
    \item We have $\mathcal{H}om_{(R)_\Prism}(\underline{\mathcal{G}}, \O_\Prism)=0$.
    Indeed, since $[p] \colon \underline{\mathcal{G}} \to \underline{\mathcal{G}}$ is surjective and the topos associated with $(R)^{\op}_\Prism$ is replete in the sense of \cite[Definition 3.1.1]{Bhatt-ScholzeProetale},
    the projection $\varprojlim_{[p]} \underline{\mathcal{G}} \to \underline{\mathcal{G}}$ is surjective.
    Since
    $\O_\Prism(A, I)=A$ is $p$-adically complete for any $(A, I) \in (R)_\Prism$, we can conclude that $\mathcal{H}om_{(R)_\Prism}(\underline{\mathcal{G}}, \O_\Prism)=0$.
    As a consequence,
    the local-to-global spectral sequence implies that
    \[
    \mathrm{Ext}^1_{(A, I)_\Prism}(\underline{\mathcal{G}}, \O_\Prism) \overset{\sim}{\to} \mathcal{E}xt^1_{(R)_\Prism}(\underline{\mathcal{G}}, \O_\Prism)(A, I)
    \]
    for any $(A, I) \in (R)_\Prism$.
    Here we regard the site 
    $(A, I)^{\op}_\Prism$
    as the localization of $(R)^{\op}_\Prism$ at $(A, I)$, and the restriction of $\underline{\mathcal{G}}$ to $(A, I)^{\op}_{\Prism}$ is denoted by the same symbol.
    In particular
    $\mathrm{Ext}^1_{(A, I)_\Prism}(\underline{\mathcal{G}}, \O_\Prism)$
    is a finite projective $A$-module and its formation commutes with base change along any morphism $(A, I) \to (A', I')$ in $(R)_\Prism$.
\end{enumerate}
\end{rem}

We assume that $R$ is quasisyntomic.
In \cite[Theorem 4.71]{Anschutz-LeBras}, it is proved that
$\mathcal{E}xt^1_{(R)_\Prism}(\underline{\mathcal{G}}, \O_\Prism)$
with
the $\phi$-linear homomorphism
$\mathcal{E}xt^1_{(R)_\Prism}(\underline{\mathcal{G}}, \O_\Prism) \to \mathcal{E}xt^1_{(R)_\Prism}(\underline{\mathcal{G}}, \O_\Prism)$
induced by the Frobenius $\phi \colon \O_\Prism \to \O_\Prism$
is a prismatic Dieudonn\'e crystal.
More precisely, they showed that $\mathcal{E}xt^1_{(R)_\Prism}(\underline{\mathcal{G}}, \O_\Prism)$ is \textit{admissible} in the sense of \cite[Definition 4.5]{Anschutz-LeBras}.
(See also Remark \ref{Remark:definition of prismatic Dieudonne crystal in ALB}.)
We shall recall the argument.

\begin{prop}[{\cite[Theorem 4.71]{Anschutz-LeBras}}]\label{Proposition:prismatic Dieudonne crystal of p-divisivle group}
Let $R \in \qsyn$ and $(A, I) \in (R)_\Prism$.
We write $M:=\mathrm{Ext}^1_{(A, I)_\Prism}(\underline{\mathcal{G}}, \O_\Prism)$.
Then $M$ with the induced homomorphism
$
F_M \colon \phi^*M \to M
$
is a minuscule Breuil--Kisin module over $(A, I)$.
\end{prop}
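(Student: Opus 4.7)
The module $M$ is already known to be a finite projective $A$-module whose formation commutes with base change along any morphism in $(R)_\Prism$; this is recorded in Remark \ref{Remark:evaluation local Ext groups} (2). By Proposition \ref{Proposition:minuscule equivalent condition}, what remains of the statement is to show that $F_M$ is effective and that the cokernel of $F_M \colon \phi^*M \to M$ is annihilated by $I$. My plan is to verify this condition after a $(\pi,I)$-completely faithfully flat cover of $(A, I)$ by a perfect prism, where the assertion reduces to the classical Dieudonn\'e theory for $p$-divisible groups over perfectoid rings.

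For the reduction, I would take the \emph{perfection} $(A_\infty, IA_\infty)$, where $A_\infty$ is the $(\pi, I)$-adic completion of $\varinjlim_\phi A$. Since the Frobenius $\phi \colon A \to A$ on a bounded prism is $(\pi, I)$-completely faithfully flat, the induced map $(A, I) \to (A_\infty, IA_\infty)$ is a flat cover in $(R)_\Prism$, and $A_\infty \simeq W(T^\flat)$ for the perfectoid ring $T := A_\infty/IA_\infty$ by Corollary \ref{Corollary:equivalence of delta}. By Remark \ref{Remark:evaluation local Ext groups} (2), the base change $M \otimes_A A_\infty$ is canonically identified with the analogous Ext group attached to the base change $\mathcal{G}_T$, so we are reduced to the perfect prism case. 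Moreover, the minuscule condition can be checked after such a faithfully flat cover, since $I = I A_\infty \cap A$ (a consequence of Lemma \ref{Lemma:rigidity}) and the formation of cokernel of a map of finite projective modules is compatible with flat base change.

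In the perfect prism case, by Example \ref{Example:prismatic sites} (2) we have $(A_\infty, IA_\infty)_\Prism = (T)_\Prism$, and a standard identification, essentially established in \cite{Anschutz-LeBras}, identifies $\mathrm{Ext}^1_{(T)_\Prism}(\underline{\mathcal{G}_T}, \O_\Prism)$ with the classical Breuil--Kisin--Fargues module $M(\mathcal{G}_T)$ of the $p$-divisible group $\mathcal{G}_T$. The BKF module $M(\mathcal{G}_T)$ is then known to be minuscule: this is the theorem of Scholze--Weinstein for $T = \O_C$ with $C$ algebraically closed complete, extended to arbitrary perfectoid $T$ by the work of Lau and Ansch\"utz--Le Bras.

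The main obstacle is the identification of the prismatic $\mathrm{Ext}^1$ with the classical Breuil--Kisin--Fargues module $M(\mathcal{G}_T)$ in the perfectoid case. This requires a careful comparison between a prismatic resolution of $\underline{\mathcal{G}_T}$ (for example via the affine formal Čech nerve coming from the universal vector extension or from affine covers of $\mathcal{G}_T$) and the Messing crystal description of $M(\mathcal{G}_T)$, together with a check that the comparison is compatible with Frobenius. Once this identification is in hand, the minuscule property is the classical input, and the descent back to $(A, I)$ is routine.
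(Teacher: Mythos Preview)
The decisive gap is your claim that the Frobenius $\phi \colon A \to A$ on a bounded prism is $(\pi, I)$-completely faithfully flat. This is false in general: flatness of Frobenius is a regularity-type condition (Kunz's theorem in characteristic $p$), and the instances used elsewhere in the paper---for free $\delta_E$-rings in Example \ref{Example:free delta pi rings} and for $\mathfrak{S}_\O$ in Section \ref{Section:G-displays over complete regular local rings}---are special features of those rings, not a general property of prisms. For a typical quasiregular semiperfectoid $S$, the initial prism $(\Prism_S, I_S)$ has $\Prism_S/(p, I_S)$ highly non-regular and the Frobenius is far from flat. Consequently the map $(A, I) \to (A_\infty, IA_\infty)$ to the perfection is not a flat cover, and your descent step does not go through.

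The paper's proof uses a different chain of reductions that avoids this issue: one first replaces $(A, I)$ by the initial prism of a quasiregular semiperfectoid ring (via \cite[Lemma 4.28]{BMS2} and \cite[Proposition 7.11]{BS}, which do produce a flat cover), then reduces further to perfectoid $R$, and finally by $p$-complete $\arc$-descent (Proposition \ref{Proposition:arc descent for finite projective modules}) to a rank-$\leq 1$ valuation ring $R$ with algebraically closed fraction field. At that endpoint the argument is direct rather than a citation: if $p = 0$ in $R$ the Verschiebung supplies $V_M$ with $F_M \circ V_M = p$; if $R = \O_C$ one embeds $\mathcal{G}[p^n]$ into an abelian scheme and invokes the Frobenius bound on $H^1_\Prism$ from \cite[Theorem 1.8 (6)]{BS}. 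Your endgame, by contrast, cites the minusculeness of the BKF module over a general perfectoid ring as a known input from Ansch\"utz--Le Bras; but that is precisely \cite[Theorem 4.71]{Anschutz-LeBras}, the statement you are proving, so the appeal is circular.
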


\begin{proof}
    By the fact that the formation of $\mathrm{Ext}^1_{(A, I)_\Prism}(\underline{\mathcal{G}}, \O_\Prism)$ commutes with base change along any morphism $(A, I) \to (A', I')$ (see Remark \ref{Remark:evaluation local Ext groups}) and Corollary \ref{Corollary:flat descent for dispalyed BK modules}, the assertion can be checked $(p, I)$-completely flat locally.
    Let $R \to R'$ be a quasisyntomic covering with $R'$ a quasiregular semiperfectoid ring.
    Applying \cite[Proposition 7.11]{BS} to the $p$-adic completion of $A/I \otimes_{R} R'$, which is a quasisyntomic covering of $A/I$,
    we can find a flat covering
    $(A, I) \to (A', I')$
    in $(R)_\Prism$ such that there exists a homomorphism $R' \to A'/I'$ over $R$.
    After replacing $R$ by $R'$ and replacing $(A, I)$ by $(A', I')$,
    we may assume that $R$ is a quasiregular semiperfectoid ring.
    Then, by choosing a surjective homomorphism from a perfectoid ring to $R$ and using \cite[Corollary 2.10, Lemma 4.70]{Anschutz-LeBras}, we may assume that $R$ is a perfectoid ring and $(A, I)=(W(R^\flat), I_R)$.
    (Here we regard $(W(R^\flat), I_R)$ as an object of $(R)_\Prism$ via the homomorphism $\theta \colon W(R^\flat) \to R$.
    In \cite{Anschutz-LeBras}, the composition $\theta \circ \phi^{-1}$ is used instead.)
    
    Let $\xi \in I_R$ be a generator.
    By Proposition \ref{Proposition:minuscule equivalent condition}, it suffices to prove that the cokernel of $F_M$ is killed by $\xi$.
    By Remark \ref{Remark:arc topology} (4) and $p$-complete $\arc$-descent (Proposition \ref{Proposition:arc descent for finite projective modules}),
    we may further assume that $R$ is a $p$-adically complete valuation ring of rank $\leq 1$ with algebraically closed fraction field.

    If $p=0$ in $R$, then $R$ is perfect by Example \ref{Example:algebraically closed valuation ring is perfectoid}.
    In this case, the Frobenius $F_M$ can be identified with
    the homomorphism
    \[
    \mathrm{Ext}^1_{(A, I)_\Prism}(\underline{(\phi^*\mathcal{G})}, \O_\Prism) \to \mathrm{Ext}^1_{(A, I)_\Prism}(\underline{\mathcal{G}}, \O_\Prism)
    \]
    induced by the relative Frobenius $\mathcal{G} \to \phi^*\mathcal{G}$.
    Thus,
    the Verschiebung homomorphism
    $\phi^*\mathcal{G} \to \mathcal{G}$
    induces a $W(R)$-linear homomorphism
    $V_M \colon M \to \phi^*M$
    such that $F_M \circ V_M=p$, which in turn implies the assertion.

    It remains to treat the case where $p$ is a nonzerodivisor in $R$, so that $R$ is the ring of integers $\O_C$ of an algebraically closed nonarchimedean extension $C$ of $\Q_p$.
    We set
    \[
    M_n:=\mathrm{Ext}^1_{(W(\O^\flat_C), I_{\O_C})_\Prism}(\underline{\mathcal{G}[p^n]}, \O_\Prism).
    \]
    By the proof of \cite[Proposition 4.69]{Anschutz-LeBras}, the natural homomorphism $M \to M_n$ induces an isomorphism
    $M/p^n \overset{\sim}{\to} M_n$ for any $n \geq 1$.
    In particular, we obtain
    \[
    M \overset{\sim}{\to} \varprojlim_n M_n
    \]
    and $M_n$ is a free $W_n(\O^\flat_C)$-module of finite rank.
    We claim that the cokernel of the Frobenius
    $
    F_{M_n} \colon \phi^*M_n \to M_n
    $
    is killed by $\xi$.
    Indeed, there is an embedding $\mathcal{G}[p^n] \hookrightarrow X$ into an abelian scheme $X$ over $\Spec \O_C$;
    see \cite[Th\'eor\`eme 3.1.1]{BBM}.
    Let $Y$ be the $p$-adic completion of $X$, which is a smooth $p$-adic formal scheme over $\Spf \O_C$.
    It follows from the proofs of \cite[Theorem 4.62, Proposition 4.66]{Anschutz-LeBras} that
    there exists a surjection
    \[
    H^1_\Prism(Y/W(\O^\flat_C)) \to M_n
    \]
    which is compatible with Frobenius homomorphisms.
    Here $H^1_\Prism(Y/W(\O^\flat_C))$ is the first prismatic cohomology of $Y$ (with respect to $(W(\O^\flat_C), I_{\O_C})$) defined in \cite{BS}.
    By \cite[Theorem 1.8 (6)]{BS}, the cokernel of the Frobenius
    \[
    \phi^*H^1_\Prism(Y/W(\O^\flat_C)) \to H^1_\Prism(Y/W(\O^\flat_C))
    \]
    is killed by $\xi$, which in turn implies the claim.
    Since the image of $\xi$ in $W_n(\O^\flat_C)$ is a nonzerodivisor,
    it follows that
    $
    F_{M_n}
    $
    is injective.
    Since $F_M= \varprojlim_n F_{M_n}$, we can conclude that the cokernel of $F_M$ is killed by $\xi$.
\end{proof}

\begin{rem}\label{Remark:proof difference}
Our proof of Proposition \ref{Proposition:prismatic Dieudonne crystal of p-divisivle group} in the case where $A=\O_C$ is slightly different from that given in \cite{Anschutz-LeBras}.
For example, we do not use \cite[Proposition 14.9.4]{Scholze-Weinstein} (see the proof of \cite[Proposition 4.48]{Anschutz-LeBras}).
\end{rem}

Finally, we recall the following classification theorem for $p$-divisible groups given in \cite{Anschutz-LeBras}.
Let
$R$ be a complete regular local ring with perfect residue field $k$ of characteristic $p$.
Let
$(\mathfrak{S}, (\mathcal{E}))$
be a prism of Breuil--Kisin type, where $\mathfrak{S}:=W(k)[[t_1, \dotsc, t_n]]$,
with an isomorphism
$R \simeq \mathfrak{S}/\mathcal{E}$
which lifts $\id_k \colon k \to k$.

\begin{thm}[{Ansch\"utz--Le Bras \cite[Theorem 4.74, Theorem 5.12]{Anschutz-LeBras}}]\label{Theorem:classification theorem for p-divisible group}
\ 
\begin{enumerate}
    \item The contravariant functor
\[
\{ \, p\text{-divisible groups over} \ \Spec R \, \} \to \{ \, \text{prismatic Dieudonn\'e crystals on} \ (R)_\Prism \, \}
\]
defined by 
$
\mathcal{G} \mapsto \mathcal{E}xt^1_{(R)_\Prism}(\underline{\mathcal{G}}, \O_\Prism)
$
is an anti-equivalence of categories.
    \item The contravariant functor
\[
\{\,  p\text{-divisible groups over} \ \Spec R \, \} \to \{\,  \text{minuscule Breuil--Kisin modules over} \ (\mathfrak{S}, (\mathcal{E})) \, \}
\]
defined by 
$
\mathcal{G} \mapsto \mathrm{Ext}^1_{(\mathfrak{S}, (\mathcal{E}))_\Prism}(\underline{\mathcal{G}}, \O_\Prism)
$
is an anti-equivalence of categories.
\end{enumerate}
\end{thm}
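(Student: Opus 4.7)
The plan is to reduce both parts to results already in hand. For part (1), the anti-equivalence between $p$-divisible groups over $\Spec R$ and \emph{admissible} prismatic Dieudonn\'e crystals on the quasisyntomic site of any quasisyntomic ring is established by Ansch\"utz--Le Bras (\cite[Theorem 4.74]{Anschutz-LeBras}). Since $R$ is a complete regular local ring, $R$ is quasisyntomic, and by Remark \ref{Remark:definition of prismatic Dieudonne crystal in ALB} admissible prismatic Dieudonn\'e crystals over $R$ correspond to prismatic Dieudonn\'e crystals on $(R)_\Prism$ in our sense. (Concretely, the admissibility condition is automatic here; it is checked against values at perfectoid covers of $R$, and this follows from the crystal property together with Proposition \ref{Proposition:prismatic Dieudonn\'e crystal of p-divisible group}.) This yields (1).

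For part (2), I would simply compose the anti-equivalence of (1) with the equivalence
\[
\{\,\text{prismatic Dieudonn\'e crystals on $(R)_\Prism$}\,\} \overset{\sim}{\to} \mathrm{BK}_{\mathrm{min}}(\mathfrak{S},(\mathcal{E})), \qquad \mathcal{M} \mapsto \mathcal{M}(\mathfrak{S},(\mathcal{E}))
\]
provided by Proposition \ref{Proposition:evaluation prismatic dieudonne crystal equivalence}. Unwinding the construction of $\mathcal{M}$ from a $p$-divisible group $\mathcal{G}$ in part (1), one has $\mathcal{M}(\mathfrak{S},(\mathcal{E})) \simeq \mathrm{Ext}^1_{(\mathfrak{S},(\mathcal{E}))_\Prism}(\underline{\mathcal{G}},\O_\Prism)$ by Remark \ref{Remark:evaluation local Ext groups}. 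Composing gives the stated functor and shows it is an anti-equivalence.

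The main input, and the only nontrivial step, is Proposition \ref{Proposition:evaluation prismatic dieudonn\'e crystal equivalence} itself, whose proof in the higher-dimensional case $n \geq 2$ is the substantive content. The remaining pieces are book-keeping: checking that the admissibility condition in \cite{Anschutz-LeBras} is automatic for prismatic crystals over a complete regular local ring (reducing to the perfectoid case via the faithfully flat map $R \to R_\infty$ from the proof of Proposition \ref{Proposition:weakely initial object} and using Proposition \ref{Proposition:prismatic Dieudonn\'e crystal of p-divisible group}), and verifying that the evaluation functor of Proposition \ref{Proposition:evaluation prismatic dieudonn\'e crystal equivalence} carries $\mathcal{E}xt^1_{(R)_\Prism}(\underline{\mathcal{G}},\O_\Prism)$ to $\mathrm{Ext}^1_{(\mathfrak{S},(\mathcal{E}))_\Prism}(\underline{\mathcal{G}},\O_\Prism)$, which is Remark \ref{Remark:evaluation local Ext groups}(2). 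No separate combinatorics with $p$-divisible groups is required here, since part (1) handles that entirely.
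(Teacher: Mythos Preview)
Your overall approach matches the paper's: deduce (1) from \cite[Theorem 4.74]{Anschutz-LeBras} plus the fact that admissibility is automatic over a complete regular local ring, then deduce (2) by composing (1) with Proposition \ref{Proposition:evaluation prismatic dieudonne crystal equivalence} and Remark \ref{Remark:evaluation local Ext groups}(2). This is exactly what the paper does.

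One correction is needed in your justification for (1). You invoke Proposition \ref{Proposition:prismatic Dieudonne crystal of p-divisible group} to argue that admissibility is automatic, but that proposition concerns only the specific crystal $\mathcal{E}xt^1_{(R)_\Prism}(\underline{\mathcal{G}},\O_\Prism)$ attached to a $p$-divisible group; it says nothing about an \emph{arbitrary} prismatic Dieudonn\'e crystal. What is actually needed is that \emph{every} prismatic Dieudonn\'e crystal on $(R)_\Prism$ is admissible, so that the target category in (1) agrees with the one in \cite[Theorem 4.74]{Anschutz-LeBras}. The paper handles this by citing \cite[Proposition 5.10]{Anschutz-LeBras} directly. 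Your idea of reducing to a perfectoid cover is on the right track (this is essentially how \cite[Proposition 5.10]{Anschutz-LeBras} proceeds, via \cite[Proposition 5.8]{Anschutz-LeBras} and the perfectoid case \cite[Proposition 4.12]{Anschutz-LeBras}), but Proposition \ref{Proposition:prismatic Dieudonne crystal of p-divisible group} is not the relevant input there.
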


\begin{proof}
    (1) This is a consequence of \cite[Theorem 4.74, Proposition 5.10]{Anschutz-LeBras}.

    (2) The assertion follows from (1) and Proposition \ref{Proposition:evaluation prismatic dieudonne crystal equivalence}.
    This result was already stated in \cite[Theorem 5.12]{Anschutz-LeBras}, and the proof was given in the case where $n \leq 1$.
\end{proof}

\section{Comparison with prismatic $F$-gauges}\label{Section:Comparison with prismatic $F$-gauges}

For the sake of completeness,
we discuss the relation between our prismatic $G$-$\mu$-displays and \textit{prismatic $F$-gauges} introduced by Drinfeld and Bhatt--Lurie (cf.\ \cite[1.8.1]{Drinfeld22}, \cite{BL}, \cite{BL2}, \cite[Definition 6.1.1]{BhattGauge}).
For simplicity,
we assume that
$\O_E=\Z_p$ throughout this section, and we restrict ourselves to the case where base rings $R$ are quasisyntomic.
In this case, Guo-Li in \cite{Guo-Li} study prismatic $F$-gauges over $R$ in a slightly different way, without using the original stacky approach.
Here we follow the approach employed in \cite{Guo-Li}.
In Section \ref{Subsection:Prismatic F-gauges in vector bundles}, we compare prismatic $F$-gauges in vector bundles with displayed Breuil--Kisin modules.
In Section \ref{Subsection:Prismatic G-F-gauges of type mu},
we introduce
\textit{prismatic $G$-$F$-gauges of type $\mu$}
and 
explain their relation to prismatic $G$-$\mu$-displays.

We work with
the category
$\mathrm{QSyn}$
of quasisyntomic rings
and
the full subcategory
$\mathrm{QRSPerfd} \subset \mathrm{QSyn}$
spanned by quasiregular semiperfectoid rings (see Section \ref{Subsection:Quasisyntomic rings}).

\subsection{Prismatic $F$-gauges in vector bundles}\label{Subsection:Prismatic F-gauges in vector bundles}

We recall the definition of prismatic $F$-gauges in vector bundles over quasisyntomic rings,
following \cite{Guo-Li}.

Let $S \in \qrsperfd$ be a quasiregular semiperfectoid ring.
By \cite[Proposition 7.10]{BS},
the category $(S)_\Prism$ admits an initial object
\[
(\Prism_S, I_S) \in (S)_\Prism.
\]
Moreover the bounded prism $(\Prism_S, I_S)$ is orientable.
We often omit the subscript and simply write $I=I_S$.
Following \cite[Definition 12.1]{BS}, we define
\[
\Fil^i_\mathcal{N}(\Prism_S):= \{ \, x \in \Prism_S \, \vert \, \phi(x) \in I^i\Prism_S \, \}
\]
for a non-negative integer $i \geq 0$.
For a negative integer $i < 0$, we set $\Fil^i_\mathcal{N}(\Prism_S)=\Prism_S$.
The filtration
$\{ \Fil^i_\mathcal{N}(\Prism_S) \}_{i \in \Z}$ is called the \textit{Nygaard filtration}.
We recall the following terminology from \cite[Section 5.5]{BhattGauge}.

\begin{defn}\label{Definition:Rees algebra for Nygaard filtration}
    The \textit{extended Rees algebra}
    $\Rees(\Fil^\bullet_\mathcal{N}(\Prism_S))$
    of the Nygaard filtration $\{ \Fil^i_\mathcal{N}(\Prism_S) \}_{i \in \Z}$ is defined by
    \[
    \Rees(\Fil^\bullet_\mathcal{N}(\Prism_S)):= \bigoplus_{i \in \Z} \Fil^i_\mathcal{N}(\Prism_S)t^{-i} \subset \Prism_S[t, t^{-1}].
    \]
    We view $\Rees(\Fil^\bullet_\mathcal{N}(\Prism_S))$ as a graded ring, where the degree of $t$ is $-1$.
    Let
    \[
    \tau \colon \Rees(\Fil^\bullet_\mathcal{N}(\Prism_S)) \to \Prism_S
    \]
    be the homomorphism of $\Prism_S$-algebras defined by $t \mapsto 1$.
    We consider the graded ring
    $\bigoplus_{i \in \Z} I^{i}t^{-i} \subset \Prism_S[1/I][t, t^{-1}]$.
    Let
    \[
    \sigma \colon \Rees(\Fil^\bullet_\mathcal{N}(\Prism_S)) \to \bigoplus_{i \in \Z} I^{i}t^{-i}
    \]
    be the graded homomorphism
    defined by
    $a_it^{-i} \mapsto \phi(a_i)t^{-i}$ for any $i \in \Z$.
\end{defn}

\begin{rem}\label{Remark:sign convention grading of t}
For the grading of $\Rees(\Fil^\bullet_\mathcal{N}(\Prism_S))$, our sign convention is opposite to that of \cite{BhattGauge}, where the degree of $t$ is defined to be $1$.
Our grading is chosen to be consistent with the convention of \cite{Lau21}.
\end{rem}

\begin{defn}[Drinfeld, Bhatt--Lurie]\label{Definition:prismatic F-gauge in vector bundle over qrsp}
    Let $S \in \qrsperfd$.
    A \textit{prismatic $F$-gauge in vector bundles} over $S$ is a pair $(N, F_N)$ consisting of a graded $\Rees(\Fil^\bullet_\mathcal{N}(\Prism_S))$-module $N$ which is finite projective as a $\Rees(\Fil^\bullet_\mathcal{N}(\Prism_S))$-module, and an isomorphism
    \[
    F_N \colon (\sigma^*N)_0 \overset{\sim}{\to} \tau^*N
    \]
    of $\Prism_S$-modules.
    Here $(\sigma^*N)_0$ is the degree $0$ part of the graded $\bigoplus_{i \in \Z} I^{i}t^{-i}$-module
    $
    \sigma^*N.
    $
\end{defn}

Let
$
F\mathchar`-\mathrm{Gauge}^{\mathrm{vect}}(S)
$
be
the category of prismatic $F$-gauges in vector bundles over $S$.

\begin{rem}\label{Remark:degree 0 part equivalence}
    Let $M=\bigoplus_{i \in \Z} M_i$ be a graded $\bigoplus_{i \in \Z} I^{i}t^{-i}$-module.
    For any $i \in \Z$, we have a natural isomorphism
    $
    M_0 \otimes_{\Prism_S} I^{i}t^{-i} \overset{\sim}{\to} M_i
    $
    of $\Prism_S$-modules.
    It follows that
    the functor
    $
    M \mapsto M_0
    $
    from the category of graded $\bigoplus_{i \in \Z} I^{i}t^{-i}$-modules to the category of $\Prism_S$-modules is an equivalence, whose inverse is given by $L \mapsto L \otimes_{\Prism_S} (\bigoplus_{i \in \Z} I^{i}t^{-i})$.
\end{rem}

\begin{rem}\label{Remark:higher display of Lau}
    The notion of prismatic $F$-gauges in vector bundles is closely related to the notion of (higher) displays in the sense of Lau \cite[Definition 3.2.1]{Lau21}.
    See Remark \ref{Remark:analogue of Lau's definitions gauge version} for more details.
\end{rem}

We collect some useful facts about graded $\Rees(\Fil^\bullet_\mathcal{N}(\Prism_S))$-modules.

\begin{rem}\label{Remark:degree i part is complete}
    Let $N=\bigoplus_{i \in \Z} N_i$ be a graded $\Rees(\Fil^\bullet_\mathcal{N}(\Prism_S))$-module which is finite projective as a $\Rees(\Fil^\bullet_\mathcal{N}(\Prism_S))$-module.
    Then each degree $i$ part $N_i$ is a direct summand of a $\Prism_S$-module of the form
    $\bigoplus^m_{j=1} \Fil^{i_j}_\mathcal{N}(\Prism_S)t^{-i_j}$, and in particular $N_i$ is $(p, I)$-adically complete.
    This follows from the following fact:
    For a graded ring $A$, a graded $A$-module $N$ is projective as an $A$-module if and only if $N$ is projective in the category of graded $A$-modules; see \cite[Lemma 3.0.1]{Lau21}.
\end{rem}

Let
\begin{equation}\label{equation:homomorphism rho}
    \rho \colon \Rees(\Fil^\bullet_\mathcal{N}(\Prism_S)) \to \Prism_S/\Fil^1_\mathcal{N}(\Prism_S)
\end{equation}
be the composition of the projection $\Rees(\Fil^\bullet_\mathcal{N}(\Prism_S)) \to \Prism_S$
with the natural homomorphism
$\Prism_S \to \Prism_S/\Fil^1_\mathcal{N}(\Prism_S)$.
The map $\rho$ is a ring homomorphism.
For an integer $n \geq 1$, we write
    \[
    \Prism^{\mathcal{N}}_{S, n}:=\Rees(\Fil^\bullet_\mathcal{N}(\Prism_S)) \otimes_{\Prism_S} \Prism_S/(p, I)^n.
    \]
Let
$
\overline{\rho} \colon \Prism^{\mathcal{N}}_{S, n} \to \Prism_S/(\Fil^1_\mathcal{N}(\Prism_S)+(p, I))
$
be the homomorphism induced by $\rho$.

\begin{lem}[{cf.\ \cite[Lemma 3.1.1, Corollary 3.1.2]{Lau21}}]\label{Lemma:Nakayama's lemma graded version}
    \ 
    \begin{enumerate}
        \item Let $M$ be a finite graded $\Prism^{\mathcal{N}}_{S, n}$-module.
        If $\overline{\rho}^*M=0$, then we have $M=0$.
        \item Let $M$ and $N$ be finite graded $\Prism^{\mathcal{N}}_{S, n}$-modules.
        Assume that $N$ is projective as a $\Prism^{\mathcal{N}}_{S, n}$-module.
        Then a homomorphism
        $f \colon M \to N$ of graded $\Prism^{\mathcal{N}}_{S, n}$-modules is an isomorphism if 
        $\overline{\rho}^*f \colon \overline{\rho}^*M \to \overline{\rho}^*N$
        is an isomorphism.
    \end{enumerate}
\end{lem}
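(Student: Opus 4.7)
The plan is to prove (1) by reducing to $n = 1$ via induction on $n$ and then applying a graded Nakayama-type argument over the Rees algebra of the Nygaard filtration, following the pattern of \cite[Lemma 3.1.1, Corollary 3.1.2]{Lau21}; part (2) will then follow from (1) by a standard kernel/cokernel argument. Since the target of $\overline{\rho}$ is already killed by $(p, I)$, the map factors through $\Prism^{\mathcal{N}}_{S, 1}$, so for $n \geq 2$ the hypothesis $\overline{\rho}^*M = 0$ descends to the quotient $\overline{M} := M/(p, I)^{n-1}M$, which is a finite graded $\Prism^{\mathcal{N}}_{S, n-1}$-module. The inductive hypothesis gives $\overline{M} = 0$, i.e., $M = (p, I)^{n-1}M$; iterating once yields $M = (p, I)^{2(n-1)}M$, which is contained in $(p, I)^n M = 0$ as soon as $n \geq 2$.

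The base case $n = 1$ is the heart of the matter. Here I would analyze the kernel of $\overline{\rho} \colon \Prism^{\mathcal{N}}_{S, 1} \to \Prism_S/(\Fil^1_\mathcal{N}(\Prism_S) + (p, I))$, which as an ideal is generated by $t - 1$, the positive-degree piece of $\Prism^{\mathcal{N}}_{S, 1}$, and lifts of $\Fil^1_\mathcal{N}(\Prism_S)/(p, I)$ placed in degree zero. Given a finite set of homogeneous generators of $M$, I would exploit the identity $M = (t - 1, \Fil^1_\mathcal{N}(\Prism_S))M$ degree by degree, using that multiplication by $t$ realizes the inclusion $\Fil^i_\mathcal{N}(\Prism_S) \hookrightarrow \Fil^{i-1}_\mathcal{N}(\Prism_S)$ on degree-$i$ components; iterating pushes a generator into arbitrarily deep Nygaard filtration, and the $(p, I)$-adic completeness of each graded piece (Remark \ref{Remark:degree i part is complete}) then forces that generator, and hence all of $M$, to vanish. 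The hard part is precisely this interplay: the kernel of $\overline{\rho}$ is not a homogeneous ideal because it contains $t - 1$, so standard graded Nakayama does not apply directly and one has to balance the degree-shifting action of $t$ against the degree-zero input coming from $\Fil^1_\mathcal{N}(\Prism_S)$.

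For part (2), right-exactness of $\overline{\rho}^*$ combined with the hypothesis that $\overline{\rho}^*f$ is an isomorphism gives $\overline{\rho}^*\Coker(f) = 0$, so (1) forces $f$ to be surjective. Projectivity of $N$ splits the surjection as $M \simeq \Ker(f) \oplus N$, exhibiting $\Ker(f)$ as a direct summand of a finite graded module and hence itself finite; the same splitting identifies $\overline{\rho}^*\Ker(f)$ as the kernel of the isomorphism $\overline{\rho}^*f$, so $\overline{\rho}^*\Ker(f) = 0$, and a second application of (1) yields $\Ker(f) = 0$.
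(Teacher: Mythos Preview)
Your argument for part (2) is correct and matches the paper's.

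For part (1), however, you have misread the map $\overline{\rho}$. The ``projection $\Rees(\Fil^\bullet_\mathcal{N}(\Prism_S)) \to \Prism_S$'' in the definition of $\rho$ is the projection onto the degree-$0$ piece, not the map $\tau$ sending $t\mapsto 1$ (this is why the paper bothers to remark that $\rho$ is a ring homomorphism, which would be automatic for $\tau$, and why $\rho^*N_S$ is treated as a \emph{graded} module in Definition~\ref{Definition:type of prismatic $F$-gauge} and Corollary~\ref{Corollary:standard projective}). Thus $\overline{\rho}(t)=0$, not $1$; the element $t-1$ is \emph{not} in $\ker\overline{\rho}$, and $\ker\overline{\rho}$ is the \emph{homogeneous} ideal $\bigoplus_{i\neq 0}(\Prism^{\mathcal{N}}_{S,n})_i \oplus \mathfrak{a}$, where $\mathfrak{a}$ is the image of $\Fil^1_\mathcal{N}(\Prism_S)$ in degree $0$. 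This matters: under your reading of $\overline{\rho}$ the lemma is actually false. Take $M=\Prism^{\mathcal{N}}_{S,n}/(t)$; since $t$ acts as $0$ on $M$, the element $t-1$ acts as $-1$, so $(t-1)M=M$ and hence your version of $\overline{\rho}^*M$ vanishes, yet $M$ (the associated graded of the Nygaard filtration modulo $(p,I)^n$) is nonzero.

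With the correct homogeneous kernel, graded Nakayama \emph{does} apply directly, and the crucial input is that $\Fil^1_\mathcal{N}(\Prism_S)\subset\mathrm{rad}(\Prism_S)$ (from \cite[Lemma 4.28]{Anschutz-LeBras}), so that $\mathfrak{a}$ lies in the Jacobson radical of the degree-$0$ ring; this is exactly what Lau's argument uses, and it works for every $n$ at once without your reduction step. Your appeal to $(p,I)$-adic completeness via Remark~\ref{Remark:degree i part is complete} is misplaced on two counts: that remark concerns finite \emph{projective} modules over the full Rees algebra, not arbitrary finite modules over $\Prism^{\mathcal{N}}_{S,n}$ (which are already $(p,I)^n$-torsion), and in any case completeness alone does not force elements lying deep in the Nygaard filtration to vanish---it is the radical condition that does the work.
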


\begin{proof}
    (1) By \cite[Lemma 4.28]{Anschutz-LeBras},
    the pair $(\Prism_S, \Fil^1_\mathcal{N}(\Prism_S))$ is henselian.
    In particular we have $\Fil^1_\mathcal{N}(\Prism_S) \subset \mathrm{rad}(\Prism_S)$.
    Using this fact, we can prove the assertion by the same argument as in the proof of \cite[Lemma 3.1.1]{Lau21}.

    (2) By (1), we see that $f$ is surjective.
    Since $N$ is projective as a graded $\Prism^{\mathcal{N}}_{S, n}$-module (Remark \ref{Remark:degree i part is complete}), we have
    $M \simeq N \oplus \Ker f$ as graded $\Prism^{\mathcal{N}}_{S, n}$-modules.
    Thus $\Ker f$ is a finite graded $\Prism^{\mathcal{N}}_{S, n}$-module such that $\overline{\rho}^*\Ker f=0$.
    By (1) again, we have $\Ker f=0$.
\end{proof}

\begin{cor}\label{Corollary:standard projective}
    Let $N=\bigoplus_{i \in \Z} N_i$ be a graded $\Rees(\Fil^\bullet_\mathcal{N}(\Prism_S))$-module with the following properties:
    \begin{enumerate}
        \item The degree $i$ part $N_i$ is $(p, I)$-adically complete for every $i \in \Z$.
        \item $N^n:=N/(p, I)^nN$ is a finite projective $\Prism^{\mathcal{N}}_{S, n}$-module for every $n \geq 1$.
    \end{enumerate}
    Then there exists a graded finite projective $\Prism_S$-module $L$ with an isomorphism
    \[
    L \otimes_{\Prism_S} \Rees(\Fil^\bullet_\mathcal{N}(\Prism_S)) \simeq N
    \]
    of graded $\Rees(\Fil^\bullet_\mathcal{N}(\Prism_S))$-modules.
    In particular $N$ is a finite projective $\Rees(\Fil^\bullet_\mathcal{N}(\Prism_S))$-module.
\end{cor}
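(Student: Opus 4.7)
The plan is to adapt the proof of the normal decomposition statement Lemma~\ref{Lemma:normal decomposition for displayed BK module} to the graded Rees setting, constructing $L=\bigoplus_j L_j$ together with a graded $\Rees(\Fil^\bullet_\mathcal{N}(\Prism_S))$-linear map
\[
\varphi\colon L\otimes_{\Prism_S}\Rees(\Fil^\bullet_\mathcal{N}(\Prism_S))\longrightarrow N,
\]
which I then show is an isomorphism using the graded Nakayama Lemma~\ref{Lemma:Nakayama's lemma graded version} together with the $(p,I)$-adic completeness hypothesis~(1) and Remark~\ref{Remark:degree i part is complete}.

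First I would construct the $L_j$. The key ingredient is that $(\Prism_S,\Fil^1_\mathcal{N}(\Prism_S)+(p,I))$ is a henselian pair, which follows from \cite[Lemma 4.28]{Anschutz-LeBras} combined with the $(p,I)$-adic completeness of $\Prism_S$; this ensures that finite projective modules over the residue $\Prism_S/(\Fil^1_\mathcal{N}+(p,I))$ lift to finite projective $\Prism_S$-modules, exactly as in the proof of Lemma~\ref{Lemma:normal decomposition for displayed BK module}. Extracting a graded decomposition of the residue $\overline{\rho}^*N^1$ requires care, since $\overline{\rho}$ is not literally graded (its target is concentrated in degree $0$): to descend the degree grading on $N^1$, I would factor $\overline{\rho}$ through the Laurent polynomial ring $\Prism_S/(\Fil^1_\mathcal{N}+(p,I))[t,t^{\pm1}]$ (with $t$ in degree $-1$), over which base change from $\Prism^\mathcal{N}_{S,1}$ becomes a genuinely graded operation and any finitely generated graded module is determined by its graded pieces. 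This yields finite projective $\Prism_S/(\Fil^1_\mathcal{N}+(p,I))$-modules $\overline{L}_j$ for finitely many $j$ (boundedness coming from finite generation of $N^1$), which I then lift to finite projective $\Prism_S$-modules $L_j$, and I set $L:=\bigoplus_j L_j$.

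Next I would build $\varphi$ by choosing, for each $j$, a $\Prism_S$-linear lift $s_j\colon L_j\to N_j$ of the identification $\overline{L}_j\hookrightarrow N_j/(\Fil^1_\mathcal{N}+(p,I))N_j$. Such lifts exist by projectivity of $L_j$ and the $(p,I)$-adic completeness of $N_j$: one lifts inductively modulo $(p,I)^n$ using projectivity and passes to the inverse limit. These $s_j$ assemble to a graded $\Prism_S$-map $L\to N$ which extends uniquely to the desired graded $\Rees$-linear $\varphi$. To verify $\varphi$ is an isomorphism, I reduce modulo $(p,I)^n$: by construction $\overline{\rho}^*\varphi_n$ is the identity on $\overline{L}\simeq\overline{\rho}^*N^n$, and since $N^n$ is finite projective over $\Prism^\mathcal{N}_{S,n}$ by hypothesis~(2), Lemma~\ref{Lemma:Nakayama's lemma graded version}(2) forces $\varphi_n$ to be an isomorphism for every $n$. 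Hypothesis~(1) and Remark~\ref{Remark:degree i part is complete} together imply that both $N_i$ and $(L\otimes\Rees)_i=\bigoplus_j L_j\otimes_{\Prism_S}\Fil^{i-j}_\mathcal{N}(\Prism_S)$ are $(p,I)$-adically complete in each graded degree, so isomorphism modulo every $(p,I)^n$ upgrades to an isomorphism $\varphi$.

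The main obstacle I expect is the first step: extracting a bounded graded decomposition of $\overline{\rho}^*N^1$ with finite projective components. Because $\overline{\rho}$ collapses the grading by sending $t\mapsto1$, the graded pieces of $\overline{\rho}^*N^1$ are not visible directly, and the factorization through the Laurent polynomial ring $\Prism_S/(\Fil^1_\mathcal{N}+(p,I))[t,t^{\pm1}]$ is essential to make the descent of the grading meaningful; checking that the resulting $\overline{L}_j$ are finite projective and supported in finitely many degrees requires combining finite generation of $N^1$ with the fact that multiplication by $t$ becomes invertible on the residue. Once this decomposition is in place the rest of the argument is formal and parallels the proof of Lemma~\ref{Lemma:normal decomposition for displayed BK module}.
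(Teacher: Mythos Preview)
Your overall strategy is the same as the paper's: produce a graded finite projective $\Prism_S$-module $L$ together with a graded map $f\colon L\otimes_{\Prism_S}\Rees(\Fil^\bullet_\mathcal{N}(\Prism_S))\to N$ that becomes an isomorphism after applying $\overline\rho^*$, then invoke Lemma~\ref{Lemma:Nakayama's lemma graded version}(2) modulo each $(p,I)^n$ and pass to the limit using hypothesis~(1) and Remark~\ref{Remark:degree i part is complete}. The paper does exactly this, in slightly fewer words.

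However, your proposed device for extracting the $\overline L_j$'s --- factoring through the Laurent polynomial ring $R_0[t,t^{\pm1}]$ with $R_0=\Prism_S/(\Fil^1_\mathcal{N}+(p,I))$ --- does not do what you want. Over $R_0[t,t^{\pm1}]$ multiplication by $t$ is an isomorphism between consecutive graded pieces, so every finitely generated graded projective module is isomorphic to $P\otimes_{R_0}R_0[t,t^{\pm1}]$ for a single $R_0$-module $P$ placed in any chosen degree; there is no intrinsic finite set of degrees $j$ with attached $\overline L_j$. Concretely, if you simply lift $P$ to an $L$ concentrated in degree $0$ and build $\varphi$, you do not get an isomorphism: for $N=\Rees(\Fil^\bullet_\mathcal{N}(\Prism_S))$ shifted so that its generator sits in degree~$1$, the degree-$1$ component of $\varphi$ is the inclusion $\Fil^1_\mathcal{N}\hookrightarrow\Prism_S$ modulo $(p,I)$, which is not surjective since $\Fil^1_\mathcal{N}+(p,I)\subset\mathrm{rad}(\Prism_S)$. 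So the weight information you need is genuinely destroyed by inverting $t$.

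The paper sidesteps this by treating $\overline\rho^*N^1$ directly as a graded $R_0$-module (following Lau \cite[Lemma~3.1.1, Corollary~3.1.2]{Lau21}) and lifting each graded piece via henselianness of $(\Prism_S,\Fil^1_\mathcal{N}+(p,I))$. Once you have the correct $\overline L_j$'s, the rest of your argument --- lifting $s_j\colon L_j\to N_j$ by projectivity and completeness, applying Lemma~\ref{Lemma:Nakayama's lemma graded version}(2) mod $(p,I)^n$, and passing to the limit --- is exactly the paper's proof.
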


\begin{proof}
    Since $\Prism_S$ is henselian with respect to both ideals $\Fil^1_\mathcal{N}(\Prism_S)$ and $(p, I)$,
    there exists a graded finite projective $\Prism_S$-module $L$ with an isomorphism
    \[
    L/(\Fil^1_\mathcal{N}(\Prism_S)+(p, I))L \overset{\sim}{\to} \overline{\rho}^*N^1
    \]
    of graded modules (by \cite[Tag 0D4A]{SP} or \cite[Theorem 5.1]{Greco}).
    This isomorphism lifts to a homomorphism
    \[
    f \colon L \otimes_{\Prism_S} \Rees(\Fil^\bullet_\mathcal{N}(\Prism_S)) \to N
    \]
    of graded $\Rees(\Fil^\bullet_\mathcal{N}(\Prism_S))$-modules (see Remark \ref{Remark:degree i part is complete}).
    By Lemma \ref{Lemma:Nakayama's lemma graded version}, the reduction modulo $(p, I)^n$ of $f$ is bijective for every $n$.
    Since degree $i$ parts of $L \otimes_{\Prism_S} \Rees(\Fil^\bullet_\mathcal{N}(\Prism_S))$ and $N$ are $(p, I)$-adically complete for every $i \in \Z$, it follows that $f$ is an isomorphism.
\end{proof}

For a bounded prism $(A, I)$, let
$
\mathrm{BK}_{\mathrm{disp}}(A, I)
$
be the category of displayed Breuil--Kisin modules over $(A, I)$ (see Definition \ref{Definition:displayed and minuscule Breuil-Kisin module}).
Prismatic $F$-gauges in vector bundles over $S$ can be related to displayed Breuil--Kisin modules over $(\Prism_S, I_S)$ as follows.

\begin{prop}\label{Proposition:F-gauges to F-crystals qrsp case}
    Let $S \in \qrsperfd$. There exists a fully faithful functor
    \[
        F\mathchar`-\mathrm{Gauge}^{\mathrm{vect}}(S) \to \mathrm{BK}_{\mathrm{disp}}(\Prism_S, I_S)
    \]
    which is compatible with base change along any homomorphism $S \to S'$ in $\qrsperfd$.
\end{prop}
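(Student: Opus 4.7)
The plan is to construct the functor by setting $\Phi(N, F_N) := (\tau^*N, F_N[1/I])$, verify this lies in $\mathrm{BK}_{\mathrm{disp}}(\Prism_S, I_S)$, and then deduce full faithfulness from the observation that any Breuil--Kisin module morphism automatically preserves the filtration $\Fil^\bullet(\phi^*M)$.

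First, I would unpack the construction. By Corollary \ref{Corollary:standard projective} (whose hypotheses are met thanks to Remark \ref{Remark:degree i part is complete}), for any $(N, F_N) \in F\mathchar`-\mathrm{Gauge}^{\mathrm{vect}}(S)$ one may choose a graded finite projective $\Prism_S$-module $L = \bigoplus_j L_j$ with $N \simeq L \otimes_{\Prism_S} \Rees(\Fil^\bullet_\mathcal{N}(\Prism_S))$ as graded $\Rees$-modules. Setting $M := \tau^*N$, this presents $M$ as the underlying ungraded $\Prism_S$-module of $L$, and a direct grading calculation using Remark \ref{Remark:degree 0 part equivalence} identifies
\[
(\sigma^*N)_0 \simeq \bigoplus_j (\phi^*L_j) \otimes_{\Prism_S} I^{-j} \subset (\phi^*M)[1/I],
\]
where $I^{-j}$ for $j > 0$ denotes the fractional ideal $d^{-j}\Prism_S$ for a generator $d \in I_S$. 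Thus $(\sigma^*N)_0[1/I] = (\phi^*M)[1/I]$, and $F_N$ extends to an isomorphism $F_M \colon (\phi^*M)[1/I] \overset{\sim}{\to} M[1/I]$. The grading on $L$ provides a normal decomposition $\phi^*M = \bigoplus_j \phi^*L_j$; the resulting filtration $\Fil^n(\phi^*M) = \bigoplus_j I^{\max(n-j,0)} \phi^*L_j$ satisfies $F_M(\Fil^n(\phi^*M)) \subset I^n M$ (this being precisely the statement that $F_N$ lands in $M$ rather than $M[1/I]$), so $(M, F_M)$ is a displayed Breuil--Kisin module with Hodge filtration $P^n = \bigoplus_{j \geq n} \phi^*L_j/I\phi^*L_j$. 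Functoriality on morphisms is immediate: a graded $\Rees$-linear map $f \colon N \to N'$ compatible with $F_N, F_{N'}$ induces $\tau^*f \colon M \to M'$ compatible with $F_M, F_{M'}$, and compatibility with base change of $S$ follows because each of $\tau^*(-)$ and $(\sigma^*(-))_0$ commutes with base change.

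The main step, and principal technical obstacle, is full faithfulness. By adjunction, a graded $\Rees$-linear map $L \otimes \Rees \to L' \otimes \Rees$ corresponds to a system of $\Prism_S$-linear maps $f_{ij} \colon L_i \to L'_j$ satisfying $f_{ij}(L_i) \subset \Fil^{i-j}_\mathcal{N}(\Prism_S) \cdot L'_j$ for all $i > j$ (the condition being vacuous for $i \leq j$, since then $\Fil^{i-j}_\mathcal{N}(\Prism_S) = \Prism_S$), and the underlying map $M \to M'$ is the total sum $\sum_{i,j} f_{ij}$. I must therefore show that any Breuil--Kisin morphism $f \colon M \to M'$ has components $f_{ij}$ satisfying this condition. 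The input is that $\phi^*f$ preserves $\Fil^n(\phi^*M) := \{\, x \in \phi^*M \, \vert \, F_M(x) \in I^n M \, \}$, which is a formal consequence of $F_{M'} \circ \phi^*f = f \circ F_M$ after inverting $I$. The core algebraic fact needed is the equivalence, for any $\Prism_S$-linear $g \colon L_i \to L'_j$ of finite projective modules,
\[
\phi^*g(\phi^*L_i) \subset I^n \phi^*L'_j \iff g(L_i) \subset \Fil^n_\mathcal{N}(\Prism_S) \cdot L'_j,
\]
which follows from $\Fil^n_\mathcal{N}(\Prism_S) = \phi^{-1}(I^n \Prism_S)$ and reduces via Zariski-local trivialization of $L'_j$ to the rank-one case. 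Applying this with $n = i - j$ and invoking the filtration preservation of $\phi^*f$ yields the desired condition on $f_{ij}$, producing the unique graded lift $\widetilde{f} \colon N \to N'$; its compatibility with $F_N$ and $F_{N'}$ is automatic from the compatibility of $f$ with $F_M, F_{M'}$ together with the identification of $F_N$ as the restriction of $F_M$ to $(\sigma^*N)_0$.

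The main obstacle is establishing the displayed equivalence above, which requires careful bookkeeping of the $\phi$-twist relating $L'_j$ and $\phi^*L'_j$. A secondary, more bookkeeping-type subtlety is that the presentation $N \simeq L \otimes \Rees$ furnished by Corollary \ref{Corollary:standard projective} is non-canonical (the grading of $L$ depends on a lifting choice), so one must verify that $\Phi$ is well-defined by checking that $\tau^*N$, $(\sigma^*N)_0$, and the filtration $F_M^{-1}(I^n M) = \Fil^n(\phi^*M)$ are all intrinsic to $(N, F_N)$, independent of the choice of $L$.
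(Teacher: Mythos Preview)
Your proposal is correct and follows essentially the same approach as the paper. The construction of the functor via $M=\tau^*N$ and extension of $F_N$, together with the use of Corollary~\ref{Corollary:standard projective} to obtain a normal decomposition and verify displayedness, matches the paper exactly. For full faithfulness, the paper cites \cite[Corollary 2.53]{Guo-Li} and records only the key observation: the Nygaard-type filtration $\Fil^i(M)=N_i$ on $M$ can be intrinsically recovered from $(M,F_M)$ as the preimage of $I^iM$ under $M\to(\phi^*M)[1/I]\xrightarrow{F_M}M[1/I]$. Your direct argument via components $f_{ij}$ and the equivalence $\phi^*g(\phi^*L_i)\subset I^n\phi^*L'_j \Leftrightarrow g(L_i)\subset\Fil^n_\mathcal{N}(\Prism_S)\cdot L'_j$ is precisely the computational unwinding of that same observation; your ``key algebraic fact'' is exactly what makes the paper's recovered filtration agree with the original one (compare equations (8.1) and (8.2) in the paper's proof). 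The paper also notes an alternative route to full faithfulness via the $\GL_n$-display formalism (Proposition~\ref{Proposition:functor from G-gauge to G-display}), which you do not use.
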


\begin{proof}
    To each $(N, F_N) \in F\mathchar`-\mathrm{Gauge}^{\mathrm{vect}}(S)$, we attach a displayed Breuil--Kisin module $(M, F_M)$ over $(\Prism_S, I)$ as follows.
    Let $M:=\tau^*N$.
    The kernel of
    $\tau \colon \Rees(\Fil^\bullet_\mathcal{N}(\Prism_S)) \to \Prism_S$
    is generated by $t-1$, so that
    $M = N/(t-1)N$.
    It follows that the natural homomorphism
    $N_i \to M$
    of $\Prism_S$-modules is injective, whose image is denoted by $\Fil^i(M) \subset M$.
    We have $\Fil^{i+1}(M) \subset \Fil^i(M)$, and the corresponding map
    $N_{i+1} \to N_i$
    is given by $x \mapsto tx$.
    Moreover we have $M = \cup_i \Fil^i(M)$.
    Let $i$ be a small enough integer such that $\Fil^i(M)=M$.
    We define
    $\phi^*M \to M[1/I]$
    to be the following composition:
    \[
    \phi^*M = \phi^*\Fil^i(M) \overset{\sim}{\to} \phi^*N_i \to (\sigma^*N)_i  \overset{\sim}{\to} (\sigma^*N)_0 \otimes_{\Prism_S} I^it^{-i}
    \overset{F_N}{\to} M \otimes_{\Prism_S} I^it^{-i} \overset{t \mapsto 1}{\to} M[1/I].
    \]
    (For the isomorphism $(\sigma^*N)_i  \overset{\sim}{\to} (\sigma^*N)_0 \otimes_{\Prism_S} I^it^{-i}$, see Remark \ref{Remark:degree 0 part equivalence}.)
    This homomorphism  $\phi^*M \to M[1/I]$ is independent of the choice of $i$.
    Let
    $F_M \colon (\phi^*M)[1/I] \to M[1/I]$
    be the induced homomorphism.
        
    We shall prove that $(M, F_M)$ is a displayed Breuil--Kisin module over $(\Prism_S, I)$.
    By Corollary \ref{Corollary:standard projective},
    we may assume that
    \[
    N=L \otimes_{\Prism_S} \Rees(\Fil^\bullet_\mathcal{N}(\Prism_S))
    \]
    for some graded finite projective $\Prism_S$-module $L=\bigoplus_{j \in \Z} L^{(-1)}_j$.
    Then we have $M=L$ and
    \begin{equation}\label{equation:filtration on M}
        \Fil^i(M)= (\bigoplus_{j \geq i} L^{(-1)}_j) \oplus (\bigoplus_{j < i} \Fil^{i-j}_\mathcal{N}(\Prism_S)L^{(-1)}_j)
    \end{equation}
    for every $i \in \Z$.
    We set $L_j:=\phi^*L^{(-1)}_j$.
    By sending $t$ to $1$, 
    we obtain
    $
    (\sigma^*N)_0 \overset{\sim}{\to} \bigoplus_{j \in \Z} (L_j \otimes_{\Prism_S} I^{-j}),
    $
    and the isomorphism $F_N$ can be written as
    \[
    F_N \colon \bigoplus_{j \in \Z} (L_j \otimes_{\Prism_S} I^{-j}) \overset{\sim}{\to} M.
    \]
    Now
    $
    F_M \colon \bigoplus_{j \in \Z} L_j[1/I] \to M[1/I] 
    $
    is the base change of $F_N$.
    (In particular $F_M$ is an isomorphism.)
    Recall the filtration
    $\{ \Fil^i(\phi^*M) \}_{i \in \Z}$
    of $\phi^*M$ from Definition \ref{Definition:Breuil-Kisin module}.
    We see that
    $\Fil^i(\phi^*M) \subset \phi^*M$
    is the intersection of
    $\phi^*M=\bigoplus_{j \in \Z} L_j$
    with
    $\bigoplus_{j \in \Z} (L_j \otimes_{\Prism_S} I^{i-j})$, and thus
    \begin{equation}\label{equation:filtration on phiM}
    \Fil^i(\phi^*M)=(\bigoplus_{j \geq i} L_j) \oplus (\bigoplus_{j < i} I^{i-j}L_j).
    \end{equation}
    From this description, we see that $(M, F_M)$ is a displayed Breuil--Kisin module.
    (Moreover
    $\phi^*M=\bigoplus_{j \in \Z} L_j$
    is a normal decomposition in the sense of Definition \ref{Definition:normal decomposition}.)

    We have constructed a functor
    $F\mathchar`-\mathrm{Gauge}^{\mathrm{vect}}(S) \to \mathrm{BK}_{\mathrm{disp}}(\Prism_S, I_S)$.
    The full faithfulness of this functor follows from \cite[Corollary 2.53]{Guo-Li}.
    The important point is that the filtration $\{ \Fil^i(M) \}_{i \in \Z}$ of $M$ can be recovered from $(M, F_M)$.
    Namely, $\Fil^i(M)$ agrees with the inverse image of $M \otimes_{\Prism_S} I^{i}$
    under the composition
    \[
    M \overset{x \mapsto 1 \otimes x}{\longrightarrow} (\phi^*M)[1/I] \overset{F_M}{\longrightarrow} M[1/I].
    \]
    (Compare $(\ref{equation:filtration on M})$ with $(\ref{equation:filtration on phiM})$.)
    Since the full faithfulness of the functor also follows from
    Example \ref{Example:GLn displays},
    Example \ref{Example:prismatic F-gauge and GLn-gauge},
    and
    Proposition \ref{Proposition:functor from G-gauge to G-display} below (compare with the argument in the proof of Proposition \ref{Proposition:evaluation prismatic dieudonne crystal equivalence}), we omit the details here.
\end{proof}

The following result is essential for the definition of prismatic $F$-gauges in vector bundles over a quasisyntomic ring.

\begin{prop}\label{Proposition:quasisyntomic descent for prismatic F-gauge}
The fibered category
over
$\mathrm{QRSPerfd}^{\op}$
which associates to each
$S \in \mathrm{QRSPerfd}$
the category
$F\mathchar`-\mathrm{Gauge}^{\mathrm{vect}}(S)$
satisfies descent with respect to the quasisyntomic topology.
\end{prop}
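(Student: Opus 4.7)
The strategy is to reduce descent of the whole datum $(N, F_N)$ to flat descent for finite projective modules (Proposition \ref{Proposition:flat descent for finite projective modules}) applied to the graded Rees algebras, one Nygaard filtration step and one level modulo $(p, I)^n$ at a time. Fix a quasisyntomic covering $S \to S'$ in $\qrsperfd$ and form the \v{C}ech conerve $S^{(\bullet)}$ (the iterated derived $(p, I)$-completed self-tensor products over $S$), which stays inside $\qrsperfd$ by the stability results of Bhatt--Morrow--Scholze for quasiregular semiperfectoid rings. The essential inputs from prismatic cohomology are then: (i) the structure map $\Prism_S \to \Prism_{S^{(n)}}$ is $(p,I)$-completely faithfully flat, and the functor $T \mapsto \Prism_T$ is a sheaf on $\qsyn^{\op}$; and (ii) for every $i \geq 0$, the Nygaard filtration piece $\Fil^i_\mathcal{N}(\Prism_S)$ also satisfies quasisyntomic descent, i.e.\ $\Fil^i_\mathcal{N}(\Prism_S) \otimes^{\L}_{\Prism_S} \Prism_{S^{(n)}}$ (derived $(p, I)$-completed) is $\Fil^i_\mathcal{N}(\Prism_{S^{(n)}})$. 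Both statements are established by Bhatt--Morrow--Scholze and Bhatt--Scholze, and form the foundation of the quasisyntomic sheaf property of $\O_\Prism$ and $\Fil^\bullet_\mathcal{N}\O_\Prism$.

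Granting (i) and (ii), the Rees algebra map $\Rees(\Fil^\bullet_\mathcal{N}(\Prism_S)) \to \Rees(\Fil^\bullet_\mathcal{N}(\Prism_{S^{(n)}}))$ is degree-wise $(p,I)$-completely faithfully flat. For each integer $m \geq 1$ the reduction $\Prism^{\mathcal{N}}_{S, m} \to \Prism^{\mathcal{N}}_{S^{(n)}, m}$ is then faithfully flat in each graded degree, so ordinary faithfully flat descent for finite projective modules (Proposition \ref{Proposition:flat descent for finite projective modules}, applied in each degree) gives descent for graded finite projective $\Prism^{\mathcal{N}}_{S, m}$-modules. Passing to the inverse limit over $m$, and using that each graded piece $N_i$ of a finite projective $\Rees(\Fil^\bullet_\mathcal{N}(\Prism_S))$-module is $(p, I)$-adically complete (Remark \ref{Remark:degree i part is complete}), we descend the underlying graded module. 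Corollary \ref{Corollary:standard projective} then guarantees that the descended object is again finite projective as a graded $\Rees(\Fil^\bullet_\mathcal{N}(\Prism_S))$-module. The isomorphism $F_N$, being an isomorphism between finite projective $\Prism_S$-modules $(\sigma^*N)_0$ and $\tau^*N$, descends separately by the same proposition; one checks compatibility with the descent of $N$ by functoriality of $\sigma$ and $\tau$.

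The main obstacle is input (ii), the quasisyntomic descent of the Nygaard filtration, since unlike $\O_\Prism$ it is not tautologically compatible with base change. The standard way around this is to use the identification of $\phi$ applied to $\Fil^i_\mathcal{N}$ with a multiple of $I^i$ inside $\Prism_S$, together with the explicit description of $\gr^i_\mathcal{N} \Prism_S$ in terms of (a twist of) the conjugate filtration on the Hodge--Tate cohomology; the latter is manifestly a quasisyntomic sheaf on $\qsyn^{\op}$ because Hodge--Tate cohomology is computed by derived de Rham--type constructions. Once (ii) is secured, the rest of the argument is formal flat descent plus bookkeeping, and the remainder of the proof is straightforward.
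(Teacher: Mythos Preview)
Your approach is essentially the same as the paper's: both reduce to faithfully flat descent for graded finite projective modules over $\Prism^{\mathcal{N}}_{S,n}$ at each finite level, pass to the inverse limit, invoke Corollary~\ref{Corollary:standard projective} to recover finite projectivity of the glued module, and descend $F_N$ separately via Proposition~\ref{Proposition:flat descent for finite projective modules}. The paper simply cites \cite[Proposition~2.29]{Guo-Li} for the two key inputs (faithful flatness of $\Prism^{\mathcal{N}}_{S,n} \to \Prism^{\mathcal{N}}_{S',n}$ and the base-change isomorphism $\Prism^{\mathcal{N}}_{S',n} \otimes_{\Prism^{\mathcal{N}}_{S,n}} \Prism^{\mathcal{N}}_{S_1,n} \simeq \Prism^{\mathcal{N}}_{S'\widehat{\otimes}_S S_1,n}$), whereas you sketch their provenance via the Nygaard/conjugate filtration; one small slip is that the \v{C}ech conerve $S^{(\bullet)}$ should be formed using $p$-adic (not $(p,I)$-adic) completions, since $I$ lives in $\Prism_S$ rather than in $S$.
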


\begin{proof}
    This was originally proved by Bhatt--Lurie (cf.\ \cite[Remark 5.5.18]{BhattGauge}).
    In \cite[Proposition 2.29]{Guo-Li}, this result was obtained by a slightly different method.
    We briefly recall the argument given in \cite[Proposition 2.29]{Guo-Li}.
    
    Let $S \to S'$ be a quasisyntomic covering in $\mathrm{QRSPerfd}$.
    By the proof of \cite[Proposition 2.29]{Guo-Li},
    the induced homomorphism
    $
    \Prism^{\mathcal{N}}_{S, n} \to \Prism^{\mathcal{N}}_{S', n}
    $
    is faithfully flat for every $n \geq 1$, and for a homomorphism $S \to S_1$ in $\mathrm{QRSPerfd}$,
    the following natural homomorphism of graded rings is an isomorphism:
    \[
    \Prism^{\mathcal{N}}_{S', n} \otimes_{\Prism^{\mathcal{N}}_{S, n}} \Prism^{\mathcal{N}}_{S_1, n} \overset{\sim}{\to} \Prism^{\mathcal{N}}_{S' \widehat{\otimes}_S S_1, n},
    \]
    where $S' \widehat{\otimes}_S S_1 \in \mathrm{QRSPerfd}$ is the $p$-adic completion of $S' \otimes_S S_1$.
    Using these results, we can prove that the natural functor from the category of prismatic $F$-gauges in vector bundles over $S$ to the category of prismatic $F$-gauges in vector bundles over $S'$ with a descent datum (with respect to $S \to S'$) is an equivalence.
    We only prove the essential surjectivity of the functor.
    
    Let $(N', F_{N'}) \in F\mathchar`-\mathrm{Gauge}^{\mathrm{vect}}(S')$ with a descent datum.
    By the results recalled in the previous paragraph and by faithfully flat descent,
    we see that for every $n \geq 1$,
    the graded $\Prism^{\mathcal{N}}_{S', n}$-module
    $N'/(p, I)^nN'$ with the descent datum arises from a graded $\Prism^{\mathcal{N}}_{S, n}$-module
    $
    N^n= \bigoplus_{i \in \Z} N^n_i
    $
    such that $N^n$ is finite projective as a $\Prism^{\mathcal{N}}_{S, n}$-module.
    Let
    $N_i:= \varprojlim_n N^n_i$
    and we define
    $N:=\bigoplus_{i \in \Z} N_i$,
    which is a
    graded
    $\Rees(\Fil^\bullet_\mathcal{N}(\Prism_S))$-module.
    We have $N/(p, I)^nN =N^n$ for every $n \geq 1$.
    By Corollary \ref{Corollary:standard projective}, we see that $N$ is a finite projective $\Rees(\Fil^\bullet_\mathcal{N}(\Prism_S))$-module.
    Moreover the natural homomorphism
    \[
    N \otimes_{\Rees(\Fil^\bullet_\mathcal{N}(\Prism_S))} \Rees(\Fil^\bullet_\mathcal{N}(\Prism_{S'})) \to N'
    \]
    is an isomorphism
    (as its reduction modulo $(p, I)^n$ is an isomorphism for every $n$).
    Since
    $(\Prism_S, I_S) \to (\Prism_{S'}, I_{S'})$
    is a faithfully flat map of $\O_E$-prisms and the $(p, I)$-adic completion of
    $\Prism_{S'} \otimes_{\Prism_S} \Prism_{S'}$
    is isomorphic to $\Prism_{S' \widehat{\otimes}_S S'}$ (see the proof of \cite[Proposition 2.29]{Guo-Li}),
    the isomorphism
    $F_{N'}$ descends to an isomorphism
    $F_N \colon (\sigma^*N)_0 \overset{\sim}{\to} \tau^*N$ by Proposition \ref{Proposition:flat descent for finite projective modules}.
    This shows that $(N', F_{N'})$ with the descent datum arises from $(N, F_N) \in F\mathchar`-\mathrm{Gauge}^{\mathrm{vect}}(S)$.
\end{proof}

Proposition \ref{Proposition:quasisyntomic descent for prismatic F-gauge}, together with \cite[Proposition 4.31]{BMS2}, shows that
the fibered category
$S \mapsto F\mathchar`-\mathrm{Gauge}^{\mathrm{vect}}(S)$
over
$\mathrm{QRSPerfd}^{\op}$
extends uniquely to 
a fibered category
\[
R \mapsto F\mathchar`-\mathrm{Gauge}^{\mathrm{vect}}(R)
\]
over
the category $\qsyn^{\op}$ that satisfies descent with respect to the quasisyntomic topology.

\begin{defn}[Drinfeld, Bhatt--Lurie, Guo--Li]\label{Definition:prismatic F-gauge in vector bundle over qsyn}
Let $R \in \qsyn$.
An object 
$N \in F\mathchar`-\mathrm{Gauge}^{\mathrm{vect}}(R)$
is called a \textit{prismatic $F$-gauge in vector bundles} over $R$.
For a homomorphism $R \to S$ with $S \in \qrsperfd$, the image of $N$ in $F\mathchar`-\mathrm{Gauge}^{\mathrm{vect}}(S)$ is denoted by $(N_S, F_{N_S})$.
\end{defn}

A fully faithful functor from
$F\mathchar`-\mathrm{Gauge}^{\mathrm{vect}}(R)$
to the category of prismatic $F$-crystals on $(R)_\Prism$ (in the sense of \cite{BS2}) is obtained in \cite[Corollary 2.53]{Guo-Li}.
More precisely, we have the following result.

\begin{prop}\label{Proposition:F-gauges to F-crystals}
    Let $R \in \qsyn$.
    There exists a fully faithful functor
    \begin{equation}\label{equation:functor from gauge to crystal}
        F\mathchar`-\mathrm{Gauge}^{\mathrm{vect}}(R) \to {2-\varprojlim}_{(A, I) \in (R)_{\Prism}} \mathrm{BK}_{\mathrm{disp}}(A, I).
    \end{equation}
    This functor is compatible with base change along any homomorphism $R \to R'$ in $\qsyn$.
\end{prop}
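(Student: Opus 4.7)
\medskip

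The plan is to mirror the strategy used by Ansch\"utz--Le Bras for prismatic Dieudonn\'e crystals (cf.\ \cite[Proposition 4.4]{Anschutz-LeBras}): first handle the quasiregular semiperfectoid case, where the prismatic site has an initial object, and then globalize by quasisyntomic descent. The full faithfulness will be inherited from the quasiregular semiperfectoid case via passage to a limit.

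\emph{Step 1 (quasiregular semiperfectoid case).} For $S \in \qrsperfd$, the site $(S)_\Prism$ admits the initial object $(\Prism_S, I_S)$ by \cite[Proposition 7.10]{BS}. Combined with Proposition \ref{Proposition:displayed condition base change}(1), which ensures that the displayed property is preserved under arbitrary base change of prisms, evaluation at $(\Prism_S, I_S)$ defines an equivalence
\[
{2-\varprojlim}_{(A, I) \in (S)_\Prism} \mathrm{BK}_{\mathrm{disp}}(A, I) \overset{\sim}{\to} \mathrm{BK}_{\mathrm{disp}}(\Prism_S, I_S).
\]
Composing this with the fully faithful functor of Proposition \ref{Proposition:F-gauges to F-crystals qrsp case} yields the desired fully faithful functor in the $\qrsperfd$ case, and the compatibility statement of Proposition \ref{Proposition:F-gauges to F-crystals qrsp case} upgrades it to a morphism of fibered categories over $\qrsperfd^{\op}$.

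\emph{Step 2 (globalization).} For $R \in \qsyn$, fix a quasisyntomic covering $R \to S^0$ with $S^0 \in \qrsperfd$, and let $S^\bullet$ be its \v{C}ech nerve (all terms $S^i \in \qrsperfd$, since $\qrsperfd$ is closed under $p$-completed tensor products). By construction of $F\mathchar`-\mathrm{Gauge}^{\mathrm{vect}}(R)$ via quasisyntomic descent (Proposition \ref{Proposition:quasisyntomic descent for prismatic F-gauge}), we have $F\mathchar`-\mathrm{Gauge}^{\mathrm{vect}}(R) \simeq \lim_{\bullet} F\mathchar`-\mathrm{Gauge}^{\mathrm{vect}}(S^\bullet)$. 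The key remaining claim is that the target satisfies the analogous descent:
\[
{2-\varprojlim}_{(A, I) \in (R)_\Prism} \mathrm{BK}_{\mathrm{disp}}(A, I) \simeq \lim_\bullet {2-\varprojlim}_{(A, I) \in (S^\bullet)_\Prism} \mathrm{BK}_{\mathrm{disp}}(A, I).
\]
Granting this, the functors from Step 1 assemble, in the limit over $\bullet$, into the required functor, and full faithfulness passes to limits.

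\emph{Step 3 (the target is a quasisyntomic sheaf).} Given $(A, I) \in (R)_\Prism$, form $(A^{(n)}, I^{(n)})$ by taking the prismatic pushout of $(A, I)$ along $R \to S^n$ for each $n$; these fit into a cosimplicial diagram of prisms over $(A, I)$, and the map $(A, I) \to (A^{(0)}, I^{(0)})$ is $(p, I)$-completely faithfully flat because $R \to S^0$ is a quasisyntomic covering (so its base change to $A/I$ is $p$-completely faithfully flat, and one lifts to the prismatic level as in Remark \ref{Remark:pushout in prismatic site}). Moreover $(A^{(n)}, I^{(n)})$ receives a canonical map from $(\Prism_{S^n}, I_{S^n})$. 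Applying Corollary \ref{Corollary:flat descent for dispalyed BK modules} (flat descent for displayed Breuil--Kisin modules) to the cover $(A, I) \to (A^{(\bullet)}, I^{(\bullet)})$, and Proposition \ref{Proposition:displayed condition base change}(2) to detect the displayed condition, gives the claimed equivalence. The main obstacle is precisely this Step 3: while the analogous descent for prismatic crystals in vector bundles (without the displayed condition) is essentially \cite[Proposition 4.4]{Anschutz-LeBras}, one must verify that ``displayed'' is both preserved and detected along the faithfully flat covers $(A, I) \to (A^{(0)}, I^{(0)})$ arising from $R \to S^0$. This compatibility is supplied by both parts of Proposition \ref{Proposition:displayed condition base change}, so no new technical input beyond careful bookkeeping of the \v{C}ech cover is needed; the compatibility with base change along arbitrary $R \to R'$ then follows from the construction.
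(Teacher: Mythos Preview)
Your approach is essentially the same as the paper's: both sides satisfy quasisyntomic descent, so the statement reduces to Proposition \ref{Proposition:F-gauges to F-crystals qrsp case}. The paper compresses your Steps 2--3 into a single sentence, citing \cite[Proposition 7.11]{BS} and Corollary \ref{Corollary:flat descent for dispalyed BK modules} (together with the proof of \cite[Proposition 2.14]{BS2}) for the fact that the right-hand side is a quasisyntomic sheaf. One minor point: in your Step 3, the construction of the flat cover $(A, I) \to (A^{(0)}, I^{(0)})$ from the quasisyntomic covering $R \to S^0$ is not given by Remark \ref{Remark:pushout in prismatic site} (which handles pushouts along flat maps of \emph{prisms}, not lifts of ring maps on the quotient); the nontrivial input here is precisely \cite[Proposition 7.11]{BS}, which uses prismatic envelopes and Andr\'e's flatness lemma.
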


\begin{proof}
    By Proposition \ref{Proposition:quasisyntomic descent for prismatic F-gauge}, the left hand side of $(\ref{equation:functor from gauge to crystal})$ satisfies quasisyntomic descent.
    By \cite[Proposition 7.11]{BS} and Corollary \ref{Corollary:flat descent for dispalyed BK modules}, the right hand side of $(\ref{equation:functor from gauge to crystal})$ also satisfies quasisyntomic descent; see also the proof of \cite[Proposition 2.14]{BS2}.
    Thus, the assertion follows from Proposition \ref{Proposition:F-gauges to F-crystals qrsp case}.
\end{proof}

As in Section \ref{Subsection:BK module of type mu},
let
$\mu=(m_1, \dotsc, m_n)$
be a tuple of integers $m_1 \geq \cdots \geq m_n$.
Let $r_i \in \Z_{\geq 0}$ be the number of occurrences of $i$ in $(m_1, \dotsc, m_n)$.
We want to compare prismatic $F$-gauges in vector bundles with (displayed) Breuil--Kisin modules of type $\mu$ in the sense of Definition \ref{Definition:type of displayed BK module}.

\begin{rem}\label{Remark:quotient of Fil1}
Let $S \in \qrsperfd$.
By \cite[Theorem 12.2]{BS}, the Frobenius
$\phi \colon \Prism_S \to \Prism_S$
induces an isomorphism
\[
\Prism_S/\Fil^1_\mathcal{N}(\Prism_S) \overset{\sim}{\to} S.
\]
Using this, we regard the homomorphism
$\rho$
(see (\ref{equation:homomorphism rho})) as
$\rho \colon \Rees(\Fil^\bullet_\mathcal{N}(\Prism_S)) \to S$.
\end{rem}

\begin{defn}\label{Definition:type of prismatic $F$-gauge}
Let $R \in \qsyn$.
Let $N \in F\mathchar`-\mathrm{Gauge}^{\mathrm{vect}}(R)$.
We say that $N$ is \textit{of type $\mu$} if for any $R \to S$ with $S \in \qrsperfd$,
the degree $i$ part $(\rho^*N_S)_i$ of the graded $S$-module $\rho^*N_S$
is of rank $r_i$ for any $i \in \Z$.
\end{defn}

Let
    \[
    F\mathchar`-\mathrm{Gauge}_\mu(R) \subset F\mathchar`-\mathrm{Gauge}^{\mathrm{vect}}(R)
    \]
be the full subcategory spanned by those objects of type $\mu$.
The property of being of type $\mu$ can be checked locally in the quasisyntomic topology.
Thus
the fibered category
$R \mapsto F\mathchar`-\mathrm{Gauge}_\mu(R)$
over
$\qsyn^{\op}$
satisfies descent with respect to the quasisyntomic topology.

By construction, the functor $(\ref{equation:functor from gauge to crystal})$
induces a fully faithful functor
\begin{equation}\label{equation:functor from gauge to BK type mu}
    F\mathchar`-\mathrm{Gauge}_{\mu}(R) \to {2-\varprojlim}_{(A, I) \in (R)_{\Prism}} \mathrm{BK}_{\mu}(A, I)
\end{equation}
for any $R \in \qsyn$.
(Recall that $\mathrm{BK}_{\mu}(A, I)$ is the category of Breuil--Kisin modules over $(A, I)$ of type $\mu$.)
We will prove later that the functors $(\ref{equation:functor from gauge to crystal})$ and $(\ref{equation:functor from gauge to BK type mu})$ are equivalences if $R$ is a perfectoid ring or a complete regular local ring with perfect residue field $k$ of characteristic $p$; see Corollary \ref{Corollary:functor from gauge to BK perfectoid and regular} below.

\begin{ex}\label{Example:F-gauge weight [0, 1]}
    Let $R \in \qsyn$.
    Let
    $F\mathchar`-\mathrm{Gauge}^{\mathrm{vect}}_{[0, 1]}(R) \subset F\mathchar`-\mathrm{Gauge}^{\mathrm{vect}}(R)$
    be the full subcategory of those $N \in F\mathchar`-\mathrm{Gauge}^{\mathrm{vect}}(R)$
    such that for any homomorphism $R \to S$ with $S \in \qrsperfd$,
    we have $(\rho^*N_S)_i=0$ for all $i \neq 0, 1$.
    The functor $(\ref{equation:functor from gauge to crystal})$
induces a fully faithful functor
\[
F\mathchar`-\mathrm{Gauge}^{\mathrm{vect}}_{[0, 1]}(R) \to {2-\varprojlim}_{(A, I) \in (R)_{\Prism}} \mathrm{BK}_{\mathrm{min}}(A, I).
\]
The right hand side can be identified with the category of prismatic Dieudonn\'e crystals on $(R)_{\Prism}$; see Section \ref{Subsection:A remark on prismatic Dieudonne crystals}.
By \cite[Theorem 2.54]{Guo-Li},
the essential image of this functor is the full subcategory of admissible prismatic Dieudonn\'e crystals on $(R)_{\Prism}$.
If $R$ is a perfectoid ring or a complete regular local ring with perfect residue field $k$ of characteristic $p$, then any prismatic Dieudonn\'e crystal on $(R)_{\Prism}$ is admissible by \cite[Proposition 4.12, Proposition 5.10]{Anschutz-LeBras}, and hence the above functor is an equivalence in this case.
This fact also follows from Corollary \ref{Corollary:functor from gauge to BK perfectoid and regular}.
\end{ex}

\subsection{Prismatic $G$-$F$-gauges of type $\mu$}\label{Subsection:Prismatic G-F-gauges of type mu}

Let $G$ be a smooth affine group scheme over
$\Z_p$.
Let
$\mu \colon \G_m \to G_{W(k)}$
be a cocharacter where $k$ is a perfect field of characteristic $p$.
We introduce prismatic $G$-$F$-gauges of type $\mu$ in the same way as for prismatic $G$-$\mu$-displays.

We retain the notation of Section \ref{Section:display group}.
For the cocharacter $\mu$,
we have
the action
$(\ref{equation:action cocharacter adjoint})$ of $\G_m$ on $G_{W(k)}=\Spec A_G$.
Let
$
A_G= \bigoplus_{i\in\Z} A_{G, i}
$
be the weight decomposition.
We define
$
A^{(-1)}_{G, i}:=(\phi^{-1})^*A_{G, i}
$
where $\phi^{-1} \colon W(k) \to W(k)$ is the inverse of the Frobenius.
Since $(\phi^{-1})^*A_G=A_G$, we have
\[
A_G= \bigoplus_{i\in\Z} A^{(-1)}_{G, i}.
\]
Let
$\mu^{(-1)} \colon \G_m \to G_{W(k)}$
be the base change of $\mu$ along
$\phi^{-1}$.
Then 
$
A_G= \bigoplus_{i\in\Z} A^{(-1)}_{G, i}
$
is the weight decomposition with respect to the action of $\G_m$ induced by $\mu^{(-1)}$.

Let $S$ be a quasiregular semiperfectoid ring over $W(k)$.

\begin{defn}\label{Definition:gauge group}
    Let
    \[
    G_{\mu, \mathcal{N}}(S) \subset G(\Prism_S)
    \]
    be the subgroup consisting of homomorphisms
    $g^* \colon A_G \to \Prism_S$
    of $W(k)$-algebras such that
    $g^*(A^{(-1)}_{G, i}) \subset \Fil^i_\mathcal{N}(\Prism_S)$ for any $i \in \Z$.
    The group $G_{\mu, \mathcal{N}}(S)$ is called the \textit{gauge group}.
\end{defn}

\begin{rem}\label{Remark:gauge group as inverse image}
It follows from Lemma \ref{Lemma:Gmu iff condition} that
$G_{\mu, \mathcal{N}}(S) \subset G(\Prism_S)$
is the inverse image of the display group
$G_\mu(\Prism_S, I_S) \subset G(\Prism_S)$
under the homomorphism
$\phi \colon G(\Prism_S) \to G(\Prism_S)$.
\end{rem}

For a generator $d \in I_S$,
we have the homomorphism
\[
\sigma_{\mu, \mathcal{N}, d} \colon G_{\mu, \mathcal{N}}(S)  \to G(\Prism_S), \quad g \mapsto \mu(d)\phi(g)\mu(d)^{-1}
\]
by Remark \ref{Remark:gauge group as inverse image}.
Let $G(\Prism_S)_{\mathcal{N}, d}$ be the set
$G(\Prism_S)$ together with the following action of $G_{\mu, \mathcal{N}}(S)$:
\begin{equation}\label{equation:gauge group action}
    G(\Prism_S) \times G_{\mu, \mathcal{N}}(S) \to G(\Prism_S), \quad (X, g) \mapsto X \cdot g:=g^{-1}X\sigma_{\mu, \mathcal{N}, d}(g).
\end{equation}
For another generator
$d' \in I_S$
and the unique element $u \in \Prism^\times_S$ such that $d=ud'$,
the bijection $G(\Prism_S)_{\mathcal{N}, d} \to G(\Prism_S)_{\mathcal{N}, d'}$, $X \mapsto X\mu(u)$ is $G_{\mu, \mathcal{N}}(S)$-equivariant.
Then we set
\[
G(\Prism_S)_\mathcal{N} := \varprojlim_{d} G(\Prism_S)_{\mathcal{N}, d},
\]
which is equipped with a natural action of $G_{\mu, \mathcal{N}}(S)$.
Here $d$ runs over the set of generators $d \in I_S$.
Although $G(\Prism_S)_\mathcal{N}$ depends on $\mu$, we omit it from the notation.

\begin{rem}\label{Remark:qsyn sheaves and qrsp sheaves}
We recall some notation from \cite[Definition 2.9]{BS2}.
Let $R$ be a quasisyntomic ring.
Let $(R)_{\mathrm{qsyn}}$
(resp.\ $(R)_{\mathrm{qrsp}}$)
be the category of quasisyntomic rings $R'$
(resp.\ quasiregular semiperfectoid rings $R'$)
with a quasisyntomic map $R \to R'$.
We endow
both
$(R)^{\op}_{\mathrm{qsyn}}$
and $(R)^{\op}_{\mathrm{qrsp}}$
with the quasisyntomic topology.
Since
quasiregular semiperfectoid rings form a basis for $\qsyn$,
we may identify sheaves on $(R)^{\op}_{\mathrm{qsyn}}$
with
sheaves on $(R)^{\op}_{\mathrm{qrsp}}$.
On the site $(R)^{\op}_{\mathrm{qsyn}}$, we have the sheaves
$\Prism_{\bullet}$
and
$I_{\bullet}$
such that
\[
\Prism_{\bullet}(S)=\Prism_S \quad \text{and} \quad I_{\bullet}(S)=I_S
\]
for each $S \in (R)_{\mathrm{qrsp}}$.
\end{rem}

\begin{lem}\label{Lemma:gauge groups form a sheaf}
    Let $R$ be a quasisyntomic ring over $W(k)$.
    The functors
    \begin{align*}
    G_{\mu, \mathcal{N}} &\colon (R)_{\mathrm{qrsp}} \to \mathrm{Set}, \quad S \mapsto G_{\mu, \mathcal{N}}(S), \\
    G_{\Prism, \mathcal{N}} &\colon (R)_{\mathrm{qrsp}} \to \mathrm{Set}, \quad S \mapsto G(\Prism_S)_\mathcal{N}
    \end{align*}
    form sheaves with respect to the quasisyntomic topology.
\end{lem}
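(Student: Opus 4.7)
The plan is to reduce both sheaf claims to the quasisyntomic descent of $\Prism_\bullet$ and of the Nygaard-filtered pieces $\Fil^i_\mathcal{N}(\Prism_\bullet)$. The former is essentially the content of \cite[Proposition 7.11]{BS}: for a quasisyntomic cover $S \to S'$ in $(R)_{\mathrm{qrsp}}$ and any further $S \to S_1$, the map $\Prism_S \to \Prism_{S'}$ is $(p,I_S)$-completely faithfully flat, and the $(p,I_S)$-adic completion of $\Prism_{S'} \otimes_{\Prism_S} \Prism_{S_1}$ agrees with $\Prism_{S' \widehat{\otimes}_S S_1}$. The latter was extracted in the proof of Proposition \ref{Proposition:quasisyntomic descent for prismatic F-gauge}: the induced map on graded rings $\Prism^{\mathcal{N}}_{S,n} \to \Prism^{\mathcal{N}}_{S',n}$ is faithfully flat and compatible with base change, and the degree $i$ piece $\Fil^i_\mathcal{N}(\Prism_S)$ is the limit of its reductions modulo $(p,I_S)^n$. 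Faithfully flat descent for modules then implies that each $\Fil^i_\mathcal{N}(\Prism_\bullet)$, and hence the graded ring $\Rees(\Fil^\bullet_\mathcal{N}(\Prism_\bullet))$, is a sheaf on $(R)^{\op}_{\mathrm{qrsp}}$.

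For $G_{\mu,\mathcal{N}}$, I would first reinterpret it in terms of graded ring maps, in the spirit of Remark \ref{Remark:analogue of Lau's definitions display version}. Namely, view $A_G = \bigoplus_i A^{(-1)}_{G,i}$ as a graded $W(k)$-algebra. Then, by Lemma \ref{Lemma:Gmu iff condition} applied to the cocharacter $\mu^{(-1)}$ together with Remark \ref{Remark:gauge group as inverse image}, the subset $G_{\mu,\mathcal{N}}(S) \subset G(\Prism_S)$ is naturally identified with the set of graded $W(k)$-algebra homomorphisms
\[
A_G \longrightarrow \Rees(\Fil^\bullet_\mathcal{N}(\Prism_S)).
\]
Since $G$ is finitely presented, this set of graded homomorphisms can be computed degree by degree on a set of algebra generators of $A_G$, each of which takes values in some $\Fil^i_\mathcal{N}(\Prism_S)$. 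The sheaf property of $G_{\mu,\mathcal{N}}$ on $(R)^{\op}_{\mathrm{qrsp}}$ then follows from the sheaf property of each $\Fil^i_\mathcal{N}(\Prism_\bullet)$ established above.

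For $G_{\Prism,\mathcal{N}}$, recall that $(\Prism_S, I_S)$ is orientable for every $S \in \qrsperfd$, so for any choice of a generator $d \in I_S$ the projection $G(\Prism_S)_\mathcal{N} \to G(\Prism_S)_{\mathcal{N},d}$ is a bijection onto the underlying set $G(\Prism_S)$. Thus, as a presheaf of sets, $G_{\Prism,\mathcal{N}}$ coincides with $S \mapsto G(\Prism_S)$; since $G$ is affine of finite presentation, this functor is a sheaf as soon as $\Prism_\bullet$ is one, which is the case by the descent statement recalled above. The only subtlety is that the identifications for varying $d$ have to be compatible across a cover, but this is automatic from the canonical isomorphism $G(\Prism_S)_{\mathcal{N},d} \simeq G(\Prism_S)_{\mathcal{N},d'}$, $X \mapsto X\mu(u)$, which is natural in $S$.

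The only step requiring care is the descent of the filtration pieces $\Fil^i_\mathcal{N}(\Prism_\bullet)$; however this has essentially been carried out in Proposition \ref{Proposition:quasisyntomic descent for prismatic F-gauge}, so no new input is needed. Once both sheaf properties on $(R)^{\op}_{\mathrm{qrsp}}$ are established, we extend them to $(R)^{\op}_{\mathrm{qsyn}}$ via the identification of sheaves discussed in Remark \ref{Remark:qsyn sheaves and qrsp sheaves}.
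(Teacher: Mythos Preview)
Your proof is correct and follows the same overall strategy as the paper: both claims reduce to the sheaf property of $\Prism_\bullet$ and of each $\Fil^i_\mathcal{N}(\Prism_\bullet)$. The paper's argument is, however, considerably more direct on the second point. Rather than going through the faithfully flat maps $\Prism^{\mathcal{N}}_{S,n} \to \Prism^{\mathcal{N}}_{S',n}$ and passing to the limit (which requires knowing that $\Fil^i_\mathcal{N}(\Prism_S)$ is $(p,I_S)$-adically complete), the paper simply observes that $I_\bullet$ is already known to be a sheaf (Remark~\ref{Remark:qsyn sheaves and qrsp sheaves}), and hence so is $S \mapsto I^i_S$; since $\Fil^i_\mathcal{N}(\Prism_S) = \phi^{-1}(I^i_S)$ is the fiber product of $I^i_\bullet \hookrightarrow \Prism_\bullet$ along $\phi \colon \Prism_\bullet \to \Prism_\bullet$, it is automatically a sheaf. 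For $G_{\mu,\mathcal{N}}$ the paper then just invokes the definition (the condition $g^*(A^{(-1)}_{G,i}) \subset \Fil^i_\mathcal{N}(\Prism_S)$ cuts out a subsheaf of $G(\Prism_\bullet)$), without the detour through graded ring maps. Your route via Proposition~\ref{Proposition:quasisyntomic descent for prismatic F-gauge} works, but it imports machinery that is not needed here.
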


\begin{proof}
    As
    $\Prism_\bullet$
    is a sheaf, so is $G_{\Prism, \mathcal{N}}$.
    Since
    $I_\bullet$
    is a sheaf, it follows that the functor
    $S \mapsto \Fil^i_\mathcal{N}(\Prism_S)$
    forms a sheaf for any $i \in \Z$.
    This implies that $G_{\mu, \mathcal{N}}$ is a sheaf.
\end{proof}

We regard $G_{\mu, \mathcal{N}}$ and $G_{\Prism, \mathcal{N}}$ as sheaves on $(R)^{\op}_{\mathrm{qsyn}}$.
The sheaf
$G_{\Prism, \mathcal{N}}$
is equipped with an action of $G_{\mu, \mathcal{N}}$.

\begin{defn}[{Prismatic $G$-$F$-gauge of type $\mu$}]\label{Definition:Prismatic G-F-gauge of type}
Let $R$ be a quasisyntomic ring over $W(k)$.
A \textit{prismatic $G$-$F$-gauge of type $\mu$} over
    $R$ is a pair
    \[
    (\mathscr{Q}, \alpha_\mathscr{Q})
    \]
    where $\mathscr{Q}$ is a $G_{\mu, \mathcal{N}}$-torsor on $(R)^{\op}_{\mathrm{qsyn}}$ and $\alpha_\mathscr{Q} \colon \mathscr{Q} \to G_{\Prism, \mathcal{N}}$ is a $G_{\mu, \mathcal{N}}$-equivariant map.
    We say that
    $(\mathscr{Q}, \alpha_\mathscr{Q})$ is \textit{banal} if $\mathscr{Q}$ is trivial as a $G_{\mu, \mathcal{N}}$-torsor.
    When there is no possibility of confusion, we write $\mathscr{Q}$ instead of $(\mathscr{Q}, \alpha_\mathscr{Q})$.
An isomorphism of prismatic $G$-$F$-gauges of type $\mu$ over $R$ is defined in the same way as in Definition \ref{Definition:G mu display over oriented prisms}.
\end{defn}

Let
\[
G\mathchar`-F\mathchar`-\mathrm{Gauge}_\mu(R)
\]
be the groupoid of prismatic $G$-$F$-gauges of type $\mu$ over $R$.
For a homomorphism $f \colon R \to R'$ of quasisyntomic rings over $W(k)$, we have a base change functor
\[
f^* \colon G\mathchar`-F\mathchar`-\mathrm{Gauge}_\mu(R) \to G\mathchar`-F\mathchar`-\mathrm{Gauge}_\mu(R')
\]
defined in the same way as in Definition \ref{Definition:base change for G-displays}.

\begin{rem}\label{Remark:on Drinfeld's definition of G-gauge of type mu}
    A ``truncated analogue'' of the notion of prismatic $G$-$F$-gauges of type $\mu$ was introduced by Drinfeld in \cite[Appendix C]{Drinfeld23} for a $p$-adic formal scheme $\mathcal{X}$ which is formally of finite type over $\Spf \Z_p$, in terms of certain torsors on the \textit{syntomification} of $\mathcal{X}$ in the sense of Drinfeld and Bhatt--Lurie.
    It should be possible to define prismatic $G$-$F$-gauges of type $\mu$ over any $p$-adic formal scheme by using certain torsors on syntomifications, but we will not discuss this here\footnote{After this work was completed, and during the refereeing process, this has been carried out by Gardner--Madapusi--Mathew \cite{GMM}.}.
\end{rem}

\begin{rem}\label{Remark:analogue of Lau's definitions gauge version}
Let
$S$
be a quasiregular semiperfectoid ring over $W(k)$.
For a generator $d \in I_S$,
let
$
\sigma_d \colon \Rees(\Fil^\bullet_\mathcal{N}(\Prism_S)) \to \Prism_S
$
be the homomorphism defined by $a_it^{-i} \mapsto \phi(a_i)
d^{-i}$ for any $i \in \Z$.
Recall the homomorphism
    $\tau \colon \Rees(\Fil^\bullet_\mathcal{N}(\Prism_S)) \to \Prism_S$
    from Definition \ref{Definition:Rees algebra for Nygaard filtration}.
Similarly to the triple
    $
    (\Rees(I^\bullet), \sigma_d, \tau)
    $
    in Remark \ref{Remark:analogue of Lau's definitions display version},
    the triple
    \[
    (\Rees(\Fil^\bullet_\mathcal{N}(\Prism_S)), \sigma_d, \tau)
    \]
    is an analogue of a higher frame in the sense of Lau.
    The homomorphism $\tau$
    induces an isomorphism
    \[
    G(\Rees(\Fil^\bullet_\mathcal{N}(\Prism_S)))^0
    \overset{\sim}{\to} 
    G_{\mu, \mathcal{N}}(S)
    \]
    where $G(\Rees(\Fil^\bullet_\mathcal{N}(\Prism_S)))^0$ is the group of elements $g \in G(\Rees(\Fil^\bullet_\mathcal{N}(\Prism_S)))$ such that
    \[
    g^* \colon A_G=\bigoplus_{i\in\Z} A^{(-1)}_{G, i} \to \Rees(\Fil^\bullet_\mathcal{N}(\Prism_S))
    \]
    is a homomorphism of graded $W(k)$-algebras.
    Via this isomorphism, the homomorphism
    $\sigma_{\mu, \mathcal{N}, d}$
    agrees with the one induced by $\sigma_d$.
    Thus, the action $(\ref{equation:gauge group action})$ is consistent with the one considered in \cite[(5-2)]{Lau21}.
    
    Roughly speaking,
    prismatic $F$-gauges in vector bundles
    (resp.\ prismatic $G$-$F$-gauges of type $\mu$) over $S$ can be considered as
    displays
    (resp.\
    $G$-$\mu$-displays) over the ``higher frame''
    $
    (\Rees(\Fil^\bullet_\mathcal{N}(\Prism_S)), \sigma_d, \tau).
    $
    On the other hand,
    displayed Breuil--Kisin modules
    (resp.\ prismatic $G$-$\mu$-displays) over $(\Prism_S, I_S)$
    can be thought of as
    displays
    (resp.\ $G$-$\mu$-displays) over the ``higher frame''
    $
    (\Rees(I^\bullet_S), \sigma_d, \tau).
    $
    See also \cite[Section 3.7]{Lau21} where the relation between displays over higher frames and Frobenius gauges in the sense of Fontaine--Jannsen \cite[Section 2.2]{Fontaine--Jannsen} is discussed.
\end{rem}

Let us discuss the relation between
prismatic $F$-gauges in vector bundles of type $\mu$
and
prismatic
$\GL_n$-$F$-gauges of type $\mu$.

\begin{ex}\label{Example:gauge group GLn case}
Let
$\mu \colon \G_m \to \GL_{n, W(k)}$
be a cocharacter and let $(m_1, \dotsc, m_n)$ be the corresponding tuple of integers $m_1 \geq \cdots \geq m_n$ as in Section \ref{Subsection:BK module of type mu}.
We retain the notation of Section \ref{Subsection:BK module of type mu}.
Let
$
L_{W(k)} = \bigoplus_{j \in\Z}  L_{\mu, j}
$
be the weight decomposition with respect to the action of $\G_m$ on $L_{W(k)}=W(k)^n$ induced by $\mu$.
We set
$L^{(-1)}_{\mu, j}:=(\phi^{-1})^*L_{\mu, j}$.
By the decomposition $L_{W(k)} = \bigoplus_{j \in\Z}  L^{(-1)}_{\mu, j}$, we regard $L_{W(k)}$ as a graded module.
Let $S$ be a quasiregular semiperfectoid ring over $W(k)$.
Then,
via the isomorphism
\[
\GL_n(\Rees(\Fil^\bullet_\mathcal{N}(\Prism_S)))^0 \overset{\sim}{\to} (\GL_n)_{\mu, \mathcal{N}}(S)
\]
given in Remark \ref{Remark:analogue of Lau's definitions gauge version},
we may identify
$(\GL_n)_{\mu, \mathcal{N}}(S)$
with the group of graded automorphisms of
$
L_{W(k)} \otimes_{W(k)} \Rees(\Fil^\bullet_\mathcal{N}(\Prism_S)).
$
\end{ex}

\begin{ex}\label{Example:prismatic F-gauge and GLn-gauge}
Let the notation be as in Example \ref{Example:gauge group GLn case}.
Let $R$ be a quasisyntomic ring over $W(k)$.
We shall construct an equivalence
\begin{equation}\label{equation:gauge to GLn gauge}
    F\mathchar`-\mathrm{Gauge}_{\mu}(R)^{\simeq} \overset{\sim}{\to} \GL_n\mathchar`-F\mathchar`-\mathrm{Gauge}_\mu(R), \quad N \mapsto \mathscr{Q}(N)
\end{equation}
where $F\mathchar`-\mathrm{Gauge}_{\mu}(R)^{\simeq}$ is the groupoid of prismatic $F$-gauges in vector bundles of type $\mu$ over $R$.
Let $N \in F\mathchar`-\mathrm{Gauge}_{\mu}(R)$.
We consider the sheaf
\[
\mathscr{Q}(N) \colon (R)_{\mathrm{qsyn}} \to \mathrm{Set}
\]
sending $S \in (R)_{\mathrm{qrsp}}$ to the set of graded isomorphisms
\[
h \colon L_{W(k)} \otimes_{W(k)} \Rees(\Fil^\bullet_\mathcal{N}(\Prism_{S}))  \overset{\sim}{\to} N_S.
\]
Such an isomorphism $h$ exists locally in the quasisyntomic topology by (the proof of) Corollary \ref{Corollary:standard projective}.
By Example \ref{Example:gauge group GLn case},
the sheaf $\mathscr{Q}(N)$ then admits the structure of a $(\GL_n)_{\mu, \mathcal{N}}$-torsor.
Let $d \in I_S$ be a generator.
We fix an isomorphism $h$ as above.
Then we have the following isomorphisms
\[
(\sigma^*N)_0 \simeq \bigoplus_{j \in \Z} (L_{\mu, j} \otimes_{W(k)} I^{-j}_S) \simeq L_{\Prism_S}
\]
where the second isomorphism is given by $\mu(d)$.
We also have
$\tau^*N \simeq L_{\Prism_S}$.
Thus, the isomorphism $F_{N_S}$ gives an element
$\alpha(h)_d \in \GL_n(\Prism_S)=\GL_n(\Prism_S)_{\mathcal{N}, d}$.
The element $\alpha(h) \in \GL_n(\Prism_S)_{\mathcal{N}}$ corresponding to $\alpha(h)_d$ does not depend on the choice of $d$.
In this way, we get a $(\GL_n)_{\mu, \mathcal{N}}$-equivariant map
$\alpha \colon \mathscr{Q}(N) \to (\GL_n)_{\Prism, \mathcal{N}}$,
so that the pair $(\mathscr{Q}(N), \alpha)$ belongs to $\GL_n\mathchar`-F\mathchar`-\mathrm{Gauge}_\mu(R)$.
This construction gives the functor $(\ref{equation:gauge to GLn gauge})$.
Using quasisyntomic descent, one can check that this functor is an equivalence.
\end{ex}

We now compare prismatic $G$-$\mu$-displays with prismatic
$G$-$F$-gauges of type $\mu$.
We first note the following result.

\begin{prop}\label{Proposition:quasisyntomic descent for G mu displays}
Let $R$ be a quasisyntomic ring over $W(k)$.
The fibered category over
$
(R)^{\op}_{\mathrm{qsyn}}
$
which associates to each $R' \in (R)_{\mathrm{qsyn}}$
the groupoid
$
G\mathchar`-\mathrm{Disp}_\mu((R')_\Prism)
$
is a stack with respect to the quasisyntomic topology.
\end{prop}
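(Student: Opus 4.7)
[Proof sketch of Proposition \ref{Proposition:quasisyntomic descent for G mu displays}]
The strategy is to reduce quasisyntomic descent to the already-established flat descent for $G$-$\mu$-displays over a single bounded prism (Proposition \ref{Proposition:flat descent of G display}), using the fact that on quasiregular semiperfectoid bases the absolute prismatic site has an initial object.

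First I would reduce to descent against coverings $S \to S'$ inside $\qrsperfd$. Since quasiregular semiperfectoids form a basis for $\qsyn$, the standard extension result \cite[Proposition 4.31]{BMS2} reduces the proposition to showing that the restricted fibered category $S \mapsto G\mathchar`-\mathrm{Disp}_\mu((S)_\Prism)$ on $(R)^{\op}_{\mathrm{qrsp}}$ is a stack for the quasisyntomic topology. For any $S \in \qrsperfd$, the bounded prism $(\Prism_S, I_S)$ is initial in $(S)_\Prism$ by \cite[Proposition 7.10]{BS}, so unwinding the $2$-limit in Definition \ref{Definition:G displays over prismatic sites} gives a canonical equivalence
\[
G\mathchar`-\mathrm{Disp}_\mu((S)_\Prism) \overset{\sim}{\to} G\mathchar`-\mathrm{Disp}_\mu(\Prism_S, I_S),
\]
functorial in $S$.

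Next, for a quasisyntomic covering $S \to S'$ in $\qrsperfd$, let $S^{(n)} := S' \widehat{\otimes}_S \cdots \widehat{\otimes}_S S'$ ($n$ factors, $p$-completed) denote the terms of the associated \v{C}ech conerve; each $S^{(n)}$ is again quasiregular semiperfectoid. By the computation recalled in the proof of Proposition \ref{Proposition:quasisyntomic descent for prismatic F-gauge} (following \cite[Proposition 2.29]{Guo-Li}), the induced map $(\Prism_S, I_S) \to (\Prism_{S^{(n)}}, I_{S^{(n)}})$ is $(p, I_S)$-completely faithfully flat, and moreover $(\Prism_{S^{(n)}}, I_{S^{(n)}})$ is canonically isomorphic to the $(p, I_{S'})$-adic completion of the $n$-fold tensor product $\Prism_{S'} \otimes_{\Prism_S} \cdots \otimes_{\Prism_S} \Prism_{S'}$. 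By Remark \ref{Remark:pushout in prismatic site}, this identifies $(\Prism_{S^{(n)}}, I_{S^{(n)}})$ with the $n$-fold self-coproduct of $(\Prism_{S'}, I_{S'})$ over $(\Prism_S, I_S)$ in the category of bounded prisms.

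Finally, I would apply flat descent (Proposition \ref{Proposition:flat descent of G display}) to the faithfully flat map $(\Prism_S, I_S) \to (\Prism_{S'}, I_{S'})$: this yields an equivalence between $G\mathchar`-\mathrm{Disp}_\mu(\Prism_S, I_S)$ and the $2$-limit of $G\mathchar`-\mathrm{Disp}_\mu(\Prism_{S^{(n)}}, I_{S^{(n)}})$ over the cosimplicial diagram. Combining this with the identifications of the previous two paragraphs produces the required equivalence
\[
G\mathchar`-\mathrm{Disp}_\mu((S)_\Prism) \overset{\sim}{\to} {2-\varprojlim}_{n}\, G\mathchar`-\mathrm{Disp}_\mu((S^{(n)})_\Prism),
\]
which is precisely the quasisyntomic descent condition. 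The main technical point that needs to be carefully verified is the matching of the two notions of \v{C}ech conerve in Step~3, i.e., that the prismatization functor $S \mapsto (\Prism_S, I_S)$ transports the $(p)$-completed tensor products computing coproducts in $\qrsperfd$ to the $(p, I)$-completed tensor products computing coproducts in bounded prisms; this is essentially the content of \cite[Proposition 2.29]{Guo-Li} combined with Remark \ref{Remark:pushout in prismatic site}.
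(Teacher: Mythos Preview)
Your overall strategy---reducing quasisyntomic descent to the flat descent of Proposition~\ref{Proposition:flat descent of G display} by passing through the initial prism $(\Prism_S, I_S)$ for $S$ quasiregular semiperfectoid---is exactly the ``standard argument'' the paper invokes, and your Steps~2--4 are correct and spell it out in more detail than the paper does.

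However, the reduction in Step~1 is not justified by \cite[Proposition 4.31]{BMS2}. That result says a stack on $(R)^{\op}_{\mathrm{qrsp}}$ extends uniquely to a stack on $(R)^{\op}_{\mathrm{qsyn}}$; it does \emph{not} say that a fibered category already defined on all of $(R)^{\op}_{\mathrm{qsyn}}$ is a stack as soon as its restriction to $(R)^{\op}_{\mathrm{qrsp}}$ is. For the latter you would also need to know that for a general $R' \in (R)_{\mathrm{qsyn}}$ and a quasisyntomic cover $R' \to S$ with $S \in (R)_{\mathrm{qrsp}}$, the value $G\mathchar`-\mathrm{Disp}_\mu((R')_\Prism)$ already agrees with ${2-\varprojlim}_n\, G\mathchar`-\mathrm{Disp}_\mu((S^{(n)})_\Prism)$---but that is precisely the descent statement in the case where the base is not itself quasiregular semiperfectoid, so the reduction as stated is circular.

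This is where \cite[Proposition 7.11]{BS}---cited by the paper but absent from your sketch---enters. For such a cover $R' \to S$, every $(A, I) \in (R')_\Prism$ admits a flat covering by a prism lying over $S$, so $(\Prism_S, I_S)$, viewed as an object of $(R')_\Prism$, covers the final object of the flat topos. Since $G\mathchar`-\mathrm{Disp}_\mu$ is a stack for the flat topology on $(R')_\Prism$ by Proposition~\ref{Proposition:flat descent of G display}, its global sections $G\mathchar`-\mathrm{Disp}_\mu((R')_\Prism)$ are then computed by the \v{C}ech nerve of $(\Prism_S, I_S)$ in $(R')_\Prism$ (as in Proposition~\ref{Proposition:crystal and descent datum}), and the identification of your Step~3 shows this \v{C}ech nerve is $(\Prism_{S^{(\bullet)}}, I_{S^{(\bullet)}})$. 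With this addition the argument is complete.
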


\begin{proof}
    This follows from \cite[Proposition 7.11]{BS} and Proposition \ref{Proposition:flat descent of G display} by a standard argument.
    (See also the proof of \cite[Proposition 2.14]{BS2}.)
\end{proof}

\begin{prop}\label{Proposition:functor from G-gauge to G-display}
    Let $R$ be a quasisyntomic ring over $W(k)$.
    There exists a fully faithful functor
    \begin{equation}\label{equation:functor from G-gauge to G-display}
        G\mathchar`-F\mathchar`-\mathrm{Gauge}_\mu(R) \to G\mathchar`-\mathrm{Disp}_{\mu}((R)_{\Prism}).
    \end{equation}
    This functor is compatible with base change along any homomorphism $R \to R'$ in $\qsyn$.
\end{prop}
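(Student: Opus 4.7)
The plan is to construct the functor via the Frobenius compatibility between gauge and display groups, then verify full faithfulness after reducing to the perfectoid case.

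First, quasisyntomic descent will reduce the problem to the case of quasiregular semiperfectoid rings. Proposition \ref{Proposition:quasisyntomic descent for G mu displays} gives the required descent for prismatic $G$-$\mu$-displays, while the analogous descent for prismatic $G$-$F$-gauges of type $\mu$ follows formally from Lemma \ref{Lemma:gauge groups form a sheaf} together with the standard fact that torsors equipped with equivariant maps to a group sheaf descend. It thus suffices to construct, functorially in a quasiregular semiperfectoid $\O$-algebra $S$, a functor
\[
\Phi_S \colon G\mathchar`-F\mathchar`-\mathrm{Gauge}_\mu(S) \to G\mathchar`-\mathrm{Disp}_\mu(\Prism_S, I_S),
\]
and to show that each $\Phi_S$ is fully faithful.

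For the construction of $\Phi_S$, I would work on the $(\pi, I_S)$-completely \'etale site of $\Prism_S$. For $B \in (\Prism_S, I_S)_\et$, the quotient $T := B/I_S B$ is a $\pi$-completely \'etale $S$-algebra, hence itself quasiregular semiperfectoid; Lemma \ref{Lemma:etale morphism and reduction} combined with the universal property of $\Prism_T$ then identifies $B$ with $\Prism_T$ canonically in $(T)_\Prism$. By Remark \ref{Remark:gauge group as inverse image}, the Frobenius restricts to a group homomorphism $\phi \colon G_{\mu, \mathcal{N}}(T) \to G_\mu(B, I_S B)$ and induces a $\phi$-equivariant map $G(\Prism_T)_\mathcal{N} \to G(B)_{I_S B}$ given by $X_d \mapsto \phi(X_d)$ for each generator $d \in I_S B$ (well-defined under change of generator). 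A short direct calculation comparing $g^{-1} X \mu(d) \phi(g) \mu(d)^{-1}$ with $\phi(g)^{-1} \phi(X) \phi(\mu(d)\phi(g)\mu(d)^{-1})$ verifies equivariance. Pushing out the torsor underlying $\mathscr{Q}$ along these sheaf maps on $(\Prism_S, I_S)^{\op}_\et$ then defines $\Phi_S(\mathscr{Q})$; functoriality in $S$ is automatic.

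Full faithfulness will be the main obstacle. Trivializing the underlying $G_{\mu, \mathcal{N}}$-torsors after a quasisyntomic cover reduces the claim to comparing, in the banal case, the sets $\{g \in G_{\mu, \mathcal{N}}(S) : X' \cdot g = X\}$ and $\{h \in G_\mu(\Prism_S, I_S) : \phi(X') \cdot h = \phi(X)\}$ under the map $g \mapsto \phi(g)$. Injectivity will follow from the Nygaard-completeness of $\Prism_S$ for quasiregular semiperfectoid $S$, which ensures $\bigcap_i \Fil^i_{\mathcal{N}}(\Prism_S) = 0$ and hence that $\phi$ is injective on $\Prism_S$. For surjectivity, I plan to reduce to the perfectoid case via quasisyntomic descent of the two Hom sheaves; when $S$ is perfectoid, $\phi$ is bijective on $W_{\O_E}(S^\flat)$, so $\Phi_S$ is actually an equivalence of groupoids and the bijection on Hom sets is immediate. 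The delicate technical point I anticipate is verifying that the Hom sheaves satisfy the requisite descent along quasisyntomic covers by perfectoids, which entails a careful comparison between torsors on the prismatic \'etale site and on the quasisyntomic site, together with the identification $\Prism_T \cong B$ used in the construction.
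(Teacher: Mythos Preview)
Your reduction to quasiregular semiperfectoid $S$ and the use of the Frobenius map $\phi$ on gauge/display groups match the paper's strategy, but both your construction of the functor and your proof of full faithfulness are substantially more elaborate than necessary.

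For the construction, the paper bypasses the \'etale site entirely: since every $G_{\mu,\mathcal{N}}$-torsor trivializes after a further quasisyntomic cover, it is enough to define the functor on banal objects, where both groupoids are simply quotient groupoids $[G(\Prism_S)_{\mathcal{N}}/G_{\mu,\mathcal{N}}(S)]$ and $[G(\Prism_S)_{I_S}/G_\mu(\Prism_S,I_S)]$, and the functor is just $X \mapsto \phi(X)$. Your route through the identification $B \simeq \Prism_T$ for $T = B/I_SB$ requires knowing that $T$ is again quasiregular semiperfectoid and that $\Prism_T$ recovers $B$; these facts are plausible but not set up in the paper, and are in any case unnecessary.

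For full faithfulness, the paper gives a one-line direct inverse rather than invoking Nygaard completeness and descent to perfectoids. Concretely, the map $g \mapsto \phi(g)$ from
\[
\{g \in G_{\mu,\mathcal{N}}(S) : g^{-1}X'\sigma_{\mu,\mathcal{N},d}(g) = X\} \quad \text{to} \quad \{h \in G_\mu(\Prism_S,I_S) : h^{-1}\phi(X')\sigma_{\mu,d}(h) = \phi(X)\}
\]
has inverse $h \mapsto X'\mu(d)h\mu(d)^{-1}X^{-1}$: since $h \in G_\mu(\Prism_S,I_S)$ we have $\mu(d)h\mu(d)^{-1} \in G(\Prism_S)$, and a direct check (using the defining equation for $h$) gives $\phi(X'\mu(d)h\mu(d)^{-1}X^{-1}) = h$, so the element lies in $G_{\mu,\mathcal{N}}(S)$ by Remark~\ref{Remark:gauge group as inverse image}. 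This handles injectivity and surjectivity simultaneously for arbitrary quasiregular semiperfectoid $S$, with no appeal to Nygaard completeness (which the paper does not prove) or to descent of Hom sheaves.
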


\begin{proof}
    It is clear that the left hand side of $(\ref{equation:functor from G-gauge to G-display})$ satisfies quasisyntomic descent.
    By Proposition \ref{Proposition:quasisyntomic descent for G mu displays}, the right hand side of $(\ref{equation:functor from G-gauge to G-display})$ also satisfies quasisyntomic descent.
    It thus suffices to construct, for each quasiregular semiperfectoid ring $S$ over $W(k)$,
    a fully faithful functor
    \begin{equation}\label{equation:functor from G-gauge to G-display banal}
    G\mathchar`-F\mathchar`-\mathrm{Gauge}_\mu(S)_{\mathrm{banal}} \to G\mathchar`-\mathrm{Disp}_{\mu}(\Prism_S, I_S)_{\mathrm{banal}}
    \end{equation}
    that is compatible with base change along any homomorphism $S \to S'$ in $\qrsperfd$.
    Here $G\mathchar`-F\mathchar`-\mathrm{Gauge}_\mu(S)_{\mathrm{banal}}$
    is the groupoid of banal prismatic $G$-$F$-gauges of type $\mu$ over $S$.
     By Remark \ref{Remark:quotient groupoid banal G-displays}, we may identify $G\mathchar`-\mathrm{Disp}_{\mu}(\Prism_S, I_S)_{\mathrm{banal}}$ with the groupoid
    $
    [G(\Prism_S)_{I_S}/G_\mu(\Prism_S, I_S)].
    $
    Similarly, we may identify $G\mathchar`-F\mathchar`-\mathrm{Gauge}_\mu(S)_{\mathrm{banal}}$ with the groupoid
    \[
    [G(\Prism_S)_{\mathcal{N}}/G_{\mu, \mathcal{N}}(S)]
    \]
    whose objects are the elements $X \in G(\Prism_S)_{\mathcal{N}}$ and whose morphisms are defined by
$
\Hom(X, X')=\{\, g \in G_{\mu, \mathcal{N}}(S) \, \vert \, X'\cdot g=X  \, \}.
$
The map $\phi \colon G(\Prism_S) \to G(\Prism_S)$ induces a map 
$\phi \colon G(\Prism_S)_{\mathcal{N}} \to G(\Prism_S)_{I_S}$ such that
for every $X \in G(\Prism_S)_{\mathcal{N}}$ and every $g \in G_{\mu, \mathcal{N}}(S)$, we have
$\phi(X \cdot g)=\phi(X) \cdot \phi(g)$,
where $\phi(g) \in G_\mu(\Prism_S, I_S)$ is the image of $g$ under the natural homomorphism $\phi \colon G_{\mu, \mathcal{N}}(S) \to G_\mu(\Prism_S, I_S)$.
Then we define the functor $(\ref{equation:functor from G-gauge to G-display banal})$ as
\begin{equation}\label{equation:functor from G-gauge to G-display banal quotient groupoid}
[G(\Prism_S)_{\mathcal{N}}/G_{\mu, \mathcal{N}}(S)] \to [G(\Prism_S)_{I_S}/G_\mu(\Prism_S, I_S)], \quad X \mapsto \phi(X).
\end{equation}
This functor is fully faithful.
Indeed, let $d \in I_S$ be a generator.
It suffices to prove that for all $X, X' \in G(\Prism_S)$, the map
\[
\{\, g \in G_{\mu, \mathcal{N}}(S) \, \vert \, g^{-1}X'\sigma_{\mu, \mathcal{N}, d}(g)=X  \, \} \to \{\, h \in G_\mu(\Prism_S, I_S) \, \vert \, h^{-1}\phi(X')\sigma_{\mu, d}(h)=\phi(X) \, \}
\]
defined by $g \mapsto \phi(g)$ is bijective.
(Recall that $\sigma_{\mu, \mathcal{N}, d}(g)=\mu(d)\phi(g)\mu(d)^{-1}$ and $\sigma_{\mu, d}(h)=\phi(\mu(d)h\mu(d)^{-1})$.)
One can check that the map
$h \mapsto X'\mu(d)h\mu(d)^{-1}X^{-1}$ gives the inverse of the above map.
Indeed, for an element $g \in G_{\mu, \mathcal{N}}(S)$ in the left hand side, we have
\[
X'\mu(d)\phi(g)\mu(d)^{-1}X^{-1} = X'\sigma_{\mu, \mathcal{N}, d}(g)X^{-1} = g.
\]
Similarly, for an element $h \in G_\mu(\Prism_S, I_S)$
in the right hand side, we have
\[
\phi(X'\mu(d)h\mu(d)^{-1}X^{-1})=\phi(X')\sigma_{\mu, d}(h)\phi(X)^{-1}= h.
\]

The proof of Proposition \ref{Proposition:functor from G-gauge to G-display} is complete.
\end{proof}

\begin{cor}\label{Corollary:functor from G-gauge to G-display perfectoid and regular}
Let $R$ be a perfectoid ring over $W(k)$ or a complete regular local ring over $W(k)$ with residue field $k$.
Then the functor $(\ref{equation:functor from G-gauge to G-display})$ is an equivalence:
\[
G\mathchar`-F\mathchar`-\mathrm{Gauge}_\mu(R) \overset{\sim}{\to} G\mathchar`-\mathrm{Disp}_{\mu}((R)_{\Prism}).
\]
\end{cor}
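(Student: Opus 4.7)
The functor (\ref{equation:functor from G-gauge to G-display}) is already fully faithful by Proposition \ref{Proposition:functor from G-gauge to G-display}, so my plan is to verify essential surjectivity in each of the two cases.

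\textbf{Perfectoid case.} For $R$ perfectoid, my first step is to use that $(W(R^\flat), I_R)$ is the initial object of $(R)_{\Prism}$ (Example \ref{Example:prismatic sites}), so that a prismatic $G$-$\mu$-display over $R$ amounts to a single $G$-$\mu$-display over this perfect prism. The key structural observation is that since $R$ is perfectoid, $\phi \colon W(R^\flat) \to W(R^\flat)$ is an automorphism; combined with Remark \ref{Remark:gauge group as inverse image}, this yields bijections $\phi \colon G_{\mu,\mathcal{N}}(R) \overset{\sim}{\to} G_\mu(W(R^\flat), I_R)$ of groups and $\phi \colon G(W(R^\flat))_{\mathcal{N}} \overset{\sim}{\to} G(W(R^\flat))_{I_R}$ of sets. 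In terms of the quotient-groupoid description of banal objects (Remark \ref{Remark:quotient groupoid banal G-displays}), this immediately shows that the comparison functor (\ref{equation:functor from G-gauge to G-display banal quotient groupoid}) is an equivalence, so essential surjectivity holds on banal pieces. To pass to general objects, I would trivialize a given $G$-$\mu$-display over a $(\pi, I_R)$-completely \'etale cover $W(R^\flat) \to W(S^\flat)$, which by Example \ref{Example:perfectoid ring etale morphism} corresponds to a $p$-completely \'etale (hence quasisyntomic) cover $R \to S$ with $S$ perfectoid, apply the banal equivalence over $S$, and glue using quasisyntomic descent --- Proposition \ref{Proposition:quasisyntomic descent for G mu displays} on the target, the sheaf-theoretic definition on the source --- together with full faithfulness of (\ref{equation:functor from G-gauge to G-display}) on the self-products $S \widehat{\otimes}_R S$ and $S \widehat{\otimes}_R S \widehat{\otimes}_R S$, which remain perfectoid.

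\textbf{Regular local case.} Write $\mathfrak{S}_\O = \O[[t_1, \dotsc, t_n]]$ and fix $\mathcal{E}$ with $R \simeq \mathfrak{S}_\O/\mathcal{E}$, as in Theorem \ref{Theorem:main result on G displays over complete regular local rings}. I would set $R_\infty := \mathfrak{S}_{\O,\infty}/\mathcal{E}$, where $\mathfrak{S}_{\O,\infty}$ is the $(\pi, \mathcal{E})$-adic completion of $\varinjlim_\phi \mathfrak{S}_\O$ from the proof of Proposition \ref{Proposition:weakely initial object}; then $R_\infty$ is perfectoid, $R \to R_\infty$ is a quasisyntomic covering, and the self-products $R_\infty^{(m)} := R_\infty \widehat{\otimes}_R \cdots \widehat{\otimes}_R R_\infty$ are quasiregular semiperfectoid. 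Given a prismatic $G$-$\mu$-display $\mathfrak{Q}$ over $R$, its base change to $R_\infty$ lifts, by the perfectoid case, to a $G$-$F$-gauge $\mathscr{Q}_\infty$ over $R_\infty$. The canonical descent datum for $\mathfrak{Q}$ on the $R_\infty^{(m)}$ should then transport uniquely to a descent datum for $\mathscr{Q}_\infty$ via the full faithfulness of (\ref{equation:functor from G-gauge to G-display}) applied to the quasisyntomic rings $R_\infty^{(m)}$, and the cocycle condition transports for the same reason. Finally, quasisyntomic descent for $G\mathchar`-F\mathchar`-\mathrm{Gauge}_\mu$ glues these to a $G$-$F$-gauge $\mathscr{Q}$ over $R$ whose image is $\mathfrak{Q}$.

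\textbf{Main obstacle.} The delicate point is the regular local case: the self-products $R_\infty^{(m)}$ are only quasiregular semiperfectoid, not perfectoid, so the bijectivity-of-$\phi$ argument from the perfectoid case does not apply to them, and one cannot hope to produce the $G$-$F$-gauge descent data by a direct lift. What makes the descent nevertheless work is the asymmetry between what is needed on $R_\infty$ and on the $R_\infty^{(m)}$ for $m \geq 2$: essential surjectivity is required only on $R_\infty$ itself, while on the $R_\infty^{(m)}$ only \emph{full faithfulness} of (\ref{equation:functor from G-gauge to G-display}) is needed, and this is exactly what Proposition \ref{Proposition:functor from G-gauge to G-display} supplies for every quasisyntomic ring. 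The other subtle verification, in the perfectoid case, is that the qsyn-topology of $G$-$F$-gauges and the $(\pi, I)$-completely \'etale topology of $G$-$\mu$-displays align correctly on perfectoid self-products, which reduces via Example \ref{Example:perfectoid ring etale morphism} to the equivalence between $p$-completely \'etale $R$-algebras and $(\pi, I_R)$-completely \'etale $W(R^\flat)$-algebras.
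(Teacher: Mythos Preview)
Your proposal is correct and follows essentially the same approach as the paper: reduce to essential surjectivity via Proposition~\ref{Proposition:functor from G-gauge to G-display}, handle the perfectoid case using bijectivity of $\phi$ on $W(R^\flat)$ together with \'etale/quasisyntomic descent, and reduce the regular local case to the perfectoid case via a perfectoid quasisyntomic cover. The only cosmetic difference is that the paper invokes \cite[Proposition~5.8]{Anschutz-LeBras} for the existence of such a cover, whereas you construct it explicitly as $R_\infty$ from the proof of Proposition~\ref{Proposition:weakely initial object}; your observation that only full faithfulness (not essential surjectivity) is needed on the non-perfectoid self-products $R_\infty^{(m)}$ is exactly the mechanism underlying the paper's terse sentence ``the assertion can be checked locally in the quasisyntomic topology.''
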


\begin{proof}
    Since we already know that this functor is fully faithful (Proposition \ref{Proposition:functor from G-gauge to G-display}), it suffices to prove the essential surjectivity.
    The assertion can be checked locally in the quasisyntomic topology.
    
    We first assume that $R$ is a perfectoid ring over $W(k)$.
    In this case, we have
    \[
    G\mathchar`-\mathrm{Disp}_{\mu}((R)_{\Prism}) \overset{\sim}{\to} G\mathchar`-\mathrm{Disp}_{\mu}(W(R^\flat), I_R).
    \]
    Since every $G$-$\mu$-display over $(W(R^\flat), I_R)$ is banal over a $p$-completely \'etale covering $R \to R'$ with $R'$ a perfectoid ring (Example \ref{Example:perfectoid ring etale morphism}), it suffices to prove that the functor
    $(\ref{equation:functor from G-gauge to G-display banal quotient groupoid})$
    given in the proof of Proposition \ref{Proposition:functor from G-gauge to G-display} is essentially surjective when $S=R$.
    This follows since $(\Prism_R, I_R) \simeq (W(R^\flat), I_R)$ and the Frobenius $\phi \colon W(R^\flat) \to W(R^\flat)$ is bijective.
    
    The case where $R$ is a complete regular local ring over $W(k)$ with residue field $k$ follows from the previous case
    since there exists a quasisyntomic covering $R \to R'$ with $R'$ a perfectoid ring by \cite[Proposition 5.8]{Anschutz-LeBras}.
\end{proof}

\begin{cor}\label{Corollary:functor from gauge to BK perfectoid and regular}
    The functors $(\ref{equation:functor from gauge to crystal})$ and $(\ref{equation:functor from gauge to BK type mu})$ are equivalences if $R$ is a perfectoid ring or a complete regular local ring with perfect residue field $k$ of characteristic $p$.
\end{cor}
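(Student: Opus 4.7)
Both functors are fully faithful by Proposition \ref{Proposition:F-gauges to F-crystals}, so essential surjectivity is the only remaining point. The plan is to handle $(\ref{equation:functor from gauge to BK type mu})$ first by reducing to the $G=\GL_n$ case of Corollary \ref{Corollary:functor from G-gauge to G-display perfectoid and regular}, and then to deduce the statement for $(\ref{equation:functor from gauge to crystal})$ by cutting arbitrary displayed Breuil--Kisin modules into pieces of constant type.

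For the functor $(\ref{equation:functor from gauge to BK type mu})$, essential surjectivity is a statement about isomorphism classes, so it suffices to exhibit an equivalence between the underlying groupoids. I would chain the equivalences
\[
F\mathchar`-\mathrm{Gauge}_\mu(R)^{\simeq} \overset{\sim}{\to} \GL_n\mathchar`-F\mathchar`-\mathrm{Gauge}_\mu(R) \overset{\sim}{\to} \GL_n\mathchar`-\mathrm{Disp}_\mu((R)_{\Prism}) \overset{\sim}{\to} {2-\varprojlim}_{(A,I)} \mathrm{BK}_\mu(A,I)^{\simeq},
\]
where $n=\sum_i r_i$ is the rank of $\mu$, using Example \ref{Example:prismatic F-gauge and GLn-gauge}, Corollary \ref{Corollary:functor from G-gauge to G-display perfectoid and regular} applied with $G=\GL_n$, and Example \ref{Example:GLn displays} (the last step also uses Definition \ref{Definition:G displays over prismatic sites}). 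Verifying that this composite agrees on objects with $(\ref{equation:functor from gauge to BK type mu})$ is a diagram-chase through the torsors $\mathscr{Q}(N)$ and $\mathcal{Q}(M)$ constructed in the two examples, together with the assignment $N \mapsto (M,F_M)$ from the proof of Proposition \ref{Proposition:F-gauges to F-crystals qrsp case}.

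For the functor $(\ref{equation:functor from gauge to crystal})$, let $\mathcal{M}=(\mathcal{M}(A,I))_{(A,I)}$ be an object of the target. The strategy is to show that $\mathcal{M}$ lies in ${2-\varprojlim}_{(A,I)} \mathrm{BK}_\mu(A,I)$ for some tuple $\mu$, at least locally, and then to invoke the previous paragraph. If $R$ is a complete regular local ring with residue field $k$, I would choose a prism of Breuil--Kisin type $(\mathfrak{S},(\mathcal{E}))\in(R)_{\Prism}$ with $\mathfrak{S}:=W(k)[[t_1,\dotsc,t_n]]$ and $\mathfrak{S}/\mathcal{E}\simeq R$; since $R$ is local, the successive quotients of the Hodge filtration on $\mathcal{M}(\mathfrak{S},(\mathcal{E}))$ are free $R$-modules of constant ranks $r_i$, and by Proposition \ref{Proposition:displayed condition base change} the tuple $\mu=(r_i)$ describes the type of $\mathcal{M}(A,I)$ for every $(A,I)\in(R)_{\Prism}$, so the first paragraph applies. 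If $R$ is perfectoid, the same ranks define a locally constant function on $\Spec R=\Spec W(R^\flat)/I_R$; the associated idempotent stratification of $R$ (possibly infinite) decomposes $R$ into perfectoid factors of constant type, each of which is handled as in the regular local case.

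The main obstacle I foresee is the perfectoid case with highly disconnected $\Spec R$, where the idempotent decomposition may be infinite. Here the cleanest route is quasisyntomic descent: the source of $(\ref{equation:functor from gauge to crystal})$ satisfies such descent by Proposition \ref{Proposition:quasisyntomic descent for prismatic F-gauge}, and the target does as well by Corollary \ref{Corollary:flat descent for dispalyed BK modules} combined with the descent argument used in the proof of Proposition \ref{Proposition:F-gauges to F-crystals}. Working quasisyntomic-locally on a cover by perfectoid rings where the type is constant, the preceding paragraph furnishes local preimages, and the full faithfulness of $(\ref{equation:functor from gauge to crystal})$ already established lets these local preimages glue to a global prismatic $F$-gauge in vector bundles over $R$.
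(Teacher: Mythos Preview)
Your argument for $(\ref{equation:functor from gauge to BK type mu})$ is exactly the paper's: both route through the $\GL_n$ case via Example~\ref{Example:prismatic F-gauge and GLn-gauge}, Corollary~\ref{Corollary:functor from G-gauge to G-display perfectoid and regular}, and Example~\ref{Example:GLn displays}.

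For $(\ref{equation:functor from gauge to crystal})$ your approach is correct but the paper's is slightly cleaner in two respects. First, rather than treating the regular local case directly, the paper reduces it to the perfectoid case by choosing a quasisyntomic covering $R\to R'$ with $R'$ perfectoid (as in the proof of Corollary~\ref{Corollary:functor from G-gauge to G-display perfectoid and regular}), so only one case needs an argument. Second, your concern about an infinite idempotent decomposition in the perfectoid case is unfounded: for a displayed Breuil--Kisin module $M$ over $(W(R^\flat),I_R)$ the Hodge filtration has only finitely many nonzero steps and each $P^i/P^{i+1}$ is a finite projective $R$-module whose rank function takes finitely many values, so the type stratification of $\Spec R$ is a \emph{finite} clopen partition. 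The paper therefore produces a $p$-completely \'etale covering $R\to R_1\times\cdots\times R_m$ with each $R_i$ perfectoid and $M$ of constant type over each factor, and then invokes the already-proved $(\ref{equation:functor from gauge to BK type mu})$ together with $p$-completely \'etale descent. Your quasisyntomic-descent workaround is valid but unnecessary once you observe the finiteness.
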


\begin{proof}
    We need to prove that $(\ref{equation:functor from gauge to crystal})$ and $(\ref{equation:functor from gauge to BK type mu})$ are essentially surjective.
    For $(\ref{equation:functor from gauge to BK type mu})$, this follows from Corollary \ref{Corollary:functor from G-gauge to G-display perfectoid and regular} together with Example \ref{Example:GLn displays} and Example \ref{Example:prismatic F-gauge and GLn-gauge}.
    For $(\ref{equation:functor from gauge to crystal})$, we argue as follows.
    As in the proof of Corollary \ref{Corollary:functor from G-gauge to G-display perfectoid and regular}, it suffices to treat the case where $R$ is a perfectoid ring.
    Then we have
    \[
    {2-\varprojlim}_{(A, I) \in (R)_{\Prism}} \mathrm{BK}_{\mathrm{disp}}(A, I) \overset{\sim}{\to} \mathrm{BK}_{\mathrm{disp}}(W(R^\flat), I_R).
    \]
    For each $M \in \mathrm{BK}_{\mathrm{disp}}(W(R^\flat), I_R)$, there exists a $p$-completely \'etale covering
    $R \to R_1 \times \cdots \times R_m$
    with $R_1, \dotsc, R_m$ perfectoid rings
    such that for any $1 \leq i \leq m$,
    the base change
    $M_{(W(R^\flat_i), I_{R_i})}$ is of type $\mu$ for some $\mu$; see Example \ref{Example:perfectoid ring etale morphism} and Remark \ref{Remark:etale locally banal of type mu}.
    Since $(\ref{equation:functor from gauge to BK type mu})$ is essentially surjective, we can conclude that $(\ref{equation:functor from gauge to crystal})$ is also essentially surjective by using $p$-completely \'etale descent.
\end{proof}

\begin{rem}\label{Remark:gauge to G-BK modules}
     Let $R$ be a quasisyntomic ring over $W(k)$.
     For a bounded prism
     $(A, I) \in (R)_{\Prism}$, we defined the groupoid
     $G\mathchar`-\mathrm{BK}_{\mu}(A, I)$
     of $G$-Breuil--Kisin modules of type $\mu$ over $(A, I)$ in Section \ref{Subsection:G-BK module} and showed that it is equivalent to
     $G\mathchar`-\mathrm{Disp}_{\mu}(A, I)$
     in Proposition \ref{Proposition:G-displays and G-BK modules}.
     Thus the fully faithful functor $(\ref{equation:functor from G-gauge to G-display})$
     can be written as
     \[
     G\mathchar`-F\mathchar`-\mathrm{Gauge}_\mu(R) \to G\mathchar`-\mathrm{BK}_{\mu}((R)_{\Prism}):={2-\varprojlim}_{(A, I) \in (R)_{\Prism}} G\mathchar`-\mathrm{BK}_{\mu}(A, I).
     \]
     The essential image of this functor consists of those $\mathcal{P} \in G\mathchar`-\mathrm{BK}_{\mu}((R)_{\Prism})$ such that for some quasisyntomic covering $R \to S$ with $S$ a quasiregular semiperfectoid ring,
     the image $\mathcal{P}_{(\Prism_S, I_S)} \in G\mathchar`-\mathrm{BK}_{\mu}(\Prism_S, I_S)$ of $\mathcal{P}$ is a trivial $G_{\Prism_S}$-torsor, and via some trivialization
    $\mathcal{P}_{(\Prism_S, I_S)} \simeq G_{\Prism_S}$,
    the isomorphism $F_{\mathcal{P}_{(\Prism_S, I_S)}}$ is given by $g \mapsto Yg$ for an element $Y$ in
    \[
    \mu(d)\phi(G(\Prism_S)) \subset G(\Prism_S[1/I_S])
    \]
    where $d \in I_S$ is a generator.
    Therefore, we can simply define a prismatic $G$-$F$-gauge of type $\mu$ over $R$ as an object $\mathcal{P} \in G\mathchar`-\mathrm{BK}_{\mu}((R)_{\Prism})$ that satisfies the above condition.
     However, similarly to prismatic $G$-$\mu$-displays,
     it should be more technically convenient to work with the one introduced in Definition \ref{Definition:Prismatic G-F-gauge of type}.
    \end{rem}

We shall give an example which shows that the functors
$(\ref{equation:functor from gauge to crystal})$
and 
$(\ref{equation:functor from G-gauge to G-display})$
are not essentially surjective in general.
This also shows that there exists a non-admissible prismatic Dieudonn\'e crystal (see Example \ref{Example:F-gauge weight [0, 1]}).

Let $\O_C$ be the ring of integers of an algebraically closed nonarchimedean extension $C$ of $\Q_p$.
    Then the quotient $S=\O_C/p$ is a quasiregular semiperfectoid ring.
    The natural homomorphism
    $S \to \Prism_S/I_S$ is injective, and the Frobenius
$\phi \colon \Prism_S \to \Prism_S$
induces an isomorphism
$
\Prism_S/\Fil^1_\mathcal{N}(\Prism_S) \overset{\sim}{\to} S
$
(see \cite[Theorem 12.2]{BS}).
The Hodge--Tate comparison theorem for the conjugate filtration with respect to the natural homomorphism $\O_C \to S$ shows that $S \to \Prism_S/I_S$ is not surjective; see \cite[Section 12.1]{BS}.
We fix a generator $d \in I_S$.

\begin{ex}\label{Example:non-admissible prismatic Dieudonn\'e crystal}
    Let the notation be as above.
    We assume that $G=\GL_2$ and
    $\mu \colon \G_m \to \GL_2$ is the 1-bounded cocharacter defined by
$
t \mapsto \diag{(t, 1)}.
$
We choose an element $x \in \Prism_S$ whose image $\overline{x} \in \Prism_S/I_S$ is not contained in $S$.
Let $X \in G(\Prism_S)_{I_S}$ be the element such that $X_d=\begin{pmatrix}
1 & 0 \\
x & 1 \\
\end{pmatrix} \in G(\Prism_S)$.
We shall show that $X$ is not contained in the essential image of the functor (\ref{equation:functor from G-gauge to G-display banal quotient groupoid}).
If $X$ is contained in the essential image, then there are $Y \in G(\Prism_S)_{\mathcal{N}}$ and $g \in G_\mu(\Prism_S, I_S)$ such that
the equality $X \cdot g=\phi(Y)$ holds in $G(\Prism_S)_{I_S}$.
In particular, we see that $g^{-1}X_d$ belongs to the image of $\phi \colon G(\Prism_S) \to G(\Prism_S)$.
We write
\[
g=\begin{pmatrix}
g_{11} & g_{12} \\
g_{21} & g_{22} \\
\end{pmatrix} \in G_\mu(\Prism_S, I_S).
\]
By Proposition \ref{Proposition:BB isomorphism}, we have $g_{21} \in I_S$ and $g_{11}, g_{22} \in \Prism^{\times}_S$.
By computing the image of $g^{-1}X_d$ in $G(\Prism_S/I_S)$ and using that $g^{-1}X_d \in \phi(G(\Prism_S))$, it follows that
$\overline{x}/\overline{g}_{22}, 1/\overline{g}_{22} \in \Prism_S/I_S$ are contained in $S$.
We thus have $\overline{x} \in S$, which leads to a 
contradiction.
\end{ex}

\subsection*{Acknowledgements}
The author would like to thank
Kentaro Inoue,
Hiroki Kato,
Arthur-C\'esar Le Bras,
Kimihiko Li,
Samuel Marks,
and Alex Youcis for helpful discussions and comments.
The author is also grateful to Teruhisa Koshikawa for answering some questions on prisms.
Finally, the author would like to thank the referees for helpful comments.
The author especially would like to thank one of the referees for the suggestion that a discussion of the relation between prismatic $G$-$\mu$-displays and prismatic $F$-gauges should be included, and one of the referees for explaining an alternative description of $G$-$\mu$-displays, which simplifies some constructions given in the previous version of this paper.
The work of the author was supported by JSPS KAKENHI Grant Numbers 22K20332 and 24K16887.

\bibliographystyle{abbrvsort}
\bibliography{bibliography.bib}
\end{document}